\newif\ifshownavigationpage
\newif\ifshowreminders
\newif\ifshownotationindex
\newif\ifshowtheoremlinks
\newif\ifshowtheoremtree
\newif\ifshowtheoremlist
\newif\ifshowequationlist
\newif\ifshowcomments
\newif\ifhighlight 
\newif\ifelaborate
\newif\ifshowaddressedcomments
\newif\ifshowrvin
\newif\ifshowrvout
\renewcommand{\ALG@beginalgorithmic}{\scriptsize}
\DeclareFontFamily{U}{mathx}{}
\DeclareFontShape{U}{mathx}{m}{n}{ <-> mathx10 }{}
\DeclareSymbolFont{mathx}{U}{mathx}{m}{n}
\renewcommand{\dj}[1]{\bm{d}_{J_1}^{_{#1}}}
\newcommand{\rvoutopacity}{20}
    \newcommand{\rvout}[1]{{\color{red!\rvoutopacity}{#1}} }
    \newcommand{\chrout}[1]{{\color{blue!\rvoutopacity}{#1}} }    
    \newcommand{\rvoutm}[1]{{\color{black!\rvoutopacity}{\ifmmode\text{\sout{\ensuremath{\displaystyle#1}}}\else\sout{#1}\fi}} } % rvout in math mode
    \newcommand{\rvout}[1]{}
    \newcommand{\chrout}[1]{}
    \newcommand{\summ}[1]{{\color{blue}[summary: #1]} } % summary
    \newcommand{\chr}[1]{{\color{PineGreen}[CR: #1]} } % comments by Chang-Han Rhee
    \newcommand{\xw}[1]{{\color{RoyalBlue}[XW: #1]} } % comments by Xingyu Wang
        \newcommand{\chra}[1]{{\color{PineGreen}\sout{[CR: #1]}} } % addressed comments by C.-H.R. 
        \newcommand{\xwa}[1]{{\color{RoyalBlue}\sout{[XW: #1]}} } % addressed comments by X.W.
        \newcommand{\chra}[1]{} % addressed comments by C.-H.R. 
        \newcommand{\xwa}[1]{} % addressed comments by X.W.
    \newcommand{\summ}[1]{} % summary
    \newcommand{\chr}[1]{} % comments by Chang-Han Rhee
    \newcommand{\chra}[1]{} % comments by C.-H.R. that are already addressed
    \newcommand{\xw}[1]{} % comments by Xingyu Wang
    \newcommand{\xwa}[1]{} % comments by X.W. that are already addressed 
\newlist{thmdependence}{itemize}{10}
\setlist[thmdependence]{nosep,label=-}
\newcommand{\thmtreenode}[5]{\item[#1] \linkdest{location, thm tree #3} {#2}~\ref{#3} \linktopf{#3} \thmsum{#4}{#5}}
\newcommand{\thmtreenodewopf}[5]{\item[#1] \linkdest{location, thm tree #3} {#2}~\ref{#3} \thmsum{#4}{#5}}
\newcommand{\thmtreeref}[2]{\item[\elsewhere] {{\hyperlink{location, thm tree #2}{\color{gray}#1}}}~\ref{#2}\thmsum{0.5}{}}
    \newcommand{\linksinthm}[1]{\emph{\linkdest{location, #1}\linktopf{#1} \linktothmtree{location, thm tree #1} }}
    \newcommand{\linksinthmwopf}[1]{\emph{\linkdest{location, #1} \linktothmtree{location, thm tree #1} }}
    \newcommand{\linksinpf}[1]{\linkdest{location, proof of #1}\linktothm{#1} \linktothmtree{location, thm tree #1} }
    \newcommand{\linksinthm}[1]{}
    \newcommand{\linksinthmwopf}[1]{}
    \newcommand{\linksinpf}[1]{}
    \newcommand{\notationdef}[2]{\linkdest{location, notation definition of #1}\hyperlink{location, notation index of #1}{#2}}
    \newcommand{\notationidx}[2]{\linkdest{location, notation index of #1}\hyperlink{location, notation definition of #1}{#2}}
    \newcommand{\notationdef}[2]{#2}
\newcommand{\linktopf}[1]{\hyperlink{location, proof of #1}{\pflinksymbol}}
\newcommand{\linktothm}[1]{\hyperlink{location, #1}{\thmlinksymbol}}
\newcommand{\linktothmtree}[1]{\hyperlink{#1}{\thmtreelinksymbol}}
\newcommand{\thmlinksymbol}{{\tiny [Theorem]}}
\newcommand{\pflinksymbol}{{\tiny [Proof]}}
\newcommand{\thmtreelinksymbol}{{\tiny [ThmTree]}}
\newcommand{\complete}{{\color{black}\checkmark}}
\newcommand{\issue}{{\color{red}\checkmark}}
\newcommand{\elsewhere}{}
\newcommand{\thmsum}[2]{\quad{\color{gray}\begin{minipage}[t]{#1\linewidth}{#2}\vspace{0.5\baselineskip}\end{minipage}}}
\newcommand{\linkdest}[1]{%
    \Hy@raisedlink{\raisebox{5mm}[0pt][0pt]{\hypertarget{#1}{}}}%
}
\newcommand{\elaborateopacity}{50}
\newcommand{\elaboratecolor}{RawSienna}
    \newcommand{\elaborate}[1]{{\color{\elaboratecolor!\elaborateopacity}{
    % \\{\tiny[Begin Elaboration]}\hrulefill\\ 
    \begin{framed}%[beforebreak=\par\noindent] 
    \noindent {\footnotesize[Elaboration]}
    #1 
    \end{framed}
    % \\\hrulefill{\tiny[End Elaboration]\hrulefill\\}}
    }}\noindent}
    \newcommand{\elaborate}[1]{}
\newtheorem{theorem}{Theorem}
\newtheorem{lemma}[theorem]{Lemma}
\newtheorem{corollary}[theorem]{Corollary}
\newtheorem{proposition}[theorem]{Proposition}
\newtheorem{definition}[theorem]{Definition}
\newtheorem{assumption}{Assumption}
\newtheorem{remark}{Remark}
\newtheorem{condition}{Condition}
\newtheorem*{theorem-nonumber}{Theorem}
\newtheorem*{condition-nonumber}{Condition}
\newtheorem*{proposition-nonumber}{Proposition}
\DeclarePairedDelimiter{\ceil}{\lceil}{\rceil}
\DeclarePairedDelimiter\floor{\lfloor}{\rfloor}
\newcommand{\D}{\mathbb D}
\newcommand{\cmt}[1]{#1} % print note in box
\renewcommand{\cmt}[1]{} % omit note
\renewcommand{\P}{\mathbf{P}}
\newcommand{\E}{\mathbf{E}}
\newcommand{\RV}{\mathcal{RV}}
\newcommand{\R}{\mathbb{R}}
\newcommand{\Z}{\mathbb{Z}}
\renewcommand{\S}{\mathbb{S}}
\newcommand{\C}{\mathbb{C}}
\newcommand{\M}{\mathbb{M}}
\newcommand{\I}{\mathbbm{I}}
\renewcommand{\complement}{c}
\newcommand{\lo}{\mathit{o}}
\def\delequal{\mathrel{\ensurestackMath{\stackon[1pt]{=}{\scriptscriptstyle\Delta}}}}
\def\distequal{\mathrel{\ensurestackMath{\stackon[1pt]{=}{\scriptstyle d}}}}
\newcommand{\norm}[1]{\left\lVert#1\right\rVert}
\algrenewcommand\algorithmicrequire{\textbf{Require:}}
\algrenewcommand\algorithmicensure{\textbf{Postcondition:}}
\title{Large Deviations and Metastability Analysis\\ for Heavy-Tailed Dynamical Systems}
\DeclareMathAccent{\widecheck}{0}{mathx}{"71}
\author[1]{Xingyu Wang} 
\author[2]{Chang-Han Rhee}
\affil[1]{Quantitative Economics, University of Amsterdam\\
    Amsterdam, 1018 WB, NL}
\affil[2]{Industrial Engineering and Management Sciences, Northwestern University\\
    Evanston, IL, 60613, USA}
\begin{document}
%%%%%%%%%%%%%%%%
\maketitle

\begin{abstract}
\noindent
This paper introduces novel frameworks for large deviations and metastability analysis in heavy-tailed stochastic dynamical systems. We develop and apply these frameworks within the context of stochastic difference equation $X^\eta_{j+1}(x) = X^\eta_{j}(x) + \eta a\big( X^\eta_{j}(x)\big) + \eta \sigma\big( X^\eta_{j}(x)\big)Z_{j+1}$ and its variation with truncated dynamics $X^{\eta|b}_{j+1}(x) =  X^{\eta|b}_{j}( x) + \varphi_b\big(\eta  a\big(  X^{\eta|b}_{j}( x)\big) + \eta \sigma\big(  X^{\eta|b}_{j}( x)\big) Z_{j+1}\big)$, where $\varphi_b(x) = (x/\norm{x})\max\{\norm{x}, b\}$. The truncation operator $\varphi_b(\cdot)$ is often introduced as a modulation mechanism in heavy-tailed systems, such as stochastic gradient descent algorithms in deep learning.  We establish locally uniform sample-path large deviations for both processes and translate these asymptotics into precise characterizations of the joint distributions of the first exit times and exit locations. Our large deviations asymptotics are sharp enough to rigorously characterize \emph{the catastrophe principle} by establishing the distributional limit of the sample paths conditional on the rare events of interest, thereby revealing the most likely paths through which rare events arise in heavy-tailed dynamical systems. Moreover the resulting limit theorem unveils a discrete hierarchy of phase transitions (i.e., exit times) as the truncation threshold $b$ varies. Together, these developments lead to comprehensive heavy-tailed counterpart of the classical Freidlin-Wentzell theory. We present our results in the context of discrete time processes $X^\eta_{j+1}(x)$ and $X^{\eta|b}_{j+1}(x)$, as they more directly model the stochastic algorithms in deep learning that inspired this work. Nontheless, the same approach applies straightforwardly to continuous-time processes, and we include the corresponding results for the L\'evy-driven SDEs in the appendix. 
\end{abstract}

\counterwithin{equation}{section}
\counterwithin{lemma}{section}
\counterwithin{corollary}{section}
\counterwithin{theorem}{section}
\counterwithin{definition}{section}
\counterwithin{proposition}{section}
\counterwithin{figure}{section}
\counterwithin{table}{section}

\tableofcontents

\section{Introduction}
Large deviations and metastability analysis in stochastic dynamical systems are deeply interconnected and have a rich history in probability theory and related fields. 
Since the foundational works of Kramers and Eyring \cite{eyring1935chemical, kramers1940brownian, glasstone1941theory}, which analyzed phase transitions in stochastic dynamical systems in the context of chemical reaction-rate theory, extensive theoretical advancements have been made. 
One of the most notable breakthroughs is the now-classical Freidlin-Wentzell theory \cite{freidlin1970onsmall, freidlin1973some, freidlin1984random}, which introduced large deviations machinery to the analysis of exit times and global behaviors of small random perturbations of dynamical systems. 
Further extensions of this approach in the context of statistical physics were pioneered in \cite{cassandro1984metastable} and described in detail in \cite{Olivieri_Vares_2005}. 
One of the key advantages of this approach---often called \emph{the pathwise approach}---is its ability to describe in detail the scenarios that lead to phase transitions. 
In particular, the large deviations formalism at the sample-path level enables precise identification of the most likely paths out of the metastability sets. 
This ensures that, asymptotically, whenever the dynamical system escapes from the metastability set, the escape routes almost always closely resemble these most likely paths.
However, the sample-path-level large deviations are typically available only in the form of logarithmic asymptotics, and hence, the asymptotic scale of the exit time can be determined only up to its exponential rate, requiring different approaches to identify the prefactor.
Another breakthrough is \emph{the potential-theoretic approach} initiated in \cite{bovier2001metastability, bovier2004metastability, bovier2005metastability} and later summarized in \cite{bovier2016metastability}. 
Instead of relying on large deviations machinery, this approach leverages potential-theoretic tools:
the scale of exit times for Markov processes can be expressed in terms of capacity, which, in turn, can be bounded using variational principles. 
The key advantage of this approach, compared to the pathwise approach, is that it is often possible to find test functions that tightly bound the capacity of the Markov chains, thereby yielding \emph{precise} asymptotics---rather than merely logarithmic asymptotics as in the pathwise approach---of the scales of exit times. 
Although the potential theoretic approach does not provide as much information---such as the most likely paths---as the pathwise approach beyond the asymptotics of the exit times, its sharpness has inspired extensive research activity. 
The early works in the potential theoretic approach were focused on reversible Markov processes. 
However, recent developments have extended the scope of the approach to enable the analysis of non-reversible Markov processes; see, for example, \cite{slowik2012note, landim2014metastability, gaudilliere2014dirichlet, lee2022non}.

While these developments provide powerful means to understand rare events and metastability of light-tailed systems, 
heavy-tailed systems exhibit a fundamentally different large deviations and metastability behaviors and call for a different set of technical tools for successful analysis.
For example, early foundational works in heavy-tailed context \cite{imkeller2006first, imkeller2008levy, pavlyukevich2008metastable, imkeller2010first} 
proved that the exit times of the stochastic processes driven by heavy-tailed noises scale polynomially with respect to the scaling parameter.  
These papers also reveal that the exit events are almost always driven by a single disproportionately large jump, while the rest of the system's behavior remains close to its nominal behavior. 
Here, nominal behavior refers to the functional law of large numbers limit of the scaled processes. 
This is in stark contrast to the light-tailed counterparts, where the exit times scale geometrically, and the exit events are driven by smooth tilting of the entire system from its nominal behavior.
One can view this as a manifestation of \emph{the principle of a single big jump}, a well-known folklore in extreme value theory. 
For stochastic processes with independent increments over a finite time horizon, \cite{hult2005functional} systematically characterized the principle of a single big jump with an early formulation of heavy-tailed sample-path large deviations.

However, many heavy-tailed rare events in machine learning, finance, operations research, and other disciplines cannot be driven by a single big jump;
see e.g.\ \cite{Albrecher_Chen_Vatamidou_Zwart_2020,tankov2003financial,foss2006heavy,doi:10.1287/moor.1120.0539,wang2022eliminating}.
A notable example arises in the context of deep learning. 
Stochastic gradient descent (SGD) and its variants are the methods of choice in training deep neural networks (DNNs). 
Heavy-tailed SGDs have attracted significant attention in the recent past because of their ability to escape local minima with a single big jump, enabling them to explore non-convex energy landscapes within realistic training time horizons. 
Such ability is widely believed to have fundamental connection to DNNs' remarkable generalization performance on test data.
However, the pure form of SGD is rarely employed in practice.
In particular, when the gradient noise appears to exhibit heavy-tailed behavior causing SGD to occasionally attempt to travel a long distance in a single step, the step size is truncated at a threshold. 
This is a common practice known as gradient clipping; see, e.g., \cite{Engstrom2020Implementation, merity2018regularizing, graves2013generating, pascanu2013difficulty,zhang2020why}.
With gradient clipping, the exit event from a large attraction field cannot be solely driven by a single big jump. 
In general---as we rigorously confirm in this paper---when a single big jump is insufficient to cause the rare event of interest, it is driven by the minimal number of big jumps required to trigger it, while the rest of the system remains close to its nominal dynamics. 
This portrayal provides a more complete picture than the principle of a single big jump and is referred to as \emph{the catastrophe principle}. 
% We will use the term catastrophe principle throughout this paper. 
More recently, a rigorous mathematical characterization of the catastrophe principle for L\'evy processes and random walks was established in the form of heavy-tailed sample-path large deviations \cite{rhee2019sample}, leveraging the $\M$-convergence theory originally introduced in \cite{lindskog2014regularly}. 
The results in \cite{rhee2019sample} can be viewed as the heavy-tailed counterpart of the Mogulskii's theorem \cite{lynch1987large, mogulskii1993large}.
Notably, the new large deviations formulation in \cite{rhee2019sample} provides precise asymptotics for heavy-tailed processes, in contrast to the logarithmic asymptotics of the classical large deviation principle; see~\eqref{intro: LD for RW}. 
This raises the hope that, for heavy-tailed dynamical systems, it may be possible to simultaneously obtain \emph{both} detailed descriptions of the scenarios leading to phase transitions (as in the pathwise approach \cite{freidlin1984random, Olivieri_Vares_2005}) and sharp asymptotics for the exit time (as in the potential-theoretic approach \cite{bovier2016metastability}).
Successfully implementing this strategy for practical systems requires establishing strong enough sample-path large deviations and developing tools to translate these results into exit-time analyses tailored for heavy-tailed dynamical systems with transition dynamics potentially modulated by truncation.

In this paper, we propose a new formulation of large deviations, along with systematic tools to establish them and translate them into exit-time asymptotics for heavy-tailed dynamical systems.
Using this framework, we derive precise sample-path large deviations and sharp scaling limits of the joint distributions of the exit times and locations for heavy-tailed stochastic difference equations. 
In particular, we characterize the asymptotics of processes whose step sizes are modulated by truncation; see \eqref{def, intro, clipped SGD} and \eqref{def: objects for defining Y eta b, clipped SDE, 1} for the precise definitions.
% We emphasize once again that the principle of a single big jump is often insufficient for such processes, whereas they frequently arise in heavy-tailed contexts; 
% thus, it is essential to develop techniques that successfully handle modulated processes. 
It turns out that such modulation introduces phase transition within phase transition: the polynomial rate of exit time's asymptotic scale changes discontinuously w.r.t.\ the truncation parameter, changing the way the exit events occur qualitatively;
see Theorem~\ref{theorem: first exit time, unclipped} and the form of $\mathcal J_b^I$ in \eqref{def: first exit time, J *}.
This behavior sharply contrasts with the light-tailed counterpart, where truncation does not affect the large deviations behavior.
This is another manifestation of the dichotomy between the catastrophe principle and conspiracy principle. 
In view of these, our results provide comprehensive heavy-tailed counterparts to the Freidlin–Wentzell theory for stochastic dynamical systems.
More precisely, the main contributions of this article can be summarized as follows:
\begin{itemize}
    \item 
        {\bf Heavy-tailed Large Deviations:}
        We establish sample-path large deviations for heavy-tailed dynamical systems. 
        We propose a new heavy-tailed large deviations formulation that is locally uniform w.r.t.\ the initial values.
        We accomplish this by formulating a uniform version of $\M(\S \setminus \C)$-convergence \cite{lindskog2014regularly,rhee2019sample}.
        Our large deviations characterize the catastrophe principle,
        which reveals a discrete hierarchy governing the causes and probabilities of a wide variety of rare events associated with heavy-tailed stochastic difference and differential equations;
        see Theorems~\ref{corollary: LDP 2},~\ref{theorem: LDP 1, unclipped},~\ref{theorem: LDP, SDE, uniform M convergence}, and~\ref{theorem: LDP, SDE, uniform M convergence, clipped}.
        We also obtain the precise distributional limit of the scaled sample paths conditional on the rare events in Corollary~\ref{corollary: conditional limit, SGD} and \ref{corollary: conditional limit, SDE}. 
        In the second half of this paper, we focus on their implications for the exit-time (and exit-location) analysis. 
        However, we emphasize that these results provide general, systematic tools for heavy-tailed rare-event analysis far beyond exit times.

    \item 
        {\bf Metastability Analysis:} 
        We establish a scaling limit of the exit-time and exit-location for stochastic difference equations. 
        We accomplish this by developing a machinery for local stability analysis of general (heavy-tailed) Markov processes.
        Central to the development is the concept of asymptotic atoms, where the process recurrently enters and asymptotically regenerates.
        Leveraging the locally uniform version of sample-path large deviations over these asymptotic atoms, we derive sharp asymptotics for the joint distribution of (scaled) exit-times and exit-locations for heavy-tailed processes, as detailed in Theorem~\ref{theorem: first exit time, unclipped} and Corollary~\ref{corollary: first exit time, untruncated case}.
        Notably, the scaling rate parameter reflects an intricate interplay between the truncation threshold and the geometry of the drift, which is a feature absent in both the principle of a single big jump regime (heavy-tailed systems without truncation) and the conspiracy principle regime (light-tailed systems).

\end{itemize}
A culmination of metastability analysis is the sharp characterization of the global dynamics of heavy-tailed processes, often established in the form of process-level convergence of scaled processes to simpler ones, such as Markov jump processes on a discrete state space; see, for example, \cite{Olivieri_Vares_2005, bovier2016metastability, imkeller2008levy, pavlyukevich2008metastable, lee2022non-b, rezakhanlou2023scaling}. 
For unregulated processes such as \eqref{defSDE, initial condition x}, which are governed by the principle of a single big jump, it is well known 
that the scaling limit is a Markov jump process with a state space consisting of the local minima of the potential function; see e.g., \cite{doi:10.1142/S0219493715500197, imkeller2008levy, pavlyukevich2008metastable}. 
In a companion paper \cite{wangSGDpaper2}, 
we demonstrate that the framework developed in this paper is strong enough to extend the above-mentioned results to the systems \emph{not} governed by the principle of a single big jump---such as \eqref{def, intro, clipped SGD} and \eqref{def: objects for defining Y eta b, clipped SDE, 1}---within a multi-well potential, by identifying scaling limits and characterizing their global behavior at the process level. 
In particular, the scaling limit for the truncated heavy-tailed dynamics is a Markov jump process that \emph{only visits the widest minima}. 
This is in sharp contrast to the untruncated cases \cite{doi:10.1142/S0219493715500197, imkeller2008levy, pavlyukevich2008metastable} where the limiting Markov jump process visits all the local minima with certain fractions.
% These findings systematically characterize a curious phenomena that the truncated heavy-tailed processes avoid narrow local minima altogether in the limit.
As a result, the fraction of time such processes spend in narrow attraction fields converges to zero as the scaling parameter (often called learning rate in the machine learning literature) tends to zero.
Precise characterization of such phenomena is of fundamental importance in understanding and further leveraging the curious effectiveness of the stochastic gradient descent (SGD) algorithms in training deep neural networks.

\subsection{Overview of the Paper}\label{subsec:overview-of-the-paper}

In this paper, we focus on the class of heavy-tailed phenomena captured by the notion of regular variation.
% Specifically,
To be specific, let $(\bm Z_i)_{i \geq 1}$ be a sequence of iid random vectors in $\R^d$ such that $\E \bm Z_1 = \bm 0$ and $\P(\norm{\bm Z_i} > x)$ is regularly varying with index $-\alpha$ as $x \to \infty$ for some $\alpha>1$.
That is, there exists some slowly varying function $\phi$ such that $\P(\norm{\bm Z_1} > x) = \phi(x)x^{-\alpha}$.
For any $\eta > 0$ and $\bm x \in \R^m$, let $\big(\bm X^\eta_j(\bm x)\big)_{j \geq 0}$ be the solution of the following stochastic difference equation
\begin{align}
    \bm X^\eta_0(\bm x) = \bm x;\qquad 
    \bm X^\eta_{j+1}(\bm x) & = \bm X^\eta_{j}(\bm x) + \eta \bm a\big( \bm X^\eta_{j}(\bm x)\big) + \eta \bm \sigma\big( \bm X^\eta_{j}(\bm x)\big)\bm Z_{j+1}\quad \forall j \geq 0.
    \label{intro, def for X eta j}
\end{align}
% with drift and diffusion coefficients being $a:\R \to \R$ and $\sigma: \R \to (0,\infty)$, respectively. 
Throughout this paper, we adopt the convention that the subscript denotes the time, and the superscript $\eta$ denotes the scaling parameter that tends to zero. 
Furthermore, we consider a truncated variation of $\bm X^\eta_{j+1}(\bm x)$. 
Let $\varphi_b(\cdot)$ be the projection operator from $\R^m$ onto the closed ball centered at the origin with radius $b$.
Define 
% $\big(\bm X^{\eta|b}_j(\bm x)\big)_{j \geq 0}$ with the following recursion:
\begin{align}
     \bm X^{\eta|b}_0(\bm x) = \bm x;\qquad 
     \bm X^{\eta|b}_{j+1}(\bm x) & = \bm X^{\eta|b}_{j}(\bm x) + \varphi_b\Big(\eta \bm a\big( \bm X^{\eta|b}_{j}(\bm x)\big) + \eta \bm\sigma\big( \bm X^{\eta|b}_{j}(\bm x)\big)\bm Z_{j+1}\Big)\quad \forall j \geq 0.
     \label{def, intro, clipped SGD}
\end{align}
In other words,
$\bm X^{\eta|b}_j(\bm x)$ is a modulated version of $\bm X^{\eta}_j(\bm x)$ where the distance traveled at each step is truncated at $b$,
and the dynamics of $\bm X^{\eta}_j(\bm x)$ is recovered by setting the truncation threshold $b$ as $\infty$.
As mentioned above, such dynamics arise in the training of DNNs, and their global behaviors are closely connected to the performance of the trained models. 
In particular, if $\bm a$ is the negative gradient of the training loss $f$, then the argument of $\varphi_b(\cdot)$ in \eqref{def, intro, clipped SGD}, $\eta \bm a\big(\bm X_j^\eta(\bm x)\big) + \eta \bm \sigma \big(\bm X_j^\eta(\bm x)\big) \bm Z_{j+1} = -\eta \big(\nabla f\big(\bm X_j^\eta(\bm x)\big) - \bm \sigma \big(\bm X_j^\eta(\bm x)\big) \bm Z_{j+1} \big)$, represents the state-dependent stochastic gradient of $f$ at $\bm X_j^\eta(\bm x)$, scaled by the negative learning rate $-\eta$, which corresponds to the one-step displacement of SGD.
Therefore, \eqref{intro, def for X eta j} and \eqref{def, intro, clipped SGD} serve as models for the dynamics of heavy-tailed SGD and its variation with gradient clipping, respectively.
See, for example, \cite{wang2022eliminating, pascanu2013difficulty, zhang2020why, pmlr-v202-koloskova23a} and the references therein for more details.
Note that \eqref{intro, def for X eta j} and \eqref{def, intro, clipped SGD} can be viewed as discretizations of small-noise SDEs driven by L\'evy processes. 
In this paper, we primarily focus on these discrete-time processes, as they provide more accurate models of the stochastic algorithms in deep learning compared to the continuous counterparts. 
Furthermore, \eqref{intro, def for X eta j} and \eqref{def, intro, clipped SGD} do not require the $Z_i$'s to be $\alpha$-stable to model SGDs and impose no restrictions on the choice of regular variation. 
In contrast, approximating SGDs with SDEs becomes obscure when $Z_i$'s are not $\alpha$-stable.
Nevertheless, we emphasize that all the results we establish for \eqref{intro, def for X eta j} and \eqref{def, intro, clipped SGD} in this paper can also be established for the stochastic differential equations driven by regularly-varying L\'evy processes, with a straightforward adaptation of the machinery we develop here. 
We present the results for L\'evy-driven SDEs in Appendix~\ref{sec: appendix, SDE reults}.
Finally, note also that although \eqref{intro, def for X eta j} and \eqref{def, intro, clipped SGD} are probably the most natural scaling regime in many contexts, more general scaling can be considered as well. 
In Appendix~\ref{sec: appendix, SGD, general scaling, results},
we present the corresponding results for
\begin{equation}
\begin{aligned}
    &\bm X^\eta_0(\bm x) = \bm x,\qquad
    \bm X^\eta_j(\bm x) = \bm X^\eta_{j - 1}(\bm x) +  \eta \bm a\big(\bm X^\eta_{j - 1}(\bm x)\big) + \eta^\gamma\bm \sigma\big(\bm X^\eta_{j - 1}(\bm x)\big)\bm Z_j\quad \forall j \geq 1;
    \\
    &\bm X^{\eta|b}_0(\bm x) = \bm x,\qquad {\bm X^{\eta|b}_j(\bm x)} = \bm X^{\eta|b}_{j - 1}(\bm x) +  \varphi_b\Big(\eta \bm a\big(\bm X^{\eta|b}_{j - 1}(\bm x)\big) + \eta^\gamma \bm \sigma\big(\bm X^{\eta|b}_{j - 1}(\bm x)\big)\bm Z_j\Big)\quad \forall j \geq 1
\end{aligned}
\label{eq:gneral-scaling}
\end{equation}
with some $\gamma > 0$.
% Let $L_t$ be a one-dimensional L\'evy process with L\'evy measure $\nu$.
% Suppose that $\E L_1 = 0$ and $\nu\big((-\infty,-x)\cup(x,\infty)\big)$ is regularly varying with index $-\alpha$ as $x \to \infty$ for some $\alpha > 1$.
% For any $\eta > 0$ and $x \in \R$, we define $\bar L^\eta_t \delequal \eta L_{t/\eta}$ as the scaled version of $L_t$, and let $Y^\eta_t(x)$ be the solution of the stochastic differential equation (SDE)
% \begin{align}
%     Y^\eta_0(x) = x;\qquad 
%     dY^\eta_t(x) = a\big(Y^\eta_{t-}(x) \big)dt + \sigma\big(Y^\eta_{t-}(x) \big) d \bar{L}^\eta_t.
%     \label{intro, def for Y eta t}
% \end{align}

At the crux of this study is a fundamental difference between light-tailed and heavy-tailed stochastic dynamical systems. 
This difference lies in the mechanism through which system-wide rare events arise.
In light-tailed systems, the system-wide rare events are characterized by the \emph{conspiracy principle}: the system deviates from its nominal behavior because the entire system behaves subtly differently from the norm, as if it has conspired.
In contrast, \emph{the catastrophe principle} governs the rare events in heavy-tailed systems: catastrophic failures (i.e., extremely large deviations from the average behavior) in a small number of components drive the system-wide rare events, 
and the behavior of the rest of the system is indistinguishable from the nominal behavior.
% The recent development in \cite{rhee2019sample} provides a rigorous characterization of the catastrophe principle in L\'evy processes and random walks with heavy-tailed increments at the sample-path level.

The classical large deviations principle (LDP) framework \cite{MR2571413, MR997938, MR2260560, MR758258} has been wildly successful in providing systematic tools for studying rare events. 
In particular, the sample-path large deviation principle rigorously characterize the conspiracy principle. 
Notable developments include the Mogulskii's theorem \cite{lynch1987large, mogulskii1993large}, the Freidline and Wentzell theory \cite{freidlin1970onsmall, freidlin1973some, freidlin1984random}, and various extensions 
for discrete-time processes \cite{
7b45313d-69b0-37a6-ac55-aaa69381f337, 
10.1214/aop/1176990641}
for finite dimensional processes under relaxed assumptions
\cite{ 
doi:10.1142/9789812770639_0007, 
donati2004large, 
donati2008large, 
BALDI20111218, 
dupuis2011weak},
and for infinite dimensional processes 
\cite{
budhiraja2000variational, 
10.1214/07-AOP362, 
dadbc9bb-cda8-3dea-89b9-08e2a516e833, 
10.1214/aop/1079021473, 
mohan2021wentzell}.
% kallianpur1995stochastic, 
% rockner2010large, 
% or for infinite dimensional processes (see, e.g., 
% \cite{
% budhiraja2000variational, 
% 10.1214/07-AOP362, 
% %kallianpur1995stochastic, 
% dadbc9bb-cda8-3dea-89b9-08e2a516e833, 
% 10.1214/aop/1079021473, 
% %rockner2010large, 
% mohan2021wentzell}), among others.

% \summ{principle of a single big jump.}%
% As a preliminary version of the catastrophe principle, the well-known 
On the other hand, due to the fundamental differences in the way rare events arise, sample-path large deviations for heavy-tailed processes has been developed much later. 
Instead, the principle of a single big jump, a special case of the catastrophe principle, has been discussed in the heavy-tail and extreme value theory literature for a long time. 
% characterizes the fact that extreme values in random walks and L\'evy processes with regularly varying increments are usually caused by a single large perturbation.
That is, in many heavy-tailed systems, the system-wide rare events arise due to exactly one catastrophe. 
This line of investigation was initiated in the classical works \cite{nagaev1969limit,nagaev1978property}, 
and \cite{hult2005functional} confirmed the principle of a single big jump systematically at the sample-path level for random walks.
The summary of the subsequent developments in the context of processes with independent increments can be found in, for example, 
\cite{borovkov_borovkov_2008,denisov2008large,embrechts2013modelling,foss2011introduction}.
% The principle of a single big jump has been rigorously confirmed for random walks in the form of heavy-tailed large deviations at the sample-path level in \cite{hult2005functional}.
More recently, \cite{rhee2019sample} established a general catastrophe principle for regularly varying L\'evy processes and random walks, which goes beyond the principle of a single big jump and characterizes the rare events driven by any number of catastrophes.
% embody the more general catastrophe principle at the sample-path level,
% addressing rare events that require multiple jumps to occur.
For example, let $\D\big([0,1],\R\big)$ be the space of real-valued càdlàg functions over $[0,1]$, 
$S_j \triangleq Z_1+\cdots+Z_j$ be a mean-zero random walk, and $\bm S^n \triangleq \{ S_{\lfloor nt\rfloor}^n/n: t \in [0,1]\}$ be the scaled sample path.
Under regularly varying $Z_i$'s,
% Suppose that $Z_i$'s have a regularly varying tail with index $\alpha$ as above. 
the sample path large deviations established in \cite{rhee2019sample}
takes the following form for ``general'' $B\in \D\big([0,1],\R\big)$,
\begin{equation}\label{intro: LD for RW}
    \begin{split}
    0 
    < 
    \mathbf{C}_k( B^\circ)
    & 
    \leq  \liminf_{n \to \infty}
        \frac{ \P({\bm{S}}^{n} \in B) }{  
    (n\P(|Z_1| > n) )^{k}
    }
   %  \\
   % & 
   \leq  
   \limsup_{n \to \infty}
   \frac{\P({\bm{S}}^{n} \in B) }{ 
   (n\P(|Z_1| > n))^{k}
   } 
    \leq 
    \mathbf{C}_{k}( B^-)
    <
    \infty,
    \end{split}
\end{equation}
where $k$ is the minimal number of jumps that a step function must possess in order to belong to $B$,
$\mathbf C_k(\cdot)$ is a measure supported on the set of step functions with $k$ jumps, 
and $B^\circ$ and $B^-$ are the interior and closure of $B$, respectively.
Here, the index $k$, as a function of $B$, plays the role of the infimum of rate function over $B$ in the classical light-tailed large deviation principle (LDP) formulation. 
See also \cite{bernhard2020heavy} for analogous results for random walks under more general scaling.

Note that in contrast to the standard log-asymptotics in the classical LDP framework, \eqref{intro: LD for RW} provides exact asymptotics. 
This formulation provides a powerful framework in heavy-tailed contexts; for instance, it has enabled the design and analysis of strongly efficient rare-event simulation algorithms for a wide variety of rare events associated with $\bm S^n$, as demonstrated in \cite{chen2019efficient}. 
Moreover, \cite[Section 4.4]{rhee2019sample} proves that it is impossible to establish the classical LDP w.r.t.\ $J_1$ topology at the sample-path level for regularly varying L\'evy processes. 
On a related note, by relaxing the upper bound of the standard LDP, an alternative formulation known as ``extended LDP'' was proposed in \cite{borovkov2010large}, and such a formulation is also feasible for heavy-tailed processes; see, for example, \cite{borovkov2011large, bazhba2020weibull, bazhba2022large}.
However, the extended LDP only provides log-asymptotics. 
For regularly varying processes, it is often desirable and possible to obtain exact asymptotics; for example, the extended LDP wouldn't suffice for analyzing the strong efficiency of the aforementioned rare-event simulation algorithm in \cite{chen2019efficient}. 
We will also see that exact asymptotics are crucial in Section~\ref{sec: first exit time simple version} and Section~\ref{sec: first exit time simple version, proof} of this paper for sharp exit time and exit location analysis.
In fact, it demands an even stronger version of \eqref{intro: LD for RW}, which we will introduce in \eqref{intro: LD of SGD} shortly.
Below, we describe the main contributions of this paper.
\\

\noindent{\bf Large Deviations for Heavy-Tailed Dynamical Systems.}
Our first contribution is to characterize the catastrophe principle for a general class of heavy-tailed stochastic dynamical systems in the form of a \emph{locally uniform} heavy-tailed large deviations at the sample-path level.
This turns out to be the right large deviations formulation for the purpose of the subsequent metastability analysis.
To be specific, let $\bm{X}^{\eta|b}_{[0,1]}(\bm x) \delequal \{ \bm X^{\eta|b}_{ \floor{t/\eta} }(\bm x):\ t\in[0,1] \}$ be the time-scaled version of the sample path of $\bm X^{\eta|b}_j(\bm x)$ defined in \eqref{def, intro, clipped SGD}, embedded in the continuous time.
Note that $\bm{X}_{[0,1]}^{\eta|b}(\bm x)$ is a random element in $\D = \D\big([0,1],\R^m\big)$.
As $\eta$ decreases,
$\bm X_{[0,1]}^{\eta|b}(\bm x)$ converges to a deterministic limit $\{\bm y_t(\bm x): t \in[0,1]\}$, where $d\bm y_t(\bm x)/dt = \bm a(\bm y_t(\bm x))$ with initial value $\bm y_0(\bm x) = \bm x$.
Let $B \subseteq \D$ be a Borel set w.r.t.\ the $J_1$ topology and $A \subset \R^m$ be a compact set.
We establish the following asymptotic bound for each $k \geq 0$:
\begin{equation}\label{intro: LD of SGD}
    \begin{split}
        \inf_{\bm x \in A}
    \mathbf{C}^{(k)|b}( B^\circ;\bm  x )
& \leq  \liminf_{\eta \downarrow 0}\frac{ \inf_{\bm x \in A}\P\big({\bm{X}}_{[0,1]}^{\eta|b}(\bm x) \in B \big) }{  
    \big(\eta^{-1}\P(\norm{\bm Z_1} > \eta^{-1}) \big)^{ k }
} 
\\
   & 
   \leq  \limsup_{\eta \downarrow 0}\frac{ \sup_{\bm x \in A}\P\big({\bm{X}}_{[0,1]}^{\eta|b}(\bm x) \in B\big) }{ 
   \big(\eta^{-1}\P(\norm{\bm Z_1} > \eta^{-1})\big)^{ k }
   } 
    \leq 
    \sup_{\bm x \in A}
    \mathbf{C}^{(k)|b}( B^-; \bm x).
    \end{split}
\end{equation}
% Let $\bm y_t(x)$ be the solution to the ordinary differential equation (ODE) ${d \bm{y}_t(x)}/{dt} = a\big(\bm{y}_t(x)\big)$
% with the initial condition $\bm y_0(x) = x$.
% Then the path of $\bm y_t(x)$ is a scaling limit of $X_j^\eta(x)$, and the rare events associated with $X_j^\eta(x)$ can be characterized in terms of the perturbations of $\bm y_\cdot(x)$.
% For a given compact set $A \subset \R$ and non-negative integer $k$,
% let $\D^{(k)}_A$
% denote the subset of $\D$ containing ODE paths $\bm y_\cdot(x)$ with exactly $k$ perturbations and initial value $x \in A$;
% for rigorous definitions, see Section \ref{subsec: LD, SGD}.
% Intuitively speaking, if $B \cap \D^{(k-1)}_A = \emptyset$ for some $B \in \D$,
% then it takes at least $k$ perturbations for any ODE path $\bm y_\cdot(x)$ with $x \in A$ to enter set $B$.
% This index $k$ plays a major role in our sample-path large deviations results.
% Indeed,
% For any Borel measurable $B \subseteq \D$ that
% % is bounded away from 
% % (i.e., has a strictly positive distance from) 
% $\D^{(k-1)}_A$
% under Skorokhod $J_1$ metric,
%
% Here, $k$ is the ``minimal number of large jumps'' required for $\bm{X}^\eta(x)$ to be in set $B$,
The precise statement and the definition of $\mathbf C^{(k)|b}$ can be found in Section~\ref{subsubsec: untrucated, LD, SGD}. Here, we note that $C^{(k)|b}$ is precisely identified, its intuitive meaning is clear, and its computation is straightforward using Monte Carlo simulation.

Additionally, we point out that 
the index $k$ leading to non-degenerate upper and lower bounds in \eqref{intro: LD of SGD} 
represents the minimal number of jumps (with sizes truncated under $b$) that must be added to the path of $\bm{y}_t(\bm x)$ for it to enter the set $B$, given $\bm x\in A$. 
Such an index $k$ dictates the precise polynomial decay rate of the rare-event probability and corresponds to the infimum of rate function of the classical large deviations framework.
Note also that as the set $A$ shrinks to a singleton, the upper and lower bounds in \eqref{intro: LD of SGD} become tighter, and hence, \eqref{intro: LD of SGD} is a \emph{locally uniform} version of the large deviations formulation in \eqref{intro: LD for RW}.

\begin{figure}[p]
\vskip 0.2in
\begin{center}
\begin{tabular}{c}
$(a)$ \textbf{Experiment Setting, and Estimations of $\P(\bm X^{\eta|b}_{[0,1]}(x_\text{init})\in B)$} 
\\
\includegraphics[width=0.7\textwidth]{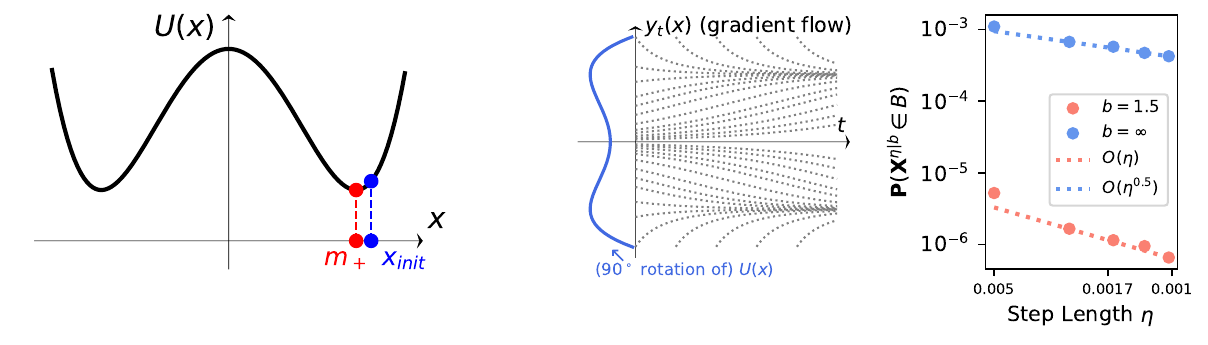} 

\\
$(b)$ \textbf{Heavy-Tailed $Z_i$, No Truncation $(b = \infty)$}
\\
\includegraphics[width=0.85\textwidth]{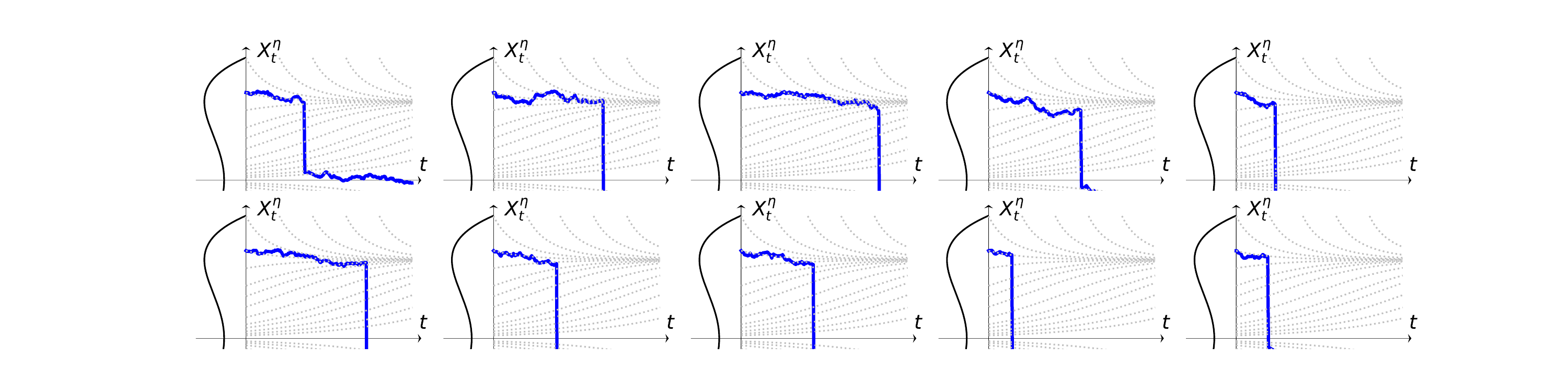} 
\\%[15pt]

 $(c)$ \textbf{Heavy-Tailed $Z_i$, with Truncation $(b = 1.5)$} 
\\
\includegraphics[width=0.85\textwidth]{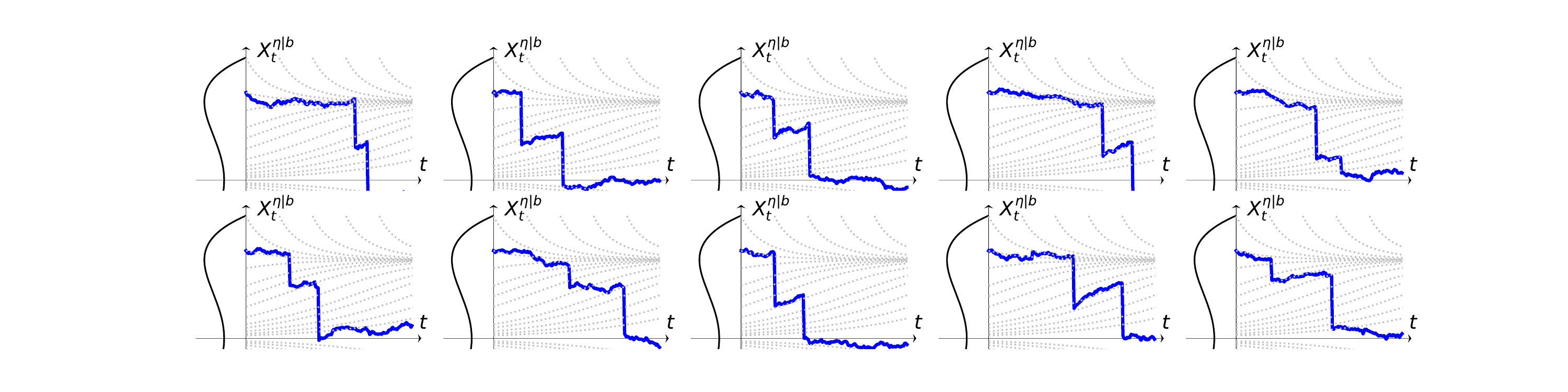} 
\\%[15pt]
$(d)$ \textbf{Light-Tailed $Z_i$, No Truncation $(b = \infty)$}
\\
\includegraphics[width=0.85\textwidth]{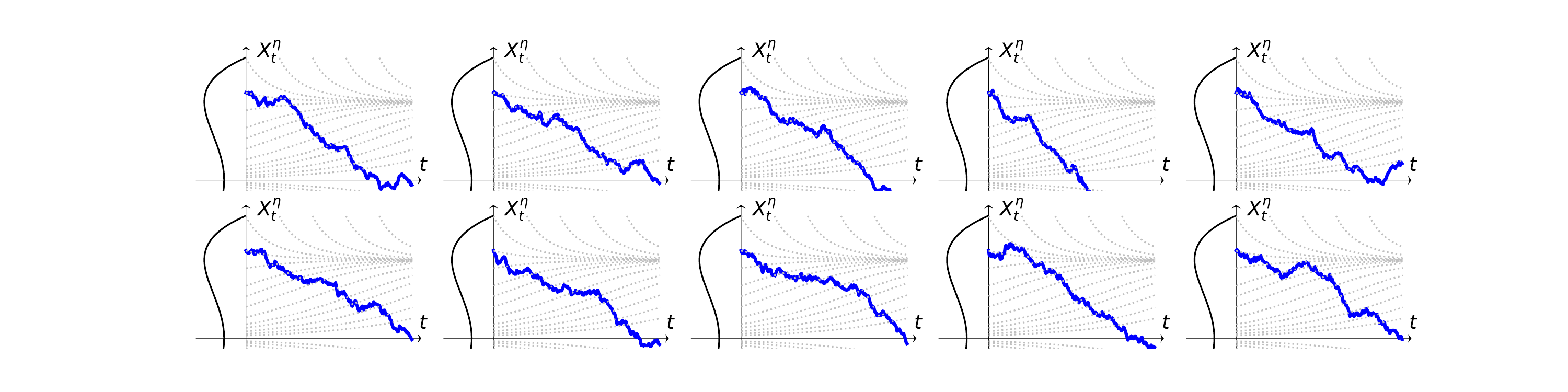} 
\\%[5pt]

$(e)$ \textbf{Light-Tailed $Z_i$, with Truncation $(b = 1.5)$}
\\
\includegraphics[width=0.85\textwidth]{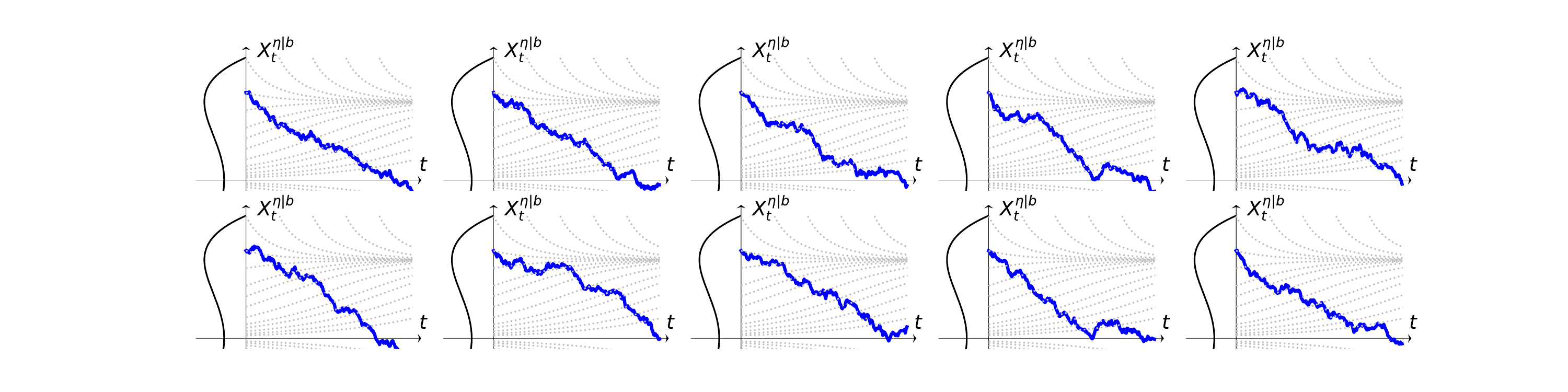}

\end{tabular}
\caption{ 
Numerical examples for large deviations and the catastrophe principle.
\textbf{(a, Left)} the potential function $U(\cdot)$ defined in \eqref{numerical example: def potential U, LD}.
\textbf{(a, Middle)} gradient flows under $-U^\prime(\cdot)$.
\textbf{(a, Right)} Estimation of 
$
\P(\bm X^{\eta|b}_{[0,1]}(\bm x_\text{init}) \in B)
$
through Monte-Carlo simulation; dashed lines are predictions according to our large deviations asymptotics.
$\textbf{(b)--(e)}$: Samples from
$
\P(\bm X^{\eta|b}_{[0,1]}(\bm x_\text{init}) \in \ \cdot\ | \bm X^{\eta|b}_{[0,1]}(\bm x_\text{init}) \in B),
$
$\eta = \frac{1}{200}$.
}
\label{fig: exit path summary}
\end{center}
\vskip -0.2in
\end{figure}

An important implication of \eqref{intro: LD of SGD} is the sharp characterization of the catastrophe principle. 
Specifically, Section~\ref{subsubsec: conditional limit theorem} proves that the conditional distribution of $\bm X^{\eta|b}_{[0,1]}(\bm x)$ given the rare event of interest converges to the distribution of a piecewise deterministic random function $\bm X^{*|b}_{|B}(\bm x)$ with precisely $k$ random jumps whose sizes are bounded from below:
\begin{align}
    \mathscr L\big(\bm X^{\eta|b}_{[0,1]}(\bm x)\big |\bm X^{\eta|b}_{[0,1]}(\bm x)\in B\big)  \to \mathscr L \big(\bm X^{*|b}_{|B}(\bm x)\big).
    \label{intro: catastrophe principle, claim}
\end{align}
We give the formal statement of the catastrophe principle in Corollary~\ref{corollary: conditional limit, SGD}.
Here, we note that the perturbation associated with $\bm Z_i$ is $\eta\bm \sigma(\bm X_{i-1}^{\eta|b}(\bm x)\bm ) \bm Z_i$. 
Hence, the jump size associated with $\bm Z_i$ being bounded from below implies that $\bm Z_i$ is of order $1/\eta$.
This means that the rare event $\{\bm X_{[0,1]}^{\eta|b}(\bm x) \in B\}$ is driven by $k$ jumps of size $O(1/\eta)$, whereas the rest of the system behaves close to the law-of-large-numbers limit of the system.
Figure~\ref{fig: exit path summary} illustrates the catastrophe principle in a univariate setting where the drift is given by the negative gradient of a potential: $\bm b(\cdot) = -U'(\cdot)$.
In (a, Left) of Figure~\ref{fig: exit path summary}, we show the potential function $U: \R \to \R$, while (a, Middle) shows its gradient flows starting from different initial points. 
By gradient flows, we refer to the solution of the ODE $d \bm y_t(\bm x)/dt = -U^\prime\big(\bm y_t(\bm x)\big)$ with initial condition $\bm y_0(\bm x) = \bm x$. 
Given an initial value $\bm x_\text{init}$, 
suppose that we are interested in the
conditional law
% law of the path $\bm X^{\eta|b}_{[0,1]}(x_\text{init}) = \{ X^{\eta|b}_{ \floor{t/\eta} }(x_\text{init}):\ t \in [0,1] \}$ from
\begin{align}
    \P\big(
        \bm X^{\eta|b}_{[0,1]}(\bm x_\text{init}) \in \ \cdot\ \big|
        \bm X^{\eta|b}_{[0,1]}(\bm x_\text{init}) \in B
    \big),
    \label{def: event B for numerical example, LD, intro}
\end{align}
where
$
    B = \big\{ \xi \in \D\big([0,1],\R\big):\ \xi(t) \leq 0\ \text{for some }t \leq 1 \big\}.
$
That is, the behavior of \eqref{def, intro, clipped SGD} when they escape from the attraction field $(0,\infty)$ associated with the local minimum $m_+=\sqrt 5$ within $\floor{1/\eta}$ steps.
As shown in (b) of 
Figure~\ref{fig: exit path summary},
when driven-by heavy-tailed $Z_i$'s, 
the untruncated dynamics
$
\bm X^\eta_j
$
closely resembles the gradient flows and stays close to $m_+$ until a single large $Z_j$ sends $\bm X^\eta_j$ outside of $(0,\infty)$ in one shot. 
In comparison, under small enough choices of the truncation threshold $b$ in \eqref{def, intro, clipped SGD}, the process $\bm X^{\eta|b}_j$ can no longer exit $(0,\infty)$ from $m_+$ in one step.
Indeed, (c) of Figure~\ref{fig: exit path summary} depicts a case where the sample paths of $\bm X^{\eta|b}_j$ resembles the gradient flow with two large perturbations truncated at $b$.
This clearly confirms the catastrophe principle \eqref{intro: catastrophe principle, claim}: 
the rare event $\{\bm X^{\eta|b}_{[0,1]}(\bm x) \in B\}$ arises almost always because of $k=2$ catastrophically large---i.e., $O(1/\eta)$---perturbations, whereas the rest of the system is indistinguishable from its nominal behavior; here,
the index $k$ is the minimum number of jumps required by the nominal path (i.e., gradient flow) to enter the set $B$.
Compare this to (b) of Figure~\ref{fig: exit path summary}, which is governed by the principle of a single big jump, i.e., the catastrophe principle with $k=1$.
Note also that both of these sharply contrast with the light-tailed cases predicted by the classical Freidlin-Wentzell theory, where the rare events arise as the SGD fights against the negative gradient in each step, climbing up the potential hill to transition into the adjacent potential well; see part (d) and (e) of Figure~\ref{fig: exit path summary}.
It is also worth noting that, unlike the light-tailed exit scenarios, which closely follow a single deterministic path defined by the solution of a variational problem associated with the rate function,
% of the Freidlin-Wentzell large deviations principle, 
heavy-tailed scenarios exhibit significant stochasticity in the location and size of the big jumps with only the number of jumps being deterministically $k$.
This reflects the fact that the distributional limit of the scaled process described in \eqref{intro: catastrophe principle, claim} is non-degenerate.
See Section~\ref{subsec: numerical examples} for more details of this numerical example.

The notion of $\M(\S \setminus \C)$-convergence, introduced in \cite{lindskog2014regularly} and further developed in \cite{rhee2019sample}, 
was a key technical tool behind \eqref{intro: LD for RW} in \cite{rhee2019sample}.
In this paper,
we introduce a uniform version of the $\M(\S \setminus \C)$-convergence and prove an associated Portmanteau theorem in Section~\ref{sec: M convergence, asymptotic equivalence}.
These developments form the backbone that supports our proofs in Section~\ref{subsec: LD, SGD} for the uniform sample-path large deviations of the form \eqref{intro: LD of SGD}.
% \summ{truncated process}%
% Furthermore, we also establish the locally uniform asymptotics for the truncated dynamics
% $\bm{X}^{\eta|b}_j(\bm x)$ in 
% Theorem \ref{corollary: LDP 2}.
% As Section~\ref{sec: first exit time simple version} elaborates, such large deviations of $\bm X^{\eta|b}_j(\bm x)$ leads to exit times and locations with structurally different asymptotic limits compared to those associated with $\bm X^{\eta}_j(\bm x)$.
\\

% \summ{exit-time analysis}%
\noindent {\bf Metastability Analysis.}
The second contribution of this paper is the first exit-time analysis for heavy-tailed systems. 
% The first exit time problem finds applications in numerous contexts, including chemical reactions \cite{kramers1940brownian}, physics \cite{chechkin2005barrier,chechkin2007barrier}, extreme climate events \cite{penland2008modelling}, mathematical finance \cite{scalas2000fractional}, and queueing systems \cite{shwartz1995large}.
As described at the beginning of this section, two modern approaches to the analysis of exit times for light-tailed stochastic dynamical systems are
% include the original Eyring-Kramers formula \cite{eyring1935chemical, kramers1940brownian, glasstone1941theory}, 
the Freidlin-Wentzell theory (or pathwise approach) detailed in the monographs \cite{freidlin1984random, Olivieri_Vares_2005} and the potential theoretic approach summarized in the monograph \cite{bovier2016metastability}.
% A classical result in this literature is the Eyring-Kramers law \cite{eyring1935chemical,kramers1940brownian} (see also \cite{freidlin1984random}), which characterizes the exit times of Brownian particles.
% See also \cite{Olivieri_Vares_2005} for the connections between large deviations, metastability, and statistical physics models.
Despite their success in the light-tailed contexts, neither the pathwise approach nor the potential theoretic approach readily extends to heavy-tailed contexts. 
% due to fundamentally different characteristics of the heavy-tailed dynamical systems. 
In particular, for truncated heavy-tailed dynamics such as $\bm X_{[0,1]}^{\eta|b}(\bm x)$, the explicit formula for the stationary distribution is rarely available, and its generator lacks the simplicity of the Brownian case, making the adaptation of potential theoretic approach to our context challenging.
Meanwhile, the pathwise approach hinges on the large deviation principles at the sample-path level.
Historically, however, the heavy-tailed large deviations at the sample-path level have been unavailable and considered to be out of reach until recently.

Successful results in heavy-tailed contexts are relatively recent. 
For one-dimensional L\'evy driven SDEs, \cite{imkeller2006first, pavlyukevich2008metastable} proved that the exit times from metastability sets scale at a polynomial rate and the prefactor of the of the scale depend on the width of the potential wells rather than the height of the potential barrier.
Similar results have been established in more general settings,
such as the multi-dimensional analog in \cite{imkeller2010first},
% These results were extended to the multi-dimensional settings in \cite{imkeller2010first},
exit times for a global attractor instead of a stable point \cite{hogele2014exit}
(see also \cite{doi:10.1142/S0219493715500197} for its application in characterizing the limiting Markov chain of hyperbolic dynamical systems driven by heavy-tailed perturbations),
exit times under multiplicative noises in $\R^d$ \cite{pavlyukevich2011first},
extensions to infinite-dimensional spaces \cite{debussche2013dynamics},
and the (discretized) stochastic difference equations driven by $\alpha$-stable noises \cite{NEURIPS2019_a97da629}, to name a few.
% \cite{imkeller2009exponential} paints interesting picture of the hierarchy in the asymptotics of the first exit times.
Such metastability analyses were applied in \cite{simsekli2019tail} to study the generaliztion performance of DNNs trained by SGDs with heavy-tailed dynamics and, more recently, in \cite{JMLR:v25:21-1343} to analyze the sample efficiency of policy gradient algorithms in reinforcement learning.
% See also \cite{imkeller2009exponential,imkeller2010hierarchy} for the summary of the hierarchy in the asymptotics of the first exit times for heavy-tailed dynamics.
It should be noted that these results focus on the events associated with the principle of a single big jump.

In contrast, this paper develops a systematic tool for analyzing the exit times and locations, even in cases where the principle of a single big jump fails to account for the exit events, and more complex patterns arise during the exit process.
The process $\bm X_j^{\eta|b}(\bm x)$ exemplifies such a scenario, as the truncation operator $\varphi_c(\cdot)$ prevents exit events driven by a single big jump. 
We reveal phase transitions in the first exit times of $\bm X^{\eta|b}_j(\bm x)$, which depend on a notion of the ``discretized widths'' of the attraction fields.
% even when they are driven by multiple big jump events as in the case of $\bm X^{\eta|b}_j(\bm x)$. 
% Indeed, we characterize the asymptotics of the joint law of the first exit time and the exit location for heavy-tailed processes.
% In essence, under truncation threshold $b > 0$, it requires a minimum of $J^*_b = \ceil{ |s_\text{left}|\vee s_\text{right}/b }$ jumps for the truncated dynamics $X^{\eta|b}_j(x)$
% to exit from $I = (s_\text{left},s_\text{right})$ when initialized at the origin.
Specifically,
we consider \eqref{intro, def for X eta j}
% and \eqref{intro, def for Y eta t} 
with drift coefficients $\bm a(\cdot) = - \nabla U(\cdot)$ 
for some potential function $U \in \mathcal{C}^1(\R^m)$.
Without loss of generality, let $I \subseteq \R^m$ be some open and bounded set containing the origin.
Suppose that the entire domain $I$ falls within the attraction field of the origin,
and the gradient field $-\nabla U(\cdot)$ is locally contractive around the origin.
% in the following sense:
% for the ODE path $d\bm y_t(\bm x)/dt = -\nabla U(\bm y_t(\bm x))$ with initial condition $\bm y_0(\bm x) =\bm  x$,
% it holds that $\lim_{t \to \infty}\bm y_t(\bm x) = \bm 0$ for all $\bm x \in I$.
In other words,
when initialized within $I$, the deterministic gradient flow
$d\bm y_t(\bm x)/dt = -\nabla U(\bm y_t(\bm x))$ (under the initial condition $\bm y_0(\bm x) =\bm  x$)
will be attracted to and remain trapped near the origin.
However, due to the presence of random perturbations, 
% although 
$\bm X^{\eta|b}_j(\bm x)$ 
% tend to be attracted to the origin, they 
will eventually escape from $I$ after a sufficiently long time.
Of particular interest are the asymptotic scale of the first exit times as $\eta \downarrow 0$.
Theorem \ref{theorem: first exit time, unclipped} proves that
the joint law of the first exit time $\tau^{\eta|b}(\bm x) = \min\{j \geq 0:\ \bm X^{\eta|b}_j(\bm x) \notin I \}$
and the exit location $\bm X^{\eta|b}_{\tau}(\bm x) \triangleq \bm X^{\eta|b}_{\tau^{\eta|b}(\bm x)}(\bm x)$
admits the limit (uniformly for all $\bm x$ bounded away from $I^\complement$):
\begin{align}
    \Big(\lambda^I_b(\eta) \cdot  \tau^{\eta|b}(\bm x),\ \bm X^{\eta|b}_{\tau}(\bm x)\Big)
    \Rightarrow 
    (E,V_b)\qquad\text{ as }\eta \downarrow 0
    \label{result, intro, first exit time}
\end{align}
with some (deterministic) time-scaling function $\lambda^I_b(\eta)$.
Here, $E$ is an exponential random variable with the rate parameter 1, 
$V_b$ is some random element independent of $E$ and supported on $I^\complement$,
and the scaling function 
$\lambda_b^I(\eta)$ is regularly varying with index $-[1 + \mathcal J^I_b(\alpha-1)]$ as $\eta \downarrow 0$,
where $\mathcal J^I_b$ is the aforementioned discretized width of domain $I$ relative to the truncation threshold $b$.
The precise definition of $\mathcal J_b^I$ is provided in \eqref{def: first exit time, J *} in Section~\ref{subsec: first exit time, results, SGD}. 
However, we note that in the special case $b= \infty$, one can immediately verify that $\mathcal J_b^I = 1$, regardless of the geometry of $U$. 
Consequently, \eqref{result, intro, first exit time} reduces to the principle of a single big jump, as expected. 
When the drift is contractive so that $\nabla U(\bm x) \cdot \bm x \geq 0$ for all $\bm x \in I$, it is also straightforward to see that $\mathcal J_b^I = \lceil r / b \rceil$ where $r$ is the distance between $\bm 0$ and $I^c$, and hence, $\mathcal J_b^I$ is indeed precisely the discretized width of the attraction field $I$ relative to $b$. 
In particular, note that the drift is contractive within any attraction field in the one-dimensional cases.  
However, in general multi-dimensional spaces, $\mathcal J_b^I$ reflects a much more intricate interplay between the geometry of the drift $\bm a(\cdot)$ (or the potential $U(\cdot)$) and the truncation threshold $b$.  
\begin{figure}[t]
\vskip 0.2in
\begin{center}
\begin{tabular}{cccc}
\footnotesize{(i)} & \footnotesize{(ii)} & \footnotesize{(iii)} & \footnotesize{(iv)}
\\
\includegraphics[width=0.27\textwidth]{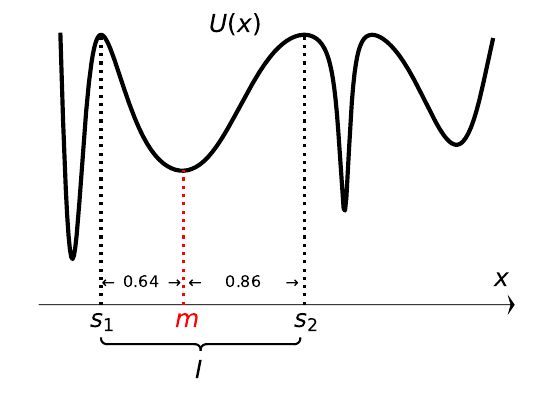}  &
\includegraphics[width=0.27\textwidth]{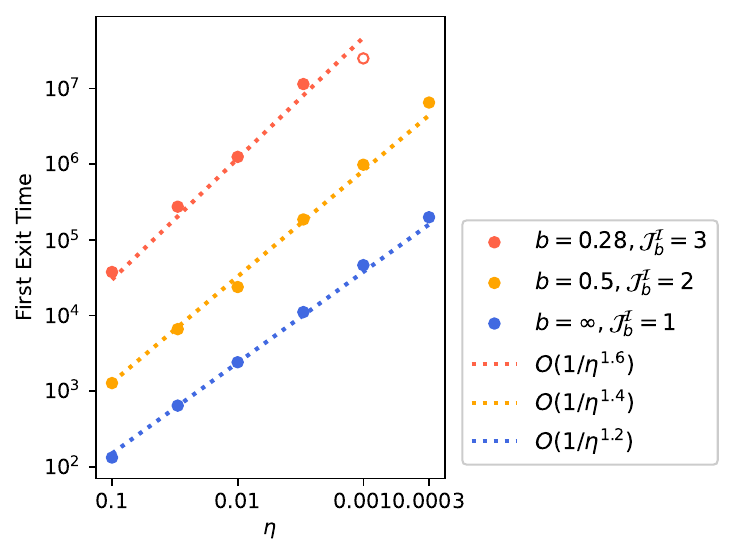}  & 
\includegraphics[width=0.18\textwidth]{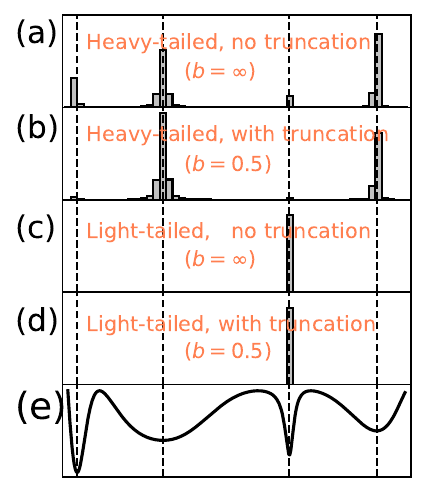} &
\includegraphics[width=0.18\textwidth]{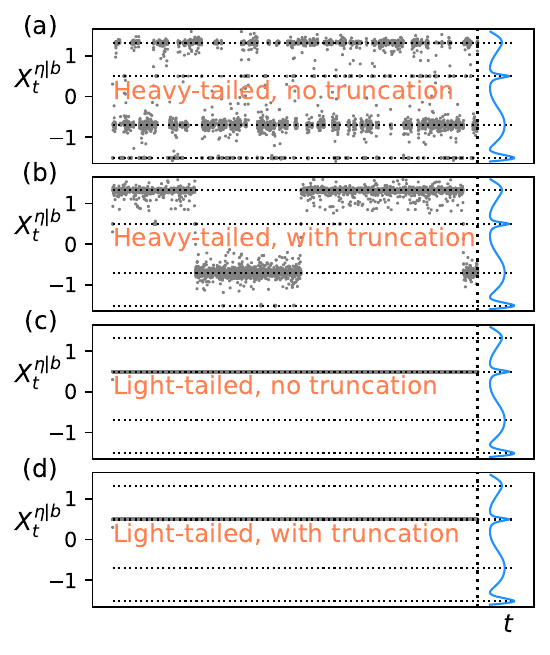}
\end{tabular}
\caption{ 
Numerical examples of the metastability analysis.
\textbf{(i)} The univariate potential $U(\cdot)$ defined in \eqref{aeq: potential U, first exit time}.
\textbf{(ii)} First exit times $\tau^{\eta|b}(m)$ from $I$ under different truncation thresholds $b$ and scale parameters $\eta$.
Dashed lines are predictions from our results in Section~\ref{sec: first exit time simple version}, whereas
the dots are the exit times estimated using 20 samples.
It can be observed that the predictions and estimates align well.
\textbf{(iii)} Histograms of locations within the potential potential $U(\cdot)$ visited by $X^{\eta|b}_t(x)$.
Note that in (b), the sharp minima are almost completely eliminated from the trajectory of the SGD.
\textbf{(iv)} Sample paths of $X^{\eta|b}_t(x)$.
Dashed lines in (iii) and (iv) are added as references for the locations of local minima.
Driven by truncated heavy-tailed noise, $X^{\eta|b}_t(x)$ almost completely avoids the sharp minima of $U(\cdot)$ in (b).
}
\label{fig: exit time}
\end{center}
\vskip -0.2in
\end{figure}
% All the associated definitions and the formal statement of \eqref{result, intro, first exit time} can be found in Section~\ref{subsec: first exit time, results, SGD} and Theorem~\ref{theorem: first exit time, unclipped}.
Figure~\ref{fig: exit time} illustrates the key role of the relative width $\mathcal J_b^\mathcal{I}$ in one dimension.
Specifically, we consider a one-dimensional case with a potential function $U:\R \to \R$ depicted in 
Figure~\ref{fig: exit time} (i), where $I = (s_1,s_2)$ is the attraction field of the local minimum $m$.
Since $m$ is closer to the left boundary $s_1$, the minimal number of steps required to exit $I$ when starting from $m$ is $\mathcal J^\mathcal{I}_b = \ceil{ |s - m_1|/b }$ where $b \in (0,\infty)$.  
In the untruncated case \eqref{intro, def for X eta j} (i.e., with $b = \infty$), 
we simply have $\mathcal J^\mathcal{I}_\infty = 1$.
Figure~\ref{fig: exit time} (ii) illustrates the discrete structure of phase transitions in \eqref{result, intro, first exit time}, 
where
the first exit time $\tau^{\eta|b}(\bm x)$ is (roughly) of order
$
1/\eta^{ 1 + \mathcal J^I_b \cdot (\alpha - 1)  }
$
for small $\eta$, with $\alpha = 1.2$ being the index of $\bm Z_i$'s regular variation.
This means that the order of the first exit time $\tau^{\eta|b}(\bm x)$ does not vary continuously with respect to the truncation threshold $b$.
Instead, it exhibits a discrete dependence on $b$ through the integer-valued quantity $\mathcal J^I_b$.
Consequently, the wider the domain $I$, the \emph{asymptotically longer} the exit time $\tau^{\eta|b}(\bm x)$ will be.
In the companion paper \cite{wangSGDpaper2}, we build on these phase transitions in exit times to reveal an intriguing global behavior of $\bm X_j^{\eta|b}$ over a multi-well potential: the distribution of $\bm X_j^{\eta|b}$'s sample path closely resembles a Markov chain that \textbf{completely avoids narrow local minima}; see Figure~\ref{fig: exit time} (iii) and (iv).
More importantly, we demonstrate in \cite{wangSGDpaper2} that
such global dynamics under truncated heavy tails are intimately related to the generalization performance of deep neural networks.
See Section~\ref{subsec: numerical examples} for more details of the numerical experiments presented in Figure~\ref{fig: exit time}.

% generated under some probability measure $C_b(\cdot)$ supported on $I^\complement$, and $E$ and $V_b$ are independent with each other.
% $C^I_b$ is a normalization constant,
% and
% the scale function $\lambda(\eta) = \eta^{-1} \cdot \P(|Z_1| > \eta^{-1})$ is roughly decaying at a polynomial rate $\eta^{\alpha - 1}$ for small $\eta$
% with $\alpha > 1$ being the heavy-tailed index for noises $Z_i$.
% See Section \ref{subsec: first exit time, results, SGD} for precise definitions.
% Notably, Theorem \ref{theorem: first exit time, unclipped} presents a stronger result where the asymptotics in \eqref{result, intro, first exit time} hold uniformly for initial values $x$ over any compact set within $I$.
% Theorem~\ref{theorem: first exit time, unclipped} also obtains the first exit time analysis for $\bm X^\eta_j(\bm x)$ by considering an arbitrarily large truncation threshold $b \approx \infty$.

Our approach to the metastability analysis hinges on the concept of asymptotic atoms,
a general machinery we develop in Section \ref{subsec: framework, first exit time analysis}.
Asymptotic atoms are nested regions of recurrence at which the process asymptotically regenerates upon each visit.
Our locally uniform sample-path large deviations then prove to be the right tool in this framework,
empowering us to characterize the behavior of the stochastic processes uniformly for all initial values over the asymptotic atoms.
% We also point out that our results make weaker assumptions than the previous results, allowing for non-constant diffusion coefficient $\bm \sigma(\cdot)$
% and eliminating the need for regularity conditions such as $U \in \mathcal C^3(\R^m)$ and non-degeneracy for the Hessian of $U(\cdot)$ at the boundary of $I$.
It should be noted that
\cite{imkeller2009exponential} also investigated the exit events driven by multiple truncated jumps. 
However, in their context, the mechanism through which multiple jumps arise is due to a different tail behavior of the increment distribution that is lighter than any polynomial rate---more precisely, a Weibull tail---and it is fundamentally different from that of the regularly varying case.
(See also \cite{imkeller2010hierarchy} for the summary of the hierarchy in the asymptotics of the first exit times for heavy-tailed dynamics.)
% case where the L\'evy measure $\nu$ decays exponentially fast with speed $\nu\big((-\infty,-u]\cup[u,\infty)\big) \approx \exp(-u^\alpha)$.
% Along with the aforementioned results \cite{imkeller2006first,pavlyukevich2008metastable,imkeller2010first} for regularly varying SDEs, \cite{imkeller2009exponential} paints interesting picture of the hierarchy in the asymptotics of the first exit times. 
% See \cite{imkeller2010hierarchy} for the summary of such hierarchy.
Our results complement the picture and provide a missing piece of the puzzle by unveiling the phase transitions in the exit times under truncated regularly varying perturbations.
% In particular, we characterize a discrete structure of phase transitions in \eqref{result, intro, first exit time}, where
% we find that the first exit time $\tau^{\eta|b}(\bm x)$ is (roughly) of order
% $
% 1/\eta^{ 1 + \mathcal J^I_b \cdot (\alpha - 1)  }
% $
% for small $\eta$.
% This means that the order of the first exit time $\tau^{\eta|b}(\bm x)$ does not vary continuously with $b$;
% rather, it exhibits a discrete dependence on $b$
% through $\mathcal J^I_b$.
%, i.e., the minimum number of jumps  required for the exit.
% This phase transition phenomenon further exemplifies the catastrophe principle under regularly varying noises, as the ``cost'' function $J^*_b$ dictates not only the most likely cause (i.e., through $J^*_b$ large noises) but also the rarity of the exit (i.e., occurring roughly once every $1/\eta^{ 1 + J^*_b \cdot (\alpha - 1)  }$ steps).
\\

Some of the the metastability analysis in Section~\ref{sec: first exit time simple version} of this paper have been presented in a preliminary form at a conference \cite{wang2022eliminating}.
The main focus of \cite{wang2022eliminating} was the connection between the metastability analysis of stochastic gradient descent (SGD) and its generalization performance in the context of training deep neural networks. 
Compared to the brute force approach in \cite{wang2022eliminating},
the current paper provides a systematic framework to characterize the global dynamics for significantly more general class of heavy-tailed dynamical systems.\\
% We also note that (i) by sending the truncation threshold $b$ to $\infty$ in \eqref{intro, result, metastability}, we recover the global dynamics of $X^\eta_j(x)$ in Theorem \ref{theorem: metastability, unclipped}; 
% (ii) metastability analysis can be conducted analogously for stochastic differential equation $Y^\eta_t(x)$ and its corresponding truncated dynamics; see Section~\ref{subsec: metastability, SDE}.

The rest of the paper is organized as follows. 
Section~\ref{sec: main results} presents the main results of this paper,
with numerical examples collected in Section~\ref{subsec: numerical examples}.
% Section \ref{subsec: implications in ML, metastability} discusses the implication of the results in machine learning.
Section~\ref{sec: LD of SGD, proof} and Section~\ref{sec: first exit time simple version, proof}
% , and Section~\ref{sec: metastability, proof} 
provide the proofs of 
Sections~\ref{sec: M convergence, asymptotic equivalence}, \ref{subsec: LD, SGD}, and \ref{sec: first exit time simple version}.
Results for stochastic difference equations under more general scaling regimes are presented in Appendix~\ref{sec: appendix, SGD, general scaling, results}.
Results for SDEs driven by L\'evy processes with regularly varying increments are collected in Appendix~\ref{sec: appendix, SDE reults}.

\section{Main Results}
\label{sec: main results}
This section presents the main results of this paper and discusses their implications. 
Section \ref{sec: M convergence, asymptotic equivalence} introduces the uniform version of $\M(\S\setminus \C)$-convergence and presents an associated portmanteau theorem.
Section~\ref{subsec: LD, SGD} develops the sample-path large deviations,
and
Section~\ref{sec: first exit time simple version} carries out the metastability analysis.
Section~\ref{subsec: numerical examples} presents numerical examples of our theoretical results.
All the proofs are deferred to the later sections.

Before presenting the main results, we set frequently used notations.
% and reviewing the concept of $\mathbb{M}$-convergence introduced in \cite{lindskog2014regularly}.
% % and asymptotic equivalence introduced in \cite{rhee2019sample}.
% % which will prove useful in the analysis of sample path large deviations of SREs and SDEs.
Let $\notationdef{set-for-integers-below-n}{[n]} \delequal \{1,2,\cdots,n\}$ for any positive integer $n$.
Let $\notationdef{notation-non-negative-numbers}{\mathbb{N}} = \{0,1,2,\cdots\}$ be the set of non-negative integers.
% and 
% $\notationdef{notation-non-negative-numbers-and-zero}{\mathbb{Z}_+} = \{0,1,2,\cdots\}$
% be the set of non-negative integers.
Let $(\mathbb{S},\bm{d})$ be a metric space with 
$\notationdef{notation-borel-sigma-algebra}{\mathscr{S}_\mathbb{S}}$
being the corresponding Borel $\sigma$-algebra.
For any $E\subseteq \mathbb{S}$,
let
$\notationdef{notation-interior-of-set-E}{E^\circ}$ and $\notationdef{notation-closure-of-set-E}{E^-}$ be the interior and closure of $E$, respectively.
For any $r > 0$, 
let
$\notationdef{notation-epsilon-enlargement-of-set-E}{E^r} \delequal 
\{ y \in \mathbb{S}:\ \bm{d}(E,y)\leq r\}$ be the $r$-enlargement of a set $E$.
Here for any set $A \subseteq \mathbb{S}$ and any $x \in \mathbb{S}$,
we define $\bm{d}(A,x) \delequal \inf\{\bm{d}(y,x):\ y \in A\}$.
Also, let
$
\notationdef{notation-epsilon-shrinkage-of-set-E}{E_{r}} \delequal
% \{x \in E: \bm{d}(x,y) < r \Longrightarrow y \in E\}
((E^c)^r)^\complement
$
be the $r$-shrinkage
of $E$.
Note that for any $E$, the enlargement $E^r$ of $E$ is closed, and the shrinkage $E_r$ of $E$ is open.
We say that set $A \subseteq \mathbb{S}$ is bounded away from another set $B \subseteq \mathbb{S}$
if $\inf_{ x\in A,y\in B }\bm{d}(x,y) > 0$.
For any Borel measure $\mu$ on $(\mathbb{S},\mathscr{S}_{\mathbb{S}})$,
let the support of $\mu$
(denoted as $\notationdef{notation-support-of-mu}{\text{supp}(\mu)}$)
% $\delequal \bigcap_{E:\ E\text{ closed},\ \mu(E) = 1}E $ 
be the smallest closed set $C$ such that $\mu(\S \setminus C)= 0$.
% be the smallest closed set $E$ such that $\mu(E) = 1$.
For any function $g: \mathbb{S} \to \R$, let
$
\notationdef{notation-support-of-function-g}{\text{supp}(g)} \delequal \big(\{ x \in \mathbb{S}:\ g(x) \neq 0 \}\big)^-.
$
Given two sequences of positive real numbers $(x_n)_{n \geq 1}$ and $(y_n)_{n \geq 1}$, 
we say that $x_n = \bm{O}(y_n)$ (as $n \to \infty$) if there exists some $C \in [0,\infty)$ such that $x_n \leq C y_n\ \forall n\geq 1$.
Besides, we say that $x_n = \bm{o}(y_n)$ if $\lim_{n \rightarrow \infty} x_n/y_n = 0$.

\subsection{Uniform $\mathbb M(\mathbb S\setminus \mathbb C)$-Convergence}
\label{sec: M convergence, asymptotic equivalence}
This section extends the notion of $\mathbb M(\mathbb S \setminus \mathbb C)$-convergence \cite{lindskog2014regularly, rhee2019sample} to a uniform version and prove an associated portmanteau theorem. 
Such developments pave the way to the locally uniform heavy-tailed sample-path large deviations.

Specifically, in this section we consider some metric space $(\mathbb{S},\bm{d})$ that is complete and separable.
Given any Borel measurable subset $\mathbb{C} \subseteq \mathbb{S}$,
let $\mathbb{S}\setminus \mathbb{C}$
be the metric subspace of $\mathbb{S}$ in the relative topology with
$\sigma$-algebra
$
\mathscr{S}_{ \mathbb{S}\setminus \mathbb{C}}
\delequal 
\{ A \in \mathscr{S}_{\mathbb{S}}:\ A \subseteq \mathbb{S}\setminus \mathbb{C}\}.
$
Let
\begin{align*}
    \notationdef{notation-M-S-exclude-C}{\mathbb{M}(\S\setminus \mathbb{C})}
    \delequal 
    \{
    \nu(\cdot)\text{ is a Borel measure on }\mathbb{S}\setminus \mathbb{C} :\ \nu(\mathbb{S}\setminus \mathbb{C}^r) < \infty\ \forall r > 0
    \}.
\end{align*}
$\mathbb{M}(\mathbb{S}\setminus \mathbb{C})$\linkdest{notation-M-convergence}
can be topologized by the sub-basis constructed using sets of form
$
\{
\nu \in \mathbb{M}(\mathbb{S}\setminus \mathbb{C}):\ \nu(f) \in G
\},
$
where $G \subseteq [0,\infty)$ is open, $f \in \mathcal{C}({ \mathbb{S}\setminus \mathbb{C} })$,
and
$
\notationdef{notation-mathcal-C-S-exclude-C}{\mathcal{C}({ \mathbb{S}\setminus \mathbb{C} })}
$
is the set of all real-valued, non-negative, bounded and continuous functions with support bounded away from $\mathbb{C}$ (i.e., $f(x) = 0\ \forall x \in \mathbb{C}^r$ for some $r > 0$).
Given a sequence $\mu_n \in \M(\S\setminus\C)$ and some $\mu \in \M(\S\setminus\C)$,
we say that $\mu_n$ converges to $\mu$ in $\M(\S\setminus\C)$ as $n \to \infty$
if
$
\lim_{n \to \infty}|\mu_n(f) - \mu(f)| = 0
$
for all $f \in \mathcal C(\S\setminus\C)$.
See \cite{lindskog2014regularly} for equivalent definitions in the form of a Portmanteau Theorem.
When the choice of $\mathbb S$ and $\mathbb C$ is clear from the context, we simply refer to it as $\mathbb M$-convergence.
As demonstrated in \cite{rhee2019sample}, the sample path large deviations for heavy-tailed stochastic processes can be formulated in terms of $\mathbb M$-convergence of the scaled process in the Skorokhod space. 
In this paper, we introduce a stronger version of $\mathbb M$-convergence, which facilitates the metastability analysis in the later sections. 
\begin{definition}
[Uniform $\mathbb{M}(\mathbb S \setminus \mathbb C)$-convergence]
\label{def: uniform M convergence}
%\linkdest{notation-uniform-M-convergence}Let $(\mathbb{S},\bm{d})$ be a complete separable metric space and let $\mathbb{C}\subseteq \mathbb{S}$ be closed.
Let $\Theta$ be a set of indices.
% Let
% $$
% \mathcal{U}^\eta(\Theta) \delequal \{ \mu^\eta_\theta:\ \theta \in \Theta \}\ \ \forall \eta \in (0,1);
% \ \ \ \
% \mathcal{U}(\Theta) \delequal \{ \mu_\theta:\ \theta \in \Theta \}
% $$
Let $\mu^\eta_\theta,\ \mu_\theta \in \mathbb{M}(\mathbb{S}\setminus \mathbb{C})$ for each $\eta>0$ and $\theta\in \Theta$.
We say that $\mu_\theta^\eta$ converges to $\mu_\theta$ in $\mathbb M(\mathbb S\setminus\mathbb C)$ uniformly in $\theta$ on $\Theta$ as $\eta\downarrow 0$ if
% $\mathcal{U}^\eta(\Theta)$ converges to $\mathcal{U}(\Theta)$
% (denoted by $\mathcal{U}^\eta(\Theta) \rightarrow \mathcal{U}(\Theta)$) 
% in $\mathbb{M}(\mathbb{S}\setminus \mathbb{C})$ uniformly over $\Theta$
% if
\begin{align*}
    \lim_{\eta \downarrow 0}\sup_{\theta \in \Theta}| \mu^\eta_\theta(f) - \mu_\theta(f) | = 0
    \qquad \forall f \in \mathcal{C}({ \mathbb{S}\setminus \mathbb{C} }).
\end{align*}
\end{definition}
If $\{\mu_\theta:\ \theta \in \Theta\}$ is sequentially compact, a Portmanteau-type theorem holds.
The proof is provided in Section~\ref{subsec: proof of portmanteau theorem for uniform M convergence}.

\begin{theorem}[Portmanteau theorem for uniform $\M(\S \setminus \C)$-convergence]
\label{theorem: portmanteau, uniform M convergence}
\linksinthm{theorem: portmanteau, uniform M convergence}
Let $\Theta$ be a set of indices.
Let $\mu^\eta_\theta,\ \mu_\theta \in \mathbb{M}(\mathbb{S}\setminus \mathbb{C})$ for each $\eta>0$ and $\theta\in \Theta$.
Suppose that for any sequence of measures $(\mu_{\theta_n})_{n \geq 1}$,
there exist a sub-sequence $(\mu_{\theta_{n_k}})_{k \geq 1}$
and some $\theta^* \in \Theta$ such that
\begin{align}
    \lim_{k \to \infty}\mu_{\theta_{n_k}}(f) = \mu_{\theta^*}(f)
    \qquad 
    \forall f \in \mathcal C(\S \setminus \C).
    \label{assumption in portmanteau, uniform M convergence}
\end{align}
Then the next three statements are equivalent:
\begin{enumerate}[(i)]
    \item  
        $\mu_\theta^\eta$ converges to $\mu_\theta$ in $\mathbb M(\mathbb S\setminus\mathbb C)$ uniformly in $\theta$ on $\Theta$ as $\eta \downarrow 0$;
    \item 
        $\lim_{\eta \downarrow 0}\sup_{\theta \in \Theta}| \mu^\eta_\theta(f) - \mu_\theta(f) | = 0$ for each $f \in \mathcal{C}(\mathbb S\setminus \mathbb C)$
        that is also uniformly continuous on $\mathbb S$;
         
    \item 
        $\limsup_{\eta \downarrow 0}\sup_{\theta \in \Theta}\mu^\eta_\theta(F) - \mu_\theta(F^\epsilon) \leq 0$
        and
        $\liminf_{\eta \downarrow 0}\inf_{\theta \in \Theta}\mu^\eta_\theta(G) - \mu_\theta(G_\epsilon) \geq 0$
        for all $\epsilon > 0$, all closed $F \subseteq \mathbb{S}$ that is bounded away from $\mathbb{C}$,
        and all open
        $G \subseteq \mathbb{S}$ that is bounded away from $\mathbb{C}$.

\end{enumerate}
Furthermore, any of the claims $(i)$--$(iii)$ implies the following.
\begin{enumerate}[(i)]\addtocounter{enumi}{3}
    \item  
         $\limsup_{\eta \downarrow 0}\sup_{\theta \in \Theta}\mu^\eta_\theta(F) \leq \sup_{\theta \in \Theta} \mu_\theta(F)$
         and
         $\liminf_{\eta \downarrow 0}\inf_{\theta \in \Theta}\mu^\eta_\theta(G) \geq \inf_{\theta \in \Theta} \mu_\theta(G)$
         for all closed $F \subseteq \mathbb{S}$ that is bounded away from $\mathbb{C}$
        and all open
        $G \subseteq \mathbb{S}$ that is bounded away from $\mathbb{C}$.
\end{enumerate}
\end{theorem}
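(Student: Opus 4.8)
The plan is to prove the chain of implications (i) $\Rightarrow$ (ii) $\Rightarrow$ (iii), together with (ii) $\Rightarrow$ (i), by adapting the classical Portmanteau arguments for weak convergence to the $\M(\S\setminus\C)$ setting, where the role of bounded continuous functions is played by $\mathcal C(\S\setminus\C)$ and all sets are required to be bounded away from $\C$. The sequential compactness hypothesis \eqref{assumption in portmanteau, uniform M convergence} will enter crucially whenever I need to pass from a statement that holds for each fixed $\theta$ to one that holds uniformly over $\Theta$: it lets me replace a ``bad'' sequence $\theta_n$ achieving a near-supremum by a convergent subsequence whose limit $\theta^*$ lies in $\Theta$.

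First I would prove (i) $\Rightarrow$ (ii). Given a closed $F$ bounded away from $\C$ and $\epsilon>0$, choose $r>0$ with $F \subseteq \S\setminus\C^r$, and construct a function $f\in\mathcal C(\S\setminus\C)$ with $\I_F \leq f \leq \I_{F^{\epsilon}}$ and $\mathrm{supp}(f)$ bounded away from $\C$ — e.g.\ $f(x) = \big(1 - \bm d(x,F)/\epsilon\big)^+ \wedge \I_{\{\bm d(x,\C)>r/2\}}$, suitably smoothed; one must check $f$ is genuinely in $\mathcal C(\S\setminus\C)$, i.e.\ bounded, continuous, nonnegative, with support bounded away from $\C$. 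Then for every $\eta$ and $\theta$, $\mu^\eta_\theta(F) \leq \mu^\eta_\theta(f)$ and $\mu_\theta(F^\epsilon) \geq \mu_\theta(f)$, so $\mu^\eta_\theta(F) - \mu_\theta(F^\epsilon) \leq \mu^\eta_\theta(f) - \mu_\theta(f) \leq \sup_{\theta'}|\mu^\eta_{\theta'}(f) - \mu_{\theta'}(f)|$. Taking $\sup_\theta$ and then $\limsup_{\eta\downarrow0}$ and invoking (i) gives the first inequality of (ii); the open-set inequality follows symmetrically using $f$ with $\I_{G_\epsilon}\leq f\leq \I_G$. No compactness is needed here.

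Next, (ii) $\Rightarrow$ (i). Fix $f\in\mathcal C(\S\setminus\C)$ with $0\leq f\leq M$ and support bounded away from $\C$. Using the layer-cake representation $\mu(f) = \int_0^M \mu(\{f>t\})\,dt$ and a standard partition $0=t_0<t_1<\cdots<t_m=M$, the usual weak-convergence estimate bounds $\mu^\eta_\theta(f) - \mu_\theta(f)$ above by a Riemann-sum expression in the closed super-level sets $\{f\geq t_i\}$ (which are bounded away from $\C$ because $\mathrm{supp}(f)$ is) plus a mesh term, and below by the analogous expression in the open sets $\{f>t_i\}$. Applying the two inequalities in (ii) term by term — and here I would send $\epsilon\downarrow0$ using continuity of $t\mapsto \mu_\theta(\{f\geq t\}^\epsilon)$ from the right and a dominated-convergence argument (all quantities are bounded by $\mu_\theta(\S\setminus\C^r)<\infty$ for the relevant $r$, and this bound is uniform in $\theta$ once we note \eqref{assumption in portmanteau, uniform M convergence} forces $\sup_\theta \mu_\theta(\S\setminus\C^r)<\infty$) — shows $\limsup_{\eta\downarrow0}\sup_\theta(\mu^\eta_\theta(f)-\mu_\theta(f))\leq 0$ and, symmetrically, $\liminf_{\eta\downarrow0}\inf_\theta(\mu^\eta_\theta(f)-\mu_\theta(f))\geq 0$, which together give (i). Finally (ii) $\Rightarrow$ (iii) is the easy direction: for closed $F$ bounded away from $\C$, $\mu^\eta_\theta(F)\leq \mu_\theta(F^\epsilon) + o(1)$ uniformly, and letting $\epsilon\downarrow 0$ with $F^\epsilon\downarrow F^- = F$ plus the finiteness bound gives $\limsup_\eta\sup_\theta\mu^\eta_\theta(F)\leq \sup_\theta\mu_\theta(F)$; the open case is symmetric with $G_\epsilon\uparrow G$.

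I expect the main obstacle to be the uniform-in-$\theta$ bookkeeping, specifically two points: (a) making the finiteness bounds genuinely uniform in $\theta$ — this is exactly where hypothesis \eqref{assumption in portmanteau, uniform M convergence} is indispensable, since without a uniform bound on $\mu_\theta(\S\setminus\C^r)$ the interchange of $\sup_\theta$ with limits in $\epsilon$ (and the control of the mesh term in the layer-cake argument) could fail; I would establish $\sup_{\theta\in\Theta}\mu_\theta(\S\setminus\C^r)<\infty$ for each $r>0$ first, by a subsequence/compactness contradiction argument using a fixed $f\in\mathcal C(\S\setminus\C)$ with $\I_{\S\setminus\C^r}\leq f$ near the relevant region. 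And (b) the right-continuity in $\epsilon$ of $\epsilon\mapsto\sup_\theta\mu_\theta(F^\epsilon)$: since a supremum of right-continuous functions need not be right-continuous, I would instead avoid claiming this and argue directly — fix $\epsilon$, take $\limsup_\eta\sup_\theta$, then separately take $\epsilon\downarrow0$ on the already-$\eta$-free quantity $\sup_\theta\mu_\theta(F^\epsilon)$ using $\bigcap_{\epsilon>0}F^\epsilon = F$ and the uniform finiteness bound to justify $\lim_{\epsilon\downarrow0}\sup_\theta\mu_\theta(F^\epsilon)\leq \sup_\theta\mu_\theta(F)$ via an exchange-of-sup-and-inf estimate. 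Everything else is a routine transcription of the classical proof.
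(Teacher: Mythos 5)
Your argument for (i) $\Rightarrow$ (ii) is correct and is essentially the paper's (the Urysohn sandwich $\I_F \le f \le \I_{F^\epsilon}$ applied uniformly), and the (iii) claim can be completed with the subsequence argument you gesture at. The genuine gap is in (ii) $\Rightarrow$ (i). After applying (ii) to the finitely many closed level sets $F_i=\{f\ge t_i\}$, your layer-cake argument reduces to showing that $\sup_{\theta}\bigl[\sum_i (t_{i+1}-t_i)\mu_\theta(F_i^\epsilon)-\mu_\theta(f)\bigr]$ becomes small as $\epsilon\downarrow 0$ and the mesh shrinks. In the classical single-measure proof this is handled either by choosing the partition to avoid the atoms of $f_*\mu$ or by $\mu(F^\epsilon)\downarrow\mu(F)$; neither device survives the supremum over an uncountable $\Theta$. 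Concretely, take $\S=\R$, $\C=\{0\}$, $\Theta=[1,2]$, $\mu_\theta=\mu^\eta_\theta=\delta_\theta$, and $f$ continuous with $f(x)=x$ on $[1,2]$ and support bounded away from $0$: this family satisfies hypothesis \eqref{assumption in portmanteau, uniform M convergence} and (i), (ii) hold trivially, yet for $F=[3/2,2]$ one has $\sup_\theta[\mu_\theta(F^\epsilon)-\mu_\theta(F)]=1$ for every $\epsilon>0$, and every $t\in[1,2]$ is an atom of $f_*\mu_\theta$ for some $\theta$, so no partition avoids all atoms. Your proposed remedies do not meet this: dominated convergence in $t$ is a per-$\theta$ statement and yields a $\theta$-dependent rate, and the inequality in your point (b), $\lim_{\epsilon\downarrow0}\sup_\theta\mu_\theta(F^\epsilon)\le\sup_\theta\mu_\theta(F)$, is the wrong object --- the layer-cake bound requires comparing $\mu_\theta(F_i^\epsilon)$ with $\mu_\theta(f)$ for the \emph{same} $\theta$, whereas the two suprema may be attained at different indices.

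The estimate you actually need is true, but only via a subsequence-extraction contradiction using \eqref{assumption in portmanteau, uniform M convergence} together with the standard Portmanteau theorem applied to the subsequential limit $\mu_{\theta^*}$ --- which is precisely the engine the paper deploys, and more economically. The paper's proof of (ii) $\Rightarrow$ (i) argues by contradiction at the level of a single test function $g$: if uniformity fails along $(\eta_n,\theta_n)$, extract $\mu_{\theta_n}\to\mu_{\theta^*}$, combine (ii) with the standard Portmanteau theorem for the sequence $\mu_{\theta_n}$ to verify the closed/open-set conditions for the single sequence $\mu^{\eta_n}_{\theta_n}$ with limit candidate $\mu_{\theta^*}$ (here $\epsilon\downarrow0$ is taken for the one fixed measure $\mu_{\theta^*}$, where ordinary continuity of measure suffices), conclude $\mu^{\eta_n}_{\theta_n}(g)\to\mu_{\theta^*}(g)$ while also $\mu_{\theta_n}(g)\to\mu_{\theta^*}(g)$, and contradict. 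I recommend abandoning the layer-cake route in favor of this argument; as written, the step ``apply the two inequalities in (ii) term by term and send $\epsilon\downarrow0$ by dominated convergence'' does not produce a bound that is uniform in $\theta$.
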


\begin{remark}
   We provide two additional remarks regarding Theorem~\ref{theorem: portmanteau, uniform M convergence}.
First, it is generally not possible to strengthen statement $(iii)$ and assert that
\begin{align}
    \limsup_{\eta \downarrow 0}\sup_{\theta \in \Theta}\mu^\eta_\theta(F) - \mu_\theta(F) \leq 0,
    \qquad 
    \liminf_{\eta \downarrow 0}\inf_{\theta \in \Theta}\mu^\eta_\theta(G) - \mu_\theta(G) \geq 0
    \label{false claim, portmanteau theorem for uniform M convergence}
\end{align}
        for all closed $F \subseteq \mathbb{S}$ bounded away from $\mathbb{C}$
        and all open
        $G \subseteq \mathbb{S}$ bounded away from $\mathbb{C}$.
In other words, in statement $(iii)$ the $\epsilon$-fattening in $F^\epsilon$ and $\epsilon$-shrinking in $G_\epsilon$ are indispensable.
Indeed, we demonstrate through a counterexample that, due to the infinite cardinality of the collections of measures $\{\mu^\eta_\theta: \theta \in \Theta\}$ and $\{\mu_\theta:\ \theta \in \Theta\}$, the claims in \eqref{false claim, portmanteau theorem for uniform M convergence} fall apart while statements $(i)$--$(iii)$ hold true.
Specifically, by setting $\mathbb C = \emptyset$ and $\mathbb S = \R$, the $\mathbb M(\mathbb S\setminus\mathbb C)$-convergence degenerates to the weak convergence of
Borel measures on $\R$.
Set $\Theta = [-1,1]$ and
\begin{align*}
    \mu^\eta_\theta \delequal \bm \delta_{\theta - \eta},
    \qquad 
    \mu_\theta\delequal \bm \delta_{\theta},
\end{align*}
where $\bm \delta_x$ is the Dirac measure at $x$. For closed set $F = [-1,0]$ and any $\eta \in (0,2)$,
\begin{align*}
    \sup_{\theta \in \Theta}\mu^\eta_\theta(F) - \mu_\theta(F)
    & \geq 
   \bm \delta_{ -\eta/2  }\big([-1,0]\big)
    -
    \bm{\delta}_{\eta/2}\big([-1,0]\big)
    \qquad\qquad 
    \text{by picking } \theta = \eta/2
    \\ 
    % & = \bm \delta_{ -\eta/2  }\big([-1,0]\big)
    % -
    % \bm{\delta}_{\eta/2}\big([-1,0]\big)
    % \\ 
    & = \mathbbm{I}\bigg\{ \frac{-\eta}{2} \in [-1,0] \bigg\}
    -
    \mathbbm{I}\bigg\{ \frac{\eta}{2} \in [-1,0] \bigg\}
    =1,
\end{align*}
thus implying 
$\limsup_{\eta \downarrow 0}\sup_{\theta \in \Theta}\mu^\eta_\theta(F) - \mu_\theta(F) \geq 1$.

Secondly, while statement $(iv)$ holds as the key component when establishing the sample-path large deviation results,
it is indeed strictly weaker than the other claims for one obvious reason:
unlike statements $(i)$--$(iii)$, the content of statement $(iv)$ does not require $\mu^\eta_\theta$ to converge to $\mu_\theta$ for any given $\theta \in \Theta$.
To illustrate that $(iv)$ does not imply $(i)$--$(iii)$, it suffices to examine the case where $\mathbb C = \emptyset$, $\mathbb S = \R$, $\Theta = [-1,1]$, $\mu^\eta_\theta = \bm{\delta}_{-\theta}$, and $\mu_\theta = \bm{\delta}_\theta$.   
\end{remark}

% To conclude, we recall the notion of vague convergence, which lays the foundation for multivariate regular variation, and hence the heavy-tailed assumptions presented in this paper.
% Given a complete separable metric space $\mathbb S$ and a subset $\mathbb C$ closed in $\mathbb S$,
% let $\notationdef{notation-compacta-of-S-minus-C}{\mathcal K(\mathbb S\setminus \mathbb C)}$ be the collection of all compact sets of $\mathbb S\setminus \mathbb C$, and 
% \begin{align*}
%     \notationdef{notation-M-+-S-minus-C}{\mathbb M_+(\mathbb S \setminus \mathbb C)}
%     \delequal 
%     \{
%          \nu(\cdot)\text{ is a Borel measure on }\mathbb{S}\setminus \mathbb{C}:\ 
%          \nu(K) < \infty\ \forall K \in \mathcal K(\mathbb S\setminus \mathbb C)
%     \}
% \end{align*}
% be the collection of measures finite on $\mathcal K(\mathbb S\setminus \mathbb C)$.
% Vague convergence from $\nu_n \in \mathbb M_+(\mathbb S\setminus \mathbb C)$ to $\nu \in \mathbb M_+(\mathbb S\setminus \mathbb C)$,
% denoted as $\nu_n \xrightarrow{v} \nu$,
% means that 
% $\lim_{n \to \infty} \nu_n(f) = \nu(f)$
% for any continuous $f$ whose support belongs to $\mathcal K(\mathbb S\setminus \mathbb C)$.
% In general, $\mathbb M$-convergence implies vague convergence, but not vice versa; see Lemma~2.1 of \cite{lindskog2014regularly}.

To conclude this section, we note that 
the proof of $\mathbb M$-convergence (and hence the application of Theorem~\ref{theorem: portmanteau, uniform M convergence})
is often facilitated by
the notion of asymptotic equivalence between two families of random objects.
Specifically,
we consider the following version of asymptotic equivalence that generalizes Definition 2.9 in \cite{10.1214/24-EJP1115},
which is particularly useful in the context of Lemma~\ref{lemma: asymptotic equivalence when bounded away, equivalence of M convergence}.
The proof of  Lemma~\ref{lemma: asymptotic equivalence when bounded away, equivalence of M convergence}
will be provided in Section~\ref{sec: M convergence, asymptotic equivalence}.

\begin{definition}[Asymptotic Equivalence]
\label{def: uniform asymptotic equivalence}
Let $X_n$ and $Y^\delta_n$ be random elements taking values in a complete separable metric space $(\mathbb{S},\bm{d})$ and supported on the same probability space.
Let $\epsilon_n$ be a sequence of positive real numbers.
Let $\mathbb{C} \subseteq \mathbb{S}$ be Borel measurable.
$X_n$ is said to be \textbf{\notationdef{asymptotic-equivalence}{asymptotically equivalent} to $Y^\delta_n$ in $\mathbb M(\mathbb S \setminus \mathbb{C})$ with respect to $\epsilon_n$ as $\delta \downarrow 0$} 
if the following holds:
given $\Delta > 0$ and $B \in \mathscr{S}_\mathbb{S}$ that is bounded away from $\mathbb{C}$,
\begin{align*}
    \lim_{\delta \downarrow 0}\lim_{ n \rightarrow \infty } \epsilon^{-1}_n {\P\Big( \bm{d}\big(X_n, Y^\delta_n\big)\mathbbm{I}\big( X_n \in B\text{ or }Y^\delta_n \in B \big) > \Delta \Big)}= 0.
\end{align*}
\end{definition}

% Definition~\ref{def: uniform asymptotic equivalence} becomes particularly useful in the context of the next lemma.

\begin{lemma}
\label{lemma: asymptotic equivalence when bounded away, equivalence of M convergence}
\linksinthm{lemma: asymptotic equivalence when bounded away, equivalence of M convergence}
Let $X_n$ and $Y^\delta_n$ be random elements taking values in a complete separable metric space $(\mathbb{S},\bm{d})$ and supported on the same probability space.
Let $\mu \in \mathbb M(\mathbb S \setminus \mathbb C)$.
Suppose that 
\begin{enumerate}[(i)]
    \item 
        $X_n$ is asymptotically equivalent to $Y^\delta_n$ in $\mathbb M(\mathbb S\setminus\mathbb{C})$ with respect to $\epsilon_n$ as $\delta \downarrow 0$,

    \item 
        Given $B \in \mathscr{S}_\mathbb{S}$ that is bounded away from $\mathbb{C}$,
        it holds for all $\delta > 0$ small enough that
    \begin{align*}
        \limsup_{\delta \downarrow 0}\limsup_{n \to \infty}\epsilon^{-1}_n\P(Y^\delta_n \in B) \leq \mu(B^{-}),
        \qquad
        \liminf_{\delta \downarrow 0}
        \liminf_{n \to \infty}\epsilon^{-1}_n\P(Y^\delta_n \in B) \geq \mu(B^\circ).
    \end{align*}
\end{enumerate}
Then
$\epsilon^{-1}_n \P(X_n \in\cdot) \rightarrow \mu(\cdot)$ in $\mathbb{M}(\mathbb{S}\setminus \mathbb{C})$.
\end{lemma}

\subsection{Heavy-Tailed Large Deviations}
\label{subsec: LD, SGD}
In 
% this section, we develop the sample-path large deviations for stochastic difference equations with heavy-tailed increments.
% Specifically, 
Section~\ref{subsubsec: untrucated, LD, SGD}, we study the sample-path large deviations for stochastic difference equations driven by heavy-tailed dynamics.
% studies the dynamics in the form of \eqref{intro, def for X eta j},
% while Section~\ref{subsubsec: truncated, LD, SGD} focuses on its truncated counterpart.
Section~\ref{subsubsec: conditional limit theorem} then characterizes the catastrophe principle of heavy-tailed systems by presenting
the conditional limit theorems.
The results reveal a discrete hierarchy of the most likely scenarios and probabilities of rare events in heavy-tailed stochastic difference equations.
We note that analogous results under more general scaling regimes and for stochastic differential equations are collected in Sections~\ref{sec: appendix, SGD, general scaling, results} and \ref{sec: appendix, SDE reults} of the Appendix.

\subsubsection{Sample-Path Large Deviations}
\label{subsubsec: untrucated, LD, SGD}

Let \notationdef{notation-Z-iid-noise-LDP}{$\bm Z_1,\bm Z_2,\ldots$} be iid copies of some random vector $\bm Z$ taking values in $\R^d$, and let
$\notationdef{notation-sigma-algebra-F}{\mathcal{F}}$ be the $\sigma$-algebra generated by $(\bm Z_j)_{j \geq 1}$. 
Henceforth in this paper, all vectors in Euclidean spaces are understood as column vectors unless stated otherwise.
Let $\mathcal{F}_j$ be the $\sigma$-algebra generated by $\bm Z_1,\bm Z_2,\cdots,\bm Z_j$ and $\mathcal{F}_0 \delequal \{\emptyset,\Omega\}$.
Let $(\Omega,\mathcal{F},\mathbb{F},\P)$ be {a} filtered probability space
with filtration $\notationdef{notation-F}{\mathbb{F}} = (\mathcal{F}_j)_{j \geq 0}$.
Given $b \in (0,\infty)$,
the drift coefficient $\notationdef{a}{\bm a}: \mathbb{R}^m \to \mathbb{R}^m$,
and the diffusion coefficient $\notationdef{sigma}{\bm \sigma}:\mathbb{R}^m\to \mathbb{R}^{m\times d}$,
our goal is to study the sample-path large deviations for the discrete-time process $\big\{ \notationdef{notation-X-eta-j-truncation-b-LDP}{\bm X^{\eta|b}_t(\bm x)}: t \in \mathbb N\big\}$ in $\R^m$ driven by the recursion
\begin{align}
    \bm X^{\eta|b}_0(\bm x) = \bm x,\qquad {\bm X^{\eta|b}_t(\bm x)} = \bm X^{\eta|b}_{t - 1}(\bm x) +  \varphi_b\Big(\eta \bm a\big(\bm X^{\eta|b}_{t - 1}(\bm x)\big) + \eta \bm \sigma\big(\bm X^{\eta|b}_{t - 1}(\bm x)\big)\bm Z_t\Big)\ \ \forall t \geq 1,
    \label{def: X eta b j x, clipped SGD}
\end{align}
where
% where $X_0 \sim \mu$ is independent from $(Z_j)_{j \geq 1}$, an iid sequence with law satisfying Assumption \ref{assumption gradient noise heavy-tailed}, and
the truncation operator $\varphi_\cdot(\cdot)$ is defined by
\begin{align}
    \notationdef{notation-truncation-operator-level-b}{\varphi_b}(\bm w) 
    \delequal{} 
    \Big(\frac{b}{\norm{\bm w}} \wedge 1\Big)\cdot \bm w
    \ \ \ \forall \bm w \neq \bm 0,
    \qquad
    {\varphi_b}(\bm 0) \delequal \bm 0. \label{defTruncationClippingOperator}
\end{align}
Here, $u\wedge v = \min\{u,v\}$ and $u \vee v = \max\{u,v\}$.
For any $\bm w \neq \bm 0$,
we have 
$
\varphi_b(\bm w) = (b \wedge \norm{\bm w}) \cdot \frac{\bm w}{\norm{\bm w}}.
$
In other words, the truncation operator $\varphi_b(\bm w)$ in \eqref{def: X eta b j x, clipped SGD} maintains the direction of the vector $\bm w$ but rescales it to ensure that the norm would not exceed the threshold value $b$.
In particular, we are interested in the case where $\bm Z_i$'s are heavy-tailed.
In this paper, we capture the heavy-tailed phenomena
with the notion of regular variation.
For any measurable function $\phi:(0,\infty) \to (0,\infty)$, we say that $\phi$ is regularly varying as $x \rightarrow\infty$ with index $\beta$ (denoted as $\phi(x) \in \RV_\beta(x)$ as $x \to \infty$) if $\lim_{x \rightarrow \infty}\phi(tx)/\phi(x) = t^\beta$ for all $t>0$. 
For details of the definition and properties of regularly varying functions, see, for example, 
\cite{bingham1989regular, resnick2007heavy, foss2011introduction, buraczewski2016stochastic}. 
Throughout this paper, we say that a measurable function $\phi(\eta)$
is regularly varying as $\eta \downarrow 0$ with index $\beta$ 
if $\lim_{\eta \downarrow 0} \phi(t\eta)/\phi(\eta) = t^\beta$ for any $t > 0$.
We denote this as $\phi(\eta) \in \notationdef{notation-RV-LDP}{\RV_{\beta}}(\eta)$ as $\eta \downarrow 0$.
Besides, we adopt the $L_2$ norm 
$
\norm{(x_1,\cdots,x_k)} = \sqrt{\sum_{j = 1}^k x^2_k}
$
on Euclidean spaces.
Let
\begin{align}
    \notationdef{notation-H}{H(x)} \delequal \P(\norm{\bm Z} > x). \label{def: H, law of Z_j}
\end{align}
For any $\alpha > 0$, let $\notationdef{notation-measure-nu-alpha}{\nu_\alpha}$ be the (Borel) measure on $(0,\infty)$ with
\begin{align}
     \nu_\alpha[x,\infty) = x^{-\alpha}. \label{def: measure nu alpha}
\end{align}
Let $\notationdef{notation-R-d-unit-sphere}{\mathfrak N_d} \delequal \{\bm x \in \R^d:\ \norm{\bm x} = 1\}$ be the unit sphere of $\R^d$.
Let $\Phi: \R^d \to [0,\infty) \times \mathfrak N_d$ be
\begin{align}
    \notationdef{notation-Phi-polar-transform}{\Phi(\bm x)} \delequal 
    \begin{cases}
         \Big(\norm{\bm x},\frac{\bm x}{\norm{\bm x}}\Big) &\text{ if }\bm x \neq 0,
         \\
         \big( 0, (1,0,0,\cdots,0)\big) & \text{ otherwise.}
    \end{cases}
    \label{def: Phi, polar transform in Rm}
\end{align}
Note that the origin is included in the domain of $\Phi$ simply to lighten the notations in the proofs. 
However, $\Phi(\bm x)$ will not be applied at $\bm x = \bm 0$ in our proofs. 
Thus, $\Phi$ can be interpreted as the polar transform with domain extended to $\bm 0$.
We impose the following multivariate regular variation assumption regarding the law of $\bm Z$.
\begin{assumption}[Regularly Varying Noises]\label{assumption gradient noise heavy-tailed}
$\E \bm Z = \bm 0$. 
Besides, there exist some $\notationdef{alpha-noise-tail-index-LDP}{\alpha} > 1$ and 
a probability measure $\mathbf S(\cdot)$ on the unit sphere $\mathfrak N_d$ such that
\begin{itemize}
    \item 
        $H(x) \in \RV_{-\alpha}(x)$ as $x \to \infty$,
    \item 
        for the polar coordinates $(R,\bm \Theta) \delequal \Phi(\bm Z)$, we have (as $x \to \infty$)
        \begin{align}
            \frac{
                \P\Big( (x^{-1}R, \bm \Theta) \in\ \cdot\ \Big)
            }{
                H(x)
            }
            \rightarrow
            \nu_\alpha \times \mathbf S
            \qquad 
            \text{in $\mathbb M\Big( 
            \big([0,\infty) \times \mathfrak N_d \big)
            \setminus
            \big( \{0\} \times \mathfrak N_d \big)
            \Big)$}.
            \label{claim, Rd heavy tailed assumption}
        \end{align}
        % where
        % $\xrightarrow{v}$ denotes vague convergence.
        % \textcolor{red}{state vague convergence of $M$-convergence?}
\end{itemize}
\end{assumption}
\elaborate{
This is the $R^1$ version.
\begin{assumption}[Regularly Varying Noises]\label{assumption gradient noise heavy-tailed}
$\E Z = 0$. Besides, there exist $\notationdef{alpha-noise-tail-index-LDP}{\alpha} > 1$ and $\notationdef{notation-p-plus-and-minus}{p^{(+)},p^{(-)}} \in (0,1)$
with $p^{(+)} + p^{(-)} = 1$ such that
\begin{align*}
    H(x) \in \RV_{-\alpha}(x)\ \text{ as }x \rightarrow \infty;\ \ \ 
    \lim_{x \rightarrow \infty}\frac{H^{(+)}(x)}{H(x)} = p^{(+)};\ \ \ 
    \lim_{x \rightarrow \infty}\frac{H^{(-)}(x)}{H(x)} = p^{(-)} = 1 - p^{(+)}.
\end{align*}
\end{assumption}
}
\begin{remark}
% In classical works on multivariate regular variation such as , 
The multivariate regular variation condition \eqref{claim, Rd heavy tailed assumption} is typically stated in terms of vague convergence; see, e.g., \cite{Resnick_2004, hult2006regular}.
    While vague convergence is generally weaker than $\mathbb M$-convergence (see Lemma~2.1 of \cite{lindskog2014regularly}),
    due to $\alpha > 1$ we have 
    $
    (\nu_\alpha \times \mathbf S)(A) < \infty
    $
    for any Borel set $A \subseteq (0,\infty) \times \mathfrak N_d$ that is bounded away from $\{0\} \times \mathfrak N_d$.
    Therefore, it is easy to verify that the $\mathbb M$-convergence stated in \eqref{claim, Rd heavy tailed assumption} is equivalent to vague convergence.
    Furthermore, by the alternative definitions for multivariate regular variation (see \cite{Resnick_2004, hult2006regular}),
    Assumption~\ref{assumption gradient noise heavy-tailed} is equivalent to the vague convergence of
    $
    H^{-1}(x)\P(x^{-1}\bm Z\in \ \cdot\ ) 
    $
    to some Borel measure $\mu(\cdot)$ in $\mathbb M(\R^d \setminus \{\bm 0\})$,
    where $\mu(\cdot)$ exhibits self-similarity in terms of $\mu(\lambda A) = \lambda^{-\alpha}\mu(A)$ for any Borel set $A \subseteq \R^d$ that is bounded away from the origin.
\end{remark}

Next,
we introduce the assumptions on
the drift coefficient $
\bm a(\cdot) = \big(a_1(\cdot),\cdots,a_m(\cdot)\big)^T
$ and
the
diffusion coefficient $
\bm\sigma(\cdot) = \big(\sigma_{i,j}(\cdot)\big)_{i \in [m], j \in [d]}.
$
Henceforth, we adopt the $L_2$ vector norm induced matrix norm
$
\norm{\textbf A} = \sup_{ \bm x \in \R^q:\ \norm{\bm x} = 1 }\norm{\textbf A\bm x}
$
for any $\textbf A \in \R^{p \times q}$.
{
Obviously, the lower bound for $D$ in Assumption \ref{assumption: lipschitz continuity of drift and diffusion coefficients} is not necessary,
and it is imposed w.l.o.g.\ for the notational simplicity in the proof.
}
\begin{assumption}[Lipschitz Continuity]
\label{assumption: lipschitz continuity of drift and diffusion coefficients}
There exists some $\notationdef{notation-Lipschitz-constant-L-LDP}{D} \in [1,\infty)$ such that
$$\norm{\bm \sigma(\bm x) - \bm \sigma(\bm y)} \vee \norm{\bm a(\bm x)-\bm a(\bm y)} \leq D\norm{\bm x - \bm y}\ \ \ \forall \bm x,\ \bm y \in \mathbb{R}^m.$$
\end{assumption}

To present the main results, we set a few notations.
Let $(\notationdef{notation-D-0T-cadlag-space}{\mathbb{D}{[0,T]}},\notationdef{notation-D-J1}{\dj{[0,T]}})$
be the metric space where $\mathbb{D}[0,{T}] = \D\big([0,T],\R^m\big)$ is the space of all càdlàg functions with domain $[0,{T}]$ and codomain $\R^m$,
and $\dj{[0,T]}$ is the Skorodkhod $J_1$ metric
\begin{align}
    \dj{[0,T]}(x,y) \delequal 
\inf_{\lambda \in \Lambda_T} \sup_{t \in [0,T]}|\lambda(t) - t|
\vee \norm{ x(\lambda(t)) - y(t) }.
\label{def: J 1 metric on D 0 T}
\end{align}
Here,
$
\Lambda_T
$
is the set of all homeomorphism on $[0,T]$.
Throughout this paper, we fix some $m$ and $d$ and consider $\bm X^{\eta|b}_t(\bm x)$ taking values in $\R^m$ driven by $\bm Z_t$'s in $\R^d$.
% so we omit the subscript $m$ in $\D[0,T]$ and $\dj{[0,T]}$.
Given $A \subseteq \R$, 
let
$
\notationdef{order-k-time-on-[0,t]}{A^{k \uparrow}} \delequal 
\{
(t_1,\cdots,t_k) \in A^k:\ t_1 < t_2 < \cdots < t_k
\}
$
be the set of sequences of increasing real numbers on $A$ with length $k$.
For any $b$, $T \in (0,\infty)$ and $k \in \mathbb{N}$,
% any $\R^{m \times p}$-matrix-valued mapping $\textbf{V}(\cdot) = \big(V_{i,j}(\cdot)\big)_{i \in [m], j \in [p]}: \R^m \to \R^{m \times p}$ for some $p \geq 1$,
define the mapping 
$\notationdef{notation-h-k-t-bar-mapping-truncation-level-b-LDP}{\bar h^{(k)|b}_{[0,T]}}: \mathbb{R}^m\times \mathbb{R}^{d \times k} \times \R^{m \times k} \times (0,{T}]^{k\uparrow} \to \mathbb{D}{[0,T]}$ as follows.
Given
$\bm x \in \mathbb{R}^m$,
$\textbf{W} = (\bm w_1,\cdots,\bm w_k) \in \mathbb{R}^{d \times k}$, 
$\textbf V = (\bm v_1,\cdots,\bm v_k) \in \R^{m \times k}$,
and $\bm{t} = (t_1,\cdots,t_k)\in (0,T]^{k\uparrow}$, let $\xi = \bar h^{(k)|b}_{[0,T]}(\bm x,\textbf{W},\textbf V,\bm{t})$ be the solution to
\begin{align}
    \xi_0 & = \bm x; \label{def: perturb ode mapping h k b, 1}
    \\
    \frac{d\xi_s}{d s} & = \bm a(\xi_s)\ \ \ \forall s \in [0,{T}],\ s \neq t_1,t_2,\cdots,t_k; \label{def: perturb ode mapping h k b, 2}
    \\
    \xi_s & = \xi_{s-} + \bm v_j + \varphi_b\big( \bm \sigma(\xi_{s-} + \bm v_j)\bm w_j\big)\ \ \ \text{ if }s = t_j\text{ for some }j\in[k] \label{def: perturb ode mapping h k b, 3}
\end{align}
% That is, $\bar h^{(k)|b}_{[0,T]}(\bm x, \textbf W,\textbf V,\bm{t})$ returns a perturbed ODE path where the impact of the jumps $\bm w_j$ are modulated by $\bm \sigma(\cdot)$ and then truncated under $b$.
Similarly, define the mapping
$
\notationdef{notation-h-k-b-t-mapping-LDP}{h^{(k)|b}_{[0,T]}}:\R^m \times \R^{d \times k} \times (0,T]^{k \uparrow} \to \D[0,T]
$
by
\begin{align}
    h^{(k)|b}_{[0,T]}(\bm x, \bm W, \bm t)
    \delequal 
    \bar h^{(k)|b}_{[0,T]}\big(\bm x,\bm W, (\bm 0,\cdots,\bm 0), \bm t\big).
    \label{def: perturb ode mapping h k b, 4}
\end{align}
In essence,  $h^{(k)|b}_{[0,T]}(\bm x,\textbf{W},\bm{t})$ produces an ODE path perturbed by jumps $\bm w_1,\cdots,\bm w_k$ (with sizes modulated by $\bm \sigma(\cdot)$ and then truncated under threshold $b$) at times $t_1,\cdots,t_k$,
and the mapping $\bar h^{(k)|b}_{[0,T]}$ further includes perturbations $\bm v_j$'s right before each jump.
For $k = 0$,
we adopt the convention that $\xi = \bar h^{(0)|b}_{[0,T]}(\bm x)$ is the solution to the ODE
${d\xi_s}/{d s} = \bm a( \xi_s)\ \forall s \in [0,T]$
with the initial condition $\xi_0 = \bm x$.
For each $r > 0$ and $\bm x \in \R^m$, let $\notationdef{notation-ball-r-x}{\bar B_r(\bm x)} \delequal \{ \bm{y}\in\R^m:\ \norm{\bm y - \bm x} \leq r \}$
be the closed ball with radius $r$ centered at $\bm x$.
Given $b,T \in (0,\infty)$, $\epsilon \geq 0$, $A \subseteq \R^m$ and $k \in \mathbb N$,
let
\begin{align}
    \notationdef{notation-D-A-k-t-truncation-b-LDP}{\mathbb{D}_{A}^{(k)|b}{[0,T]}(\epsilon) } \delequal \bar h^{(k)|b}_{[0,T]}\Big( A \times \mathbb{R}^{m \times k} \times \big(\bar B_\epsilon(\bm 0)\big)^k \times (0,T]^{k\uparrow} \Big)
    \label{def: l * tilde jump number for function g, clipped SGD}
\end{align}
be the set that contains all the ODE path with $k$ jumps by time $T$, 
i.e., the image of the mapping $\bar h^{(k)|b}_{[0,T]}$ defined in \eqref{def: perturb ode mapping h k b, 1}--\eqref{def: perturb ode mapping h k b, 3}, 
under small perturbations $\norm{\bm v_j} \leq \epsilon$ for all $j \in [k]$.
By our definition of $\bar h^{(0)|b}_{[0,T]}$ above, $\D^{(0)|b}_A[0,T](\epsilon)$ simply contains all ODE paths under vector field $\bm a(\cdot)$ with initial values over $A$.
For $k = -1$, we adopt the convention that
$\mathbb{D}_{A}^{(-1)|b}[0,T](\epsilon) \delequal \emptyset$.
Also, note that
$
\mathbb{D}^{(k)|b}_A{[0,T]}(\epsilon) \subseteq \mathbb{D}^{(k)|b}_A{[0,T]}(\epsilon^\prime)
$
for any $0 \leq \epsilon < \epsilon^\prime$
and $k \geq -1$.
We state useful properties of $h^{(k)|b}_{[0,T]}$ and $\D^{(k)|b}_A[0,T](\epsilon)$ in Section~\ref{sec: appendix, mapping h} of the appendix.

For any $t > 0$, let 
$\notationdef{notation-lebesgue-measure-restricted}{\mathcal{L}_t}$ be the Lebesgue measure restricted on $(0,t)$ and $\notationdef{notation-lebesgue-measure-on-ordered-[0,t]}{\mathcal{L}^{k\uparrow}_t}$ be the Lebesgue measure restricted on $(0,t)^{k \uparrow}$.
Given $\bm x \in \mathbb{R}^m$, $k \in \mathbb N$ and $b,T \in (0,\infty)$,
define the Borel measure
\begin{align}
 \notationdef{notation-measure-C-k-t-truncation-b-LDP}{{\mathbf{C}}^{(k)|b}_{{[0,T]}}(\ \cdot \ ;\bm x)} \delequal &
   \int \mathbbm{I}\Big\{ h^{(k)|b}_{{[0,T]}}\big( \bm x,\textbf W,\bm t  \big) \in\ \cdot\  \Big\} 
   \big((\nu_\alpha \times \mathbf S)\circ \Phi\big)^k(d \textbf W) \times\mathcal{L}^{k\uparrow}_{ {T} }(d\bm t),
   \label{def: measure mu k b t}
\end{align}
where
$\mathbf S$ is the probability measure on the unit sphere $\mathfrak N_d$ characterized in Assumption~\ref{assumption gradient noise heavy-tailed},
$\nu_\alpha$ is specified in \eqref{def: measure nu alpha},
$(\nu_\alpha \times \mathbf S) \circ \Phi$
is the composition of the product measure $\nu_\alpha \times \mathbf S$ with the polar transform $\Phi$, i.e.,
\begin{align}
    \big((\nu_\alpha \times \mathbf S)\circ \Phi\big)(B) \delequal 
    (\nu_\alpha \times \mathbf S)\big( \Phi(B) \big)\qquad \forall \text{Borel set }B \subseteq \R^d\setminus\{\bm 0\},
    \label{def, nu alpha times S composition polar transform}
\end{align}
and $\big((\nu_\alpha \times \mathbf S)\circ \Phi\big)^k$ is the $k$-fold of $(\nu_\alpha \times \mathbf S)\circ \Phi$.
% In other words, 
% $(\nu_\alpha \times \mathbf S)^k$ is the $k$-fold measure of $\nu_\alpha \times \mathbf S$.
In other words, for $\textbf W = (\bm w_1,\cdots,\bm w_k) \in \R^{d \times k}$ with $\bm w_j \neq 0\ \forall j \in [k]$, we have
$
\big((\nu_\alpha \times \mathbf S)\circ \Phi\big)^k(d\textbf W) 
    = 
\bigtimes_{j \in [k]}\big((\nu_\alpha \times \mathbf S)\circ \Phi\big)(d \bm w_j).
$
Note that for any $\bm x \in A$, the measure ${{\mathbf{C}}^{(k)|b}_{{[0,T]}}(\ \cdot \ ;\bm x)}$ is supported on $\D^{(k)|b}_A[0,T](0)$.
Next, define the rate functionc
\begin{align*}
    \notationdef{notation-lambda-scale-function}{\lambda(\eta)} \delequal \eta^{-1}H(\eta^{-1})
\end{align*}
with $H(x) = \P(\norm{\bm Z} > x)$ defined in \eqref{def: H, law of Z_j}.
By Assumption~\ref{assumption gradient noise heavy-tailed}, $\lambda(\eta) \in \RV_{\alpha - 1}(\eta)$ as $\eta \downarrow 0$.
We write $\lambda^k(\eta) = \big(\lambda(\eta)\big)^k$.
For any $T,\ \eta,\ b \in (0,\infty)$, and $\bm x \in \R^m$,
let
$$
 \notationdef{notation-scaled-X-eta-mu-truncation-b-LDP}{{\bm{X}}^{\eta|b}_{[0,T]}(\bm x)} \delequal \{ \bm X^{\eta|b}_{ \floor{ t/\eta } }(\bm x):\ t \in [0,T] \}
$$
be the time-scaled version of $\bm X_j^{\eta|b}(\bm x)$ embedded in $\D[0,T]$,
with $\notationdef{floor-operator}{\floor{t}}\delequal \max\{n \in \mathbb{Z}:\ n \leq t\}$
and
$\notationdef{ceil-operator}{\ceil{t}} \delequal \min\{n \in \Z:\ n \geq t\}$.
In case that $T = 1$, we suppress the time horizon $[0,1]$ and write 
$
\notationdef{notation-D-0T-cadlag-space-T=1}{\mathbb{D}} \delequal \mathbb D[0,1],
$
$
\notationdef{notation-D-J1-T=1}{
\bm{d}_{J_1}
} \delequal \dj{[0,1]},
$
$\notationdef{notation-scaled-X-eta-mu-truncation-b-LDP-T=1}{{\bm{X}}^{\eta|b}(\bm x)} \delequal {\bm{X}}^{\eta|b}_{[0,1]}(\bm x)$,
% $\notationdef{notation-h-k-t-mapping-truncation-level-b-LDP-T=1}{h^{(k)|b}}  \delequal h^{(k)|b}_{[0,1]}$,
$\notationdef{notation-h-k-t-sigma-mapping-truncation-level-b-LDP-T=1}{h^{(k)|b}}  \delequal h^{(k)|b}_{[0,1]}$,
$\notationdef{notation-D-A-k-t-truncation-b-LDP-T=1}{\mathbb{D}_{A}^{(k)|b}(\epsilon)}  \delequal \mathbb{D}_{A}^{(k)|b} [0,1](\epsilon)$,
and
$\notationdef{notation-measure-C-k-t-truncation-b-LDP-T=1}{\mathbf{C}^{(k)|b}}    \delequal \mathbf{C}^{(k)|b}_{[0,1]}$.
Now, we are ready to state Theorem~\ref{corollary: LDP 2},
which establishes the uniform $\mathbb M$-convergence for the law of $\bm X^{\eta|b}_{[0,T]}(\bm x)$ to $\mathbf C^{(k)|b}_{[0,T]}(\ \cdot\ ;\bm x)$ and a uniform version of the sample path large deviations for $\bm X^{\eta|b}_{[0,T]}(\bm x)$.

\begin{theorem} 
\label{corollary: LDP 2}
\linksinthm{corollary: LDP 2}
Under Assumptions \ref{assumption gradient noise heavy-tailed} and \ref{assumption: lipschitz continuity of drift and diffusion coefficients},
it holds for any $k \in \mathbb N$, any $b,T,\epsilon \in (0,\infty)$, and any compact $A\subset \R^m$ that
$$
 \lambda^{-k}(\eta) \P\big( \bm{X}^{\eta|b}_{[0,T]}(\bm x) \in\ \cdot\ \big) 
    \rightarrow 
    \mathbf{C}^{(k)|b}_{[0,T]}   (\ \cdot\ ; \bm x)
\quad
\text{in
$
\mathbb{M}\Big( \mathbb{D}[0,T]\setminus \mathbb{D}^{(k-1)|b}_{A}[0,T](\epsilon) \Big)
$
uniformly in {$\bm x$ on $A$}
}
$$
% \begin{align*}
%       \lambda^{-k}(\eta) \P\big( \bm{X}^{\eta|b}_{[0,T]}(x) \in\ \cdot\ \big) 
%     \rightarrow 
%     \mathbf{C}^{(k)|b}_{[0,T]}   (\ \cdot\ ; x)
%     \text{ in }\mathbb{M}\big( \mathbb{D}[0,T]\setminus \mathbb{D}^{(k-1)|b}_{A}[0,T] \big)
% \end{align*}
as $\eta \downarrow 0$.
Furthermore,
for any
$B \in \mathscr{S}_{\mathbb{D}[0,T]}$
that is 
bounded away from ${ \mathbb{D}}_{A}^{(k - 1)|b}[0,T](\epsilon)$ for some (and hence all) $\epsilon > 0$ small enough,
% Let Assumptions \ref{assumption gradient noise heavy-tailed}, \ref{assumption: lipschitz continuity of drift and diffusion coefficients}, and \ref{assumption: nondegeneracy of diffusion coefficients} hold.
% For any $b,T > 0$,
% any compact $A\subseteq \mathbb{R}$,
% any
% $B \in \mathscr{S}_{\mathbb{D}[0,T]}$ and $k \geq 1$,
% if
% $B$ is bounded away from ${ \mathbb{D}}_{A}^{(k - 1)|b}[0,T]$, then
\begin{equation}\label{claim, uniform sample path LD, corollary: LDP 2}
    \begin{split}
        \inf_{\bm x \in A}
    \mathbf{C}^{(k)|b}_{[0,T]}\big( B^\circ; \bm x \big)
& \leq  \liminf_{\eta \downarrow 0}\frac{ \inf_{\bm x \in A}\P\big({\bm{X}}^{\eta|b}_{[0,T]}(\bm x) \in B \big) }{  \lambda^k(\eta)  } 
\\
   & \leq  \limsup_{\eta \downarrow 0}\frac{ \sup_{\bm x \in A}\P\big({\bm{X}}^{\eta|b}_{[0,T]}(\bm x) \in B \big) }{  \lambda^k(\eta)  } 
    \leq 
    \sup_{\bm x \in A}
    \mathbf{C}^{(k)|b}_{[0,T]}\big( B^-; \bm x \big)
    <
    \infty.
    \end{split}
\end{equation}
\end{theorem}

We provide the proof of Theorem~\ref{corollary: LDP 2} in Section~\ref{subsec: LDP clipped, proof of main results}.
Furthermore, by sending $b \to \infty$ in Theorem~\ref{corollary: LDP 2}, we are able to establish uniform sample path large deviations for the process
$\big\{ \notationdef{notation-X-j-eta-x}{\bm X^{\eta}_t(\bm x)}: t \in \mathbb N\big\}$ driven by the recursion
\begin{align}
    \bm X^\eta_0(\bm x) = \bm x;\qquad
    \bm X^\eta_t(\bm x) = \bm X^\eta_{t - 1}(\bm x) +  \notationdef{notation-eta}{\eta} \bm a\big(\bm X^\eta_{t - 1}(\bm x)\big) + \eta\bm \sigma\big(\bm X^\eta_{t - 1}(\bm x)\big)\bm Z_t\ \ \forall t \geq 1.
     \label{def: X eta b j x, unclipped SGD}
\end{align}
Note that the scalar version of the stochastic difference equation in \eqref{def: X eta b j x, unclipped SGD} is
% Recall that we interpret all vectors as column vectors.
\begin{align}
    X^\eta_{t,i}(x)
    =
    X^\eta_{t - 1,i}(x)
    +
    \eta a_i\big(\bm X^\eta_{t - 1}(x)\big)
    +
    \eta \sum_{j \in [d]} \sigma_{i,j}\big(\bm X^\eta_{t - 1}(x)\big)Z_{t,j}
    \qquad 
    \forall t \geq 1,\ i \in [m],    
    \nonumber
\end{align}
where
$
\bm a(\cdot) = \big(a_1(\cdot),\cdots,a_m(\cdot)\big)^T,
$
$
\bm\sigma(\cdot) = \big(\sigma_{i,j}(\cdot)\big)_{i \in [m], j \in [d]},
$
$
\bm X^\eta_t(x) = \big( X^\eta_{t,1}(x),\cdots,X^\eta_{t,m}(x) \big)^T,
$
and
$
\bm Z_t = (Z_{t,1},\cdots,Z_{t,d})^T.
$
By interpreting $\varphi_{\infty}(\bm w) = \bm w$ as the identity mapping in \eqref{defTruncationClippingOperator},
the definition of $\bm X^\eta_t(\bm x)$ in \eqref{def: X eta b j x, unclipped SGD} coincides with that of
$\bm X^{\eta|\infty}_t(\bm x)$
in \eqref{def: X eta b j x, clipped SGD} under the choice of $b = \infty$.
Analogously, we adopt the notations
$\notationdef{notation-h-k-t-bar-mapping-LDP}{\bar h^{(k)}_{[0,T]}} \delequal {\bar h^{(k)|\infty}_{[0,T]}}$,
$\notationdef{notation-h-k-t-mapping-LDP}{h^{(k)}_{[0,T]}}\delequal {h^{(k)|\infty}_{[0,T]}}$,
$
\notationdef{notation-measure-C-k-t-mu-LDP}{\mathbf{C}^{(k)}_{ {[0,T]} }(\ \cdot\ ;\bm x)} \delequal 
{\mathbf{C}^{(k)|\infty}_{ {[0,T]} }(\ \cdot\ ;\bm x)},
$
and
$
\notationdef{notation-D-A-k-t-LDP}{\mathbb{D}^{(k)}_A{[0,T]}(\epsilon) } \delequal {\mathbb{D}^{(k)|\infty}_A{[0,T]}(\epsilon) }.
$
For $k = -1$,
we again adopt the convention that $\mathbb{D}^{(-1)}_{A}[0,T](\epsilon) \delequal \emptyset$.
Define the time-scaled version of the sample path as
\begin{align}
    \notationdef{notation-scaled-X-0T-eta-LDP}{\bm{X}^\eta_{{[0,T]}}(\bm x)} \delequal \big\{ \bm X^\eta_{ \floor{ t/\eta }}(\bm x):\ t \in [0,{T}] \big\}\quad \forall T > 0. \label{def: scaled SGD, LDP}
\end{align}
In case that $T = 1$, we suppress the time horizon $[0,1]$ and write
$
\notationdef{notation-h-k-t-sigma-mapping-T=1}{h^{(k)}},
$
$
\notationdef{notation-measure-C-k-t-mu-LDP-T-1}{\mathbf{C}^{(k)}},
$
$
\notationdef{notation-D-A-k-t-LDP-T=1}{\mathbb{D}^{(k)}_A(\epsilon)},
$
and
$
\notationdef{notation-scaled-X-eta-LDP}{\bm{X}^{\eta}(x)}
$
to denote
% $
% h^{(k)}_{[0,1]},
% $
$
h^{(k)}_{[0,1]},
$
$
\mathbf{C}^{(k)}_{[0,1]},
$
$
\mathbb{D}^{(k)}_A[0,1](\epsilon),
$
and
$
\bm{X}^{\eta}_{[0,1]}(x),
$
respectively.
Under Assumption~\ref{assumption: boundedness of drift and diffusion coefficients}, we establish in Theorem~\ref{theorem: LDP 1, unclipped} the uniform $\M$-convergence and sample path large deviations for $\bm X^\eta_{[0,T]}(\bm x)$.
Again, the lower bound for $C$ in Assumption~\ref{assumption: boundedness of drift and diffusion coefficients} is imposed w.l.o.g.\ simply for the convenience of the proof.

\begin{assumption}[Boundedness]  
\label{assumption: boundedness of drift and diffusion coefficients}
There exists some $\notationdef{notation-constant-C-boundedness-assumption}{C} \in [1,\infty)$ such that
\begin{align*}
    \norm{\bm a(\bm x)} \vee \norm{\bm \sigma(\bm x)} \leq  C\qquad \forall \bm x \in \mathbb{R}^m.
\end{align*}
\end{assumption}

\begin{theorem} 
\label{theorem: LDP 1, unclipped}
\linksinthm{theorem: LDP 1, unclipped}
Under Assumptions \ref{assumption gradient noise heavy-tailed}, \ref{assumption: lipschitz continuity of drift and diffusion coefficients}, and \ref{assumption: boundedness of drift and diffusion coefficients},
it holds for any $k \in \mathbb N$, $T>0$, $\epsilon > 0$, and any compact $A \subseteq \R^m$ that
% \begin{align*}
%     \frac{ \P( \bm{X}^\eta(x) \in\ \cdot\ ) }{ \big( \eta^{-1}H(\eta^{-1}) \big)^k  }
%     \rightarrow 
%     \mathbf{C}^{(k)}(\ \cdot\ ; x)
%     \text{ in }\mathbb{M}\big( \mathbb{D}\setminus \mathbb{D}^{(k-1)}_A \big)\text{ uniformly in \rvin{$x$ on} }A.
%      %\text{ as }\eta \downarrow 0.
% \end{align*}
$$
\lambda^{-k}(\eta) \P\big( \bm{X}^{\eta}_{[0,T]}(\bm x) \in\ \cdot\ \big) 
    \rightarrow 
    \mathbf{C}^{(k)}_{[0,T]}(\ \cdot\ ; \bm x)
    \quad 
    \text{in $\mathbb{M}\Big(\mathbb{D}[0,T]\setminus \mathbb{D}^{(k-1)}_A[0,T](\epsilon)\Big)$
uniformly in $\bm x$ on $A$}
$$
% \begin{align*}
%      \lambda^{-k}(\eta) \P\big( \bm{X}^{\eta}_{[0,T]}(x) \in\ \cdot\ \big) 
%     \rightarrow 
%     \mathbf{C}^{(k)}_{[0,T]}(\ \cdot\ ; x)
%     \text{ in }\mathbb{M}\big(\mathbb{D}[0,T]\setminus \mathbb{D}^{(k-1)}_A[0,T]\big)
% \end{align*}
 as $\eta \downarrow 0$.
Furthermore, 
for any
$B \in \mathscr{S}_{\mathbb{D}[0,T]}$ that is
 bounded away from ${ \mathbb{D}}_{A}^{(k - 1)}[0,T](\epsilon)$ for some (and hence all) $\epsilon > 0$ small enough,
\begin{equation} \label{claim, uniform sample path LD, theorem: LDP 1, unclipped}
    \begin{split}
        \inf_{\bm x \in A}
    \mathbf{C}^{(k)}_{[0,T]}( B^\circ; \bm x)
& \leq  \liminf_{\eta \downarrow 0}\frac{ \inf_{\bm x \in A}\P\big({\bm{X}}^{\eta}_{[0,T]}(\bm x) \in B \big) }{  \lambda^k(\eta)  } 
\\
   & \leq  \limsup_{\eta \downarrow 0}\frac{ \sup_{\bm x \in A}\P\big({\bm{X}}^{\eta}_{[0,T]}(\bm x) \in B \big) }{  \lambda^k(\eta)  } 
    \leq 
    \sup_{\bm x \in A}
    \mathbf{C}^{(k)}_{[0,T]}( B^-; \bm x)
    <
    \infty.
    \end{split}
\end{equation}
\end{theorem}

\begin{remark}
  We add a remark on the connection between \eqref{claim, uniform sample path LD, corollary: LDP 2}
 \eqref{claim, uniform sample path LD, theorem: LDP 1, unclipped} and the classical LDP framework.
Given a measurable set $B \subseteq \D[0,T]$, there is a particular $k$ that plays the role of the rate function. 
Specifically, let $\D^{(k)}_A[0,T] = \D^{(k)}_A[0,T](0)$ and
$\mathcal J_{A}(B) \delequal \min\{k \in \mathbb N:\ B \cap \D^{(k)}_A[0,T] \neq \emptyset\}$.
In great generality, this coincides with the smallest possible value of $k \in \mathbb N$ for which the lower bound 
$
 \inf_{\bm x \in A}
    \mathbf{C}^{(k)}_{[0,T]}( B^\circ; \bm x)
$
in \eqref{claim, uniform sample path LD, theorem: LDP 1, unclipped}
is strictly positive,
and $\lambda^{\mathcal J_{A}(B)}(\eta)$ characterizes the exact rate of decay for both $\inf_{\bm x \in A}\P({\bm{X}}^{\eta}_{[0,T]}(\bm x) \in B)$
and
$\sup_{\bm x \in A}\P({\bm{X}}^{\eta}_{[0,T]}(\bm x) \in B)$
as $\eta \downarrow 0$.
It should be noted these results are exact asymptotics as opposed to the log asymptotics in classical LDP framework. 
In case that the set $A$ is a singleton (e.g., $A = \{\bm 0\}$), $T = 1$, $\bm a \equiv 0$, and $\bm \sigma \equiv \textbf I_m$ (i.e., the identity matrix in $\R^m$), the process ${\bm{X}}^{\eta}_{[0,T]}(\bm x)$ will degenerate to a L\'evy process, and
 $\mathcal J_{A}(\cdot)$ will reduce to $\mathcal J(\cdot)$ defined in equation (3.3) of \cite{rhee2019sample}.
 Furthermore, the condition of $B$ being bounded away from $\D^{(k-1)}_A(\epsilon)$ (for small $\epsilon > 0$) will reduce to that 
 $B$ is bounded away from the set of step functions (i.e., piece-wise constant functions) in $\D$, vanishing at the origin, with at most $k-1$ jumps.
 This confirms that Theorems~\ref{corollary: LDP 2} and \ref{theorem: LDP 1, unclipped} are proper generalizations of the heavy-tailed large deviations for L\'evy processes and random walks in \cite{rhee2019sample}.
\end{remark}

We provide the proof of Theorem~\ref{theorem: LDP 1, unclipped} in Section~\ref{subsec: LDP clipped, proof of main results}.
Here, we give a high-level description of the proof strategy for Theorems~\ref{corollary: LDP 2} and \ref{theorem: LDP 1, unclipped}.
\begin{itemize}
    \item 
        We first establish the asymptotic equivalence between $\bm X^{\eta|b}_{[0,T]}(\bm x)$ and an ODE perturbed by the top-$k$ ``largest'' noises in $(\bm Z_j)_{j \leq T/\eta}$ in terms of $\M$-convergence.
        The key technical tools are the concentration inequalities in Lemma \ref{lemma LDP, small jump perturbation} that tightly control the fluctuations in $\bm X^{\eta|b}_j(\bm x)$ between any two ``large'' $\bm Z_j$'s.

    \item
        Then, to complete the proof of Theorem~\ref{corollary: LDP 2}, it suffices to study the $\M$-convergence of this perturbed ODE.
        The foundation of this analysis is 
        the asymptotic law of the top-$k$ largest noises in $(\bm Z_j)_{j \leq T/\eta}$ studied in Lemma~\ref{lemma: weak convergence of cond law of large jump, LDP}.

    \item
        For $b$ sufficiently large,  $\bm X^{\eta}_j(x)$ would coincide with $\bm X^{\eta|b}_j(x)$ for the entire period of $j \leq T/\eta$, unless we have a large $\bm Z_j$ during this period.
        % (that is, the truncation operator $\varphi_b$ did not come into effect for a long period).
        By sending $b \to \infty$ and analyzing the limits involved, we obtain the sample path large deviations for $\bm X^\eta_j(\bm x)$ and prove Theorem~\ref{theorem: LDP 1, unclipped}.
\end{itemize}
See Section \ref{subsec: LDP clipped, proof of main results} for the detailed proof and the rigorous definitions of the concepts involved.

\subsubsection{Catastrophe Principle}
\label{subsubsec: conditional limit theorem}
Perhaps the most important implication of the large deviations bounds is the identification of conditional distributions of the stochastic processes given the rare events of interest. 
This section precisely identifies the distributional limits of the conditional laws of $\bm X^\eta_{[0,T]}(\bm x)$ and $\bm X^{\eta|b}_{[0,T]}(\bm x)$.
% given the rare events. 
% Recall that $\bm X^\eta(x)$ and $\bm X^{\eta|b}(x)$ denote the time-scaled sample paths of the stochastic difference equations.
% For any $B \subseteq \mathbb D[0,T]$, $x \in \R$, and $b \in (0,\infty)$, let
% \begin{align*}
%     \mathcal J_{[0,T]}(B;x) \delequal \min\Big\{k \geq 0:\ \mathbb D_{\{x\}}^{(k)}[0,T] \cap B \neq \emptyset\Big\},
%     \quad 
%     \mathcal J^{|b}_{[0,T]}(B;x) \delequal \min\Big\{k \geq 0:\ \mathbb D_{\{x\}}^{(k)|b}[0,T] \cap B \neq \emptyset\Big\}.
% \end{align*}
In fact, the conditional limit theorem below follows immediately from the sample-path large deviations established above, i.e., \eqref{claim, uniform sample path LD, theorem: LDP 1, unclipped} and \eqref{claim, uniform sample path LD, corollary: LDP 2},
and Portmanteau Theorem.
While all the results in Section \ref{subsubsec: conditional limit theorem} can be easily extended to $\mathbb D[0,T]$ with arbitrary $T \in (0,\infty)$, we focus on $\mathbb D = \mathbb D\big([0,1],\R^m\big)$ for the sake of clarity of the presentation.

\begin{corollary}\label{corollary: conditional limit, SGD}
Let Assumptions \ref{assumption gradient noise heavy-tailed} and \ref{assumption: lipschitz continuity of drift and diffusion coefficients} hold.
\begin{enumerate}[(i)]
 \item Given $b > 0$, $k \in \mathbb N$, $\bm x \in \R^m$, and measurable $B \subseteq \mathbb D$, suppose that $B$ is bounded away from $\mathbb D^{(k-1)|b}_{\{\bm x\}}(\epsilon)$ for some (and hence all) $\epsilon > 0$ small enough, and
    $
    \mathbf{C}^{(k)|b}(B^\circ;\bm x) = \mathbf{C}^{(k)|b}(B^-;\bm x) > 0.
    $
    % Let $\bm X^{\eta|b}_{|B}(x)$ be a process having the conditional law of $\bm X^{\eta|b}(x)$ given that $\bm X^{\eta|b}(x) \in B$.
    Then
    $$
    \P\big(\bm X^{\eta|b}_{[0,1]}(\bm x)\in \cdot\,|\,\bm X^{\eta|b}_{[0,1]}(\bm x)\in B\big) \Rightarrow
    \frac{\mathbf C^{(k)|b}(\,\cdot\cap B;\bm x)}{\mathbf C^{(k)|b}(B;\bm x)}
    \qquad \text{as }\eta \downarrow 0.
    $$ 
    % as $\eta \downarrow 0$.
    % where the law of $\bm X^{*|b}_{|B}(x)$, denoted as $\P^{*|b}_{|B}(\ \cdot\ ;x)$, is given by
    % \begin{align}
    %     \P^{*|b}_{|B}(\ \cdot\ ; x) \delequal 
    %     \frac{\mathbf C^{(k)|b}(\cdot\cap B;x)}{\mathbf C^{(k)|b}(B;x)}.
    %     \label{def, conditional limit, SGD truncated}
    % \end{align}

    \item Furthermore, suppose that Assumption \ref{assumption: boundedness of drift and diffusion coefficients} holds.
    Given $k\in \mathbb N$, $\bm x \in \R^m$, and measurable $B \subseteq \mathbb D$, suppose that $B$ is bounded away from $\mathbb D^{(k-1)}_{\{\bm x\}}(\epsilon)$ for some (and hence all) $\epsilon > 0$ small enough,
    % $B \cap \mathbb D^{(k)}_{\{\bm x\}} \neq \emptyset$, 
    and 
    $
    \mathbf{C}^{(k)}(B^\circ;\bm  x) = \mathbf{C}^{(k)}(B^-;\bm x) > 0.
    $
    % Let $\bm X^{\eta}_{|B}(x)$ be a process having the conditional law of $\bm X^{\eta}(x)$ given that $\bm X^{\eta}(x) \in B$.
    Then
    $$\P\big(\bm X^{\eta}_{[0,1]}(\bm x)\in \cdot \,\big|\,\bm X^{\eta}_{[0,1]}(\bm x)\in B \big)\Rightarrow 
    % \bm X^{*}_{|B}(x)
    \frac{\mathbf C^{(k)}(\cdot\cap B;\bm x)}{\mathbf C^{(k)}(B;\bm x)}
    \qquad \text{as }\eta \downarrow 0.
    $$ 
    % as $\eta \downarrow 0$.
    % where the law of $\bm X^{*}_{|B}(x)$, denoted as $\P^{*}_{|B}(\ \cdot\ ;x)$, is given by
    % \begin{align}
    %     \P^{*}_{|B}(\ \cdot\ ; x) \delequal \frac{\mathbf C^{(k)}(\cdot\cap B;x)}{\mathbf C^{(k)}(B;x)}.
    %     \label{def, conditional limit, SGD}
    % \end{align}
\end{enumerate}
\end{corollary}

\begin{remark}
Note that Corollary~\ref{corollary: conditional limit, SGD} is a sharp characterization of \emph{catastrophe principle} for $\bm X^{\eta|b}_{[0,1]}(\bm x)$ and $\bm X^{\eta}_{[0,1]}(\bm x)$. 
% A direct probabilistic interpretation of the conditional limit is as follows.
By definition of $\mathbf C^{(k)|b}$ in \eqref{def: measure mu k b t}, 
% in part (i),
% of Corollary \ref{corollary: conditional limit, SGD}
its support belongs to the set of paths of the form
% \begin{align*}
\[
    h^{(k)|b}\big(\bm x, (\bm w_1,\cdots,\bm w_k ),(t_1,\cdots,t_k)\big),
\]
% \end{align*}
where the mapping $h^{(k)|b}$ is defined in \eqref{def: perturb ode mapping h k b, 1}--\eqref{def: perturb ode mapping h k b, 3}, 
% $U_j$'s are iid samples of Unif$(0,1)$,
% and
% the law of $W_j$'s satisfies (recall that $\nu^k_\alpha$ is the $k$-fold product measure of $\nu_\alpha$ in \eqref{def: measure nu alpha})
% \begin{align*}
%     &\mathbf P^{*}_{|B}\Big(W_1 \in d w_1, \cdots, W_k \in d w_k;\ x    \Big)
%     \\
%     & = \frac{
%     \P\Big( h^{(k)}\big(x,( w_1,\cdots,w_k ),(U_1,\cdots,U_k)\big) \in B  \Big) \cdot \nu_\alpha^k(dw_1,\cdots,dw_k)
%     }{ 
%                 \mathbf C^{(k)}(B;x) 
%     }.
% \end{align*}
and the norms $\norm{\bm w_j}$'s are bounded from below; see, for instance, Lemma~\ref{lemma: LDP, bar epsilon and delta} and \ref{lemma: LDP, bar epsilon and delta, clipped version}.
This is a clear manifestation of the catastrophe principle: 
whenever the rare event arises, the conditional distribution resembles the nominal path (i.e., the solution of the associated ODE) perturbed by precisely $k$ jumps.
% Indeed, implied by the conditional limit $\bm X^{\eta}_{|B}(x) \Rightarrow \bm X^{*}_{|B}(x)$ and the representation of $\bm X^{*}_B(x)$ above,
% This is the most likely---and at the same time the only likely---way for $\bm X^\eta(x) \in B$ to occur.
% is by perturbing the nominal path the corresponding ODE path (that is, the nominal behavior of $\bm X^\eta(x)$)
% with $k$ iid Pareto jumps under law $\nu_\alpha$, modulated by $\sigma(\cdot)$.
In fact, the definition of $\mathbf C^{(k)|b}$ also implies that the the jump sizes are Pareto (modulated by $\bm \sigma(\cdot)$) and the jump times are uniform, conditional on the perturbed path belonging to $B$.
Similar interpretation applies to $\bm X^\eta_{[0,1]}(\bm x)$ in part (ii) of Corollary~\ref{corollary: conditional limit, SGD}.
% Specifically, the most likely and the only likely way for $\bm X^{\eta}(x) \in B$ to occur is by 
% perturbing the corresponding ODE path with $k$ iid Pareto jumps under law $\nu_\alpha$, modulated by $\sigma(\cdot)$ and then truncated under threshold $b$.
\end{remark}

\subsection{Metastability Analysis}
\label{sec: first exit time simple version}
This section analyzes the metastability of $\bm X_j^\eta(\bm x)$ and $\bm X_j^{\eta|b}(\bm x)$.
Section~\ref{subsec: first exit time, results, SGD} establishes the scaling limits of their exit times.
Section~\ref{subsec: framework, first exit time analysis} introduces a framework that facilitates such analysis for general Markov chains.
Again, the results for stochastic differential equations and/or under more general scaling regimes are collected in the Appendix.

\subsubsection{First Exit Times and Locations}
\label{subsec: first exit time, results, SGD}
In this section,
we analyze the first exit times and locations of $\bm X^\eta_j(\bm x)$ and $\bm X^{\eta|b}_j(\bm x)$
% , defined in \eqref{def: X eta b j x, unclipped SGD} and \eqref{def: X eta b j x, clipped SGD},
% where
% the heavy-tailed noises $(Z_j)_{j \geq 1}$ satisfy Assumption \ref{assumption gradient noise heavy-tailed}
% and $a(\cdot)$ and $\sigma(\cdot)$ satisfy Assumptions \ref{assumption: lipschitz continuity of drift and diffusion coefficients} and \ref{assumption: nondegeneracy of diffusion coefficients}.
% We study the first exit time of $X^\eta_j(x)$ and $X^{\eta|b}_j(x)$
from an attraction field of some potential with a unique local minimum at the origin.
Specifically, throughout Section \ref{subsec: first exit time, results, SGD}, we fix an open set $\notationdef{notation-exit-domain-I}{I} \subset \R^m$ 
that is bounded and contains the origin,
i.e., $\sup_{\bm x \in I}\norm{\bm x} < \infty$
and
$\bm 0 \in I$.
Let $\notationdef{notation-continuous-gradient-descent}{\bm{y}_t(\bm x)}$ be the solution of ODE
\begin{align}
\bm y_0(\bm x) = \bm x,\qquad
   \frac{d\bm{y}_t(\bm x)}{dt} = \bm a\big(\bm{y}_t(\bm x)\big) \ \ \forall t \geq 0.
   \label{def ODE path y t}
\end{align}
We impose the following assumption on the gradient field $\bm a:\ \R^m \to \R^m$.

\begin{assumption}
\label{assumption: shape of f, first exit analysis}
$\bm a(\bm 0) = \bm 0$.
The open set $I \subset \R^m$ contains the origin and is bounded, i.e.,
$\sup_{\bm x \in I}\norm{\bm x} < \infty$
and
$\bm 0 \in I$.
For all $\bm x \in I \setminus \{\bm 0\}$,
\begin{align*}
    \bm y_t(\bm x) \in I\ \ \forall t \geq 0,
    \qquad
    \lim_{t \to \infty}\bm y_t(\bm x) = \bm 0.
\end{align*}
Besides, it holds for all $\epsilon > 0$ small enough that
$
\bm a(\bm x)\bm x < 0\ \forall \bm x \in \bar B_\epsilon(\bm 0) \setminus \{\bm 0\}.
$

% Besides,
% \begin{enumerate}[(i)]
%     \item $a(0)= 0$; $a(\cdot)$ is differentiable around $0$ with $a^\prime(0) < 0$;
%     \item The following claims hold for $s \in \{s_\text{left},s_\text{right}\}$: 
%     If $a(s) = 0$, then
%     $a(\cdot)$ is differentiable around $s$ and $a^\prime(s) > 0$. 
% \end{enumerate}
% $f(x)$ is monotonically decreasing on $(-\infty,0]$ and monotonically increasing on $[0,\infty)$. Besides, $\{x:\ f^\prime(x) = 0\} = \{0\}$.
\end{assumption}

An immediate consequence of the condition $\lim_{t \to \infty}\bm y_t(\bm x) = \bm 0\ \forall \bm x \in I\setminus\{\bm 0\}$ is that $\bm a(\bm x) \neq \bm 0$ for all $\bm x \in I\setminus \{\bm 0\}$.
Of particular interest is the case where $\bm a(\cdot) = -\nabla U(\cdot)$
for some potential $U\in \mathcal{C}^1(\R^m)$
that has a unique local minimum at $\bm x = 0$ over the domain ${I}$.
In particular, Assumption~\ref{assumption: shape of f, first exit analysis} holds if $U$ is also locally $\mathcal C^2$ around the origin, and the Hessian of $U(\cdot)$ at the origin $\bm x = \bm 0$ is positive definite.
We note that Assumption~\ref{assumption: shape of f, first exit analysis} is a standard one in existing literature;
see e.g.\ \cite{doi:10.1142/S0219493711003413,imkeller2010first}.
% Assumption \ref{assumption: shape of f, first exit analysis}
% then implies that $U$ has a unique local minimum at $\bm x = 0$ over the domain ${I}$.
% Besides, the potential $U$ has a non-degenerate second-order derivative $U^{\prime\prime}(0) = - a^\prime(0) > 0$.
% It is worth noticing that Assumption \ref{assumption: shape of f, first exit analysis} is more flexible than the assumptions in other related works in the literature.
% For instance,
% \cite{pavlyukevich2008metastable,imkeller2006first} required the second-order derivative $U^{\prime\prime}(\cdot)$ to be non-degenerate at the origin as well as the boundary points of $I$, with an extra condition of $U \in \mathcal C^3$ over a wide enough compact set, and held the drift coefficient $\sigma(\cdot)$ as constant.
% In contrast, Assumption~\ref{assumption: shape of f, first exit analysis} is close to minimum assumption required for $I$ to be an attraction field associated with the origin. 
% we conduct a first exit time analysis with significantly relaxed assumptions.

Define 
\begin{align}
    \notationdef{notation-tau-eta-x-first-exit-time}{\tau^\eta(\bm x)} \delequal \min\big\{j \geq 0:\ \bm X^\eta_j(\bm x) \notin I\big\},
    \qquad
    \notationdef{notation-tau-eta-b-x-first-exit-time}{\tau^{\eta|b}(\bm x)} \delequal \min\big\{j \geq 0:\ \bm X^{\eta|b}_j(\bm x) \notin I \big\},
    \label{def: first exit time for heavy tailed SGD}
\end{align}
as the first exit time of $\bm X^\eta_j(\bm x)$ and $\bm X^{\eta|b}_j(\bm x)$ from $I$, respectively.
To facilitate the presentation of the main results, we introduce a few concepts.
Define the mapping 
$
\bar g^{(k)|b}: \R^m \times \R^{d \times k} \times \R^{m \times k} \times (0,\infty)^{k\uparrow} \to \R^m
$
as the location of the (perturbed) ODE with $k$ jumps at the last jump time:
\begin{align}
    \notationdef{notation-mapping-bar-g-k-b}{\bar g^{(k)|b}\big( \bm x, \textbf W, \textbf V, (t_1,\cdots,t_k)\big)}
    \delequal 
    \bar h^{(k)|b}_{ [0,t_k + 1] }
    \Big(
        \bm x,
        \textbf W,
        \textbf V,
        (t_1,\cdots,t_k)
    \Big)(t_k),
    \label{def: bar g k b mapping, metastability}
\end{align}
where $\bar h^{(k)|b}_{[0,T]}$ is the perturbed ODE mapping defined in \eqref{def: perturb ode mapping h k b, 1}--\eqref{def: perturb ode mapping h k b, 3}.
Note that the definition remains the same if, in \eqref{def: bar g k b mapping, metastability}, we use mapping $\bar h^{(k)|b}_{[0,T]}$
with any $T \in [t_k,\infty)$ instead of $\bar h^{(k)|b}_{[0,t_k + 1]}$.
We include a $+1$ offset only to extend the time range of the mapping and simplify some arguments in our proofs.
Besides,
define 
$
\widecheck{g}^{(k)|b}: \R^m \times \R^{d\times k} \times (0,\infty)^{k\uparrow} \to \R^m
$
by
\begin{align}
    \notationdef{notation-check-g-k-b}{\widecheck{g}^{(k)|b}(\bm x,\textbf W,\bm t)}
    \delequal 
    \bar g^{(k)|b}\big(\bm x, \textbf W, (\bm 0,\cdots,\bm 0), \bm t\big)
    =
    h^{(k)|b}_{[0,t_k+1]}(\bm x,\textbf W,\bm t)(t_{k}),
    \label{def: mapping check g k b, endpoint of path after the last jump, first exit analysis}
\end{align}
where $\bm t = (t_1,\ldots,t_k) \in (0,\infty)^{k\uparrow}$, and the mapping 
$h^{(k)|b}_{[0,T]}$ is defined in \eqref{def: perturb ode mapping h k b, 4}.
\elaborate{%%%%%%%%%%%%%%%%%%%%%%%%%%%%%%%%%%%%%%%
That is,
for any $T > t_k$ and $\xi =  h^{(k)|b}_{[0,T]}(x,w_1,\cdots,w_k,t_1,\cdots,t_k)$,
we set $\widecheck{g}^{(k)|b}(x,w_1,\cdots,w_k,t_1,\cdots,t_k) = \xi(t_k)$, i.e., the value of path $\xi$ right after the last jump.
}%%%%%%%%%%%%%%% end of \elaborate %%%%%%%%%%%%%%%
For $k = 0$,
we adopt the convention that $\bar{g}^{(0)|b}(\bm x) = \bm x$.
With mappings $\bar g^{(k)|b}$ defined, we are able to introduce 
(for any $k \geq 1$, $b> 0$, and $\epsilon \geq 0$)
\begin{align}
    \notationdef{notation-set-G-k-b-epsilon}{\mathcal G^{(k)|b}(\epsilon)} 
    & \delequal 
    \bigg\{
    \bar g^{(k - 1)|b}
    \Big( \bm v_1 + \varphi_b\big(\bm \sigma(\bm v_1)\bm w_1\big),
    (\bm w_2,\cdots, \bm w_k), (\bm v_2,\cdots,\bm v_k), \bm t 
    \Big):
    \nonumber
    \\ 
    & \qquad\qquad
    \textbf W = (\bm w_1,\cdots, \bm w_k) \in \R^{d\times k},
    \textbf V = (\bm v_1,\cdots, \bm v_k) \in \Big(\bar B_\epsilon(\bm 0)\Big)^k,
    \bm t \in (0,\infty)^{k - 1 \uparrow}
    \bigg\}
    \label{def: set G k b epsilon}
\end{align}
as the set covered by the $k^\text{th}$ jump of along ODE path initialized at the origin, with each jump modulated by $\bm \sigma(\cdot)$ and truncated under $b$ (and an $\epsilon$ perturbation right before each jump).
Here, the truncation operator $\varphi_b$ is defined in \eqref{defTruncationClippingOperator}, and $\bar B_{r}(\bm 0)$ is the closed ball with radius $r$ centered at the origin.
Under $\epsilon = 0$, we write
\begin{align*}
    \notationdef{notation-set-G-k-b}{\mathcal G^{(k)|b}}
    \delequal 
    \mathcal G^{(k)|b}(0)
    = 
    \bigg\{
    \widecheck{g}^{(k - 1)|b}
    \Big( \varphi_b\big(\bm \sigma(\bm 0)\bm w_1\big),
    (\bm w_2,\cdots, \bm w_k), \bm t 
    \Big):\ 
    \textbf W = (\bm w_1,\cdots,\bm w_k) \in \R^{d \times k},
    \bm t \in (0,\infty)^{k - 1 \uparrow}
    \bigg\}.
\end{align*}
Furthermore, as a convention for the case with $k = 0$, we set
\begin{align}
    \mathcal{G}^{(0)|b}(\epsilon) \delequal \bar B_{\epsilon}(\bm 0).
    \label{def: 0 jump coverage set, first exit times}
\end{align}
We note that $\mathcal G^{(k)|b}(\epsilon)$ is monotone in $\epsilon$, $k$, and $b$, in the sense that
$
\mathcal{G}^{(k)|b}(\epsilon) \subseteq \mathcal{G}^{(k)|b}(\epsilon^\prime) 
$
for all $0 \leq \epsilon \leq \epsilon^\prime$,
$
\mathcal{G}^{(k)|b}(\epsilon) \subseteq \mathcal{G}^{(k+1)|b}(\epsilon), 
$
and
$
\mathcal{G}^{(k)|b}(\epsilon) \subseteq \mathcal{G}^{(k)|b^\prime}(\epsilon) 
$
for all $0 < b \leq b^\prime$.

The intuition behind our metastability analysis (in particular, Theorem~\ref{theorem: first exit time, unclipped})
is as follows.
The characterization of the $k$-jump-coverage sets of form $\mathcal G^{(k)|b}$ reveals that, due to the truncation of  $\varphi_b(\cdot)$, the space reachable by ODE paths would expand as more jumps are added to the ODE path.
This leads to an intriguing phase transition for the law of the first exit times $\tau^{\eta|b}(\bm x)$ (as $\eta \downarrow 0$) in terms of the minimum number of jumps required for exit.
% i.e., the smallest $k$ such that $\mathcal G^{(k)|b} \cap I^c \neq \emptyset$.
More precisely, let
\begin{align}
    % \notationdef{notation-r-radius-of-exit-domain}{l} & \delequal \inf_{x \in I^\complement}|x| = |s_\text{left}| \wedge s_\text{right},\qquad 
    \notationdef{notation-J-*-first-exit-analysis}{\mathcal J^I_b} \delequal
    \min\big\{ k \geq 1:\ \mathcal G^{(k)|b} \cap I^\complement \neq \emptyset \big\}
    \label{def: first exit time, J *}
\end{align}
be the smallest $k$ such that, under truncation at level $b$, the $k$-jump-coverage sets can reach outside the attraction field $I$.
Theorem~\ref{theorem: first exit time, unclipped} reveals a discrete hierarchy that 
% the asymptotics of $\tau^{\eta|b}(\bm x)$ does not vary with the truncation level $b$ in a continuous fashion;
% instead, 
the order of the first exit time $\tau^{\eta|b}(\bm x)$ and the limiting law of the exit location $\bm X^{\eta|b}_{ \tau^{\eta|b}(\bm x) }(\bm x)$ are dictated by this ``discretized width'' metric $\mathcal J^I_b$ of the domain $I$, relative to the truncation threshold $b$.
Here, the limiting law is characterized by measures
% This allows us to define Borel measures (for each $k \geq 1$ and $b > 0$)
\begin{align}
    % \widecheck{ \mathbf C }^{(k)}(\cdot )
    % & \delequal 
    % \int \mathbbm{I}\Big\{ g^{(k-1)}\big( \sigma(0)\cdot w_k, w_1,\cdots,w_{k-1},\bm t \big) \in \ \cdot \  \Big\}
    % \nu^k_\alpha(d w_1,\cdots,dw_k) \times \mathcal{L}^{k-1\uparrow}_\infty(d\bm t),
    % \\
     \notationdef{notation-check-C-k-b}{\widecheck{ \mathbf C }^{(k)|b}(\ \cdot\ )}
    & \delequal 
    \int \mathbbm{I}\bigg\{ \widecheck{g}^{(k-1)|b}\Big( \varphi_b\big(\bm\sigma(\bm x)\bm w_1\big),(\bm w_2,\cdots,\bm w_k),\bm t \Big) \in \ \cdot \  \bigg\}
     \big((\nu_\alpha \times \mathbf S)\circ \Phi\big)^k(d \textbf W) \times \mathcal{L}^{k-1\uparrow}_\infty(d\bm t),
    \label{def: measure check C k b}
\end{align}
where 
$\alpha > 1$ is the heavy-tail index in Assumption~\ref{assumption gradient noise heavy-tailed},
$\textbf W = (\bm w_1, \bm w_2, \cdots, \bm w_k) \in \R^{d \times k}$,
$\big((\nu_\alpha \times \mathbf S)\circ \Phi\big)^k$ is the $k$-fold of $(\nu_\alpha \times \mathbf S)\circ \Phi$
defined in \eqref{def, nu alpha times S composition polar transform},
and
\notationdef{notation-measure-L-k-up-infty}{$\mathcal{L}^{k\uparrow}_\infty$}
is the Lebesgue measure restricted on $\{ (t_1,\cdots,t_k) \in (0,\infty)^k:\ 0 < t_1 < t_2 < \cdots < t_k \}$.
Section~\ref{subsec: lemma for measure check C} collects useful properties of the mapping $\widecheck g^{(k)|b}$ and the measure ${\widecheck{ \mathbf C }^{(k)|b}}$.
% In case that $x = 0$, we write $\widecheck{ \mathbf C }^{(k)|b}(\cdot ) \delequal \widecheck{ \mathbf C }^{(k)|b}(\ \cdot\ ;0)$.
% and
% $\notationdef{notation-check-C-x=0}{\widecheck{ \mathbf C }(\cdot )} \delequal \widecheck{ \mathbf C }(\ \cdot\ ;0 )$.
% Also, let

Recall that $H(\cdot) = \P(\norm{\bm Z_1} > \cdot)$, $\lambda(\eta) = \eta^{-1}H(\eta^{-1})$,
and for any $k \geq 1$ we write $\lambda^k(\eta) = \big(\lambda(\eta)\big)^k$.
Recall that 
$
\notationdef{notation-I-epsilon-shrinkage}{I_\epsilon} = \{ \bm y:\ \norm{\bm x - \bm y} < \epsilon\ \Longrightarrow\ \bm x \in I \}
$
is the $\epsilon$-shrinkage of $I$.
As the main result of this section,
Theorem~\ref{theorem: first exit time, unclipped} provides sharp asymptotics for the joint law of first exit times and exit locations of  $\bm X^{\eta|b}_j(\bm x)$ and $\bm X^\eta_j(\bm x)$.
The proof of Theorem~\ref{theorem: first exit time, unclipped} is based on a general framework developed in Section~\ref{subsec: framework, first exit time analysis},
and 
we detail the proof in Section~\ref{sec: proof of proposition: first exit time}.
% By applying the general framework developed in Secton \ref{subsec: framework, first exit time analysis}
% and verifying Condition \ref{condition E2} over the asymptotic atoms $A(\epsilon) = (-\epsilon,\epsilon)$,
% we obtain the following result regarding the first exit times and exit locations of $X^{\eta|b}_j(x)$ and $X^\eta_j(x)$ from $I$.
% The detailed proof is provided in Section \ref{sec: proof of proposition: first exit time}.
% % Now we are ready to state the main results regarding the first exit times $\tau^\eta(x)$ and $\tau^{\eta|b}(x)$.

\begin{theorem}
\label{theorem: first exit time, unclipped}
\linksinthm{theorem: first exit time, unclipped}
\textbf{(First Exit Times and Locations: Truncated Case)}
    Let Assumptions \ref{assumption gradient noise heavy-tailed}, \ref{assumption: lipschitz continuity of drift and diffusion coefficients}, and \ref{assumption: shape of f, first exit analysis} hold.
    Let $b > 0$.
        Suppose that 
        $\mathcal J^I_b < \infty$,
        $I^c$ is bounded away from $\mathcal G^{(\mathcal J^I_b - 1)|b}(\epsilon)$ for some (and hence all) $\epsilon > 0$ small enough,
        and
        $
        \widecheck{\mathbf C}^{( \mathcal J^I_b )|b}(\partial I) = 0.
        $
        Then 
        $\notationdef{notation-C-b-*}{C^I_b} \delequal  \widecheck{ \mathbf{C} }^{ (\mathcal{J}^I_b)|b }(I^\complement) < \infty$.
        Furthermore, if $C^I_b \in (0,\infty)$,
        then
        for any $\epsilon > 0$, $t \geq 0$, and measurable set $B \subseteq I^c$,
    \begin{align*}
    \limsup_{\eta\downarrow 0}\sup_{\bm x \in I_\epsilon}
    \P\bigg(
        C^I_b \eta\cdot \lambda^{ \mathcal J^I_b }(\eta)\tau^{\eta|b}(\bm x) > t;\ 
        \bm X^{\eta|b}_{ \tau^{\eta|b}(\bm x)}(\bm x) \in B
     \bigg)
     & \leq \frac{ \widecheck{\mathbf{C}}^{ (\mathcal J^I_b)|b }(B^-) }{ C^I_b }\cdot\exp(-t),
     \\
     \liminf_{\eta\downarrow 0}\inf_{\bm x \in I_\epsilon}
    \P\bigg(
        C^I_b \eta\cdot \lambda^{ \mathcal J^I_b }(\eta)\tau^{\eta|b}(\bm x) > t;\ 
        \bm X^{\eta|b}_{ \tau^{\eta|b}(\bm x)}(\bm x) \in B
     \bigg)
     & \geq \frac{ \widecheck{\mathbf{C}}^{ (\mathcal J^I_b)|b }(B^\circ) }{ C^I_b }\cdot\exp(-t).
    \end{align*}
    Otherwise, we have $C^I_b = 0$, and
    \begin{align*}
        \limsup_{\eta\downarrow 0}\sup_{\bm x \in I_\epsilon}
    \P\bigg(
        \eta\cdot \lambda^{ \mathcal J^I_b }(\eta)\tau^{\eta|b}(\bm x) \leq t
     \bigg) = 0
     \qquad
     \forall \epsilon > 0,\ t \geq 0.
    \end{align*}
\end{theorem}

\begin{remark}
\label{remark: regularity conditions for first exit times}
Regarding the regularity conditions in Theorem~\ref{theorem: first exit time, unclipped},
conditions of form 
$\widecheck{\mathbf C}^{( \mathcal J^I_b )|b}(\partial I) = 0$ are standard even for metastability analyses of untruncated dynamics;
see e.g.\ \cite{doi:10.1142/S0219493715500197,
hogele2014exit}.
Besides, we note that theses conditions hold almost automatically in the non-degenerate one-dimensional settings:
suppose that $m = d = 1$ (so $\bm Z_j$'s and $\bm X^{\eta|b}_j(\bm x)$'s are random variables in $\R^1$) and for the diffusion coefficient $\sigma: \R \to \R$ we have $\inf_{x \in I}\sigma(x) > 0$;
then for (Lebesgue) almost every $b \in (0,\infty)$,
$I^c$ is bounded away from $\mathcal G^{(\mathcal J^I_b - 1)|b}(\epsilon)$ (for small $\epsilon > 0$),
$
        \widecheck{\mathbf C}^{( \mathcal J^I_b )|b}(\partial I) = 0,
        $
and $C^I_b \in (0,\infty)$
with $\mathcal J^I_b = \inf_{ x \notin I }\ceil{ |x|/b  }$.
See Lemmas~\ref{lemma: measure check C J * b, continuity, first exit analysis, R1} and \ref{lemma: exit rate strictly positive, first exit analysis, R1} in the Appendix.
\end{remark}
\begin{remark}
As noted in Section~\ref{subsec:overview-of-the-paper} and will be confirmed in Corollary~\ref{corollary: first exit time, untruncated case} below, $\mathcal J_b^I = 1$ when $b= \infty$,  regardless of the geometry of $\bm a(\cdot)$. 
In this case, Theorem~\ref{theorem: first exit time, unclipped} reduces to the manifestation of the principle of a single big jump. 
For $b \neq \infty$ and a contractive drift---i.e., $a(\bm x) \cdot \bm x \leq 0$ for all $\bm x \in I$---note that $\mathcal J_b^I = \lceil r / b \rceil$, where $r \triangleq \inf\{ \|\bm x - \bm 0\|: \bm x \in I^c\}$. 
This is because gradient flow will not bring $\bm X_j^{\eta|b}(\bm x)$ closer to $I^c$, and hence, the most efficient way to escape from $I$ is through $\lceil r / b \rceil$ consecutive jumps in the direction where $I^c$ is closest.
In the general case, however, $\mathcal J_b^I$ is determined as the solution to the discrete optimization problem in \eqref{def: first exit time, J *}, where the geometry of $\bm a(\cdot)$---in particular, gradient flows and their distances from $I^c$---plays a more sophisticated role. 
\end{remark}

% In summary, by developing the machinery of uniform $\mathbb M$-convergence and asymptotic atoms, we provide a general framework that connects large deviations and first exit analysis.
% Applying this framework for the truncated heavy-tailed dynamics $\bm X^{\eta|b}_j(\bm x)$,
% we reveal an intriguing phase transition in terms of the truncation threshold $b$, where the discretized width $\mathcal J^I_b$ dictates the order of the first exit times and the limiting law of first exit locations.
We conclude this section by noting that the first exit analysis for untruncated process $\bm X_j^\eta(\bm x)$
(see e.g.\ \cite{imkeller2006first, pavlyukevich2008metastable, imkeller2008levy}
% , doi:10.1142/S0219493711003413
% ,imkeller2010first} 
for analogous results for continuous processes) 
follows directly from Theorem~\ref{theorem: first exit time, unclipped}.
Let
\begin{align}
    \notationdef{notation-check-C}{\widecheck{\mathbf C}(\ \cdot\ )} \delequal \int \mathbbm{I}\Big\{ \bm \sigma(\bm 0) \bm w \in\ \cdot\ \Big\}
    \big((\nu_\alpha \times \mathbf S)\circ \Phi\big)(d \bm w).
    \label{def: measure check C}
\end{align}
The asymptotic analysis for exit times and locations of the untruncated dynamics $\bm X^\eta_j(\bm x)$ follows from the result for $\bm X^{\eta|b}_j(\bm x)$ by sending $b$ to $\infty$,
and the limiting laws of the exit location $\bm X^\eta_{\tau^\eta(\bm x)}(\bm x)$ is characterized by $\widecheck{\mathbf C}(\ \cdot\ )$, as presented in Corollary~\ref{corollary: first exit time, untruncated case}.
The proof is straightforward and we collect it in Section~\ref{subsec: lemma for measure check C}
for the sake of completeness.

\begin{corollary}\label{corollary: first exit time, untruncated case}
\linksinthm{corollary: first exit time, untruncated case}
\textbf{(First Exit Times and Locations: Untruncated Case)}
Let Assumptions \ref{assumption gradient noise heavy-tailed}, \ref{assumption: lipschitz continuity of drift and diffusion coefficients}, and \ref{assumption: shape of f, first exit analysis} hold.
        Suppose that $\widecheck{\mathbf C}(\partial I) = 0$ and $\norm{\bm \sigma(\bm 0)} > 0$.
        Then $\notationdef{notation-C-*-first-exit-time}{C^I_\infty} \delequal  \widecheck{ \mathbf{C} }(I^\complement) < \infty$.
        Furthermore,
        if $C^I_\infty > 0$,
        then
        for any $t \geq 0$, $\epsilon > 0$, and measurable set $B \subseteq I^c$,
    \begin{align*}
        \limsup_{\eta\downarrow 0}\sup_{\bm x \in I_\epsilon}
        \P\bigg(
            C^I_\infty H(\eta^{-1})\tau^\eta(\bm x) > t;\ \bm X^\eta_{ \tau^\eta(\bm x)}(\bm x) \in B
        \bigg)
        & \leq
        \frac{ \widecheck{\mathbf{C}}(B^-) }{ C^I_\infty }\cdot\exp(-t),
        \\ 
        \liminf_{\eta\downarrow 0}\inf_{\bm x \in I_\epsilon}
        \P\bigg(
            C^I_\infty H(\eta^{-1})\tau^\eta(\bm x) > t;\ \bm X^\eta_{ \tau^\eta(\bm x)}(\bm x) \in B
        \bigg)
        & \geq
        \frac{ \widecheck{\mathbf{C}}(B^\circ) }{ C^I_\infty }\cdot\exp(-t).
    \end{align*}
    Otherwise, we have $C^I_\infty = 0$, and 
    \begin{align*}
        \limsup_{\eta\downarrow 0}\sup_{\bm x \in I_\epsilon}
        \P\bigg(
            H(\eta^{-1})\tau^\eta(\bm x) \leq t
        \bigg) = 0
        \qquad
        \forall \epsilon > 0,\ t \geq 0.
    \end{align*}
\end{corollary}

\subsubsection{General Framework: Asymptotic Atoms}
\label{subsec: framework, first exit time analysis}
This section proposes a general framework that enables sharp characterization of exit times and exit locations of Markov chains.
The new heavy-tailed large deviations formulation introduced in Section~\ref{subsec: LD, SGD} is conducive to this framework.

Consider a general metric space $(\mathbb S,\bm d)$
and a family of $\mathbb S$-valued Markov chains $\big\{\{V_j^\eta(x): j\geq 0\}:\eta>0\big\}$ parameterized by $\eta$, where $x\in \mathbb S$ denotes the initial state and $j$ denotes the time index. 
We use
$
\bm V_{[0,T]}^\eta(x)\delequal \{V^\eta_{\lfloor t/\eta\rfloor}(x): t  \in [0,T]\}
$ 
to denote the scaled version of $\{V_j^\eta(x): j\geq 0\}$ as a  $\D[0,T]$-valued random element.
% We will consider $\tau^{\eta}_{I_\epsilon^c}(x) \delequal \min\{j\geq0: V_j^\eta(x) \notin I_\epsilon\}$, and $\tau^\eta_{J(\delta)}(x) \delequal \min\{j\geq0: V_j^\eta(x) \in J(\delta)\}$.
For a given set $E$, let $\tau^{\eta}_{E}(x) \delequal \min\{j\geq0: V_j^\eta(x) \in  E\}$ denote $\{V_j^\eta(s): j\geq 0\}$'s first hitting time of $E$.
We consider an asymptotic domain of attraction $I\subseteq \mathbb S$, within which $\bm V_{[0,T]}^\eta(x)$ typically (i.e., as $\eta\downarrow 0$) stays within $I$ throughout any fixed time horizon $[0,T]$ as far as the initial state $x$ is in $I$. 
% We will make these informal descriptions precise in Condition~\ref{condition E2}.
However, if one considers an infinite time horizon, $V_{\boldsymbol{\cdot}}^\eta(x)$ is typically bound to escape $I$ eventually due to the stochasticity. 
The goal of this section is to establish an asymptotic limit of the joint distribution of the exit time $\tau^{\eta}_{I^\complement}(x)$ and the exit location $V^\eta_{\tau^\eta_{I^\complement}(x)}(x)$.
Throughout this section, we will
denote 
\(
    V^\eta_{\tau^\eta_{I(\epsilon)^\complement}(x)}(x)
\)
and 
\(
    V^\eta_{\tau^\eta_{I^\complement}(x)}(x)
\)
with 
\(
    V^\eta_{\tau_\epsilon}(x)
\)
and
\(
    V^\eta_{\tau}(x)
\),
respectively, for notation simplicity. 

% Let $E(\epsilon,T) \delequal \{\xi \in \D[0,T]: \xi(t) \notin I(\epsilon) \text{ for some } t\in[0,T]\}$.

% \begin{condition}\label{condition E0}
% $\big\{\{V_j^\eta(x): j\geq 0\}:\eta>0\big\}$ possesses an asymptotic atom $\{A(\epsilon)\subseteq S: \epsilon>0\}$ in the following sense:
% For sufficiently large $T > 0$, sufficiently small $\epsilon>0$,
% \begin{align*}
% l(\epsilon,T)
% & \leq  \liminf_{\eta \downarrow 0}\frac{ \inf_{x \in A(\epsilon)}\P\big({\bm{V}}^{\eta}_{[0,T]}(x) \in E(0,T) \big) }{  \gamma(\eta)T/\eta } 
% \\
% & \leq  \limsup_{\eta \downarrow 0}\frac{ \sup_{x \in A(\epsilon)}\P\big({\bm{V}}^{\eta}_{[0,T]}(x) \in E(\epsilon,T) \big) }{  \gamma(\eta)T/\eta  } 
% \leq 
% u(\epsilon,T)
% \end{align*}
% where 
% % for each sufficiently small $\epsilon>0$ and sufficiently large $T>0$
% $u(\epsilon,T;\cdot):\D[0,T]\to \R_+$ 
% and 
% $l(\epsilon,T;\cdot):\D[0,T]\to \R_+$ 
% are set functions such that 
% $$
% \lim_{\epsilon\to0}\lim_{T\to\infty}\frac {l(\epsilon,T)}{T} 
% = \lim_{\epsilon\to0}\lim_{T\to\infty}\frac {u(\epsilon,T)}{T} 
% = 1
% $$
% \end{condition}

% \begin{condition}\label{condition E1}
% % There exists $C_0:\R_+ \to \R_+$ such that $C_0(\epsilon) \to \infty$ as $\epsilon\to 0$ and
% For all sufficiently small $\epsilon>0$, 
% \begin{align}
% \liminf_{\eta\downarrow 0} \frac{\ \inf_{x\in I\setminus I(\epsilon)} \P\big( \tau^{\eta}_{I^\complement}(x) \leq T/\eta\big)} {\gamma(\eta) T/\eta} \geq 1
% % \tag{E3}
% \label{eq: exit time condition easy to exit from around the boundary}
% \end{align}
% \end{condition}

We introduce the notion of asymptotic atoms to facilitate the analyses.
Let $\{I(\epsilon)\subseteq I: \epsilon>0\}$ and $\{A(\epsilon)\subseteq \mathbb S: \epsilon>0\}$ be collections of subsets of $I$ such that $\bigcup_{\epsilon>0}I(\epsilon) = I$ and $\bigcap_{\epsilon>0}A(\epsilon) \neq \emptyset$.
Let $C(\cdot)$ is a Borel measure on $\mathbb S\setminus I$ satisfying $C(\partial I) = 0$ that 
characterizes the (asymptotics limit of the) exit location of $V^\eta_{\cdot}(x)$.
Specifically, we consider two different cases for the location measure $C(\cdot)$:
\begin{enumerate}[$(i)$]
    \item
        $C(I^\complement) \in (0,\infty)$: by incorporating the normalizing constant $C(I^\complement)$ into the scale function $\gamma(\eta)$, we can assume w.l.o.g.\ that $C(\cdot)$ \textbf{is a probability measure}, and $C(B)$ dictates the limiting probability that $\P(V^\eta_\tau(x) \in B)$ as shown in Theorem~\ref{thm: exit time analysis framework};

    \item 
        $C(I^\complement) = 0$: as a result, $C(B) = 0$ for any Borel set $B \subseteq I^\complement$, and it is equivalent to stating that $C(\cdot)$ \textbf{is trivially zero}.
\end{enumerate}

\begin{definition} \label{def: asymptotic atom}
 $\big\{\{V_j^\eta(x): j\geq 0\}:\eta>0\big\}$ possesses an asymptotic atom $\{A(\epsilon)\subseteq \mathbb S: \epsilon>0\}$ associated with the domain $I$, location measure $C(\cdot)$, scale $\gamma:(0,\infty) \to (0,\infty)$, and covering $\{I(\epsilon)\subseteq I: \epsilon>0\}$ if the following holds: 
For each measurable set $B \subseteq \mathbb S$,
there exist $\delta_B:(0,\infty)\times(0,\infty)\to(0,\infty)$, $\epsilon_B>0$, and $T_B:(0,\infty) \to (0,\infty)$ such that 
% For any given $\epsilon>0$, there exist sets $A(\epsilon)$ and $I(\epsilon)$ such that $A(\epsilon) \subsetneq I(\epsilon) \subseteq I$ and
% \textcolor{red}{in \eqref{eq: exit time condition lower bound}, $\tau^\eta_{I^c}$ or $\tau^\eta_{I(\epsilon)^c}$}
\begin{align}
C(B^\circ) - \delta_B(\epsilon,T)
\leq\,
&
\liminf_{\eta\downarrow0} \frac{\ \inf_{x \in A(\epsilon)} \P\big(\tau^{\eta}_{I(\epsilon)^\complement}(x) \leq T/\eta;\; V_{\tau_{\epsilon}}^\eta(x)\in B\big)}{\gamma(\eta)T/\eta} 
\label{eq: exit time condition lower bound}
\\
\leq\,
&
\limsup_{\eta\downarrow0} \frac{\sup_{x \in A(\epsilon)} \P\big(\tau^{\eta}_{I(\epsilon)^\complement}(x) \leq T/\eta;\; V_{\tau_{\epsilon}}^\eta(x)\in B\big)}{\gamma(\eta)T/\eta} 
\leq C(B^-) + \delta_B(\epsilon,T)
\label{eq: exit time condition upper bound}
\\
&
\limsup_{\eta\downarrow 0} \frac{\sup_{x\in I(\epsilon)} \P\big( \tau^{\eta}_{(I(\epsilon)\setminus A(\epsilon))^\complement}(x) > T/\eta\big)} {\gamma(\eta) T/\eta} = 0
% \tag{E4}
\label{eq:E3}
\\[7pt]
&
\liminf_{\eta\downarrow 0} \ \inf_{x\in I(\epsilon)} \P\big(\tau^{\eta}_{A(\epsilon)}(x) \leq T/{\eta}\big) = 1
% \tag{E5}
\label{eq:E4}
\end{align}
for any $\epsilon \leq \epsilon_B$ and $T \geq T_B(\epsilon)$,
where $\gamma(\eta)/\eta \to 0$ as $\eta \downarrow 0$ and
$\delta_B$'s
are such that 
$$
\lim_{\epsilon\downarrow 0}\lim_{T\to\infty} {\delta_B(\epsilon,T)}
% = \lim_{\epsilon\to0}\lim_{T\to\infty} {u(\epsilon,T)}
= 0.
$$
\end{definition}

To see how Definition~\ref{def: asymptotic atom} asymptotically characterize the atoms in $V^\eta_{ \boldsymbol{\cdot}}(x)$ for the first exit analysis from domain $I$,
note that
the condition~\eqref{eq:E4} requires the process to efficiently return to the asymptotic atoms $A(\epsilon)$.
The conditions~\eqref{eq: exit time condition lower bound} and \eqref{eq: exit time condition upper bound} then state that,
upon hitting the asymptotic atoms $A(\epsilon)$,
the process almost regenerates in terms of the law of the exit time $\tau^\eta_{I(\epsilon)^\complement}(x)$ and exit locations $V^\eta_{\tau_\epsilon}(x)$.
Furthermore, the condition~\eqref{eq:E3} prevents the process $V^\eta_{ \boldsymbol{\cdot}}(x)$ from spending a long time without either returning to the asymptotic atoms $A(\epsilon)$ or exiting from $I(\epsilon)$, which covers the domain $I$ as $\epsilon$ tends to $0$.

The existence of an asymptotic atom is a sufficient condition for characterization of exit time and location asymptotics as in Theorem~\ref{theorem: first exit time, unclipped}. 
To minimize repetition, we refer to the existence of an asymptotic atom---with specific domain, location measure, scale, and covering---Condition~\ref{condition E2} throughout the paper.
\begin{condition}\label{condition E2}
A family $\big\{\{V_j^\eta(x): j\geq 0\}:\eta>0\big\}$ of Markov chains possesses an asymptotic atom $\{A(\epsilon)\subseteq \mathbb S: \epsilon>0\}$ associated with the domain $I$, location measure $C(\cdot)$, scale $\gamma:(0,\infty) \to (0,\infty)$, and covering $\{I(\epsilon)\subseteq I: \epsilon>0\}$.
\end{condition}

Recall that, right before Definition~\ref{def: asymptotic atom}, we state that for the location measure $C(\cdot)$ we consider two cases that $(i)$ $C(I^\complement) = 1$ (more generally, $C(\cdot)$ is a finite measure), and $(ii)$ $C(I^\complement) = 0$.
The following theorem is the key result of this section. See Section~\ref{subsec: Exit time analysis framework} for the proof of the theorem.

% Thanks to the previously developed uniform sample-path large deviations, 
% we can verify Condition \ref{condition E2} uniformly for all initial values over the asymptotic atoms $A(\epsilon) = (-\epsilon,\epsilon)$.

\begin{theorem}
\label{thm: exit time analysis framework} 
\linksinthm{thm: exit time analysis framework} 
If Condition~\ref{condition E2} holds, then the first exit time $\tau_{I^\complement}^\eta(x)$ scales as $1/\gamma(\eta)$, and the distribution of the location $V_\tau^\eta(x)$ at the first exit time converges to $C(\cdot)$. 
Moreover, the convergence is uniform over $I(\epsilon)$ for any $\epsilon>0$.
That is, 
\begin{enumerate}[$(i)$]
    \item 
        If $C(I^\complement) = 1$, then for each $\epsilon>0$, measurable $B\subseteq I^\complement$, and $t\geq 0$,
        \begin{align*}
            C(B^\circ) \cdot e^{-t}
            & \leq 
            \liminf_{\eta\downarrow 0} \inf_{x\in I(\epsilon)}
            \P\big(
                \gamma(\eta)\tau_{I^\complement}^{\eta}(x)>t,\,V_{\tau}^\eta(x)\in B  
            \big) 
            \\
            & \leq 
            \limsup_{\eta\downarrow 0} \sup_{x\in I(\epsilon)}
            \P\big(
                \gamma(\eta)\tau_{I^\complement}^{\eta}(x)>t,\,V_{\tau}^\eta(x)\in B  
            \big) 
            \leq C(B^-) \cdot e^{-t};
        \end{align*}

    \item 
        If $C(I^\complement) = 0$, then for each $\epsilon,t >0$,
        \begin{align*}
            \lim_{\eta\downarrow 0} \sup_{x\in I(\epsilon)}
            \P\big(
                \gamma(\eta)\tau_{I^\complement}^{\eta}(x) \leq t
            \big) = 0.
        \end{align*}
\end{enumerate}

\end{theorem}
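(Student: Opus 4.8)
The plan is to prove Theorem~\ref{thm: exit time analysis framework} by a renewal argument in which the atom $A(\epsilon)$ plays the role of the regeneration set and a window of $\lceil T/\eta\rceil$ steps plays the role of a single ``attempt'' at exiting $I$. Throughout, $\epsilon$ and $T$ are held fixed until the final iterated limit, and every estimate is kept uniform in the initial state. As a preliminary reduction, \eqref{eq:E4} shows that from any $x\in I(\epsilon)$ the chain reaches $A(\epsilon)$ within $\lceil T/\eta\rceil$ steps with probability tending to $1$; since $\gamma(\eta)\cdot T/\eta\to0$, this initial stretch is negligible on the time scale $1/\gamma(\eta)$ and leaves the asymptotic exit location unchanged, so it suffices to prove the statement for $x\in A(\epsilon)$ and then prepend this transient via the strong Markov property.

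The heart of the argument is the analysis of one cycle. Started from any $y\in A(\epsilon)$ and run for $\lceil T/\eta\rceil$ steps, the chain exits $I$ during the window into a measurable set $B\subseteq I^\complement$ with probability bounded below by $\big(C(B^\circ)-\delta_B(\epsilon,T)\big)\gamma(\eta)T/\eta$ and above by $\big(C(B^-)+\delta_B(\epsilon,T)\big)\gamma(\eta)T/\eta$, by \eqref{eq: exit time condition lower bound}--\eqref{eq: exit time condition upper bound} together with $C(\partial I)=0$ and $C$ being supported on $I^\complement$ (which also forces the probability of leaving $I(\epsilon)$ into $I\setminus I(\epsilon)$, a ``near miss'', to be $o(\gamma(\eta)T/\eta)$, hence negligible once summed over all cycles). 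Conditionally on not exiting, I let the chain run for a further $\lceil T'/\eta\rceil$ steps, where $T'\to\infty$ but $T'/T\to0$: by \eqref{eq:E3} with horizon $T'$, the chain revisits $A(\epsilon)$ during this shorter stretch with probability $1-o(\gamma(\eta)T/\eta)$, whereupon a fresh cycle begins. A cycle therefore has length in $[\lceil T/\eta\rceil,\lceil(T+T')/\eta\rceil]$ and per-cycle exit probability $q_\eta$ equal to $\gamma(\eta)T/\eta$ up to a factor $1+o(1)$, with the $o(1)$ vanishing after the iterated limit in $T$ and $\epsilon$.

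Chaining cycles at the successive entry times to $A(\epsilon)$ via the strong Markov property, the number $\mathcal N^\eta$ of cycles preceding the exit from $I$ is geometrically distributed with parameter $q_\eta$, up to an error that is $o(1)$ after summing the $o(q_\eta)$ per-cycle errors of \eqref{eq: exit time condition upper bound} and \eqref{eq:E3} over the $\Theta(\eta/(\gamma(\eta)T))$ cycles; and, conditionally on $\mathcal N^\eta$, the exit location is distributed as $C(\,\cdot\,)$ up to $\delta_B(\epsilon,T)$ and independently of $\mathcal N^\eta$. Since $\gamma(\eta)\tau^\eta_{I^\complement}(x)=q_\eta\mathcal N^\eta\cdot(1+O(T'/T)+o(1))$ and a $q_\eta$-geometric variable multiplied by $q_\eta$ converges in law to $\mathrm{Exp}(1)$, we obtain
\begin{align*}
\P\big(\gamma(\eta)\tau^\eta_{I^\complement}(x)>t,\;V^\eta_\tau(x)\in B\big)=e^{-t}\,C(B)+O\big(\delta_B(\epsilon,T)+T'/T\big)+o(1),
\end{align*}
uniformly in $x\in I(\epsilon)$; running this with $B^\circ$ and $B^-$ in place of $B$ and then letting $\eta\downarrow0$, $T\to\infty$ (say $T'=\sqrt T$), and $\epsilon\to0$ in that order — so that $\delta_B(\epsilon,T)\to0$ by Definition~\ref{def: asymptotic atom} — produces exactly the claimed bounds $C(B^\circ)e^{-t}\le\liminf\le\limsup\le C(B^-)e^{-t}$.

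The main obstacle is the passage from the deterministic-window hypotheses of Condition~\ref{condition E2} to a bona fide renewal structure, whose regeneration epochs (the visits to $A(\epsilon)$) occur at random times: one must arrange that the ``resynchronization'' lag between the end of a non-exiting window and the next visit to $A(\epsilon)$ is asymptotically negligible relative to $T/\eta$ (handled above by using the separate, much shorter horizon $T'$) so that no spurious constant enters the exponent, and one must check that the error terms do not accumulate — those in \eqref{eq:E3} are $o(\gamma(\eta)\cdot\text{horizon})$ and hence summable over the $\Theta(\eta/(\gamma(\eta)T))$ cycles, whereas the merely $o(1)$ error in \eqref{eq:E4} is harmless only because it is incurred a single time, in the initial reduction to $A(\epsilon)$. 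Beyond that, keeping every constant and $o(\cdot)$ term uniform in the initial condition and sequencing the iterated limit $\lim_{\epsilon\to0}\lim_{T\to\infty}$ correctly is routine bookkeeping.
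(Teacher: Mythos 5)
Your proposal is correct and follows essentially the same regeneration argument as the paper's own proof (Proposition~\ref{prop: exit time analysis main proposition} together with the reduction steps in the proof of Theorem~\ref{thm: exit time analysis framework}): cycles anchored at successive returns to $A(\epsilon)$, per-cycle exit probability $\approx\gamma(\eta)T/\eta$ from \eqref{eq: exit time condition lower bound}--\eqref{eq: exit time condition upper bound}, a geometric cycle count producing the exponential limit, \eqref{eq:E3} and \eqref{eq:E4} controlling the resynchronization lag and the initial transient, and the near miss into $I\setminus I(\epsilon)$ absorbed via $C(I^-)=0$ (the paper uses cooling periods $\lfloor rT/\eta\rfloor$ with $r$ near $1$ where you use a short auxiliary window $T'$, a purely cosmetic difference). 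The one imprecision is that the per-cycle near-miss probability is $O\big(\delta_I(\epsilon,T)\,\gamma(\eta)T/\eta\big)$ rather than $o(\gamma(\eta)T/\eta)$, so its sum over cycles is only killed by the iterated limit $T\to\infty$, $\epsilon\to0$ that you already take; the paper sidesteps this by bounding $\P\big(V^\eta_{\tau_\epsilon}(x)\in I\big)$ once, rather than cycle by cycle.
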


\subsection{Numerical Examples}
\label{subsec: numerical examples}

In this section, we provide the details for numerical samples illustrated in Figures~\ref{fig: exit path summary} and \ref{fig: exit time}.

\medskip
\noindent
\textbf{Large Deviations and the Catastrophe Principle}.
We consider iterates in $\R^1$:
\begin{align}
    \bm X^{\eta|b}_0(\bm x) = \bm x,\qquad
\bm X^{\eta|b}_j(\bm x) = \bm X^{\eta|b}_{j-1}(\bm x) + \varphi_b\big( -\eta U^\prime\big(\bm X^{\eta|b}_{j-1}(\bm x)\big) + \eta \bm Z_j\big)\quad \forall j \geq 1,
\label{def: R1 dynamics, numerical examples}
\end{align}
with the potential function defined as
\begin{align}
    U(\bm x) = \frac{1}{10}\bm x^4 - \bm x^2.
    \label{numerical example: def potential U, LD}
\end{align}
The potential $U(\cdot)$ possess two local minima $m_{\pm} = \pm \sqrt{5} \approx \pm 2.24$.
Here, $(\bm Z_j)_{j \geq 1}$ is an iid sequence of law
\begin{align}
      c_{\text{pareto}} \cdot W_\alpha + c_{\text{normal}}\cdot N(0,1)
      \label{def: law of noises, LD experiment}
\end{align}
where $N(0,1)$ is a standard normal RV,
$
\P(W_\alpha > w) = \P(-W_\alpha > w) = \frac{0.5}{ (1 + w)^\alpha  }
$
for $w > 0$,
% $$
%     \P(W_\alpha > w) = \P(-W_\alpha > w) = \frac{0.5}{ (1 + w)^\alpha  },\qquad 
%     \forall w > 0,
% $$
.i.e., a Lomax (Pareto Type-II) RV with index $\alpha$ and a random sign, and $W$ and $N(0,1)$ are independent.

Recall that we denote the (time-scaled) sample path by $\bm X^{\eta|b}_{[0,1]}(\bm x) = \{ \bm X^{\eta|b}_{\floor{t/\eta}}(\bm x):\ t \in [0,1] \}$,
Let $B\subseteq \D\big([0,1],\R\big)$ be defined as in \eqref{def: event B for numerical example, LD, intro}.
First, we are interested in the probabilities $\P(\bm X^{\eta|b}_{[0,1]}(\bm x_\text{init}) \in B)$.
Specifically, we fix the initial value at $\bm x_\text{init} = 2.5$,
and consider a heavy-tailed setting where 
$c_\text{pareto} = 0.2$, $c_\text{sigma} = 5$, and $\alpha = 1.5$ in \eqref{def: law of noises, LD experiment}.
By \emph{truncated case}, we mean that the truncation threshold $b$ is set as $1.5$ in \eqref{def: R1 dynamics, numerical examples}.
In this case, we have
$\D^{(2)|b}_{ \{ \bm x_\text{init} \} }[0,1] \cap B \neq \emptyset$ (see \eqref{def: l * tilde jump number for function g, clipped SGD})
and that $B$ is bounded away from 
$
\D^{(1)|b}_{ \{ \bm x_\text{init} \} }[0,1](\epsilon)
$
for small $\epsilon$.
In particular, for any $\epsilon > 0$ small enough and any
$
\xi \in 
\D^{(1)|b}_{ \{ \bm x_\text{init} \} }[0,1](\epsilon),
$
we have $\inf_{t \in [0,1]}\xi_t \geq m_+ - (b + \epsilon) > 0$.
Applying Theorem~\ref{corollary: LDP 2},
we have
$\P(\bm X^{\eta|b}_{[0,1]}(\bm x_\text{init}) \in B)$ is roughly of order $\eta^{ 2 * (1.5 - 1)} = \eta$ as $\eta \downarrow 0$.
By \emph{untruncated case} we mean that $b$ is set as $\infty$, so the projection operator $\varphi_b$ in \eqref{def: R1 dynamics, numerical examples} is superfluous
and the iterates reduces to the stochastic difference equation in \eqref{def: X eta b j x, unclipped SGD}.
In this case,
we have 
$\D^{(1)}_{ \{ \bm x_\text{init} \} }[0,1] \cap B \neq \emptyset$ (see \eqref{def: l * tilde jump number for function g, clipped SGD})
and $B$ is bounded away from
$
\D^{(0)}_{ \{ \bm x_\text{init} \} }[0,1](\epsilon),
$
which (regardless of the value of $\epsilon$) only contains the gradient flow
$
d\bm y_t(\bm x_\text{init})/dt = -U^\prime\big(\bm y_t(\bm x_\text{init})\big).
$
We thus yield that
$\P(\bm X^{\eta}_{[0,1]}(\bm x_\text{init}) \in B)$ is roughly of order $\eta^{1.5 - 1} = \eta^{0.5}$ as $\eta \downarrow 0$.
We confirm these asymptotics through Monte-Carlo simulation.
The results are obtained by collecting 32 positive samples
(i.e., draw independent samples of $\mathbbm{I}\{\bm X^{\eta|b}_{[0,1]}(\bm x_\text{init}) \in B\}$ until the event occurs 32 times), and
are presented in Table~\ref{table: large deviation, probability}
and the log-log scale plot in Figure~\ref{fig: exit path summary} (a, Right).
As shown in the plot, the estimates confirm the asymptotics indicated by the sample path large deviations we developed in Section~\ref{subsec: LD, SGD}.

Next, we inspect the conditional law
$\P\big(\bm X^{\eta|b}_{[0,1]}(\bm x_\text{init})\in \cdot\ \big|\bm X^{\eta|b}_{[0,1]}(\bm x_\text{init}) \in B\big)$,
which reveals the most likely behavior of $\bm X^{\eta|b}_j(\bm x)$ given the rare event.
We are also interested in comparing the heavy-tailed and light-tailed cases.
In the \emph{light-tailed case},
we set 
$c_\text{pareto} = 0$ and $c_\text{sigma} = 10$ in \eqref{def: law of noises, LD experiment}.
The parameters are so chosen that,
under $\eta = 1/200$, the probability of the rare event is of an order comparable to its heavy-tailed counterpart (that is, around $10^{-6}$), which prevents the experiment from running too long.
We present in Figure~\ref{fig: exit path summary} (b)--(e) the samples from the conditional law,
which we obtained by running Monte-Carlo simulation for $\bm X^{\eta|b}_{[0,1]}(\bm x_\text{init})$
and keeping the samples when the event $\{\bm X^{\eta|b}_{[0,1]}(\bm x_\text{init}) \in B\}$ occurs.
Part (c) and (d) confirm the catastrophe principle for heavy-tailed dynamics established in Corollary~\ref{corollary: conditional limit, SGD}: 
the rare event arises due to $k$ catastrophically large components, and the index $k$ is the minimum number of perturbations required for the nominal path to enter the target set.
From part (d) and (e) of the figure, we also observe the sharp contrast between the catastrophe principle of heavy-tailed systems and the conspiracy principle of light-tailed systems.

\begin{table}[t]
  \caption{
  Monte-Carlo estimation for $\P(\bm X^{\eta|b}_{[0,1]} \in B)$ using 32 positive samples.}
  \label{table: large deviation, probability}
  \centering
  \begin{tabular}{lllllll}
    \hline
    $\eta$     &    1/200 & 1/400 & 1/600 & 1/800 & 1/1000 \\
    \hline
    
    (Untruncated) $b = \infty$  &  $1.1 \times 10^{-3}$ & $6.74 \times 10^{-4}$ & $5.78 \times 10^{-4}$ & $4.73 \times 10^{-4}$ & $4.25 \times 10^{-4}$ \\ 
    (Truncated) $b = 1.5$    & $5.25 \times 10^{-6}$ & $1.67 \times 10^{-6}$ & $1.15 \times 10^{-6}$ & $9.53 \times 10^{-7}$ & $6.61\times 10^{-7}$ \\
    % CIFAR10, VGG11      & 69.39\% & 74.42\% & 74.38\% & 40.50\% & 75.69\% & \textbf{75.87\%} \\

    \hline%\bottomrule
  \end{tabular}
\end{table}

% \begin{figure}[H]
% \vskip 0.2in
% \begin{center}
% \begin{tabular}{ccc}
% \includegraphics[width = 0.37\textwidth]{LD landscape.pdf} &
%  \includegraphics[width = 0.3\textwidth]{gradient flow path.pdf} &
% \includegraphics[width=0.22\textwidth]{updated rare events probability.pdf}  \\
% \end{tabular}
% \caption{ 
% large deviation experiments.
% \textbf{(Middle)} Gradient flow path.
% \textbf{(Right)} Dashed references lines are power law functions with order $\eta^{1/2}$ and $\eta^{1}$, as our theorems indicate.
% Each dot is estimated by 32 positive samples.
% }
% \label{fig: LD experiment}
% \end{center}
% \vskip -0.2in
% \end{figure}

\medskip
\noindent
\textbf{Metastability}.
Consider the one-dimensional iterates in \eqref{def: R1 dynamics, numerical examples} under the potential function
\begin{equation}\label{aeq: potential U, first exit time}
\begin{aligned}
    & U(x)= (x+1.6)(x+1.3)^2(x-0.2)^2(x-0.7)^2(x-1.6)\big(0.05|1.65-x|\big)^{0.6} \\
    & \ \ \cdot \Big( 1 + \frac{1}{ 0.01 + 4(x-0.5)^2  } \Big)\Big( 1 + \frac{1}{0.1 + 4(x+1.5)^2} \Big)\Big( 1 - \frac{1}{4}\exp( -5(x + 0.8)(x + 0.8)  ) \Big).
\end{aligned}
\end{equation}
See Fig~\ref{fig: exit time} (i) for an illustration.
Specifically, we consider the case where the law of $(Z_j)_{j \geq 1}$ is of form \eqref{def: law of noises, LD experiment} with  $c_\text{pareto} = 0.1$, $c_\text{normal} = 0$, and $\alpha = 1.2$,
and
focus on the local minimum $m = -0.66$ and its attraction field $I = (-1.3, 0.2)$.
We initialize the process at $\bm x = m$ and are interested in first exit times $\tau^{\eta|b}(\bm x)$  from $I$; see \eqref{def: first exit time for heavy tailed SGD}. 
In this case,
the index $\mathcal J^I_b$ defined in \eqref{def: first exit time, J *}
reduces to $\mathcal J^I_b = \ceil{ 0.64/b  }$ for any $b \in (0,\infty)$,
and the regularity conditions in Theorem~\ref{theorem: first exit time, unclipped}
hold for (Lebesgue) almost all $b > 0$; see Remark~\ref{remark: regularity conditions for first exit times}.
Therefore, for Lebesgue almost all $b > 0$,
the stopping times $\tau^{\eta|b}(\bm x)$ is roughly of order $1/\eta^{ 1 + \mathcal J^I_b(\alpha - 1) } = 1/\eta^{ 1 + \mathcal J^I_b * 0.2 }$ as $\eta \downarrow 0$.
This characterizes the phase transitions in the order of first exit times depending on the (discretized) relative width $\mathcal J^I_b$.
In case that $b = \infty$, we apply Corollary~\ref{corollary: first exit time, untruncated case} and obtain that 
the exit times $\tau^\eta(\bm x)$ in the untrucated case (see \eqref{def: first exit time for heavy tailed SGD})
is roughly of order 
$1/\eta^{\alpha } = 1/\eta^{ 1.2 }$ for small $\eta$.

We confirm these asymptotics through Monte-Carlo simulation and present the results in 
Fig~\ref{fig: exit time} (ii).
This is a log-log scale plot, where each dot represents an average of 20 samples, and the dashed lines indicate the asymptotics provided by our metastability analysis.
To prevent the experiment from running too long, a stopping criterion of $5\times 10^7$ steps 
is employed.
This stopping criterion was reached only in the case where $b = 0.28$ and $\eta = 0.001$, which is indicated in the plot by the only non-solid dot, highlighting that it is an underestimation.
The plot confirms the asymptotic law of first exit times established in our metastability analysis, as well as the phase transition in first exit times  w.r.t.\ $\mathcal J^I_b$.
Furthermore,
this dependency on the relative width $\mathcal J^I_b$ leads to the  
 intriguing global dynamics shown in Fig~\ref{fig: exit time} (iii) and (iv), where we run $\bm X^{\eta|b}_t(\bm x)$ under the choice of $\eta = 1/1000$ and $x = 0.3$.
In the light-tailed cases, we set 
$c_\text{pareto} = 0$ and $c_\text{normal} = 1$.
As we can see from Fig~\ref{fig: exit time} (iii) and (iv), driven by untruncated heavy-tailed perturbations, $\bm X^{\eta}_j(\bm x)$ frequently traverses all local minima of $U$;
In contrast, under truncated heavy tails, $\bm X^{\eta}_j(\bm x)$ almost completely avoids the sharp minima of $U$.
This phenomenon is formally characterized in a companion paper \cite{wangSGDpaper2},
where we show that, as $\eta \downarrow 0$, the (time-scaled) sample path of $\bm X^{\eta|b}_j(\bm x)$ converges in distribution to a Markov chain that \textbf{only visits the widest minima} (in terms of the relative width $\mathcal J^I_b$) of the potential $U$, and we discuss its connection to the generalization performance of deep neural networks trained with heavy tailed noises.

\section{Uniform $\mathbb M$-Convergence and Sample Path Large Deviations }
\label{sec: LD of SGD, proof}

Here, we collect the proofs for Sections~\ref{sec: M convergence, asymptotic equivalence} and \ref{subsec: LD, SGD}.
Specifically, Section~\ref{subsec: proof of portmanteau theorem for uniform M convergence}
provides the proof of Theorem \ref{theorem: portmanteau, uniform M convergence}
(i.e., the Portmanteau theorem for the uniform $\M(\S \setminus \C)$-convergence)
and Lemma~\ref{lemma: asymptotic equivalence when bounded away, equivalence of M convergence}.
Section~\ref{subsec: proof of lemmas, LD of SGD proof} further develops a set of technical tools,
which will then be applied to establish the sample-path large deviations results (i.e., Theorems~\ref{corollary: LDP 2} and \ref{theorem: LDP 1, unclipped}) in Section~\ref{subsec: LDP clipped, proof of main results}.

\subsection{Proof of Theorem \ref{theorem: portmanteau, uniform M convergence} and Lemma~\ref{lemma: asymptotic equivalence when bounded away, equivalence of M convergence}}
\label{subsec: proof of portmanteau theorem for uniform M convergence}

% We first provide the proof of Theorem \ref{theorem: portmanteau, uniform M convergence},
% i.e., the Portmanteau theorem for the uniform $\M(\S \setminus \C)$-convergence.

\begin{proof}[Proof of Theorem \ref{theorem: portmanteau, uniform M convergence}]
\linksinpf{theorem: portmanteau, uniform M convergence}
\textbf{Proof of $(i) \Rightarrow (ii)$}. It follows directly from Definition \ref{def: uniform M convergence}.

\medskip
\noindent
\textbf{Proof of $(ii) \Rightarrow (iii)$}.
We consider a proof by contradiction.
Suppose that the upper bound $\limsup_{\eta \downarrow 0}\sup_{\theta \in \Theta}\mu^\eta_\theta(F) - \mu_\theta(F^\epsilon) \leq 0$ does not hold for some closed $F$ bounded away from $\C$ and some $\epsilon > 0$.
Then there exist a sequence $\eta_n \downarrow 0$, a sequence $\theta_n \in \Theta$, and some $\delta > 0$ such that
$
\mu^{\eta_n}_{\theta_n}(F) - \mu_{\theta_n}(F^\epsilon) > \delta\ \forall n \geq 1.
$
Now, we make two observations.
First, using Urysohn's lemma (see, e.g., lemma 2.3 of \cite{lindskog2014regularly}),
one can identify some $f \in \mathcal C(\S \setminus \C)$, which is also uniformly continuous on $\mathbb S$, such that
$
\mathbbm{I}_F \leq f \leq \mathbbm{I}_{F^\epsilon}.
$
This leads to the bound
$
\mu^{\eta_n}_{\theta_n}(F) - \mu_{\theta_n}(F^\epsilon)
\leq 
\mu^{\eta_n}_{\theta_n}(f) - \mu_{\theta_n}(f)
$
for each $n$.
Secondly,
% due to the assumption \eqref{assumption in portmanteau, uniform M convergence},
% by picking a sub-sequence of $\theta_n$ if necessary we can find some $\mu_{\theta^*}$ such that
% $
% \lim_{n \to \infty}\big| \mu_{\theta_n}(f) - \mu_{\theta^*}(f) \big| = 0.
% $
% On the other hand, 
from statement $(ii)$ we get
$
\lim_{n \to \infty}\big| \mu^{\eta_n}_{\theta_n}(f) - \mu_{\theta_n}(f) \big| = 0.
$
In summary, we yield the contradiction
\begin{align*}
\limsup_{n \to \infty}\mu^{\eta_n}_{\theta_n}(F) - \mu_{\theta_n}(F^\epsilon)
& \leq 
\limsup_{n \to \infty}\mu^{\eta_n}_{\theta_n}(f) - \mu_{\theta_n}(f)
% \\ 
% & 
\leq \lim_{n \to \infty}\big|\mu^{\eta_n}_{\theta_n}(f) - \mu_{\theta_n}(f)\big| = 0.
% \\
% &
%     \lim_{n \to \infty}\big| \mu^{\eta_n}_{\theta_n}(f) - \mu_{\theta^*}(f)\big|
%     \leq 
%     \lim_{n \to \infty}\big| \mu^{\eta_n}_{\theta_n}(f) - \mu_{\theta_n}(f) \big|
%     +
%     \lim_{n \to \infty}\big| \mu_{\theta_n}(f) - \mu_{\theta^*}(f) \big|
%     = 0
\end{align*}
% By Portmanteau theorem for standard $\M(\S \setminus \C)$-convergence (see theorem 2.1 of \cite{lindskog2014regularly}),
% we obtain the contradiction
% $
% \limsup_{n \to\infty}\mu^{\eta_n}_{\theta_n}(F) \leq \mu_{\theta^*}(F).
% $
Analogously,
if the claim
$\liminf_{\eta \downarrow 0}\inf_{\theta \in \Theta}\mu^\eta_\theta(G) - \mu_\theta(G^\epsilon) \geq 0$, supposedly, does not hold for some open $G$ bounded away from $\C$ and some $\epsilon > 0$,
% (i.e., 
% $
% \mu^{\eta_n}_{\theta_n}(G) - \mu_{\theta_n}(G) < -\epsilon\ \forall n \geq 1
% $
% for some $\epsilon > 0$ and some sub-sequence $\theta_n$)
% and obtain the contradiction 
% $
% \limsup_{n \to\infty}\mu^{\eta_n}_{\theta_n}(F) \leq \mu_{\theta^*}(F).
% $
then we can yield a similar contradiction by applying Urysohn's lemma and constructing some uniformly continuous $g \in \mathcal C(\S \setminus \C)$ such that $\mathbbm{I}_{G_\epsilon} \leq g \leq \mathbbm{I}_G$.
This concludes the proof of  $(ii) \Rightarrow (iii)$.

\medskip\noindent\textbf{Proof of $(iii) \Rightarrow (i)$.}
Again, we proceed with a proof by contradiction.
Suppose that the claim
$
\lim_{\eta \downarrow 0}\sup_{\theta \in \Theta}\big| \mu^\eta_\theta(g) - \mu_\theta(g) \big| = 0
$
does not hold for some $g \in \mathcal C(\S \setminus \C)$.
Then, there exist some sequences $\eta_n \downarrow 0$, $\theta_n \in \Theta$ and some $\delta > 0$ such that
\begin{align}
    |\mu^{\eta_n}_{\theta_n}(g) - \mu_{\theta_n}(g) | > \delta\qquad \forall n \geq 1.
    \label{proof, proof by contradiction, ii to i, theorem: portmanteau, uniform M convergence}
\end{align}
To proceed, we arbitrarily pick some
closed $F \subseteq \mathbb{S}$ that is bounded away from $\mathbb{C}$
        and some open
        $G \subseteq \mathbb{S}$ that is bounded away from $\mathbb{C}$.
First, 
using claims in $(iii)$, we get
$
\limsup_{n \to \infty}\mu^{\eta_n}_{\theta_n}(F) - \mu_{\theta_n}(F^\epsilon) \leq 0
$
and
$
\liminf_{n \to \infty}\mu^{\eta_n}_{\theta_n}(G) - \mu_{\theta_n}(G_\epsilon) \geq 0
$
for any $\epsilon > 0$.
Next,
due to condition \eqref{assumption in portmanteau, uniform M convergence},
by picking a sub-sequence of $\theta_n$ if necessary we can find some $\mu_{\theta^*}$ such that
$
\lim_{n \to \infty}\big| \mu_{\theta_n}(f) - \mu_{\theta^*}(f) \big| = 0
$
for all $f \in \mathcal C(\S \setminus \C)$.
By Portmanteau theorem for standard $\M(\S \setminus \C)$-convergence (see theorem 2.1 of \cite{lindskog2014regularly}),
we yield
$
\limsup_{n \to\infty}\mu_{\theta_n}(F^\epsilon) \leq \mu_{\theta^*}(F^\epsilon)
$
and
$
\liminf_{n \to\infty}\mu_{\theta_n}(G_\epsilon) \geq \mu_{\theta^*}(G_\epsilon).
$
In summary, for any $\epsilon > 0$,
\begin{align*}
    \limsup_{n \to \infty}\mu^{\eta_n}_{\theta_n}(F) 
    & \leq 
    \limsup_{n \to \infty} \mu_{\theta_n}(F^\epsilon) + \limsup_{n \to \infty} \mu^{\eta_n}_{\theta_n}(F)  -  \mu_{\theta_n}(F^\epsilon)
    % \\ 
    % & 
    \leq \mu_{\theta^*}(F^\epsilon),
    \\
    \liminf_{n \to \infty}\mu^{\eta_n}_{\theta_n}(G) 
    & \geq
    \liminf_{n \to \infty} \mu_{\theta_n}(G_\epsilon) + \liminf_{n \to \infty} \mu^{\eta_n}_{\theta_n}(G)  -  \mu_{\theta_n}(G_\epsilon)
    % \\ 
    % & 
    \geq \mu_{\theta^*}(G_\epsilon).
\end{align*}
Lastly, note that $\lim_{\epsilon \downarrow 0}\mu_{\theta^*}(F^\epsilon) = \mu_{\theta^*}(F)$
and
$\lim_{\epsilon \downarrow 0}\mu_{\theta^*}(G_\epsilon) = \mu_{\theta^*}(G)$
due to continuity of measures and $\bigcap_{\epsilon > 0} F^\epsilon = F$, $\bigcup_{\epsilon > 0}G_\epsilon = G$.
This allows us to apply  Portmanteau theorem for standard $\M(\S \setminus \C)$-convergence again and obtain
$
\lim_{n \to \infty}\big|\mu^{\eta_n}_{\theta_n}(g) - \mu_{\theta^*}(g)\big| = 0
$
for the $g \in \mathcal C(\S \setminus \C)$ fixed in \eqref{proof, proof by contradiction, ii to i, theorem: portmanteau, uniform M convergence}.
However, recall that we have already obtained 
$
\lim_{n \to \infty}\big| \mu_{\theta_n}(g) - \mu_{\theta^*}(g) \big| = 0
$
using assumption \eqref{assumption in portmanteau, uniform M convergence}.
We now arrive at the contradiction
\begin{align*}
    \lim_{n \to \infty}\big| \mu^{\eta_n}_{\theta_n}(g) - \mu_{\theta_n}(g) \big|
    & \leq 
     \lim_{n \to \infty}\big| \mu^{\eta_n}_{\theta_n}(g) - \mu_{\theta^*}(g) \big|
     +
      \lim_{n \to \infty}\big| \mu_{\theta^*}(g) - \mu_{\theta_n}(g) \big| = 0
\end{align*}
and conclude the proof of $(iv)\Rightarrow (i)$.

\medskip\noindent\textbf{Proof of $(i) \Rightarrow (iv)$.}
% This is an immediate consequence of the triangle inequality. For instance,
% \begin{align*}
%     \limsup_{\eta \downarrow 0}\sup_{\theta \in \Theta}\mu^\eta_\theta(F)
%     & \leq 
%     \limsup_{\eta \downarrow 0}\sup_{\theta \in \Theta}\mu^\eta_\theta(F) - \mu_\theta(F)
%     +
%     \sup_{\theta \in \Theta}\mu_\theta(F)
%     \leq 
%     \sup_{\theta \in \Theta}\mu_\theta(F),
% \end{align*}
% where the last inequality follows from statement $(iv)$.
% \elaborate{
Due to the equivalence of $(i)$, $(ii)$, and $(iii)$, it only remains to show that $(i) \Rightarrow (iv)$.
% Again, we consider a proof by contradiction.
Suppose, for the sake of contradiction, that the claim  $\limsup_{\eta \downarrow 0}\sup_{\theta \in \Theta}\mu^\eta_\theta(F) \leq \sup_{\theta \in \Theta} \mu_\theta(F)$ in $(iv)$ does not hold for some closed $F$ bounded away from $\C$.
Then we can find sequences $\eta_n \downarrow 0$, $\theta_n \in \Theta$ and some $\delta > 0$ such that 
$
\mu^{\eta_n}_{\theta_n}(F) > \sup_{\theta \in \Theta} \mu_\theta(F) + \delta\ \forall n \geq 1.
$
Next,
due to the assumption \eqref{assumption in portmanteau, uniform M convergence},
by picking a sub-sequence of $\theta_n$ if necessary we can find some $\mu_{\theta^*}$ such that
$
\lim_{n \to \infty}\big| \mu_{\theta_n}(f) - \mu_{\theta^*}(f) \big| = 0
$
for all $f \in \mathcal C(\S \setminus \C)$.
Meanwhile, 
$(i)$ implies that 
$
\lim_{n \to \infty}\big| \mu^{\eta_n}_{\theta_n}(f) - \mu_{\theta_n}(f) \big| = 0
$
for all $f \in \mathcal C(\S \setminus \C)$.
Therefore,
\begin{align*}
    \lim_{n \to \infty}\big| \mu^{\eta_n}_{\theta_n}(f) - \mu_{\theta^*}(f)  \big|
    \leq 
    \lim_{n \to \infty}\big| \mu^{\eta_n}_{\theta_n}(f) - \mu_{\theta_n}(f) \big|
    +
    \lim_{n \to \infty}\big| \mu_{\theta_n}(f) - \mu_{\theta^*}(f) \big|
    = 0
\end{align*}
for all $f \in \mathcal C(\S \setminus \C)$.
By Portmanteau theorem for standard $\M(\S \setminus \C)$-convergence,
we yield the contradiction
$
\limsup_{n \to \infty}\mu^{\eta_n}_{\theta_n}(F) \leq \mu_{\theta^*}(F) \leq \sup_{\theta \in \Theta}\mu_\theta(F).
$
In summary, we have established the claim $\limsup_{\eta \downarrow 0}\sup_{\theta \in \Theta}\mu^\eta_\theta(F) \leq \sup_{\theta \in \Theta} \mu_\theta(F)$
for all closed $F$ bounded away from $\C$.
The same approach can also be applied to show
$\liminf_{\eta \downarrow 0}\inf_{\theta \in \Theta}\mu^\eta_\theta(G) \geq \inf_{\theta \in \Theta} \mu_\theta(G)$
for all open $G$ bounded away from $\C$.
This concludes the proof.
% }
\end{proof}

\begin{proof}[Proof of Lemma~\ref{lemma: asymptotic equivalence when bounded away, equivalence of M convergence}]
\linksinpf{lemma: asymptotic equivalence when bounded away, equivalence of M convergence}
We arbitrarily pick some Borel measurable $B \subseteq \mathbb S$ that is bounded away from $\mathbb C$.
Henceforth in this proof, we only consider $\Delta > 0$ small enough
that $\bm d(B,\mathbb C) > \Delta$, and hence $B^\Delta$ is still bounded away from $\mathbb C$.
Observe that
\begin{align*}
    \P(X_n \in B)
    & \leq 
    \P\big(
        X_n \in B;\ \bm d( X_n, Y^\delta_n ) \leq \Delta
    \big)
    +
    \P\big(
        X_n \in B;\ \bm d( X_n, Y^\delta_n ) > \Delta
    \big)
    \\ 
    & \leq 
    \P\big(
        Y^\delta_n \in B^\Delta
    \big)
    +
    \P\big(
        X_n\in B\text{ or }Y^\delta_n \in B;\ \bm d( X_n, Y^\delta_n ) > \Delta
    \big).
\end{align*}
As a result,
\begin{align}
     & \limsup_{n \to \infty}
    \epsilon^{-1}_n
    \P(X_n \in B)
    \nonumber
    \\ 
    & \leq
    \limsup_{\delta \downarrow 0}\limsup_{n \to \infty}
    \epsilon^{-1}_n\P\big(
        Y^\delta_n \in B^\Delta
    \big)
    +
    \limsup_{\delta \downarrow 0}\limsup_{n \to \infty} \epsilon^{-1}_n
    \P\Big( \bm{d}\big(X_n, Y^\delta_n\big)\mathbbm{I}\big( X_n \in B\text{ or }Y^\delta_n \in B \big) > \Delta \Big)
    \nonumber
    \\ 
    & \leq
    \mu(B^\Delta)
    \qquad
    \text{ by conditions (i) and (ii) of Lemma~\ref{lemma: asymptotic equivalence when bounded away, equivalence of M convergence}}.
    \label{proof: upper bound, lemma: asymptotic equivalence when bounded away, equivalence of M convergence}
\end{align}
Analogously, observe the lower bound
\begin{align*}
    \P(X_n \in B)
    & \geq 
    \P\big(X_n \in B;\ \bm d( X_n, Y^\delta_n ) \leq \Delta\big)
    \\ 
    & \geq
    \P\big( Y^\delta_n \in B_\Delta; \ \bm d( X_n, Y^\delta_n ) \leq \Delta\big)
    \\ 
    & \geq 
    \P( Y^\delta_n \in B_\Delta)
    -
    \P\big( Y^\delta_n \in B_\Delta; \ \bm d( X_n, Y^\delta_n ) > \Delta\big)
    \\ 
    & \geq 
    \P( Y^\delta_n \in B_\Delta)
    -
    \P\big( Y^\delta_n \in B\text{ or }X_n \in B; \ \bm d( X_n, Y^\delta_n ) > \Delta\big),
\end{align*}
and hence
\begin{align}
    & \liminf_{n \to \infty}
    \epsilon^{-1}_n
    \P(X_n \in B)
    \nonumber
    \\ 
    & \geq 
    \liminf_{\delta \downarrow 0}\liminf_{n \to \infty}\epsilon^{-1}_n
    \P( Y^\delta_n \in B_\Delta)
    -
    \limsup_{\delta \downarrow 0}\limsup_{n \to \infty} \epsilon^{-1}_n
    \P\Big( \bm{d}\big(X_n, Y^\delta_n\big)\mathbbm{I}\big( X_n \in B\text{ or }Y^\delta_n \in B \big) > \Delta \Big)
    \nonumber
    \\ 
    & \geq 
    \mu(B_\Delta)
     \qquad
    \text{ by conditions (i) and (ii) of Lemma~\ref{lemma: asymptotic equivalence when bounded away, equivalence of M convergence}}.
    \label{proof: lower bound, lemma: asymptotic equivalence when bounded away, equivalence of M convergence}
\end{align}
Since 
$\mu \in \mathbb M(\mathbb S\setminus\mathbb C)$ and
$B^\Delta$ is bounded away from $\mathbb C$,
we have $\mu(B^\Delta) < \infty$.
By sending $\Delta \downarrow 0$ in \eqref{proof: upper bound, lemma: asymptotic equivalence when bounded away, equivalence of M convergence} and \eqref{proof: lower bound, lemma: asymptotic equivalence when bounded away, equivalence of M convergence},
it then follows from the continuity of measure $\mu$ that 
\begin{align*}
    \mu(B^\circ) \leq 
    \liminf_{n \to \infty}\epsilon_n^{-1}\P(X_n \in B)
    \leq 
    \limsup_{n \to \infty}\epsilon_n^{-1}\P(X_n \in B)
    \leq \mu(B^-).
\end{align*}
Due to the arbitrariness of our choice of $B$,
we conclude the proof using Theorem 2.1 of \cite{lindskog2014regularly},
which is the Portmanteau theorem for the standard $\mathbb M(\mathbb S\setminus \mathbb C)$-convergence and, essentially, a special case of Theorem~\ref{theorem: portmanteau, uniform M convergence}.
\end{proof}

\subsection{Technical Lemmas for Theorems \ref{corollary: LDP 2} and \ref{theorem: LDP 1, unclipped}}
\label{subsec: proof of lemmas, LD of SGD proof}

Our analysis hinges on the separation of \textit{large noises} among $(\bm Z_j)_{j \geq 1}$ from the rest, and we pay special attention to $\bm Z_j$'s with norm large enough such that $\eta\norm{\bm Z_j}$ exceed some prefixed threshold level $\delta > 0$.
% the former are viewed as \textit{small noises} while the latter will be referred to as \textit{large noises} or \textit{large jumps} when compared to some threshold level $\delta > 0$ after being modulated by the step length $\eta$.
To be more concrete,
for any $i \geq 1$ and $\eta,\delta > 0$, define the $i$\textsuperscript{th} arrival time
of ``large noises'' and its size as
\begin{align}
    \notationdef{notation-large-jump-time}{\tau^{>\delta}_i(\eta)} & \delequal{} \min\{ n > \tau^{>\delta}_{i-1}(\eta):\ \eta\norm{\bm Z_j} > \delta  \},\quad \tau^{>\delta}_{0}(\eta) = 0 \label{defArrivalTime large jump} \\
    \notationdef{notation-large-jump-size}{\bm W^{>\delta}_i(\eta)} & \delequal{} \bm Z_{\tau^{>\delta}_i(\eta)}. \label{defSize large jump}
\end{align}
For any $\delta > 0$ and $k = 1,2,\cdots$, note that
\begin{align}
    \P\Big( \tau^{>\delta}_k(\eta) \leq \floor{1/\eta} \Big)
    & \leq 
    \P\Big( \tau^{>\delta}_j(\eta) - \tau^{>\delta}_{j-1}(\eta) \leq \floor{1/\eta}\ \ \forall j \in [k] \Big)
    \nonumber
    \\
    & = \Big[ \sum_{i = 1}^{ \floor{1/\eta} }\big(1 - H(\delta/\eta)\big)^{i - 1}H(\delta/\eta)      \Big]^{k}
    \leq \Big[ \sum_{i=1}^{\floor{1/\eta}}H(\delta/\eta)      \Big]^{k}
    \nonumber
    \\
    &
    \leq \Big[ 1/\eta\cdot H(\delta/\eta)      \Big]^{k}.
    \label{property: large jump time probability}
\end{align} 
% Also,
% \begin{align}
%     \P\Big( \tau^{>\delta}_k(\eta) = \floor{1/\eta} \Big)
%     & = 
%     \P\Big( \#\big\{ j \leq \floor{1/\eta} - 1:\ \eta|Z_j| > \delta  \big\} = k - 1  \Big)
%     \P\Big( \eta| Z_{\floor{1/\eta}} | > \delta \Big)
%     \nonumber
%     \\
%     & \leq 
%     \P\Big( \tau^{>\delta}_{k - 1}(\eta) \leq \floor{1/\eta} \Big)\P\Big( \eta| Z_{\floor{1/\eta}} | > \delta \Big)
%     \nonumber
%     \\
%     & \leq 
%     \Big[ 1/\eta\cdot H(\delta/\eta)      \Big]^{k - 1} \cdot H(\delta/\eta)
%     \in \RV_{ (k-1)(\alpha-1) + \alpha }(\eta).
%     \label{property: large jump time probability, exact jump time at the endpoint}
% \end{align}
Recall the definition of filtration ${\mathbb{F}} = (\mathcal{F}_j)_{j \geq 0}$
where 
$\mathcal{F}_j$ is the $\sigma$-algebra generated by $\bm Z_1,\bm Z_2,\cdots,\bm Z_j$ and $\mathcal{F}_0 = \{\emptyset,\Omega\}$.
In the next lemma, we establish a uniform asymptotic concentration bound for the weighted sum of $Z_i$'s where the weights are adapted to the filtration $\mathbb F$. 
% To be specific, 
% let $(\Omega,\mathcal{F},\mathbb{F},\P)$
% be the filtered probability space that supports iid sequence $(Z_j)_{j \geq 1}$;
% Here for filtration $\notationdef{notation-F}{\mathbb{F}} = (\mathcal{F}_j)_{j \geq 0}$,
% let $\mathcal{F}_0 = \{\emptyset,\Omega\}$ and $\mathcal{F}_j$ is the $\sigma$-algebra generated by $Z_1,Z_2,\cdots,Z_j$.
% On $(\Omega,\mathcal{F},\mathbb{F},\P)$, for any $\eta > 0, b > 0$ and $x \in \R$ we define $\notationdef{notation-X-j-eta-x}{\big(X^\eta_j(x)\big)_{j \geq 0}}$
% and
% $\notationdef{notation-X-j-eta-b-x}{\big(X^{\eta|b}_j(x)\big)_{j \geq 0}}$
% \begin{align}
% X^\eta_j(x) & = X^\eta_{j-1}(x) + \eta a\big(X^\eta_{j-1}(x)\big) + \eta \sigma\big(X^\eta_{j-1}(x)\big)Z_j,
%     \label{def: recursion X eta j x}
%     \\
% X^{\eta|b}_j(x) & = X^{\eta|b}_{j-1}(x) + \varphi_b\Big(\eta a\big(X^{\eta|b}_{j-1}(x)\big) + \eta \sigma\big(X^{\eta|b}_{j-1}(x)\big)Z_j\Big)
%     \label{def: recursion X eta b j x}
% \end{align}
% under initial condition $X^\eta_0(x) = X^{\eta|b}_0(x) = x$.
For any $M \in (0,\infty)$, let $\bm{\Gamma}_M$ denote the collection of families of random matrices $\textbf V_j = (V_{j;p,q})_{p \in [m], q\ \in [d]}$ taking values in $\R^{m \times d}$, over which we will prove the uniform asymptotics:
\begin{align}
    \notationdef{notation-Gamma-M-adapted-process-bounded-by-M-LDP}{\bm{\Gamma}_M} \delequal 
    \bigg\{ (\textbf V_j)_{j \geq 0}\text{ is adapted to }\mathbb{F}:\ \norm{\textbf V_j} \leq M\ \forall j \geq 0\text{ almost surely} \bigg\}.
    \label{def: Gamma M, set of bounded adapted process}
\end{align}
% Let
% $\notationdef{notation-rho-t-LDP}{\rho(t)} \delequal \exp(Dt)$
% for any $t > 0$
% where $D < \infty$ is the Lipschitz constant in Assumption \ref{assumption: lipschitz continuity of drift and diffusion coefficients}.

\linkdest{location, lemma: LDP, small jump perturbation}

\begin{lemma}
\label{lemma LDP, small jump perturbation}
\linksinthm{lemma LDP, small jump perturbation}
Let Assumption \ref{assumption gradient noise heavy-tailed} hold.
\begin{enumerate}[(a)]
    \item 
    %Let Assumption \ref{assumption gradient noise heavy-tailed} hold.
    Given any $M > 0$, $N > 0$, $t > 0$, and $\epsilon > 0$, there exists $\delta_0 = \delta_0(\epsilon,M,N,t) > 0$ such that 
\begin{align*}
    \lim_{\eta \downarrow 0}\eta^{-N}{ \underset{ (\textbf V_i)_{i \geq 0} \in  \bm{\Gamma}_M}{\sup}
    \P\Bigg(\; \underset{ j \leq \floor{t/\eta} \wedge \big(\tau^{>\delta}_{1}(\eta) - 1\big) }{\max}\ 
    \eta\norm{\sum_{i = 1}^j \textbf V_{i-1}\bm Z_i} > \epsilon \Bigg)   } = 0\qquad \forall \delta\in(0,\delta_0).
\end{align*}

\item 
Furthermore, let Assumption \ref{assumption: boundedness of drift and diffusion coefficients} hold.
%Let Assumptions \ref{assumption gradient noise heavy-tailed} and \ref{assumption: boundedness of drift and diffusion coefficients} hold. 
For each $i \geq 1$, let 
\begin{align}
    \notationdef{notation-event-A-i-concentration-of-small-jumps}{A_i(\eta,b,\epsilon,\delta,\bm x)}
    &\delequal 
    % \Big\{ 
    \left\{\rule{0cm}{0.9cm}
    \underset{j \in I_i(\eta,\delta) }{\max}\  
        \eta\norm{\sum_{n = \tau^{>\delta}_{i-1}(\eta) + 1}^j \bm \sigma\big( \bm X^{\eta|b}_{n-1}(\bm x) \big)\bm Z_n} \leq \epsilon 
        % \Big\}
    \right\};
    \label{def: event A i concentration of small jumps, 1}
    \\
    \notationdef{notation-A-i-concentration-of-small-jumps-2}{I_i(\eta,\delta)} 
    &\delequal 
    \bigg\{j \in \mathbb{N}:\  \tau^{>\delta}_{i-1}(\eta) + 1 \leq j \leq \big(\tau^{>\delta}_{i}(\eta) - 1 \big) \wedge \floor{1/\eta}\bigg\}.
    \label{def: event A i concentration of small jumps, 2}
\end{align}
Here we adopt the convention that (under $b = \infty$)
$$
{A_i(\eta,\infty,\epsilon,\delta,x)}
    \delequal 
    \left\{\rule{0cm}{0.9cm}
    \underset{j \in I_i(\eta,\delta) }{\max}\  
        \eta\norm{\sum_{n = \tau^{>\delta}_{i-1}(\eta) + 1}^j \bm \sigma\big( \bm X^{\eta}_{n-1}(\bm x) \big)\bm Z_n} \leq \epsilon 
        % \Big\}
    \right\}.
$$
%with $\rho = \rho(1) = \exp(L)$.
For any $k \geq 0$, $N > 0$, $\epsilon > 0$ and $b \in (0,\infty]$, there exists $\delta_0 = \delta_0(\epsilon,N) > 0$ such that
\begin{align*}
    \lim_{\eta \downarrow 0}\eta^{-N}{\sup_{\bm x \in \R^m} \P\Bigg( \Big(\bigcap_{i = 1}^k A_i(\eta,b,\epsilon,\delta,\bm x)\Big)^c \Bigg)  } = 0\qquad \forall \delta \in (0,\delta_0).
\end{align*}

\end{enumerate}
\end{lemma}

\begin{proof}
\linksinpf{lemma LDP, small jump perturbation}
$(a)$
Choose some $\beta$ such that $\frac{1}{2 \wedge \alpha} < \beta < 1$. Let
\begin{align*}
    \bm Z^{(1)}_i \delequal \bm Z_i \mathbbm{I}\Big\{ \norm{\bm Z_i} \leq \frac{1}{\eta^\beta}\Big\},\qquad
    \hat{\bm Z}^{(1)}_i \delequal \bm Z^{(1)}_i - \E \bm Z^{(1)}_i,\qquad
    \bm Z^{(2)}_i \delequal \bm Z_i \mathbbm{I}\Big\{ \norm{\bm Z_i} \in (\frac{1}{\eta^\beta},\frac{\delta}{\eta}]\Big\}.
\end{align*}
Note that $\sum_{i=1}^j \bm V_{i-1}\bm Z_i = \sum_{i=1}^j \bm V_{i-1}\bm Z_i^{(1)}  + \sum_{i=1}^j \bm V_{i-1}\bm Z_i^{(2)}$ on $j < \tau^{>\delta}_1(\eta)$, and hence,  
\begin{align*}
&\max_{j \leq  \floor{t/\eta} \wedge \big(\tau^{>\delta}_{1}(\eta) - 1\big) }  \eta\norm{\sum_{i=1}^j \bm V_{i-1}\bm Z_i}
\\
&
\leq
\max_{j \leq  \floor{t/\eta}  }\eta\norm{\sum_{i=1}^j \bm V_{i-1}\bm Z_i^{(1)}}
+
\max_{j \leq  \floor{t/\eta}  }\eta\norm{\sum_{i=1}^j \bm V_{i-1}\bm Z_i^{(2)}}
\\
% &
% \leq
% \max_{j \leq  \floor{t/\eta}}\eta\Big|\sum_{i=1}^j V_{i-1}Z_i^{(1)}\Big|
% +\max_{j \leq  \floor{t/\eta}}\eta\Big|\sum_{i=1}^j V_{i-1}Z_i^{(2)}\Big|.
% \\
&
\leq
\max_{j \leq  \floor{t/\eta}}\eta\norm{\sum_{i=1}^j \bm V_{i-1}\E \bm Z_i^{(1)}}
+\max_{j \leq  \floor{t/\eta}}\eta\norm{\sum_{i=1}^j \bm V_{i-1}\widehat{\bm Z}_i^{(1)}}
+\max_{j \leq  \floor{t/\eta}}\eta\norm{\sum_{i=1}^j \bm V_{i-1}\bm Z_i^{(2)}}.
\end{align*}
Therefore, it suffices to show the existence of $\delta_0$ such that for any $\delta \in (0,\delta_0)$,
\begin{align}\label{proof: LDP, small jump perturbation, asymptotics part 1}
    \limsup_{\eta \downarrow 0} 
        % \eta^{-N} \underset{ (V_i)_{i \geq 0} \in  \bm{\Gamma}_M}{\sup}\P\Big( 
        \underset{ (\bm V_i)_{i \geq 0} \in  \bm{\Gamma}_M}{\sup}
        \underset{ j \leq \floor{t/\eta} }{\max}\ \eta\norm{\sum_{i=1}^j \bm V_{i-1}\E \bm Z_i^{(1)}}   &< \frac{\epsilon}{3},
    \\
    \label{proof: LDP, small jump perturbation, asymptotics part 2}
    \lim_{\eta \downarrow 0} 
        \eta^{-N} \underset{ (\bm V_i)_{i \geq 0} \in  \bm{\Gamma}_M}{\sup}
            \P\Bigg( \underset{ j \leq \floor{t/\eta} }{\max}\ 
                \eta\norm{\sum_{i = 1}^j\bm V_{i-1}\widehat{\bm Z}^{(1)}_i} > \frac{\epsilon}{3} \Bigg)   &= 0,
    \\
    \label{proof: LDP, small jump perturbation, asymptotics part 3}
    \lim_{\eta \downarrow 0} 
        \eta^{-N} \underset{ (\bm V_i)_{i \geq 0} \in  \bm{\Gamma}_M}{\sup}
            \P\Bigg( \underset{ j \leq \floor{t/\eta} }{\max}\ 
                \eta\norm{\sum_{i = 1}^j\bm V_{i-1}{\bm Z}^{(2)}_i} > \frac{\epsilon}{3} \Bigg)   &= 0.
\end{align}
For \eqref{proof: LDP, small jump perturbation, asymptotics part 1}, 
first observe that
% recall that $\E \bm Z_i = \bm 0$, and hence,
\begin{align*}
\norm{\E \bm Z^{(1)}_i} 
&= 
\norm{ \E \bm Z_i\mathbbm{I}\{ \norm{\bm Z_i}> 1/\eta^\beta \} }
\qquad\text{ due to $\E \bm Z_i = \bm 0$}
\\
& \leq 
\E \Big[\norm{\bm Z_i}\mathbbm{I}\{ \norm{\bm Z_i}> 1/\eta^\beta \} \Big]
\\&
=
\E\Big[(\norm{\bm Z_i}-1/\eta^\beta)\mathbbm{I}\{ \norm{\bm Z_i} - 1/\eta^\beta>0 \}\Big]  + 1/\eta^\beta\cdot\P( \norm{\bm Z_i}> 1/\eta^\beta ).
\end{align*}
Since $(\norm{\bm Z_i}-1/\eta^\beta)\mathbbm{I}\{ \norm{\bm Z_i}- 1/\eta^\beta>0 \}$ is non-negative,
\begin{align*}
\E (\norm{\bm Z_i}-1/\eta^\beta)\mathbbm{I}\{ \norm{\bm Z_i}- 1/\eta^\beta>0 \} 
&
=
\int_0^\infty \P\big((\norm{\bm Z_i} - 1/\eta^\beta)\I\{\norm{\bm Z_i} - 1/\eta^\beta\}> x\big)dx
\\
&
=
\int_0^\infty \P(\norm{\bm Z_i} - 1/\eta^\beta> x)dx
% \\
% &
=
\int^{\infty}_{1/\eta^\beta}\P(\norm{\bm Z}> x)dx.
\end{align*}
Recall that $H(x) = \P(\norm{\bm Z} > x) \in \RV_{-\alpha}(x)$ as $x \rightarrow \infty$. 
Therefore, from
Karamata's theorem,
\begin{align}
    \norm{\E \bm Z^{(1)}_i} 
    \leq 
    \int^{\infty}_{1/\eta^\beta}\P(\norm{\bm Z} > x)dx + 1/\eta^\beta\cdot\P( \norm{\bm Z} > 1/\eta^\beta ) \in \RV_{(\alpha - 1)\beta}(\eta)
    \label{term E Z 1, lemma LDP, small jump perturbation}
\end{align}
as $\eta \downarrow 0$.
% we have that $|\E Z^{(1)}_i|  \in \RV_{(\alpha - 1)\beta}(\eta)$ as $\eta \downarrow 0$.
Therefore, there exists some $\eta_0 = \eta_0(t,M,\epsilon) > 0$ such that
for any $\eta \in (0,\eta_0)$,
we have 
$
t \cdot M \cdot  \norm{\E \bm Z^{(1)}_i} < \epsilon/3,
$
and hence
for any $(\bm V_i)_{i \geq 0} \in \bm{\Gamma}_M$ and $\eta \in (0,\eta_0)$,
\begin{align*}
    \underset{ j \leq \floor{t/\eta} }{\max}
        \eta\norm{\sum_{i = 1}^j \bm V_{i-1}\E \bm Z^{(1)}_i } 
        \leq 
        \floor{t/\eta} \cdot M \cdot \eta\norm{\E \bm Z^{(1)}_i} < \epsilon/3,
    % \label{proof: LDP, small jump perturbation, ineq 0}
\end{align*}
from which we immediately get \eqref{proof: LDP, small jump perturbation, asymptotics part 1}.

Next, for \eqref{proof: LDP, small jump perturbation, asymptotics part 2}, 
recall our convention that vectors in Euclidean spaces are understood as row vectors (unless specified otherwise),
and write
$
\bm V_t = (V_{t;l,k})_{l \in [m], k \in [d]},
$
$
\widehat{\bm Z}_t = (\widehat Z_{t;1},\cdots,\widehat Z_{t;d})^T.
$
Since
\begin{align*}
    \norm{ \sum_{i = 1}^j \bm V_{i-1}\widehat{\bm Z}_i }
    & = 
    \sqrt{
        \sum_{l = 1}^m \bigg(
            \sum_{i = 1}^j
            \sum_{k = 1}^d V_{i-1;l,k}\hat Z_{i,k}
        \bigg)^2
    },
\end{align*}
to prove \eqref{proof: LDP, small jump perturbation, asymptotics part 2},
it suffices to show that
\begin{align}
    \lim_{\eta \downarrow 0} 
        \eta^{-N} \underset{ (\bm V_i)_{i \geq 0} \in  \bm{\Gamma}_M}{\sup}
            \P\Bigg( \underset{ j \leq \floor{t/\eta} }{\max}\ 
                \eta| Y_{l,k}(j;\bm V) | > \frac{\epsilon}{3 \sqrt{md^2} } \Bigg)   &= 0
                \qquad \forall l \in [m],\ k \in [d],
    \label{proof: LDP, small jump perturbation, asymptotics part 2, R1 decomp}
\end{align}
where 
\begin{align*}
    Y_{l,k}(j;\bm V) \delequal \sum_{i = 1}^j V_{i-1;l,k}\hat Z_{i,k}.
\end{align*}
%let $\widehat{Z}^{(1)}_i \delequal Z^{(1)}_i - \E Z^{(1)}_i$ for all $i \geq 1$
%and
% $\widehat{\bm{Z}}^{(1),\eta}_s = \sum_{i = 1}^{\floor{s/\eta}} \eta\widehat{Z}^{(1)}_i$.
% Note that $\Big(\sum_{i = 1}^j W_{i-1}\widehat{Z}^{(1)}_i \Big)_{j \geq 1}$ is a martingale adapted to $\mathbb{F}$.
% Therefore, after fixing some $p$ satisfying
To proceed, we fix a sufficiently large $p$ satisfying
\begin{align}
    p\geq 1,\ \ p > \frac{2N}{\beta},\ \ p > \frac{2N}{1 - \beta},\ \ p > \frac{2N}{(\alpha - 1)\beta} > \frac{2N}{(2\alpha - 1)\beta}, \label{proof: LDP, small jump perturbation, choose p}
\end{align}
and some $l \in [m]$, $k \in [d]$.
Note that for  $(\bm V_i)_{i \geq 0} \in \bm{\Gamma}_M$ and $\eta >0$, 
$\{ V_{i-1;l,k}\widehat Z_{i;k}^{(1)}: i\geq 1\}$ is a martingale difference sequence.
Therefore,
$\big(Y_{l,k}(j;\bm V)\big)_{j \geq 0}$ is a martingale, and
\begin{align}
    &\E 
        \left[\rule{0cm}{0.7cm}
        % \Bigg[
            \left(
                \underset{ j \leq \floor{t/\eta} }{\max} \eta\Big|Y_{l,k}(j;\bm V)\Big|
            \right)^p
        \right]\rule{0cm}{0.7cm}
        % \Bigg]
    \nonumber
    \\
    & 
    \leq 
    c_1\E \left[ \left(
     \sum_{i = 1}^{ \floor{t/\eta} } \left(\eta V_{i-1;l,k}\widehat{Z}^{(1)}_{i;k}\right)^2
    \right)^{p/2} \right]
    \nonumber
    \\
    & 
    \leq 
    c_1 M^p 
    \E \left[
    \left(
    \sum_{i = 1}^{ \floor{t/\eta} } \left(\eta \widehat{Z}^{(1)}_{i;k}\right)^2
    \right)^{p/2}
    \right]
    \qquad 
    \text{due to $\norm{\bm V_s} \leq M$ for all $s \geq 0$}
    \nonumber
    \\
    & 
    \leq 
    c_1 c_2 M^p
    \E \left[\rule{0cm}{0.9cm}
        \left(
            \underset{ j \leq \floor{t/\eta} }{\max} \Big|\sum_{i = 1}^j \eta\widehat{Z}^{(1)}_{i;k}\Big|
        \right)^p
    \right]
    % \nonumber
    % \\
    % & 
    \leq 
    \underbrace{c_1c_2\left(\frac{p}{p-1}\right)^p}_{\delequal c^\prime} M^p 
        \E \left[\rule{0cm}{0.9cm}
            \left|
                \sum_{i = 1}^{\floor{t/\eta}} \eta\widehat{Z}^{(1)}_{i;k}
            \right|^p
        \right]
    \ \ \ 
    \label{proof: LDP, small jump perturbation, ineq 1}
\end{align}
for some $c_1,c_2>0$ that only depend on $p$ and won't vary with $(\bm V_i)_{i \geq 0}$ and $\eta$.
The first and third inequalities are from the uppper and lower bounds of Burkholder-Davis-Gundy inequality (Theorem 48, Chapter IV of \cite{protter2005stochastic}), respectively, and 
the fourth inequality is from Doob's maximal inequality.
It then follows from Bernstein's inequality that for any $\eta > 0$ and any $s \in [0,t], y \geq 1$
\begin{align}
    \P\Bigg( \bigg|\sum_{j = 1}^{ \floor{s/\eta} }\eta \widehat{Z}^{(1)}_{j;l,k} \bigg|^p > \eta^{2N}y \Bigg)
    &
    = 
    \P\Bigg( \bigg|\sum_{j = 1}^{ \floor{s/\eta} }\eta \widehat{Z}^{(1)}_{j;l,k} \bigg| > \eta^{ \frac{2N}{p} }{y^{1/p}} \Bigg)
    \nonumber
    \\
    &
    \leq
    2\exp\Bigg( - \frac{ \frac{1}{2}\eta^{\frac{4N}{p}}\sqrt[p]{y^2} }{ \frac{1}{3}\eta^{1-\beta+ \frac{2N}{p}}\sqrt[p]{y} + \frac{t}{\eta}\cdot\eta^2\cdot\E\big[(\widehat{Z}^{(1)}_{1;k})^2\big]  } \Bigg).
    \label{proof: applying berstein ineq, lemma LDP, small jump perturbation}
\end{align}
Our next goal is to show that $\frac{t}{\eta} \cdot \eta^2 \cdot \E\big[(\widehat{Z}^{(1)}_{1;k})^2\big] < \frac{1}{3}\eta^{1 - \beta + \frac{2N}{p}}$
for any $\eta > 0$ small enough.
First, due to $(a+b)^2 \leq 2a^2 + 2b^2$,
\begin{align*}
\E\big[(\widehat{Z}^{(1)}_{1;k})^2\big]
& = \E\big[\big( Z^{(1)}_{1;k} - \E Z^{(1)}_{1;k} \big)^2\big]
\leq 
2\E\big[\big( Z^{(1)}_{1;k}\big)^2\big] + 2\big[\E Z^{(1)}_{1;k}\big]^2
\leq 
2\E\bigg[\norm{\bm Z^{(1)}_1}^2\bigg] 
    + 
    2\bigg[\E \norm{\bm Z^{(1)}_1}\bigg]^2.
\end{align*}
Also, it has been shown earlier that $\E \norm{\bm Z^{(1)}_1} \in \RV_{(\alpha - 1)\beta}(\eta)$,
and hence $\Big[\E \norm{\bm Z^{(1)}_1}\Big]^2 \in \RV_{2(\alpha - 1)\beta}(\eta)$.
From our choice of $p > \frac{2N}{(2\alpha - 1)\beta}$ in \eqref{proof: LDP, small jump perturbation, choose p},
we have 
$
1 + 2(\alpha - 1)\beta > 1 - \beta + \frac{2N}{p},
$
thus implying 
$$\frac{t}{\eta} \cdot \eta^2 \cdot 2\bigg[\E \norm{\bm Z^{(1)}_1}\bigg]^2 < \frac{1}{6}\eta^{1 - \beta + \frac{2N}{p}}$$
for any $\eta > 0$ sufficiently small.
Next,
$\E\big[\norm{ \bm Z_1^{(1)}}^2\big] = \int^{\infty}_0 2x \P(\norm{ \bm Z_1^{(1)}} > x)dx = \int^{1/\eta^\beta}_0 2x \P(\norm{\bm Z_1} > x)dx.$ 
If $\alpha \in (1,2]$, then Karamata's theorem implies
$\int^{1/\eta^\beta}_0 2x \P(\norm{\bm Z_1}> x)dx \in \RV_{ -(2-\alpha)\beta }(\eta)$ as $\eta \downarrow 0$.
Given our choice of $p$ in \eqref{proof: LDP, small jump perturbation, choose p},
one can see that
$1 - (2-\alpha)\beta > 1 - \beta + \frac{2N}{p}$.
As a result, 
for any $\eta > 0$ small enough
we have
$\frac{t}{\eta}\cdot\eta^2\cdot 2\E\big[\norm{\bm Z_1^{(1)}}^2\big] < \frac{1}{6}\eta^{1-\beta + \frac{2N}{p}}$.
If $\alpha > 2$, then
$\lim_{\eta\downarrow 0}\int^{1/\eta^\beta}_0 2x \P(\norm{\bm Z_1} > x)dx =  \int^{\infty}_0 2x \P(\norm{\bm Z_1} > x)dx < \infty$.
Also,  \eqref{proof: LDP, small jump perturbation, choose p} implies that $1 - \beta + \frac{2N}{p} < 1$.
As a result, for any $\eta > 0$ small enough
we have
$\frac{t}{\eta}\cdot\eta^2\cdot 2\E\big[\norm{\bm Z_1^{(1)}}^2\big] < \frac{1}{6}\eta^{1-\beta + \frac{2N}{p}}$.
In summary, 
\begin{align}
    \frac{t}{\eta} \cdot \eta^2 \cdot \E\big[(\widehat{Z}^{(1)}_{1;k})^2\big] < \frac{1}{3}\eta^{1 - \beta + \frac{2N}{p}}
    \label{term second order moment hat Z 1, lemma LDP, small jump perturbation}
\end{align}
holds for any $\eta > 0$ small enough.
Along with \eqref{proof: applying berstein ineq, lemma LDP, small jump perturbation},
we yield that for any $\eta > 0$ small enough,
\begin{align*}
     \P\Bigg( \bigg|\sum_{j = 1}^{ \floor{s/\eta} }\eta \widehat{Z}^{(1)}_{j;l,k} \bigg|^p > \eta^{2N}y \Bigg)
     \leq 
     2\exp\Bigg( \frac{ -\frac{1}{2}{y^{1/p}} }{ \frac{2}{3}\eta^{1-\beta- \frac{2N}{p}} } \Bigg)
     \leq 
     2\exp\Big( -\frac{3}{4}{y^{1/p}} \Big)
     \qquad \forall y \geq 1,
\end{align*}
where the last inequality is due to our choice of $p$ in \eqref{proof: LDP, small jump perturbation, choose p} that $1 - \beta - \frac{2N}{p} > 0$.
Moreover, since $\int_0^\infty\exp\big( -\frac{3}{4}{y^{1/p}} \big)dy < \infty$,
one can see the existence of some $C^{(1)}_p < \infty$ such that
$ \E\Big|\sum_{j = 1}^{ \floor{t/\eta} }\eta \widehat{Z}^{(1)}_{j;l,k} \Big|^p\Big/\eta^{2N} < C^{(1)}_p $ for all $\eta > 0$ small enough.
Combining this bound, \eqref{proof: LDP, small jump perturbation, ineq 1}, and Markov inequality, 
for all $\eta > 0$ small enough,
\begin{align*}
    \P\Bigg( \underset{ j \leq \floor{t/\eta} }{\max}\ 
                \eta| Y_{l,k}(j;\bm V) | > \frac{\epsilon}{3 \sqrt{md^2} } \Bigg)
    & \leq 
    \frac{ \E\Big[ \underset{ j \leq \floor{t/\eta} }{\max} \Big|\sum_{i = 1}^j \eta Y_{l,k}(j,\bm V)\Big|^p \Big] }{ \epsilon^p/(3\sqrt{md^2})^p }
    \\
    & \leq 
    \frac{ c^\prime M^p \E\Big|\sum_{j = 1}^{ \floor{t/\eta} }\eta \widehat{Z}^{(1)}_{i;k} \Big|^p }{ \epsilon^p/(3\sqrt{md^2})^p }
    \leq 
    \frac{c' M^p\cdot C^{(1)}_p }{ \epsilon^p/(3\sqrt{md^2})^p } \cdot \eta^{2N}
\end{align*}
holds uniformly for all $(\bm  V_i)_{i \geq 0} \in \bm{\Gamma}_M$.
This proves \eqref{proof: LDP, small jump perturbation, asymptotics part 2, R1 decomp} and hence \eqref{proof: LDP, small jump perturbation, asymptotics part 2}.  

Finally, for \eqref{proof: LDP, small jump perturbation, asymptotics part 3}, recall that we have chosen $\beta$ in such a way that $\alpha\beta - 1 > 0$.
Fix a constant $J = \ceil{\frac{N}{\alpha\beta - 1}} + 1$,
and define
$I(\eta) \delequal \#\big\{ i \leq \floor{t/\eta}:\ \bm Z^{(2)}_i \neq 0 \big\}$.
Besides, fix $\delta_0 = \frac{\epsilon}{3MJ}$.
For any $\delta \in (0,\delta_0)$ and $(\bm V_i)_{i \geq 0} \in \bm{\Gamma}_M$,
note that on event $\{I(\eta) < J\}$,
we must have
$\underset{ j \leq \floor{t/\eta} }{\max}\ \eta\norm{\sum_{i = 1}^j \bm V_{i-1}\bm Z^{(2)}_i}
< \eta \cdot M \cdot J \cdot \delta_0/\eta < MJ\delta_0 < \epsilon/3$.
On the other hand,
\begin{align*}
    \P\big( I(\eta) \geq J\big)
    \leq 
    \binom{\floor{t/\eta}}{J}\cdot \Big(H\big({1}/{\eta^\beta}\big)\Big)^J
    \leq 
    (t/\eta)^J \cdot \Big(H\big({1}/{\eta^\beta}\big)\Big)^J \in \RV_{ J(\alpha\beta - 1) }(\eta)\text{ as }\eta \downarrow 0.
\end{align*}
Lastly, the choice of $J = \ceil{\frac{N}{\alpha\beta - 1}} + 1$ guarantees that $J(\alpha\beta - 1) > N$,
and hence,
$$
\lim_{\eta \downarrow 0}{ \underset{ (V_i)_{i \geq 0} \in  \bm{\Gamma}_M}{\sup}
    \P\Bigg( \underset{ j \leq \floor{t/\eta} }{\max}\ \eta\norm{\sum_{i = 1}^j \bm V_{i-1}\bm Z^{(2)}_i} > \frac{\epsilon}{3} \Bigg)  }\Big/{\eta^N} 
\leq 
\lim_{\eta \downarrow 0}{ \underset{ (V_i)_{i \geq 0} \in  \bm{\Gamma}_M}{\sup}
\P( I(\eta) \geq J )  }\Big/{\eta^N} = 0.
$$
This concludes the proof of part $(a)$.

\medskip
$(b)$
To ease notations, in this proof we write $\bm X^\eta = \bm X^{\eta|\infty}$ for the cases where $b = \infty$.
Due to Assumption \ref{assumption: boundedness of drift and diffusion coefficients},
it holds for any $\bm x\in\R^m$ and any $\eta > 0, n \geq 0$ that $\norm{\bm \sigma\big(\bm X^{\eta|b}_n(\bm x)\big)} \leq C$,
so
$\{\bm \sigma(\bm X_i^{\eta|b}(\bm x))\}_{i\geq 0} \in \bm{\Gamma}_C$.
By
strong Markov property at stopping times $\big(\tau^{>\delta}_j(\eta)\big)_{j \geq 1}$,
% we then get
\begin{align*}
    \sup_{\bm x \in \R^m}\P\Bigg( \Big(\bigcap_{i = 1}^{k}A_i(\eta,b,\epsilon,\delta,\bm x)\Big)^c \Bigg)
    & \leq 
    \sum_{i = 1}^k \sup_{\bm x \in \R^m}\P\Bigg( \Big(A_i(\eta,b,\epsilon,\delta,\bm x)\Big)^c \Bigg)
    \\
    & \leq
    k \cdot  \underset{ (\bm V_i)_{i \geq 0} \in  \bm{\Gamma}_C}{\sup}
        \P\Bigg( \underset{ j \leq \floor{1/\eta} \wedge \big(\tau^{>\delta}_{1}(\eta) - 1\big) }{\max}\ 
            \eta\norm{\sum_{i = 1}^j \bm V_{i-1}\bm Z_i } > \epsilon \Bigg)
\end{align*}
where $C < \infty$ is the constant in Assumption \ref{assumption: boundedness of drift and diffusion coefficients} and the set $\bm{\Gamma}_C$ is defined in \eqref{def: Gamma M, set of bounded adapted process}.
Thanks to part $(a)$,
one can find some $\delta_0 = \delta_0(\epsilon,C,N) \in (0,\bar{\delta})$ such that
$$
\underset{ (\bm V_i)_{i \geq 0} \in  \bm{\Gamma}_C}{\sup}
        \P\Bigg( \underset{ j \leq \floor{1/\eta} \wedge \big(\tau^{>\delta}_{1}(\eta) - 1\big) }{\max}\ 
            \eta\norm{\sum_{i = 1}^j \bm V_{i-1}\bm Z_i } > \epsilon \Bigg)
= \bm{o}(\eta^N)$$
(as $\eta \downarrow 0$) for any $\delta \in (0,\delta_0)$,
which concludes the proof of part $(b)$.
\end{proof}

Next, for any $c > \delta > 0$,
we study the law of $\big(\tau^{>\delta}_j(\eta)\big)_{j \geq 1}$ and $\big(\bm W^{>\delta}_j(\eta)\big)_{j \geq 1}$ conditioned on event
\begin{align}
\linkdest{notation-set-E-delta-LDP}
    \notationdef{notation-set-E-delta-LDP}{E^\delta_{c,k}(\eta)} \delequal 
    \Big\{ 
        \tau^{>\delta}_{k}(\eta) \leq \floor{1/\eta} < \tau^{>\delta}_{k+1}(\eta);\ 
        \eta\norm{\bm W^{>\delta}_j(\eta)} > c\ \ \forall j \in [k] 
    \Big\}. \label{def: E eta delta set, LDP}
\end{align}
The intuition is that, on event $E^\delta_{c,k}(\eta)$, among the first $\floor{1/\eta}$ steps there are exactly $k$ ``large'' jumps, 
all of which has size larger than $c/\eta$. 
Next, 
for each $c > 0$,
we
consider a random vector ${\bm W^*(c)}$ in $\R^d$ with $\norm{\bm W^*(c)} > c$ almost surely, whose polar coordinates
$
\big(R^*(c),\bm \Theta^*(c)\big) \delequal \Big( \norm{\bm W^*(c)}, \frac{\bm W^*(c)}{\norm{\bm W^*(c)}}\Big)
$
admit the law
\begin{align}
\P\bigg(
    \Big(R^*(c),\bm \Theta^*(c)\Big) \in\ \cdot\ 
\bigg)
=
\big(\bar\nu_\alpha|_{(c,\infty)} \times \mathbf S\big)(\cdot).
    \label{def: prob measure Q, LDP}
\end{align}
Here, recall the definition of the measure $\nu_\alpha$ in \eqref{def: measure nu alpha} and the measure $\mathbf S$ in Assumption~\ref{assumption gradient noise heavy-tailed}, and note that $\alpha > 1$ is the heavy-tail index in Assumption~\ref{assumption gradient noise heavy-tailed}.
For any $c > 0$, we set
\begin{align*}
    \bar \nu_\alpha|_{(c,\infty)}\big(\cdot\big)
    \delequal
    c^{\alpha} \cdot \nu_\alpha\Big(\ \cdot\ \cap (c,\infty)\Big).
\end{align*}
to be the restricted and normalized (as a probability measure) version of $\nu_\alpha$ over $(c,\infty)$.
Let
$\big(\notationdef{notation-W-*_j}{\bm W^*_j(c)}\big)_{j \geq 1}$ be a sequence of iid copies of $\bm W^*(c)$.
Also, for $\notationdef{notation-U-j-t}{(U_j)_{j \geq 1}}$, a sequence of iid copies of Unif$(0,1)$ that is also independent of $\big(\bm W^*_j(c)\big)_{j \geq 1}$,
let
$\notationdef{notation-U-j-k-LDP}{U_{(1;k)} \leq U_{(2;k)}\leq \cdots \leq U_{(k;k)}}$
be the order statistics of $(U_j)_{j = 1}^{k}$.
For any random element $X$ and any Borel measureable set $A$,
let $\notationdef{notation-law-of-X}{\mathscr{L}(X)}$ be the law of $X$, and $\notationdef{notation-law-of-X-cond-on-A}{\mathscr{L}(X|A)}$ be the conditional law of $X$ given event $A$.

%\linkdest{location, proof of lemma: weak convergence of cond law of large jump, LDP}
\begin{lemma} \label{lemma: weak convergence of cond law of large jump, LDP}
\linksinthm{lemma: weak convergence of cond law of large jump, LDP}%
Let Assumption \ref{assumption gradient noise heavy-tailed} hold.
For any $\delta >0, c \geq \delta$ and $k \in \mathbb{Z}^+$,
\begin{align}
    \lim_{\eta \downarrow 0}\frac{\P\big( E^\delta_{c,k}(\eta)\big)}{\lambda^k(\eta)} 
=  \frac{1/c^{\alpha k}}{k!},
\label{claim 1, lemma: weak convergence of cond law of large jump, LDP}
\end{align}
and
\begin{align*}
    & \mathscr{L}\Big( \eta \bm W^{>\delta}_{1}(\eta),\eta \bm W^{>\delta}_{2}(\eta),\cdots,\eta \bm W^{>\delta}_{k}(\eta),\eta \tau^{>\delta}_{1}(\eta),\eta \tau^{>\delta}_{2}(\eta),\cdots,\eta \tau^{>\delta}_{k}(\eta) \Big| E^\delta_{c,k}(\eta)\Big) \\
    \rightarrow & \mathscr{L}\Big(\bm W^*_1(c), \bm W^*_2(c),\cdots,\bm W^*_{k}(c), U_{(1;k)},U_{(2;k)},\cdots,U_{(k;k)} \Big)
    \text{ as }\eta \downarrow 0.
\end{align*}
\end{lemma}

\begin{proof}
\linksinpf{lemma: weak convergence of cond law of large jump, LDP}%
Note that $\big( \tau^{>\delta}_i(\eta) \big)_{i \geq 1}$ is independent of $\big( \bm W^{>\delta}_i(\eta) \big)_{i \geq 1}$.
Therefore,
$ \P\big( E^\delta_{c,k}(\eta) \big)  
    = \P\big(\tau^{>\delta}_{k}(\eta) \leq \floor{1/\eta} < \tau^{>\delta}_{k+1}(\eta) \big)\cdot 
    \Big(\P\big( \eta\norm{\bm W^{>\delta}_{1}(\eta)} > c \big)\Big)^{k}.$
Recall that $H(x) = \P(\norm{\bm Z} > x)$. Observe that
\begin{equation}\label{proof, prob tau decomp, lemma: weak convergence of cond law of large jump, LDP}
    \begin{split}
        \P\Big(\tau^{>\delta}_{k}(\eta) \leq \floor{1/\eta} < \tau^{>\delta}_{k+1}(\eta) \Big)
    % \\
    & 
    =
   \P\Big( \#\big\{ j \leq \floor{1/\eta}:\ \eta|Z_j| > \delta  \big\} = k  \Big)
   % \cdot
   %  \P\Big( \eta|Z_{ \floor{1/\eta} }| \leq \delta \Big)
    \\
    % &
    % ={\binom{\floor{1/\eta} }{ k}}
    % {\Big( 1 - H(\delta/\eta) \Big)^{ \floor{1/\eta} - k}}
    % {\big(H(\delta/\eta)\big)^{k}}
    % % \cdot
    % % {\Big(1 - H(\delta/\eta)\Big)}
    % \\
    &
    =\underbrace{\binom{\floor{1/\eta} }{ k}}_{ \delequal q_1(\eta) }
    \underbrace{\Big( 1 - H(\delta/\eta) \Big)^{ \floor{1/\eta} - k}}_{\delequal q_2(\eta)} \underbrace{\big(H(\delta/\eta)\big)^{k}}_{\delequal q_3(\eta)}.
    \end{split}
\end{equation}
For $q_1(\eta)$, note that
\begin{align}
    \lim_{\eta \downarrow 0}\frac{ q_1(\eta) }{1/\eta^{k} }
    =
    \frac{  \big( \floor{1/\eta}  \big)\big( \floor{1/\eta} - 1 \big)\cdots\big( \floor{1/\eta} - k + 1 \big)\big/ k!   }{ 1/\eta^{k} }
    =
    \frac{ 1  }{k!}.
    \label{proof: lemma weak convergence of cond law of large jump, 1, LDP}
\end{align}
Also, since $(\floor{1/\eta} - k)\cdot H(\delta/\eta) = \bm o(1)$ as $\eta\downarrow0$, we have that
$\lim_{\eta \downarrow 0}q_2(\eta) = 1$.
% we obviously have $q_2(\eta) \leq 1$.
% On the other hand,
% for any $\eta$ small enough, we have
% $\log\big(1 - H(\delta/\eta)\big) \geq -2H(\delta/\eta)$.
% As a result, for any $\eta$ small enough,
% \begin{align*}
%     \log q_2(\eta) & =
%     \big(\floor{t/\eta} - k\big)\log\big(1 - H(\delta/\eta)\big)
%     \geq
%     -2 t \frac{H(\delta/\eta)}{\eta}.
% \end{align*}
% In particular, note that $\frac{H(\delta/\eta)}{\eta} \in \RV_{ \alpha - 1 }(\eta)$ as $\eta \downarrow 0$, implying that
% $\liminf_{\eta \downarrow 0}\log q_2(\eta) \geq 0 \Longrightarrow \liminf_{\eta \downarrow 0} q_2(\eta) \geq 1$.
% In summary, $\lim_{\eta\downarrow 0}q_2(\eta) = 1.$
Lastly, note that 
\begin{align*}
\P\big( \eta\norm{\bm W^{>\delta}_{1}(\eta)} > c \big)
&
% = \sum_{i=1}^\infty\P\Big( \eta\big|Z_{\tau^{>\delta}_{1}(\eta)}\big| > c\ \Big|\ \tau_1^\eta(\delta) = i \Big) \cdot\P(\tau_1^\eta(\delta) = i)
% \\&
% = \sum_{i=1}^\infty\P\Big( \eta\big|Z_i\big| > c\ \Big|\ \eta|Z_i| > \delta \Big)\cdot\P(\tau_1^\eta(\delta) = i)
% \\&
% = \sum_{i=1}^\infty \left(H(c/\eta)   \Big/ H(\delta/\eta)\right)\cdot\P(\tau_1^\eta(\delta) = i)
% \\&
= H(c/\eta)   \Big/ H(\delta/\eta),
\end{align*}
and hence, %Then from the regularly varying nature of $H(\cdot)$, we yield
\begin{align}
    \lim_{\eta \downarrow 0}\frac{q_3(\eta) \cdot \Big(\P\big( \eta\norm{\bm W^{>\delta}_{1}(\eta)} > c \big)\Big)^{k}}{ \big( H(1/\eta) \big)^{k} }
    & = 
    \lim_{\eta \downarrow 0}
    \frac{\big( H(\delta/\eta) \big)^{k} \cdot \Big( H(c/\eta)   \Big/ H(\delta/\eta)  \Big)^{k}}{ \big( H(1/\eta) \big)^{k}  }
    =  \lim_{\eta \downarrow 0} \frac{\big( H(c/\eta)  \big)^{k}}{ \big( H(1/\eta) \big)^{k}  }
    =
    1/c^{\alpha k}
    \label{proof, term q 3, lemma: weak convergence of cond law of large jump, LDP}
\end{align}
Plugging \eqref{proof: lemma weak convergence of cond law of large jump, 1, LDP} and \eqref{proof, term q 3, lemma: weak convergence of cond law of large jump, LDP} into \eqref{proof, prob tau decomp, lemma: weak convergence of cond law of large jump, LDP},
we obtain \eqref{claim 1, lemma: weak convergence of cond law of large jump, LDP}.
\elaborate{$$
\lim_{\eta \downarrow 0}
\frac{\P\big( E^\delta_{c,k}(\eta)\big) }{\lambda^k(\eta)} 
=  
\frac{q_1(\eta)\cdot q_2(\eta)\cdot q_3(\eta) \cdot \Big(\P\big( \eta\big|W^{>\delta}_{1}(\eta)\big| >c \big)\Big)^{k}}{1/\eta^k\big(H(1/\eta)\big)^k}
=\frac{1/c^{\alpha k}}{k!}.
$$
}

Next, we move onto the proof of the weak convergence.
% Again, Assumption \ref{assumption gradient noise heavy-tailed} implies that
% (for any $x > \frac{\bar{\delta}}{2C}$)
We use 
$
\big(R^{>\delta}_1(\eta),\bm \Theta^{>\delta}_1(\eta)\big)
\delequal
\Big(
\norm{\bm W^{>\delta}_1(\eta)}, \frac{\bm W^{>\delta}_1(\eta)}{\norm{\bm W^{>\delta}_1(\eta)}}
\Big)
$
to denote the polar coordinates of $\bm W^{>\delta}_1(\eta)$.
Observe the following weak convergence:
\begin{align*}
    & \P\bigg(  
        \Big(\eta R^{>\delta}_1(\eta),\bm \Theta^{>\delta}_1(\eta)\Big)\in \ \cdot\ 
        \bigg|\ 
        \eta R^{>\delta}_1(\eta) > c
    \bigg)
    \\
    & = 
    \frac{
        \P\Big(
            \big(\eta R^{>\delta}_1(\eta),\bm \Theta^{>\delta}_1(\eta)\big)\in \ \cdot\ \cap \big((c,\infty)\cap \mathfrak N_d\big)
        \Big)
    }{
        \P\big( \eta \norm{\bm W^{>\delta}_1(\eta)}>c \big)
    }
    \\
    & = 
    \frac{
    \P\Big(
        \big(\eta R,\bm \Theta\big)\in \ \cdot\ \cap \big((c,\infty)\cap \mathfrak N_d\big)
    \Big)\Big/\P( \eta\norm{\bm Z} > \delta )
    }{
        \P( \eta \norm{\bm Z}>c )\big/\P( \eta \norm{\bm Z}>\delta )
    }
    \qquad
    \text{with $(R,\bm \Theta) = ( \norm{\bm Z}, \frac{\bm Z}{\norm{\bm Z}} )$}
    \\ 
    & = 
    \frac{
    \P\Big(
        \big(\eta R,\bm \Theta\big)\in \ \cdot\ \cap \big((c,\infty)\cap \mathfrak N_d\big)
    \Big)
    }{
        \P( \eta \norm{\bm Z}>1 )    }
        \cdot \frac{ \P( \eta \norm{\bm Z}>1 )  }{ \P( \eta \norm{\bm Z}>c )  }
    = 
    \frac{
    \P\Big(
        \big(\eta R,\bm \Theta\big)\in \ \cdot\ \cap \big((c,\infty)\cap \mathfrak N_d\big)
    \Big)
    }{
        H(\eta^{-1})    } \cdot \frac{ H(\eta^{-1})  }{ H( c\cdot\eta^{-1}  ) }
    \\ 
    &
    \Rightarrow
    (\bar \nu_\alpha|_{(c,\infty)} \times \mathbf S)(\cdot)
    \qquad 
    \text{as $\eta \downarrow 0$ by Assumption~\ref{assumption gradient noise heavy-tailed}.}
\end{align*}
% For any $x > c$,
% \begin{align*}
%     \lim_{\eta \downarrow 0}\frac{ \P\big(\eta W^{>\delta}_{1}(\eta) > x\big)}{\P\big(\eta |W^{>\delta}_{1}(\eta)| > c\big)} = p^{(+)} \Big( \frac{c}{ x }  \Big)^{\alpha},
%     \ \ \ 
%     \lim_{\eta \downarrow 0}\frac{ \P\big(\eta W^{>\delta}_{1}(\eta) < - x\big)}{\P\big(\eta |W^{>\delta}_{1}(\eta)| > c\big)} = p^{(-)} \Big( \frac{c}{ x }  \Big)^{\alpha}.
% \end{align*}
As a result, we must have
$\mathscr{L}\Big( \eta \bm W^{>\delta}_{1}(\eta),\eta \bm W^{>\delta}_{2}(\eta),\cdots,\eta \bm W^{>\delta}_{k}(\eta)\Big| E^\delta_{c,k}(\eta)\Big)
    \to \mathscr{L}\Big(\bm W^*_1(c), \cdots,\bm W^*_{k}(c) \Big)$.
Moreover, 
one can easily see that, conditioned on the event $E^\delta_{c,k}(\eta)$,
the sequences $\eta \bm W^{>\delta}_{1}(\eta),\cdots,\eta \bm W^{>\delta}_{k}(\eta)$
and
$
\eta \tau^{>\delta}_{1}(\eta),\cdots,\eta \tau^{>\delta}_{k}(\eta)
$
are conditionally independent.
\elaborate{Indeed, for any $1 \leq i_1 < \cdots < i_k \leq \floor{1/\eta}$
and $c_1,\cdots,c_k > c$,
\begin{align*}
    % & \frac{ \P\Big( \eta |W^{>\delta}_j(\eta)| > c_j\text{ and }\tau^{>\delta}_j(\eta) = i_j\ \forall j \in [k]\Big) }{\P\big(E^\delta_{c,k}(\eta) \big)}
    % \\
    % & = 
    & \frac{ \P\Big(\tau^{>\delta}_j(\eta) = i_j\text{ and }\eta |W^{>\delta}_j(\eta)| > c_j\ \forall j \in [k]\Big) }{ 
    \P\Big(
    \tau^{>\delta}_{k}(\eta) < \floor{1/\eta} < \tau^{>\delta}_{k+1}(\eta);\ \eta|W^{>\delta}_j(\eta)| > c\ \ \forall j \in [k]
    \Big)
    }
    \\
    & = 
    \frac{ \P\big(\tau^{>\delta}_j(\eta) = i_j\ \forall j \geq 1\big)\P\big(\eta |W^{>\delta}_j(\eta)| > c_j\ \forall j \in [k]\big) }{ 
    \P\big(
    \tau^{>\delta}_{k}(\eta) < \floor{1/\eta} < \tau^{>\delta}_{k+1}(\eta)\big)\P\big(\eta|W^{>\delta}_j(\eta)| > c\ \ \forall j \in [k]
    \big)
    }
    \\
    &
    \text{ due to the independence between $\big( \tau^{>\delta}_i(\eta) \big)_{i \geq 1}$ and $\big( W^{>\delta}_i(\eta) \big)_{i \geq 1}$}
    \\
    & = 
    \P\Big(\tau^{>\delta}_j(\eta) = i_j\ \forall j \geq 1\ \Big|\ \tau^{>\delta}_{k}(\eta) < \floor{1/\eta} < \tau^{>\delta}_{k+1}(\eta)\Big)
    \cdot 
    \P\Big(\eta |W^{>\delta}_j(\eta)| > c_j\ \forall j \in [k]\ \Big|\ \eta|W^{>\delta}_j(\eta)| > c\ \ \forall j \in [k]\Big)
    \\
    & = 
    \P\Big(\tau^{>\delta}_j(\eta) = i_j\ \forall j \geq 1\ \Big|\ \tau^{>\delta}_{k}(\eta) < \floor{1/\eta} < \tau^{>\delta}_{k+1}(\eta);\ \eta|W^{>\delta}_j(\eta)| > c\ \ \forall j \in [k]\Big)
    \\
    &\qquad \cdot 
    \P\Big(\eta |W^{>\delta}_j(\eta)| > c_j\ \forall j \in [k]\ \Big|\ \tau^{>\delta}_{k}(\eta) < \floor{1/\eta} < \tau^{>\delta}_{k+1}(\eta);\ \eta|W^{>\delta}_j(\eta)| > c\ \ \forall j \in [k]\Big).
\end{align*}
Again, we applied the independence between $\big( \tau^{>\delta}_i(\eta) \big)_{i \geq 1}$ and $\big( W^{>\delta}_i(\eta) \big)_{i \geq 1}$.} Therefore, as $\eta \downarrow 0$, the limit of the conditional law
% From the conditional independence between 
% $\eta W^{>\delta}_{1}(\eta),\cdots,\eta W^{>\delta}_{k}(\eta)$
% and
% $
% \eta \tau^{>\delta}_{1}(\eta),\cdots,\eta \tau^{>\delta}_{k}(\eta)
% $
% on event $E^\delta_{c,k}(\eta)$, 
% we know that the limit of 
$
\mathscr{L}\Big( \eta \bm W^{>\delta}_{1}(\eta),\cdots,\eta \bm W^{>\delta}_{k}(\eta)\Big| E^\delta_{c,k}(\eta)\Big)
$
is also independent from that of
$
\mathscr{L}\Big( \eta \tau^{>\delta}_{1}(\eta),\cdots,\eta \tau^{>\delta}_{k}(\eta)\Big| E^\delta_{c,k}(\eta)\Big),
$
and
it only remains to show that
$$\mathscr{L}\Big( \eta \tau^{>\delta}_{1}(\eta),\eta \tau^{>\delta}_{2}(\eta),\cdots,\eta \tau^{>\delta}_{k}(\eta)\Big| E^\delta_{c,k}(\eta)\Big) \to  \mathscr{L}\Big(U_{(1;k)},\cdots,U_{(k;k)} \Big).$$
Note that since both $\{\eta \tau^{>\delta}_i(\eta): i=1,\ldots,k\}$ and $\{U_{(i):k}: i=1,\ldots,k\}$ are sorted in an ascending order, the joint CDFs are completely characterized by $\{t_i: i=1,\ldots,k\}$'s such that $0 \leq t_1 \leq t_2 \leq \cdots \leq t_{k} \leq 1$. 
For any such $(t_1,\cdots,t_{k}) \in [0,t]^{k}$, note that
\begin{align*}
    & \P\Big( \eta \tau^{>\delta}_{1}(\eta) > t_1,\ \eta \tau^{>\delta}_{2}(\eta) > t_2,\ \cdots, \eta \tau^{>\delta}_{k}(\eta) > t_{k}    \ \Big|\ E^\delta_{c,k}(\eta) \Big)
    \\
    &= 
    \P\Big( \eta \tau^{>\delta}_{1}(\eta) > t_1,\ \eta \tau^{>\delta}_{2}(\eta) > t_2,\ \cdots, \eta \tau^{>\delta}_{k}(\eta) > t_{k}    \ \Big|\ \tau^{>\delta}_{k}(\eta) < \floor{1/\eta} < \tau^{>\delta}_{k+1}(\eta) \Big)
    \\
    &= 
    \frac{
    \P\Big( \eta \tau^{>\delta}_{1}(\eta) > t_1,\ \eta \tau^{>\delta}_{2}(\eta) > t_2,\ \cdots, \eta \tau^{>\delta}_{k}(\eta) > t_{k};\ \tau^{>\delta}_{k}(\eta) < \floor{1/\eta} < \tau^{>\delta}_{k+1}(\eta) \Big) 
    }{%\Big/ 
    \P\Big(\tau^{>\delta}_{k}(\eta) < \floor{1/\eta} < \tau^{>\delta}_{k+1}(\eta)\Big)
    }
\end{align*}
and observe that
\begin{align*}
    &\frac{\P\Big( \eta \tau^{>\delta}_{1}(\eta) > t_1,\ \eta \tau^{>\delta}_{2}(\eta) >  t_2,\ \cdots, \eta \tau^{>\delta}_{k}(\eta) > t_{k};\ \tau^{>\delta}_{k}(\eta) < \floor{1/\eta} < \tau^{>\delta}_{k+1}(\eta) \Big)}{\P\Big(\tau^{>\delta}_{k}(\eta) < \floor{1/\eta} < \tau^{>\delta}_{k+1}(\eta)\Big)}
    \\
    &=
    \frac{ \big| \bm{E}^\eta  \big| \cdot q_2(\eta)q_3(\eta)  }{ q_1(\eta) q_2(\eta)q_3(\eta) }
    % \\
    % &
    =\big| \bm{E}^\eta  \big|\Big/q_1(\eta)
\end{align*}
where 
$\bm{E}^\eta \delequal \Big\{ (s_1,\cdots,s_{k}) \in \{1,2,\cdots,\floor{1/\eta}-1\}^{k} :\ \eta s_j > t_j\ \forall j \in [k];\ s_1 < s_2 < \cdots < s_k \Big\}$.
% for any $(s_1,\cdots,s_{k}) \subseteq \{1,2,\cdots,\floor{t/\eta}\}$. 
% $s_{(1)} < s_{(2)} < \cdots < s_{(k)}$ is the ordered statistics
% for any $(s_1,\cdots,s_{k}) \subseteq \{1,2,\cdots,\floor{t/\eta}\}$. 
Note that
\begin{align*}
    |\bm{E}^\eta| & =
    \sum_{ s_{k} = \floor{\frac{t_{k}}{\eta}} + 1 }^{\floor{1/\eta} - 1}\
    \sum_{ s_{k- 1} = \floor{\frac{t_{k - 1}}{\eta}} + 1 }^{s_k - 1}\
    \sum_{ s_{k- 2} = \floor{\frac{t_{k - 2}}{\eta}} + 1 }^{s_{k-1}-1}
    \cdots
    \sum_{ s_{2} = \floor{\frac{t_{2}}{\eta}} + 1 }^{s_3 - 1}\
    \sum_{ s_{1} = \floor{\frac{t_{1}}{\eta}} + 1 }^{s_2 - 1}1.
\end{align*}
Together with \eqref{proof: lemma weak convergence of cond law of large jump, 1, LDP},
we obtain
\begin{align*}
\lim_{\eta \downarrow 0} \big| \bm{E}^\eta  \big|\Big/q_1(\eta)
&= 
(k!)\cdot\lim_{\eta \downarrow 0}\frac{ \big| \bm{E}^\eta  \big|}{(1/\eta)^{k}} 
=
(k!)\int_{t_k}^1 \int_{t_{k - 1}}^{s_{k}} \int_{t_{k - 2}}^{s_{k - 1}}
\cdots
\int_{t_{2}}^{s_{3}}\int_{t_1}^{s_2} ds_1 ds_2 \cdots ds_{k}
\\
& = \P\big( U_{(i;k)} >t_i\ \forall i \in [j] \big)
\end{align*}
and conclude the proof.
\end{proof}

Next, we present useful results about mappings $h^{(k)}_{[0,T]}$ and $h^{(k)|b}_{[0,T]}$ defined in \eqref{def: perturb ode mapping h k b, 1}--\eqref{def: perturb ode mapping h k b, 4}.
% as well as the measures $\mathbf{C}^{(k)}_{ {[0,T]} }$ and $\mathbf{C}^{(k)|b}_{{[0,T]} }$ defined in \eqref{def: measure C k t} and \eqref{def: measure mu k b t}, respectively.
These results will serve as crucial tools when establishing
Theorems~\ref{corollary: LDP 2} and \ref{theorem: LDP 1, unclipped}.
First,
recall the definitions of the sets $\mathbb{D}_{A}^{(k)}(r)$ and $\mathbb{D}^{(k)|b}_{A}(r)$ in \eqref{def: l * tilde jump number for function g, clipped SGD}, respectively.
% Also, recall that $\textbf I_m$ is the identity matrix in $\R^{m\times m}$.
% , which are the images of mappings $h^{(k)}$ and $h^{(k)|b}$.
The first two results reveal useful properties of $\mathbb{D}_{A}^{(k)}(r)$ and $\mathbb{D}^{(k)|b}_{A}(r)$
when Assumptions~\ref{assumption: lipschitz continuity of drift and diffusion coefficients} and \ref{assumption: boundedness of drift and diffusion coefficients} hold.
As their proofs mostly rely on arguments and calculations independent of those in the other sections of our analyses,
we collect the proofs of Lemmas \ref{lemma: LDP, bar epsilon and delta} and \ref{lemma: LDP, bar epsilon and delta, clipped version} in Section~\ref{sec: appendix, mapping h}.

\begin{lemma}
\label{lemma: LDP, bar epsilon and delta}
\linksinthm{lemma: LDP, bar epsilon and delta}
Let Assumptions \ref{assumption: lipschitz continuity of drift and diffusion coefficients} and \ref{assumption: boundedness of drift and diffusion coefficients} hold.
Given some compact $A \subseteq \R^m$, some $B \in \mathscr{S}_{\mathbb{D}}$, and some $k \in \mathbb N,\ r > 0$,
if $B$ is bounded away from $\mathbb{D}_A^{(k-1)}(r)$,
then there exist $\bar{\epsilon}>0$ and $\bar{\delta}>0$ such that the following claims hold:
\begin{enumerate}[(a)]
    \item
    For any $\bm x \in A$,
    \begin{align*}
        h^{(k)}\big(\bm x,(\bm w_1,\cdots,\bm w_k),\bm{t}\big) \in B^{\bar{\epsilon}}
        \qquad
        \Longrightarrow\qquad
            \norm{\bm w_j} >\bar\delta\ \forall j \in [k];
    \end{align*}
    % the condition $\norm{\bm w_j} > \bar\delta\ \forall j \in [k]$ must hold if $ h^{(k)} (x,\bm{w},\bm{t}) \in B^{\bar{\epsilon}}$;

    \item 
        $\bm{d}_{J_1}\big(B^{\bar{\epsilon}},\mathbb{D}_{A}^{(k - 1)}(r)\big) > 0$.
%(recall that $\mathbb{D}^{A^{\bar{\delta}}}_{k-1}[0,t] = h^{(k-1)}_t\big( A^{\bar{\delta}} \times \mathbb{R}^{k-1} \times (0,t)^{(k-1)\uparrow}\big)$).
\end{enumerate}
\end{lemma}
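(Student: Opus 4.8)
The plan is to exploit two structural facts about the sets $\mathbb D^{(k)}_A$: first, that $h^{(k)}$ depends continuously on the jump sizes and times (locally, away from coincident jump times), and second, that ``killing a jump'' (sending some $w_j \to 0$) continuously deforms a path in $\mathbb D^{(k)}_A$ into a path in $\mathbb D^{(k-1)}_A$. Concretely, for $(a)$ I would argue by contradiction: suppose no such $\bar\delta$ works for the choice of $\bar\epsilon$ we are about to fix. Then there are sequences $x_n \in A$, $\bm t_n \in (0,1]^{k\uparrow}$, $\bm w_n \in \R^k$ with $h^{(k)}(x_n,\bm w_n,\bm t_n) \in B^{\bar\epsilon}$ but $|w_{n,j_n}| \le 1/n$ for some coordinate $j_n$. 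The key boundedness input is that on the event $h^{(k)}(x_n,\bm w_n,\bm t_n)\in B^{\bar\epsilon}$, the remaining jump sizes $w_{n,j}$ and the terminal value stay bounded — this uses Assumption~\ref{assumption: boundedness of drift and diffusion coefficients} (so the drift flow is well-controlled on $[0,1]$, Lemma~\ref{lemma Ode Gd Gap}-type estimates) together with the fact that $B^{\bar\epsilon}$ is bounded in $(\mathbb D,\bm d_{J_1})$ since $B$ itself must be bounded (it is bounded away from the nonempty set $\mathbb D^{(k-1)}_A$; more carefully, one restricts attention to the portion of $B$ within bounded $J_1$-distance of $\mathbb D^{(k)}_A$, outside of which $\mathbf C^{(k)}(\cdot;x)$ and the relevant probabilities vanish — but for the combinatorial statement here it suffices that paths with an unbounded jump escape to $J_1$-distance tending to infinity from any fixed bounded region). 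Extracting subsequences ($A$ compact; $\bm t_n$ in a compact cube; bounded jumps), pass to a limit $(x_*,\bm w_*,\bm t_*)$ with $w_{*,j_*}=0$. If the limiting jump times remain distinct, continuity of $h^{(k)}$ gives $h^{(k)}(x_*,\bm w_*,\bm t_*)\in B^{2\bar\epsilon}$, and since one jump size is zero this path lies in $\mathbb D^{(k-1)}_A$, contradicting $\bm d_{J_1}(B,\mathbb D^{(k-1)}_A)>0$ once $\bar\epsilon$ is chosen smaller than a third of that distance. If two limiting jump times coincide, the two jumps (modulated by $\sigma$) merge into a single effective jump, so the limit again lies in $\mathbb D^{(k-1)}_A$ (indeed in $\mathbb D^{(\le k-1)}_A$), and the same contradiction applies.

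For $(b)$, I would first note $\bm d_{J_1}(B,\mathbb D^{(k-1)}_A) =: 3d_0 > 0$ by hypothesis, and simply take any $\bar\epsilon < d_0/3$ (shrinking the $\bar\epsilon$ from part $(a)$ if needed, and shrinking $\bar\delta$ correspondingly, since making $\bar\epsilon$ smaller only makes the implication in $(a)$ easier). Then for $\xi \in B^{3\bar\epsilon}$ there is $\xi' \in B$ with $\bm d_{J_1}(\xi,\xi') \le 3\bar\epsilon < d_0$, and for any $\zeta \in \mathbb D^{(k-1)}_A$ we have $\bm d_{J_1}(\xi',\zeta) \ge 3d_0$, so by the triangle inequality $\bm d_{J_1}(\xi,\zeta) \ge 3d_0 - d_0 = 2d_0 > 0$. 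Taking the infimum over $\xi \in B^{3\bar\epsilon}$ and $\zeta \in \mathbb D^{(k-1)}_A$ yields $\bm d_{J_1}(B^{3\bar\epsilon},\mathbb D^{(k-1)}_A) \ge 2d_0 > 0$, which is $(b)$. The case $k=0$ is degenerate: $\mathbb D^{(-1)}_A = \emptyset$, so ``bounded away from $\emptyset$'' and both claims hold vacuously (any $\bar\epsilon,\bar\delta$, with the $\bar\delta$-condition over an empty index set).

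The main obstacle I anticipate is making the compactness/continuity argument in $(a)$ fully rigorous at the level of the $J_1$ topology, precisely because $h^{(k)}$ is \emph{not} globally continuous on $\R\times\R^k\times(0,1]^{k\uparrow}$ — it can blow up near the boundary where jump times coincide or approach the endpoints, and $J_1$-convergence of the output paths is subtle when jump locations move. The clean way around this is to work on the relevant sublevel region: restrict to $(\bm w,\bm t)$ with $h^{(k)}(x,\bm w,\bm t)\in B^{\bar\epsilon}$, show (via Gronwall/Lemma~\ref{lemma Ode Gd Gap} bounds and Assumption~\ref{assumption: lipschitz continuity of drift and diffusion coefficients}) that this forces $\|\bm w\|$ into a compact set depending only on $A,B,\bar\epsilon$, and then invoke continuity of $h^{(k)}$ on the compact parameter set with jump times bounded away from $0$ and from each other — and handle the boundary cases (jump time $\to 0$, jump times colliding, jump size $\to 0$) separately, each of which collapses the path into $\mathbb D^{(k-1)}_A$ or lower, hence contradicts the positive distance. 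Once this reduction is set up, everything else is routine triangle-inequality bookkeeping.
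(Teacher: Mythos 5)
Your part (b) and your choice of $\bar\epsilon$ are exactly what the paper does, and your overall intuition for (a) — a path in $B^{\bar\epsilon}$ with a small jump would be $J_1$-close to a path in $\mathbb{D}_A^{(k-1)}$, contradicting the separation — is the right one. However, the compactness route you take to implement (a) has a genuine gap. Your argument requires extracting a convergent subsequence of the full configurations $(x_n,\bm w_n,\bm t_n)$, and to do so you assert that the remaining jump sizes stay bounded because ``$B$ itself must be bounded (it is bounded away from the nonempty set $\mathbb{D}_A^{(k-1)}$).'' That implication is false: a set can be bounded away from $\mathbb{D}_A^{(k-1)}$ and still contain paths with arbitrarily large jumps (e.g.\ sets of the form $\{\xi:\ \|\xi\|\geq M\}$, which are used repeatedly later in the paper). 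Only the \emph{offending} coordinate $w_{n,j_n}$ is controlled; the others need not converge, so the limit $(x_*,\bm w_*,\bm t_*)$ you want to pass to may not exist, and the subsequent appeal to continuity of $h^{(k)}$ (itself delicate at coinciding or boundary jump times, as you note) never gets off the ground.

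The fix is to drop compactness entirely and make the ``jump size $\to 0$'' case — which you relegate to a boundary case — the whole argument, applied at fixed $n$: given $\xi=h^{(k)}(x_0,\bm w,\bm t)\in B^{\bar\epsilon}$ with $|w_J|\leq\bar\delta$ for some minimal $J$, define $\xi'$ by deleting the $J$-th jump (run the ODE from $\xi(t_J-)$ on $[t_J,t_{J+1})$ and rejoin $\xi$ at $t_{J+1}$), so $\xi'\in\mathbb{D}_A^{(k-1)}$. The deleted jump has magnitude $|\sigma(\xi(t_J-))w_J|\leq C\bar\delta$ by Assumption \ref{assumption: boundedness of drift and diffusion coefficients}, and Gronwall's inequality on the single interval $[t_J,t_{J+1})$ propagates this to $\sup_{s}|\xi(s)-\xi'(s)|\leq \rho C\bar\delta$ with $\rho=\exp(D)$; choosing $\bar\delta=\bar\epsilon/(\rho C+1)$ gives $\bm d_{J_1}(\xi,\xi')<\bar\epsilon$, contradicting $\bm d_{J_1}(B^{3\bar\epsilon},\mathbb{D}_A^{(k-1)})>0$. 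This is a purely local, quantitative estimate: no limits, no boundedness of the other jumps, and no continuity of $h^{(k)}$ are needed.
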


\begin{lemma}
\label
{lemma: LDP, bar epsilon and delta, clipped version}
\linksinthm
{lemma: LDP, bar epsilon and delta, clipped version}
Let Assumptions~\ref{assumption: lipschitz continuity of drift and diffusion coefficients} and \ref{assumption: boundedness of drift and diffusion coefficients}  hold.
Given some compact $A \subseteq \R^m$, some $B \in \mathscr{S}_{\mathbb{D}}$, and some $k \in \mathbb N$, $b,r > 0$,
if $B$ is bounded away from $\mathbb{D}^{(k-1)|b}_{A}(r)$,
then there exist $\bar{\epsilon}>0$ and $\bar{\delta}>0$ such that the following claims hold:
\begin{enumerate}[(a)]
    \item 
    for any $\bm x \in A$, $b > 0$, and any $(\bm v_1,\cdots,\bm v_k) \in \R^{m \times k}$ with $\max_{j \in [k]}\norm{\bm v_j} \leq \bar\epsilon$,
    \begin{align*}
        \bar h^{(k)|b}\big(\bm x,(\bm w_1,\cdots,\bm w_k),(\bm v_1,\cdots,\bm v_k),\bm{t}\big) \in B^{\bar{\epsilon}}
        \qquad
        \Longrightarrow\qquad
            \norm{\bm w_j} >\bar\delta\ \forall j \in [k];
    \end{align*}
    
\item $\bm{d}_{J_1}\big(B^{\bar\epsilon},\mathbb{D}_{A}^{(k - 1)|b}(r)\big) > 0$.
\end{enumerate}
% Furthermore, suppose that Assumption~\ref{assumption: boundedness of drift and diffusion coefficients} holds.
% Then there exists $\bar\epsilon > 0$, $\bar\delta > 0$ such that
% \begin{enumerate}[(a)]
%  \setcounter{enumi}{2}
%     \item 
%         for any $\bm x \in A$, $b > 0$, and any $(\bm v_1,\cdots,\bm v_k) \in \R^{m \times k}$ with $\max_{j \in [k]}\norm{\bm v_j} \leq \bar\epsilon$,
%         \begin{align*}
%         \bar h^{(k)|b}\big(\bm x,(\bm w_1,\cdots,\bm w_k),(\bm v_1,\cdots,\bm v_k),\bm{t}\big) \in B^{\bar{\epsilon}}
%         \qquad
%         \Longrightarrow\qquad
%             \norm{\bm w_j} >\bar\delta\ \forall j \in [k];
%     \end{align*}
%     that is, under Assumption~\ref{assumption: boundedness of drift and diffusion coefficients}, there exists $\bar\epsilon,\bar\delta > 0$ such that part $(a)$ holds uniformly for all $b > 0$.
% \end{enumerate}
% Furthermore, suppose that Assumption \ref{assumption: uniform nondegeneracy of diffusion coefficients} holds,
% then there exist $\bar{\epsilon}>0$ and $\bar{\delta}>0$ such that
% \begin{enumerate}[(a)]
% \setcounter{enumi}{2}
%     \item 
%     Given any $x \in A$,
%     the condition $|w_j| > \bar\delta\ \forall j \in [k]$ must hold if $ h^{(k)|b+\bar\epsilon} (x,\bm{w},\bm{t}) \in B^{\bar{\epsilon}}$,
% \item $\bm{d}_{J_1}(B^{\bar\epsilon},\mathbb{D}_{A}^{(k - 1)|b + \bar{\epsilon}}) > 0$.
% \end{enumerate}
\end{lemma}

The next lemma establishes a convergence result from $\mathbf C^{(k)|b}$ to $\mathbf C^{(k)}$.
Again, we collect the proof in Section~\ref{sec: appendix, mapping h}.

\begin{lemma}\label{lemma: convergence from C k b measure to C k measure}
\linksinthm{lemma: convergence from C k b measure to C k measure}
Let Assumptions \ref{assumption: lipschitz continuity of drift and diffusion coefficients} and \ref{assumption: boundedness of drift and diffusion coefficients} hold.
Let $k \in \mathbb N$, $r > 0$, and $A\subseteq \R^m$ be compact.
For any
$g \in \mathcal{C}\big( \mathbb{D}\setminus \mathbb{D}^{(k-1)}_A (r)\big)$ and $\bm x \in A$,
\begin{align*}
    \lim_{b \rightarrow \infty}\mathbf{C}^{(k)|b}   (g;\bm x) = \mathbf{C}^{(k)}(g;\bm x).
\end{align*}
\end{lemma}

In Lemma~\ref{lemma: SGD close to approximation x circ, LDP}, we show that the image of $h^{(1)}$ (resp. $h^{(1)|b}$) provides good approximations of the sample path of $\bm X^\eta_j(\bm x)$ (resp. $\bm X^{\eta|b}_j(\bm x)$) up until $\tau^{>\delta}_1(\eta)$, i.e. the arrival time of the first ``large noise''; see 
\eqref{defArrivalTime large jump},\eqref{defSize large jump} for the definition of $\tau^{>\delta}_{i}(\eta),\bm W^{>\delta}_{i}(\eta)$.

\begin{lemma} 
\label
{lemma: SGD close to approximation x circ, LDP}
\linksinthm
{lemma: SGD close to approximation x circ, LDP}%
Let Assumptions \ref{assumption: lipschitz continuity of drift and diffusion coefficients} and \ref{assumption: boundedness of drift and diffusion coefficients} hold.
Let
$D,C \in [1,\infty)$ be the constants in Assumptions \ref{assumption: lipschitz continuity of drift and diffusion coefficients} and \ref{assumption: boundedness of drift and diffusion coefficients}, respectively, and let $\notationdef{notation-rho-LDP}{\rho} \delequal \exp(D)$.

\begin{enumerate}[(a)]
    \item For any $\epsilon,\delta,\eta > 0$ and any $\bm x,\bm y \in \R^m$, it holds on the event 
    \begin{align*}
        \Bigg\{
        \max_{ i \leq \floor{1/\eta}\wedge \big( \tau^{>\delta}_1(\eta) - 1  \big) }
        \eta\norm{
            \sum_{j = 1}^i \bm \sigma\big(\bm X^\eta_{j-1}(\bm x)\big)\bm Z_j
        } \leq \epsilon
        \Bigg\}
    \end{align*}
    that
    \begin{align}
        \sup_{ t \in [0,1]:\ t <\eta \tau^{>\delta}_1(\eta) }\norm{\xi_t - \bm X^{\eta}_{\floor{ t/\eta }}(\bm x)}
        & \leq
        \rho \cdot \big( \epsilon + \norm{\bm x - \bm y} + \eta C \big),
        \label{ineq, no jump time, a, lemma: SGD close to approximation x circ, LDP}
        % \\
        % \sup_{ t \in [0,1]:\ t \leq \eta \tau^{>\delta}_1(\eta) }\big| \xi_t - X^\eta_{\floor{ t/\eta }}(x) \big| & \leq \rho \cdot \big(1 + D \cdot |\eta W^{>\delta}_{1}(\eta)| \big)\big( \epsilon + |x-y| + 2\eta C \big)
        % \textcolor{red}{do we need this??}
        % \label{ineq, with jump time, a, lemma: SGD close to approximation x circ, LDP}
    \end{align}
    where
    % $L,C \in (0,\infty)$ are the constants in Assumptions \ref{assumption: lipschitz continuity of drift and diffusion coefficients} and \ref{assumption: boundedness of drift and diffusion coefficients} respectively,
    % $\rho = \exp(L)$, and
   \begin{align}
    \xi = 
    \begin{cases}
     h^{(1)}\big(\bm y,\eta \bm W^{>\delta}_{1}(\eta), \eta \tau^{>\delta}_1(\eta) \big) & \text{ if }\eta \tau^{>\delta}_1(\eta) \leq 1,
     \\
     h^{(0)}(\bm y) & \text{ if }\eta \tau^{>\delta}_1(\eta) > 1.
    \end{cases}
    \nonumber
    \end{align}

    \item 
    % Furthermore, suppose that Assumption \ref{assumption: uniform nondegeneracy of diffusion coefficients} holds.
    For any  $\gamma,b > 0$, $\epsilon \in (0,1)$,
    $\delta \in (0,\frac{b}{2C})$, $\eta \in (0,\frac{b \wedge 1}{2C})$, and $\bm x,\bm y \in \R^m$, it holds on the event 
    \begin{align*}
       \Bigg\{
        \max_{ i \leq \floor{1/\eta}\wedge \big( \tau^{>\delta}_1(\eta) - 1  \big) }
        \eta\norm{
            \sum_{j = 1}^i \bm \sigma\big(\bm X^{\eta|b}_{j-1}(\bm x)\big)\bm Z_j
        } \leq \epsilon
        \Bigg\}
    \cap 
        \Big\{
            \eta \norm{\bm W^{>\delta}_1(\eta)} \leq 1/\epsilon^\gamma
        \Big\}
    \end{align*}
    that
    \begin{align}
        \sup_{ t \in [0,1]:\ t <\eta \tau^{>\delta}_1(\eta) }\norm{ \xi_t - \bm X^{\eta|b}_{\floor{ t/\eta }}(\bm x) }
        & \leq
        \rho \cdot \big( \epsilon + \norm{\bm x-\bm y} + \eta C \big),
        \label{ineq, no jump time, b, lemma: SGD close to approximation x circ, LDP}
        \\
        \sup_{ t \in [0,1]:\ t \leq \eta \tau^{>\delta}_1(\eta) }\norm{ \xi_t - \bm X^{\eta|b}_{\floor{ t/\eta }}(\bm x) }
        & \leq
   \rho D \cdot \big( \epsilon + \norm{\bm x - \bm y} + 2\eta C \big) \cdot \epsilon^{-\gamma}
    \label{ineq, with jump time, b, lemma: SGD close to approximation x circ, LDP}
    \end{align}
    where 
    % $L,C \in (0,\infty)$ are the constants in Assumptions \ref{assumption: lipschitz continuity of drift and diffusion coefficients} and \ref{assumption: boundedness of drift and diffusion coefficients} respectively,
    % $\rho = \exp(L)$, and
   \begin{align*}
    \xi = 
    \begin{cases}
     h^{(1)|b}\big(\bm y,\eta \bm W^{>\delta}_{1}(\eta), \eta \tau^{>\delta}_1(\eta) \big) & \text{ if }\eta \tau^{>\delta}_1(\eta) \leq 1,
     \\
     h^{(0)|b}(\bm y) & \text{ if }\eta \tau^{>\delta}_1(\eta) > 1.
    \end{cases}
    \end{align*}
\end{enumerate}
\end{lemma}

\begin{proof}
% \linkdest{location, proof of lemma: SGD close to approximation x circ, LDP} 
\linksinpf{lemma: SGD close to approximation x circ, LDP}%
\noindent$(a)$
Recall that $\bm{y}_t(\bm x)$ defined in \eqref{def ODE path y t} is the solution to ODE $d \bm{y}_t(\bm x)/dt = \bm a\big(\bm{y}_t(\bm x)\big)$
under initial condition $\bm{y}_0(\bm x) = \bm x$.
By definition of $\xi$,
we have $\xi_t = \bm{y}_t(\bm y)$ 
for any $t \in [0,1]$ with $t < \eta \tau^{>\delta}_1(\eta)$.
Also,
since $\tau^{>\delta}_1(\eta)$ only takes integer values, we know that
$
\eta \tau^{>\delta}_1(\eta) \leq 1 \Longleftrightarrow \tau^{>\delta}_1(\eta) \leq \floor{1/\eta}
$
and
$
\eta \tau^{>\delta}_1(\eta) > 1 \Longleftrightarrow \tau^{>\delta}_1(\eta) >\floor{1/\eta}.
$

Let $A \delequal\Big\{
        \max_{ i \leq \floor{1/\eta}\wedge \big( \tau^{>\delta}_1(\eta) - 1  \big) }
        \eta\norm{
            \sum_{j = 1}^i \bm \sigma\big(\bm X^\eta_{j-1}(\bm x)\big)\bm Z_j
        } \leq \epsilon
        \Big\}.$
Let $\bm{x}^\eta_\cdot(\cdot)$ be the deterministic process defined in \eqref{def: gradient descent process y}.
Applying discrete version of Gronwall's inequality (see, for example, Lemma A.3 of \cite{kruse2014strong})
we know that on event $A$,
\begin{align}
    \norm{ 
        \bm{x}^\eta_j(\bm x) - \bm X^\eta_j(\bm x)
    } \leq \epsilon\cdot\exp( \eta D \cdot \floor{1/\eta}) \leq \rho \epsilon
    \qquad \forall j \leq \floor{1/\eta} \wedge \big( \tau^{>\delta}_1(\eta) - 1 \big). 
    \label{proof, ineq gap between X and y, SGD close to approximation x circ, LDP}
\end{align}
On the other hand, 
since $\xi_t = \bm{y}_t(\bm y)$ for all $t < \eta\tau^{>\delta}_1(\eta)$,
by applying Lemma \ref{lemma Ode Gd Gap} we get
\begin{align}
    \sup_{ t \in [0,1]:\ t < \eta \tau^{>\delta}_1(\eta) }
    \norm{ \xi_t - \bm{x}^\eta_{ \floor{t/\eta} }(\bm x) }
    % & = 
    % \sup_{ s \in [0,1/\eta]:\ s < \tau^{>\delta}_1(\eta) }\Big| \bm{x}^\eta(s;y) - \bm{x}_{ \floor{s} }(x) \Big|
    % \nonumber
    % \\
    & \leq 
    % \sup_{ s \in [0,1/\eta]}\Big| \bm{x}^\eta(s;y) - \bm{x}_{ \floor{s} }(x) \Big|    
    % \leq 
    \big(\eta C + \norm{\bm x - \bm y}\big)\cdot \rho.
    \label{proof, ineq gap between y and xi, SGD close to approximation x circ, LDP}
\end{align}
% In case that $\tau^{>\delta}_1(\eta) > \floor{1/\eta}$ (namely $\eta\tau^{>\delta}_1(\eta) > 1$), 
% we have $\tau^{>\delta}_1(\eta) -1  \geq \floor{1/\eta}$, 
% thus implying that 
Combining \eqref{proof, ineq gap between X and y, SGD close to approximation x circ, LDP} and \eqref{proof, ineq gap between y and xi, SGD close to approximation x circ, LDP}, we get
\begin{align}
% &
\sup_{t \in [0,1]: t < \eta\tau^{>\delta}_1(\eta)}\norm{ \xi_t - \bm X^\eta_{ \floor{t/\eta} }(\bm x) }
% \\
% &
% \leq
% \sup_{t \in [0,1]: t < \eta\tau_1^\eta(\delta)}\Big| \xi_t - \bm y_{\floor{t/\eta}}^\eta(x) \Big|
% +
% \sup_{t \in [0,1]: t < \eta\tau_1^\eta(\delta)} \Big| \bm y_{\floor{t/\eta}}^\eta(x) - X_{\floor{t/\eta}}^\eta(x) \Big|
% \nonumber\\
% &
% \leq
% \sup_{t \in [0,1]: t < \eta\tau_1^\eta(\delta)}\Big| \xi_t - \bm y_{\floor{t/\eta}}^\eta(x) \Big|
% +
% \max_{j \leq \floor{1/\eta} \wedge\big(\tau_1^\eta(\delta)-1\big)}\Big| \bm y_j^\eta(x) - X_j^\eta(x) \Big|
% \nonumber\\
&\leq
\rho\cdot \Big( \epsilon + \norm{\bm x - \bm y} + \eta C \Big).
\label{proof, up to t strictly less than eta tau_1^eta, SGD close to approximation x circ, LDP}
\end{align}

\medskip
\noindent
$(b)$
Note that for any $\bm x \in \R^m$ and any $t\in[0,1]$ with $t < \eta \tau^{>\delta}_1(\eta)$,
$$h^{(0)|b}(\bm x)(t) = h^{(0)}(\bm x)(t) 
= h^{(1)|b}\big(\bm x,\eta \bm W^{>\delta}_{1}(\eta),\eta\tau^{>\delta}_1(\eta)\big)(t)= h^{(1)}\big(\bm x,\eta \bm W^{>\delta}_{1}(\eta),\eta\tau^{>\delta}_1(\eta)\big)(t) = \bm{y}_t(\bm x).$$
% Therefore, the inequalities
% \eqref{proof, ineq gap between X and y, SGD close to approximation x circ, LDP}\eqref{proof, ineq gap between y and xi, SGD close to approximation x circ, LDP} still hold.
Also, for any $\bm w \in \R^d$ with $\norm{\bm w} \leq \delta < \frac{b}{2C}$ and any $\bm x \in \R^m$
note that 
$ \varphi_b\Big( \eta \bm a(\bm x) + \bm \sigma(\bm x) \bm w  \Big) =  \eta \bm a(\bm x) + \bm \sigma(\bm x) \bm w$
due to $\eta\norm{\bm a(\bm x)} \leq \eta C < \frac{b}{2}$
and 
$
\norm{\bm\sigma(\bm x)}\norm{\bm w} \leq C \delta  < b/2 
$
(recall our choice of $\eta C < \frac{b}{2} \wedge 1$ and $\delta < \frac{b}{2C}$).
% and Assumption \ref{assumption: boundedness of drift and diffusion coefficients}.
As a result, $\bm X^\eta_j(\bm x) = \bm X^{\eta|b}_j(\bm x)$ for all $\bm x \in \R^m$ and $j < \tau^{>\delta}_1(\eta)$.
% In case that $\tau^{>\delta}_1(\eta) > \floor{1/\eta}$ (namely $\eta\tau^{>\delta}_1(\eta) > 1$), 
% we have $\tau^{>\delta}_1(\eta) -1  \geq \floor{1/\eta}$.
It then follows directly from 
\eqref{proof, up to t strictly less than eta tau_1^eta, SGD close to approximation x circ, LDP}
% \eqref{proof, ineq gap between X and y, SGD close to approximation x circ, LDP}\eqref{proof, ineq gap between y and xi, SGD close to approximation x circ, LDP} 
that
on event 
$
\Big\{
        \max_{ i \leq \floor{1/\eta}\wedge \big( \tau^{>\delta}_1(\eta) - 1  \big) }
        \eta\norm{
            \sum_{j = 1}^i \bm \sigma\big(\bm X^{\eta|b}_{j-1}(\bm x)\big)\bm Z_j
        } \leq \epsilon
        \Big\},
$
we have
$$
\sup_{t \in [0,1]: t < \eta\tau^{>\delta}_1(\eta)}\norm{ \xi_t - \bm X^{\eta|b}_{ \floor{t/\eta} }(\bm x) }
\leq 
\rho\cdot \big( \epsilon + \norm{\bm x - \bm y} + \eta C \big).
$$
A direct consequence is (we write $\bm{y}(u;\bm x) = \bm{y}_u(\bm x)$, $\bm{y}(s-;\bm x) = \lim_{u \uparrow s}\bm{y}_u(\bm x)$, and $\xi(t) = \xi_t$ in this proof)
\begin{align}
    \norm{
    \bm{y}( \eta\tau^{>\delta}_1(\eta)-;\bm y) - \bm X^{\eta|b}_{ \tau^{>\delta}_1(\eta) - 1 }(\bm x)
    }\leq \rho \cdot \big( \epsilon + \norm{\bm x - \bm y} + \eta C \big).
    \label{proof, ineq, lemma: SGD close to approximation x circ, LDP}
\end{align}
% In case that $\tau^{>\delta}_1(\eta) \leq \floor{1/\eta}$ (namely $\eta\tau^{>\delta}_1(\eta) \leq 1$),
% observe that
Therefore,
\begin{align*}
    & \norm{ \xi\big(\eta \tau^{>\delta}_1(\eta)\big) - \bm X^{\eta|b}_{ \tau^{>\delta}_1(\eta) }(\bm x) }
    \\
    & = 
    \Bigg|\Bigg|
    \bm{y}(\eta \tau^{>\delta}_1(\eta)-;\bm y) + \varphi_b\bigg(\eta \bm \sigma\Big(\bm{y}(\eta \tau^{>\delta}_1(\eta)-;\bm y)\Big)\bm W^{>\delta}_{1}(\eta)\bigg)
    \\
    &\qquad\qquad
    -\bigg[
    \bm X^{\eta|b}_{ \tau^{>\delta}_1(\eta) - 1 }(\bm x) + \varphi_b\bigg(\eta \bm a\Big(\bm X^{\eta|b}_{ \tau^{>\delta}_1(\eta) - 1 }(\bm x)\Big)
    + \eta \bm \sigma\Big(\bm X^{\eta|b}_{ \tau^{>\delta}_1(\eta) - 1 }(\bm x)\Big) \bm W^{>\delta}_{1}(\eta)\bigg)
    \bigg]
    \Bigg|\Bigg|
    \\
    & \leq 
    \norm{
    \bm{y}(\eta \tau^{>\delta}_1(\eta)-;\bm y) - \bm X^{\eta|b}_{ \tau^{>\delta}_1(\eta) - 1 }(\bm x)
    }
    \\
    &\qquad\qquad
    +
    %\Big|\eta a\Big(X^{\eta|b}_{ \tau^{>\delta}_1(\eta) - 1 }(x)\Big)\Big|
    \underbrace{
    \norm{
        \varphi_b\bigg(\eta \bm \sigma\Big(\bm{y}(\eta \tau^{>\delta}_1(\eta)-;\bm y)\Big)\bm W^{>\delta}_{1}(\eta)\bigg)
        -
        \varphi_b\bigg(\eta \bm \sigma\Big(\bm X^{\eta|b}_{ \tau^{>\delta}_1(\eta) - 1 }(\bm x)\Big) \bm W^{>\delta}_{1}(\eta)\bigg)
    }
    }_{ \delequal I_1}
    \\
    &\qquad\qquad
    + 
    \underbrace{
    \norm{
        \varphi_b\bigg(\eta \bm \sigma\Big(\bm X^{\eta|b}_{ \tau^{>\delta}_1(\eta) - 1 }(\bm x)\Big) \bm W^{>\delta}_{1}(\eta)\bigg)
        -
        \varphi_b\bigg(\eta \bm a\Big(\bm X^{\eta|b}_{ \tau^{>\delta}_1(\eta) - 1 }(\bm x)\Big)
        + \eta \bm \sigma\Big(\bm X^{\eta|b}_{ \tau^{>\delta}_1(\eta) - 1 }(\bm x)\Big) \bm W^{>\delta}_{1}(\eta)\bigg)
    }
    }_{ \delequal I_2}.
\end{align*}
% First of all,
% using \eqref{proof, ineq gap between X and y, SGD close to approximation x circ, LDP}\eqref{proof, ineq gap between y and xi, SGD close to approximation x circ, LDP} again,
First, due to $\norm{\varphi_b(\bm x) - \varphi_b(\bm y)} \leq \norm{\bm x - \bm y}$,
\begin{align*}
    I_1
    & \leq 
    \eta\norm{ \bm W^{>\delta}_1(\eta) }
    \cdot 
    \norm{
        \bm \sigma\Big(\bm{y}(\eta \tau^{>\delta}_1(\eta)-;\bm y)\Big)
        -
        \bm \sigma\Big(\bm X^{\eta|b}_{ \tau^{>\delta}_1(\eta) - 1 }(\bm x)\Big)
    }
    \\ 
    & \leq 
    \eta\norm{ \bm W^{>\delta}_1(\eta) }
    \cdot 
    D \cdot 
    \norm{
    \bm{y}(\eta \tau^{>\delta}_1(\eta)-;\bm y) - \bm X^{\eta|b}_{ \tau^{>\delta}_1(\eta) - 1 }(\bm x)
    }
    \qquad 
    \text{by Assumption~\ref{assumption: lipschitz continuity of drift and diffusion coefficients}}
    \\ 
    & \leq 
    \rho D\big( \epsilon + \norm{\bm x - \bm y} + \eta C \big)
    \cdot 
    \eta\norm{ \bm W^{>\delta}_1(\eta) }
    \qquad\text{by \eqref{proof, ineq, lemma: SGD close to approximation x circ, LDP}}
    \\
    & \leq 
    \rho D\big( \epsilon + \norm{\bm x - \bm y} + \eta C \big) \cdot \epsilon^{-\gamma}
    \qquad
    \text{ on event }\Big\{
            \eta \norm{\bm W^{>\delta}_1(\eta)} \leq 1/\epsilon^\gamma
        \Big\}.
\end{align*}
% using Assumption \ref{assumption: lipschitz continuity of drift and diffusion coefficients}
% and the upper bound \eqref{proof, ineq, lemma: SGD close to approximation x circ, LDP}.
Similarly, we can get
$I_2 \leq 
    \norm{\eta \bm a\Big(\bm X^{\eta|b}_{ \tau^{>\delta}_1(\eta) - 1 }(\bm x)\Big)}
    \leq \eta C$.
In summary, on event $\Big\{
            \eta \norm{\bm W^{>\delta}_1(\eta)} \leq 1/\epsilon^\gamma
        \Big\}$,
\begin{align*}
    \sup_{t \in [0,1]:\ t \leq \eta \tau^{>\delta}_1(\eta)}
    \norm{ \xi_t - \bm X^{\eta|b}_{ \floor{t/\eta} }(\bm x) }
    & \leq  \rho D\big( \epsilon + \norm{\bm x - \bm y} + \eta C \big) \cdot \epsilon^{-\gamma} + \eta C
    \\
    & \leq 
     \rho D\big( \epsilon + \norm{\bm x - \bm y} + 2\eta C \big) \cdot \epsilon^{-\gamma}.
\end{align*}
This concludes the proof of part $(b)$.
\end{proof}

\elaborate{
I moved this part into elaboration because $\hat{\bm X}^{\eta|b;>\delta}$ is not needed in the analysis.

We introduce process $\hat{\bm X}^{\eta|b;>\delta}$ as an approximation for $\bm X^{\eta|b}$.
For any $x \in \mathbb{R}$ and $\eta > 0,\delta > 0,b > 0$, define $\notationdef{notation-low-level-approximation-X-eta-b-LDP}{\hat{X}^{\eta|b;>\delta}_t(x)}$ on $(\Omega,\mathcal{F},\P)$ as the solution to (under initial condition $\hat{X}^{\eta|b;>\delta}_0(x) = x$)
\begin{align*}
    % \hat{X}^{\eta|b;>\delta}_0(x) & = x, 
    % \label{def: x breve approximation, clipped, LDP, 1}
    % \\
    \frac{d \hat{X}^{\eta|b;>\delta}_t(x)}{dt} & = a\big( \hat{X}^{\eta|b;>\delta}_t(x)\big)\qquad \forall t \in [0,\infty)\setminus \{ \eta\tau^{>\delta}_i(\eta):\ i \geq 1 \},
    \\
    \hat{X}^{\eta|b;>\delta}_{\eta\tau^{>\delta}_i(\eta) }(x) & = \hat{X}^{\eta|b;>\delta}_{\eta\tau^{>\delta}_i(\eta) -}(x) + \varphi_b\Big(\sigma\big(\hat{X}^{\eta|b;>\delta}_{\eta\tau^{>\delta}_i(\eta) -}(x)\big)\cdot \eta W^{>\delta}_i(\eta)\Big)\qquad \forall i \geq 1.
\end{align*}
Here $\hat{X}^{\eta|b;>\delta}_{s-}(x) \delequal \lim_{u \uparrow s}\hat{X}^{\eta|b;>\delta}_u(x)$.
Let $\notationdef{notation-low-level-approximation-X-eta-b-LDP-full-path}{\hat{\bm X}^{\eta|b;>\delta}(x)} \delequal \big\{\hat{X}^{\eta|b;>\delta}_t(x):\ t\in[0,1]\big\}$.
By definition of $\hat{\bm X}^{\eta|b;>\delta}(x)$,
it holds for any $\eta,b > 0, k \geq 0, x \in \R$ that
\begin{align*}
\tau^{>\delta}_k(\eta) < \floor{1/\eta} < \tau^{>\delta}_{k+1}(\eta)
\qquad 
\Longrightarrow
\qquad 
    \hat{\bm{X}}^{\eta|b; >\delta }(x)
    =
    h^{(k)|b}\big( x, \eta \bm W^{>\delta}(\eta), \eta \bm \tau^{>\delta}(\eta)\big)
\end{align*}
with $\bm W^{>\delta}(\eta) =  (W^{>\delta}_{1}(\eta), \cdots,W^{>\delta}_k(\eta))$
and 
$\bm \tau^{>\delta}(\eta) =  (\tau^{>\delta}_{1}(\eta), \cdots,\tau^{>\delta}_k(\eta))$.
Furthermore, by applying Lemma \ref{lemma: SGD close to approximation x circ, LDP} inductively, we can reveal the conditions under which $\hat{\bm{X}}^{\eta|b; >\delta }(x)$ is close to $\bm{X}^{\eta|b}(x)$.
}
By applying Lemma \ref{lemma: SGD close to approximation x circ, LDP} inductively,
the next result establishes the conditions under which the image of the mapping $h^{(k)|b}$ approximates the path of $\bm X^{\eta|b}_j(\bm x)$.
\linkdest{location, proof of lemma: SGD close to approximation x breve, LDP clipped}
\begin{lemma}
\label{lemma: SGD close to approximation x breve, LDP clipped}
\linksinthm{lemma: SGD close to approximation x breve, LDP clipped}
Let Assumptions~\ref{assumption: lipschitz continuity of drift and diffusion coefficients} and \ref{assumption: boundedness of drift and diffusion coefficients} hold.
Let $A_i(\eta,b,\epsilon,\delta,\bm x)$ be defined as in \eqref{def: event A i concentration of small jumps, 1}.
For any $k \in \mathbb N$, $\bm x \in \mathbb{R}^m$, $b > 0$, $\epsilon \in (0,1)$,
$\delta \in (0,\frac{b}{2C})$, and $\eta \in (0,\frac{b \wedge \epsilon}{2C})$,
it holds on event
$$\bigg(\bigcap_{i = 1}^{k+1} A_i(\eta,b,\epsilon,\delta,\bm x) \bigg)\cap 
\Big\{ \tau^{>\delta}_k(\eta) \leq \floor{1/\eta} <  \tau^{>\delta}_{k+1}(\eta) \Big\}
\cap 
\Big\{
\eta \norm{\bm W^{>\delta}_i(\eta)} \leq 1/\epsilon^{ \frac{1}{2k}  }\ \forall i \in [k]
\Big\}
% ;
% |\eta W^{>\delta}_{i}(\eta)| \leq (1/\epsilon)^{\frac{1}{2k}}\ \forall i \in [k] 
$$
that
$$\underset{ t \in [0,1] }{\sup}
\norm{ \xi_t - \bm X^{\eta|b}_{\floor{t/\eta}}(\bm x) } < 
% \textcolor{red}{????}
(2\rho D)^{k+1} \sqrt{\epsilon},
$$
where
$$
\xi \delequal h^{(k)|b}\Big(\bm x,\big(\eta \bm W^{>\delta}_1(\eta),\cdots,\eta \bm W^{>\delta}_k(\eta)\big),\big(\eta \tau^{>\delta}_1(\eta),\cdots,\eta \tau^{>\delta}_k(\eta)\big)\Big),
$$
$\rho = \exp(D) \geq 1$,
% $E_i(t,\eta,\delta) \delequal \big\{j \in \mathbb{N}:\  \tau^{>\delta}_{i-1}(\eta) + 1 \leq j \leq \big(\tau^{>\delta}_{i}(\eta) - 1 \big) \wedge \floor{t/\eta}\big\}$,
$D \in[1,\infty)$ is the Lipschitz coefficient in Assumption~\ref{assumption: lipschitz continuity of drift and diffusion coefficients},
and $C \geq 1$ is the constant in Assumption~\ref{assumption: boundedness of drift and diffusion coefficients}.
\end{lemma}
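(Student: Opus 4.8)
The plan is to prove the bound by induction on $k$, the number of large noises that arrive by time $\floor{1/\eta}$, by splitting $[0,1]$ into the $k+1$ inter‑arrival windows $[0,\eta\tau^{>\delta}_1(\eta))$, $[\eta\tau^{>\delta}_1(\eta),\eta\tau^{>\delta}_2(\eta))$, $\ldots$, $[\eta\tau^{>\delta}_{k-1}(\eta),\eta\tau^{>\delta}_k(\eta))$, $[\eta\tau^{>\delta}_k(\eta),1]$ and running Lemma~\ref{lemma: SGD close to approximation x circ, LDP}(b) on each window in turn, restarting the argument at the left endpoint via the strong Markov property of $\{X^{\eta|b}_j(x)\}$ at the stopping times $\tau^{>\delta}_i(\eta)$. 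Two structural facts should be recorded first. First, by the concatenation (flow) property of the perturbed‑ODE construction \eqref{def: perturb ode mapping h k b, 1}--\eqref{def: perturb ode mapping h k b, 3}, the restriction of $\xi = h^{(k)|b}\big(x,\eta \bm W^{>\delta}(\eta),\eta \bm\tau^{>\delta}(\eta)\big)$ to the $i$‑th window agrees with the one‑jump path obtained from $h^{(1)|b}$ started at $\xi\big(\eta\tau^{>\delta}_{i-1}(\eta)\big)$ and perturbed by the single noise $\eta W^{>\delta}_i(\eta)$ at the relative time $\eta\big(\tau^{>\delta}_i(\eta)-\tau^{>\delta}_{i-1}(\eta)\big)$; since $\eta\tau^{>\delta}_{k+1}(\eta)>1$ on the conditioning event, the terminal window contains no jump of $\xi$. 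Second, after shifting the time index by $\tau^{>\delta}_{i-1}(\eta)$, the event $A_i(\eta,b,\epsilon,\delta,x)$ defined in \eqref{def: event A i concentration of small jumps, 1} is exactly the small‑noise control event appearing in the hypothesis of Lemma~\ref{lemma: SGD close to approximation x circ, LDP}(b) for that window, so on $\bigcap_{i=1}^{k+1}A_i(\eta,b,\epsilon,\delta,x)$ that lemma applies on every window; the standing hypotheses $\delta<b/(2C)$ and $\eta C<(b\wedge\epsilon)/2$ put us within its scope (in particular they force $\varphi_b$ to be inactive on the small increments between large noises, and they give $\eta C<\epsilon/2$).

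With these in hand, set $\kappa\delequal\rho(1+bD/c)\ge 1$ and let $R_i\delequal\sup\big\{\,|\xi_t-X^{\eta|b}_{\floor{t/\eta}}(x)|:\ 0\le t\le\eta\tau^{>\delta}_i(\eta)\,\big\}$, so $R_0=0$. On the first window, \eqref{ineq, with jump time, b, lemma: SGD close to approximation x circ, LDP} with $y=x$ gives $R_1\le\kappa(\epsilon+2\eta C)<2\kappa\epsilon$; on window $i$ for $2\le i\le k$, \eqref{ineq, with jump time, b, lemma: SGD close to approximation x circ, LDP} applied via the strong Markov property with the two initial values $X^{\eta|b}_{\tau^{>\delta}_{i-1}(\eta)}(x)$ and $\xi\big(\eta\tau^{>\delta}_{i-1}(\eta)\big)$ (whose distance is at most $R_{i-1}$) yields the recursion $R_i\le\kappa(\epsilon+R_{i-1}+2\eta C)$. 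Using $\eta C<\epsilon/2$ we get $\epsilon+2\eta C<2\epsilon$, and a short induction (using $\kappa\ge1$) gives $R_i\le(2\kappa)^i\epsilon$ for all $i\le k$. Finally, because the terminal window carries no jump of $\xi$, \eqref{ineq, no jump time, b, lemma: SGD close to approximation x circ, LDP} restarted at $\tau^{>\delta}_k(\eta)$ bounds $\sup_{t\le1}|\xi_t-X^{\eta|b}_{\floor{t/\eta}}(x)|$ by $\rho(\epsilon+R_k+\eta C)<\rho\big(\tfrac32\epsilon+(2\kappa)^k\epsilon\big)\le 3\rho(2\kappa)^k\epsilon<3\rho(3\kappa)^k\epsilon=\big[3\rho(1+bD/c)\big]^{k}\cdot3\rho\epsilon$, which is the claimed bound; the case $k=0$ follows directly from \eqref{ineq, no jump time, b, lemma: SGD close to approximation x circ, LDP} with $y=x$, since there $\rho(\epsilon+\eta C)<\tfrac32\rho\epsilon<3\rho\epsilon$.

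The main obstacle I anticipate is not any single estimate---those are all instances of the Gronwall‑type propagation already packaged into Lemma~\ref{lemma: SGD close to approximation x circ, LDP}---but the careful bookkeeping behind the two structural facts above: one must track the indexing of the noises $(W^{>\delta}_i(\eta))$ and the arrival times $(\tau^{>\delta}_i(\eta))$ through the strong Markov restarts, verify that $\tau^{>\delta}_k(\eta)\le\floor{1/\eta}<\tau^{>\delta}_{k+1}(\eta)$ indeed places exactly $k$ jumps of $\xi$ inside $[0,1]$ with a jump‑free terminal window, and check that the conditioning event restricts the small‑noise fluctuations window‑by‑window in precisely the form Lemma~\ref{lemma: SGD close to approximation x circ, LDP}(b) demands (including that after each restart the relevant horizon is at most $1$). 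Once this alignment is made explicit, the recursion and the final arithmetic close the proof.
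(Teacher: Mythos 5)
Your proposal is correct and follows essentially the same route as the paper's proof: partition $[0,1]$ at the large-noise arrival times, apply Lemma~\ref{lemma: SGD close to approximation x circ, LDP}(b) window by window on the events $A_i(\eta,b,\epsilon,\delta,x)$, and close a Gronwall-type recursion $R_i \le \rho(1+\tfrac{bD}{c})(\epsilon + R_{i-1} + 2\eta C)$, finishing the jump-free terminal window with the no-jump estimate. Your constant bookkeeping ($R_i \le (2\kappa)^i\epsilon$ versus the paper's $(3\kappa)^i\epsilon$) is slightly tighter but lands at the same stated bound, and your attention to the index-shifting at the restarts matches what the paper does implicitly.
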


\begin{proof}
\linksinpf{lemma: SGD close to approximation x breve, LDP clipped}%
It is straightforward to see the claim is an immeidate corollary of
\eqref{ineq, with jump time, b, lemma: SGD close to approximation x circ, LDP} in Lemma~\ref{lemma: SGD close to approximation x circ, LDP} when applied inductively
(in particular, with $\gamma = \frac{1}{2k}$, and note that due to our choice of $\eta$, we have $2\eta C < \epsilon$).
To avoid repetition, we omit the details.
\elaborate{First of all, on $A_1(\eta,b,\epsilon,\delta,x)$,
one can apply \eqref{ineq, no jump time, b, lemma: SGD close to approximation x circ, LDP}
of Lemma \ref{lemma: SGD close to approximation x circ, LDP} and obtain
\begin{align*}
    \sup_{t \in [0,1]:\ t < \eta\tau^{>\delta}_1(\eta)}
    \Big| \xi_t - X^{\eta|b}_{\floor{t/\eta}}(x) \Big|
    & = 
   \sup_{t \in [0,1]:\ t < \eta\tau^{>\delta}_1(\eta)}
    \Big| \bm{y}_t(x) - X^\eta_{ \floor{t/\eta} }(x) \Big|
    \leq \rho \cdot (\epsilon + \eta C) < 2\rho \epsilon,
\end{align*}
where we applied our choice of $\eta C < \epsilon/2$.
In case that $k = 0$,
we can already conclude the proof.
Henceforth in the proof, we focus on the case where $k \geq 1$.
Now
we can instead apply \eqref{ineq, with jump time, b, lemma: SGD close to approximation x circ, LDP} of Lemma \ref{lemma: SGD close to approximation x circ, LDP} to get
\begin{align*}
    \sup_{t \in [0,\eta\tau^{>\delta}_1(\eta)]}
   \Big| \xi_t - X^{\eta|b}_{\floor{t/\eta}}(x) \Big|
    % & = 
    % \sup_{t \in [0,\tau^{>\delta}_1(\eta)]}
    % \Big| \bm{x}^\eta(t;x) - X^\eta_{ \floor{t} }(x) \Big|
    & \leq
    \rho \cdot \Big(1 + \frac{bD}{c}\Big)\big( \epsilon + 2\eta C \big)
    \leq 
    3\rho\cdot \Big(1 + \frac{bD}{c}\Big)\epsilon
\end{align*}
due to
our choice of $2\eta C < \epsilon$.
To proceed with an inductive argument,
suppose that for some $j = 1,2,\cdots,k-1$ we can show that 
\begin{align*}
    \sup_{t \in [0,1\wedge \eta\tau^{>\delta}_j(\eta)]}
     \Big|\xi_t - X^{\eta|b}_{\floor{t/\eta}}(x) \Big|
    & \leq 
    \underbrace{
    \Big[ 3\rho\cdot(1 + \frac{bD}{c}) \Big]^{j}\epsilon
    }_{\delequal R_j}.
\end{align*}
To highlight the timestamp in the ODE $\bm y_t(y)$ we write
$\bm{y}(t;y) = \bm{y}_t(y)$ in this proof.
Note that for any $t \in \big[\eta\tau^{>\delta}_j(\eta), \eta\tau^{>\delta}_{j+1}(\eta)\big)$,
we have 
$\xi_t = \bm{y}\Big( t - \eta\tau^{>\delta}_j(\eta);\xi_{\eta\tau^{>\delta}_j(\eta)} \Big)$.
Therefore, by applying \eqref{ineq, with jump time, b, lemma: SGD close to approximation x circ, LDP} of Lemma \ref{lemma: SGD close to approximation x circ, LDP} again,
we obtain
\begin{align*}
    \sup_{t \in \big[\eta\tau^{>\delta}_j(\eta), \eta\tau^{>\delta}_{j+1}(\eta)\big]}
    \Big| \xi_t - X^{\eta|b}_{\floor{t/\eta}}(x) \Big|
    & \leq 
     \rho \cdot \Big(1 + \frac{bD}{c}\Big)
    \cdot (\epsilon + R_j + 2\eta C)
    \\
    & \leq 
   \rho \cdot \Big(1 + \frac{bD}{c}\Big)\cdot (2\epsilon + R_j)
    \qquad 
    \text{due to $2\eta C < \epsilon$}
    \\
    & \leq
   3\rho \cdot \Big(1 + \frac{bD}{c}\Big)R_j = R_{j + 1}
    \qquad\text{ due to }R_j  > \epsilon.
\end{align*}
Arguing inductively, we yield
$
\sup_{t \in [0 ,\eta\tau^{>\delta}_{k}(\eta)]}
      \big| \xi_t - X^{\eta|b}_{\floor{t/\eta}}(x) \big|
     \leq R_k = \big[ 3\rho\cdot(1 + \frac{bD}{c}) \big]^{k}\epsilon.
$
Lastly, due to \eqref{ineq, no jump time, a, lemma: SGD close to approximation x circ, LDP} of Lemma \ref{lemma: SGD close to approximation x circ, LDP}
and the fact that $\eta \tau^{>\delta}_{k+1}(\eta) > 1$,
\begin{align*}
     \sup_{t \in [\eta\tau^{>\delta}_{k}(\eta),1]}
     \Big| \xi_t - X^{\eta|b}_{\floor{t/\eta}}(x) \Big|
     & \leq \rho \cdot (\epsilon + R_k + \eta C)
     \leq \rho \cdot (2\epsilon + R_k)
     \\
     & \leq 
     \rho \cdot 3 R_k 
     < 
    \Big[ 3\rho\cdot(1 + \frac{bD}{c}) \Big]^{k}\cdot 3\rho\epsilon
\end{align*}
This concludes the proof.}e
\end{proof}

To conclude, Lemma~\ref{lemma: sequential compactness for limiting measures, LD of SGD}
provides tools for
verifying the sequential compactness condition \eqref{assumption in portmanteau, uniform M convergence}
for measures $\mathbf C^{(k)}(\ \cdot\ ; \bm x)$ and $\mathbf C^{(k)|b}(\ \cdot\ ;\bm x)$
when we restrict $\bm x$ over a compact set $A$.

\begin{lemma}\label{lemma: sequential compactness for limiting measures, LD of SGD}
\linksinthm{lemma: sequential compactness for limiting measures, LD of SGD}
Let Assumptions~\ref{assumption: lipschitz continuity of drift and diffusion coefficients} and \ref{assumption: boundedness of drift and diffusion coefficients} hold.
Let $T,r > 0$ and $k \geq 1$. Let $A \subseteq \R^m$ be compact.
\begin{enumerate}[(a)]
    \item 
        For any $\bm x_n \in A$ and $\bm x^* \in A$ such that $\lim_{n \to \infty}\bm x_n = \bm x^*$,
        \begin{align*}
            \lim_{n \to \infty}\mathbf C^{(k)}(f;\bm x_{n})
            =
            \mathbf C^{(k)}(f;\bm x^*)
            \qquad 
            \forall f \in \mathcal C\big(\mathbb D[0,T]\setminus \mathbb D^{(k-1)}_A[0,T](r)\big).
        \end{align*}

    \item 
        Let $b > 0$.
        For any $\bm x_n \in A$ and $\bm x^* \in A$ such that $\lim_{n \to \infty}\bm x_n = \bm x^*$,
        \begin{align*}
            \lim_{n \to \infty}\mathbf C^{(k)|b}(f;\bm x_{n})
            =
            \mathbf C^{(k)|b}(f;\bm x^*)
            \qquad 
            \forall f \in \mathcal C\big(\mathbb D[0,T]\setminus \mathbb D^{(k-1)|b}_A[0,T](r)\big).
        \end{align*}
\end{enumerate}
\end{lemma}

\begin{proof}
\linksinpf{lemma: sequential compactness for limiting measures, LD of SGD}
For convenience we consider the case $T = 1$, but the proof can easily extend for arbitrary $T > 0$.

\noindent
$(a)$
% Since $A$ is compact, for the sequence $x_n \in A$ we can find a sub-sequence $x_{n_k}$ and some $x^* \in A$ such that $x_{n_k} \to x^*$ as $k \to \infty$.
Pick some $f \in \mathcal C\big(\D \setminus \D^{(k-1)}_A(r)\big)$,
and let $\phi(\bm x) \delequal \mathbf{C}^{(k)}(f;\bm x)$.
We argue that $\phi(\cdot)$ is a continuous function using Dominated Convergence theorem.
First,
from the continuity of $f$ and $h^{(k)}$ (see Lemma \ref{lemma: continuity of h k b mapping}),
for any sequence $\bm y_n \in \R^m$ with $\lim_{n\to \infty} \bm y_n = \bm y \in \R^m$, we have
\begin{align*}
    \lim_{m \to \infty} f\Big( h^{(k)}(\bm y_m,\textbf W,\bm{t})  \Big)=  f\Big( h^{(k)}(\bm y,\textbf W,\bm{t})  \Big)
    \qquad 
    \forall \textbf W \in \R^{d \times k},\ \bm t \in (0,1)^{k\uparrow}.
\end{align*}
% In order to apply Dominated Convergence theorem and conclude that $\phi(x)$ is continuous,
% it suffices to find an integrable function that dominates $f\big( h^{(k)}(x,\bm w,\bm{t})  \big)$.
% To this end, we apply 
Next, by applying
Lemma~\ref{lemma: LDP, bar epsilon and delta} onto $B = \text{supp}(f)$, which is bounded away from $\D_A^{(k-1)}(r)$,
we find $\bar \delta > 0$ such that
$
h^{(k)}\big(\bm x,(\bm w_1,\cdots,\bm w_k),\bm{t}\big) \in B \Longrightarrow \norm{\bm w_j} > \bar \delta\ \forall j \in [k].
$
% the claim
% $
% |w_j| > \bar \delta\ \forall j \in [k]
% $
% must hold if $h^{(k)}(x,\bm w,\bm{t}) \in B$.
As a result,
$$
\Big|f\Big( h^{(k)}\big(\bm x,(\bm w_1,\cdots,\bm w_k),\bm{t}\big)  \Big)\Big| \leq \norm{f} \cdot \mathbbm{I}\big\{\norm{\bm w_j} > \bar \delta\ \forall j \in [k]\big\}.
$$
Also, note that
$
\int \mathbbm{I}\big\{\norm{\bm w_j} > \bar \delta\ \forall j \in [k]\big\} \big((\nu_\alpha \times \mathbf S)\circ \Phi\big)^k(d \textbf W) \times \mathcal{L}^{k\uparrow}_1(d\bm{t}) \leq 1/\bar\delta^{k\alpha} < \infty.
$
This allows us to apply  Dominated Convergence theorem and obtain
\elaborate{
From the continuity of $h^{(k)}$ (see Lemma \ref{lemma: continuity of h k b mapping}) and the boundedness and continuity of $f$,
one can see that $\phi:\R \to [0,\infty)$ is continuous and bounded.
To see why, first note that $\norm{\phi} \leq \norm{f} < \infty$.
Next, applying Lemma \ref{lemma: LDP, bar epsilon and delta} onto set $B = \text{supp}(g)$, which is bounded away from $\D_A^{(k-1)}$,
we can find some constant $\bar \delta > 0$ such that
\begin{align*}
    h^{(k)}(x,\bm w,\bm{t}) \in B\qquad \Longrightarrow\qquad |w_j| > \bar \delta\ \forall j \in [k].
\end{align*}
As a result,
$
\Big|f\Big( h^{(k)}(x,\bm w,\bm{t})  \Big)\Big| \leq \norm{f} \cdot \mathbbm{I}\big(|w_j| > \bar \delta\ \forall j \in [k]\big).
$
In particular, note that 
$
\int \mathbbm{I}\big(|w_j| > \bar \delta\ \forall j \in [k]\big) \nu_\alpha^k(d \bm w) \times \mathcal{L}^{k\uparrow}_1(d\bm{t}) \leq 1/\bar\delta^{k\alpha} < \infty
$
so the dominating function $\norm{f} \cdot \mathbbm{I}(|w_j| > \bar \delta\ \forall j \in [k])$ is integrable.
We also observe that for any $\bm{w} \in \R^{k},\bm{t} \in (0,1)^{k\uparrow}, y \in A$, and any sequence $y_m \in \R$ such that $y_m \rightarrow y$ as $m\rightarrow\infty$, it follows from the continuity of $f$ and $h^{(k)}$ that
\begin{align*}
    f\Big( h^{(k)}(y_m,\bm w,\bm{t})  \Big) \rightarrow  f\Big( h^{(k)}(y,\bm w,\bm{t})  \Big)\qquad\text{ as }m\rightarrow\infty.
\end{align*}
Using Dominated Convergence Theorem, we see that $\phi(x)$ is continuous and bounded.
}
% This implies
\begin{align*}
    \lim_{n \to \infty} \phi(\bm x_{n}) = \lim_{n \to \infty} \mathbf C^{(k)}(f;\bm x_{n}) = \mathbf C^{(k)}(f;\bm x^*) =  \phi(\bm x^*),
\end{align*}
which the proof of part $(a)$.

\medskip
\noindent
$(b)$ The proof is almost identical. The only differences are that we apply Lemma \ref{lemma: continuity of h k b mapping clipped} (resp. Lemma \ref{lemma: LDP, bar epsilon and delta, clipped version})
instead of Lemma \ref{lemma: continuity of h k b mapping} (resp. Lemma \ref{lemma: LDP, bar epsilon and delta})
so we omit the details.
\end{proof}

\subsection{Proofs of Theorems~\ref{corollary: LDP 2} and \ref{theorem: LDP 1, unclipped}}
\label{subsec: LDP clipped, proof of main results}

In the proofs of Theorems~\ref{corollary: LDP 2} and \ref{theorem: LDP 1, unclipped} below,
without loss of generality we focus on the case where $T = 1$.
But we note that the proof for the cases with arbitrary $T > 0$ is nearly identical.
Recall that, to simplify notations, we write $\bm X^\eta(\bm x) = \bm X^{\eta}_{[0,1]}(\bm x) = \{ \bm X^{\eta}_{\floor{t/\eta}}(\bm x):\ t \in [0,1] \}$,
and
$\bm X^{\eta|b}(\bm x) = \bm X^{\eta|b}_{[0,1]}(\bm x) = \{ \bm X^{\eta|b}_{\floor{t/\eta}}(\bm x):\ t \in [0,1] \}$.

\subsubsection{Proof of Theorem~\ref{theorem: LDP 1, unclipped}}
\label{subsubsec: proof of LDP unclipped}

Recall the notion of uniform $\mathbb{M}$-convergence introduced in Definition \ref{def: uniform M convergence}.
At first glance, the uniform version of $\mathbb{M}$-convergence stated in Theorem~\ref{corollary: LDP 2} and \ref{theorem: LDP 1, unclipped}
is stronger than the standard $\mathbb{M}$-convergence introduced in \cite{lindskog2014regularly}.
Nevertheless,
under the conditions stated in Theorem~\ref{theorem: LDP 1, unclipped} or \ref{corollary: LDP 2} regarding the initial values of $\bm{X}^\eta$ or $\bm{X}^{\eta|b}$,
we can show that it suffices to prove the standard notion of $\mathbb{M}$-convergence.
In particular, the proofs to Theorem~\ref{corollary: LDP 2} and \ref{theorem: LDP 1, unclipped} hinge on the following key proposition for $\bm{X}^{\eta|b}$.

\begin{proposition}
\label{proposition: standard M convergence, LDP clipped}
\linksinthm{proposition: standard M convergence, LDP clipped}
Let $\eta_n$ be a sequence of strictly positive real numbers with $\lim_{n \rightarrow \infty}\eta_n = 0$.
Let compact set $A \subseteq \R^m$ and $\bm x_n,\bm x^* \in A$ be such that $\lim_{n \rightarrow \infty}\bm x_n = \bm x^*$.
Under Assumptions \ref{assumption gradient noise heavy-tailed} and \ref{assumption: lipschitz continuity of drift and diffusion coefficients},
it holds for all $k \in \mathbb N$ and $b,r > 0$ that
\begin{align*}
    { \P\big( \bm{X}^{\eta_n|b}(\bm x_n) \in\ \cdot\ \big)  }\big/{ \lambda^k(\eta_n) } \rightarrow \mathbf{C}^{(k)|b}   \big(\ \cdot\ ; \bm x^*\big)\text{ in }\mathbb{M}\big(\mathbb{D}\setminus \mathbb{D}^{(k-1)|b}_{A}(r) \big)\text{ as }n \rightarrow \infty.
\end{align*}
\end{proposition}

As the first application of Proposition \ref{proposition: standard M convergence, LDP clipped},
in Section~\ref{subsubsec: proof of LDP unclipped}
we prepare a similar result for the unclipped dynamics $\bm{X}^\eta$ defined in \eqref{def: X eta b j x, unclipped SGD} and \eqref{def: scaled SGD, LDP},
which will be the key tool in our proof of Theorem~\ref{theorem: LDP 1, unclipped}.

\begin{proposition}
\label{proposition: standard M convergence, LDP unclipped}
\linksinthm{proposition: standard M convergence, LDP unclipped}
Let $\eta_n$ be a sequence of strictly positive real numbers with $\lim_{n \rightarrow \infty}\eta_n = 0$.
Let compact set $A \subseteq \R^m$ and $\bm x_n,\bm x^* \in A$ be such that $\lim_{n \rightarrow \infty}\bm x_n = \bm x^*$.
Under Assumptions \ref{assumption gradient noise heavy-tailed}, \ref{assumption: lipschitz continuity of drift and diffusion coefficients}, and \ref{assumption: boundedness of drift and diffusion coefficients},
it holds for all $k \in \mathbb N$ and $r > 0$ that
\begin{align*}
    { \P\big( \bm{X}^{\eta_n}(\bm x_n) \in\ \cdot\ \big)  }\big/{ \lambda^k(\eta_n)  }
    \rightarrow \mathbf{C}^{(k)}\big(\ \cdot\ ; \bm x^*\big)\text{ in }\mathbb{M}\big(\mathbb{D}\setminus \mathbb{D}^{(k-1)}_A(r) \big)\text{ as }n \rightarrow \infty.
\end{align*}
\end{proposition}

\begin{proof}
\linksinpf{proposition: standard M convergence, LDP unclipped}
Fix some $k = 0,1,2,\cdots$, $r >0$, and some $g \in \mathcal{C}\big( \mathbb{D}\setminus \mathbb{D}^{(k-1)}_A (r)\big)$.
By virtue of the Portmanteau theorem for $\mathbb{M}$-convergence (see theorem 2.1 of \cite{lindskog2014regularly}),
it suffices to show that 
\begin{align*}
    \lim_{n \rightarrow \infty}{ \E\big[g\big( \bm{X}^{\eta_n}(\bm x_n)\big)\big]  }\big/{ \lambda^k(\eta_n)  }
    =
    \mathbf{C}^{(k)}(g;\bm x^*).
\end{align*}
To this end, we let $B \delequal \text{supp}(g)$ and observe that for any $n \geq 1$ and any $\delta,b > 0$,
\begin{align*}
    & \E\Big[g\big( \bm{X}^{\eta_n}(\bm x_n)\big)\Big] 
    \\
    & = 
    \E\Big[g( \bm{X}^{\eta_n}(\bm x_n))\mathbbm{I}\Big\{\bm{X}^{\eta_n}(\bm x_n) \in B\Big\}\Big]
    \\
    & = 
    \E\Big[ { g( \bm{X}^{\eta_n}(\bm x_n)) \mathbbm{I}\big\{ \tau^{>\delta}_{k+1}(\eta_n) < \floor{1/\eta_n};\ \bm{X}^{\eta_n}(\bm x_n) \in B \big\} }\Big]
    \\
    % &
    % +
    % \E\big[{g( \bm{X}^{\eta_n}(x_n)) \mathbbm{I}\big( \tau^{>\delta}_{k}(\eta_n) = \floor{1/\eta_n};\ \bm{X}^{\eta_n}(x_n) \in B\big)}\big]
    % \\
    &\qquad
    +
    \E\Big[{g( \bm{X}^{\eta_n}(\bm x_n)) \mathbbm{I}\Big\{ \tau^{>\delta}_{k}(\eta_n) > \floor{1/\eta_n};\ \bm{X}^{\eta_n}(\bm x_n) \in B\Big\}}\Big]
    \\
    &\qquad
    + 
    \E\bigg[
        {g( \bm{X}^{\eta_n}(\bm x_n)) \mathbbm{I}\bigg\{ \tau^{>\delta}_{k}(\eta_n) \leq \floor{1/\eta_n} < \tau^{>\delta}_{k+1}(\eta_n);\ 
        {\eta_n}\norm{\bm W^{>\delta}_{j}(\eta_n)} > \frac{b}{2C}\ \text{ for some }j \in [k];\ \bm{X}^{\eta_n}(\bm x_n) \in B
        \bigg\}}
    \bigg]
    \\
    & \qquad + 
    \E\bigg[
        \underbrace{g( \bm{X}^{\eta_n}(\bm x_n)) \mathbbm{I}\bigg\{ \tau^{>\delta}_{k}(\eta_n) \leq \floor{1/\eta_n} < \tau^{>\delta}_{k+1}(\eta_n);\ 
        {\eta_n}\norm{\bm W^{>\delta}_{j}(\eta_n)} \leq \frac{b}{2C}\ \forall j \in [k];\ \bm{X}^{\eta_n}(\bm x_n) \in B
        \bigg\}}_{\delequal I_*(n,b,\delta) }
    \bigg],
\end{align*}
where $C \geq 1$ is the constant in Assumption \ref{assumption: boundedness of drift and diffusion coefficients}
such that $\norm{\bm a(\bm x)} \vee \norm{\bm \sigma(\bm x)} \leq C$ for any $\bm x$,
and $\tau^{>\delta}_j(\eta)$'s, ${\bm W^{>\delta}_j(\eta)}$'s are defined in \eqref{defArrivalTime large jump} and \eqref{defSize large jump}.
Now, we focus on the term $I_*(n,b,\delta)$.
For any $n$ large enough, we have $\eta_n \cdot \sup_{\bm x \in \R^m} \norm{\bm a(\bm x)} \leq  \eta_n C \leq b/2$.
As a result,for such $n$ and any $\delta \in (0,\frac{b}{2C})$, on the event
\begin{align*}
    \widetilde A(n,b,\delta) \delequal 
    \bigg\{ \tau^{>\delta}_{k}(\eta_n) \leq \floor{1/\eta_n} < \tau^{>\delta}_{k+1}(\eta_n);\ 
        {\eta_n}\norm{\bm W^{>\delta}_{j}(\eta_n)} \leq \frac{b}{2C}\ \forall j \in [k];\ \bm{X}^{\eta_n}(\bm x_n) \in B
        \bigg\},
\end{align*}
the norm of the step-size (before truncation)
$\eta \bm a\big(\bm X^{\eta|b}_{j - 1}(\bm x)\big) + \eta \bm \sigma\big(\bm X^{\eta|b}_{j - 1}(\bm x)\big)\bm Z_j$ of $\bm X_j^{\eta|b}$ is less than $b$ for each $j \leq \floor{1/\eta_n}$,
and hence $\bm{X}^{\eta_n}(\bm x_n) = \bm{X}^{\eta_n|b}(\bm x_n)$.
\elaborate{
For any $n$ large enough, we have $ \eta_n \cdot \sup_{x \in \R} |a(x)| \leq  \eta_n C \leq b/2$.
For such large $n$, the following claims hold on event
$
\big\{ \tau^{>\delta}_{k}(\eta_n) <\floor{1/\eta_n} < \tau^{>\delta}_{k+1}(\eta_n);\ 
    {\eta_n}|W^{>\delta}_{j}(\eta_n)| \leq \frac{b}{2C}\ \forall j \in [k]
    \big\}
$:
\begin{itemize}
    \item For $i = 1,2,\cdots,\floor{1/\eta_n}$ such that $i = \tau^{>\delta}_{j}(\eta_n)$ for some $j \in [k]$,
    we have
    \begin{align*}
        \big|
        \eta_n a\big(X^{\eta_n}_{i-1}(x_n)\big) + \eta_n \sigma\big(X^{\eta_n}_{i-1}(x_n)\big)Z_i
        \big|
        & = 
        \big|
        \eta_n a\big(X^{\eta_n}_{i-1}(x_n)\big) + \eta_n \sigma\big(X^{\eta_n}_{i-1}(x_n)\big)W^{>\delta}_{j}(\eta_n)
        \big|
        \\
        & \leq 
        \eta_n\big|
        a\big(X^{\eta_n}_{i-1}(x_n)\big) \big|
        +
        \eta_n\cdot \sigma\big(X^{\eta_n}_{i-1}(x_n)\big)  \cdot |W^{>\delta}_{j}(\eta_n)|
        \\
        & \leq \eta_n C + C\cdot \eta_n \cdot |W^{>\delta}_{j}(\eta_n)|
        \\
        & \leq \frac{b}{2} + C \cdot \frac{b}{2C} = b;
    \end{align*}
    
    \item Similarly, for $i = 1,2,\cdots,\floor{1/\eta_n}$ such that $i \neq \tau^{>\delta}_{j}(\eta_n)$ for any $j \in [k]$,
    \begin{align*}
        \big|
        \eta_n a\big(X^{\eta_n}_{i-1}(x_n)\big) + \eta_n \sigma\big(X^{\eta_n}_{i-1}(x_n)\big)Z_i
        \big|
        & \leq 
        \eta_n\big|
        a\big(X^{\eta_n}_{i-1}(x_n)\big) \big|
        +
        \eta_n\cdot \sigma\big(X^{\eta_n}_{i-1}(x_n)\big)  \cdot |Z_i|
        \\
        & \leq \eta_n C + C\cdot \eta_n \cdot |Z_i|
        \\
        & \leq \frac{b}{2} + C \cdot \delta \leq \frac{b}{2} + \frac{b}{2} = b
    \end{align*}
    due to our choice of $\delta \in (0,\frac{b}{2C})$.
\end{itemize}
}
% Therefore, on the said event we have 
% $\eta_n a\big(X^{\eta_n}_{i-1}(x_n)\big) + \eta_n \sigma\big(X^{\eta_n}_{i-1}(x_n)\big)Z_i
% =
% \varphi_b\Big(\eta_n a\big(X^{\eta_n}_{i-1}(x_n)\big) + \eta_n \sigma\big(X^{\eta_n}_{i-1}(x_n)\big)Z_i\Big)
% $
% for any $i = 1,2,\cdots,\floor{1/\eta_n}$, and hence $\bm{X}^{\eta_n}(x_n) = \bm{X}^{\eta_n|b}(x_n)$.
This observation leads to the following upper bound:
Given any $b > 0$ and $\delta \in (0,\frac{b}{2C})$, it holds for any $n$ large enough that
\begin{align*}
     \E\Big[g\big( \bm{X}^{\eta_n}(\bm x_n)\big)\Big]
     & \leq 
     \norm{g}\underbrace{\P\big( \tau^{>\delta}_{k+1}(\eta_n) \leq \floor{1/\eta_n} \big)}_{ \delequal p_1(n,\delta) }
     \\
     &
     % +
     % \norm{g}\underbrace{\P\big( \tau^{>\delta}_{k}(\eta_n) = \floor{1/\eta_n}
     % ;\ \bm{X}^{\eta_n}(x_n) \in B
     % \big)}_{ \delequal p_2(n,\delta) }
     +
     \norm{g}\underbrace{\P\big( \tau^{>\delta}_{k}(\eta_n) > \floor{1/\eta_n}
     ;\ \bm{X}^{\eta_n}(\bm x_n) \in B
     \big)}_{ \delequal p_2(n,\delta) }
     \\
     & + \norm{g}\underbrace{\P\bigg( \tau^{>\delta}_{k}(\eta_n) \leq \floor{1/\eta_n} < \tau^{>\delta}_{k+1}(\eta_n);\ 
    {\eta_n}\norm{ \bm W^{>\delta}_{j}(\eta_n)} > \frac{b}{2C}\text{ for some }j \in [k]
    \bigg)}_{\delequal p_3(n,b,\delta)}
    \\
    & + 
    \E\Big[{g\big( \bm{X}^{\eta_n|b}(\bm x_n)\big)}\Big].
\end{align*}
Similarly, given any $n$ large enough, any $b > 0$ and any $\delta \in (0,\frac{b}{2C})$, we have the lower bound
\begin{align*}
     \E\Big[g\big( \bm{X}^{\eta_n}(\bm x_n)\big)\Big]
     & \geq \E\big[I_*(n,b,\delta)\big]
     \\
    %  & = 
    % \E\Big[g( \bm{X}^{\eta_n}(x_n)) 
    % \\ 
    % &\qquad \cdot 
    %   \mathbbm{I}\underbrace{\Big\{ \tau^{>\delta}_{k}(\eta_n) < \floor{1/\eta_n} < \tau^{>\delta}_{k+1}(\eta_n);\ 
    % {\eta_n}|W^{>\delta}_{j}(\eta_n)| \leq \frac{b}{2C}\ \forall j \in [k];\ \bm{X}^{\eta_n}(x_n) \in B
    % \Big\}}_{ \delequal \widetilde{A}(n,b,\delta) }\Big]
    % \\
     & = 
      \E\Big[g\big( \bm{X}^{\eta_n|b}(\bm x_n)\big) \mathbbm{I}\Big(\widetilde{A}(n,b,\delta)\Big)\Big]
      \ \ \ 
      \text{ due to }\bm X^{\eta_n}(\bm x_n) = \bm X^{\eta_n|b}(\bm x_n)\text{ on }\widetilde{A}(n,b,\delta)
    \\
   %  & = 
   %  \E\big[{g( \bm{X}^{\eta_n|b}(x_n))}\big]
   %  -
   % \E\Big[g( \bm{X}^{\eta_n|b}(x_n)) \mathbbm{I}_{ \big(\widetilde{A}(n,b,\delta)\big)^\complement }\Big]
   %  \\
    & \geq
    \E\Big[{g\big( \bm{X}^{\eta_n|b}(\bm x_n)\big)}\Big]
    -\norm{g}\P\Big(\big(\widetilde{A}(n,b,\delta)\big)^\complement\Big)
    \\
    & \geq 
    \E\Big[{g\big( \bm{X}^{\eta_n|b}(\bm x_n)\big)}\Big]
    -
    \norm{g}\cdot\big[p_1(n,\delta) + p_2(n,\delta) +  p_3(n,b,\delta)\big].
\end{align*}
We claim that there exists some $\delta > 0$ such that
\begin{align}
    \lim_{n \rightarrow \infty} p_1(n,\delta)\big/\lambda^k(\eta_n)  & = 0,
    \label{goal 1, proposition: standard M convergence, LDP unclipped}
    \\
    % \lim_{n \rightarrow \infty} p_2(n,\delta)\big/ \lambda^k(\eta_n)  & = 0.
    % \label{goal 2, proposition: standard M convergence, LDP unclipped}
    % \\
    \lim_{n \rightarrow \infty} p_2(n,\delta)\big/ \lambda^k(\eta_n)  & = 0.
    \label{goal 3, proposition: standard M convergence, LDP unclipped}
\end{align}
Furthermore, we claim that for any $b > 0$,
\begin{align}
    \limsup_{n \rightarrow \infty} p_3(n,b,\delta)\big/ \lambda^k(\eta_n)  & \leq \psi_\delta(b) \delequal \frac{k}{ \delta^{ \alpha k } } \cdot \big(\frac{\delta}{2C}\big)^\alpha \cdot \frac{1}{b^{\alpha}}.
    \label{goal 4, proposition: standard M convergence, LDP unclipped}
\end{align}
Note that $\lim_{b \rightarrow \infty}\psi_\delta(b) = 0$.
Lastly, by Lemma~\ref{lemma: convergence from C k b measure to C k measure},
\begin{align}
    \lim_{b \rightarrow \infty}\mathbf{C}^{(k)|b}   (g;\bm x^*) = \mathbf{C}^{(k)}(g;\bm x^*).
    \label{goal 5, proposition: standard M convergence, LDP unclipped}
\end{align}
Then by combining \eqref{goal 1, proposition: standard M convergence, LDP unclipped}--\eqref{goal 4, proposition: standard M convergence, LDP unclipped} with the upper and lower bounds above for $\E\big[g( \bm{X}^{\eta_n}(x_n))\big]$,
we see that for any $b$ large enough (such that $\frac{b}{2C} > \delta$),
\begin{align}
     & \lim_{n \rightarrow \infty}\frac{ \E\big[g( \bm{X}^{\eta_n|b}(\bm x_n))\big]  }{ \lambda^k(\eta_n)  }
     - \norm{g}\psi_\delta(b)
     \leq 
     \lim_{n \rightarrow \infty}\frac{ \E\big[g( \bm{X}^{\eta_n}(\bm x_n))\big]  }{ \lambda^k(\eta_n)  }
     \leq 
     \lim_{n \rightarrow \infty}\frac{ \E\big[g( \bm{X}^{\eta_n|b}(\bm x_n))\big]  }{ \lambda^k(\eta_n)  }
     + \norm{g}\psi_\delta(b),
     \nonumber
     \\
     \Longrightarrow & 
     -\norm{g}\psi_\delta(b) + \mathbf{C}^{(k)|b}   (g;\bm x^*)
     \leq 
     \lim_{n \rightarrow \infty}\frac{ \E\big[g( \bm{X}^{\eta_n}(\bm x_n))\big]  }{ \lambda^k(\eta_n)  }
     \leq 
     \norm{g}\psi_\delta(b) + \mathbf{C}^{(k)|b}   (g;\bm x^*).
     \nonumber
\end{align}
In the last line of the display, we applied Proposition \ref{proposition: standard M convergence, LDP clipped}.
Letting $b$ tend to $\infty$ and applying the limit \eqref{goal 5, proposition: standard M convergence, LDP unclipped}, we conclude the proof.
Now, it only remains to prove \eqref{goal 1, proposition: standard M convergence, LDP unclipped}
% \eqref{goal 2, proposition: standard M convergence, LDP unclipped}
\eqref{goal 3, proposition: standard M convergence, LDP unclipped}
\eqref{goal 4, proposition: standard M convergence, LDP unclipped}.

\medskip
\noindent\textbf{Proof of Claim }\eqref{goal 1, proposition: standard M convergence, LDP unclipped}:

We show that this claim holds for any $\delta > 0$.
Applying \eqref{property: large jump time probability}, we see that 
$p_1(n,\delta) \leq \big( H(\frac{\delta}{\eta_n})\big/\eta_n \big)^{k+1}$
holds
for any $\delta > 0$ and any $n \geq 1$.
Due to the regularly varying nature of $H(\cdot)$,
we then yield
$
\limsup_{n \rightarrow \infty}\frac{p_1(n,\delta)}{ \lambda^{k+1}(\eta_n)}
\leq 1/\delta^{\alpha(k+1)} < \infty.
$
Therefore,
\begin{align*}
     \limsup_{n \rightarrow \infty} \frac{p_1(n,\delta)}{\lambda^k(\eta_n)}
     \leq 
     \limsup_{n \rightarrow \infty} \frac{p_1(n,\delta)}{\lambda^{k+1}(\eta_n)}
     \cdot \lim_{n \rightarrow \infty}{\lambda(\eta_n)}
     \leq \frac{1}{\delta^{\alpha (k+1)}}\cdot \lim_{n \rightarrow \infty}\frac{H(1/\eta_n)}{\eta_n} = 0
\end{align*}
due to $\frac{H(1/\eta)}{\eta} = \lambda(\eta) \in \RV_{\alpha - 1}(\eta)$ as $\eta \downarrow 0$ and $\alpha > 1$.

% \medskip
% \noindent\textbf{Proof of Claim }\eqref{goal 2, proposition: standard M convergence, LDP unclipped}:

% It follows directly from \eqref{property: large jump time probability, exact jump time at the endpoint}.

\medskip
\noindent\textbf{Proof of Claim }\eqref{goal 3, proposition: standard M convergence, LDP unclipped}:

We claim the existence of some $\epsilon > 0$ such that
\begin{align}
    \Big\{\tau^{>\delta}_k(\eta) > \floor{1/\eta}
     ;\ \bm{X}^{\eta}(\bm x) \in B\Big\} \cap \bigg( \bigcap_{ i = 1 }^{k+1}A_i(\eta,\infty,\epsilon,\delta,\bm x) \bigg)
      = \emptyset\quad
     \forall \bm x \in A,\ \delta> 0,\ \eta \in (0,\frac{\epsilon}{C\rho})
     \label{subgoal for goal 2, proposition: standard M convergence, LDP unclipped}
\end{align}
where $D,C \in [1,\infty)$ are the constants in Assumptions \ref{assumption: lipschitz continuity of drift and diffusion coefficients} and \ref{assumption: boundedness of drift and diffusion coefficients} respectively,
$\rho \delequal \exp(D)$,
and event $A_i(\eta,b,\epsilon,\delta,\bm x)$ is defined in \eqref{def: event A i concentration of small jumps, 1}.
Then for any $\delta > 0$,
\begin{align*}
    \limsup_{n \rightarrow \infty}{p_2(n,\delta)}\big/{ \lambda^k(\eta_n) } 
    % & = 
    % \limsup_{n \rightarrow \infty}\sup_{x \in A}
    % \frac{\P\Big( \big\{ \tau^{>\delta}_{k}(\eta_n) > \floor{1/\eta_n}
    %  ;\ \bm{X}^{\eta_n}(x) \in B\big\} \setminus \big( \bigcap_{ i = 1 }^{k+1}A_i(\eta_n,\epsilon,\delta,x) \big)
    %  \Big)}{ \lambda^k(\eta_n) } 
    %  \\
     & \leq 
     \limsup_{n \rightarrow \infty}\sup_{x \in A}{
        \P\Bigg( 
            \bigg( \bigcap_{ i = 1 }^{k+1}A_i(\eta_n,\infty,\epsilon,\delta,\bm x) \bigg)^\complement
        \Bigg)}\Big/{ \lambda^k(\eta_n) }.
\end{align*}
Applying Lemma \ref{lemma LDP, small jump perturbation} $(b)$
with $N > k(\alpha - 1)$, we conclude that 
claim \eqref{goal 3, proposition: standard M convergence, LDP unclipped} holds for all $\delta > 0$ small enough.
% Thanks to Lemma \ref{lemma LDP, small jump perturbation} (b), the claim
% $
% \limsup_{n \rightarrow \infty}\sup_{x \in A}{\P\Big( \big( \bigcap_{ i = 1 }^{k+1}A_i(\eta_n,\epsilon,\delta,x) \big)^\complement
%      \Big)}\Big/{ \lambda^k(\eta_n) } = 0
% $
% holds for any $\delta > 0$ small enough.
Now, it only remains to find $\epsilon > 0$ that satisfies condition \eqref{subgoal for goal 2, proposition: standard M convergence, LDP unclipped}.
To this end, we first 
recall that the set $B = \text{supp}(g)$ is bounded away from $\mathbb{D}^{(k-1)}_A(r)$.
By Lemma~\ref{lemma: LDP, bar epsilon and delta}, there is $\bar{\epsilon} > 0$ such that
$
\bm{d}_{J_1}\big(B^{\bar\epsilon},\mathbb{D}^{(k- 1)}_A(r)\big) > \bar\epsilon.
$
W.l.o.g.\ we pick $\bar\epsilon$ small enough such that $\bar\epsilon \in (0,r)$.
Next, we show that \eqref{subgoal for goal 2, proposition: standard M convergence, LDP unclipped} holds for any $\epsilon > 0$ small enough with
$
    (\rho + 1)\epsilon < \bar{\epsilon}.
$
To see why, we fix some $\bm x \in A$, $\delta > 0$ and $\eta \in (0,\frac{\epsilon}{C\rho})$.
% such that
% $
%  \P\Big( \big\{ \tau^{>\delta}_k(\eta) > \floor{1/\eta}
%      ;\ \bm{X}^{\eta}(x) \in B\big\} \cap \big( \bigcap_{ i = 1 }^{k+1}A_i(\eta,b,\epsilon,\delta,x) \big)
%      \Big) > 0.
% $
Define a process ${\breve{\bm X}^{\eta,\delta}(\bm x)} \delequal \big\{\notationdef{notation-breve-X-eta-delta-t}{\breve{\bm X}^{\eta,\delta}_t(\bm x)}:\ t \in [0,1]\big\}$ as the solution to
(under initial condition $ \breve{\bm X}^{\eta,\delta}_0(\bm x) = \bm x$)
\begin{align*}
    \frac{d \breve{\bm X}^{\eta,\delta}_t(\bm x)}{dt} & = \bm a\big( \breve{\bm X}^{\eta,\delta}_t(\bm x) \big)\qquad \forall t \geq 0,\ t \notin \{ \eta\tau^{>\delta}_j(\eta):\ j \geq 1 \},
    \\
     \breve{\bm X}^{\eta,\delta}_{ \eta \tau^{>\delta}_i(\eta)}(\bm x) & = \bm X^\eta_{ \tau^{>\delta}_i(\eta) }(\bm x)\qquad \forall j \geq 1.
\end{align*}
On event $\big(\cap_{i = 1}^{k + 1} A_i(\eta,\infty,\epsilon,\delta,\bm .x)\big) \cap \{ \tau^{>\delta}_{k}(\eta) > \floor{1/\eta} \}$,
observe that
\begin{align}
& \bm{d}_{J_1}\Big(\breve{\bm X}^{\eta,\delta}(\bm x), \bm{X}^\eta(\bm x)\Big)\nonumber
    \\
    % & \leq 
    % \sup_{ t \in [0,1] }\Big| \breve{X}^{\eta,\delta}_t(x) - X^\eta_{ \floor{t/\eta} }(x)  \Big|
    % \nonumber
    % \\
    & \leq \sup_{ t \in \big[0,\eta\tau^{>\delta}_1(\eta)\big) \cup 
    \big[\eta\tau^{>\delta}_1(\eta),\eta\tau^{>\delta}_{2}(\eta)\big) \cup \cdots 
    \cup 
    \big[\eta\tau^{>\delta}_{k}(\eta),\eta\tau^{>\delta}_{k+1}(\eta)\big)
    }
    \norm{ \breve{\bm X}^{\eta,\delta}_t(\bm x) - \bm X^\eta_{ \floor{t/\eta} }(\bm x)  }
    \nonumber
    \\
    & \leq \rho\cdot \big( \epsilon + \eta C \big) \leq \rho \epsilon + \epsilon  < \bar\epsilon 
    \qquad \text{because of \eqref{ineq, no jump time, a, lemma: SGD close to approximation x circ, LDP} of Lemma \ref{lemma: SGD close to approximation x circ, LDP}}.
    \nonumber
\end{align}
In the last line of the display above, we applied $\eta < \frac{\epsilon}{C\rho}$ and our choice of $(\rho + 1)\epsilon < \bar\epsilon$.
\elaborate{
\begin{align}
    & \bm{d}_{J_1}\big(\breve{\bm X}^{\eta,\delta}(x), \bm{X}^\eta(x)\big)\nonumber
    \\
    & \leq 
    \sup_{ t \in [0,1] }\Big| \breve{X}^{\eta,\delta}_t(x) - X^\eta_{ \floor{t/\eta} }(x)  \Big|
    \nonumber
    \\
    & \leq \sup_{ t \in \big[0,\eta\tau^{>\delta}_1(\eta)\big) \cup 
    \big[\eta\tau^{>\delta}_1(\eta),\eta\tau^{>\delta}_{2}(\eta)\big) \cup \cdots 
    \cup 
    \big[\eta\tau^{>\delta}_{k}(\eta),\eta\tau^{>\delta}_{k+1}(\eta)\big)
    }
    \Big| \breve{X}^{\eta,\delta}_t(x) - X^\eta_{ \floor{t/\eta} }(x)  \Big|
    \ \ \ \text{by definition 
    of }\breve{\bm X}^{\eta,\delta}
    \nonumber
    \\
    & \leq \rho\cdot \big( \epsilon + \eta C \big) 
    \ \ \ \text{because of \eqref{ineq, no jump time, a, lemma: SGD close to approximation x circ, LDP} of Lemma \ref{lemma: SGD close to approximation x circ, LDP}}
    \nonumber
    \\
    & \leq (\rho + 1)\epsilon < \bar{\epsilon}\ \ \ \text{due to }\eta < \frac{\epsilon}{C\rho}\text{ and our choice of }\epsilon.
    \nonumber
    %\label{proof, ineq for mathring x, proposition: asymptotic equivalence, unclipped}
\end{align}
}
However, from the display above, we also learned that on $\{\tau^{>\delta}_k(\eta) > \floor{1/\eta}\}$,
we have $\breve{\bm X}^{\eta,\delta}(x) \in \mathbb{D}^{(k-1)}_A(\bar\epsilon) \subseteq \mathbb{D}^{(k-1)}_A(r)$;
recall that we picked $\bar\epsilon\in (0,r)$.
As a result, on event $\big(\cap_{i = 1}^{k + 1} A_i(\eta,\infty,\epsilon,\delta,\bm x)\big) \cap \{ \tau^{>\delta}_{k}(\eta) > \floor{1/\eta} \}$
we must have $\bm{d}_{J_1}\big( \mathbb{D}^{(k-1)}_A(r) ,\bm{X}^\eta(\bm x)\big) < \bar{\epsilon}$,
and hence
$\bm{X}^\eta(\bm x) \notin B$ due to the fact that $\bm{d}_{J_1}\big(B^{\bar\epsilon},\mathbb{D}^{(k- 1)}_A(r)\big) > \bar\epsilon$.
% which leads to the contradiction that
% $
%     \big(\cap_{i = 1}^{k + 1} A_i(\eta,b,\epsilon,\delta,x)\big) \cap \{ \tau^{>\delta}_{k}(\eta) > \floor{1/\eta} \} \cap \{\bm{X}^\eta(x) \in B\} = \emptyset.
% $
This verifies \eqref{subgoal for goal 2, proposition: standard M convergence, LDP unclipped}.

\medskip
\noindent\textbf{Proof of Claim }\eqref{goal 4, proposition: standard M convergence, LDP unclipped}:
 
 Due to the independence between $\big(\tau^{>\delta}_i(\eta) - \tau^\eta_{j-1}(\delta)\big)_{j \geq 1}$ and $\big(\bm W^{>\delta}_i(\eta)\big)_{j \geq 1}$,
\begin{align*}
p_3(n,b,\delta) & = 
    \P\Big( \tau^{>\delta}_{k}(\eta_n) < \floor{1/\eta_n} < \tau^{>\delta}_{k+1}(\eta_n)\Big)
    \cdot
    \P\bigg(
    {\eta_n}\norm{\bm W^{>\delta}_{j}(\eta_n)} > \frac{b}{2C}\text{ for some }j \in [k]
    \bigg)
    \\
    & \leq 
    \P\Big( \tau^{>\delta}_{k}(\eta_n) \leq \floor{1/\eta_n} \Big)
    \cdot \sum_{j = 1}^k
    \P\bigg(
    {\eta_n}\norm{\bm W^{>\delta}_{j}(\eta_n)} > \frac{b}{2C}
    \bigg)
    \\
    & \leq 
    \Big(\frac{H(\delta/\eta_n)}{\eta_n}\Big)^k \cdot k \cdot \frac{ H\big( \frac{b}{2C} \cdot \frac{1}{\eta_n} \big)   }{ H\big( \delta \cdot \frac{1}{\eta_n} \big) }.
\end{align*}
Due to $H(x) \in \RV_{-\alpha}(x)$ as $x\to\infty$ (see Assumption~\ref{assumption gradient noise heavy-tailed}), we conclude that
$
\limsup_{n \rightarrow \infty}\frac{p_4(n,b,\delta)}{ \lambda^k(\eta_n) }
    \leq \frac{k}{ \delta^{ \alpha k } } \cdot \big(\frac{\delta}{2C}\big)^\alpha \cdot \frac{1}{b^{\alpha}} = \psi_\delta(b).
$

\end{proof}

With Proposition \ref{proposition: standard M convergence, LDP unclipped} in our arsenal, 
we prove Theorem~\ref{theorem: LDP 1, unclipped}.

\begin{proof}[Proof of Theorem~\ref{theorem: LDP 1, unclipped}]
\linksinpf{theorem: LDP 1, unclipped}%
For simplicity of notations we focus on the case where $T = 1$, but the proof below can be easily generalized for arbitrary $T > 0$.

We first prove the uniform $\M$-convergence.
Specifically,
we proceed with a proof by contradiction.
Fix some $r > 0$ and $k \in \mathbb N$, and
suppose that there is some $f \in \mathcal{C}\big(\mathbb{D}\setminus\mathbb{D}^{(k-1)}_A(r)\big)$,
some sequence $\eta_n > 0$ with limit $\lim_{n \rightarrow \infty}\eta_n = 0$, some sequence $\bm x_n \in A$, and $\epsilon > 0$ such that
$$
\big| \mu^{(k)}_n(f) - \mathbf{C}^{(k)}(f;\bm x_n) \big| > \epsilon \ \forall n \geq 1
\qquad 
\text{ with }\mu^{(k)}_n(\cdot) 
\delequal { \P\big( \bm{X}^{\eta_n}(\bm x_n) \in\ \cdot\ \big)  }\big/{ \lambda^k(\eta_n)  }.
$$
% \begin{align*}
%     \big| \mu^{(k)}_n(f) - \mathbf{C}^{(k)}(f;x_n) \big| > \epsilon\ \ \ \forall n \geq 1
% \end{align*}
Since $A \subseteq \R^m$ is compact,
by picking a proper subsequence we can assume w.l.o.g.\ that
$
\lim_{n \rightarrow \infty}\bm x_n = \bm x^*
$
for some $\bm x^* \in A$.
This allows us to apply Proposition \ref{proposition: standard M convergence, LDP unclipped}
and yield $\lim_{n \rightarrow \infty}\big| \mu^{(k)}_n(f) - \mathbf{C}^{(k)}(f;\bm x^*) \big| = 0.$
% \begin{align}
%     \lim_{n \rightarrow \infty}\big| \mu^{(k)}_n(f) - \mathbf{C}^{(k)}(f;x^*) \big| = 0.
%     \label{proof, claim 1, theorem: LDP 1, unclipped}
% \end{align}
On the other hand,
using part $(a)$ of Lemma~\ref{lemma: sequential compactness for limiting measures, LD of SGD}, we get
$
\lim_{n \rightarrow \infty}\big| 
    \mathbf{C}^{(k)}(f;\bm x_n) - \mathbf{C}^{(k)}(f;\bm x^*)
    \big| = 0.
$
Therefore, we arrive at the contradiction
\begin{align*}
    \lim_{n \rightarrow \infty}
 \big| \mu^{(k)}_n(f) - \mathbf{C}^{(k)}(f;\bm x_n) \big|
 \leq 
 \lim_{n \rightarrow \infty}
 \big| \mu^{(k)}_n(f) - \mathbf{C}^{(k)}(f;\bm x^*)\big|
 +
 \lim_{n\rightarrow \infty}
 \big|\mathbf{C}^{(k)}(f;\bm x^*)-
 \mathbf{C}^{(k)}(f;\bm x_n) \big| = 0
\end{align*}
and conclude the proof of the uniform $\M$-convergence claim.
% \begin{align*}
%     & \lim_{n \rightarrow \infty}
%  \big| \mu^{(k)}_n(f) - \mathbf{C}^{(k)}(f;x_n) \big|
%  \\
%  & \leq 
%  \lim_{n \rightarrow \infty}
%  \big| \mu^{(k)}_n(f) - \mathbf{C}^{(k)}(f;x^*)\big|
%  +
%  \lim_{n\rightarrow \infty}
%  \big|\mathbf{C}^{(k)}(f;x^*)-
%  \mathbf{C}^{(k)}(f;x_n) \big|
%  \\ 
%  & = 0.\qquad 
%  \text{using \eqref{proof, claim 1, theorem: LDP 1, unclipped} and \eqref{proof, claim 2, theorem: LDP 1, unclipped}.}
% \end{align*}
% This concludes the proof.
% \end{proof}

% With Theorem~\ref{theorem: LDP 1, unclipped} obtained,
% the claims in Theorem \ref{theorem: sample path LDP, unclipped} follows from Theorem~\ref{theorem: LDP 1, unclipped} in a rather straightforward way.

% \begin{proof}[Proof of Theorem \ref{theorem: sample path LDP, unclipped}]
% \linksinpf{theorem: sample path LDP, unclipped}%
Next, we prove the uniform sample-path large deviations stated in \eqref{claim, uniform sample path LD, theorem: LDP 1, unclipped}.
Part $(a)$ of Lemma \ref{lemma: sequential compactness for limiting measures, LD of SGD} verifies the compactness condition 
\eqref{assumption in portmanteau, uniform M convergence}
for the family of measures $\{\mathbf C^{(k)}(\ \cdot\ ; \bm x):\ \bm x \in A\}$.
In light of the Portmanteau theorem for uniform $\M$-convergence (i.e., Theorem \ref{theorem: portmanteau, uniform M convergence}),
most claims follow directly from the uniform $\mathbb M$-convergence established above, and
it only remains to verify that  $\sup_{\bm x \in A}\mathbf{C}^{(k)}\big( B^-; \bm x \big) < \infty$.
To do so,
note that $B^-$ is bounded away from $\mathbb{D}_{A}^{(k - 1)}(r)$.
This allows us to apply Lemma \ref{lemma: LDP, bar epsilon and delta} and find $\bar{\epsilon} > 0$, $\bar{\delta} > 0$ such that,
for any $\bm x \in A$ and $\bm t \in (0,1]^{k\uparrow}$,
    $$
    h^{(k)} \big(\bm x,(\bm w_1,\cdots,\bm w_k),\bm{t}\big) \in B^{\bar{\epsilon}} \Longrightarrow \norm{\bm w_j} > \bar\delta\ \forall j \in [k].
    $$
Then by the definition of $\mathbf C^{(k)} = \mathbf C^{(k)|\infty}$ in \eqref{def: measure mu k b t},
\begin{align*}
    \sup_{\bm x \in A}
    \mathbf{C}^{(k)}(B^-;\bm x)
    & =\sup_{\bm x \in A}
    \int\mathbbm{I}\Big\{ h^{(k)} \big( \bm x,(\bm w_1,\cdots,\bm w_k),\bm t  \big) \in B^- \Big\}
    \big((\nu_\alpha \times \mathbf S)\circ \Phi\big)^k(d \textbf W) \times \mathcal{L}^{k\uparrow}_1(d\bm{t}q
    \\
    & \leq 
    \int\mathbbm{I}\Big\{ \norm{\bm w_j} > \bar{\delta}\ \forall j \in [k]   \Big\}
    \big((\nu_\alpha \times \mathbf S)\circ \Phi\big)^k(d \textbf W) \times \mathcal{L}^{k\uparrow}_1(d\bm{t})
    \leq 
    1/\bar\delta^{k\alpha} < \infty.
\end{align*}
This concludes the proof.
% For claim (ii), we consider a proof by contradiction.
% Suppose that this claim does not hold.
% Then we can find sequences $x_n \in A$, $\epsilon_n \downarrow 0$ and some $\delta > 0$ such that
% \begin{align*}
%     \mathbf C^{(k)}(B_{\epsilon_n}; x_n) < \mathbf C^{(k)}(B^\circ; x_n) - \delta\qquad \forall n \geq 1.
% \end{align*}
% \textcolor{red}{value of $N$?}
% Using Urysohn's lemma, one can identify some $f \in \mathcal C(\D \setminus \D^{(k)}_A)$ such that
% $
% \mathbbm{I}_{ B_{\epsilon_N} } \leq f \leq \mathbbm{I}_{B^\circ}.
% $
% This implies 
\end{proof}

\subsubsection{Proof of Theorem~\ref{corollary: LDP 2}}

Aside from Proposition \ref{proposition: standard M convergence, LDP clipped}, another key tool in our proof of Theorem~\ref{corollary: LDP 2}
is the following ``truncated'' version of the drift and diffusion coefficients $\bm a(\cdot),\bm \sigma(\cdot)$.
Given any $M \geq 1$,
let 
% $\bm a_M(\cdot) = \big(a_{M;j}(\cdot)\big)_{j \in [m]}$
% and
% $\bm \sigma_M(\cdot) = \big(\sigma_{M;i,j}(\cdot)\big)_{i \in [m], j \in [d]}$
% be defined by
\begin{align}
    \notationdef{a-M}{\bm a_M}(\bm x) \delequal
    \begin{cases}
     \bm a\Big( M \cdot \frac{\bm x}{\norm{\bm x}}  \Big) & \text{ if }\norm{\bm x} > M, \\
     % a(-M) & \text{ if }x < -M, \\
     \bm a(\bm x) & \text{ otherwise.}
    \end{cases}
    \ \ \ \ \ \ \ \ \ \ \ 
    \notationdef{sigma-M}{\bm \sigma_M}(\bm x) \delequal
    \begin{cases}
     \bm \sigma\Big( M \cdot \frac{\bm x}{\norm{\bm x}}  \Big) & \text{ if }\norm{\bm x} > M, \\
     % a(-M) & \text{ if }x < -M, \\
     \bm \sigma(\bm x) & \text{ otherwise.}
    \end{cases}
    \label{def: a sigma truncated at M, LDP}
\end{align}
That is, we project $\bm x$ onto the closed ball $\{\bm x \in \R^m:\ \norm{\bm x} \leq M\}$.
For any $\bm a(\cdot),\bm \sigma(\cdot)$ satisfying Assumption~\ref{assumption: lipschitz continuity of drift and diffusion coefficients},
one can see that
$\bm a_M(\cdot),\bm \sigma_M(\cdot)$ will satisfy Assumptions~\ref{assumption: lipschitz continuity of drift and diffusion coefficients} and
\ref{assumption: boundedness of drift and diffusion coefficients}.
Similarly, recall the definition of the mapping $\bar h^{(k)|b}$ in \eqref{def: perturb ode mapping h k b, 1}-\eqref{def: perturb ode mapping h k b, 3}.
% with $\textbf V: \R^m \to \R^{m \times p}$.
We also consider its ``truncated'' counterpart by defining
the mapping $\notationdef{notation-mapping-bar-h-k-t-b-M-LDP}{\bar h^{(k)|b}_{M\downarrow}}: \mathbb{R}^m\times \mathbb{R}^{d \times k} \times \R^{m \times k} \times (0,1]^{k\uparrow} \to \mathbb{D}$ as follows.
Given any
$\bm x \in \mathbb{R}^m$,
$\textbf W = (\bm w_1,\cdots,\bm w_k) \in \mathbb{R}^{d \times k},\ \textbf V = (\bm v_1,\cdots,\bm v_j) \in \R^{m \times k},\ \bm{t} = (t_1,\cdots,t_k)\in (0,1]^{k\uparrow}$, let $\xi = \bar h^{(k)|b}_{M\downarrow}(\bm x,\textbf W,\textbf V,\bm{t})$ be the solution to
\begin{align}
    \xi_0 & = \bm x; \label{def: perturb ode mapping h k b, truncated at M, 1}
    \\
    \frac{d\xi_t}{d t} & = \bm a_M(\xi_t)\qquad\forall t \in [0,1],\ t \neq t_1,t_2,\cdots,t_k; \label{def: perturb ode mapping h k b, truncated at M, 2}
    \\
    \xi_t & = \xi_{t-} + \bm v_j + \varphi_b\big( \bm \sigma_M(\xi_{t-} + \bm v_j) \bm w_j\big)\qquad \text{ if }t = t_j\text{ for some }j\in[k]. \label{def: perturb ode mapping h k b, truncated at M, 3}
\end{align}
Define mapping $h^{(k)|b}_{M\downarrow}:\R^m \times \R^{d \times k} \times (0,1]^{k\uparrow} \to \D$ by
\begin{align}
    \notationdef{mapping-h-k-b-M-down}{h^{(k)|b}_{M\downarrow}}\big(\bm x, (\bm w_1,\cdots,\bm w_k),\bm t\big)
    \delequal 
    \bar h^{(k)|b}_{M\downarrow}\big(\bm x, (\bm w_1,\cdots,\bm w_k),(\bm 0,\cdots,\bm 0),\bm t\big).
    \label{def: perturb ode mapping h k b, truncated at M, 4}
\end{align}
Also, recall that $\bar B_r(\bm x)$ is the closed ball with radius $r$ centered at $\bm x$, and set
\begin{align}
    \notationdef{notation-D-A-k-t-truncation-b-M-LDP}{\mathbb{D}_{A;M\downarrow}^{(k)|b}(r)} \delequal 
\bar h^{(k)|b}_{M\downarrow}\Big( A \times \mathbb{R}^{m \times k} \times \big(\bar B_r(\bm 0)\big)^k \times (0,1]^{k\uparrow} \Big).
\label{def D A k t truncation set}
\end{align}
In short, the difference between $\bar h^{(k)|b}_{M\downarrow}$ and $\bar h^{(k)|b}$
is that, when constructing $\bar h^{(k)|b}_{M\downarrow}$,
we use the truncated drift and diffusion coefficients $\bm a_M(\cdot)$ and $\bm \sigma_M(\cdot)$.

The main idea for our proof of Theorem~\ref{corollary: LDP 2} is as follows.
For large enough $M > 0$, one can show that it is very unlikely for the truncated dynamics $\bm X^{\eta|b}(\bm x)$ to exit from the the ball $\bar B_r(\bm 0) = \{\bm y:\ \norm{\bm y} \leq M\}$.
Therefore, it suffices to study the $\M$-convergence and large deviation limits of a modified version of $\bm X^{\eta|b}(\bm x)$, where we use $\bm a_M$ and $\bm \sigma_M$ for the drift and diffusion coefficients, instead of $\bm a$ and $\bm \sigma$.
Since $\bm a_M$ and $\bm \sigma_M$ automatically satisfy the boundedness condition in Assumption~\ref{assumption: boundedness of drift and diffusion coefficients}, we essentially reduce the problem to a simpler one,
whose proof is almost identical to that of Theorem~\ref{theorem: LDP 1, unclipped} and builds upon the technical tools developed in Section~\ref{subsec: proof of lemmas, LD of SGD proof} again.
% Then, the proof is almost identical to that of 

\begin{proof}[Proof of Theorem~\ref{corollary: LDP 2}]
\linksinpf{corollary: LDP 2}%
First, we argue that the proof is almost identical to that of Theorem~\ref{theorem: LDP 1, unclipped} if Assumption~\ref{assumption: boundedness of drift and diffusion coefficients} also holds.
In particular,
the proof-by-contradiction approach in Theorem~\ref{theorem: LDP 1, unclipped}
can be applied here to
establish the uniform $\M$-convergence.
% To avoid repetition we omit the details.
The only difference is that we apply Proposition \ref{proposition: standard M convergence, LDP clipped} (resp., part $(b)$ of Lemma \ref{lemma: sequential compactness for limiting measures, LD of SGD})
instead of Proposition \ref{proposition: standard M convergence, LDP unclipped} (resp., part $(a)$ of Lemma \ref{lemma: sequential compactness for limiting measures, LD of SGD}).
Similarly, the proof to the uniform sample-path large deviations stated in \eqref{claim, uniform sample path LD, corollary: LDP 2}
is almost identical to that of \eqref{claim, uniform sample path LD, theorem: LDP 1, unclipped} in Theorem \ref{theorem: LDP 1, unclipped}.
The only difference is that we apply part $(b)$ of  Lemma~\ref{lemma: sequential compactness for limiting measures, LD of SGD}
(resp., Lemma~\ref{lemma: LDP, bar epsilon and delta, clipped version})
instead of part $(a)$ of  Lemma \ref{lemma: sequential compactness for limiting measures, LD of SGD}
(resp., Lemma \ref{lemma: LDP, bar epsilon and delta}).
To avoid repetition we omit the details.

In the remainder of this proof, we discuss how to extend the proof and cover the case where Assumption~\ref{assumption: boundedness of drift and diffusion coefficients} is dropped.
To prove the uniform $\mathbb M$-convergence claim,
we proceed again with a proof by contradiction.
Fix some $b, r > 0, k \in\mathbb N$, and
suppose that there are some $g \in \mathcal{C}\big(\mathbb{D}\setminus\mathbb{D}^{(k-1)}_A(r)\big)$,
some sequence $\eta_n > 0$ with limit $\lim_{n \rightarrow \infty}\eta_n = 0$, some sequence $\bm x_n \in A$, and $\epsilon > 0$ such that
\begin{align}
    \big| \mu^{(k)}_n(g) - \mathbf{C}^{(k)|b}(g;\bm x_n) \big| > \epsilon \ \forall n \geq 1
\qquad 
\text{ with }\mu^{(k)}_n(\cdot) 
\delequal { \P\big( \bm{X}^{\eta_n|b}(\bm x_n) \in\ \cdot\ \big)  }\big/{ \lambda^k(\eta_n)  }.
\label{proof, target contradiction claim, corollary: LDP 2}
\end{align}
By the compactness of $A$, we can pick a sub-sequence if needed and w.l.o.g.\ assume that $\lim_{n \to \infty}\bm x_n = \bm x^*$ for some $\bm x^* \in A$.
Next,
let $B \delequal \text{supp}(g)$ and note that $B$ is bounded away from $\mathbb{D}^{(k-1)|b}_{A}(r)$.
Applying Corollary \ref{corollary: existence of M 0 bar delta bar epsilon, clipped case, LDP},
we can fix some $M_0$ such that the following claim holds for any $M \geq M_0:$
 for any $\xi = \bar h^{(k)|b}_{M \downarrow}(\bm x,\textbf W,\textbf V,\bm{t})$ with $\bm{t} = (t_1,\cdots,t_{k}) \in (0,1]^{k\uparrow}$, $\textbf W = (\bm w_1,\cdots,\bm w_{k}) \in \mathbb{R}^{d \times k}$,
  $\textbf V = (\bm v_1,\cdots,\bm v_k) \in \R^{m \times k}$ with $\max_{j \in [d]}\norm{\bm v_j} \leq r$,
 and $\bm x \in A$,
\begin{align}
     \xi = \bar h^{(k)|b} (\bm x,\textbf W,\textbf V,\bm{t}) =  \bar h^{(k)|b}_{M \downarrow}(\bm x,\textbf W,\textbf V,\bm{t})
     \qquad\text{and}\qquad
     \sup_{t \in [0,1]}\norm{\xi_t} \leq M_0.
     \label{property: choice of M 0, new, proposition: standard M convergence, LDP clipped}
\end{align}
Here, recall that the mappings $\bar h^{(k)|b}_{M\downarrow}$ and $h^{(k)|b}_{M\downarrow}$ are defined in \eqref{def: perturb ode mapping h k b, truncated at M, 1}--\eqref{def: perturb ode mapping h k b, truncated at M, 4}.
Now, we fix some $M \geq M_0 + 1$ and recall the definitions of $\bm a_M,\ \bm \sigma_M$ in \eqref{def: a sigma truncated at M, LDP}.
Define the stochastic processes $\widetilde{\bm X}^{\eta|b}(\bm x) \delequal \big\{\widetilde{\bm X}^{\eta|b}_{\floor{t/\eta}}(\bm x):\ t\in [0,1]\big\}$
by
\begin{align}
 {\widetilde{\bm X}^{\eta|b}_j(\bm x)} = \widetilde{\bm X}^{\eta|b}_{j - 1}(\bm x) +  \varphi_b\Big(\eta \bm a_M\big(\widetilde{\bm X}^{\eta|b}_{j - 1}(\bm x)\big) + \eta \bm \sigma_M\big(\widetilde{\bm X}^{\eta|b}_{j - 1}(\bm x)\big)\bm Z_j\Big)\ \ \forall j \geq 1
 \label{proof, def tilde X eta b j x, corollary: LDP 2}
\end{align}
under initial condition $\widetilde{\bm X}^{\eta|b}_0(\bm x) = \bm x$.
In particular, by comparing the definition of $\widetilde{\bm X}^{\eta|b}_j(\bm x)$
with that of ${\bm X}^{\eta|b}_j(\bm x)$ in \eqref{def: X eta b j x, clipped SGD},
one can see that (for any $\bm x \in \R^m, \eta > 0$)
\begin{align}
    \sup_{t \in [0,1]}\norm{\widetilde{\bm X}^{\eta|b}_{\floor{t/\eta}}(\bm x)} > M & \ \ \Longleftrightarrow\ \ 
     \sup_{t \in [0,1]}\norm{{\bm X}^{\eta|b}_{\floor{t/\eta}}(\bm x)} > M,
     \label{property: Y and tilde Y, 1, proposition: standard M convergence, LDP clipped}
     \\
     \sup_{t \in [0,1]}\norm{{\bm X}^{\eta|b}_{\floor{t/\eta}}(\bm x)} \leq M 
     &\ \  \Longrightarrow\ \ 
     {\bm X}^{\eta|b}(\bm x) = \widetilde{\bm X}^{\eta|b}(\bm x).
     \label{property: Y and tilde Y, 2, proposition: standard M convergence, LDP clipped}
\end{align}

Now, we observe a few facts.
First, define measure 
\begin{align*}
    \widetilde{\mathbf{C}}^{(k)|b}(\ \cdot \ ;\bm x) \delequal &
   \int \mathbbm{I}\Big\{ h^{(k)|b}_{M\downarrow}\big( \bm x,\textbf W,\bm{t}  \big) \in\ \cdot\  \Big\} 
   \big((\nu_\alpha \times \mathbf S)\circ \Phi\big)^k(d \textbf W) \times\mathcal{L}^{k\uparrow}_{ {T} }(d\bm t).
\end{align*}
Due to \eqref{property: choice of M 0, new, proposition: standard M convergence, LDP clipped},
we must have
\begin{align}
    \widetilde{\mathbf{C}}^{(k)|b}(\ \cdot \ ;\bm x) = {\mathbf{C}}^{(k)|b}(\ \cdot \ ;\bm x)
    \qquad \forall \bm x \in A.
    \label{proof, tilde C k b = C k b, corollary: LDP 2}
\end{align}
Next, recall that 
 $\bm a_M$ and $\bm \sigma_M$ satisfy Assumption~\ref{assumption: boundedness of drift and diffusion coefficients}.
Then as has been established at the beginning of the proof, we have the following uniform $\M$-convergence for $\widetilde{\bm X}^{\eta|b}(x)$:
\begin{align}
     \lambda^{-k}(\eta) \P\big( \widetilde{\bm{X}}^{\eta|b}(\bm x) \in\ \cdot\ \big) 
    \rightarrow 
    \widetilde{\mathbf{C}}^{(k)|b}(\ \cdot\ ; \bm x) = {\mathbf{C}}^{(k)|b}(\ \cdot\ ; \bm x)
\quad
\text{in
$
\mathbb{M}\Big( \mathbb{D}\setminus \mathbb{D}^{(k-1)|b}_{A}(r) \Big)
$
uniformly in {$\bm x$ on $A$} 
}
\label{proof, uniform M convergence for M truncated path, corollary: LDP 2}
\end{align}
as $\eta \downarrow 0$.
By Definition~\ref{def: uniform M convergence}, for the function $g \in \mathcal{C}\big(\mathbb{D}\setminus\mathbb{D}^{(k-1)}_A(r)\big)$ fixed above,
we now have
\begin{align}
    \lim_{n \to \infty}
    \big| \tilde \mu^{(k)}_n(g) - \mathbf{C}^{(k)|b}(g;\bm x_n) \big| = 0 
    \qquad
\text{ with }\tilde\mu^{(k)}_n(\cdot) 
\delequal { \P\big( \widetilde{\bm{X}}^{\eta_n|b}(\bm x_n) \in\ \cdot\ \big)  }\big/{ \lambda^k(\eta_n)  }.
\label{proof, intermediate result for contradiction, proposition: standard M convergence, LDP clipped}
\end{align}
On the other hand,
for any $n \geq 1$ (recall that $B = \text{supp}(g)$)
\begin{equation}\label{proof, decomp of E g, proposition: standard M convergence, LDP clipped}
    \begin{split}
         \E\Big[ g\big( \bm{X}^{\eta_n|b}(\bm x_n) \big) \Big]
    % & = 
    % \E\big[ g\big( \bm{X}^{\eta_n|b}(x_n) \big) \mathbbm{I}\big\{  \bm{X}^{\eta_n|b}(x_n) \in B  \big\}  \big]
    % \\
    & =  \E\Bigg[ g\big( \bm{X}^{\eta_n|b}(\bm x_n) \big) \mathbbm{I}\Big\{  \bm{X}^{\eta_n|b}(\bm x_n) \in B;\  \sup_{t \in [0,1]}\norm{{\bm X}^{\eta_n|b}_{\floor{t/\eta}}(\bm x_n)} \leq M  \Big\}  \Bigg]
    \\
    &\qquad + 
    \E\Bigg[g\big( \bm{X}^{\eta_n|b}(\bm x_n) \big) \mathbbm{I}\Big\{  \bm{X}^{\eta_n|b}(\bm x_n) \in B;\  \sup_{t \in [0,1]}\norm{{\bm X}^{\eta_n|b}_{\floor{t/\eta}}(\bm x_n)} > M  \Big\}  \Bigg].
    \end{split}
\end{equation}
The following bound then follows immediately from  \eqref{property: Y and tilde Y, 1, proposition: standard M convergence, LDP clipped} and \eqref{property: Y and tilde Y, 2, proposition:  standard M convergence, LDP clipped}:
\begin{align}
    \bigg| \E\Big[ g\big( \bm{X}^{\eta_n|b}(\bm x_n) \big) \Big] - 
            \E\Big[ g\big( \widetilde{\bm{X}}^{\eta_n|b}(\bm x_n) \big) \Big]
    \bigg|
    & \leq 
    \norm{g}\P\Bigg( \sup_{t \in [0,1]}\norm{\widetilde{\bm X}^{\eta_n|b}_{\floor{t/\eta}}(\bm x_n)} > M  \Bigg).
    \label{proof, consequence, decomp of E g, proposition: standard M convergence, LDP clipped}
\end{align}
Furthermore, we claim that 
\begin{align}
    &\lim_{n\rightarrow \infty}{\lambda^{-k}(\eta_n)}{ \P\Bigg( \sup_{t \in [0,1]}\norm{\widetilde{\bm X}^{\eta_n|b}_{\floor{t/\eta}}(\bm x_n)} > M  \Bigg) }
     = 0.
      \label{goal new 2, proposition: standard M convergence, LDP clipped}
\end{align}
Then observe that
\begin{align*}
    % \Big| \mu^{(k)}_n(g) - \mathbf{C}^{(k)|b}(g;\bm x_n) \Big| 
    % & \leq \Big| \mu^{(k)}_n(g) - \tilde \mu^{(k)}_n(g)\Big| + \Big| \tilde \mu^{(k)}_n(g) -  \mathbf{C}^{(k)|b}(g;\bm x_n) \Big|,
    % \\
    % \Longrightarrow
    & \limsup_{n \to \infty}
        \Big| \mu^{(k)}_n(g) - \mathbf{C}^{(k)|b}(g;\bm x_n) \Big|
    \\
    & \leq 
    \limsup_{n \to \infty}\Big| \mu^{(k)}_n(g) - \tilde \mu^{(k)}_n(g)\Big|
    +
    \limsup_{n \to \infty}\Big| \tilde \mu^{(k)}_n(g) -  \mathbf{C}^{(k)|b}(g;\bm x_n) \Big|
    \\ 
    & \leq \limsup_{n \to \infty}
        {\lambda^{-k}(\eta_n)}{ \P\Bigg( \sup_{t \in [0,1]}\norm{\widetilde{\bm X}^{\eta_n|b}_{\floor{t/\eta}}(\bm x_n)} > M  \Bigg) }
    + 
    0
    % \\
    % &
    \qquad \text{ due to \eqref{proof, consequence, decomp of E g, proposition: standard M convergence, LDP clipped} and \eqref{proof, intermediate result for contradiction, proposition: standard M convergence, LDP clipped} }
    \\ 
    & = 0
    \qquad\text{due to \eqref{goal new 2, proposition: standard M convergence, LDP clipped}}.
\end{align*}
In summary, we end up with a clear contradiction to \eqref{proof, target contradiction claim, corollary: LDP 2}, thus allowing us to conclude the proof.
Now, it only remains to prove claim~\eqref{goal new 2, proposition: standard M convergence, LDP clipped}.

\medskip
\noindent\textbf{Proof of Claim }\eqref{goal new 2, proposition: standard M convergence, LDP clipped}:

Let ${E} \delequal \{ \xi \in \mathbb{D}:\ \sup_{t \in [0,1]}\norm{\xi_t} > M \}$.
Suppose we can show that $E$ is bounded away from $\mathbb{D}^{(k)|b}_{A}(r)$,
then by applying the uniform $\M$-convergence established above in \eqref{proof, uniform M convergence for M truncated path, corollary: LDP 2} for $\widetilde{\bm X}^{\eta|b}(\bm x)$,
we get
$
 \limsup_{n \rightarrow \infty}
    { \P\Big( \widetilde{\bm X}^{\eta_n|b}(\bm x_n) \in E \Big) }\Big/{\lambda^{k+1}(\eta_n)}
    % \leq 
    % \widetilde{\mathbf{C}}^{(k+1)}_b( E^-;x^*) 
    < \infty,
$
which then implies \eqref{goal new 2, proposition: standard M convergence, LDP clipped}.
To see why $E$ is bounded away from $\mathbb{D}^{(k)|b}_{A}(r)$,
note that by \eqref{property: choice of M 0, new, proposition: standard M convergence, LDP clipped},
$$
\xi \in \mathbb{D}^{(k)|b}_{A}(r) \Longrightarrow \sup_{t \in [0,1]}\norm{\xi_t} \leq M_0 \leq M - 1
$$
due to our choice of $M \geq M_0 + 1$ at the beginning.
Therefore, we yield $\bm{d}_{J_1}\big(\mathbb{D}^{(k)|b}_{A}(r),E\big) \geq 1$ and conclude the proof.
\end{proof}

\subsubsection{Proof of Proposition~\ref{proposition: standard M convergence, LDP clipped}}
\label{subsubsec: proof, proposition: standard M convergence, LDP clipped}

As has been demonstrated earlier, Proposition \ref{proposition: standard M convergence, LDP clipped} lays the foundation for the sample path large deviations of heavy-tailed stochastic difference equations.
In Section~\ref{subsubsec: proof, proposition: standard M convergence, LDP clipped}, we provide
the proof of Proposition~\ref{proposition: standard M convergence, LDP clipped}.
% To disentangle the technicalities involved, we provide further reduction to the assumptions in Proposition~\ref{proposition: standard M convergence, LDP clipped}.
Analogous to the proof of Theorem~\ref{corollary: LDP 2} above, we show that it suffices to prove the seemingly more restrictive results stated below in Proposition~\ref{proposition: standard M convergence, LDP clipped, stronger boundedness assumption}, where we impose the 
the boundedness condition in Assumption~\ref{assumption: boundedness of drift and diffusion coefficients}.

\begin{proposition}
\label{proposition: standard M convergence, LDP clipped, stronger boundedness assumption}
\linksinthm{proposition: standard M convergence, LDP clipped, stronger boundedness assumption}
Let $\eta_n$ be a sequence of strictly positive real numbers with $\lim_{n \rightarrow \infty}\eta_n = 0$.
Let compact set $A \subseteq \R^m$ and $\bm x_n,\bm x^* \in A$ be such that $\lim_{n \rightarrow \infty}\bm x_n = \bm x^*$.
Under Assumptions~\ref{assumption gradient noise heavy-tailed}, \ref{assumption: lipschitz continuity of drift and diffusion coefficients}, and \ref{assumption: boundedness of drift and diffusion coefficients},
it holds for all $k \in \mathbb N$ and $b,r > 0$ that
\begin{align*}
    { \P\big( \bm{X}^{\eta_n|b}(\bm x_n) \in\ \cdot\ \big)  }\big/{ \lambda^k(\eta_n) } \rightarrow \mathbf{C}^{(k)|b}   \big(\ \cdot\ ; \bm x^*\big)\text{ in }\mathbb{M}\big(\mathbb{D}\setminus \mathbb{D}^{(k-1)|b}_{A}(r) \big)\text{ as }n \rightarrow \infty.
\end{align*}
\end{proposition}

\begin{proof}[Proof of Proposition \ref{proposition: standard M convergence, LDP clipped}]
\linksinpf{proposition: standard M convergence, LDP clipped}
The proof is almost identical to the second half of the proof for Theorem~\ref{corollary: LDP 2}.
Specifically, we 
fix some $M \geq M_0 + 1$ with $M_0$ specified in \eqref{property: choice of M 0, new, proposition: standard M convergence, LDP clipped},
and we arbitrarily pick some $g \in \mathcal{C}\big(\mathbb{D}\setminus\mathbb{D}^{(k-1)}_A(r)\big)$.
Besides, define
the stochastic processes $\widetilde{\bm X}^{\eta|b}(\bm x) \delequal \big\{\widetilde{\bm X}^{\eta|b}_{\floor{t/\eta}}(\bm x):\ t\in [0,1]\big\}$
by
\eqref{proof, def tilde X eta b j x, corollary: LDP 2}.
By repeating the arguments in the proof for Theorem~\ref{corollary: LDP 2},
we yield \eqref{proof, tilde C k b = C k b, corollary: LDP 2} and \eqref{proof, consequence, decomp of E g, proposition: standard M convergence, LDP clipped} again.
Next, by applying Proposition~\ref{proposition: standard M convergence, LDP clipped, stronger boundedness assumption} onto $\widetilde{\bm X}^{\eta|b}(\bm x)$,
we again obtain \eqref{proof, intermediate result for contradiction, proposition: standard M convergence, LDP clipped} and \eqref{goal new 2, proposition: standard M convergence, LDP clipped} (in particular, for the claim \eqref{goal new 2, proposition: standard M convergence, LDP clipped}, note that at the end of the proof for Theorem~\ref{corollary: LDP 2} we have already shown that $\{ \xi \in \mathbb{D}:\ \sup_{t \in [0,1]}\norm{\xi_t} > M \}$ is bounded away from $\D^{(k)|b}_A(r)$).
Now, for 
$
\mu^{(k)}_n(\cdot) 
\delequal { \P\big( \bm{X}^{\eta_n|b}(\bm x_n) \in\ \cdot\ \big)  }\big/{ \lambda^k(\eta_n)  },
$
observe that
\begin{align*}
    & \lim_{n \to \infty}
        \Big| \mu^{(k)}_n(g) - \mathbf{C}^{(k)|b}(g;\bm x_n) \Big|
    \\ 
    & \leq 
    \limsup_{n \to \infty}\Big| \mu^{(k)}_n(g) - \tilde \mu^{(k)}_n(g)\Big|
    +
    \limsup_{n \to \infty}\Big| \tilde \mu^{(k)}_n(g) -  \mathbf{C}^{(k)|b}(g;\bm x_n) \Big|
    \\
    & \leq \limsup_{n \to \infty}
        {\lambda^{-k}(\eta_n)}{ \P\Bigg( \sup_{t \in [0,1]}\norm{\widetilde{\bm X}^{\eta_n|b}_{\floor{t/\eta}}(\bm x_n)} > M  \Bigg) }
    + 
    0
    % \\
    % &
    \qquad \text{ due to \eqref{proof, consequence, decomp of E g, proposition: standard M convergence, LDP clipped} and \eqref{proof, intermediate result for contradiction, proposition: standard M convergence, LDP clipped} }
    \\
    & = 0
    \qquad\text{due to \eqref{goal new 2, proposition: standard M convergence, LDP clipped}}.
\end{align*}
By the Portmanteau theorem for $\M$-convergence (see theorem 2.1 of \cite{lindskog2014regularly}) and the arbitrariness of the function $g \in \mathcal{C}\big(\mathbb{D}\setminus\mathbb{D}^{(k-1)}_A(r)\big)$, we conclude the proof.
\end{proof}

The rest of Section~\ref{subsubsec: proof, proposition: standard M convergence, LDP clipped} is devoted to establishing Proposition~\ref{proposition: standard M convergence, LDP clipped, stronger boundedness assumption}.
In light of Lemma \ref{lemma: asymptotic equivalence when bounded away, equivalence of M convergence},
one approach to Proposition \ref{proposition: standard M convergence, LDP clipped, stronger boundedness assumption}
is to construct some stochastic process that is asymptotically equivalent to $\bm{X}^{\eta|b}$ and, as $\eta \downarrow 0$,
converges to the limiting measure ${\mathbf{C}}^{(k)|b}$ stated in  Proposition~\ref{proposition: standard M convergence, LDP clipped, stronger boundedness assumption}.
Specifically,
recall the definitions of $\tau^{>\delta}_i(\eta)$ and $\bm W^{>\delta}_i(\eta)$ in \eqref{defArrivalTime large jump}--\eqref{defSize large jump}.
Given $\eta,b,\delta>0$ and $\bm x \in \R^m$,
we define
$\notationdef{notation-hat-X-clip-b-top-j-jumps}{\hat{\bm{X}}^{\eta|b; >\delta }(\bm x)} \delequal \big\{
\hat{\bm{X}}^{\eta|b; >\delta }_t(\bm x):\ t \in [0,1]
\big\}$
as the solution to
\begin{align}
    \frac{d\hat{\bm{X}}^{\eta|b; >\delta}_t(\bm x)}{dt}
    &= 
    \bm a\big( \hat{\bm{X}}^{\eta|b; >\delta }_t(\bm x) \big)
    \ \ \ \ \forall t \in [0,1],\ t \notin \big\{\eta \tau^{>\delta}_i( \eta ):\ i \geq 1 \big\},
     \label{def: hat X truncated b, j top jumps, 1}
    \\
    \hat{\bm{X}}^{\eta|b; >\delta }_t (\bm x)
    &= 
    \hat{\bm{X}}^{\eta|b; >\delta }_{t-}(\bm x)
    +
    \varphi_b\Big(\eta \bm \sigma\big( \hat{\bm{X}}^{\eta|b; >\delta }_{t-}(\bm x) \big)\bm W^{>\delta}_i(\eta)\Big)
    \ \ \ \ \text{if } t = \eta \tau^{>\delta}_i( \eta )\text{ for some }i \geq 1
    \label{def: hat X truncated b, j top jumps, 2}
\end{align}
with initial condition $\hat{\bm{X}}^{\eta|b; >\delta }_0(\bm x) = \bm x$.
% where $X_0$ is independent of $(Z_j)_{j \geq 1}$.
% For any $x \in \R$, we write $\hat{\bm{X}}^{\eta,T; (j) }(x) =  \hat{\bm{X}}^{\eta,T; (j) }(\bm{\delta}_x)$
% where $\bm{\delta}_w$ is the Dirac measure at $w$.
By definition of the mapping $h^{(k)|b}$ in \eqref{def: perturb ode mapping h k b, 1}--\eqref{def: perturb ode mapping h k b, 4}, we have
% \begin{align}
%     \hat{\bm{X}}^{\eta|b; (k) }(\bm x)
%     =
%     h^{(k)|b}\Big(
%     \bm x,\big(\eta \bm W^{(k)}_1(\eta),\cdots,\eta \bm W^{(k)}_k(\eta)\big),
%     \big(\eta \tau^{(k)}_1(\eta),\cdots,\eta \tau^{(k)}_k(\eta)\big)
%     \Big).
%     \label{property, hat X eta b as the image of mapping h k b}
% \end{align}
% Furthermore,
the following property:
for any $\eta,b,\delta > 0$, $j \geq 0$, and $\bm x\in \R^m$,
\begin{align}
\text{on event }
\Big\{\tau^{>\delta}_j(\eta) \leq \floor{1/\eta} < \tau^{>\delta}_{j+1}(\eta)\Big\},
\text{ we have }
    \hat{\bm{X}}^{\eta|b; >\delta }(\bm x)
    =
     h^{(j)|b}\big( \bm x, \eta \textbf W^{>\delta}(\eta), \eta \bm \tau^{>\delta}(\eta)\big)
      \label{property: equivalence between breve X and hat X, clipped at b}
\end{align}
with $\textbf W^{>\delta}(\eta) =  (\bm W^{>\delta}_{1}(\eta), \cdots,\bm W^{>\delta}_j(\eta))$
and 
$\bm \tau^{>\delta}(\eta) =  (\tau^{>\delta}_{1}(\eta), \cdots,\tau^{>\delta}_j(\eta))$.

\ 

We first state two results that allow us to apply Lemma~\ref{lemma: asymptotic equivalence when bounded away, equivalence of M convergence}.
\begin{proposition}\label{proposition: asymptotic equivalence, clipped}
\linksinthm{proposition: asymptotic equivalence, clipped}
Let $\eta_n$ be a sequence of strictly positive real numbers with $\lim_{n \rightarrow \infty}\eta_n = 0$.
Let compact set $A \subseteq \R^m$ and $\bm x_n,\bm x^* \in A$ be such that $\lim_{n \rightarrow \infty}\bm x_n = \bm x^*$.
Under Assumptions  \ref{assumption gradient noise heavy-tailed}, \ref{assumption: lipschitz continuity of drift and diffusion coefficients}, \ref{assumption: boundedness of drift and diffusion coefficients}, 
it holds for all $k \in \mathbb N$ and $b,r > 0$ that
$\bm{X}^{\eta_n|b}(\bm x_n)$
is asymptotically equivalent to $
\hat{\bm{X}}^{\eta_n|b;>\delta}(\bm x_n)
$
in $\mathbb M\big(\D \setminus \D^{(k)|b}_A(r)\big)$
w.r.t.\ ${\lambda^{k}(\eta_n)}$ as $\delta \downarrow 0$.
% when bounded away from $\mathbb{D}^{(k-1)|b}_{A}(r)$. 
\end{proposition}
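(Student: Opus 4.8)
The plan is to establish asymptotic equivalence (in the sense of Definition~\ref{def: uniform asymptotic equivalence}) between $\bm X^{\eta_n|b}(x_n)$ and $\hat{\bm X}^{\eta_n|b;(k)}(x_n)$ by conditioning on the number and sizes of the ``large noises'' among $(Z_j)_{j \leq \floor{1/\eta_n}}$, and showing that the contributions of all scenarios other than ``exactly $k$ large noises, all of moderate size'' are negligible at scale $\lambda^k(\eta_n)$, while on that dominant scenario the two processes coincide up to an arbitrarily small error with overwhelming probability. Concretely, I would fix $\Delta > 0$ and a Borel set $B \subseteq \D$ bounded away from $\D^{(k-1)|b}_A$, and seek to show
\[
    \lim_{n \to \infty} \lambda^{-k}(\eta_n)\,\P\Big( \bm d_{J_1}\big(\bm X^{\eta_n|b}(x_n), \hat{\bm X}^{\eta_n|b;(k)}(x_n)\big)\mathbbm{I}\big\{\bm X^{\eta_n|b}(x_n) \in B \text{ or } \hat{\bm X}^{\eta_n|b;(k)}(x_n) \in B\big\} > \Delta \Big) = 0.
\]

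First I would choose the threshold parameters carefully. Applying Lemma~\ref{lemma: LDP, bar epsilon and delta, clipped version} to $B$ yields constants $\bar\epsilon, \bar\delta > 0$ such that $B^{\bar\epsilon}$ is bounded away from $\D^{(k-1)|b}_A$ and any path in $B^{\bar\epsilon}$ that lies in $\D^{(k)|b}_{\{x\}}$ must have all $k$ jump sizes exceeding $\bar\delta$. I would then pick $\delta \in (0, \bar\delta \wedge \tfrac{b}{2C})$ small enough — shrinking it further using Lemma~\ref{lemma LDP, small jump perturbation}(b) with a moment exponent $N > k(\alpha-1)$ — so that the ``small-noise concentration'' event $\bigcap_{i=1}^{k+1} A_i(\eta,b,\epsilon,\delta,x)$ (with $\epsilon$ chosen so that the error bound $[3\rho(1+bD/c)]^k 3\rho\epsilon$ in Lemma~\ref{lemma: SGD close to approximation x breve, LDP clipped} is below $\Delta \wedge \bar\epsilon$) fails only with probability $o(\lambda^k(\eta_n))$, uniformly in $x \in \R$. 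The decomposition then splits the event into: (i) more than $k$ large noises, i.e.\ $\tau^{>\delta}_{k+1}(\eta_n) \leq \floor{1/\eta_n}$, whose probability is $O(\lambda^{k+1}(\eta_n)) = o(\lambda^k(\eta_n))$ by \eqref{property: large jump time probability}; (ii) fewer than $k$ large noises, on which I would argue that both processes stay within $\D^{(k-1)|b}_A$-neighborhoods (using Lemma~\ref{lemma: SGD close to approximation x circ, LDP} to control the gap between the processes and their $h^{(j)|b}$-approximations on the small-noise concentration event) and hence neither hits $B$; (iii) exactly $k$ large noises but at least one of size exceeding $b/(2C)$ (or some other large cutoff), which again costs an extra factor of regularly-varying tail probability and is $o(\lambda^k(\eta_n))$ by the independence of $(\tau^{>\delta}_i)$ and $(W^{>\delta}_i)$; and (iv) the dominant event $E^\delta_{b/(2C), k}(\eta_n)$ intersected with the concentration event. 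On event (iv), both $\bm X^{\eta_n|b}(x_n)$ and $\hat{\bm X}^{\eta_n|b;(k)}(x_n)$ are, by \eqref{property: equivalence between breve X and hat X, clipped at b} and Lemma~\ref{lemma: SGD close to approximation x breve, LDP clipped}, within $\Delta/2$ of the same path $h^{(k)|b}(x_n, \eta_n \bm W^{>\delta}(\eta_n), \eta_n \bm\tau^{>\delta}(\eta_n))$ in $J_1$ distance — here one must also verify that the top-$k$ noises used to define $\hat{\bm X}^{\eta_n|b;(k)}$ coincide with $(W^{>\delta}_i, \tau^{>\delta}_i)_{i\leq k}$ on this event, which holds because on $E^\delta_{b/(2C),k}$ there are exactly $k$ noises exceeding $\delta$ and thus $\bm Z^{(k)}(\eta_n) > \delta$ — so their mutual $J_1$ distance is at most $\Delta$, and the indicator in the asymptotic-equivalence probability vanishes.

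I expect the main obstacle to be the bookkeeping in step (ii): showing that on the ``fewer than $k$ large noises'' event, neither process can reach $B$. The subtlety is that $\hat{\bm X}^{\eta_n|b;(k)}(x_n)$ is \emph{always} driven by the top $k$ noises regardless of how many exceed $\delta$, so when fewer than $k$ noises exceed $\delta$, some of the ``jumps'' of $\hat{\bm X}^{\eta_n|b;(k)}$ have size $\eta_n|W^{(j)}_i(\eta_n)| \leq \delta$, i.e.\ they are spurious small perturbations. I would handle this by the same argument as in the proof of claim \eqref{goal 3, proposition: standard M convergence, LDP unclipped} in Proposition~\ref{proposition: standard M convergence, LDP unclipped}: construct an auxiliary path that deletes these tiny jumps (landing it in $\D^{(k-1)|b}_A$), bound the resulting $J_1$ perturbation by $\rho C \delta < \bar\epsilon$ via Gronwall's inequality and the truncation-operator Lipschitz bound \eqref{observation, uniform nondegeneracy and truncation operator}, and invoke $\bm d_{J_1}(B^{\bar\epsilon}, \D^{(k-1)|b}_A) > 0$ to conclude the path is outside $B$; an analogous (in fact simpler) deletion argument using Lemma~\ref{lemma: SGD close to approximation x circ, LDP} handles $\bm X^{\eta_n|b}(x_n)$. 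Assembling the four bounds and letting $n \to \infty$ then gives the claim, and since $\Delta > 0$ and $B$ were arbitrary (subject to being bounded away from $\D^{(k-1)|b}_A$), this is precisely the asserted asymptotic equivalence.
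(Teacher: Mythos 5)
Your overall strategy is the same as the paper's: decompose according to the number and sizes of the noises exceeding $\delta/\eta$, kill the ``too many'' case by \eqref{property: large jump time probability}, kill the ``too few'' case by showing $\hat{\bm X}^{\eta|b;(k)}$ has a spurious sub-$\bar\delta$ jump (hence avoids $B^{\bar\epsilon}$ by Lemma~\ref{lemma: LDP, bar epsilon and delta, clipped version}) while $\bm X^{\eta|b}\in B$ forces failure of the concentration events of Lemma~\ref{lemma LDP, small jump perturbation}(b), and on the remaining event identify $\hat{\bm X}^{\eta|b;(k)}$ with $h^{(k)|b}\big(x,\eta\bm W^{>\delta}(\eta),\eta\bm\tau^{>\delta}(\eta)\big)$ via \eqref{property: equivalence between breve X and hat X, clipped at b} and invoke Lemma~\ref{lemma: SGD close to approximation x breve, LDP clipped}. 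Steps (i), (ii) and (iv) match the paper's treatment of its events $B_1^\complement$, $B_1\cap B_2^\complement$ and $B_1\cap B_2\cap B_3$.

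Step (iii), however, contains a false claim. For a \emph{fixed} cutoff such as $b/(2C)$, the event $\{\tau^{>\delta}_{k}(\eta)\leq\floor{1/\eta}<\tau^{>\delta}_{k+1}(\eta);\ \eta|W^{>\delta}_i(\eta)|>\tfrac{b}{2C}\text{ for some }i\in[k]\}$ is \emph{not} $o(\lambda^k(\eta_n))$: by the independence of the arrival times and sizes, its probability is asymptotic to $\frac{1}{k!}\frac{\lambda^k(\eta)}{\delta^{\alpha k}}\cdot\big(1-(1-(\tfrac{2C\delta}{b})^\alpha)^k\big)$, i.e.\ a strictly positive constant times $\lambda^k(\eta)$. (In Proposition~\ref{proposition: standard M convergence, LDP unclipped} the analogous term $p_3(n,b,\delta)$ is only shown to be $O(\lambda^k(\eta_n))\cdot\psi_\delta(b)$ and is disposed of by sending $b\to\infty$ at the very end; here $b$ is the fixed truncation level of the process and cannot be sent to infinity.) As written, your decomposition also fails to cover the bulk event ``exactly $k$ large noises, all of size in $(\delta,b/(2C)]$,'' since $E^\delta_{b/(2C),k}(\eta)$ as defined in \eqref{def: E eta delta set, LDP} requires \emph{all} sizes to exceed $b/(2C)$. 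The repair is simple and is exactly what the truncation buys you: the error bound $[3\rho(1+bD/c)]^k\cdot 3\rho\epsilon$ of Lemma~\ref{lemma: SGD close to approximation x breve, LDP clipped} holds on $\{\tau^{>\delta}_{k}(\eta)\leq\floor{1/\eta}<\tau^{>\delta}_{k+1}(\eta)\}$ intersected with the concentration events \emph{regardless of the noise sizes}, because every jump is capped at $b$. So no upper cutoff is needed at all — your event (iii) should simply be absorbed into (iv), where the distance bound already forces $\bm d_{J_1}(\bm X^{\eta_n|b}(x_n),\hat{\bm X}^{\eta_n|b;(k)}(x_n))<\Delta$. (The paper does insert one extra case, $B_3^\complement$, where some $\eta|W^{>\delta}_i(\eta)|\in(\delta,\bar\delta]$; your argument covers it through the same size-independent distance bound, so omitting it is harmless.) With that correction your proof is sound and coincides with the paper's.
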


\begin{proposition}\label{proposition: uniform weak convergence, clipped}
\linksinthm{proposition: uniform weak convergence, clipped}
Let $\eta_n$ be a sequence of strictly positive real numbers with $\lim_{n \rightarrow \infty}\eta_n = 0$,
 $A \subseteq \R^m$ be compact, and $\bm x_n,\bm x^* \in A$ be such that $\lim_{n \rightarrow \infty}\bm x_n = \bm x^*$.
Let Assumptions  \ref{assumption gradient noise heavy-tailed}, \ref{assumption: lipschitz continuity of drift and diffusion coefficients}, and \ref{assumption: boundedness of drift and diffusion coefficients} hold.
Let $k \geq 0$ and $b,r,\delta > 0$.
For any $g \in \mathcal{C}\big({ \mathbb{D}\setminus  \mathbb{D}^{(k-1)|b}_{A}(r) }\big)$,
\begin{align*}
    \lim_{n \to \infty}{ \E\Big[ g\big( \hat{\bm{X}}^{\eta_n|b; (k)}(\bm x_n)\big) \Big] }\Big/{\lambda^{k}(\eta_n)} = \mathbf{C}^{(k)|b} (g\ ;{\bm x^*})
    \qquad\forall
    \text{$\delta > 0$ small enough,}
\end{align*}
where $\mathbf{C}^{(k)|b}   $ is the measure defined in \eqref{def: measure mu k b t},
and $
{\mathcal{C}({ \mathbb{S}\setminus \mathbb{C} })}
$
is the set of all real-valued, non-negative, bounded and continuous functions with support bounded away from $\mathbb{C}$.
\end{proposition}

\begin{proof}[Proof of Proposition \ref{proposition: standard M convergence, LDP clipped, stronger boundedness assumption}]
\linksinpf{proposition: standard M convergence, LDP clipped}%
In the context of Lemma~\ref{lemma: asymptotic equivalence when bounded away, equivalence of M convergence}
and
under the choice of 
\begin{align*}
    (\mathbb S,\bm d) = (\D,\dj{}),
    \quad 
    \mathbb C = \D^{(k-1)|b}_A(r),
    \quad 
    X_n = \bm X^{\eta_n|b}(\bm x_n),
    \quad 
    Y^\delta_n = \hat{\bm X}^{\eta_n|b; > \delta}(\bm x_n),
\end{align*}
Proposition~\ref{proposition: asymptotic equivalence, clipped} verifies condition (i),
while Proposition~\ref{proposition: uniform weak convergence, clipped} (together with Urysohn's Lemma)
verifies condition (ii).
Applying Lemma~\ref{lemma: asymptotic equivalence when bounded away, equivalence of M convergence}, we conclude the proof.
\end{proof}

Now, it only remains to prove Propositions~\ref{proposition: asymptotic equivalence, clipped} and \ref{proposition: uniform weak convergence, clipped}.

\begin{proof}[Proof of Proposition \ref{proposition: asymptotic equivalence, clipped}]
\linksinpf{proposition: asymptotic equivalence, clipped}%
Fix 
some $b,r > 0, k \in \mathbb N$,
and some sequence of strictly positive real numbers $\eta_n$ with $\lim_{n \rightarrow \infty}\eta_n = 0$.
Also, fix a compact set $A \subseteq \R^m$ and $\bm x_n,\bm x^* \in A$ such that $\lim_{n \rightarrow\infty}\bm x_n = \bm x^*$.
Besides, we arbitrarily pick  some $\Delta > 0$ and some $B \in \mathscr{S}_{\mathbb{D}}$ that is bounded away from $\mathbb{D}^{(k - 1)|b}_{A}(r)$.
By Definition~\ref{def: uniform asymptotic equivalence}, our goal is to show that (for all $\delta > 0$ small enough)
\begin{align}
    \lim_{n \rightarrow \infty}
    \P\Big( \bm{d}_{J_1}\big( \bm{X}^{\eta_n|b}(\bm x_n), \hat{\bm X}^{ \eta_n|b;>\delta }(\bm x_n) \big)
    \mathbbm{I}\big\{\bm{X}^{\eta_n|b}(\bm x_n)\text{ or }\hat{\bm X}^{ \eta_n|b;>\delta }(\bm x_n) \in B  \big\} > \Delta \Big)\Big/\lambda^k(\eta_n) = 0.
    \label{goal: asymptotic equivalence claim, proposition: asymptotic equivalence, clipped}
\end{align}
By Lemma~\ref{lemma: LDP, bar epsilon and delta, clipped version},
there are some $\bar{\epsilon} \in (0,r)$ and $\bar{\delta} > 0$
such that 
\begin{itemize}
    \item for any $\bm x \in A$ and $b > 0$,
    and any $(\bm v_1,\cdots,\bm v_k) \in \R^{m \times k}$ with $\max_{j \in [k]}\norm{\bm v_j} \leq \bar\epsilon$,
        \begin{align}
           \bar h^{(k)|b}\big(\bm x,(\bm w_1,\cdots,\bm w_k),(\bm v_1,\cdots,\bm v_k),\bm{t}\big) \in B^{\bar{\epsilon}}
            \qquad
            \Longrightarrow
            \qquad
            \norm{\bm w_i} > \bar\delta\ \forall i \in [k];
            \label{choice of bar delta, proof, proposition: asymptotic equivalence, clipped}
        \end{align} 

    \item furthermore,
        \begin{align}
            \bm{d}_{J_1}\big(B^{\bar\epsilon},\mathbb{D}^{(k- 1)|b}_{A}(r)\big) > \bar\epsilon.
            \label{choice of bar epsilon, proof, proposition: asymptotic equivalence, clipped}
\end{align}
\end{itemize}
% where $C \geq 1$ 
% and $0 < c \leq 1$
% are the constants in Assumptions \ref{assumption: boundedness of drift and diffusion coefficients} and \ref{assumption: uniform nondegeneracy of diffusion coefficients}, respectively.
Henceforth in this proof, we only consider $\delta \in (0,\bar\delta)$.
Meanwhile,
given $\eta,\delta,\epsilon > 0$ and $\bm x \in A$,
let
\begin{align*}
    \notationdef{notation-set-B-0}{B_0} &\delequal 
    \Big\{ \bm{X}^{\eta|b}(\bm x)\in B\text{ or }\hat{\bm X}^{\eta|b;>\delta}(\bm x) \in B;\   
    \bm{d}_{J_1}\big(\bm{X}^{\eta|b}(\bm x),\hat{\bm X}^{\eta|b;>\delta}(\bm x)\big) > \Delta
    \Big\},
    \\
    \notationdef{notation-B1}
    {B_1} & \delequal 
    \Big\{ \tau^{>\delta}_{k+1}(\eta) > \floor{1/\eta}\Big\},
    \\
    \notationdef{notation-B2}
    {B_2} & \delequal 
    \Big\{ \tau^{>\delta}_{k}(\eta) \leq \floor{1/\eta} \Big\},
    \\
    \notationdef{notation-B3}
    {B_3} & \delequal 
    \Big\{\eta \norm{ \bm W^{>\delta}_{i}(\eta) } > \bar{\delta}\ \text{for all }i \in [k] \Big\},
    \\
    \notationdef{notation-B4}
    {B_4} & \delequal 
    \Big\{\eta \norm{ \bm W^{>\delta}_{i}(\eta) } \leq 1/\epsilon^{ \frac{1}{2k}  } \ \text{for all }i \in [k] \Big\}.
\end{align*}
We have the following decomposition of events:
\begin{align}
B_0 
&=
(B_0\cap B_1^\complement)
\cup
(B_0\cap B_1 \cap B_2^\complement)
\cup
(B_0\cap B_1 \cap B_2 \cap B_3^\complement)
\nonumber
\\
&\qquad
\cup
(B_0\cap B_1 \cap B_2 \cap B_3 \cap B_4^\complement)
\cup 
(B_0 \cap B_1\cap B_2 \cap B_3 \cap B_4).
\label{proof, decomp of event B 0, proposition: asymptotic equivalence, clipped}
\end{align}
To proceed, 
let
$\rho = \exp(D)$ and
$D \in[1,\infty)$ is the Lipschitz coefficient in Assumption \ref{assumption: lipschitz continuity of drift and diffusion coefficients}.
% $C \geq 1$ is the constant in Assumption \ref{assumption: boundedness of drift and diffusion coefficients},
% and $c \in (0,1)$ is the constant in Assumption \ref{assumption: uniform nondegeneracy of diffusion coefficients}.
For any $\epsilon > 0$ small enough such that
\begin{align}
    (2\rho D)^{k+1} \sqrt{\epsilon} < \Delta,\qquad 2\rho\epsilon < \bar\epsilon,\qquad \epsilon \in (0,1),
    \label{proof, choice of epsilon, proposition: standard M convergence, LDP clipped, stronger boundedness assumption}
\end{align}
we claim that
\begin{align}
    \lim_{\eta \downarrow 0 }\sup_{\bm x \in A}
    \P\Big( B_0\cap B_1^\complement \Big)
    \Big/ \lambda^k(\eta) & = 0,
    \label{goal, event B 1, clipped, proposition: asymptotic equivalence, clipped}
    \\
    \lim_{\eta \downarrow 0}\sup_{\bm x \in A}
    \P\Big( B_0\cap B_1 \cap B_2^\complement \Big)
    \Big/ \lambda^k(\eta) & = 0,
    \label{goal, event B 2, clipped, proposition: asymptotic equivalence, clipped}
    \\
    \lim_{\eta \downarrow 0}\sup_{\bm x \in A}
    \P\Big( B_0\cap B_1 \cap B_2 \cap B_3^\complement \Big)
    \Big/ \lambda^k(\eta) & = 0,
    \label{goal, event B 3, clipped, proposition: asymptotic equivalence, clipped}
    \\
    \limsup_{\eta \downarrow 0}\sup_{\bm x \in A}
    \P\Big( B_0\cap B_1 \cap B_2 \cap B_3 \cap B_4^\complement \Big)
    \Big/ \lambda^k(\eta) & \leq \bar\delta^{-k\alpha} \cdot \epsilon^{\frac{\alpha}{2k}},
    \label{goal, event B 4, clipped, proposition: asymptotic equivalence, clipped}
    \\
    \lim_{\eta \downarrow 0}\sup_{\bm x \in A}
    \P\Big( B_0\cap B_1 \cap B_2 \cap B_3 \cap B_4\Big)
    \Big/ \lambda^k(\eta) & = 0,
    \label{goal, event B 5, clipped, proposition: asymptotic equivalence, clipped}
\end{align}
if we pick $\delta > 0$ sufficiently small.
Under such $\delta$,
by the decomposition of event $B_0$ in \eqref{proof, decomp of event B 0, proposition: asymptotic equivalence, clipped}, we yield 
\begin{align*}
    \limsup_{\eta \downarrow 0 }\sup_{\bm x \in A}
    \P\big( B_0 \big)
    \big/ \lambda^k(\eta)
    \leq 
    \bar\delta^{-k\alpha} \cdot \epsilon^{\frac{\alpha}{2k}}
\end{align*}
for all $\epsilon > 0$ small enough.
Note that $\bar\delta > 0$ is the constant fixed in \eqref{choice of bar delta, proof, proposition: asymptotic equivalence, clipped}.
Driving $\epsilon \downarrow 0$, we conclude the proof of 
\eqref{goal: asymptotic equivalence claim, proposition: asymptotic equivalence, clipped}.
% \begin{align*}
%      \lim_{n \rightarrow \infty}
%     \frac{\P\Big( \bm{d}_{J_1}\big( \bm{X}^{\eta_n|b}(x_n), \hat{\bm X}^{ \eta_n|b;(k) }(x_n) \big)\mathbbm{I}\big(\bm{X}^{\eta_n|b}(x_n)\text{ or }\hat{\bm X}^{ \eta_n|b;(k) }(x_n) \in B  \big) > \Delta \Big)}{\lambda^k(\eta_n)} = 0,
% \end{align*}
% which establishes \eqref{goal: asymptotic equivalence claim, proposition: asymptotic equivalence, clipped}.
The remainder of this proof is devoted to claims \eqref{goal, event B 1, clipped, proposition: asymptotic equivalence, clipped}--\eqref{goal, event B 5, clipped, proposition: asymptotic equivalence, clipped}.

\medskip
\noindent\textbf{Proof of }\eqref{goal, event B 1, clipped, proposition: asymptotic equivalence, clipped}:

For any $\delta > 0$, \eqref{property: large jump time probability} implies that
$\sup_{x\in A}\P(B_0\cap B_1^\complement) \leq \P(B_1^\complement) \leq \big( \eta^{-1}H(\delta\eta^{-1}) \big)^{k+1} 
= \bm{O}\big( \lambda^{k+1}(\eta) \big)
= \bm{o}\big(\lambda^k(\eta)\big)$.

\medskip
\noindent\textbf{Proof of }\eqref{goal, event B 2, clipped, proposition: asymptotic equivalence, clipped}:

% Observe that
% $
%     B_0 \cap B_1 \cap B^\complement_2
%     =
%     \Big(B_0 \cap \big\{ \tau^{>\delta}_k(\eta) = \floor{1/\eta}\big\}\Big) \cup 
%     \Big(
%     B_0 \cap \big\{ \tau^{>\delta}_k(\eta) > \floor{1/\eta}\big\}
%     \Big).
% $
% First of all, using \eqref{property: large jump time probability, exact jump time at the endpoint},
% one can see that 
% $
% \lim_{\eta\downarrow 0}\P\big( \tau^{>\delta}_k(\eta) = \floor{1/\eta} \big)\big/\lambda^k(\eta)= 0
% $
% holds for any $\delta > 0$.
It suffices to show that (for all $\delta > 0$ small enough)
\begin{align*}
    \lim_{\eta\downarrow 0}\sup_{\bm x \in A}
    \P\Big(
    \underbrace{B_0 \cap \big\{ \tau^{>\delta}_k(\eta) > \floor{1/\eta}\big\}}_{\delequal \widetilde{B}}
    \Big)\Big/\lambda^k(\eta) = 0
\end{align*}
In particular,
we only consider $\delta \in (0,\bar{\delta} \wedge \frac{b}{2})$ with $\bar{\delta}$ characterized in \eqref{choice of bar delta, proof, proposition: asymptotic equivalence, clipped}
and $C \geq 1$ being the constant in Assumption~\ref{assumption: boundedness of drift and diffusion coefficients}.
% By definition,
% $
%  \hat{\bm{X}}^{\eta|b; (k) }(x) = h^{(k)|b}\big(x,\eta \tau^{(k)}_1(\eta), \cdots,\eta \tau^{(k)}_k(\eta),\eta W^{(k)}_1(\eta), \cdots,\eta W^{(k)}_k(\eta) \big).
% $
By property \eqref{property: equivalence between breve X and hat X, clipped at b},
it holds 
on event $\{\tau^{>\delta}_k(\eta) > \floor{1/\eta}\}$ 
that $\hat{\bm X}^{\eta_n|b;>\delta}(\bm x) \in \D^{(k-1)}_A(0) \subseteq \D^{(k-1)}_A(r)$.
In light of \eqref{choice of bar epsilon, proof, proposition: asymptotic equivalence, clipped},
we must have $\hat{\bm{X}}^{\eta|b; >\delta }(\bm x) \notin B^{\bar{\epsilon}}$ on event $\{\tau^{>\delta}_k(\eta) > \floor{1/\eta}\}$,
and hence
\begin{align*}
    \widetilde{B} \subseteq
    \{ \bm{X}^{\eta|b}(\bm x)\in B \}
    \cap
    \{ \tau^{>\delta}_{k}(\eta) > \floor{1/\eta} \}
    \qquad
    \forall \bm x \in A.
\end{align*}
Furthermore, let event $A_i(\eta,b,\epsilon,\delta,\bm x)$ be defined as in \eqref{def: event A i concentration of small jumps, 1}.
We claim that
\begin{align}
    \{ \bm{X}^{\eta|b}(x)\in B \}
    \cap
    \{ \tau^{>\delta}_{k}(\eta) > \floor{1/\eta} \} \cap \big(\cap_{i = 1}^{k} A_i(\eta,b,\epsilon,\delta,\bm x)\big) = \emptyset
    \label{proof, subgoal of claim 2, proposition: asymptotic equivalence, clipped}
\end{align}
holds for all  $\eta > 0$ small enough with $\eta < \min\{\frac{b \wedge 1}{2C},\frac{\epsilon}{C}\}$, all $\delta \in (0,\frac{b}{2C})$, and all $\bm x \in A$.
Then
\begin{align*}
    \lim_{\eta \downarrow 0}\sup_{\bm x \in A}
    \P\big( \widetilde{B}\big)
    \Big/ \lambda^k(\eta)
   %  & =
   %  \lim_{\eta \downarrow 0}\sup_{x \in A}
   % \P\Big( \widetilde{B} \cap \big(\cap_{i = 1}^{k+1} A_i(\eta,b,\epsilon,\delta,x)\big)^\complement \Big)
   %  \Big/ \lambda^k(\eta)
   %  \\
    & \leq 
    \lim_{\eta \downarrow 0}\sup_{\bm x \in A}
    \P\Bigg( \Big( \bigcap_{i = 1}^{k}A_i(\eta,b,\epsilon,\delta,\bm x) \Big)^c \Bigg)
    \Bigg/ \lambda^k(\eta).
\end{align*}
To conclude the proof, one only need to apply Lemma~\ref{lemma LDP, small jump perturbation} $(b)$
with some $N > k(\alpha - 1)$ (recall that $\lambda(\eta) \in \RV_{ k(\alpha - 1) }(\eta)$ as $\eta \downarrow 0$).

Now, we prove claim~\eqref{proof, subgoal of claim 2, proposition: asymptotic equivalence, clipped}
for any $\eta \in \big( 0, \min\{\frac{b \wedge 1}{2C},\frac{\epsilon}{C}\}\big)$,
$\delta \in (0,\frac{b}{2C})$, and $\bm x \in A$.
Define the stochastic process 
$\bm{\breve X}^{\eta|b;\delta}(\bm x) \delequal \big\{ \notationdef{notation-breve-X-eta-b-delta-t}{\breve{\bm X}^{\eta|b;\delta}_t(\bm x)}:\ t \in [0,1]\big\}$
as the solution to
\begin{align}
    % \breve{X}^{\eta|b;\delta}_0(x) & = x, 
    % %\nonumber
    % \label{def: x eta b M circ approximation, LDP, 1}
    % \\
    \frac{d \breve{\bm X}^{\eta|b;\delta}_t(\bm x)}{dt} & = \bm a\big( \breve{\bm X}^{\eta|b;\delta}_t(\bm x) \big)\qquad \forall t \in [0,\infty)\setminus \{ \eta\tau^{>\delta}_j(\eta):\ j \geq 1 \},
    %\nonumber
    \label{def: x eta b M circ approximation, LDP, 2}
    \\
     \breve{\bm X}^{\eta|b;\delta}_{\eta\tau^{>\delta}_j(\eta)}(\bm x) & = \bm X^{\eta|b}_{\tau^{>\delta}_j(\eta)}(\bm x)\qquad \forall j \geq 1,
     %\nonumber
     \label{def: x eta b M circ approximation, LDP, 3}
\end{align}
under the initial condition $\breve{\bm X}^{\eta|b;\delta}_0(\bm x)  = \bm x$.
For any $j \geq 1$, observe that on event 
$\big(\cap_{i = 1}^{j} A_i(\eta,b,\epsilon,\delta,\bm x)\big) \cap \{ \tau^{>\delta}_{j}(\eta) > \floor{1/\eta} \}$,
\begin{align}
    & \bm{d}_{J_1}\Big(\breve{\bm X}^{\eta|b;\delta}(\bm x), \bm{X}^{\eta|b}(\bm x)\Big)
    \nonumber 
    \\ 
    & \leq \sup_{t \in [0,1]}\norm{ \breve{\bm X}^{\eta|b;\delta}_t(\bm x) - \bm X^{\eta|b}_{ \floor{t/\eta} }(\bm x) }
    \nonumber
    \\
    & \leq \sup_{ t \in \big[0,\eta\tau^{>\delta}_1(\eta)\big) \cup 
    \big[\eta\tau^{>\delta}_1(\eta),\eta\tau^{>\delta}_{2}(\eta)\big) \cup \cdots 
    \cup 
    \big[\eta\tau^{>\delta}_{j-1}(\eta),\eta\tau^{>\delta}_{j}(\eta)\big)
    }
   \norm{ \breve{\bm X}^{\eta|b;\delta}_t(\bm x) - \bm X^{\eta|b}_{ \floor{t/\eta} }(\bm x) }
   \nonumber
   \\
    & \leq \rho\cdot \big( \epsilon + \eta C \big) 
    \leq 2\rho\epsilon < \bar{\epsilon}
    \qquad 
\text{by \eqref{ineq, no jump time, b, lemma: SGD close to approximation x circ, LDP} of Lemma \ref{lemma: SGD close to approximation x circ, LDP}}.
    \label{proof, ineq for mathring x, proposition: asymptotic equivalence, clipped}
\end{align}
\elaborate{
For any $\eta > 0$ small enough with $\eta < \min\{\frac{b \wedge 1}{2C},\frac{\epsilon}{C}\}$,
on event $\big(\cap_{i = 1}^{k+1} A_i(\eta,b,\epsilon,\delta,x)\big) \cap \{ \tau^{>\delta}_{k + 1}(\eta) > \floor{1/\eta} \}$
that
\begin{align}
    \bm{d}_{J_1}(\breve{\bm X}^{\eta|b;\delta}(\bm x), \bm{X}^{\eta|b}(\bm x))
    & \leq 
    \sup_{ t \in [0,1] }\Big| \breve{X}^{\eta|b;\delta}_t(x) - X^{\eta|b}_{ \floor{t/\eta} }(x) \Big|
    \nonumber
    \\
    & = \sup_{ t \in \big[0,\eta\tau^{>\delta}_1(\eta)\big) \cup 
    \big[\eta\tau^{>\delta}_1(\eta),\eta\tau^{>\delta}_{2}(\eta)\big) \cup \cdots 
    \cup 
    \big[\eta\tau^{>\delta}_{k}(\eta),\eta\tau^{>\delta}_{k+1}(\eta)\big)
    }
   \Big| \breve{X}^{\eta|b;\delta}_t(x) - X^{\eta|b}_{ \floor{t/\eta} }(x) \Big|
   \nonumber
   \\
   &\ \ \ \ \text{due to }\tau^{>\delta}_{k+1}(\eta) > \floor{1/\eta}\text{ and the definition of }\breve{X}^{\eta|b;\delta}_t(x)
    \nonumber
    \\
    & \leq \rho\cdot \big( \epsilon + \eta C \big) 
    \ \ \ \text{due to \eqref{ineq, no jump time, b, lemma: SGD close to approximation x circ, LDP} of Lemma~\ref{lemma: SGD close to approximation x circ, LDP}}
    \nonumber
    \\
    & \leq 2\rho\epsilon < \bar{\epsilon}
    \nonumber
\end{align}
}
In the last line of the display above, note that $(i)$ our choices of $\eta < \frac{b \wedge 1}{2C}$ and $\delta < \frac{b}{2C}$ allow us to apply part $(b)$ of Lemma~\ref{lemma: SGD close to approximation x circ, LDP}, and $(ii)$ the inequalities then follow from the choice of $\eta < \frac{\epsilon}{C}$ above and the choice of $2\rho\epsilon < \bar\epsilon$  in \eqref{proof, choice of epsilon, proposition: standard M convergence, LDP clipped, stronger boundedness assumption}.
Moreover, recall that we have fixed $\bar\epsilon < r$ at the beginning of the proof,
and note that \eqref{proof, ineq for mathring x, proposition: asymptotic equivalence, clipped} confirms (under the choice of $j = k$)
that on event 
\begin{align*}
    \bm{\breve X}^{\eta|b;\delta}(\bm x) \in \D^{(k-1)|b}_A(\bar\epsilon) \subseteq \D^{(k-1)|b}_A(r)
    \qquad
    \text{and}
    \qquad
    \bm{d}_{J_1}\Big(\breve{\bm X}^{\eta|b;\delta}(\bm x), \bm{X}^{\eta|b}(\bm x)\Big) < \bar\epsilon.
\end{align*}
In light of \eqref{choice of bar epsilon, proof, proposition: asymptotic equivalence, clipped}, this implies that on event $\big(\cap_{i = 1}^{k} A_i(\eta,b,\epsilon,\delta,\bm x)\big) \cap \{ \tau^{>\delta}_{k}(\eta) > \floor{1/\eta} \}$,
we must have ${\bm X}^{\eta|b}(\bm x) \notin B^{\bar\epsilon}$,
thus concluding the proof of claim~\eqref{proof, subgoal of claim 2, proposition: asymptotic equivalence, clipped}.

\medskip
\noindent\textbf{Proof of }\eqref{goal, event B 3, clipped, proposition: asymptotic equivalence, clipped}:

% Again, we focus on $\delta \in (0,\bar{\delta}\wedge\frac{b}{2})$ with $\bar{\delta}$ chosen in \eqref{choice of bar delta, proof, proposition: asymptotic equivalence, clipped}.
On event $B_1 \cap B_2 = \{\tau^{>\delta}_{k}(\eta) \leq \floor{1/\eta} < \tau^{>\delta}_{k+1}(\eta)\}$,
recall that \eqref{property: equivalence between breve X and hat X, clipped at b} holds.
% $
%  \hat{\bm{X}}^{\eta|b; (k) }(x)
%     =
%     h^{(k)|b}\big( x, \eta W^{>\delta}_{1}(\eta), \cdots,\eta W^{>\delta}_k(\eta), 
%      \eta \tau^{>\delta}_1(\eta), \cdots,\eta \tau^{>\delta}_k(\eta)\big).
% $
Furthermore, on $B_3^\complement$,
there is some $i \in [k]$ such that $\eta \norm{\bm W^{>\delta}_{i}(\eta)} \leq \bar{\delta}$.
Combining \eqref{property: equivalence between breve X and hat X, clipped at b} with
the choice of $\bar{\delta}$ in  \eqref{choice of bar delta, proof, proposition: asymptotic equivalence, clipped},
we get that for all $\bm x \in A$,
it holds on event $B_1 \cap B_2 \cap B_3^\complement$ that $\hat{\bm{X}}^{\eta|b; >\delta }(\bm x) \notin B$, and hence
\begin{align*}
    & B_0 \cap B_1 \cap B_2 \cap B_3^\complement 
    \\ 
    & \qquad \subseteq \{ \bm{X}^{\eta|b}(\bm x) \in B \} \cap 
     \Big\{\tau^{>\delta}_k(\eta) \leq \floor{1/\eta} < \tau^{>\delta}_{k+1}(\eta);\ \eta \norm{\bm W^{>\delta}_{i}(\eta)} \leq \bar{\delta}\ \text{for some }i \in [k] \Big\}.
\end{align*}
Furthermore, we claim that
for all $\bm x \in A$, $\delta \in (0,\bar{\delta}\wedge\frac{b}{2C})$ and $\eta \in \big(0,\min\{\frac{b \wedge 1}{2C},\bar \delta\}\big)$,
\begin{equation}   \label{proof: goal 1, lemma: atypical 3, large jumps being too small, LDP clipped}
    \begin{split}
        & \{ \bm{X}^{\eta|b}(x) \in B \} \cap 
     \Big\{\tau^{>\delta}_k(\eta) \leq \floor{1/\eta} < \tau^{>\delta}_{k+1}(\eta);\ \eta \norm{\bm W^{>\delta}_{i}(\eta)} \leq \bar{\delta}\ \text{for some }i \in [k] \Big\}
     \\ 
     & \qquad \qquad \qquad \qquad \qquad \qquad \qquad \qquad \qquad \qquad \qquad \qquad 
     \cap \Bigg( \bigcap_{i = 1}^{k+1}A_i(\eta,b,\epsilon,\delta,\bm x) \Bigg) = \emptyset.
    \end{split}
\end{equation}
Then for any $\delta \in (0,\bar{\delta}\wedge\frac{b}{2})$,
\begin{align*}
    \lim_{\eta \downarrow 0}\sup_{\bm x \in A}
    \P\Big( B_0\cap B_1 \cap B_2 \cap B_3^\complement \Big)
    \Big/ \lambda^k(\eta)
    & \leq 
    \lim_{\eta \downarrow 0}\sup_{\bm x \in A}
    \P\Bigg( \Big( \bigcap_{i = 1}^{k+1}A_i(\eta,b,\epsilon,\delta,\bm x) \Big)^c \Bigg)
    \Bigg/ \lambda^k(\eta).
\end{align*}
Applying Lemma~\ref{lemma LDP, small jump perturbation} $(b)$
with some $N > k(\alpha - 1)$, we conclude the proof of \eqref{goal, event B 3, clipped, proposition: asymptotic equivalence, clipped}.

Now, it remains to prove the claim~\eqref{proof: goal 1, lemma: atypical 3, large jumps being too small, LDP clipped}
for any $\bm x \in A$, $\delta \in (0,\bar{\delta}\wedge\frac{b}{2C})$ and $\eta \in \big(0,\min\{\frac{b \wedge 1}{2C},\bar \delta\}\big)$.
First, on this event, there exists some $J \in [k]$ such that $\eta\norm{ \bm W^{>\delta}_J(\eta)} \leq \bar{\delta}$.
% and we proceed with a proof by contradiction.
% Suppose that 
% $\P_x\Big(\big( \bigcap_{i = 1}^{k + 1}A_i(\eta,b,\epsilon,\delta,x)\big) \cap B^{ (k),\epsilon,\delta|b }_{3;M\downarrow}(\eta,\bar{\delta})\Big) > 0$
% % Consider the following proof by contradiction.
% for some $x \in A$, $\delta > 0,\eta < \frac{\epsilon}{\rho C_M}$.
%on event $\big( \bigcap_{i = 1}^{k + 1}A_i(\eta,\epsilon,\delta,T,x)\big) \cap B^{ \text{atypical},\epsilon,\delta }_3(\eta,x)$ 
Next, recall the definition of the process $\breve{\bm X}^{\eta|b;\delta}_t(\bm x)$ in \eqref{def: x eta b M circ approximation, LDP, 2}--\eqref{def: x eta b M circ approximation, LDP, 3}.
Applying \eqref{proof, ineq for mathring x, proposition: asymptotic equivalence, clipped} with $j = k+1$, we get that
$\big(\cap_{i = 1}^{k+1} A_i(\eta,b,\epsilon,\delta,\bm x)\big) \cap \{ \tau^{>\delta}_{k+1}(\eta) > \floor{1/\eta} \}$,
\begin{align}
\bm{d}_{J_1}\Big(\breve{\bm X}^{\eta|b;\delta}(\bm x), \bm{X}^{\eta|b}(\bm x)\Big)
\leq 
   \sup_{ t \in [0,1] }\norm{ \breve{\bm X}^{\eta|b;\delta}_t(\bm x) - \bm X^{\eta|b}_{ \floor{t/\eta} }(\bm x) }
   < 2\rho\epsilon < \bar{\epsilon}.
    \label{proof: ineq 1, lemma: atypical 3, large jumps being too small, LDP, clipped}
\end{align}
This further confirms that, on the said event,
there exists some $\textbf V = (\bm v_1,\cdots,\bm v_k) \in \R^{m \times k}$ with $\norm{\bm v_j} \leq \bar\epsilon < r$ (recall that we have fixed $\bar\epsilon < r$ at the beginning of the proof) such that
\begin{align*}
    \breve{\bm X}^{\eta|b;\delta}(\bm x)
    =
    \bar h^{(k)|b}\Big(
        \bm x, \big( \eta \bm W^{>\delta}_1(\eta),\cdots,\eta \bm W^{>\delta}_k(\eta)\big), \textbf V,
        \big(\eta  \tau^{>\delta}_1(\eta),\cdots,\eta  \tau^{>\delta}_k(\eta)\big)
    \Big),
\end{align*}
where the mapping $\bar h^{(k)|b}$ is defined in \eqref{def: perturb ode mapping h k b, 1}--\eqref{def: perturb ode mapping h k b, 3}.
Due to $\eta\norm{ \bm W^{>\delta}_J(\eta)} \leq \bar{\delta}$,
it follows from \eqref{choice of bar delta, proof, proposition: asymptotic equivalence, clipped} that
$
\breve{\bm X}^{\eta|b;\delta}(\bm x) \notin B^{\bar\epsilon}.
$
Then by \eqref{choice of bar epsilon, proof, proposition: asymptotic equivalence, clipped} and \eqref{proof: ineq 1, lemma: atypical 3, large jumps being too small, LDP, clipped},
we must have $\bm X^{\eta|b}(\bm x) \notin B$ on the event
$
\big\{\tau^{>\delta}_k(\eta) \leq \floor{1/\eta} < \tau^{>\delta}_{k+1}(\eta);\ \eta \norm{\bm W^{>\delta}_{i}(\eta)} \leq \bar{\delta}\ \text{for some }i \in [k] \big\}
\cap 
\big( \bigcap_{i = 1}^{k+1}A_i(\eta,b,\epsilon,\delta,\bm x) \big),
$
thus verifying claim~\eqref{proof: goal 1, lemma: atypical 3, large jumps being too small, LDP clipped}.

\medskip
\noindent\textbf{Proof of }\eqref{goal, event B 4, clipped, proposition: asymptotic equivalence, clipped}:

Recall that $H(x) = \P(\norm{\bm Z} > x)$.
Due to
\begin{align*}
    & B_0\cap B_1 \cap B_2 \cap B_3 \cap B_4^\complement
    \\ 
    &
    \subseteq 
    \Big\{
        \tau^{>\delta}_k(\eta) \leq \floor{1/\eta} < \tau^{>\delta}_{k+1}(\eta)
    \Big\}
    \cap 
    \Big\{
        \eta\norm{\bm W^{>\delta}_i(\eta)} > \bar\delta\ \forall i \in [k];\ 
        \eta\norm{\bm W^{>\delta}_i(\eta)} > 1/\epsilon^{\frac{1}{2k}}\text{ for some }i \in [k]
    \Big\}.
\end{align*}
and the independence between $\big(\tau^{>\delta}_i(\eta)\big)_{i \in [k]}$ and $\big(\bm W^{>\delta}_i(\eta)\big)_{i \in [k]}$,
we get
\begin{align*}
    & \limsup_{\eta \downarrow 0}\sup_{\bm x \in A}
        \frac{
            \P\big( B_0\cap B_1 \cap B_2 \cap B_3 \cap B_4^\complement \big)
        }{
            \lambda^k(\eta)
        }
    \\ 
    & \leq 
    \lim_{\eta \downarrow 0}
        \frac{1}{ \lambda^k(\eta) }
        \cdot 
        \Big( \eta^{-1}H(\delta \eta^{-1}) \Big)^k
        \cdot k \cdot 
        \Bigg(
            \frac{ H(\bar\delta \eta^{-1}) }{ H(\delta \eta^{-1}) }
        \Bigg)^{k - 1}
        \cdot 
            \frac{ H(\epsilon^{ - \frac{1}{2k} } \eta^{-1}) }{ H(\delta \eta^{-1}) }
    \qquad\text{ by \eqref{property: large jump time probability}}
    \\ 
    & = 
    \lim_{\eta \downarrow 0}
    \frac{1}{ \lambda^k(\eta) }
        \cdot 
        \Big( \eta^{-1}H(\eta^{-1}) \Big)^k
    \cdot k \cdot 
        \Bigg(
            \frac{ H(\bar\delta \eta^{-1}) }{ H(\eta^{-1}) }
        \Bigg)^{k - 1}
        \cdot 
            \frac{ H(\epsilon^{ - \frac{1}{2k} } \eta^{-1}) }{ H(\eta^{-1}) }
    \\ 
    & = k \cdot \lim_{\eta \downarrow 0}
    \Bigg(
            \frac{ H(\bar\delta \eta^{-1}) }{ H(\eta^{-1}) }
        \Bigg)^{k - 1}
        \cdot 
            \frac{ H(\epsilon^{ - \frac{1}{2k} } \eta^{-1}) }{ H(\eta^{-1}) }
    \qquad \text{recall that }\lambda(\eta) = \eta^{-1}H(\eta^{-1})
    \\ 
    & = \bar\delta^{-k\alpha} \cdot \epsilon^{\frac{\alpha}{2k}}
    \qquad
    \text{due to $H(x) \in \RV_{-\alpha}(x)$ as $x \to \infty$; see Assumption~\ref{assumption gradient noise heavy-tailed}.}
\end{align*}

\medskip
\noindent\textbf{Proof of }\eqref{goal, event B 5, clipped, proposition: asymptotic equivalence, clipped}:

We only consider $\delta \in (0, \frac{b}{2C})$.
On event $B_1 \cap B_2 = \{\tau^{>\delta}_{k}(\eta) \leq \floor{1/\eta} < \tau^{>\delta}_{k+1}(\eta)\}$,
$ \hat{\bm{X}}^{\eta|b; >\delta }(\bm x)$ admits the expression in \eqref{property: equivalence between breve X and hat X, clipped at b}.
\elaborate{
it follows from \eqref{property: equivalence between breve X and hat X, clipped at b} that
$
    \hat{\bm{X}}^{\eta|b; >\delta }(x)
    =
    h^{(k)|b}\big( x, \eta W^{>\delta}_{1}(\eta), \cdots,\eta W^{>\delta}_k(\eta), 
     \eta \tau^{>\delta}_1(\eta), \cdots,\eta \tau^{>\delta}_k(\eta)\big).
$
}
Then by applying Lemma \ref{lemma: SGD close to approximation x breve, LDP clipped}
we yield that for any $\bm x \in A$ and any $\eta \in (0,\frac{\epsilon \wedge b}{2C})$, the inequality
$$
\bm{d}_{J_1}\Big(\hat{\bm{X}}^{\eta|b; >\delta }(\bm x), \bm{X}^{\eta|b}(\bm x)  \Big)
\leq 
\sup_{ t \in [0,1] }
\norm{ \hat{\bm{X}}^{\eta|b; >\delta }_t(\bm x) -  \bm{X}^{\eta|b}_{\floor{t/\eta}}(\bm x) }
< 
(2\rho D)^{k+1} \sqrt{\epsilon},
$$
holds on event
$
\Big(\bigcap_{i = 1}^{k+1} A_i(\eta,b,\epsilon,\delta,\bm x) \Big).
$
Due to our choice of
$(2\rho D)^{k+1} \sqrt{\epsilon} < \Delta$ in \eqref{proof, choice of epsilon, proposition: standard M convergence, LDP clipped, stronger boundedness assumption}, we get
$
 \Big(\bigcap_{i = 1}^{k+1} A_i(\eta,b,\epsilon,\delta,\bm x) \Big) \cap B_1 \cap B_2 \cap B_3 \cap B_4 \cap B_0 = \emptyset.
$
\elaborate{
\begin{align*}
    & \big(\bigcap_{i = 1}^{k+1} A_i(\eta,b,\epsilon,\delta,x) \big) \cap B_1 \cap B_2 \cap B_3 \cap B_0
    \\
    & \subseteq \{\bm{d}_{J_1}\big(\hat{\bm{X}}^{\eta|b; (k) }(x), \bm{X}^{\eta|b}(x)  \big) < \Delta\} \cap \{\bm{d}_{J_1}\big(\hat{\bm{X}}^{\eta|b; (k) }(x), \bm{X}^{\eta|b}(x)  \big) > \Delta\} = \emptyset.
\end{align*}
}
Therefore,
\begin{align*}
    \limsup_{\eta\downarrow 0}\sup_{\bm x \in A}
    {\P\Big( B_1 \cap B_2 \cap B_3  \cap B_0 \Big)}\Big/{\lambda^k(\eta)}
    \leq 
    \limsup_{\eta\downarrow 0}\sup_{\bm x \in A}
    {\P\Bigg( \Big(\bigcap_{i = 1}^{k+1} A_i(\eta,b,\epsilon,\delta,
    \bm x) \Big)^c\Bigg)}\Bigg/{\lambda^k(\eta)}.
\end{align*}
Again, by applying Lemma~\ref{lemma LDP, small jump perturbation} $(b)$
with some $N > k(\alpha - 1)$, we conclude the proof.
\end{proof}

% \subsubsection{Proof of Proposition \ref{proposition: uniform weak convergence, clipped}}
% \label{subsubsec: proof of Proposition proposition: uniform weak convergence, clipped}

Recall that
${\big(\bm W^*_j(c)}\big)_{j \geq 1}$ is a sequence of iid copies of $\bm W^*(c)$ defined in \eqref{def: prob measure Q, LDP},
and $\big(U_{(j:k)}\big)_{j \in [k]}$ are the order statistics of $k$ samples of Unif$(0,1)$.
In order to prove Proposition \ref{proposition: uniform weak convergence, clipped},
we prepare a lemma regarding a weak convergence on events
$
E^\delta_{c,k}(\eta) = \big\{ \tau^{>\delta}_{k}(\eta) \leq  \floor{1/\eta} < \tau^{>\delta}_{k+1}(\eta);\ 
\eta\norm{ \bm W^{>\delta}_j(\eta)} > c\ \ \forall j \in [k] \big\}
$
defined
in \eqref{def: E eta delta set, LDP}.
% We first prepare a lemma regarding a weak convergence claim on $E^{\delta}_{c,k}(\eta)$.
% For any probability measure $\P$, random element $X$, and event $A$,
% let
% $\notationdef{notation-L-P-X-LDP}{\mathscr L_\P(X)}$ be the distribution of $X$ under $\P$,
% and let
% $\notationdef{notation-L-P-X-A-LDP}{\mathscr L_\P(X|A)}$ be the distribution of $X$ conditioning on $A$.

\begin{lemma}
\label
{lemma: weak convergence, expectation wrt approximation, LDP, preparation}
\linksinthm
{lemma: weak convergence, expectation wrt approximation, LDP, preparation}%
Let Assumption \ref{assumption gradient noise heavy-tailed}
% ,\ref{assumption: lipschitz continuity of drift and diffusion coefficients}, and \ref{assumption: boundedness of drift and diffusion coefficients}
hold.
Let $A \subseteq \R^m$ be a compact set.
Let bounded function $\Psi:\mathbb{R}^m\times \mathbb{R}^{d \times k} \times (0,1]^{k\uparrow} \to \mathbb{R} $ be continuous on $\mathbb{R}^m \times \mathbb{R}^{d \times k} \times (0,1)^{k\uparrow}$.
For any $\delta > 0,\ c > \delta$ and $k \in \mathbb N$, 
\begin{align*}
    \lim_{\eta \downarrow 0}\sup_{\bm x \in A}
    \left|\rule{0cm}{0.8cm}
        \frac{ \E \Big[ 
            \Psi\Big(
                \bm x, \big(\eta \bm W^{>\delta}_{1}(\eta),\cdots,\eta \bm W^{>\delta}_{k}(\eta)\big),\big(\eta \tau^{>\delta}_{1}(\eta),\cdots,\eta \tau^{>\delta}_{k}(\eta)\big)  \Big)
            \mathbbm{I}_{E^\delta_{c,k}(\eta)}  \Big] }{ \lambda^k(\eta) } - \frac{(1/c^{\alpha k})\psi_{c,k}(\bm x) }{k!} 
    \right|\rule{0cm}{0.8cm}
    = 0
\end{align*}
where
$ \psi_{c,k}(\bm x) \delequal \E\Big[\Psi\Big(\bm x, \big(\bm W^*_1(c),\cdots,\bm W^*_{k}(c)\big),\big( U_{(1;k)},\cdots,U_{(k;k)}\big)\Big)\Big]$. 
\end{lemma}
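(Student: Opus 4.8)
The plan is to reduce the claim to the two building blocks already established: (i) the probability asymptotics and the conditional weak convergence in Lemma~\ref{lemma: weak convergence of cond law of large jump, LDP}, and (ii) the uniform-in-$x$ continuity behavior coming from the fact that $\Phi$ is bounded and continuous on $\R\times\R^k\times(0,1)^{k\uparrow}$. Write
\begin{align*}
\E\Big[ \Phi\big( x, \eta W^{>\delta}_{1}(\eta),\ldots,\eta \tau^{>\delta}_{k}(\eta) \big)\mathbbm{I}_{E^\delta_{c,k}(\eta)}\Big]
= \P\big(E^\delta_{c,k}(\eta)\big)\cdot \E\Big[ \Phi\big( x, \eta W^{>\delta}_{1}(\eta),\ldots,\eta \tau^{>\delta}_{k}(\eta) \big)\,\Big|\, E^\delta_{c,k}(\eta)\Big].
\end{align*}
By Lemma~\ref{lemma: weak convergence of cond law of large jump, LDP}, $\P\big(E^\delta_{c,k}(\eta)\big)/\lambda^k(\eta)\to (1/c^{\alpha k})/k!$, and the conditional law of $\big(\eta W^{>\delta}_{1}(\eta),\ldots,\eta \tau^{>\delta}_{k}(\eta)\big)$ converges weakly to $\mathscr L\big(W^*_1(c),\ldots,W^*_k(c),U_{(1;k)},\ldots,U_{(k;k)}\big)$. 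Since the limiting law of the jump-time vector is $\mathscr L(U_{(1;k)},\ldots,U_{(k;k)})$, which is supported on $(0,1)^{k\uparrow}$ and assigns zero mass to its boundary, and $\Phi$ is bounded and continuous on $\R\times\R^k\times(0,1)^{k\uparrow}$, the continuous mapping theorem plus bounded convergence gives, for each fixed $x$,
\begin{align*}
\E\Big[ \Phi\big( x, \eta W^{>\delta}_{1}(\eta),\ldots,\eta \tau^{>\delta}_{k}(\eta) \big)\,\Big|\, E^\delta_{c,k}(\eta)\Big] \to \phi_{c,k}(x)\qquad\text{as }\eta\downarrow0.
\end{align*}
Combining the two factors yields pointwise convergence of the normalized expectation to $(1/c^{\alpha k})\phi_{c,k}(x)/k!$.

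**Upgrading to uniformity over the compact set $A$.** The nontrivial part is the $\sup_{x\in A}$. I would handle this by a subsequence/compactness argument combined with equicontinuity. First, note that $\phi_{c,k}(\cdot)$ is continuous on $\R$: this follows from dominated convergence, since $\Phi$ is bounded and continuous and the law of $(W^*_1(c),\ldots,W^*_k(c),U_{(1;k)},\ldots,U_{(k;k)})$ does not depend on $x$, so $x_m\to x$ forces $\Phi(x_m,\cdot)\to\Phi(x,\cdot)$ pointwise (hence in $L^1$ of the limiting law). Next, suppose the uniform convergence fails; then there exist $\epsilon_0>0$, $\eta_n\downarrow0$, and $x_n\in A$ with
\begin{align*}
\Big| \lambda^{-k}(\eta_n)\E\Big[ \Phi\big( x_n,\ldots \big)\mathbbm{I}_{E^\delta_{c,k}(\eta_n)}\Big] - \tfrac{(1/c^{\alpha k})\phi_{c,k}(x_n)}{k!}\Big| > \epsilon_0.
\end{align*}
Passing to a subsequence, $x_n\to x^*\in A$ by compactness. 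The key remaining step is then to show the \emph{joint} limit: $\lambda^{-k}(\eta_n)\E[\Phi(x_n,\ldots)\mathbbm{I}_{E^\delta_{c,k}(\eta_n)}]\to (1/c^{\alpha k})\phi_{c,k}(x^*)/k!$. Together with continuity of $\phi_{c,k}$ (so $\phi_{c,k}(x_n)\to\phi_{c,k}(x^*)$) this produces the contradiction.

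**The joint limit --- the main obstacle.** The crux is that the conditional expectation $\E[\Phi(x_n,\ldots)\mid E^\delta_{c,k}(\eta_n)]$ involves $x_n$ varying simultaneously with $\eta_n$; plain weak convergence of the conditional law for fixed $x$ is not enough. I would resolve this by exploiting uniform continuity of $\Phi$ on suitable sets. Since $E^\delta_{c,k}(\eta)$ forces $\eta|W^{>\delta}_j(\eta)|>c$ and these are automatically bounded above in the relevant range (on $E^\delta_{c,k}(\eta)$ the jump sizes $\eta W^{>\delta}_j(\eta)$ have tails controlled as in \eqref{def: prob measure Q, LDP}, and tightness of the conditional laws holds), for any $\epsilon>0$ one can find a compact rectangle $K\subseteq \R\times\R^k\times(0,1)^{k\uparrow}$ carrying all but $\epsilon$ of the mass of the conditional laws uniformly in $n$ and also carrying mass $\ge 1-\epsilon$ under the limiting law. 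On the compact set $A\times K$, $\Phi$ is uniformly continuous, so $\big|\Phi(x_n,\bm w,\bm t)-\Phi(x^*,\bm w,\bm t)\big|$ is small uniformly over $(\bm w,\bm t)\in K$ once $n$ is large. This lets us replace $\Phi(x_n,\cdot)$ by $\Phi(x^*,\cdot)$ up to an error $o(1)+O(\|\Phi\|\epsilon)$, after which Lemma~\ref{lemma: weak convergence of cond law of large jump, LDP} applied with the fixed test function $\Phi(x^*,\cdot)$ gives convergence to $\phi_{c,k}(x^*)$; sending $\epsilon\downarrow0$ finishes the joint limit. The one technical care point is the tightness of the conditional laws of $\big(\eta W^{>\delta}_j(\eta)\big)_{j\in[k]}$ given $E^\delta_{c,k}(\eta)$, which follows from the explicit Pareto-type tail in \eqref{def: prob measure Q, LDP} together with the computation $\P\big(\eta W^{>\delta}_1(\eta)>x\big)\big/\P\big(\eta|W^{>\delta}_1(\eta)|>c\big)\to p^{(+)}(c/x)^\alpha$ already recorded in the proof of Lemma~\ref{lemma: weak convergence of cond law of large jump, LDP}; the jump-time coordinates are automatically tight in $[0,1]^k$ and their limit charges no boundary of $(0,1)^{k\uparrow}$.
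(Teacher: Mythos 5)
Your proposal is correct and follows essentially the same route as the paper's proof: factor the expectation as $\P(E^\delta_{c,k}(\eta))\cdot\E[\Phi\mid E^\delta_{c,k}(\eta)]$, argue by contradiction with a compact subsequence $x_n\to x^*$, invoke Lemma~\ref{lemma: weak convergence of cond law of large jump, LDP} for the joint limit, and use bounded convergence to get continuity of $\phi_{c,k}$. In fact you supply more detail than the paper does on the one delicate step (the joint limit when $x_n$ varies with $\eta_n$, handled via tightness of the conditional laws and uniform continuity of $\Phi$ on compacts), which the paper passes over by citing the lemma directly.
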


\begin{proof}
\linksinpf
{lemma: weak convergence, expectation wrt approximation, LDP, preparation}%
Fix some $\delta > 0, c > \delta$ and $k \in \mathbb N$.
We proceed with a proof by contradiction.
Suppose there exist some $\epsilon > 0$,
some sequence $\bm x_n \in A$, and some sequence $\eta_n \downarrow 0$ such that
\begin{align}
    \Big| { { \lambda^{-k}(\eta_n) } 
    \E \Big[ \Psi\big( \bm x_n, 
    \eta_n \textbf{W}^{\eta_n},\eta_n \bm{\tau}^{\eta_n}
    \big)\mathbbm{I}_{E^\delta_{c,k}(\eta_n)}  \Big] }
    - { (1/{ k! } )\cdot c^{-\alpha k}\cdot \psi_{c,k}(\bm x_n) } \Big| > \epsilon\ \ \ \forall n \geq 1
    \label{proof by contradiction, assumption, lemma: weak convergence, expectation wrt approximation, LDP, preparation}
\end{align}
where
$
\textbf W^{\eta}\delequal(\bm W^{>\delta}_1(\eta),\cdots, \bm W^{>\delta}_k(\eta)),\
    \bm \tau^{\eta} \delequal
(\tau^{>\delta}_1(\eta),\cdots,\tau^{>\delta}_k(\eta)).
$
Since $A$ is compact, 
by picking a sub-sequence if needed we can assume w.l.o.g.\ that 
 $\bm x_{n} \to \bm x^*$ for some $\bm x^*\in A$.
% To ease the notation complexity, let's assume (w.l.o.g.\ ) that $x_n \rightarrow x^*$.
Now, observe that
\begin{align*}
    & \lim_{n \rightarrow \infty}
    { \lambda^{-k}(\eta_n) }{ \E \bigg[ \Psi\big( \bm x_n, 
    \eta_n \textbf{W}^{\eta_n},\eta_n \bm{\tau}^{\eta_n}
    \big)\mathbbm{I}_{E^\delta_{c,k}(\eta_n)}  \bigg] }
    \\
    & = 
    \bigg[ \lim_{n \rightarrow \infty} { \lambda^{-k}(\eta_n) }{\P\Big( E^\delta_{c,k}(\eta_n) \Big)} \bigg]
    \cdot \lim_{n\rightarrow\infty}
    \E\bigg[
   \Psi\big( \bm x_n, 
    \eta_n \textbf{W}^{\eta_n},
    \eta_n\bm{\tau}^{\eta_n}
    \big)\Big| E^\delta_{c,k}(\eta_n)
   \bigg]
   \\
   & = 
   % (1/{ k! } )\cdot c^{-\alpha k} \cdot 
   % \E\Big[
   % \Phi\big( x^*, 
   %  \bm{W}^{*}, \bm{U}^{*}
   %  \big) 
   % \Big]  = 
    (1/{ k! } )\cdot c^{-\alpha k} \cdot \psi_{c,k}(\bm x^*)
    \qquad 
    \text{by Lemma \ref{lemma: weak convergence of cond law of large jump, LDP}, $\bm x_n \to \bm x^*$, and continuous mapping theorem}.
\end{align*}
% where
% $\bm{W}^* \delequal \big(W^*_j(c)\big)_{j = 1}^{k}$, $\bm{U}^* \delequal \big(U_{(j;k)}\big)_{j = 1}^{k}$. 
However, by Bounded Convergence theorem, we see that $\psi_{c,k}$ is also continuous, and hence $\psi_{c,k}(\bm x_n) \rightarrow \phi_{c,k}(\bm x^*)$.
This leads to a contradiction with \eqref{proof by contradiction, assumption, lemma: weak convergence, expectation wrt approximation, LDP, preparation} and allows us to conclude the proof.
\elaborate{
This leads to the contradiction 
\begin{align*}
    \lim_{n \rightarrow \infty}\Bigg| \frac{ \E \Big[ \Phi\big( x_n, 
    \bm{W}^{\eta_n},\bm{\tau}^{\eta_n}
    \big)\mathbbm{I}_{E^\delta_{c,k}(\eta_n)}  \Big] }{ \lambda^k(\eta_n) } - \frac{(1/c^{\alpha k})\phi_{c,k}(x_n) }{k!} \Bigg| = 0
\end{align*}
}
\end{proof}

We are now ready to prove Proposition \ref{proposition: uniform weak convergence, clipped}.

\begin{proof}[Proof of Proposition \ref{proposition: uniform weak convergence, clipped}]
\linksinpf{proposition: uniform weak convergence, clipped}%
Fix some $b,r > 0$, $k \in \mathbb N$, and $g \in \mathcal{C}\big({ \mathbb{D}\setminus  \mathbb{D}^{(k-1)|b}_{A}(r) }\big)$;
i.e. $g:\mathbb{D} \to [0,\infty)$ is non-negative, continuous, and bounded, whose support $B \delequal \text{supp}(g)$ bounded away from $\mathbb{D}^{(k - 1)|b}_{A}(r)$.
By Lemma~\ref{lemma: LDP, bar epsilon and delta, clipped version},
we can fix some $\bar{\epsilon} \in (0,r)$ and $\bar{\delta} > 0$ such that
\begin{itemize}
    \item 
        for any $\bm x \in A$ and $b > 0$,
         \begin{align}
            h^{(k)|b}\big(\bm x,(\bm w_1,\cdots,\bm w_k),\bm{t}\big) \in B^{\bar{\epsilon}} \Longrightarrow
            \norm{\bm w_j} > \bar{\delta}\ \ \forall j \in [k];
            \label{choice of bar delta, proposition: uniform weak convergence, clipped}
        \end{align}

    \item
        $\bm{d}_{J_1}\big(B^{\bar\epsilon},\mathbb{D}^{(k- 1)|b}_{A}(r)\big) > \bar\epsilon.$
\end{itemize}
Fix some $\delta \in (0,\bar{\delta} \wedge \frac{b}{2})$,
and observe that for any $\eta > 0$ and $\bm x \in A$,
\begin{align*}
    & g\Big( \hat{\bm{X}}^{\eta|b;>\delta}(\bm x)\Big)
    \\ 
    & = 
    \underbrace{g\Big( \hat{\bm{X}}^{\eta|b;>\delta}(\bm x)\Big)\mathbbm{I}\Big\{ \tau^{>\delta}_{k+1}(\eta) \leq \floor{1/\eta} \Big\} }_{ \delequal I_1(\eta,\bm x) }
    % \\
    % &
    % +
    % \underbrace{ g\big( \hat{\bm{X}}^{\eta|b;(k)}(x)\big)\mathbbm{I}\big\{ \tau^{>\delta}_{k}(\eta) =\floor{1/\eta} \big\} }_{ \delequal I_2(\eta,x) }
    +
    \underbrace{ g\Big( \hat{\bm{X}}^{\eta|b;>\delta}(\bm x)\Big)\mathbbm{I}\Big\{ \tau^{>\delta}_{k}(\eta) >\floor{1/\eta} \Big\} }_{ \delequal I_2(\eta,\bm x) }
    \\
    & 
    \qquad
    + 
    \underbrace{ g\Big( \hat{\bm{X}}^{\eta|b;>\delta}(\bm x)\Big)\mathbbm{I}\Big\{ 
        \tau^{>\delta}_{k}(\eta) \leq \floor{1/\eta} < \tau^{>\delta}_{k+1}(\eta);\ \eta\norm{\bm W^{>\delta}_j(\eta)} \leq \bar{\delta}\text{ for some }j \in [k] 
        \Big\} }_{ \delequal I_3(\eta,\bm x) }
    \\
    & \qquad
    + 
    \underbrace{ g\Big( \hat{\bm{X}}^{\eta|b;>\delta}(\bm x)\Big)\mathbbm{I}\Big( E^\delta_{\bar{\delta},k}(\eta)\Big) }_{ \delequal I_4(\eta,\bm x)}.
\end{align*}

For $I_1(\eta,\bm x)$, it follows from \eqref{property: large jump time probability} that
$
    \sup_{\bm x \in \R^m}\E[I_1(\eta,\bm x)]
    \leq \norm{g}\cdot\Big[ \frac{1}{\eta}\cdot H(\delta/\eta)      \Big]^{k + 1}.
$
Therefore,
$
\lim_{\eta \downarrow 0}\sup_{\bm x \in A}\E[I_1(\eta,\bm x)]\Big/\big( \eta^{-1}H(\eta^{-1}) \big)^k
\leq 
\frac{\norm{g}}{\delta^{\alpha(k + 1)}} \cdot
\lim_{n \rightarrow \infty}\frac{H(1/\eta)}{\eta} = 0
$
due to $H(x) \in \RV_{-\alpha}(x)$ and $\alpha > 1$.
% Besides, from \eqref{property: large jump time probability, exact jump time at the endpoint},
% we get
% $
% \lim_{\eta \downarrow 0}\sup_{x \in A}\E[I_2(\eta,x)]\Big/\big( \eta^{-1}H(\eta^{-1}) \big)^k
% \leq 
% \norm{g}
% \cdot 
% \lim_{\eta \downarrow 0}\P\Big( \tau^{>\delta}_k(\eta) = \floor{1/\eta} \Big)\Big/\big( \eta^{-1}H(\eta^{-1}) \big)^k
% = 0.
% $

Next, for term $I_2(\eta,\bm x)$,
it has been shown in the proof of \eqref{goal, event B 2, clipped, proposition: asymptotic equivalence, clipped} for Proposition~\ref{proposition: asymptotic equivalence, clipped} that, for all $\delta \in (0,\bar{\delta} \wedge \frac{b}{2})$ and $\bm x \in A$,
it holds on event $\{\tau^{>\delta}_k(\eta) > \floor{1/\eta}\}$ that $\hat{\bm{X}}^{\eta|b;>\delta }(\bm x) \notin B^{\bar{\epsilon}}$,
and hence $I_2(\eta,\bm x) = 0$.

% by definition,
% $
% \hat{\bm{X}}^{\eta|b; (k) }(x) = h^{(k)|b}\big(x,\eta \tau^{(k)}_1(\eta), \cdots,\eta \tau^{(k)}_k(\eta),\eta W^{(k)}_1(\eta), \cdots,\eta W^{(k)}_k(\eta) \big).
% $
% Moreover, on event $\{ \tau^{>\delta}_k(\eta) > \floor{1/\eta} \}$, we must have $\#\big\{ i \in \big[ \floor{1/\eta} \big]:\ \eta|Z_i| > \delta \big\} < k$.
% From the definition of $\bm{Z}^{ (k) }(\eta)$ in \eqref{def: objects for definition of hat X leq k},
% we then have that $\min_{i \in [k]}\eta|W^{(k)}_i(\eta)| \leq \delta$.
% In light of \eqref{choice of bar delta, proposition: uniform weak convergence, clipped} and our choice of $\delta < \bar{\delta}$,
% for any $x \in A$ and any $\eta > 0$, on event $\{ \tau^{>\delta}_k(\eta) > \floor{1/\eta} \}$ we have $\hat{\bm{X}}^{\eta|b; (k) }(x) \notin B$ for
% $B = \text{supp}(g)$, thus implying $I_2(\eta,x) = 0$ for any $x \in A$ and $\eta > 0$.

For the term $I_3(\eta,\bm x)$, 
on event $\{\tau^{>\delta}_k(\eta) \leq \floor{1/\eta} < \tau^{>\delta}_{k+1}(\eta) \}$ the process $\hat{\bm X}^{\eta|b;>\delta}(\bm x)$
admits the expression in \eqref{property: equivalence between breve X and hat X, clipped at b}.
In particular,
since there is some $i \in [k]$ such that $\eta \norm{\bm W^{>\delta}_{i}(\eta)} \leq \bar{\delta}$,
by \eqref{choice of bar delta, proposition: uniform weak convergence, clipped} we must have 
$
\hat{\bm{X}}^{\eta|b; (k) }(\bm x) \notin B,
$
and hence $I_3(\eta,\bm x) = 0$.

% Therefore, on event $\big\{ \tau^{>\delta}_{k}(\eta) < \floor{1/\eta} < \tau^{>\delta}_{k+1}(\eta);\ |\eta W^{>\delta}_j(\eta)| \leq \bar{\delta}\text{ for some }j \in [k]  \big\}$,
% it follows from \eqref{choice of bar delta, proposition: uniform weak convergence, clipped} and our choice of $\delta < \bar \delta$ that 
% which implies
% $
% \hat{\bm{X}}^{\eta|b; (k) }(x) \notin B.
% $
% due to  \eqref{choice of bar delta, proposition: uniform weak convergence, clipped} and our choice of $\delta < \bar \delta$.
% In summary, we get $I_3(\eta,x) = 0$.

Lastly, for the term $I_4(\eta,\bm x)$,
on event $E^\delta_{\bar{\delta},k}(\eta)$
the process $\hat{\bm X}^{\eta|b;(k)}(\bm x)$
would again admit the expression in \eqref{property: equivalence between breve X and hat X, clipped at b}.
As a result, for any $\eta > 0$ and $\bm x \in A$,
we have 
$$
\E[I_4(\eta,\bm x)]
    = \E \bigg[ 
    \Psi\big( \bm x, 
    \eta \textbf{W}^{\eta},\eta \bm{\tau}^{\eta}
    \big)\mathbbm{I}_{E^\delta_{\bar\delta,k}(\eta)}  
    \bigg],
$$
where
$
\textbf W^{\eta}\delequal(\bm W^{>\delta}_1(\eta),\cdots, \bm W^{>\delta}_k(\eta)),\
    \bm \tau^{\eta} \delequal
(\tau^{>\delta}_1(\eta),\cdots,\tau^{>\delta}_k(\eta)),
$
and
$
\Psi(\bm x,\textbf W,\bm t)
    \delequal
    g\big( h^{(k)|b}(\bm x,\textbf W,\bm t) \big).
$
\elaborate{
\begin{align*}
    \E[I_5(\eta,x)]
    & = \E\Big[ \Phi\big( x, \eta W^{>\delta}_{1}(\eta),\cdots,\eta W^{>\delta}_{k}(\eta),\eta \tau^{>\delta}_{1}(\eta),\cdots,\eta \tau^{>\delta}_{k}(\eta)  \big)\mathbbm{I}\big(E^\delta_{\bar{\delta},k}(\eta)\big) \Big]
\end{align*}
where
$\Phi: \mathbb{R}\times \mathbb{R}^{k} \times (0,1)^{k\uparrow} \to \mathbb{R}$ is defined as
\begin{align}
    \Phi(x_0,w_1,w_2,\cdots,w_{k},t_1,t_2,\cdots,t_{k})
    \delequal
    g\big( h^{(k)|b}(x_0,w_1,w_2,\cdots,w_{k},t_1,t_2,\cdots,t_{k}) \big).
    \nonumber
    %\label{def: Phi mapping, LDP, unclipped}
\end{align}
}
Besides, let $\psi(\bm x) \delequal \E\Big[\Psi\Big(\bm x, \big(\bm W^*_1(c),\cdots,\bm W^*_{k}(c)\big),\big( U_{(1;k)},\cdots,U_{(k;k)}\big)\Big)\Big]$.
First, the continuity of mapping $\Psi$ 
on 
$
\mathbb{R}^m\times \mathbb{R}^{d \times k} \times (0,1)^{k\uparrow}
$
follows directly from the continuity of $g$ and $h^{(k)|b}$
(see Lemma \ref{lemma: continuity of h k b mapping clipped}).
Besides, $\norm{\Psi} \leq \norm{g} < \infty$, so $\Psi(\cdot)$ is also bounded.
By Bounded Convergence Theorem, one can see that $\psi(\cdot)$ is also continuous.
Also, $\norm{\psi} \leq \norm{\Psi} \leq \norm{g} < \infty$.
% The continuity of $\Phi$ allows us to apply 
%  Lemma \ref{lemma: weak convergence, expectation wrt approximation, LDP, preparation}
%  and show that
%  \begin{align*}
%  \end{align*}
By Lemma \ref{lemma: weak convergence, expectation wrt approximation, LDP, preparation},
\begin{align*}
    \lim_{\eta \downarrow 0}\sup_{\bm x \in A}
    \Bigg| 
    {\lambda^{-k}(\eta)}\E \bigg[ 
    \Psi\big( \bm x, 
    \eta \textbf{W}^{\eta},\eta \bm{\tau}^{\eta}
    \big)\mathbbm{I}_{E^\delta_{\bar\delta,k}(\eta)}  
    \bigg]
    -
    (1/{ k! } )\cdot {\bar\delta}^{-\alpha k}\cdot \psi(\bm x) 
    \Bigg|
 = 0.
\end{align*}
% due to Lemma \ref{lemma: weak convergence, expectation wrt approximation, LDP, preparation}.
Meanwhile, 
due to the continuity of $\psi(\cdot)$, for any $\bm x_n,\ \bm x^* \in A$ with $\lim_{n \rightarrow\infty}\bm x_n = \bm x^*$,
we have $\lim_{n \rightarrow \infty}\psi(\bm x_n) = \psi(\bm x^*)$.
To conclude the proof, we only need to show that 
\begin{align}
    \frac{(1/{\bar\delta}^{\alpha k}) \psi(\bm x^*)  }{k !} = \mathbf{C}^{(k)|b}   (g;\bm x^*).
    \label{proof, goal, limiting measure, proposition: uniform weak convergence, clipped}
\end{align}
To do so, recall the law of $\bm W^*(c)$ in \eqref{def: prob measure Q, LDP}.
By definition of 
$\psi(\cdot)$,
\begin{align*}
    \psi(\bm x^*) & =
    \int g\Big( h^{(k)|b}\big(\bm x^*,(w_1\bm \theta_1,\cdots,w_{k}\bm \theta_k),(t_1,\cdots,t_{k}) \big) \Big)
    \mathbbm{I}\Big\{w_j > \bar{\delta}\ \forall j \in [k]\Big\}
    \\
    &\ \ \ \ \ \ \ \ \ \ \ \ \ \ \ \ \ \qquad
    \P\Big( U_{(1;k)} \in dt_1,\cdots,U_{(k;k)} \in dt_{k} \Big) \times 
    \Bigg( \bigtimes_{j = 1}^{k} \bigg(\bar{\delta}^\alpha \cdot  \nu_\alpha(d w_j) \times \mathbf S(d \bm \theta_j)\bigg) \Bigg).
\end{align*}
By \eqref{choice of bar delta, proposition: uniform weak convergence, clipped},
 we have
\begin{align*}
   & g\Big( h^{(k)|b}\big(\bm x^*,(w_1\bm \theta_1,\cdots,w_{k}\bm \theta_k),(t_1,\cdots,t_{k}) \big) \Big)
   \\
   & = 
    g\Big( h^{(k)|b}\big(\bm x^*,(w_1\bm \theta_1,\cdots,w_{k}\bm \theta_k),(t_1,\cdots,t_{k}) \big) \Big)
    \mathbbm{I}\Big\{w_j > \bar{\delta}\ \forall j \in [k]\Big\}.
\end{align*} 
Besides,
$\P\Big( U_{(1;k)} \in dt_1,\cdots,U_{(k;k)}\in dt_{k} \Big) = k! \cdot \mathbbm{I}\{0 < t_1 < t_2 < \cdots <t_{k} < 1\}
    \mathcal{L}^{k\uparrow}_1(dt_1,\cdots,dt_{k})$ 
    where $\mathcal{L}^{k\uparrow}_1$ is the Lebesgue measure restricted on $(0,1)^{k\uparrow}$.
\elaborate{
We make a few observations.
\begin{itemize}
    \item From \eqref{choice of bar delta, proposition: uniform weak convergence, clipped},
    we must have $g\big( h^{(k)}(x^*,w_1,\cdots,w_{k},t_1,\cdots,t_{k}) \big)= 0$ if there is some $j\in[k]$ with $|w_j| \leq \bar{\delta}$;
    % \item The definition of $h^{k}_t$ implies that
    % $h^{(k)}_t(x_0,w_1,\cdots,w_{k},w_{i_1},t_1,\cdots,t_{k}) = 
    % h^{(k)}_t(x_0,w_{i_1},\cdots,w_{i_{k}},t_{i_1},\cdots,t_{i_{k}})$
    % where $(i_1,i_2,\cdots,i_{k})$ is a permutation of $(1,2,\cdots,k)$;
    \item Regarding the density of $\big(U_{(i;k)}\big)_{i = 1}^{k}$,
    the definition of order statistics and the law of the uniform random variables imply that 
    $\P\Big( U_{(1;k)} \in dt_1,\cdots,U_{(k;k)}\in dt_{k} \Big) = k! \cdot \mathbbm{I}\{0 < t_1 < t_2 < \cdots <t_{k} < 1\}
    \mathcal{L}^{k\uparrow}_1(dt_1,\cdots,dt_{k})$ 
    where $\mathcal{L}^{k\uparrow}_1$ is the Lebesgue measure restricted on $(0,1)^{k\uparrow}$.
\end{itemize}
}
As a result,
\begin{align*}
    & \psi(\bm x^*) 
    \\
    & = k! \cdot \bar{\delta}^{\alpha k} 
    \int g\Big( h^{(k)|b}\big(\bm x^*, (w_1\bm \theta_1,\cdots,w_k\bm\theta_k),\bm t\big) \Big)
    \Bigg(\bigtimes_{j = 1}^k\bigg(\nu_\alpha(d w_j)\times \mathbf S(d\bm \theta_j)\bigg)\Bigg)
     \times \mathcal{L}^{k\uparrow}_1(d\bm{t})
    \\
    & 
    = 
    k! \cdot {\bar{\delta}^{\alpha k}}\cdot  \mathbf{C}^{(k)|b }\big(g;\bm x^*\big)
\end{align*}
by the definition of $\mathbf{C}^{(k)|b }$ in \eqref{def: measure mu k b t},
thus verifying \eqref{proof, goal, limiting measure, proposition: uniform weak convergence, clipped}.
\end{proof}

\section{Metastability Analysis}
\label{sec: first exit time simple version, proof}

In this section, we collect the proofs for Section~\ref{sec: first exit time simple version}.
Specifically, Section~\ref{subsec: Exit time analysis framework} develops the general framework for first exit analysis of Markov processes by establishing Theorem~\ref{thm: exit time analysis framework}.
Section~\ref{sec: proof of proposition: first exit time} then applies the framework in the context of heavy-tailed stochastic difference equations and proves Theorem~\ref{theorem: first exit time, unclipped}.

\subsection{Proof of Theorem~\ref{thm: exit time analysis framework}}
\label{subsec: Exit time analysis framework}

Our proof of Theorem~\ref{thm: exit time analysis framework} hinges on the following proposition.

\begin{proposition}\label{prop: exit time analysis main proposition}
\linksinthm{prop: exit time analysis main proposition}
Suppose that Condition~\ref{condition E2} holds.
\begin{enumerate}[$(i)$]
    \item 
        If $C(\cdot)$ is a probability measure supported on $I^\complement$ (i.e., $C(I^\complement) = 1$),
        then for each measurable set $B\subseteq \mathbb S$ and $t\geq 0$, there exists $\delta_{t,B}(\epsilon)$ such that
        \begin{align*}
            C(B^\circ)\cdot e^{-t} - \delta_{t,B}(\epsilon)
            &
            \leq
            \liminf_{\eta\downarrow 0} \inf_{x\in A(\epsilon)}\P\big(\gamma(\eta) \tau_{I(\epsilon)^\complement}^\eta(x) > t;\; V^\eta_{\tau_\epsilon}(x) \in B\big)
            \\
            &
            \leq
            \limsup_{\eta\downarrow 0} \sup_{x\in A(\epsilon)}\P\big(\gamma(\eta) \tau_{I(\epsilon)^\complement}^\eta(x) > t;\; V^\eta_{\tau_\epsilon}(x) \in B\big)
            \leq C(B^-)\cdot e^{-t} + \delta_{t,B}(\epsilon)
        \end{align*}
        for all sufficiently small $\epsilon>0$, where $\delta_{t,B}(\epsilon) \to 0$ as $\epsilon \to 0$.

    \item 
        If $C(I^\complement) = 0$ (i.e., $C(\cdot)$ is trivially zero), then for each $t > 0$, there exists $\delta_{t}(\epsilon)$ such that
        \begin{align*}
            \limsup_{\eta \downarrow 0}
            \sup_{x\in A(\epsilon)}\P\big(\gamma(\eta) \tau_{I(\epsilon)^\complement}^\eta(x) \leq t\big)
            \leq \delta_{t}(\epsilon)
        \end{align*}
        for all $\epsilon > 0$ sufficiently small, where $\delta_t(\epsilon) \to 0$ as $\epsilon \to 0$.
\end{enumerate}

\end{proposition}

\begin{proof}
\linksinpf{prop: exit time analysis main proposition}
Fix some measurable $B \subseteq \S$ and $t \geq 0$.
Henceforth in the proof, given any choice of $0 < r < R$, we only consider
$\epsilon \in (0,\epsilon_B)$ and $T$ sufficiently large such that Condition~\ref{condition E2} holds with $T$ replaced with $\frac{1-r}{2}T$, $\frac{2-r}{2}T$, $rT$, and $RT$.
Let
\[
    % \rho^{(i+1)}_{r}(x) 
    % \rho^{\eta}_{i+1}(x) = 
    \rho^{\eta}_{i}(x)
    \delequal \inf\Big\{j\geq \rho^{\eta}_{i-1}(x) + \lfloor rT/\eta \rfloor: V_j^\eta(x) \in A(\epsilon)\Big\}
\]
where 
\(\rho_0^\eta(x) = 0.\)
One can interpret these as the $i$\textsuperscript{th} asymptotic regeneration times after cooling period $rT/\eta$.
We start with the following two observations: For any $0 < r < R$,
\begin{align}
    % &
    \P\Big(\tau_{I(\epsilon)^\complement}^\eta(y) \in \big(RT/\eta,\, \rho_1^\eta(y)\big]\Big)
    &
    \leq 
    \P\Big(\tau_{I(\epsilon)^\complement}^\eta(y) \wedge \rho_1^\eta(y) > RT/\eta\Big)
    \nonumber\\
    &
    \leq
    \P\Big(V_j^\eta(y)\in I(\epsilon)\setminus A(\epsilon)\quad \forall j \in \big[\lfloor r T/\eta \rfloor,\,RT/\eta \big] \Big)
    \nonumber\\
    &
    \leq
    \sup_{z\in I(\epsilon)\setminus A(\epsilon)}
        \P\Big(\tau^\eta_{(I(\epsilon)\setminus A(\epsilon))^c}(z) > \frac{R-r}{2} T/\eta \Big)
    \nonumber\\
    &
    =
    \gamma(\eta)T/\eta \cdot \lo(1),
    \label{eq: exit time analysis: almost nothing happens between T over eta and rho one}
\end{align}
where  the last equality is from \eqref{eq:E3} of Condition~\ref{condition E2}, and
\begin{align}
    &
    \sup_{y\in A(\epsilon)}
    \P\Big(
        V_{\tau_\epsilon}^\eta(y) \in B
        ;\;
        \tau_{I(\epsilon)^\complement}^\eta(y) \leq \rho_1^\eta(y)
    \Big)
    \nonumber\\
    &
    \leq 
    \sup_{y\in A(\epsilon)}
    \P\Big(
        V_{\tau_\epsilon}^\eta(y) \in B
        ;\;
        \tau_{I(\epsilon)^\complement}^\eta(y) \leq RT/\eta
    \Big)
    +
    \sup_{y\in A(\epsilon)}
    \P\Big(
        \tau_{I(\epsilon)^\complement}^\eta(y) \in \big(RT/\eta,\, \rho_1^\eta(y)\big]
    \Big)
    \nonumber\\
    &
    \leq 
    \sup_{y\in A(\epsilon)}
    \P\Big(
        V_{\tau_\epsilon}^\eta(y) \in B
        ;\;
        \tau_{I(\epsilon)^\complement}^\eta(y) \leq RT/\eta
    \Big)
    +
    \gamma(\eta)T/\eta \cdot \lo(1)
    \nonumber\\
    % &
    % =\big(u(\epsilon, T)\cdot C(B)+ \lo(1)\big) \cdot \gamma(\eta) RT/\eta + \frac{R-r}2\cdot \gamma(\eta)T/\eta \cdot \lo(1)
    % \nonumber\\
    &
    \leq\big(C(B^-)+ \delta_B(\epsilon,RT) + \lo(1)\big) \cdot \gamma(\eta) RT/\eta,
    \label{eq: exit time analysis: upper bound of the probability that tau is before rho}
\end{align}
where the second inequaility is from \eqref{eq: exit time analysis: almost nothing happens between T over eta and rho one} and the last equality is from \eqref{eq: exit time condition upper bound} of Condition~\ref{condition E2}.
% for sufficiently large $T$'s.

\medskip
\noindent
\textbf{Proof of Case $(i)$.}

We work with different choices of $R$ and $r$ for the lower and upper bounds.
For the lower bound, we work with $R>r>1$ and set
\(K = \left\lceil \frac{t/\gamma(\eta)}{T/\eta} \right\rceil\).
Note that for $\eta \in \big(0, (r-1)T\big)$,
we have
$\lfloor r T/\eta \rfloor \geq T/\eta$ and hence $\rho_K^\eta(x) \geq K \lfloor r T/\eta\rfloor \geq t/\gamma(\eta)$.
Note also that from the Markov property conditioning on $\mathcal F_{\rho_j^\eta(x)}$,
\begin{align}
    &
    \inf_{x \in A(\epsilon)}\P\big(\gamma(\eta) \tau_{I(\epsilon)^\complement}^\eta(x) > t;\; V^\eta_{\tau_\epsilon}(x) \in B\big)
    % \\
    % &
    % =
    %     \P\big(\tau_{I(\epsilon)^\complement}^\eta(x) > t/\gamma(\eta);\; V^\eta_{\tau_\epsilon}(x) \in B\big)
    \nonumber\\
    &
    \geq
    \inf_{x \in A(\epsilon)}
        \P(\tau_{I(\epsilon)^\complement}^\eta (x) > \rho_K^\eta(x);\; V^\eta_{\tau_\epsilon}(x) \in B)
    % \\
    % &
    =
    \inf_{x \in A(\epsilon)}
    \sum_{j=K}^\infty     
        \P\Big(
            \tau_{I(\epsilon)^\complement}^\eta(x) \in \big(\rho_j^\eta(x),\, \rho_{j+1}^\eta(x) \big];\; V^\eta_{\tau_\epsilon}(x) \in B
        \Big)         
    \nonumber\\
    &
    \geq
    \inf_{x \in A(\epsilon)}
    \sum_{j=K}^\infty     
        \P\Big(
            \tau_{I(\epsilon)^\complement}^\eta(x) \in \big(\rho_j^\eta(x),\, \rho_{j}^\eta(x) + T/\eta \big];\; V^\eta_{\tau_\epsilon}(x) \in B
        \Big)         
    \nonumber\\
    &
    \geq
    \inf_{x \in A(\epsilon)}
    \sum_{j=K}^\infty     
        \inf_{y\in A(\epsilon)}
        \P\Big(
            \tau_{I(\epsilon)^\complement}^\eta(y) \leq T/\eta;\; V^\eta_{\tau_\epsilon}(y) \in B
        \Big) 
        \cdot
        \P\Big(\tau_{I(\epsilon)^\complement}^\eta(x) > \rho_j^\eta(x)\Big).
    \nonumber\\
    &
    \geq
    \inf_{y\in A(\epsilon)}
    \P\Big(
        \tau_{I(\epsilon)^\complement}^\eta(y) \leq T/\eta;\; V^\eta_{\tau_\epsilon}(y) \in B
    \Big) 
    \cdot
    \sum_{j=K}^\infty \inf_{x \in A(\epsilon)}  
        \P\Big(\tau_{I(\epsilon)^\complement}^\eta(x) > \rho_j^\eta(x)\Big).
        \label{eq: exit time and location analysis with epsilon lower bound}
\end{align}
\elaborate{
\begin{align*}
    &
    \P\Big(
        \tau_{I(\epsilon)^\complement}^\eta(x) \in \big(\rho_j^\eta(x),\, \rho_j^\eta(x) + T/\eta\big];\; V^\eta_{\tau_\epsilon}(x) \in B
    \Big) 
    \\
    &
    =
    \E\bigg[
        \P\Big(
            \tau_{I(\epsilon)^\complement}^\eta(x) \in \big(\rho_j^\eta(x),\, \rho_j^\eta(x) + T/\eta\big];\; V^\eta_{\tau_\epsilon}(x) \in B
        \Big|
            \mathcal F_{\rho_j^\eta(x)}
        \Big) 
    \bigg]
    \\    
    &
    =
    \E\bigg[
        \P\Big(
            \tau_{I(\epsilon)^\complement}^\eta(x) \leq \rho_j^\eta(x) + T/\eta;\; V^\eta_{\tau_\epsilon}(x) \in B
        \Big|
            \mathcal F_{\rho_j^\eta(x)}
        \Big) 
        \cdot
        \I\big\{\tau_{I(\epsilon)^\complement}^\eta(x) > \rho_j^\eta(x)\big\}
    \bigg]
    \\
    &
    \geq
        \E\bigg[
            \inf_{y\in A(\epsilon)}
            \P\Big(
                \tau_{I(\epsilon)^\complement}^\eta(y) \leq T/\eta;\; V^\eta_{\tau_\epsilon}(y) \in B
            \Big) 
            \cdot
            \I\big\{\tau_{I(\epsilon)^\complement}^\eta(x) > \rho_j^\eta(x)\big\}
        \bigg]
    \\
    &
    =
    \inf_{y\in A(\epsilon)}
    \P\Big(
        \tau_{I(\epsilon)^\complement}^\eta(y) \leq T/\eta;\; V^\eta_{\tau_\epsilon}(y) \in B
    \Big) 
    \cdot
    \P\big(\tau_{I(\epsilon)^\complement}^\eta(x) > \rho_j^\eta(x)\big).
\end{align*}
}%
From the Markov property conditioning on $\mathcal F_{\rho_j^\eta(x)}$,
the second term can be bounded as follows:
\begin{align}
    &
    \sum_{j=K}^\infty \inf_{x \in A(\epsilon)}  
    \P\Big(\tau_{I(\epsilon)^\complement}^\eta(x) > \rho_j^\eta(x)\Big)
    \nonumber\\
    &
    \geq 
    \sum_{j=0}^\infty
    \bigg(
        \inf_{y \in A(\epsilon)}
            \P\Big(\tau_{I(\epsilon)^\complement}^\eta(y) > \rho_1^\eta(y)\Big)
    \bigg)^{K+j}
    % \\
    % &
    =
    \sum_{j=0}^\infty
        \bigg(
            1 -\sup_{y \in A(\epsilon)}
                \P\Big(\tau_{I(\epsilon)^\complement}^\eta(y) \leq \rho_1^\eta(y) \Big)
        \bigg)^{K+j}
    \nonumber\\
    &
    =
    \frac1{\sup_{y \in A(\epsilon)}\P\Big(\tau_{I(\epsilon)^\complement}^\eta(y) \leq \rho_1^\eta(y) \Big)}
    \cdot
    \bigg(
        1 - \sup_{y \in A(\epsilon)}\P\Big(\tau_{I(\epsilon)^\complement}^\eta(y) \leq \rho_1^\eta(y) \Big)
    \bigg)^{ \ceil{\frac{t/\gamma(\eta)}{T/\eta}}  }   
    \nonumber\\
    &
    \geq
    \frac1{\big(1+\delta_{\S}(\epsilon, RT) + \lo(1)\big) \cdot \gamma(\eta) RT/\eta}
    \cdot
    \bigg(
        1 - \big(1+\delta_{\S}(\epsilon, RT) + \lo(1)\big) \cdot \gamma(\eta) RT/\eta
    \bigg)^{\frac{t/\gamma(\eta)}{T/\eta} +1}.    
    \label{eq:exit time and location tail of tau upper bound}    
\end{align}
where the last inequality is from \eqref{eq: exit time analysis: upper bound of the probability that tau is before rho} with $B=\mathbb S$.
From \eqref{eq: exit time and location analysis with epsilon lower bound},
\eqref{eq:exit time and location tail of tau upper bound}, and    
\eqref{eq: exit time condition lower bound} of Condition~\ref{condition E2}, we have
\begin{align*}
    &
    \liminf_{\eta\downarrow 0} 
    \inf_{x\in A(\epsilon)}
        \P\big(\gamma(\eta) \tau_{I(\epsilon)^\complement}^\eta(x) > t;\; V^\eta_{\tau_\epsilon}(x) \in B\big) 
    \\
    &
    \geq
    \liminf_{\eta\downarrow 0}
    \frac{C(B^\circ) - \delta_B(\epsilon, T)+ \lo(1)}{\big(1+\delta_{\S}(\epsilon, RT)+ \lo(1)\big)\cdot R}
    \cdot 
        \bigg(
            1 - \big(1+\delta_{\S}(\epsilon, RT) + \lo(1)\big) \cdot \gamma(\eta) RT/\eta
        \bigg)^{\frac{ R\cdot t}{\gamma(\eta)RT/\eta} +1}.    
    \\
    &
    \geq
    \frac{C(B^\circ)-\delta_B(\epsilon,T)}{1+\delta_{\S}(\epsilon,RT)}\cdot\exp\Big(-\big(1+\delta_{\S}(\epsilon,RT)\big) \cdot R\cdot t\Big).
\end{align*}
By taking limit $T\to\infty$ and then considering an $R$ arbitrarily close to 1, it is straightforward to check that the desired lower bound holds.
% \begin{align*}
%     &
%     C(B^\circ)\cdot\exp(-t)
%     \leq 
%     \liminf_{\eta\downarrow 0} 
%         \P\big(\gamma(\eta) \tau_{I(\epsilon)^\complement}^\eta(x) > t;\; V^\eta_{\tau_\epsilon}(x) \in B\big).
% \end{align*}

Moving on to the upper bound, we set $R=1$ and fix an arbitrary $r\in (0,1)$.
% Let $\rho^\eta_{i+1}(x;\epsilon,t) \delequal \inf\{j\geq \rho_i(x; \epsilon,t) + \lfloor t/\eta \rfloor: V_j^\eta(x) \in A(\epsilon)\}$ where $\rho_0^\eta(x;\epsilon,t) = 0$.
Set
\(k = \left\lfloor \frac{t/\gamma(\eta)}{T/\eta} \right\rfloor\)
and note that
\begin{align*}
    % &
    \sup_{x \in A(\epsilon)}\P\big(\gamma(\eta) \tau_{I(\epsilon)^\complement}^\eta(x) > t;\; V^\eta_{\tau_\epsilon}(x) \in B\big)
    % \\
    &
    =\sup_{x \in A(\epsilon)}
        \P\big(\tau_{I(\epsilon)^\complement}^\eta(x) > t/\gamma(\eta);\; V^\eta_{\tau_\epsilon}(x) \in B\big)
    \\
    &
    =
        \underbrace{ \sup_{x \in A(\epsilon)}
            \P\big(\tau_{I(\epsilon)^\complement}^\eta(x) > t/\gamma(\eta)\geq \rho_k^\eta(x);\; V^\eta_{\tau_\epsilon}(x) \in B\big)
        }_{\mathrm{(I)}}
        \\&
        \quad
        +
        \underbrace{ \sup_{x \in A(\epsilon)}
            \P\big(\tau_{I(\epsilon)^\complement}^\eta(x) > t/\gamma(\eta);\; \rho_k^\eta(x) > t/\gamma(\eta);\; V^\eta_{\tau_\epsilon}(x) \in B\big)
        }_{\mathrm{(II)}}
\end{align*}
We first show that (II) vanishes as $\eta \to 0$. 
Our proof hinges on the following claim:
\begin{align*}
    \big\{
        \tau_{I(\epsilon)^\complement}^\eta (x) > t/ \gamma(\eta)
        ;\;
        % \quad\&\quad 
        \rho_k^\eta(x) > t/\gamma(\eta)
    \big\}
    % \quad
    % \implies 
    % \quad
    \ \subseteq\ 
    \bigcup_{j=1}^k\big\{\tau_{I(\epsilon)^\complement}^\eta(x) \wedge \rho_j^\eta(x) - \rho_{j-1}^\eta(x) \geq T/\eta\big\}
\end{align*}
Proof of the claim: 
Suppose that \(\tau_{I(\epsilon)^\complement}^\eta (x) > t/ \gamma(\eta)\) and \(\rho_k^\eta(x) > t/\gamma(\eta)\).
Let $k^* \delequal \max\{j\geq 1: \rho_j^\eta(x) \leq t/\gamma(\eta)\}$. 
Note that $k^* < k$.
We consider two cases separately: (i)
\(
\rho_{k^*}^\eta(x)/k^* > (t/\gamma(\eta) - T/\eta)/k^*
\)
and
(ii) 
\(
\rho_{k^*}^\eta(x) \leq t/\gamma(\eta) - T/\eta.
\)
In case of (i), since $\rho_{k^*}^\eta(x)/k^*$ is the average of \(\{\rho_j^\eta(x) - \rho_{j-1}^\eta(x): j=1,\ldots,k^*\}\), there exists $j^*\leq k^*$ such that
\[
    \rho_{j^*}^\eta(x) - \rho_{j^*-1}^\eta(x) > \frac{t/\gamma(\eta) - T/\eta}{k^*} \geq \frac{kT/\eta - T/\eta}{k-1} = T/\eta
\]
Note that since $\rho_{j^*}^\eta(x) \leq \rho_{k^*}^\eta(x) \leq t/\gamma(\eta) \leq \tau_{I(\epsilon)^\complement}^\eta(x)$, this proves the claim for case (i). 
For case (ii), note that
\[
    \rho_{k^*+1}^\eta(x) \wedge \tau_{I(\epsilon)^\complement}^\eta(x) - \rho_{k^*}^\eta(x)
    \geq t / \gamma(\eta)  - (t/\gamma(\eta) - T/\eta)   = T/\eta,
\]
which proves the claim.

Now, with the claim in hand, we have that
\begin{align*}
% & 
    \mathrm{\text{(II)}}
    % \\
    &
    \leq  
    \sum_{j=1}^k \sup_{x \in A(\epsilon)}
        \P\big(\tau_{I(\epsilon)^\complement}^\eta(x) \wedge \rho_j^\eta(x) - \rho_{j-1}^\eta (x) \geq T/\eta\big)
    \\
    &
    =
    \sum_{j=1}^k \sup_{x \in A(\epsilon)}
        \E\Big[
            \P\big(
                \tau_{I(\epsilon)^\complement}^\eta(x) \wedge \rho_j^\eta(x) - \rho_{j-1}^\eta (x) \geq T/\eta
            \big|
                \mathcal F_{\rho_{j-1}^\eta(x)}
            \big)
        \Big]
    \\
    &
    \leq
    \sum_{j=1}^k \sup_{y \in A(\epsilon)}
        \P\big(
            \tau_{I(\epsilon)^\complement}^\eta(y) \wedge \rho_1^\eta(y) \geq T/\eta        
        \big)
    \\
    &
    \leq 
    \frac{t}{\gamma(\eta)T/\eta}
    \cdot 
    \gamma(\eta) T/\eta \cdot \lo(1) 
    = \lo (1)
\end{align*}
for sufficiently large $T$'s, where the last inequality is from the definition of $k$ and \eqref{eq: exit time analysis: almost nothing happens between T over eta and rho one}.
We are now left with bounding (I) from above.
\begin{align*}
    \mathrm{(I)}
    &
    =  \sup_{x \in A(\epsilon)}
    \P
    \big(
        \tau_{I(\epsilon)^\complement}^\eta(x) > t/\gamma(\eta)\geq \rho_K^\eta(x);\; V^\eta_{\tau_\epsilon}(x) \in B
    \big)
    % \\&
    \leq  \sup_{x \in A(\epsilon)}
    \P
    \big(
        \tau_{I(\epsilon)^\complement}^\eta(x) 
        >
        \rho_K^\eta(x);\; V^\eta_{\tau_\epsilon}(x) \in B
    \big)
    \\&
    = 
    \sum_{j=k}^\infty \sup_{x \in A(\epsilon)}
        \P
        \Big(
            \tau_{I(\epsilon)^\complement}^\eta(x) \in \big(\rho_j^\eta(x),\,\rho_{j+1}^\eta(x)\big] ;\; 
            V^\eta_{\tau_\epsilon}(x) \in B
        \Big)
     % \textcolor{red}{\big[\rho_j^\eta(x),\,\rho_{j+1}^\eta(x)\big)
    % \text{ instead?}}
    \\
    &=
    \sum_{j=k}^\infty     \sup_{x \in A(\epsilon)}
    \E\bigg[
        \E\Big[
            \I\big\{V_{\tau_\epsilon}^\eta(x) \in B\big\}
            \cdot
            \I\big\{\tau_{I(\epsilon)^\complement}^\eta(x) \leq \rho_{j+1}^\eta(x)\big\}
        \Big|
            \mathcal F_{\rho_j^\eta(x)}
        \Big]
        \cdot
        \I\big\{\tau_{I(\epsilon)^\complement}^\eta(x) > \rho_j^\eta(x) \big\}
    \bigg]
    \\
    &\leq
    \sum_{j=k}^\infty     \sup_{x \in A(\epsilon)}
    \E\bigg[
        \sup_{y\in A(\epsilon)}
        \P\Big(
            V_{\tau_\epsilon}^\eta(y) \in B
            ;\;
            \tau_{I(\epsilon)^\complement}^\eta(y) \leq \rho_1^\eta(y)
        \Big)
        \cdot
        \I\big\{\tau_{I(\epsilon)^\complement}^\eta(x) > \rho_j^\eta(x) \big\}
    \bigg]    
    \\
    &=
    \sup_{y\in A(\epsilon)}
    \P\Big(
        V_{\tau_\epsilon}^\eta(y) \in B
        ;\;
        \tau_{I(\epsilon)^\complement}^\eta(y) \leq \rho_1^\eta(y)
    \Big)
    \cdot
    \sum_{j=k}^\infty     \sup_{x \in A(\epsilon)}
    \P\Big(\tau_{I(\epsilon)^\complement}^\eta(x) > \rho_j^\eta(x)\Big)
\end{align*}
The first term can be bounded via \eqref{eq: exit time analysis: upper bound of the probability that tau is before rho} with $R=1$:
\begin{align*}
    &
    \sup_{y\in A(\epsilon)}
    \P\Big(
        V_{\tau_\epsilon}^\eta(y) \in B
        ;\;
        \tau_{I(\epsilon)^\complement}^\eta(y) \leq \rho_1^\eta(y)
    \Big)
    \\
    &
    \leq
    \big(C(B^-)+\delta_B(\epsilon, T)+ \lo(1)\big) \cdot \gamma(\eta) T/\eta + \frac{1-r}2\cdot \gamma(\eta)T/\eta \cdot \lo(1)
\end{align*}
whereas the second term is bounded via \eqref{eq: exit time condition lower bound} of Condition~\ref{condition E2} as follows: 
\begin{align}
    &
    \sum_{j=k}^\infty \sup_{x \in A(\epsilon)}
    \P\Big(\tau_{I(\epsilon)^\complement}^\eta(x) > \rho_{j}^{\eta}(x)\Big)
    \nonumber\\
    &
    \leq 
    \sum_{j=0}^\infty
    \bigg(
        \sup_{y \in A(\epsilon)}
            \P\Big(\tau_{I(\epsilon)^\complement}^\eta(y) > \lfloor rT/\eta\rfloor\Big)
    \bigg)^{k+j}
    % \nonumber\\
    % &
    =
    \sum_{j=0}^\infty
        \bigg(
            1 -\inf_{y \in A(\epsilon)}
                \P\Big(\tau_{I(\epsilon)^\complement}^\eta(y) \leq rT/\eta \Big)
        \bigg)^{k+j}
    \nonumber\\
    &\leq
    \frac1{\inf_{y \in A(\epsilon)}\P\Big(\tau_{I(\epsilon)^\complement}^\eta(y) \leq rT/\eta \Big)}
    \cdot
    \bigg(
        1 - \inf_{y \in A(\epsilon)}\P\Big(\tau_{I(\epsilon)^\complement}^\eta(y) \leq rT/\eta \Big)
    \bigg)^{\frac{t/\gamma(\eta)}{T/\eta} -1}
    \nonumber\\
    &
    = 
    \frac{1}
    {r\cdot\big(1-\delta_B(\epsilon, rT)+\lo(1)\big)\cdot \gamma(\eta)T/\eta} 
    \cdot 
    \Big(
    1- r\cdot\big(1-\delta_B(\epsilon, rT)+\lo(1)\big)\cdot \gamma(\eta)T/\eta
    \Big)^{\frac{t}{\gamma(\eta)T/\eta}-1}
    \label{eq:exit time and location tail of tau lower bound}
    \nonumber
\end{align}
Therefore,
\begin{align*}
    \limsup_{\eta\downarrow 0}
    \sup_{x\in A(\epsilon)}
        \P\big(\gamma(\eta) \tau_{I(\epsilon)^\complement}^\eta(x) > t;\; V^\eta_{\tau_\epsilon}(x) \in B\big)
    &
    \leq
    \frac{C(B^-)+\delta_B(\epsilon,T)}{r\cdot (1-\delta_B(\epsilon,rT))} \cdot \exp\Big(-r \cdot\big(1-\delta_B(\epsilon, rT)\big)\cdot  t\Big).
\end{align*}
Again, taking $T\to\infty$ and considering $r$ arbitrarily close to 1, we can check that the desired upper bound holds. 

\medskip
\noindent
\textbf{Proof of Case $(ii)$.}

We work with $R=1$ and set
\(K = \left\lceil \frac{t/\gamma(\eta)}{T/\eta} \right\rceil\).
Again, for $\eta \in \big(0, (r-1)T\big)$,
we have
$\lfloor r T/\eta \rfloor \geq T/\eta$ and hence $\rho_K^\eta(x) \geq K \lfloor r T/\eta\rfloor \geq t/\gamma(\eta)$.
By the Markov property conditioning on $\mathcal F_{\rho_j^\eta(x)}$,
\begin{align*}
    &
    \sup_{x \in A(\epsilon)}\P\big(\gamma(\eta) \tau_{I(\epsilon)^\complement}^\eta(x) \leq t\big)
    \\ 
    & \leq 
    \sup_{x \in A(\epsilon)}\P\Big( \tau_{I(\epsilon)^\complement}^\eta(x) \leq \rho^\eta_K(x) \Big)
    =
    \sup_{x \in A(\epsilon)}\sum_{j = 1}^K\P
    \Big(\tau_{I(\epsilon)^\complement}^\eta(x) \in \big(\rho^\eta_{j-1}(x), \rho^\eta_j(x)\big] \Big)
    \\ 
    & \leq 
    \sum_{j = 1}^K
    \sup_{y \in A(\epsilon)}
    \bigg(
        1 -
        \P\Big(
        \tau_{I(\epsilon)^\complement}^\eta(y) \leq \rho_1^\eta(y)
    \Big)
    \bigg)^{j - 1}
    \cdot 
    \sup_{y \in A(\epsilon)}\P\Big(
        \tau_{I(\epsilon)^\complement}^\eta(y) \leq \rho_1^\eta(y)
    \Big)
    \\ 
    & \leq 
    K \cdot \sup_{y \in A(\epsilon)}\P\Big(
        \tau_{I(\epsilon)^\complement}^\eta(y) \leq \rho_1^\eta(y)
    \Big)
    \leq 
    K \cdot \big(\delta_{I^\complement}(\epsilon,T) + \lo(1)\big) \cdot \gamma(\eta) T/\eta
    \\ 
    &\qquad
    \text{by \eqref{eq: exit time analysis: upper bound of the probability that tau is before rho} (with $B = I^\complement$) and the running assumption of Case $(ii)$ that $C(\cdot)\equiv 0$}
    \\ 
    & \leq 
    \frac{2t/\gamma(\eta)}{T/\eta} \cdot 
     \big(\delta_{I^\complement}(\epsilon,T) + \lo(1)\big) \cdot \gamma(\eta) T/\eta
     \qquad
     \text{ for all $\eta$ small enough under }K = \ceil{\frac{t/\gamma(\eta)}{T/\eta}}
     \\
     & = 
     2t \cdot   \big(\delta_{I^\complement}(\epsilon,T) + \lo(1)\big).
\end{align*}
Lastly, by Condition~\ref{condition E2} (specifically, $\lim_{\epsilon \downarrow 0}\lim_{T \uparrow \infty}\delta_{I^\complement}(\epsilon,T) = 0$ in Definition~\ref{def: asymptotic atom}),
we verify the upper bounds in Case $(ii)$ and conclude the proof.
\end{proof}

Now, we are ready to prove Theorem~\ref{thm: exit time analysis framework}.
\begin{proof}[Proof of Theorem~\ref{thm: exit time analysis framework}]
\linksinpf{thm: exit time analysis framework} 
We focus on the proof of Case $(i)$ since the proof of Case $(ii)$ is almost identical, with the only key difference being that we apply part $(ii)$ of Proposition~\ref{prop: exit time analysis main proposition} instead of part $(i)$.

We first claim that for any $\epsilon, \epsilon' > 0$, $t\geq 0$, and measurable $B\subseteq \mathbb S$,
\begin{equation}\label{exit time and location analysis framework main theorem proof claim}
\begin{aligned}
C(B^\circ)\cdot e^{-t} - \delta_{t,B}(\epsilon)
&
\leq
\liminf_{\eta\downarrow 0} 
    \inf_{x\in I(\epsilon')}
        \P\Big(\gamma(\eta)\cdot\tau_{I(\epsilon)^\complement}^\eta (x) > t,\, V_{\tau_\epsilon}^\eta(x) \in B \Big)
\\
&
\leq
\limsup_{\eta\downarrow 0} 
    \sup_{x\in I(\epsilon')}
        \P\Big(\gamma(\eta)\cdot\tau_{I(\epsilon)^\complement}^\eta (x) > t,\, V_{\tau_\epsilon}^\eta(x) \in B \Big)
\leq 
C(B^-)\cdot e^{-t} + \delta_{t,B}(\epsilon)
\end{aligned}
\end{equation}
where $\delta_{t,B}(\epsilon)$ is characterized in part $(i)$ of Proposition \ref{prop: exit time analysis main proposition} such that $\delta_{t,B}(\epsilon) \to 0$ as $\epsilon \to 0$.
Now, note that for any measurable $B \subseteq I^\complement$,
\begin{align}
    &
    \P\Big(\gamma(\eta)\cdot \tau^\eta_{I^\complement}(x) > t,\, V_\tau^\eta (x) \in B\Big)
    \nonumber
    \\
    &
    =
    \underbrace{
        \P\Big(\gamma(\eta)\cdot \tau^\eta_{I^\complement}(x) > t,\, V_\tau^\eta (x) \in B,\, V^\eta_{\tau_\epsilon}(x) \in I\Big)
    }_{\mathrm{\text{(I)}}}
    +
    \underbrace{
        \P\Big(\gamma(\eta)\cdot \tau^\eta_{I^\complement}(x) > t,\, V_\tau^\eta (x) \in B,\, V^\eta_{\tau_\epsilon}(x) \notin I\Big)
    }_{\mathrm{\text{(II)}}}
    \nonumber
\end{align}
and since 
\[
    \mathrm{\text{(I)}}
    \leq 
    \P\Big(V^\eta_{\tau_\epsilon}(x) \in I\Big)
    \qquad\text{and}\qquad
    \mathrm{\text{(II)}}
    =\P\Big(\gamma(\eta)\cdot \tau^\eta_\epsilon(x) > t,\, V^\eta_{\tau_\epsilon}(x) \in B\setminus I\Big),
\]
we have that
\begin{align*}
    % &
    \liminf_{\eta\downarrow 0}
        \inf_{x\in I(\epsilon')}
            \P\Big(\gamma(\eta)\cdot \tau^\eta_{I^\complement}(x) > t,\, V_\tau^\eta (x) \in B\Big)    
    % \\
    &
    \geq
    \liminf_{\eta\downarrow 0}
        \inf_{x\in I(\epsilon')}
            \P\Big(\gamma(\eta)\cdot \tau^\eta_\epsilon(x) > t,\, V^\eta_{\tau_\epsilon}(x) \in B\setminus I\Big)
    \\
    &
    \geq
    C\big((B\setminus I)^\circ\big) \cdot e^{-t} - \delta_{t,B\setminus I}(\epsilon)
    \\[2pt]
    &
    =
    C(B^\circ)\cdot e^{-t} - \delta_{t,B\setminus I}(\epsilon)
\end{align*}
due to $B \subseteq I^\complement$, and
\begin{align*}
    &
    \limsup_{\eta\downarrow 0}
        \sup_{x\in I(\epsilon')}
            \P\Big(\gamma(\eta)\cdot \tau^\eta_{I^\complement}(x) > t,\, V_\tau^\eta (x) \in B\Big)
    \\
    &
    \leq
    \limsup_{\eta\downarrow 0}
        \sup_{x\in I(\epsilon')}
            \P\Big(\gamma(\eta)\cdot \tau^\eta_\epsilon(x) > t,\, V^\eta_{\tau_\epsilon}(x) \in B\setminus I\Big) 
    + 
    \limsup_{\eta\downarrow 0}
        \sup_{x\in I(\epsilon')}
            \P\Big(V^\eta_{\tau_\epsilon}(x) \in I\Big)
    \\
    &
    \leq
    C\big((B\setminus I)^-\big)\cdot e^{-t} + \delta_{t,B\setminus I}(\epsilon) + C (I^-) + \delta_{0,I}(\epsilon)
    \\[3pt]
    &
    = 
    C(B^-)\cdot e^{-t} + \delta_{t,B\setminus I}(\epsilon) + \delta_{0,I}(\epsilon).
\end{align*}
Taking $\epsilon\to0$, we arrive at the desired lower and upper bounds of the theorem.
Now we are left with the proof of the claim~\eqref{exit time and location analysis framework main theorem proof claim} is true. 
Note that for any $x \in I$,
\begin{align}
    &
    \P\Big(\gamma(\eta)\cdot \tau^\eta_\epsilon(x) > t,\, V^\eta_{\tau_\epsilon}(x) \in B\Big)
    \nonumber
    \\
    &
    =
    \E
    \bigg[
        \P\Big(\gamma(\eta)\cdot \tau^\eta_\epsilon(x) > t,\, V^\eta_{\tau_\epsilon}(x) \in B\Big| \mathcal F_{\tau_{A(\epsilon)}^\eta(x)}\Big)
        \cdot 
        \Big( \I\big\{\tau_{A(\epsilon)}^\eta (x) \leq T/\eta\big\} + \I\big\{\tau_{A(\epsilon)}^\eta (x) > T/\eta\big\}\Big)
    \bigg]
    \label{eq:exit time and location analysis framework: proof of the main theorem: decomposition in the proof of the claim}
\end{align}
Fix an arbitrary $s>0$, and note that from the Markov property,
\begin{align*}
    &
    \P\Big(\gamma(\eta)\cdot \tau^\eta_\epsilon(x) > t,\, V^\eta_{\tau_\epsilon}(x) \in B\Big)
    \\
    &
    \leq
    \E\bigg[
        \sup_{y \in A(\epsilon)}
            \P
            \Big(
                \tau^\eta_\epsilon(y) > t/\gamma(\eta) - T/\eta,\, V^\eta_{\tau_\epsilon}(y) \in B
            \Big)
            \cdot 
            \I\big\{\tau_{A(\epsilon)}^\eta (x) \leq T/\eta\big\}
    \bigg] 
    % \\
    % &
    % \quad
    + 
    \P\Big(\tau_{A(\epsilon)}^\eta (x) > T/\eta\Big)
    \\
    &
    \leq
    \sup_{y \in A(\epsilon)}
        \P
        \Big(
            \gamma(\eta)\cdot \tau^\eta_\epsilon(y) > t-s,\, V^\eta_{\tau_\epsilon}(y) \in B
        \Big)
    % \\
    % &
    % \quad
    + 
    \P\Big(\tau_{A(\epsilon)}^\eta (x) > T/\eta\Big)
\end{align*}
for sufficiently small $\eta$'s;
here, we applied $\gamma(\eta)/\eta \to 0$ as $\eta \downarrow 0$ in the last inequality.
In light of part $(i)$ of Proposition~\ref{prop: exit time analysis main proposition},
by taking  $T \to \infty$ we yield
\begin{align*}
    \limsup_{\eta\downarrow 0}
        \sup_{x\in I(\epsilon')}
            \P\Big(\gamma(\eta)\cdot \tau^\eta_\epsilon(x) > t,\, V^\eta_{\tau_\epsilon}(x) \in B\Big)
    &
    \leq
    C(B^-)\cdot e^{-(t-s)} + \delta_{t,B}(\epsilon)
\end{align*}
Considering an arbitrarily small $s>0$, we get the upper bound of the claim~\eqref{exit time and location analysis framework main theorem proof claim}. 
For the lower bound, again from \eqref{eq:exit time and location analysis framework: proof of the main theorem: decomposition in the proof of the claim} and the Markov property,
\begin{align*}
    &
    \liminf_{\eta\downarrow 0}
        \inf_{x\in I(\epsilon')}
            \P\Big(\gamma(\eta)\cdot \tau^\eta_\epsilon(x) > t,\, V^\eta_{\tau_\epsilon}(x) \in B\Big)
    \\
    &
    \geq 
    \liminf_{\eta\downarrow 0}
        \inf_{x\in I(\epsilon')}
            \E\bigg[
                \inf_{y \in A(\epsilon)}
                    \P
                    \Big(
                        \tau^\eta_\epsilon(y) > t/\gamma(\eta),\, V^\eta_{\tau_\epsilon}(y) \in B
                    \Big)
                \cdot 
                \I\big\{\tau_{A(\epsilon)}^\eta (x) \leq T/\eta\big\}
            \bigg] 
    \\
    &
    \geq 
    \liminf_{\eta\downarrow 0}       
        \inf_{y \in A(\epsilon)}
            \P
            \Big(
                \gamma(\eta)\cdot \tau^\eta_\epsilon(y) > t,\, V^\eta_{\tau_\epsilon}(y) \in B
            \Big)
        \cdot 
        \inf_{x\in I(\epsilon')}
            \P\big(\tau_{A(\epsilon)}^\eta (x) \leq T/\eta\big)
    \\
    &
    \geq
    C(B^\circ) - \delta_{t,B}(\epsilon),
\end{align*}
which is the desired lower bound of the claim~\eqref{exit time and location analysis framework main theorem proof claim}. 
This concludes the proof. 
\end{proof}

\subsection{Proof of Theorem~\ref{theorem: first exit time, unclipped}}
\label{sec: proof of proposition: first exit time}

In this section, we apply the framework developed in Section~\ref{subsec: framework, first exit time analysis}
and prove Theorem~\ref{theorem: first exit time, unclipped}.
Throughout this section,
we impose Assumptions \ref{assumption gradient noise heavy-tailed}, \ref{assumption: lipschitz continuity of drift and diffusion coefficients}, and \ref{assumption: shape of f, first exit analysis}.
Besides, we fix a few useful constants.
Recall the definition of the discretized width metric $\mathcal J^I_b$ defined in \eqref{def: first exit time, J *}.
To prove Theorem~\ref{theorem: first exit time, unclipped},
in this section
we fix some $b > 0$ such that the conditions in  Theorem~\ref{theorem: first exit time, unclipped} hold.
This allows us to fix some $\widecheck \epsilon > 0$ small enough such that 
\begin{align}
    \bar B_{\widecheck{\epsilon}}(\bm 0) \subseteq I,
    \qquad
    \bm a(\bm x)\bm x < 0\ \forall \bm x \in \bar B_{\widecheck \epsilon}(\bm 0)\setminus\{\bm 0\},
    \quad
    \inf\Big\{ 
        \norm{\bm x - \bm y}:\ 
        \bm x \in I^\complement,\ \bm y \in \mathcal G^{( \mathcal J^I_b - 1 )|b}(\widecheck \epsilon)
    \Big\}
    > 0.
    \label{constant bar epsilon, first exit time analysis}
\end{align}
Here, $\bar B_r(\bm x) = \{ \bm x:\ \norm{\bm x} \leq r  \}$ is the closed ball with radius $r$ centered at $\bm x$.
An direct implication of the first condition in \eqref{constant bar epsilon, first exit time analysis} is the following positive invariance property under the gradient field $\bm a(\cdot)$:
for any $r \in (0,\widecheck \epsilon]$, 
\begin{align}
    \bm y_t(\bm x) \in \bar B_{ r }(\bm 0)\qquad \forall \bm x \in \bar B_{ r }(\bm 0).
    \label{property: contraction of the ODE around the origin}
\end{align}
Next, for any $\epsilon \in (0,\widecheck \epsilon)$, let
\begin{align}
    \notationdef{notation-domain-check-I-epsilon}{\widecheck I(\epsilon)} \delequal 
    \Big\{
        \bm x \in I:\  \norm{ \bm y_{1/\epsilon}(\bm x) } < \widecheck \epsilon
    \Big\}
    \label{def: covering sets I epsilon, first exit time}
\end{align}
with the ODE $\bm y_t(x)$ defined in \eqref{def ODE path y t}.
By Gronwall's inequality, it is easy to see that $\widecheck I(\epsilon)$ is an open set.
Meanwhile, by Assumption \ref{assumption: shape of f, first exit analysis},
given any $\bm x \in I$ we must have $\bm x \in \widecheck I(\epsilon)$ for all $\epsilon > 0$ small enough.
As a result, the collection of open sets $\{ \widecheck I(\epsilon):\ \epsilon \in (0,\widecheck \epsilon) \}$
provides a covering for $I$:
\begin{align*}
    \bigcup_{\epsilon \in (0,\widecheck \epsilon)}\widecheck I(\epsilon) = I.
\end{align*}
% see Section~\ref{subsec: framework, first exit time analysis} and Condition~\ref{condition E2} for the definition of a covering for asymptotic atoms.
% We will also make use of the following property:
% given $0 < \epsilon^\prime < \epsilon$,
% \begin{align}
%     \inf\Big\{
%         \norm{\bm x - \bm z}:\ \bm x \in I(\epsilon),\ \bm z \notin I(\epsilon^\prime)
%     \Big\} > 0.
%     \label{property: bounded away, I epsilon, different epsilon}
% \end{align}
% To see why, it suffices to consider a straightforward proof by contradiction.
% Suppose that we have sequences $\bm x_n \in I(\epsilon)$ and $\bm z_n \notin I(\epsilon^\prime)$
% such that
% $
% \norm{\bm x_n - \bm z_n } \leq 1/n.
% $
% Then from the boundedness of $I$ (and hence $I(\epsilon)$),
% by picking a converging sub-sequence if necessary we can w.l.o.g.\ assume that $\bm x_n \to \bm x^*$ for some $\bm x^* \in \big(I(\epsilon)\big)^-$,
% and hence $\norm{\bm z_n - \bm x^*} \to 0$.
% However, due to $\bm z_n \notin I(\epsilon^\prime)$,
% we must have $\norm{\bm y_{1/\epsilon^\prime}(\bm z_n)} \geq \widecheck \epsilon$ for all $n$.
% Gronwall's inequality then implies that $\norm{\bm y_{1/\epsilon^\prime}(\bm x^*)} \geq \widecheck \epsilon$.
% By \eqref{property: contraction of the ODE around the origin},
% we yield the contradiction that $\norm{\bm y_{1/\epsilon}(\bm x^*)} \geq \widecheck \epsilon$, and hence $\bm x^* \notin I(\epsilon)$.
% This contradiction verifies claim \eqref{property: bounded away, I epsilon, different epsilon}.

Next, recall that we use 
$
{I_\epsilon} = \{ \bm y\in \R^n:\  \norm{\bm x - \bm y} < \epsilon\ \Longrightarrow\ \bm x \in I  \}
$
to denote the $\epsilon$-shrinkage of the set $I$.
Given any $\epsilon > 0$,
note that $I_\epsilon$ is an open set and, by definition, its closure $I_\epsilon^-$ is still bounded away from $I^\complement$,
i.e.,
$
\norm{\bm x-\bm y} \geq \epsilon
$
for all $\bm x\in I^-_\epsilon$, $\bm y \in I^\complement$.
Then from the continuity of $\bm a(\cdot)$ (see Assumption~\ref{assumption: lipschitz continuity of drift and diffusion coefficients}), the boundedness of set $I$ and hence $I^-_\epsilon \subseteq I$,
as well as property \eqref{property: contraction of the ODE around the origin},
we know that
given any $\epsilon > 0$, the claim
\begin{align*}
    \norm{\bm y_T(\bm x)} < \widecheck \epsilon\qquad \forall \bm x \in I^-_\epsilon
\end{align*}
holds for all $T > 0$ large enough.
This confirms that
given $\epsilon > 0$, it holds for all $\epsilon^\prime > 0$ small enough that 
\begin{align}
    I^-_\epsilon \subseteq \widecheck I(\epsilon^\prime).
    \label{property, I epsilon and I _ epsilon }
\end{align}

As a direct consequence of the discussion above,
we highlight another important property of the sets $\mathcal G^{(k)|b}(\epsilon)$ defined in \eqref{def: set G k b epsilon}.
For any $k \in \mathbb N$, $b > 0$, and $\epsilon \geq 0$, let
\begin{align}
    \notationdef{notation-extended-coverage-set-bar-G-k-b-epsilon}{\bar{\mathcal G}^{(k)|b}(\epsilon)}
    \delequal
    \Big\{
        \bm y_t(\bm x):\ \bm x \in \mathcal G^{(k)|b}(\epsilon),\ t \geq 0
    \Big\},
    \label{def: bar G k b epsilon, extended k jump coverage set}
\end{align}
where $\bm y_\cdot(\bm x)$ is the ODE defined in \eqref{def ODE path y t}.
First, due to \eqref{property, I epsilon and I _ epsilon } and the fact that $\mathcal G^{(\mathcal J^I_b - 1)|b}(\widecheck{\epsilon})$ is bounded away from $I^\complement$ (see \eqref{constant bar epsilon, first exit time analysis}),
given any $\epsilon \in (0,\widecheck{\epsilon}]$, it holds for all $\epsilon^\prime > 0$ small enough that 
$
\mathcal G^{(\mathcal J^I_b - 1)|b}(\epsilon) \subseteq \widecheck I(\epsilon^\prime).
$
Furthermore, we claim that $\bar{\mathcal G}^{(\mathcal J^I_b - 1)|b}(\widecheck\epsilon)$ is also bounded away from $I^\complement$, i.e.,
\begin{align}
    \inf\Big\{
        \norm{\bm x - \bm z}:\ \bm x \in \bar{\mathcal G}^{(\mathcal J^I_b - 1)|b}(\widecheck\epsilon),\ \bm z \in I^\complement
    \Big\}
    > 0.
    \label{property, bar G k b epsilon bounded away from I complement}
\end{align}
Again, this can be argued with a proof by contradiction.
Suppose there exist sequences $\bm x_n^\prime \in \bar{\mathcal G}^{(\mathcal J^I_b - 1)|b}(\widecheck\epsilon)$ and $\bm z_n \notin I$
such that $\norm{\bm x_n^\prime - \bm z_n} \leq 1/n$.
By definition of $\bar{\mathcal G}^{(\mathcal J^I_b - 1)|b}(\widecheck\epsilon)$,
there exist sequences $\bm x_n \in {\mathcal G}^{(\mathcal J^I_b - 1)|b}(\widecheck\epsilon)$ and $t_n \geq 0$ such that
$
\bm x^\prime_n = \bm y_{t_n}(\bm x_n)
$
for all $n\geq 1$.
Furthermore, recall that we have
$
\mathcal G^{(\mathcal J^I_b - 1)|b}(\widecheck \epsilon) \subseteq\widecheck I(\epsilon)
$
for $\epsilon > 0$ small enough.
On the other hand,
by the definition of $\widecheck I(\epsilon)$ in \eqref{def: covering sets I epsilon, first exit time}
and the property \eqref{property: contraction of the ODE around the origin},
it holds for all $n \geq 1$ that 
$
\bm y_t(\bm x_n) \in \bar B_{\widecheck{\epsilon}}(\bm 0)\ \forall t \geq 1/\epsilon.
$
Since $\bm z_n \notin I$ and $\bar B_{\widecheck{\epsilon}}(\bm 0) \subseteq I$ (see \eqref{constant bar epsilon, first exit time analysis}),
we must have $t_n < 1/\widecheck\epsilon$ for all $n$.
Together with the boundedness of $I$,
by picking a sub-sequence if necessary, we can w.l.o.g.\ assume that $\bm x_n \to \bm x^*$ for some $\bm x^* \in \big(\mathcal{G}^{(\mathcal J^I_b - 1)|b}\big)^- \subset I$
and $t_n \to t^*$ for some $t^* \in [0,1/\widecheck\epsilon]$.
Since $\bm x^* \in I$,
by Assumption~\ref{assumption: shape of f, first exit analysis}
we must have $\bm y_{t^*}(\bm x^*) \in I$.
By the continuity of the flow (specifically, using Gronwall's inequality) and the fact that $I$ is an open set,
we have $\bm z_n = \bm y_{t_n}(\bm x_n) \in I$ for all $n$ large enough.
This contradicts our choice that $\bm z_n \notin I$ for all $n$,
thus establishing \eqref{property, bar G k b epsilon bounded away from I complement}.
Now, by \eqref{constant bar epsilon, first exit time analysis}, \eqref{property: contraction of the ODE around the origin}, and \eqref{property, bar G k b epsilon bounded away from I complement},
we can fix some $\bar \epsilon > 0$ small enough such that the following claims hold:
\begin{align}
    &\bar B_{\bar{\epsilon}}(\bm 0) \subseteq I_{\bar\epsilon},
    \label{constant bar epsilon, new, 1, first exit time analysis}
    \\ 
    &r \in (0,\bar\epsilon],\ \bm x \in \bar B_r(\bm 0)
    \quad \Longrightarrow \quad 
    \bm y_t(\bm x) \in \bar B_r(\bm 0)\ \forall t \geq 0,
    \label{constant bar epsilon, new, 2, first exit time analysis}
    \\ 
     &\inf\Big\{
        \norm{\bm x - \bm z}:\ \bm x \in \bar{\mathcal G}^{(\mathcal J^I_b - 1)|b}(2\bar\epsilon),\ \bm z \notin I_{\bar\epsilon}
    \Big\}
     > \bar\epsilon.
    \label{constant bar epsilon, new, 3, first exit time analysis}
\end{align}

Moving on, let
\begin{align*}
    \notationdef{notation-hitting-time-t-x-epsilon}{\bm t_{\bm x}(\epsilon)} \delequal \inf\Big\{ t \geq 0:\ \bm y_t(\bm x) \in \bar B_{\epsilon}(\bm 0) \Big\}
\end{align*}
be the hitting time of the closed ball $\bar B_{\epsilon}(\bm 0)$ for the ODE $\bm y_t(\bm x)$,
and let
\begin{align}
    \notationdef{notation-t-epsilon-ode-return-time}{ \bm{t}(\epsilon) } \delequal
    \sup\Big\{ \bm t_{\bm x}(\epsilon):\ \bm x \in I_\epsilon^-   \Big\}
    \label{def: t epsilon function, first exit analysis}
\end{align}
be the upper bound for the hitting times $\bm t_{\bm x}(\epsilon)$ over $\bm x \in I_\epsilon^-$.
Again, from the continuity of $\bm a(\cdot)$, the contraction of $\bm y_t(\bm x)$ around the origin (see Assumption~\ref{assumption: shape of f, first exit analysis} and its implication \eqref{constant bar epsilon, new, 2, first exit time analysis}), 
and the boundedness of $I$ and hence $I_\epsilon^-$,
we have $\bm t(\epsilon) < \infty$ for any $\epsilon > 0$.
Besides, by definition of $\bm{t}(\cdot)$, we have
\begin{align}
    \bm{y}_{ t }(\bm x) \in \bar B_{\epsilon}(\bm 0) \qquad \forall \bm x \in I_\epsilon^-,\ t \geq \bm{t}(\epsilon).
    \label{property: t epsilon function, first exit analysis}
\end{align}
Furthermore, by repeating the arguments for \eqref{property, bar G k b epsilon bounded away from I complement},
one can show that (for all $\epsilon > 0$)
\begin{align}
    \inf\Big\{
        \norm{ \bm y_t(\bm x) - \bm z  }:\ 
        \bm x \in I_\epsilon^-,\ t \geq 0,\ \bm z \notin I
    \Big\} > 0.
    \label{property: coverage of I_epsilon, bounded away from I complement}
\end{align}
Specifically, for the constant $\bar\epsilon > 0$ fixed in \eqref{constant bar epsilon, new, 1, first exit time analysis}--\eqref{constant bar epsilon, new, 3, first exit time analysis},
by \eqref{property: coverage of I_epsilon, bounded away from I complement} we can find some $\bar c \in (0,1)$ such that
\begin{align}
    \Big\{
        \bm y_t(\bm x):\ \bm x \in I^-_{\bar\epsilon},\ t \geq 0
    \Big\}
    \subseteq I_{\bar c\bar\epsilon}.
    \label{constant bar c for bar epsilon, first exit time}
\end{align}

Recall that we use $E^-$ and $E^\circ$ to denote the closure and interior of any Borel set $E$.
In our analysis below, we make use of the following inequality in Lemma~\ref{lemma: limiting measure, with exit location B, first exit analysis}.
We collect its proof in Section~\ref{subsec: lemma for measure check C}, together with the proofs of other useful properties regarding measures $\widecheck{ \mathbf C }^{(k)|b}$.
\begin{lemma}
\label{lemma: limiting measure, with exit location B, first exit analysis}
\linksinthm{lemma: limiting measure, with exit location B, first exit analysis}
% Let $\bar c \in (0,1)$ be the constant fixed in \eqref{constant bar c for bar epsilon, first exit time}.
Let $\bar t,\bar\delta \in (0,\infty)$ be the constants characterized in part $(b)$ of Lemma~\ref{lemma: choose key parameters, first exit time analysis}.
Given $\Delta \in (0, {\bar\epsilon})$,
there exists $\epsilon_0 = \epsilon_0(\Delta) > 0$ such that for any $\epsilon \in (0,\epsilon_0]$, $T \geq \bar t$,
and Borel measurable $B \subseteq (I_\epsilon)^c$,
\begin{align*}
    (T-\bar t)\cdot \Big( \widecheck{\mathbf{C}}^{ (\mathcal{J}^I_b)|b }(B_{\Delta}) - \widecheck{\bm c}(\epsilon_0)\Big) 
    & \leq 
    \inf_{\bm x:\ \norm{\bm x} \leq \epsilon  }\mathbf{C}^{ (\mathcal{J}^I_b)|b}_{[0,T]}
    \bigg( 
    \Big(\widecheck E(\epsilon,B,T)\Big)^\circ;\ \bm x
    \bigg)
    \\ 
    &
    \leq 
    \sup_{\bm x:\ \norm{\bm x} \leq \epsilon}
    \mathbf{C}^{ (\mathcal{J}^I_b)|b}_{[0,T]}
    \bigg( 
    \Big(\widecheck E(\epsilon,B,T)\Big)^-;\ \bm x
    \bigg)
    \leq 
    T\cdot \Big( \widecheck{\mathbf{C}}^{ (\mathcal{J}^I_b)|b }(B^{\Delta})+ \widecheck{\bm c}(\epsilon_0)\Big)
\end{align*}
where 
\begin{align}
    \notationdef{notation-set-check-E-epsilon-B-T}{\widecheck{E}(\epsilon,B,T)} 
    & \delequal 
    \Big\{ \xi \in \mathbb{D}[0,T]:\ \exists t \leq T\ s.t.\ \xi_t \in B\text{ and }\xi_s \in I(\epsilon)\ \forall s \in [0,t) \Big\},
    \label{def: set check E epsilon B T, measure check C k b}
    \\
    \notationdef{notation-error-function-check-c-epsilon}{\widecheck{\bm c}(\epsilon)} 
    & \delequal 
    \mathcal J^I_b \cdot (\bar t)^{ \mathcal J^I_b - 1 } \cdot (\bar\delta)^{ -\alpha \cdot (\mathcal J^I_b - 1) }
    \cdot 
    \epsilon^{ \frac{\alpha}{2\mathcal J^I_b}  }.
    \label{def: check c error function, first exit time proof}
\end{align}
\end{lemma}

% The key step in our first exit time analysis is to contextualize the framework developed in Section \ref{subsec: Exit time analysis framework} in the setup of Theorem \ref{theorem: first exit time, unclipped}.
% Specifically, we provide a few technical lemmas that verify Condition \ref{condition E2}.
To see how we apply the framework developed in Section \ref{subsec: framework, first exit time analysis},
let us specialize Condition \ref{condition E2}
to a setting
where $\S = \R$, $A(\epsilon)  = \{ \bm x \in \R^m:\ \norm{\bm x} < \epsilon \}$,
and the covering
$I(\epsilon) = I_\epsilon$.
Let $V^\eta_j(x) = \bm X^{\eta|b}_j(\bm x)$.
Meanwhile, for
$
C^I_b = \widecheck{\mathbf C}^{(\mathcal{J}^I_b)|b}\big( I^\complement\big),
$
it is shown in Lemma~\ref{lemma: exit rate strictly positive, first exit analysis} that $C^I_b < \infty$.
Now, recall that $H(\cdot) = \P(\norm{\bm Z_1} > \cdot)$ and $\lambda(\eta) = \eta^{-1}H(\eta^{-1})$.
Recall that in Theorem~\ref{theorem: first exit time, unclipped},
we consider two cases: $(i)$ $C^I_b \in (0,\infty)$, and $(ii)$ $C^I_b = 0$.
We first discuss our choices in Case $(i)$.
When $C^I_b > 0$, we set
\begin{align}
    C(\ \cdot\ ) \delequal \frac{ \widecheck{\mathbf C }^{ (\mathcal J^I_b)|b }(\ \cdot \ \setminus I )  }{C^I_b},
    \qquad 
    \gamma(\eta) \delequal C^I_b \cdot \eta \cdot \big(\lambda(\eta)\big)^{\mathcal J^I_b}.
    \label{def: measure C and scale gamma when applying the exit time framework}
\end{align}
The regularity conditions in Theorem~\ref{theorem: first exit time, unclipped} dictate that
$
\widecheck{\mathbf C}^{(\mathcal J^I_b)|b}(\partial I) = 0,
$
and hence
$C(\partial I) = 0$.
Besides, note that $C(\cdot)$ is a probability measure and
$
\gamma(\eta)T/\eta =  C^I_b  T \cdot \big(\lambda(\eta)\big)^{\mathcal J^I_b}.
$
Besides, this corresponds to Case $(i)$ for the location measure in the definition of asymptotic atoms; see the discussion before Definition~\ref{def: asymptotic atom}.

The application of the framework developed in Section \ref{subsec: framework, first exit time analysis}
(specifically, Theorem~\ref{thm: exit time analysis framework})
hinges on the verification of \eqref{eq: exit time condition lower bound}--\eqref{eq:E4}.
We start by verifying \eqref{eq: exit time condition lower bound} and \eqref{eq: exit time condition upper bound}.
First, given any Borel measurable $B \subseteq \R$, we specify the choice of function $\delta_B(\epsilon,T)$ in Condition \ref{condition E2}.
From the continuity of measures,
we get
$\lim_{\Delta \downarrow 0}\widecheck{\mathbf{C}}^{ (\mathcal{J}^I_b)|b }\Big( (B^{\Delta}\cap I^\complement ) \setminus (B^- \cap I^\complement)\Big) = 0$
and
$\lim_{\Delta \downarrow 0} \widecheck{\mathbf{C}}^{ (\mathcal{J}^I_b)|b }\Big( (B^\circ \cap I^\complement) \setminus (B_{\Delta} \cap I^\complement ) \Big) = 0$.
This allows us to fix a sequence $(\Delta^{(n)})_{n \geq 1}$
such that $\Delta^{(n+1)} \in (0,\Delta^{(n)}/2)$ and
\begin{align}
    \widecheck{\mathbf{C}}^{ (\mathcal{J}^I_b)|b }\Big( (B^{\Delta^{(n)}}\cap I^\complement ) \setminus (B^- \cap I^\complement)\Big) \vee 
   \widecheck{\mathbf{C}}^{ (\mathcal{J}^I_b)|b }\Big( (B^\circ \cap I^\complement) \setminus (B_{\Delta^{(n)}} \cap I^\complement ) \Big)
   \leq 1/2^n
   \label{choice of Delta n, first exit time proof}
    \end{align}
for each $n \geq 1$.
Next,
recall the definition of set $\widecheck E(\epsilon,B,T)$ in Lemma \ref{lemma: limiting measure, with exit location B, first exit analysis},
and
let $\widetilde B(\epsilon) \delequal B \setminus I_\epsilon$.
Using Lemma \ref{lemma: limiting measure, with exit location B, first exit analysis},
we are able to fix another sequence of strictly decreasing positive real numbers $(\epsilon^{(n)})_{n \geq 1}$
such that 
$\epsilon^{(n)} \in (0,\bar\epsilon]\ \forall n \geq 1$
and for any $n \geq 1$, $\epsilon \in (0,\epsilon^{(n)}]$,
we have
% Then we are able to fix another sequence $(\epsilon^{(n)})_{n \geq 1}$ with $\epsilon^{(n)} \in (0,\bar\epsilon]\ \forall n$
% such that, for each $n \geq 1$, we have $\epsilon^{(n+1)} \leq \epsilon^{(n)}/2$
% and claims in Lemma \ref{lemma: limiting measure, with exit location B, first exit analysis} hold for $\Delta = \Delta^{(n)}$ and $\epsilon_0 = \epsilon^{(n)}$.
% Now, observe the following consequence of our choice of $\epsilon^{(n)},\Delta^{(n)}$.
% Using Lemma  \ref{lemma: limiting measure, with exit location B, first exit analysis},
% for all $\epsilon \in (0,\epsilon^{(n)}]$
% we have
\begin{align}
 \sup_{\bm x:\ \norm{\bm x} \leq \epsilon}
 \mathbf{C}^{ (\mathcal{J}^I_b)|b}_{[0,T]}
    \bigg(
    \Big(\widecheck E\big(\epsilon,\widetilde B(\epsilon),T\big)\Big)^-;\ \bm x
    \bigg)
    & \leq 
    T\cdot 
    \bigg( 
        \widecheck{\mathbf{C}}^{ (\mathcal{J}^I_b)|b }\Big( \big(B \setminus I_\epsilon\big)^{\Delta^{(n)}} \Big)
        + \widecheck{\bm c}(\epsilon^{(n)})
    \bigg),
    % +
    % (\bar t/\Bar{\delta}^\alpha)^{ \mathcal{J}^I_b},
    \label{measure C bound, 2, first exit time analysis}
    \\
    \inf_{\bm x:\ \norm{\bm x} \leq \epsilon}
    \mathbf{C}^{ (\mathcal{J}^I_b)|b}_{[0,T]}
    \bigg(
    \Big(\widecheck E\big(\epsilon,\widetilde B(\epsilon),T\big)\Big)^\circ;\ \bm x
    \bigg)
    & \geq 
    (T - \bar t)\cdot 
    \bigg( 
        \widecheck{\mathbf{C}}^{ (\mathcal{J}^I_b)|b }\Big( \big(B \setminus I_\epsilon\big)_{\Delta^{(n)}} \Big) - \widecheck{\bm c}(\epsilon^{(n)})
    \bigg).
    \label{measure C bound, 1, first exit time analysis}
\end{align}
% where the set $\widecheck E(\epsilon,B,T)$ is defined in Lemma \ref{lemma: limiting measure, with exit location B, first exit analysis}.
Besides, note that
given any $\epsilon \in (0,\epsilon^{(1)}]$,
there uniquely exists some $n = n_\epsilon \geq 1$ such that $\epsilon \in (\epsilon^{(n+1)},\epsilon^{(n)}]$.
This allows us to set
\begin{align}
            & \widecheck{\delta}_B(\epsilon,T) \label{def: bm delta B, first exit analysis}
        \\ 
    & =
    T \cdot \widecheck{ \mathbf C}^{ (\mathcal J^I_b)|b }\Big( (B^{ \Delta^{(n)} } \cap I^\complement) \symbol{92} (B^- \cap I^\complement) \Big)
    \vee 
     \widecheck{ \mathbf C}^{ (\mathcal J^I_b)|b }\Big( (B^\circ\cap I^\complement) \symbol{92} (B_{ \Delta^{(n)} } \cap I^\complement) \Big)
     \vee 
     \widecheck{ \mathbf C}^{ (\mathcal J^I_b)|b }\Big( (\partial I)^{ \epsilon + \Delta^{(n)} }\Big)
     \nonumber
     \\
     &\qquad
     +
     T \cdot \widecheck{\bm c}(\epsilon^{(n)})
     + \bar t \cdot  \widecheck{ \mathbf C}^{ (\mathcal J^I_b)|b }\big(B^\circ \setminus I\big),
     \nonumber
\end{align}
where $\widecheck{\bm c}(\cdot)$ is defined in \eqref{def: check c error function, first exit time proof}.
Also, let $ \delta_B(\epsilon,T) \delequal {\widecheck{\delta}_B(\epsilon,T)}/({ C^I_b \cdot T })$.
By \eqref{choice of Delta n, first exit time proof}
and $\widecheck{\mathbf C}^{(\mathcal J^I_b)|b}( B \setminus I) \leq \widecheck{\mathbf C}^{(\mathcal J^I_b)|b}(I^\complement) < \infty$,
we get
\begin{align*}
   \lim_{T \to \infty}\delta_B(\epsilon,T)
   \leq 
   \frac{1}{C^I_b} \cdot \Big[ \widecheck{\bm c}(\epsilon^{(n)}) +  \frac{1}{2^{ n } } \vee \widecheck{ \mathbf C}^{ (\mathcal J^I_b)|b }\Big( (\partial I)^{ \epsilon + \Delta^{(n)} }\Big) \Big],
\end{align*}
where $n$ is the unique positive integer satisfying $\epsilon \in (\epsilon^{(n+1)},\epsilon^{(n)}]$.
Moreover, as $\epsilon \downarrow 0$ we get $n_\epsilon \to \infty$.
Since $\partial I$ is closed, we get $\cap_{r > 0}(\partial I)^r = \partial I$,
which then implies $\lim_{r \downarrow 0}\widecheck{ \mathbf C}^{ (\mathcal J^I_b)|b }\Big( (\partial I)^{ r }\Big)
=
\widecheck{ \mathbf C}^{ (\mathcal J^I_b)|b }( \partial I) = 0
$
due to continuity of measures.
Also, by definition of $\widecheck{\bm c}$ in \eqref{def: check c error function, first exit time proof},
we have $\lim_{\epsilon \downarrow 0}\widecheck{\bm c}(\epsilon) = 0$.
In summary, we have verified that $\lim_{\epsilon \downarrow 0}\lim_{T \to \infty}\delta_B(\epsilon,T) = 0$.

Next, in case that $C^I_b = 0$, we set 
\begin{align*}
    C(\cdot) \equiv 0,\qquad
    \gamma(\eta) \delequal \eta \big(\lambda(\eta)\big)^{\mathcal J^I_b},
    \qquad
    \delta_B(\epsilon,T) \delequal \widecheck{\delta}_{B}(\epsilon,T)/T.
\end{align*}
The  calculations above again verify that $\lim_{\epsilon \downarrow 0}\lim_{T \to \infty}\delta_B(\epsilon,T) = 0$.

% Note that $\lim_{T \to \infty}\bm \delta_B(\epsilon,T) \leq 1/(C^I_b \cdot 2^n)$
% where $n = n_\epsilon$ is the unique positive integer satisfying $\epsilon \in (\epsilon^{(n+1)},\epsilon^{(n)}]$,
% and hence $ \lim_{\epsilon \downarrow 0} \lim_{T \to \infty}\bm \delta_B(\epsilon,T) \leq \lim_{n \to \infty}  1/(C^I_b \cdot 2^n) = 0$.
Now, we are ready to verify conditions \eqref{eq: exit time condition lower bound} and \eqref{eq: exit time condition upper bound}.
Specifically,
we
introduce stopping time
\begin{align}
    \notationdef{notation-tau-eta-b-epsilon-exit-time}{\tau^{\eta|b}_\epsilon(\bm x)} & \delequal \min\big\{ j \geq 0:\ \bm X^{\eta|b}_j(\bm x) \notin I_\epsilon \big\}.
    \label{def: epsilon relaxed first exit time}
\end{align}

\begin{lemma}[Verifying conditions \eqref{eq: exit time condition lower bound} and \eqref{eq: exit time condition upper bound}]
\label{lemma: exit prob one cycle, with exit location B, first exit analysis}
\linksinthm{lemma: exit prob one cycle, with exit location B, first exit analysis}
% Let $\bar\epsilon$ be specified as in \eqref{constant bar epsilon, first exit time analysis}.
Let $\bar t$ be characterized as in Lemma~\ref{lemma: limiting measure, with exit location B, first exit analysis}.
Given any measurable $B \subseteq \R$,
any $\epsilon \in (0,\bar\epsilon]$ small enough,
and any $T > \bar t$,
% Given $\delta \in (0,1)$,
% there exists $\epsilon_0 = \epsilon_0(\delta) > 0$ such that 
% for any $\epsilon \in (0,\epsilon_0]$, any $T > \bar t$, and any measurable $B \subseteq (I_{\bar\epsilon/2})^c$,
\begin{align*}
 C(B^\circ) - \delta_B(\epsilon,T)
& \leq 
    \liminf_{\eta \downarrow 0}\inf_{\bm x:\ \norm{\bm x} \leq \epsilon }
    \frac{ 
        \P\Big(
            \tau^{\eta|b}_\epsilon(\bm x) \leq T/\eta;\ \bm X^{\eta|b}_{\tau^{\eta|b}_\epsilon(\bm x)}(\bm x) \in B
        \Big) }{\gamma(\eta) T/\eta }
    \\
    & \leq 
     \limsup_{\eta \downarrow 0}\sup_{\bm x:\ \norm{\bm x} \leq \epsilon}
    \frac{ \P\Big(
        \tau^{\eta|b}_\epsilon(\bm x) \leq T/\eta;\ \bm X^{\eta|b}_{\tau^{\eta|b}_\epsilon(\bm x)}(\bm x) \in B
    \Big) }{\gamma(\eta)T/\eta }
    \leq 
   C(B^-)
    +
    \delta_B(\epsilon,T).
\end{align*}
\end{lemma}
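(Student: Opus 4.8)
The plan is to obtain the asymptotics for the one-cycle exit probability $\P\big(\tau^{\eta|b}_\epsilon(x) \leq T/\eta;\ X^{\eta|b}_{\tau^{\eta|b}_\epsilon(x)}(x) \in B\big)$ by transferring the uniform sample-path large deviations from Theorem~\ref{corollary: LDP 2} through the event-level description of the exit. The key observation is that exiting $I_\epsilon$ before time $T/\eta$ with exit location in $B$ is, up to small-probability corrections, equivalent to the scaled path $\bm X^{\eta|b}_{[0,T]}(x)$ landing in a set of the form $\widecheck E(\epsilon,\widetilde B(\epsilon),T) = \{\xi\in\mathbb D[0,T]:\ \exists t\leq T\text{ s.t. }\xi(t)\in\widetilde B(\epsilon)\text{ and }\xi(s)\in I_\epsilon\ \forall s\in[0,t)\}$, where $\widetilde B(\epsilon)=B\setminus I_\epsilon$. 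First I would argue that this set (or rather, a suitable closed enlargement of it) is bounded away from $\mathbb D^{(\mathcal J^*_b - 1)|b}_{[-\epsilon,\epsilon]}[0,T]$: indeed, by parts (a), (b) of Lemma~\ref{lemma: choose key parameters, first exit time analysis}, an ODE path with at most $\mathcal J^*_b - 1$ truncated jumps starting in $[-\epsilon,\epsilon]$ stays within $[-(r-2\bar\epsilon),r-2\bar\epsilon]$ and hence cannot reach $I^\complement$; so any path reaching $B\setminus I_\epsilon\subseteq (I_{\bar\epsilon/2})^\complement$ stays a positive distance from that set, which is exactly the hypothesis needed to invoke Theorem~\ref{corollary: LDP 2} with $k=\mathcal J^*_b$ and $A=[-\epsilon,\epsilon]$.

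The main body of the proof then proceeds in three steps. \textbf{Step 1 (reduce the exit event to a path event).} For $x\in(-\epsilon,\epsilon)$, the event $\{\tau^{\eta|b}_\epsilon(x)\leq T/\eta;\ X^{\eta|b}_{\tau^{\eta|b}_\epsilon(x)}(x)\in B\}$ differs from $\{\bm X^{\eta|b}_{[0,T]}(x)\in \widecheck E(\epsilon,\widetilde B(\epsilon),T)\}$ only because the embedded continuous-time path jumps rather than hits the exact value $X^{\eta|b}_{\tau^{\eta|b}_\epsilon(x)}(x)$ at the exit step, and because a path can oscillate near $\partial I_\epsilon$; both discrepancies can be absorbed into $\bar\epsilon$-enlargements/shrinkages of $\widecheck E$, at the cost of terms controlled by $\widecheck{\mathbf C}^{(\mathcal J^*_b)|b}$ of thin shells around $\partial I$ and around $\partial B$. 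This is where the sequences $(\Delta^{(n)})$ and $(\epsilon^{(n)})$ and the definition of $\widecheck{\bm\delta}_B(\epsilon,T)$ in \eqref{def: bm delta B, first exit analysis} enter: they are engineered so that $\widecheck E(\epsilon,\widetilde B(\epsilon),T)^-$ and $\widecheck E(\epsilon,\widetilde B(\epsilon),T)^\circ$ are sandwiched, in $\mathbf C^{(\mathcal J^*_b)|b}_{[0,T]}(\ \cdot\ ;x)$-measure, between $T\cdot\widecheck{\mathbf C}^{(\mathcal J^*_b)|b}((B\setminus I_\epsilon)^{\Delta})$ and $(T-\bar t)\cdot\widecheck{\mathbf C}^{(\mathcal J^*_b)|b}((B\setminus I_\epsilon)_{\Delta})$ via \eqref{measure C bound, 2, first exit time analysis}--\eqref{measure C bound, 1, first exit time analysis}. \textbf{Step 2 (apply the LDP).} Apply the uniform sample-path large deviations \eqref{claim, uniform sample path LD, corollary: LDP 2} of Theorem~\ref{corollary: LDP 2} to the closed and open modifications of $\widecheck E(\epsilon,\widetilde B(\epsilon),T)$, obtaining
\begin{align*}
    \inf_{x\in[-\epsilon,\epsilon]}\mathbf C^{(\mathcal J^*_b)|b}_{[0,T]}\big(\widecheck E(\epsilon,\widetilde B(\epsilon),T)^\circ;x\big)
    &\leq \liminf_{\eta\downarrow0}\inf_{x\in(-\epsilon,\epsilon)}\frac{\P(\cdots)}{\lambda^{\mathcal J^*_b}(\eta)}
    \\
    &\leq \limsup_{\eta\downarrow0}\sup_{x\in(-\epsilon,\epsilon)}\frac{\P(\cdots)}{\lambda^{\mathcal J^*_b}(\eta)}
    \leq \sup_{x\in[-\epsilon,\epsilon]}\mathbf C^{(\mathcal J^*_b)|b}_{[0,T]}\big(\widecheck E(\epsilon,\widetilde B(\epsilon),T)^-;x\big),
\end{align*}
after accounting for the exact-hitting/oscillation corrections from Step 1. \textbf{Step 3 (normalize).} Divide through by $\gamma(\eta)T/\eta = C^*_b T\,\lambda^{\mathcal J^*_b}(\eta)$, substitute \eqref{measure C bound, 2, first exit time analysis}--\eqref{measure C bound, 1, first exit time analysis}, and recall $C(\cdot)=\widecheck{\mathbf C}^{(\mathcal J^*_b)|b}(\cdot\setminus I)/C^*_b$. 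The leading terms $\frac{T\cdot\widecheck{\mathbf C}^{(\mathcal J^*_b)|b}((B\setminus I_\epsilon)^{\Delta})}{C^*_b T}$ and $\frac{(T-\bar t)\cdot\widecheck{\mathbf C}^{(\mathcal J^*_b)|b}((B\setminus I_\epsilon)_{\Delta})}{C^*_b T}$ converge to $C(B^-)$ and $C(B^\circ)$ respectively up to the discrepancies collected into $\bm\delta_B(\epsilon,T)$; the $\bar t/C^*_b T$ and $(\bar t/\bar\delta^\alpha)^{\mathcal J^*_b}/(C^*_b T)$ factors also go into $\bm\delta_B(\epsilon,T)$, as does the difference between $B\setminus I_\epsilon$ and $B\setminus I$ (a shell of width $\epsilon$ near $\partial I$, whose $\widecheck{\mathbf C}^{(\mathcal J^*_b)|b}$-mass is the $(\partial I)^{\epsilon+\Delta^{(n)}}$ term). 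This yields exactly the claimed two-sided bound.

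I expect \textbf{Step 1} to be the main obstacle. The subtle points are: (i) the embedded path $\bm X^{\eta|b}_{[0,T]}(x)$ exits $I_\epsilon$ by a jump whose landing point is $X^{\eta|b}_{\tau^{\eta|b}_\epsilon(x)}(x)$, so one must check that the path-functional event and the discrete-exit event coincide except on events whose probability is $o(\gamma(\eta)T/\eta)$ — this needs the concentration estimates of Lemma~\ref{lemma LDP, small jump perturbation} to rule out the exit being caused by accumulated small noise, and the fact (Lemma~\ref{lemma: choose key parameters, first exit time analysis}(a),(b)) that $\mathcal J^*_b$ is genuinely the minimal number of jumps to escape, so fewer jumps are negligible; (ii) handling the closure/interior mismatch when $B$ is an arbitrary measurable set, which is precisely why the approximating sequences $\Delta^{(n)},\epsilon^{(n)}$ are introduced and why $\widecheck{\bm\delta}_B$ has the particular form \eqref{def: bm delta B, first exit analysis}. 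A further bookkeeping nuisance is that Lemma~\ref{lemma: limiting measure, with exit location B, first exit analysis} is stated for $B\subseteq(I_{\bar\epsilon/2})^\complement$ while the target allows general measurable $B$; one resolves this by noting $X^{\eta|b}_{\tau^{\eta|b}_\epsilon(x)}(x)\in B$ together with the exit forces the exit location into $B\cap (I_\epsilon)^\complement\subseteq B\cap(I_{\bar\epsilon/2})^\complement$ once $\epsilon<\bar\epsilon/2$, so replacing $B$ by $\widetilde B(\epsilon)=B\setminus I_\epsilon$ is legitimate and the two $\widecheck{\mathbf C}^{(\mathcal J^*_b)|b}$-measures differ only on the boundary shell already absorbed into $\bm\delta_B$.
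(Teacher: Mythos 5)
Your proposal is correct and follows essentially the same route as the paper: identify the exit event with the path event $\{\bm X^{\eta|b}_{[0,T]}(x)\in\widecheck E(\epsilon,\widetilde B(\epsilon),T)\}$, check via Lemma~\ref{lemma: choose key parameters, first exit time analysis}(a),(b) that this set is bounded away from $\mathbb D^{(\mathcal J^*_b-1)|b}_{[-\epsilon,\epsilon]}[0,T]$, invoke Theorem~\ref{corollary: LDP 2}, feed in \eqref{measure C bound, 2, first exit time analysis}--\eqref{measure C bound, 1, first exit time analysis}, and absorb the set-algebra discrepancies into $\bm\delta_B(\epsilon,T)$. The one place where you diverge is Step~1, which you flag as the main obstacle and propose to handle with concentration estimates from Lemma~\ref{lemma LDP, small jump perturbation}: in the paper this step is an \emph{exact identity}, not an approximation. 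Since $\bm X^{\eta|b}_{[0,T]}(x)$ is the piecewise-constant embedding $t\mapsto X^{\eta|b}_{\floor{t/\eta}}(x)$, its value on $[\eta\tau^{\eta|b}_\epsilon(x),\eta\tau^{\eta|b}_\epsilon(x)+\eta)$ is literally the chain's exit location and its values before that are literally the pre-exit chain values (all in $I_\epsilon$ by definition of the first exit time), so there is no landing-point mismatch or oscillation to control; all approximation enters only through Lemma~\ref{lemma: limiting measure, with exit location B, first exit analysis}, i.e., through taking closures/interiors of $\widecheck E$ in path space. Your extra machinery would be bounding a discrepancy that is identically zero, so it does not break the argument, but it is unnecessary.
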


\begin{proof}
\linksinpf{lemma: exit prob one cycle, with exit location B, first exit analysis}
Recall that 
\begin{enumerate}[$(i)$]
    \item
        in case that $C^I_b \in (0,\infty)$, we have
        $
        \gamma(\eta)T/\eta =  C^I_b  T \cdot \big(\lambda(\eta)\big)^{\mathcal J^I_b},
        $
        $
        C(\cdot) = \widecheck{\mathbf C}^{ (\mathcal J^I_b)|b }(\ \cdot\ \setminus I)/C^I_b,
        $
        and
        $\delta_B(\epsilon,T) = {\widecheck{\delta}_B(\epsilon,T)}/({ C^I_b \cdot T })$;

    \item 
        in case that $C^I_b = 0$, we have
        $
        \gamma(\eta)T/\eta =  T \cdot \big(\lambda(\eta)\big)^{\mathcal J^I_b},
        $
        $
        C(\cdot) \equiv 0,
        $
        and
        $\delta_B(\epsilon,T) = {\widecheck{\delta}_B(\epsilon,T)}/T$.
\end{enumerate}
In both cases,
by rearranging the terms, it suffices to show that
\begin{align}
 \limsup_{\eta \downarrow 0}\sup_{\bm x:\ \norm{\bm x} \leq \epsilon}
    \frac{ \P\Big(\tau^{\eta|b}_\epsilon(\bm x) \leq T/\eta;\ \bm X^{\eta|b}_{\tau^{\eta|b}_\epsilon(\bm x)}(\bm x) \in B\Big) }{\big(\lambda(\eta)\big)^{\mathcal J^I_b} }
    & \leq 
   T \cdot \widecheck{ \mathbf C }^{ (\mathcal J^I_b)|b }(B^- \setminus I)
    +
   \widecheck{\delta}_B(\epsilon,T),
   \label{proof, goal upper bound, lemma: exit prob one cycle, with exit location B, first exit analysis}
   \\
    \liminf_{\eta \downarrow 0}\inf_{\bm x:\ \norm{\bm x} \leq \epsilon}
    \frac{ \P\Big(\tau^{\eta|b}_\epsilon(\bm x) \leq T/\eta;\ \bm X^{\eta|b}_{\tau^{\eta|b}_\epsilon(\bm x)}(\bm x) \in B\Big) }{\big(\lambda(\eta)\big)^{\mathcal J^I_b} }
    & \geq 
   T \cdot \widecheck{ \mathbf C }^{ (\mathcal J^I_b)|b }(B^\circ \setminus I ) - \widecheck{\delta}_B(\epsilon,T).
   \label{proof, goal lower bound, lemma: exit prob one cycle, with exit location B, first exit analysis}
\end{align}

Recall the definition of set $\widecheck E(\epsilon,\cdot,T)$ in \eqref{def: set check E epsilon B T, measure check C k b}.
Let $\widetilde B(\epsilon) \delequal B \setminus I_\epsilon$.
Note that
$$
\Big\{
    \tau^{\eta|b}_\epsilon(\bm x) \leq T/\eta;\ \bm X^{\eta|b}_{\tau^{\eta|b}_\epsilon(\bm x)}(\bm x) \in B
\Big\}
=
\Big\{
    \tau^{\eta|b}_\epsilon(\bm x) \leq T/\eta;\ \bm X^{\eta|b}_{\tau^{\eta|b}_\epsilon(\bm x)}(\bm x) \in \widetilde B(\epsilon)
\Big\}
=
 \Big\{ 
    \bm{X}^{\eta|b}_{[0,T]}(\bm x) \in \widecheck E\big(\epsilon, \widetilde B(\epsilon) ,T\big)
\Big\}.
$$
For any $\epsilon \in (0,\bar \epsilon)$ and $\xi \in \widecheck E(\epsilon,\widetilde B(\epsilon),T)$, 
there exists $t \in [0,T]$ such that $\xi_t \notin I(\epsilon)$.
On the other hand,
recall that we use $\bar B_\epsilon(\bm 0)$ to denote the closed ball with radius $\epsilon$ centered at the origin.
By part $(a)$ of Lemma~\ref{lemma: choose key parameters, first exit time analysis},
given $\epsilon \in (0,\bar\epsilon]$,
it holds for all $\xi \in \mathbb{D}^{ (\mathcal{J}^I_b - 1)|b }_{ \bar B_\epsilon(\bm 0) }[0,T](\epsilon)$
that $\xi_t \in I_{2\bar\epsilon}^-\ \forall t \in [0,T]$.
Therefore, the claim
\begin{align}
    \dj{[0,T]}
    \bigg( \widecheck E\big(\epsilon,\widetilde B(\epsilon),T\big),\ 
    \mathbb{D}^{ (\mathcal{J}^I_b - 1)|b }_{ \bar B_\epsilon(\bm 0) }[0,T](\epsilon)
    \bigg) \geq \bar\epsilon
    \nonumber
\end{align}
for all $\epsilon \in (0,\bar\epsilon]$.
Next, recall the 
strictly decreasing positive real number sequence $(\epsilon^{(n)})_{n \geq 1}$
specified in \eqref{measure C bound, 2, first exit time analysis}--\eqref{measure C bound, 1, first exit time analysis}.
For all $\epsilon > 0$ small enough we have $\epsilon \in (0,\epsilon^{(1)}]$,
so for such $\epsilon$ we can set $n = n_\epsilon$ as the unique positive integer such that $\epsilon \in (\epsilon^{(n+1)},\epsilon^{(n)}]$.
% where the strictly decreasing positive real number sequence $(\epsilon^{(n)})_{n \geq 1}$
% is specified in \eqref{measure C bound, 2, first exit time analysis}--\eqref{measure C bound, 1, first exit time analysis}.
It then follows from Theorem~\ref{corollary: LDP 2} that
\begin{align}
     \limsup_{\eta \downarrow 0}\sup_{ \bm x:\ \norm{\bm x} \leq \epsilon }
         \frac{ 
    \P\Big(
        \tau^{\eta|b}_\epsilon(\bm x) \leq T/\eta;\ \bm X^{\eta|b}_{\tau^{\eta|b}_\epsilon(\bm x)}(\bm x) \in B
    \Big)   
    }{ \big(\lambda(\eta)\big)^{ \mathcal{J}^I_b }} 
    &  \leq 
    \sup_{ \bm x:\ \norm{\bm x} \leq \epsilon }
    \mathbf{C}^{ (\mathcal{J}^I_b)|b}_{[0,T]}\bigg( \Big(\widecheck E\big(\epsilon, \widetilde B(\epsilon) ,T\big)\Big)^-;\bm x\bigg)
    \nonumber
    \\ 
    & \leq
    T\cdot
    \bigg( 
        \widecheck{\mathbf{C}}^{ (\mathcal{J}^I_b)|b }\Big( (B \setminus I_\epsilon)^{\Delta^{(n)}} \Big) + \widecheck{\bm c}(\epsilon^{(n)})
    \bigg),
    \label{proof, ineq for upper bound, lemma: exit prob one cycle, with exit location B, first exit analysis}
\end{align}
where we applied property \eqref{measure C bound, 2, first exit time analysis} in the last inequality.
Furthermore,
\begin{align*}
    \widecheck{\mathbf{C}}^{ (\mathcal{J}^I_b)|b }\Big( (B \setminus I_\epsilon)^{\Delta^{(n)}} \Big)
    & \leq 
    \widecheck{\mathbf{C}}^{ (\mathcal{J}^I_b)|b }\Big( B^{\Delta^{(n)}} \cup (I^\complement_\epsilon)^{\Delta^{(n)}} \Big)
    \qquad 
    \text{due to }(E\cup F)^\Delta \subseteq E^\Delta \cup F^\Delta
    \\ 
    & = 
    \widecheck{\mathbf{C}}^{ (\mathcal{J}^I_b)|b }\Big( B^{\Delta^{(n)}} \cup (I^\complement_\epsilon)^{\Delta^{(n)}} \cap I^\complement \Big)
    +
    \widecheck{\mathbf{C}}^{ (\mathcal{J}^I_b)|b }\Big( B^{\Delta^{(n)}} \cup (I^\complement_\epsilon)^{\Delta^{(n)}} \cap I \Big)
    \\
    & \leq 
    \widecheck{\mathbf{C}}^{ (\mathcal{J}^I_b)|b }\Big( B^{\Delta^{(n)}}\setminus I\Big)
    +
    \widecheck{\mathbf{C}}^{ (\mathcal{J}^I_b)|b }\Big( (I^\complement_\epsilon)^{\Delta^{(n)}} \cap I \Big)
    \\ 
    & \leq 
    \widecheck{\mathbf{C}}^{ (\mathcal{J}^I_b)|b }\Big( B^{\Delta^{(n)}}\setminus I\Big)
    +
    \widecheck{\mathbf{C}}^{ (\mathcal{J}^I_b)|b }\Big( (\partial I)^{ \epsilon + \Delta^{(n)}} \Big)
    \\
    & \leq 
    \widecheck{\mathbf{C}}^{ (\mathcal{J}^I_b)|b }\Big( B^-\setminus I\Big)
    +
    \widecheck{\mathbf{C}}^{ (\mathcal{J}^I_b)|b }\Big( (B^{\Delta^{(n)}}\cap I^\complement ) \setminus (B^- \cap I^\complement)\Big)
    +
    \widecheck{\mathbf{C}}^{ (\mathcal{J}^I_b)|b }\Big( (\partial I)^{ \epsilon + \Delta^{(n)}} \Big)
\end{align*}
By definition of $\widecheck{\delta}_B$ in \eqref{def: bm delta B, first exit analysis}
and the choice of $C(\cdot)$ in \eqref{def: measure C and scale gamma when applying the exit time framework},
we can 
plug this bound back into \eqref{proof, ineq for upper bound, lemma: exit prob one cycle, with exit location B, first exit analysis}
and yield the upper bound \eqref{proof, goal upper bound, lemma: exit prob one cycle, with exit location B, first exit analysis}.
Similarly, by Theorem~\ref{corollary: LDP 2} and the property \eqref{measure C bound, 1, first exit time analysis}, we obtain (for all $\epsilon$ small enough)
\begin{align}
    \liminf_{\eta \downarrow 0}\inf_{ \bm x:\ \norm{\bm x} \leq \epsilon }
         \frac{ 
    \P\Big(
        \tau^{\eta|b}_\epsilon(\bm x) \leq T/\eta;\ \bm X^{\eta|b}_{\tau^{\eta|b}_\epsilon(\bm x)}(\bm x) \in B
    \Big)   
    }{ \big(\lambda(\eta)\big)^{ \mathcal{J}^I_b }} 
    &  \geq 
    \inf_{ \bm x:\ \norm{\bm x} \leq \epsilon }
    \mathbf{C}^{ (\mathcal{J}^I_b)|b}_{[0,T]}\bigg( \Big(\widecheck E\big(\epsilon, \widetilde B(\epsilon) ,T\big)\Big)^\circ;\bm x\bigg)
    \nonumber
    \\ 
    & \geq
    (T - \bar t)\cdot 
    \bigg( 
        \widecheck{\mathbf{C}}^{ (\mathcal{J}^I_b)|b }\Big( (B \setminus I_\epsilon)_{\Delta^{(n)}} \Big) - \widecheck{\bm c}(\epsilon^{(n)})
    \bigg).
    \label{proof, ineq for lower bound, lemma: exit prob one cycle, with exit location B, first exit analysis}
\end{align}
Furthermore, from the preliminary bound $(E\cap F)_\Delta \supseteq E_\Delta \cap F_\Delta $
we get
\begin{align*}
  \widecheck{\mathbf{C}}^{ (\mathcal{J}^I_b)|b }\Big( (B \setminus I_\epsilon)_{\Delta^{(n)}} \Big)
  & \geq 
  \widecheck{\mathbf{C}}^{ (\mathcal{J}^I_b)|b }\Big( (B \setminus I)_{\Delta^{(n)}} \Big)
  % \\ 
  % & 
  \geq 
  \widecheck{\mathbf{C}}^{ (\mathcal{J}^I_b)|b }\Big( B_{\Delta^{(n)}} \cap I^\complement_{\Delta^{(n)}} \Big).
\end{align*}
Together with the fact that
$
B_\Delta \setminus I 
= B_\Delta \cap I^\complement
\subseteq
\Big(B_\Delta \cap (I^\complement)_\Delta\Big) 
\cup 
\Big( I^\complement \setminus (I^\complement)_\Delta \Big),
$
we yield
\begin{align*}
    \widecheck{\mathbf{C}}^{ (\mathcal{J}^I_b)|b }\Big( (B \setminus I_\epsilon)_{\Delta^{(n)}} \Big)
    & \geq 
    \widecheck{\mathbf{C}}^{ (\mathcal{J}^I_b)|b }\Big( B_{\Delta^{(n)}} \setminus I \Big)
    -
    \widecheck{\mathbf{C}}^{ (\mathcal{J}^I_b)|b }\Big( I^\complement \setminus I^\complement_{\Delta^{(n)}} \Big)
    \\ 
    & \geq 
    \widecheck{\mathbf{C}}^{ (\mathcal{J}^I_b)|b }\Big( B_{\Delta^{(n)}} \setminus I \Big)
    -
    \widecheck{\mathbf{C}}^{ (\mathcal{J}^I_b)|b }\Big( (\partial I)^{\Delta^{(n)}} \Big)
    \\
    & \geq 
    \widecheck{\mathbf{C}}^{ (\mathcal{J}^I_b)|b }\Big( B^\circ \setminus I \Big)
    -
    \widecheck{\mathbf{C}}^{ (\mathcal{J}^I_b)|b }\Big( (B^\circ \cap I^\complement) \setminus (B_{\Delta^{(n)}} \cap I^\complement ) \Big)
    -
    \widecheck{\mathbf{C}}^{ (\mathcal{J}^I_b)|b }\Big( (\partial I)^{\Delta^{(n)}} \Big).
\end{align*}
Plugging this bound back into \eqref{proof, ineq for lower bound, lemma: exit prob one cycle, with exit location B, first exit analysis},
we establish the lower bound \eqref{proof, goal lower bound, lemma: exit prob one cycle, with exit location B, first exit analysis} and conclude the proof.
\end{proof}

The next two results verify conditions \eqref{eq:E3} and \eqref{eq:E4}.
Let
\begin{align}
     \notationdef{notation-R-eta-b-epsilon-return-time}{R^{\eta|b}_\epsilon(\bm x)} & \delequal 
     \min\bigg\{ j \geq 0:\ \norm{\bm X^{\eta|b}_j(\bm x)} < \epsilon \bigg\}
    \label{def: return time R in first cycle, first exit analysis}
\end{align}
be the first time $\bm X^{\eta|b}_j(\bm x)$ returns to the $\epsilon$-neighborhood of the origin.
Under our choice of $A(\epsilon) = \{\bm x \in \R^m:\ \norm{\bm x} < \epsilon\}$
and $I(\epsilon) = I_\epsilon$,
the event $\{\tau^{\eta}_{(I(\epsilon)\setminus A(\epsilon))^\complement}(x) > T/\eta\}$ in condition \eqref{eq:E3}
means that $\bm X^{\eta|b}_j(\bm x) \in I_\epsilon \symbol{92} \{ \bm x:\norm{\bm x} < \epsilon  \}$ for all $j \leq T/\eta$.
Also, recall
the definition of $\bm t(\cdot)$ in \eqref{def: t epsilon function, first exit analysis}
and that
$
\gamma(\eta)T/\eta =  C^I_b  T \cdot \big(\lambda(\eta)\big)^{\mathcal J^I_b}.
$
Therefore, to verify condition \eqref{eq:E3}, it suffices to prove the following result.

\begin{lemma}[Verifying condition \eqref{eq:E3}]
\label{lemma: fixed cycle exit or return taking too long, first exit analysis}
\linksinthm{lemma: fixed cycle exit or return taking too long, first exit analysis}
 % Let Assumptions \ref{assumption gradient noise heavy-tailed}, \ref{assumption: f and sigma, stationary distribution of SGD}, and \ref{assumption: shape of f, first exit analysis} hold.
 Given $k \geq 1$ and $\epsilon \in (0,\bar\epsilon)$,
 it holds for all $T \geq k \cdot \bm{t}(\epsilon/2)$ that
 \begin{align*}
     \lim_{\eta \downarrow 0}\ \sup_{ \bm x \in I_\epsilon }
     \ \, \frac1{\lambda^{k-1}(\eta)}\P\Big( \bm X^{\eta|b}_j(\bm x) \in I_\epsilon\setminus \{ \bm x:\norm{\bm x} < \epsilon  \}\quad \forall j \leq T/\eta \Big)
      = 0.
 \end{align*}
\end{lemma}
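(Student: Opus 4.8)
\noindent
The plan is to exploit the contraction of the deterministic flow $\bm y_\cdot(\cdot)$ toward the origin: for any starting point in $I^-_\epsilon$, within rescaled time $\bm t(\epsilon/2)$ the flow is inside $[-\epsilon/2,\epsilon/2]$ (property \eqref{property: t epsilon function, first exit analysis}), so keeping $X^{\eta|b}_\cdot(x)$ in the annulus $I_\epsilon\setminus(-\epsilon,\epsilon)$ over a horizon of rescaled length $k\,\bm t(\epsilon/2)$ forces the noise to overcome this contraction $k$ separate times, each time costing one ``big'' noise; hence the probability is $\bo(\lambda^k(\eta))=\lo(\lambda^{k-1}(\eta))$ since $\lambda(\eta)\to 0$.

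\emph{Step 1: localization.} On the event $G^\eta(x)\delequal\{X^{\eta|b}_j(x)\in I_\epsilon\setminus(-\epsilon,\epsilon)\ \forall j\le\floor{T/\eta}\}$ the trajectory stays in the compact set $I^-\subseteq[-M_0,M_0]$ with $M_0\delequal|s_\text{left}|\vee s_\text{right}$; arguing as in the proof of Proposition~\ref{proposition: standard M convergence, LDP clipped} (cf.\ \eqref{property: Y and tilde Y, 1, proposition: standard M convergence, LDP clipped}), $G^\eta(x)$ is unchanged when $X^{\eta|b}(x)$ is replaced by the process $\widetilde X^{\eta|b}(x)$ driven by the truncated coefficients $a_{M_0+1},\sigma_{M_0+1}$ of \eqref{def: a sigma truncated at M, LDP}; these satisfy Assumptions~\ref{assumption: lipschitz continuity of drift and diffusion coefficients}, \ref{assumption: boundedness of drift and diffusion coefficients}, and \ref{assumption: uniform nondegeneracy of diffusion coefficients} (the latter because $\inf_{x\in[-M_0-1,M_0+1]}\sigma(x)>0$ by continuity and Assumption~\ref{assumption: nondegeneracy of diffusion coefficients}), and the associated flow coincides with $\bm y_\cdot(\cdot)$ on $I^-_\epsilon$ because it stays in $I^-$, where the coefficients are untruncated. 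I will henceforth work with $\widetilde X^{\eta|b}$; write $C_*\delequal\sup_{x\in[-M_0-1,M_0+1]}(|a(x)|\vee|\sigma(x)|)$, let $D$ be the Lipschitz constant, and $\rho\delequal\exp(D\,\bm t(\epsilon/2))$.

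\emph{Step 2: blocking and event setup.} Fix $N>(k-1)(\alpha-1)$ and $\bar\epsilon'\in(0,\epsilon/(4\rho))$, and use Lemma~\ref{lemma LDP, small jump perturbation} to obtain $\delta\in(0,\tfrac{b}{2C_*})$ so small that the associated small-noise concentration bound over horizon $\bm t(\epsilon/2)$ is $\lo(\eta^N)$, uniformly over adapted weights in $\bm\Gamma_{C_*}$. For $\eta$ small the blocks $\mathcal I_\ell\delequal\{(\ell-1)\floor{\bm t(\epsilon/2)/\eta}+1,\dots,\ell\floor{\bm t(\epsilon/2)/\eta}\}$, $\ell\in[k]$, are disjoint subsets of $\{1,\dots,\floor{T/\eta}\}$, because $k\floor{\bm t(\epsilon/2)/\eta}\le\floor{k\,\bm t(\epsilon/2)/\eta}\le\floor{T/\eta}$. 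Set $D_\ell\delequal\{\exists\, n\in\mathcal I_\ell:\eta|Z_n|>\delta\}$ and $E_\ell\delequal D_\ell^\complement\cap\{\max_{j\in\mathcal I_\ell}\eta|\sum_{n\in\mathcal I_\ell,\,n\le j}\sigma_{M_0+1}(\widetilde X^{\eta|b}_{n-1}(x))Z_n|>\bar\epsilon'\}$.

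\emph{Step 3: the geometric core.} I claim $G^\eta(x)\cap D_\ell^\complement\cap E_\ell^\complement=\emptyset$ for each $\ell$. On this set the $\ell$-th block starts from some $y=\widetilde X^{\eta|b}_{(\ell-1)\floor{\bm t(\epsilon/2)/\eta}}(x)\in I^-_\epsilon$; on $D_\ell^\complement$ the per-step displacements in $\mathcal I_\ell$ are below $b$ (since $\eta|a_{M_0+1}|\le\eta C_*<b/2$ and $\eta|\sigma_{M_0+1}Z_n|\le C_*\delta<b/2$), so truncation is inactive there and $\widetilde X^{\eta|b}$ follows the untruncated recursion. Applying Lemma~\ref{lemmaBasicGronwall} with cumulative small-noise bound $\bar\epsilon'$ (valid on $E_\ell^\complement$) and then Lemma~\ref{lemma Ode Gd Gap}, and invoking \eqref{property: t epsilon function, first exit analysis}, the block endpoint $\widetilde X^{\eta|b}_{\ell\floor{\bm t(\epsilon/2)/\eta}}(x)$ is within $\bar\epsilon'\rho+\bo(\eta)$ of $\bm y_{s}(y)$ for some $s\in(\bm t(\epsilon/2)-\eta,\bm t(\epsilon/2)]$, hence within $\tfrac\epsilon2+\tfrac\epsilon4+\bo(\eta)<\epsilon$ of the origin for $\eta$ small; this contradicts $G^\eta(x)$ since that endpoint index lies in $\{1,\dots,\floor{T/\eta}\}$. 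Therefore $G^\eta(x)\subseteq\bigcap_{\ell=1}^k(D_\ell\cup E_\ell)\subseteq\big(\bigcap_{\ell=1}^kD_\ell\big)\cup\big(\bigcup_{\ell=1}^kE_\ell\big)$.

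\emph{Step 4: probability estimates.} Since the $\mathcal I_\ell$ are disjoint and the $Z_n$ are i.i.d., $\P(\bigcap_\ell D_\ell)=\prod_\ell\P(D_\ell)\le\big(\floor{\bm t(\epsilon/2)/\eta}\,H(\delta/\eta)\big)^k$, and as $\eta^{-1}H(\delta/\eta)\in\RV_{\alpha-1}(\eta)$ this is $\bo(\lambda^k(\eta))=\lo(\lambda^{k-1}(\eta))$ using $\lambda(\eta)\to 0$. By the strong Markov property at $\mathcal F_{\min\mathcal I_\ell-1}$, $\sup_{x\in\R}\P(E_\ell)$ is bounded by the concentration quantity of Step~2 with $t=\bm t(\epsilon/2)$ and weights in $\bm\Gamma_{C_*}$, hence $\lo(\eta^N)=\lo(\lambda^{k-1}(\eta))$. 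Summing over $\ell\in[k]$ gives $\sup_{x\in I^-_\epsilon}\P(G^\eta(x))=\lo(\lambda^{k-1}(\eta))$, which is the assertion. The main obstacle is getting Step~3 right: one must order the choices of $\delta$, $\bar\epsilon'$, $N$, and the block length so that simultaneously the concentration bound of Lemma~\ref{lemma LDP, small jump perturbation} applies, the Grönwall amplification of the small-noise fluctuation over a block of duration $\bm t(\epsilon/2)$ stays below $\tfrac\epsilon2$, and the discrete recursion tracks $\bm y_\cdot(\cdot)$ closely enough to inherit the contraction guaranteed by $\bm t(\epsilon/2)$; the localization of Step~1 is what makes Lemma~\ref{lemma LDP, small jump perturbation} available despite the absence of Assumption~\ref{assumption: boundedness of drift and diffusion coefficients} in this section.
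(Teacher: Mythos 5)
Your proof is correct, but it takes a genuinely different route from the paper. The paper's proof is short: it rewrites the event as $\{\bm X^{\eta|b}_{[0,T]}(x)\in E(\epsilon)\}$ with $E(\epsilon)$ the set of c\`adl\`ag paths confined to the annulus $I_\epsilon\setminus(-\epsilon,\epsilon)$, shows that $E(\epsilon)$ is bounded away from $\mathbb D^{(k-1)|b}_{I^-_\epsilon}[0,T]$ (by the same pigeonhole-plus-contraction observation you use: any perturbed ODE path with only $k-1$ jumps over a horizon $T\ge k\,\bm t(\epsilon/2)$ has an inter-jump interval of length at least $\bm t(\epsilon/2)$, during which property \eqref{property: t epsilon function, first exit analysis} drags it into $[-\epsilon/2,\epsilon/2]$, at $J_1$-distance $\ge\epsilon/2$ from $E(\epsilon)$), and then invokes the uniform upper bound of Theorem~\ref{corollary: LDP 2} to get $\bo(\lambda^k(\eta))=\lo(\lambda^{k-1}(\eta))$ directly --- note that Theorem~\ref{corollary: LDP 2} needs no boundedness assumption, so no localization is required. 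You instead re-derive the relevant piece of the LDP upper bound from scratch: disjoint blocks of length $\bm t(\epsilon/2)/\eta$, each of which must either contain a $\delta$-large noise or exhibit an atypical small-noise fluctuation, with the two failure probabilities controlled by independence across blocks and by Lemma~\ref{lemma LDP, small jump perturbation}(a) respectively; the localization in your Step~1 is then genuinely needed to make the adapted weights bounded. Your argument is more elementary and self-contained (it does not lean on the sample-path LDP machinery at all), at the cost of being longer and of reproving, in this special case, estimates that Theorem~\ref{corollary: LDP 2} already packages; the paper's route is shorter and makes the logical dependence on the LDP explicit, which is the point of the framework in Section~\ref{subsec: framework, first exit time analysis}. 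I find no gap in your chain of estimates: the ordering of the constants ($N>(k-1)(\alpha-1)$, $\bar\epsilon'<\epsilon/(4\rho)$, $\delta<b/(2C_*)$, then $\eta$ small) is consistent, the $\bo(\eta)$ slack from evaluating the flow at time $\eta\lfloor\bm t(\epsilon/2)/\eta\rfloor$ rather than $\bm t(\epsilon/2)$ is correctly absorbed, and the inclusion $G^\eta(x)\subseteq(\bigcap_\ell D_\ell)\cup(\bigcup_\ell E_\ell)$ together with $\prod_\ell\P(D_\ell)=\bo(\lambda^k(\eta))$ and $\P(E_\ell)=\lo(\eta^N)$ delivers the claim.
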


\begin{proof}
\linksinpf{lemma: fixed cycle exit or return taking too long, first exit analysis}
In this proof, we write $\xi(t) = \xi_t$ for any $\xi \in \D[0,T]$,
and set $B_\epsilon(\bm 0) = \{\bm x \in \R^m:\ \norm{\bm x} < \epsilon\}$.
Note that
$\big\{
\bm X^{\eta|b}_j(\bm x) \in I_\epsilon \setminus B_\epsilon(\bm 0)\ \ \forall j \leq T/\eta
\big\}
= \big\{ \bm{X}^{\eta|b}_{[0,T]}(x) \in E(\epsilon)\big\}
$
where
\begin{align*}
    E(\epsilon) \delequal 
    \Big\{ \xi \in \mathbb{D}[0,T]:\ \xi(t) \in I_\epsilon\setminus B_\epsilon(\bm 0) \ \ \forall t \in[0,T]\Big\}.
\end{align*}
Recall the definition of $\mathbb{D}^{(k)|b}_{A}[0,T](\epsilon)$ in \eqref{def: l * tilde jump number for function g, clipped SGD}.
We claim that $E(\epsilon)$ is bounded away from $\mathbb{D}^{ (k - 1)|b }_{ I_\epsilon^- }[0,T](\epsilon)$.
This allows us to apply Theorem \ref{corollary: LDP 2} and conclude that
\begin{align*}
    \sup_{ x \in I_\epsilon }
    \P\Big( \bm{X}^{\eta|b}_{[0,T]}(\bm x) \in E(\epsilon)\Big)
    =
    \bm{O}\big( \lambda^{k}(\eta) \big)
    =
    \bm{o}\big(\lambda^{k - 1}(\eta)\big)\ \ \ \text{as }\eta \downarrow 0.
\end{align*}
Now, it only remains to verify that $E(\epsilon)$ is bounded away from $\mathbb{D}^{ (k - 1)|b }_{ I_\epsilon^- }[0,T](\epsilon)$,
which can be established if we show 
the existence of some $\delta > 0$ such that
\begin{align}
    \dj{[0,T]}(\xi,\xi^\prime) \geq \delta > 0
    \qquad
    \forall \xi \in \mathbb{D}^{ (k - 1)|b }_{ I_\epsilon^- }[0,T](\epsilon),\ \xi^\prime \in E(\epsilon).
    \label{proof, ineq, lemma: fixed cycle exit or return taking too long, first exit analysis}
\end{align}
First, 
by definition of $E(\epsilon)$, we have $\xi^\prime_t \in I_\epsilon\ \forall t \in [0,T]$
for any $\xi^\prime \in E(\epsilon)$.
Note that
$
\inf\{ \norm{\bm x - \bm y}:\ \bm x \in I_\epsilon, \bm y \notin I_{\epsilon/2} \} \geq \epsilon/2.
$
Therefore, 
if $\xi_t \notin I_{\epsilon/2}$
for some $t \leq T$,
we must have
$
\dj{[0,T]}(\xi,\xi^\prime) \geq \epsilon/2 > 0.
$
Now suppose that $\xi_t \in I_{\epsilon/2}$
for all $t \leq T$.
Due to 
$
\xi \in \mathbb{D}^{ (k - 1)|b }_{  I_\epsilon^- }[0,T](\epsilon),
$
there is some $\bm x \in I_\epsilon^-$, $\textbf W \in \R^{ d \times (k - 1)}$, 
$\textbf V \in \big(\bar{B}_{\epsilon}(\bm 0)\big)^{k-1}$,
and $(t_1,\cdots,t_{k - 1}) \in (0,T]^{ k - 1 \uparrow}$ such that
$
\xi = \bar h^{ (k - 1)|b }_{[0,T]}\big(\bm x,\textbf W,\textbf V,(t_1,\cdots,t_{k - 1})\big).
$
With the convention that $t_0 = 0$ and $t_{k} = T$,
we have 
\begin{align}
    \xi(t) = \bm{y}_{ t - t_{j-1} }\big(\xi(t_{j-1})\big)\qquad 
    \forall t \in [t_{j-1},t_j).
    \label{proof, property of path xi, lemma: fixed cycle exit or return taking too long, first exit analysis}
\end{align}
for each $j \in [k]$.
Here, $\bm{y}_\cdot(x)$ is the ODE defined in \eqref{def ODE path y t}.
Due to the assumption $T \geq k \cdot \bm{t}(\epsilon/2)$, 
there must be some $j \in [k]$ such that $t_j - t_{j - 1} \geq \bm{t}({\epsilon}/{2})$.
However, due to the running assumption that $\xi(t) \in I_{\epsilon/2}\ \forall t \in [0,T]$, we have $\xi(t_{j-1}) \in I_{\epsilon/2}$.
Combining \eqref{proof, property of path xi, lemma: fixed cycle exit or return taking too long, first exit analysis} along with property \eqref{property: t epsilon function, first exit analysis}, we get
$
\lim_{t \uparrow t_j}\xi(t) \in \bar B_{\epsilon/2}(\bm 0) \subset B_\epsilon(\bm 0).
$
On the other hand, by definition of $E(\epsilon)$, we have $\xi^\prime(t) \notin B_\epsilon(\bm 0)$
for all $t \in [0,T]$, 
which implies 
$
\dj{[0,T]}(\xi,\xi^\prime) \geq \frac{\epsilon}{2}.
$
This concludes the proof.
\end{proof}

Lastly, we establish condition \eqref{eq:E4}.
Note that the first visit time $\tau^{\eta}_{A(\epsilon)}(x)$ therein coincides with $R^{\eta|b}_\epsilon(x)$ defined in \eqref{def: return time R in first cycle, first exit analysis}
under our choice of $A(\epsilon) = \{ \bm x \in \R^m:\ \norm{\bm x} < \epsilon \}$.

\begin{lemma}[Verifying condition \eqref{eq:E4}]
\label{lemma: cycle, efficient return}
\linksinthm{lemma: cycle, efficient return}
% Let Assumptions \ref{assumption gradient noise heavy-tailed}, \ref{assumption: f and sigma, stationary distribution of SGD}, and \ref{assumption: shape of f, first exit analysis} hold.
    Let $\bm{t}(\cdot)$ be defined as in \eqref{def: t epsilon function, first exit analysis} and
    \begin{align*}
        E(\eta,\epsilon,\bm x)
        \delequal 
        \Bigg\{ R^{\eta|b}_\epsilon(\bm x) \leq \frac{\bm t(\epsilon/2)}{\eta};\ \bm X^{\eta|b}_j(\bm x) \in I\ \forall j \leq R^{\eta|b}_\epsilon(\bm x) \Bigg\}.
    \end{align*}
    It holds for all $\epsilon \in (0,\bar\epsilon)$ that
    $
     \lim_{\eta\downarrow 0}\sup_{\bm x \in I_\epsilon^-}
        \P\Big( \big(E(\eta,\epsilon,\bm x)\big)^\complement\Big) = 0.
    $
    % \begin{align*}
    %     \lim_{\eta\downarrow 0}\sup_{x \in [s_\text{left} + \epsilon,s_\text{right}-\epsilon]}
    %     \P\Big( \big(A(\eta,\epsilon,x)\big)^\complement\Big) = 0.
    % \end{align*}
\end{lemma}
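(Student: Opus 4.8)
The plan is to recast $(E(\eta,\epsilon,x))^\complement$ as the event that the scaled path $\bm{X}^{\eta|b}_{[0,T]}(x)$, with horizon $T\delequal\bm{t}(\epsilon/2)$, lands in a set that is bounded away from the family of zero--jump ODE paths started in $I^-_\epsilon$, and then to invoke the sample--path large deviations of Theorem~\ref{corollary: LDP 2} with $k=1$. Concretely, set $T=\bm{t}(\epsilon/2)$ and introduce the Borel subsets of $\mathbb{D}[0,T]$
\[
  B_1\delequal\big\{\xi:\ |\xi(t)|\ge\epsilon\ \ \forall t\in[0,T]\big\},
  \qquad
  B_2\delequal\big\{\xi:\ \xi(t)\in I_{\epsilon/2}^\complement\ \text{ for some }t\in[0,T]\big\}.
\]
Unwinding the definition of $R^{\eta|b}_\epsilon$ and of the time--embedding $j\mapsto j\eta$, one checks directly that $\{R^{\eta|b}_\epsilon(x)>T/\eta\}\subseteq\{\bm{X}^{\eta|b}_{[0,T]}(x)\in B_1\}$ and that $\{\exists\,j\le R^{\eta|b}_\epsilon(x):\ X^{\eta|b}_j(x)\notin I_{\epsilon/2}\}\subseteq\{\bm{X}^{\eta|b}_{[0,T]}(x)\in B_1\cup B_2\}$ (the second inclusion splits according to whether $R^{\eta|b}_\epsilon(x)$ exceeds $T/\eta$). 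Hence $(E(\eta,\epsilon,x))^\complement\subseteq\{\bm{X}^{\eta|b}_{[0,T]}(x)\in B_1\cup B_2\}$ for every $x\in I^-_\epsilon$.

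The heart of the argument is to show that $B_1\cup B_2$ is bounded away, in $\bm{d}_{J_1,[0,T]}$, from $\mathbb{D}^{(0)|b}_{I^-_\epsilon}[0,T]$, which is precisely the family $\{\bm{y}_\cdot(x_0):\ x_0\in I^-_\epsilon\}$ of ODE paths. Two geometric facts about the flow are needed: (i) for $x_0\in I^-_\epsilon$, Assumption~\ref{assumption: shape of f, first exit analysis} makes $|\bm{y}_t(x_0)|$ nonincreasing, and since $a(0)=0$ is an equilibrium and solutions are unique the trajectory cannot cross the origin, so $\bm{y}_t(x_0)\in[x_0\wedge 0,\,x_0\vee 0]\subseteq I^-_\epsilon$ for all $t\ge0$; (ii) by \eqref{property: t epsilon function, first exit analysis}, $\bm{y}_T(x_0)\in[-\epsilon/2,\epsilon/2]$, since $x_0\in I^-_\epsilon\subseteq I^-_{\epsilon/2}$ and $T=\bm t(\epsilon/2)$. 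From (i), and because every time change in $\Lambda_T$ fixes $0$ and $T$, for any $\xi\in B_2$ one gets $\bm{d}_{J_1,[0,T]}\big(\xi,\bm{y}_\cdot(x_0)\big)\ge\bm{d}(I^-_\epsilon,I_{\epsilon/2}^\complement)=\epsilon/2$; from (ii), for any $\xi\in B_1$, $\bm{d}_{J_1,[0,T]}\big(\xi,\bm{y}_\cdot(x_0)\big)\ge|\xi(T)-\bm{y}_T(x_0)|\ge\epsilon-\epsilon/2=\epsilon/2$. Taking the infimum over $x_0\in I^-_\epsilon$ shows that $B_1\cup B_2$ sits at $J_1$--distance at least $\epsilon/2$ from $\mathbb{D}^{(0)|b}_{I^-_\epsilon}[0,T]$.

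Since Assumptions~\ref{assumption gradient noise heavy-tailed},~\ref{assumption: lipschitz continuity of drift and diffusion coefficients}, and~\ref{assumption: nondegeneracy of diffusion coefficients} are in force throughout this section and $I^-_\epsilon$ is compact, Theorem~\ref{corollary: LDP 2} applies with $k=1$ and yields
\[
  \limsup_{\eta\downarrow0}\frac{\sup_{x\in I^-_\epsilon}\P\big(\bm{X}^{\eta|b}_{[0,T]}(x)\in B_1\cup B_2\big)}{\lambda(\eta)}
  \ \le\ \sup_{x\in I^-_\epsilon}\mathbf{C}^{(1)|b}_{[0,T]}\big((B_1\cup B_2)^-;x\big)\ <\ \infty .
\]
Because $\lambda(\eta)=\eta^{-1}H(\eta^{-1})\in\RV_{\alpha-1}(\eta)$ with $\alpha>1$, we have $\lambda(\eta)\to0$ as $\eta\downarrow0$; combined with the inclusion from the first paragraph this gives $\sup_{x\in I^-_\epsilon}\P\big((E(\eta,\epsilon,x))^\complement\big)=\bm{O}(\lambda(\eta))\to0$, which is the claim.

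I expect the only genuine obstacle to be the two ``bounded away'' estimates --- in particular checking that the endpoint value $\xi(T)$ is controlled (this is exactly why the horizon is taken to be $\bm{t}(\epsilon/2)$ rather than something smaller) and that the deterministic flow really never leaves $I^-_\epsilon$, for which the non--crossing of the equilibrium at $0$ is essential. The measurability of $B_1,B_2$ in $\mathbb{D}[0,T]$, the set--theoretic inclusions in the first paragraph, and the asymptotics of $\lambda$ are routine.
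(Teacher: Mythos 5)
Your proof is correct and follows essentially the same route as the paper's: decompose $(E(\eta,\epsilon,x))^\complement$ into path events bounded away in $J_1$ from the zero-jump set $\mathbb{D}^{(0)|b}_{I^-_\epsilon}[0,\bm t(\epsilon/2)]$ (using the monotone decay of $|\bm y_t|$ and the definition of $\bm t(\epsilon/2)$ to control the endpoint), then apply Theorem~\ref{corollary: LDP 2} and use $\lambda(\eta)\to 0$. The only cosmetic difference is that your second set $B_2$ is a slight superset of the paper's $E^*_2(\epsilon)$ (you drop the requirement of a subsequent return to $(-\epsilon,\epsilon)$), which changes nothing in the bounded-away estimate.
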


\begin{proof}
\linksinpf{lemma: cycle, efficient return}
In this proof, we write $B_\epsilon(\bm 0) = \{\bm x \in \R^m:\ \norm{\bm x} < \epsilon\}$ and $I(\epsilon) = I_\epsilon$.
    Note that
    $
    \big(E(\eta,\epsilon,\bm x)\big)^\complement
    \subseteq 
    \big\{ \bm{X}^{\eta|b}_{ [0,\bm{t}(\epsilon/2)] }(\bm x) \in E^*_1(\epsilon) \cup E^*_2(\epsilon) \big\},
    $
    where 
\begin{align*}
    E^*_1(\epsilon) & \delequal \bigg\{ \xi \in \mathbb{D}[0,\bm{t}(\epsilon/2)]:\ \xi(t) \notin B_\epsilon(\bm 0)\ \forall t \in [0,\bm{t}(\epsilon/2)] \bigg\},
    \\
    E^*_2(\epsilon) & \delequal \bigg\{ \xi \in \mathbb{D}[0,\bm{t}(\epsilon/2)]:\ \exists 0 \leq s \leq t \leq \bm{t}(\epsilon/2)\ s.t.\ \xi(t) \in B_\epsilon(\bm 0),\ \xi(s) \notin I \bigg\}.
    % \\
    % E^*_2(\epsilon) & \delequal \big\{ \xi \in \mathbb{D}[0,\bm{t}(\epsilon/2)]:\ \exists 0 \leq s \leq t \leq \bm{t}(\epsilon/2)\ s.t.\ \xi(t) \in (-\epsilon,\epsilon),\ |\xi(s)| > |x| + \frac{\epsilon}{2} \big\}.
\end{align*}
Recall the definition of $\mathbb{D}^{(k)|b}_{A}[0,T](\epsilon)$ in \eqref{def: l * tilde jump number for function g, clipped SGD}.
We claim that both $E^*_1(\epsilon)$ and $E^*_2(\epsilon)$ are bounded away from 
\begin{align*}
    \mathbb{D}^{(0)|b}_{ (I(\epsilon))^- }[0,\bm{t}(\epsilon/2)] = 
    \bigg\{ 
        \big\{ \bm{y}_t(\bm x):\ t \in [0,\bm{t}(\epsilon/2)] \big\}:\ \bm x \in  \big(I(\epsilon)\big)^-
    \bigg\}.
\end{align*}
To see why, note that
% the property \eqref{property: bounded away, I epsilon, different epsilon} allows us to fix some $\delta > 0$ such that  
$
\inf\{ \norm{\bm x - \bm y}:\ \bm x \in I(\epsilon), \bm y \notin I(\epsilon/2) \} \geq \epsilon/2.
$
Meanwhile, properties \eqref{constant bar epsilon, new, 2, first exit time analysis} and \eqref{property: t epsilon function, first exit analysis}
imply that
 $\bm{y}_{ \bm{t}(\epsilon/2) }(\bm x) \in \bar B_{\epsilon/2}(\bm 0)$ for all $\bm x \in \big(I(\epsilon)\big)^-$.
% from Assumption \ref{assumption: shape of f, first exit analysis} and property \eqref{property: t epsilon function, first exit analysis},
% we get and $\bm{y}_t(x) \in I_\epsilon$, $|\bm y_t(x)| \leq |x|$ for all $t$ and $x$ such that $t \in [0,\bm{t}(\epsilon/2)]$ and $x \in I^-_\epsilon$.
Therefore,
\begin{align}
    \dj{[0,\bm{t}(\epsilon/2)]}
    \bigg( 
        \mathbb{D}^{(0)|b}_{ (I(\epsilon))^- }[0,\bm{t}(\epsilon/2)],\ E^*_1(\epsilon)
    \bigg) & \geq \frac{\epsilon}{2} > 0,
    \label{proof, bound 1, lemma: cycle, efficient return}
     % \\ 
     % \dj{[0,\bm{t}(\epsilon/2)]}\Big( \mathbb{D}^{(0)|b}_{ I_\epsilon }[0,\bm{t}(\epsilon/2)],\ E^*_3(\epsilon)\Big) & \geq \frac{\epsilon}{2}> 0.
     % \label{proof, bound 3, lemma: cycle, efficient return}
\end{align}
Meanwhile, by property \eqref{property: coverage of I_epsilon, bounded away from I complement},
we immediately get
\begin{align}
    \dj{[0,\bm{t}(\epsilon/2)]}
    \bigg( 
        \mathbb{D}^{(0)|b}_{ (I(\epsilon))^- }[0,\bm{t}(\epsilon/2)],\ E^*_2(\epsilon)
    \bigg) & \geq \delta > 0.
     \label{proof, bound 2, lemma: cycle, efficient return}
\end{align}
This allows us to apply Theorem \ref{corollary: LDP 2} and obtain 
$$
\sup_{\bm x \in (I(\epsilon))^-}
\P\bigg( \big(E(\eta,\epsilon,\bm x)\big)^\complement\bigg) \leq 
\sup_{\bm x \in (I(\epsilon))^-}
\P\bigg(\bm{X}^{\eta|b}_{ [0,\bm{t}(\epsilon/2)] }(\bm x) \in E^*_1(\epsilon) \cup E^*_2(\epsilon)\bigg)
= \bm{O}\big(\lambda(\eta)\big)
$$
as $\eta \downarrow 0$.
To conclude the proof, one only needs to note that $\lambda(\eta) \in \RV_{\alpha - 1}(\eta)$ (with $\alpha > 1$) and hence $\lim_{\eta \downarrow 0}\lambda(\eta) = 0$.
\end{proof}

We conclude this section with the proof of Theorem \ref{theorem: first exit time, unclipped}.

\begin{proof}[Proof of Theorem \ref{theorem: first exit time, unclipped}]
\linksinpf{theorem: first exit time, unclipped}
First, it is established in Lemma~\ref{lemma: exit rate strictly positive, first exit analysis} that $C^I_b < \infty$.
Next, since Lemmas~\ref{lemma: exit prob one cycle, with exit location B, first exit analysis}--\ref{lemma: cycle, efficient return}
verify Condition~\ref{condition E2},
Theorem~\ref{theorem: first exit time, unclipped}
follows immediately from Theorem~\ref{thm: exit time analysis framework}.
\end{proof}

\section*{Acknowledgements}
The authors gratefully acknowledge the support of the National Science Foundation (NSF) under CMMI-2146530.

\bibliographystyle{abbrv} % outcomment this and next line in Case 1
\bibliography{bib_appendix} % if more than one, comma separated

\newpage
\appendix

\section{Results under General Scaling}
\label{sec: appendix, SGD, general scaling, results}

Below, we present results analogous to those in Section~\ref{sec: main results} under a general scaling.
Specifically, throughout this section we define $(\bm X^\eta_j(\bm x))_{j \geq 0}$ 
and 
$(\bm X^{\eta|b}_j(\bm x))_{j \geq 0}$ by the recursions in \eqref{eq:gneral-scaling}
with $\gamma \in (\frac{1}{2 \wedge \alpha}, \infty)$,
where $\alpha > 1$ is the heavy-tailed index in Assumption~\ref{assumption gradient noise heavy-tailed}.
Let 
\begin{align*}
    \lambda(\eta;\gamma) = \eta^{-1}H(\eta^{-\gamma}).
\end{align*}
We adopt the notations
$
\mathbf{C}^{(k)|b}_{[0,T]},
$
$
\mathbb{D}^{(k)|b}_A[0,T](\epsilon),
$
$
{\bm{X}^{\eta|b}_{[0,T]}(\bm x)},
$
etc., introduced
in Section~\ref{sec: main results}.
First, we present the sample path large deviations under general scaling.
\begin{theorem}
Let  Assumptions \ref{assumption gradient noise heavy-tailed} and \ref{assumption: lipschitz continuity of drift and diffusion coefficients} hold.
Let $\gamma \in (\frac{1}{2 \wedge \alpha}, \infty)$.
\begin{enumerate}[(a)]
    \item 
        For any $k \in \mathbb N$, any $b,T,\epsilon > 0$, and any compact $A\subseteq \R^m$,
$$
 \lambda^{-k}(\eta;\gamma) \P\big( \bm{X}^{\eta|b}_{[0,T]}(\bm x) \in\ \cdot\ \big) 
    \rightarrow 
    \mathbf{C}^{(k)|b}_{[0,T]}   (\ \cdot\ ; \bm x)
    \quad
    \text{
        in
        $
        \mathbb{M}\Big( \mathbb{D}[0,T]\setminus \mathbb{D}^{(k-1)|b}_{A}[0,T](\epsilon) \Big)
        $
        uniformly in {$\bm x$ on} A
    }
$$
% \begin{align*}
%       \lambda^{-k}(\eta) \P\big( \bm{X}^{\eta|b}_{[0,T]}(x) \in\ \cdot\ \big) 
%     \rightarrow 
%     \mathbf{C}^{(k)|b}_{[0,T]}   (\ \cdot\ ; x)
%     \text{ in }\mathbb{M}\big( \mathbb{D}[0,T]\setminus \mathbb{D}^{(k-1)|b}_{A}[0,T] \big)
% \end{align*}
 as $\eta \downarrow 0$.
Furthermore,
for any
$B \in \mathscr{S}_{\mathbb{D}[0,T]}$
that is 
bounded away from ${ \mathbb{D}}_{A}^{(k - 1)|b}[0,T](\epsilon)$
for some (and hence all) $\epsilon > 0$ small enough,
% Let Assumptions \ref{assumption gradient noise heavy-tailed}, \ref{assumption: lipschitz continuity of drift and diffusion coefficients}, and \ref{assumption: nondegeneracy of diffusion coefficients} hold.
% For any $b,T > 0$,
% any compact $A\subseteq \mathbb{R}$,
% any
% $B \in \mathscr{S}_{\mathbb{D}[0,T]}$ and $k \geq 1$,
% if
% $B$ is bounded away from ${ \mathbb{D}}_{A}^{(k - 1)|b}[0,T]$, then
\begin{equation}\nonumber
    \begin{split}
        \inf_{\bm x \in A}
    \mathbf{C}^{(k)|b}_{[0,T]}\big( B^\circ; \bm x \big)
& \leq  \liminf_{\eta \downarrow 0}\frac{ \inf_{\bm x \in A}\P\big({\bm{X}}^{\eta|b}_{[0,T]}(\bm x) \in B \big) }{  \lambda^k(\eta;\gamma)  } 
\\
   & \leq  \limsup_{\eta \downarrow 0}\frac{ \sup_{\bm x \in A}\P\big({\bm{X}}^{\eta|b}_{[0,T]}(\bm x) \in B \big) }{  \lambda^k(\eta;\gamma)  } 
    \leq 
    \sup_{\bm x \in A}
    \mathbf{C}^{(k)|b}_{[0,T]}\big( B^-; \bm x \big)
    <
    \infty.
    \end{split}
\end{equation}

\item Furthermore, suppose that Assumption~\ref{assumption: boundedness of drift and diffusion coefficients} holds.
For any $k \in \mathbb N$, $T,\epsilon >0$, and any compact $A \subseteq \R^m$ that
% \begin{align*}
%     \frac{ \P( \bm{X}^\eta(x) \in\ \cdot\ ) }{ \big( \eta^{-1}H(\eta^{-1}) \big)^k  }
%     \rightarrow 
%     \mathbf{C}^{(k)}(\ \cdot\ ; x)
%     \text{ in }\mathbb{M}\big( \mathbb{D}\setminus \mathbb{D}^{(k-1)}_A \big)\text{ uniformly in \rvin{$x$ on} }A.
%      %\text{ as }\eta \downarrow 0.
% \end{align*}
$$
\lambda^{-k}(\eta;\gamma) \P\big( \bm{X}^{\eta}_{[0,T]}(\bm x) \in\ \cdot\ \big) 
    \rightarrow 
    \mathbf{C}^{(k)}_{[0,T]}(\ \cdot\ ;\bm x)
    \quad
    \text{
        in $\mathbb{M}\Big(\mathbb{D}[0,T]\setminus \mathbb{D}^{(k-1)}_A[0,T](\epsilon)\Big)$
        uniformly in $\bm x$ on A
    }
$$
% \begin{align*}
%      \lambda^{-k}(\eta) \P\big( \bm{X}^{\eta}_{[0,T]}(x) \in\ \cdot\ \big) 
%     \rightarrow 
%     \mathbf{C}^{(k)}_{[0,T]}(\ \cdot\ ; x)
%     \text{ in }\mathbb{M}\big(\mathbb{D}[0,T]\setminus \mathbb{D}^{(k-1)}_A[0,T]\big)
% \end{align*}
 as $\eta \downarrow 0$.
Furthermore, 
for any
$B \in \mathscr{S}_{\mathbb{D}[0,T]}$ that is
 bounded away from ${ \mathbb{D}}_{A}^{(k - 1)}[0,T](\epsilon)$
 for some (and hence all) $\epsilon > 0$ small enough,
\begin{equation} \nonumber
    \begin{split}
        \inf_{\bm x \in A}
    \mathbf{C}^{(k)}_{[0,T]}( B^\circ;\bm x)
& \leq  \liminf_{\eta \downarrow 0}\frac{ \inf_{\bm x \in A}\P\big({\bm{X}}^{\eta}_{[0,T]}(\bm x) \in B \big) }{  \lambda^k(\eta;\gamma)  } 
\\
   & \leq  \limsup_{\eta \downarrow 0}\frac{ \sup_{\bm x \in A}\P\big({\bm{X}}^{\eta}_{[0,T]}(\bm x) \in B \big) }{  \lambda^k(\eta;\gamma)  } 
    \leq 
    \sup_{\bm x \in A}
    \mathbf{C}^{(k)}_{[0,T]}( B^-; \bm x)
    <
    \infty.
    \end{split}
\end{equation}
\end{enumerate}
\end{theorem}

The corresponding conditional limit theorem is identical to Corollary~\ref{corollary: conditional limit, SGD}, under the condition that $\gamma \in (\frac{1}{2 \wedge \alpha}, \infty)$, so we skip the details.
Lastly, we present the metastability analysis.
Let $I \subseteq \R^m$ be an open set such that $\bm 0 \in \bm I$ and Assumption~\ref{assumption: shape of f, first exit analysis} holds.
Let the first exit times $\tau^\eta(\bm x)$ and $\tau^{\eta|b}(\bm x)$ be defined as in \eqref{def: first exit time for heavy tailed SGD}.
We adopt the notations $\mathcal J^I_b$, $\mathcal G^{(k)|b}(\epsilon)$, $\widecheck{\mathbf C}^{k|b}$, etc.,
introduced
in Section~\ref{sec: first exit time simple version}.

\begin{theorem}
    Let Assumptions \ref{assumption gradient noise heavy-tailed}, \ref{assumption: lipschitz continuity of drift and diffusion coefficients}, and \ref{assumption: shape of f, first exit analysis} hold.
    Let $\gamma \in (\frac{1}{2 \wedge \alpha}, \infty)$.
\begin{enumerate}[(a)]
    \item 
         Let $b > 0$.
         Suppose that $\mathcal J^I_b < \infty$,
         $I^c$ is bounded away from $\mathcal G^{(\mathcal J^I_b - 1)|b}(\epsilon)$ for some (and hence all) $\epsilon > 0$ small enough,
        and
        $
        \widecheck{\mathbf C}^{( \mathcal J^I_b )|b}(\partial I) = 0.
        $
        Then 
        ${C^I_b} \delequal  \widecheck{ \mathbf{C} }^{ (\mathcal{J}^I_b)|b }(I^\complement) < \infty$.
        Furthermore, if $C^I_b \in (0,\infty)$,
        then
        for any $\epsilon > 0$, $t \geq 0$, and measurable set $B \subseteq I^c$,
        \begin{align*}
            \limsup_{\eta\downarrow 0}\sup_{\bm x \in I_\epsilon}
            \P\bigg(
                C^I_b \eta\cdot \lambda^{ \mathcal{J}^I_b }(\eta;\gamma)\tau^{\eta|b}(\bm x) > t
             ;\ \bm X^{\eta|b}_{ \tau^{\eta|b}(\bm x)}(\bm x) \in B
             \bigg)
             & \leq \frac{ \widecheck{\mathbf{C}}^{ (\mathcal{J}^I_b)|b }(B^-) }{ C^I_b }\cdot\exp(-t),
             \\
             \liminf_{\eta\downarrow 0}\inf_{\bm x \in I_\epsilon}
            \P\bigg(
                C^I_b \eta\cdot \lambda^{ \mathcal{J}^I_b }(\eta;\gamma)\tau^{\eta|b}(\bm x) > t
             ;\ \bm X^{\eta|b}_{ \tau^{\eta|b}(\bm x)}(\bm x) \in B
             \bigg)
             & \geq \frac{ \widecheck{\mathbf{C}}^{ (\mathcal{J}^I_b)|b }(B^\circ) }{ C^I_b }\cdot\exp(-t).
        \end{align*}
        Otherwise, we have $C^I_b = 0$, and
        \begin{align*}
            \limsup_{\eta\downarrow 0}\sup_{\bm x \in I_\epsilon}
        \P\bigg(
            \eta\cdot \lambda^{ \mathcal J^I_b }(\eta;\gamma)\tau^{\eta|b}(\bm x) \leq t
         \bigg) = 0
         \qquad
         \forall \epsilon > 0,\ t \geq 0.
        \end{align*}

    \item 
        Suppose that $\widecheck{\mathbf C}(\partial I) = 0$.
        Then ${C^I_\infty} \delequal  \widecheck{ \mathbf{C} }(I^\complement) < \infty$.
        Furthermore, if $C^I_\infty > 0$,
        then
        for any $t \geq 0$ and measurable set $B \subseteq I^c$,
    \begin{align*}
        \limsup_{\eta\downarrow 0}\sup_{\bm x \in I_\epsilon}
        \P\bigg(
            C^I_\infty \eta\cdot \lambda(\eta;\gamma)\tau^\eta(\bm x) > t;\ \bm X^\eta_{ \tau^\eta(\bm x)}(\bm x) \in B
        \bigg)
        & \leq
        \frac{ \widecheck{\mathbf{C}}(B^-) }{ C^I_\infty }\cdot\exp(-t),
        \\ 
        \liminf_{\eta\downarrow 0}\inf_{\bm x \in I_\epsilon}
        \P\bigg(
            C^I_\infty \eta\cdot \lambda(\eta;\gamma)\tau^\eta(\bm x) > t;\ \bm X^\eta_{ \tau^\eta(\bm x)}(\bm x) \in B
        \bigg)
        & \geq
        \frac{ \widecheck{\mathbf{C}}(B^\circ) }{ C^I_\infty  }\cdot\exp(-t).
    \end{align*}
     Otherwise, we have $C^I_\infty = 0$, and 
    \begin{align*}
        \limsup_{\eta\downarrow 0}\sup_{\bm x \in I_\epsilon}
        \P\bigg(
            \eta\cdot \lambda(\eta;\gamma)\tau^\eta(\bm x) \leq t
        \bigg) = 0
        \qquad
        \forall \epsilon > 0,\ t \geq 0.
    \end{align*}
    
\end{enumerate}
\end{theorem}
The proofs for results in this section will be almost identical to those presented in the main paper.
We omit the details to avoid repetition.

\section{Results for L\'evy-Driven Stochastic Differential Equations}
\label{sec: appendix, SDE reults}

In this section, we collect the results for stochastic differential equations driven by L\'evy processes with regularly varying increments. Specifically,
any multidimensional L\'evy process $\notationdef{notation-levy-process}{\bm{L}} = \{ \bm L_t:\ t \geq 0\}$
can be characterized by its generating triplet $(\bm c_{\bm L},\bm \Sigma_{\bm L},\nu)$ where $\bm c_{\bm L} \in \mathbb{R}^m$ is the drift parameter, the positive semi-definite matrix $\bm \Sigma_{\bm L} \in \R^{m \times m}$ is the magnitude of the Brownian motion term in $\bm L_t$, 
and $\nu$ is the L\'evy measure characterizing the intensity of jumps in $\bm L_t$. 
More precisely, 
we have the following L\'evy–Itô decomposition
\begin{align}
    \bm L_t \distequal \bm c_{\bm L}t + \bm \Sigma_{\bm L}^{1/2} 
    \bm B_t+ 
    \int_{\norm{\bm x} \leq 1}\bm x\big[ N([0,t]\times d\bm x) - t\nu(dx) \big] 
    + 
    \int_{\norm{\bm x} > 1}\bm xN( [0,t]\times d\bm x ) 
    \label{prelim: levy ito decomp}
\end{align}
where $\bm B_t$ is a standard Brownian motion in $\R^m$, 
the measure $\nu$ satisfies $\int (\norm{\bm x}^2\wedge 1) \nu(d\bm x) < \infty$,
and $N$ is a Poisson random measure independent of $\bm B_t$ with intensity measure $\mathcal L_\infty\times \nu$.
See Chapter~4 of \cite{sato1999levy} for details.
We impose the following heavy-tailed assumption on the increments of $\bm L_t$.

\begin{assumption}\label{assumption: heavy-tailed levy process}
$\E \bm L_1 = \bm 0$. Besides, there exist $\alpha > 1$ 
and 
a probability measure $\mathbf S(\cdot)$ on the unit sphere of $\R^d$ such that
\begin{itemize}
    \item 
         $\notationdef{notation-H-L}{H_L(x)} \delequal \nu\Big( \big\{\bm y \in \R^d:\ \norm{\bm y} > x  \big\} \Big) \in \RV_{-\alpha}(x)$ as $x \to \infty$,

    \item As $r \to \infty$,
        \begin{align*}
            \frac{ \big(\nu \circ \Phi^{-1}_r\big)(\cdot) }{ H_L(r)} \rightarrow \nu_\alpha \times \mathbf S
            \qquad 
            \text{in $\mathbb M\Big( 
            \big([0,\infty) \times \mathfrak N_d \big)
            \setminus
            \big( \{0\} \times \mathfrak N_d \big)
            \Big)$},
        \end{align*}
        where 
        $\mathfrak N_d$ is the unit sphere of $\R^d$,
        the measure $\nu_\alpha$ is defined in \eqref{def: measure nu alpha},
        and
        % the measure $\big(\nu \circ \Phi^{-1}_r\big)$ is defined by
        \begin{align*}
            \big(\nu \circ \Phi^{-1}_r\big)(\cdot)
            \delequal
            \nu\Big(
                \Phi^{-1}(r\cdot\ ,\ \cdot)
            \Big),
        \end{align*}
        i.e. $ \big(\nu \circ \Phi^{-1}_r\big)(A\times B) = 
        \nu\big(
        \Phi^{-1}(rA,B)
        \big)
        $
        for all Borel sets $A \subseteq (0,\infty)$ and $B \subseteq \mathfrak N_d$.
\end{itemize}
\end{assumption}
Consider a filtered probability space
$\big(\Omega,\mathcal{F},\mathbb{F} = (\mathcal{F}_{t})_{t \geq 0},\P\big)$
satisfying the usual hypotheses stated in Chapter I, \cite{protter2005stochastic} and supporting the L\'evy process $\bm L$, where
$\mathcal{F}_{0} = \{\emptyset,\Omega\}$ and $\mathcal{F}_{t}$ is the $\sigma$-algebra generated by $\{\bm L_s:s\in[0,t]\}$.
% Let $\beta > 0$.
For $\eta \in (0,1]$
and $\beta \geq 0$, define the scaled process
\begin{align}
    {\bar{\bm L}^\eta} \delequal \big\{ \bar{\bm L}^\eta_t = \eta \bm L_{t/\eta^\beta}:\ t \geq 0\big\},
    \label{def: scaled levy process}
\end{align}
and
let $\bm Y^\eta_t(\bm x)$ be the solution to SDE
\begin{align}
\bm Y^\eta_0(\bm x) = \bm x,\qquad 
    d\notationdef{notation-Y-eta-SDE}{\bm Y^\eta_t(\bm x)} 
    % = a\big(Y^\eta_{t-}(x)\big)dt + \eta \sigma\big(Y^\eta_{t-}(x)\big)dL_{t/\eta^\beta}
    =
    \bm a\big(\bm Y^\eta_{t-}(\bm x)\big)dt
        +
    \bm \sigma\big(\bm Y^\eta_{t-}(\bm x)\big) d\bar{\bm L}^\eta_t.
    \label{defSDE, initial condition x}
\end{align}
Henceforth in Section~\ref{sec: appendix, SDE reults}, we consider $\beta \in [0,2\wedge \alpha)$
where $\alpha > 1$ is the tail index in Assumption~\ref{assumption: heavy-tailed levy process}.
Below, we present the sample-path large deviations and metastability analysis of $\bm Y^\eta_t(\bm x)$.

\subsection{Sample Path Large Deviations}
\label{subsec: LD, SDE}

Recall the definitions of the mapping $h^{(k)}_{[0,T]} = h^{(k)|\infty}_{[0,T]}$ in \eqref{def: perturb ode mapping h k b, 1}--\eqref{def: perturb ode mapping h k b, 3}
as well as the measure $\mathbf{C}^{(k)}_{[0,T]} = \mathbf{C}^{(k)|\infty}_{[0,T]}$ in \eqref{def: measure mu k b t}.
Also, recall uniform $\mathbb{M}$-convergence introduced in Definition \ref{def: uniform M convergence}.
Define
$\notationdef{notation-bm-Y-0T-eta-SDE}{\bm Y^\eta_{[0,T]}(\bm x)} = \{\bm Y^\eta_t(\bm x):\ t\in[0,T]\}$ as a random element in $\D[0,T]$.
In case that $T = 1$, we suppress $[0,1]$ and write $\notationdef{notation-bm-Y-eta-SDE}{\bm Y^\eta(\bm x)}$.
The next result characterizes the sample-path large deviations for $\bm Y^\eta_{[0,T]}(x)$  by establishing $\mathbb{M}$-convergence that is uniform in the initial condition $x$.
The proofs are almost identical to those of $\bm X^\eta_j(\bm x)$ and hence omitted to avoid repetition.
Recall that ${H_L(x)} = \nu\big( \{\bm y \in \R^d:\ \norm{\bm y} > x  \} \big)$.
Let
\begin{align*}
   \notationdef{notation-scale-function-lambda-L}{\lambda_L(\eta;\beta)} \delequal \eta^{-\beta} H_L(\eta^{-1})
\end{align*}
and
$
\lambda^k_L(\eta;\beta) = \big(\lambda_L(\eta;\beta)\big)^k,
$
where $\beta \in [0,2\wedge \alpha)$ determines the time scaling in \eqref{def: scaled levy process}.

\begin{theorem}
\label{theorem: LDP, SDE, uniform M convergence}
\linksinthm{theorem: LDP, SDE, uniform M convergence}
% For any $B \subseteq \R$, let
% \begin{align*}
%     \Pi^{\eta,(k)}(B) \delequal \bigg\{ \frac{ \P\big( \bm Y^\eta(x) \in\ \cdot\ \big) }{ \big(H_L(\frac{1}{\eta})\big/\eta \big)^k  } :\ x \in B \bigg\},
%     \ \ \ 
%     \Pi^{*,(k)}(B) \delequal
%     \Big\{
%     \mathbf{C}^{(k)}(\ \cdot\ ; \bm{\delta}_x):\ x \in B
%     \Big\}.
% \end{align*}
Under Assumptions \ref{assumption: lipschitz continuity of drift and diffusion coefficients},  \ref{assumption: boundedness of drift and diffusion coefficients}, and \ref{assumption: heavy-tailed levy process},
it holds for
 any $\beta \in [0,2\wedge \alpha)$,
$T,\epsilon >0,\ k \in \mathbb N$, and any compact set $A\subseteq \R^m$ that
$$
\lambda^{-k}_L(\eta;\beta)\P\big(\bm{Y}^\eta_{[0,T]}(\bm x)\in\ \cdot\ \big)
    \rightarrow \mathbf{C}^{(k)}_{[0,T]}(\ \cdot\ ;\bm x)
    \quad
    \text{
    in
        $
        \mathbb{M}\Big(\mathbb{D}[0,T]\setminus \mathbb{D}^{(k-1)}_A[0,T](\epsilon)\Big)
        $
    uniformly in $\bm x$ on A
    }
$$
 as $\eta \to 0$.
Furthermore,
given
$B \in \mathscr{S}_{\mathbb{D}[0,T]}$
that is bounded away from ${ \mathbb{D}}_{A}^{(k - 1)}[0,T](\epsilon)$
for some (and hence all) $\epsilon > 0$ small enough,
\begin{align*}
\inf_{\bm x \in A}
    \mathbf{C}^{(k)}_{[0,T]}\big( B^\circ; \bm x \big)
& \leq  \liminf_{\eta \downarrow 0}\frac{ \inf_{\bm x \in A}\P\big({\bm{Y}}^{\eta}_{[0,T]}(\bm x) \in B \big) }{  \lambda^k_L(\eta;\beta)  } 
\\
   & \leq  \limsup_{\eta \downarrow 0}\frac{ \sup_{\bm x \in A}\P\big({\bm{Y}}^{\eta}_{[0,T]}(\bm x) \in B \big) }{  \lambda^k_L(\eta;\beta)  } 
    \leq 
    \sup_{\bm x \in A}
    \mathbf{C}^{(k)}_{[0,T]}\big( B^-; \bm x \big)
    <
    \infty.
\end{align*}
\end{theorem}

Analogous to the truncated dynamics $\bm X^{\eta|b}_j(\bm x)$,
we introduce a truncated variation ${\bm Y}^{\eta|b}_t(\bm x)$
where all jumps are truncated under the threshold value $b$.
More generally, we consider
a sequence of stochastic processes $\big(\bm{Y}^{\eta|b;(k)}_t(\bm x)\big)_{k \geq 0}$.
% given any $f: \R \to \R$ and $g: \R \to \R$ that are Lipschitz continuous.
First, for any $\bm x \in \R^m$ and $t \geq 0$, let 
\begin{align}
    d{\bm Y}^{\eta|b;(0)}_t(\bm x) = \bm a\big({\bm Y}^{\eta|b;(0)}_{t-}(\bm x)\big)dt + 
    % \eta
    \bm \sigma\big(\bm{Y}^{\eta|b;(0)}_{t-}(\bm x)\big)d\bar{\bm L}_t
    \label{def: Y eta b 0 f g, SDE clipped}
\end{align}
% and set $\bm{Y}^{\eta|b;(0)}(x) \delequal \big\{{Y}^{\eta|b;(0)}_{t}(x):\ t \in [0,1]\big\}$ for any $b > 0$.
% As an immediate result of this construction, we have 
% $
% {Y}^{\eta|b;(0)}_t(x;a,\sigma) = Y^\eta_t(x)
% $
% and
% $\bm{Y}^{\eta|b;(0)}(x;a,\sigma) = \bm{Y}^\eta(x)$.
under initial condition ${\bm Y}^{\eta|b;(0)}_0(\bm x) = \bm x$.
Next, we define
\begin{align}
    \tau^{\eta|b;(1)}_{Y}(\bm x) & \delequal 
    \inf\Big\{ t > 0: \norm{\bm\sigma\big(\bm {Y}^{\eta|b;(0)}_{t-}(\bm x)\big) \Delta \bar{\bm L}^\eta_t } 
    % = \norm{\Delta \bm{Y}^{\eta|b;(0)}_t(\bm x) } 
    > b \Big\},
    \label{def, tau and W, discont in Y, eta b, 1}
    \\
    \bm W^{\eta|b;(1)}_{Y}(\bm x) & \delequal \Delta \bm{Y}^{\eta|b;(0)}_{ \tau^{\eta|b;(1)}_{Y}(\bm x)}(\bm x)
    \label{def, tau and W, discont in Y, eta b, 2}
\end{align}
as the arrival time and size of the first jump in $\bm{Y}^{\eta|b;(0)}_t(\bm x)$ with $L_2$ norm larger than $b$.
Furthermore, we define (for any $k \geq 1$)
\begin{align}
    {
    \bm{Y}^{\eta|b;(k)}_{ \tau^{\eta|b;(k)}_{Y}(\bm x) }(\bm x)
    } & \delequal 
    \bm{Y}^{\eta|b;(k)}_{ \tau^{\eta|b;(k)}_{Y}(\bm x)- }(\bm x)
    +
    \varphi_b\Big(
    \bm W^{\eta|b;(k)}_{Y}(\bm x)
    \Big),
    \label{def: objects for defining Y eta b, clipped SDE, 1}
    \\
    d{\bm Y^{\eta|b;(k)}_{ t}(\bm x)}& \delequal 
    \bm a\big(\bm{Y}^{\eta|b;(k)}_{ t-}(\bm x)\big) dt 
    + 
    \bm\sigma\big(\bm{Y}^{\eta|b;(k)}_{ t-}(\bm x)\big)d \bar{\bm L}^\eta_t
    \qquad \forall t > \tau^{\eta|b;(k)}_{Y}(\bm x),
    \label{def: objects for defining Y eta b, clipped SDE, 2}
    \\
    {
    \tau^{\eta|b;(k+1)}_{Y}(\bm x)
    } & \delequal 
    \min\Big\{ t > \tau^{\eta|b;(k)}_{Y}(\bm x): \norm{\sigma\big(\bm{Y}^{\eta|b;(k)}_{t-}(\bm x)\big) \Delta{\bar{\bm L}}^\eta_t } > b \Big\},
    \label{def: objects for defining Y eta b, clipped SDE, 3}
    \\
    {\bm W^{\eta|b;(k+1)}_{Y}(\bm x)} & \delequal 
    \Delta\bm{Y}^{\eta|b;(k)}_{ \tau^{\eta|b;(k + 1)}_{Y}(\bm x)}(\bm x)
    \label{def: objects for defining Y eta b, clipped SDE, 4}
\end{align}
Lastly, for any $t \geq 0,\ b > 0,\ k \in \mathbb N$ and $\bm x \in \R^m$, we define
(under convention $\tau^{\eta|b;(0)}_{Y}(\bm x) = 0$)
\begin{align}
    \notationdef{notation-Y-eta-b-SDE}{\bm Y^{\eta|b}_t(\bm x)}
    \delequal 
    \sum_{k \geq 0}
    \bm Y^{\eta|b;(k)}_{t}(\bm x)
    \cdot 
    \mathbbm{I}
    \Big\{
    t \in \Big[  
    \tau^{\eta|b;(k)}_{Y}(\bm x), 
     \tau^{\eta|b;(k+1)}_{Y}(\bm x) 
    \Big)
    \Big\}
    \label{defSDE, o.g., initial condition x, clipped}
\end{align}
and let $\notationdef{notation-bm-Y-0T-eta-b-SDE}{\bm{Y}^{\eta|b}_{[0,T]}(x)} \delequal \big\{\bm Y^{\eta|b}_t(\bm x):\ t \in [0,T]\big\}$.
By definition, for any $t \geq 0,\ b > 0,\ k \in \mathbb N$ and $\bm x \in \R^m$,
\begin{align}
    {\bm Y^{\eta|b}_t(\bm x)}  = \bm Y^{\eta|b;(k)}_{t}(\bm x)
    \qquad \Longleftrightarrow \qquad 
    t \in \Big[  
    \tau^{\eta|b;(k)}_{Y}(\bm x), 
     \tau^{\eta|b;(k+1)}_{Y}(\bm x) 
    \Big).
    \label{defSDE, initial condition x, clipped}
\end{align}
In case that $T = 1$, we suppress $[0,1]$ and write $\notationdef{notation-bm-Y-eta-b-SDE}{\bm Y^{\eta|b}(\bm x)}$.
The next theorem presents the sample path large deviations for $\bm Y^{\eta|b}_t(\bm x)$.
Once again, the proof is omitted as it closely resembles that of $\bm X^{\eta|b}_j(\bm x)$.

\begin{theorem}
\label{theorem: LDP, SDE, uniform M convergence, clipped}
\linksinthm{theorem: LDP, SDE, uniform M convergence, clipped}
% For any $B \subseteq \R$ and $b > 0$ let
% \begin{align*}
%     \Pi^{\eta,(k)}_b(B) \delequal \bigg\{ \frac{ \P\big( \bm Y^{\eta|b}(x) \in\ \cdot\ \big) }{ \big(H_L(\frac{1}{\eta})\big/\eta \big)^k  } :\ x \in B \bigg\},
%     \ \ \ 
%     \Pi^{*,(k)}_b(B) \delequal
%     \Big\{
%     \mathbf{C}^{(k)|b}   (\ \cdot\ ; \bm{\delta}_x):\ x \in B
%     \Big\}.
% \end{align*}
Under Assumptions~\ref{assumption: lipschitz continuity of drift and diffusion coefficients} and \ref{assumption: heavy-tailed levy process},
it holds for 
any $\beta \in [0,2\wedge \alpha)$, any $b,T,\epsilon > 0,\ k\in \mathbb N$, and any compact set $A\subseteq \R^m$ that
$$
\lambda^{-k}_L(\eta;\beta)\P\big(\bm{Y}^{\eta|b}_{[0,T]}(\bm x)\in\ \cdot\ \big)
    \rightarrow \mathbf{C}^{(k)|b}_{[0,T]}(\ \cdot\ ;\bm x)
    \text{
    in
    $
    \mathbb{M}\Big(\mathbb{D}[0,T]\setminus \mathbb{D}^{(k-1)|b}_{A}[0,T](\epsilon)\Big)
    $
    uniformly in $\bm x$ on A
    }
$$
 as $\eta \to 0$.
Furthermore,
given
$B \in \mathscr{S}_{\mathbb{D}[0,T]}$ that is bounded away from ${ \mathbb{D}}_{A}^{(k - 1)|b}[0,T](\epsilon)$
for some (and hence all) $\epsilon > 0$ small enough,
\begin{align*}
\inf_{\bm x \in A}
    \mathbf{C}^{(k)|b}_{[0,T]}\big( B^\circ; \bm x \big)
& \leq  \liminf_{\eta \downarrow 0}\frac{ \inf_{\bm x \in A}\P\big({\bm{Y}}^{\eta|b}_{[0,T]}(\bm x) \in B \big) }{  \lambda^k_L(\eta;\beta)  } 
\\
   & \leq  \limsup_{\eta \downarrow 0}\frac{ \sup_{\bm x \in A}\P\big({\bm{Y}}^{\eta|b}_{[0,T]}(\bm x) \in B \big) }{  \lambda^k_L(\eta;\beta)  } 
    \leq 
    \sup_{\bm x \in A}
    \mathbf{C}^{(k)|b}_{[0,T]}\big( B^-; \bm x \big)
    <
    \infty.
\end{align*}
\end{theorem}

Analogous to Corollary~\ref{corollary: conditional limit, SGD}, we present the conditional limit theorem for 
$\bm Y^{\eta}_{[0,1]}(\bm x)$ and $\bm Y^{\eta|b}_{[0,1]}(\bm x)$.
% i.e., the sample paths of the stochastic differential equations.

\begin{corollary}\label{corollary: conditional limit, SDE}
Let Assumptions \ref{assumption: lipschitz continuity of drift and diffusion coefficients} and \ref{assumption: heavy-tailed levy process} hold.
Let $\beta \in [0,2\wedge \alpha)$.
\begin{enumerate}[(i)]
 \item Given $b > 0$, $k \in \mathbb N$, $\bm x \in \R^m$, and measurable $B \subseteq \mathbb D$, suppose that $B$ is bounded away from $\mathbb D^{(k-1)|b}_{\{\bm x\}}(\epsilon)$ 
 for some (and hence all) $\epsilon > 0$ small enough,
 and
 $
    \mathbf{C}^{(k)|b}(B^\circ;\bm x) = \mathbf{C}^{(k)|b}(B^-;\bm x) > 0.
    $
    Then
    \begin{align*}
        \P\big(\bm Y^{\eta|b}_{[0,1]}(\bm x)\in \cdot\,|\,\bm Y^{\eta|b}_{[0,1]}(\bm x)\in B\big) \Rightarrow
    \frac{\mathbf C^{(k)|b}(\,\cdot\cap B;x)}{\mathbf C^{(k)|b}(B;x)}
    \qquad\text{ as $\eta \downarrow 0$.}
    \end{align*}

    % $$\bm Y^{\eta|b}_{|B}(x) \Rightarrow \bm Y^{*|b}_{|B}(x)$$ 
    % as $\eta \downarrow 0$.
    % where the law of $\bm Y^{*|b}_{|B}(x)$ is given by \eqref{def, conditional limit, SGD truncated}.

    \item Furthermore, suppose that Assumption \ref{assumption: boundedness of drift and diffusion coefficients} holds.
    Given $k\in \mathbb N$, $\bm x \in \R^m$, and measurable $B \subseteq \mathbb D$, suppose that $B$ is bounded away from $\mathbb D^{(k-1)}_{\{\bm x\}}(\epsilon)$
    for some (and hence all) $\epsilon > 0$ small enough,
    and 
    $
    \mathbf{C}^{(k)}(B^\circ;\bm  x) = \mathbf{C}^{(k)}(B^-;\bm x) > 0.
    $
    % Let $\bm Y^{\eta}_{|B}(x)$ be a process having the conditional law of $\bm Y^{\eta}(x)$ given that $\bm Y^{\eta}(x) \in B$.
    Then
    \begin{align*}
        \P\big(\bm Y^{\eta}_{[0,1]}(x)\in \cdot\,|\,\bm Y^{\eta}_{[0,1]}(x)\in B\big) \Rightarrow
    \frac{\mathbf C^{(k)}(\,\cdot\cap B;x)}{\mathbf C^{(k)}(B;x)}
    \qquad\text{ as $\eta \downarrow 0$.}
    \end{align*}
    % $$\bm Y^{\eta}_{|B}(x) \Rightarrow \bm Y^{*}_{|B}(x)$$ 
    % as $\eta \downarrow 0$.
    % where the law of $\bm Y^{*}_{|B}(x)$ is given by \eqref{def, conditional limit, SGD}.
    % \begin{align*}
    %     \P^{*}_{|B}(\ \cdot\ ; x) \delequal \frac{\mathbf C^{(k)}(\cdot\cap B;x)}{\mathbf C^{(k)}(B;x)}.
    % \end{align*}
\end{enumerate}
\end{corollary}

\subsection{Metastability Analysis}
\label{subsec: first exit time, SDE}

Consider an
open set $I \subseteq \R^m$ such that $\bm 0 \in \bm I$ and Assumption~\ref{assumption: shape of f, first exit analysis} holds.
% We impose Assumption~\ref{assumption: shape of f, first exit analysis} on $a(\cdot)$.
Define stopping times
\begin{align*}
    {\tau^\eta_Y(\bm x)} \delequal \inf\big\{t \geq 0:\ \bm Y^\eta_t(\bm x) \notin I\big\},\qquad
    {\tau^{\eta|b}_Y(\bm x)} \delequal \inf\big\{t \geq 0:\ \bm Y^{\eta|b}_t(\bm x) \notin I \big\}
\end{align*}
as the first exit times of $\bm Y^\eta_t(\bm x)$ and $\bm Y^{\eta|b}_t(\bm x)$ from $I$, respectively.
The following result characterizes the asymptotic law of the first exit times and exit locations,
using the measures ${\widecheck{ \mathbf C }^{(k)|b}(\cdot )}$ defined in \eqref{def: measure check C k b}
and
${\widecheck{\mathbf C}(\cdot)}$ defined in \eqref{def: measure check C}.
We omit the proof
due to its similarity to that of Theorem \ref{theorem: first exit time, unclipped}.

\begin{theorem}
\label{theorem: first exit time, unclipped, SDE}
\linksinthm{theorem: first exit time, unclipped, SDE}
    Let Assumptions \ref{assumption: lipschitz continuity of drift and diffusion coefficients},
    \ref{assumption: shape of f, first exit analysis}, and \ref{assumption: heavy-tailed levy process} hold.
    Let $\beta \in [0,2\wedge \alpha)$.

    \begin{enumerate}[(a)]

\item 
    Let $b > 0$.
         Suppose that $\mathcal J^I_b < \infty$, $I^c$ is bounded away from $\mathcal G^{(\mathcal J^I_b - 1)|b}(\epsilon)$ for some (and hence all) $\epsilon > 0$ small enough,
        and
        $
        \widecheck{\mathbf C}^{( \mathcal J^I_b )|b}(\partial I) = 0.
        $
        Then 
        ${C^I_b} \delequal  \widecheck{ \mathbf{C} }^{ (\mathcal{J}^I_b)|b }(I^\complement) < \infty$.
        Furthermore, if $C^I_b \in (0,\infty)$,
        then
        for any $\epsilon > 0$, $t \geq 0$, and measurable set $B \subseteq I^c$,
\begin{align*}
    \limsup_{\eta\downarrow 0}\sup_{\bm x \in I_\epsilon}
    \P\bigg(C^I_b \lambda^{ \mathcal{J}^I_b }_L(\eta;\beta)\tau^{\eta|b}_Y(\bm x) > t
     ;\ \bm Y^{\eta|b}_{ \tau^{\eta|b}_Y(\bm x)}(\bm x) \in B\bigg)
     & \leq \frac{ \widecheck{\mathbf{C}}^{ (\mathcal{J}^I_b)|b }(B^-) }{ C^I_b }\cdot\exp(-t),
     \\
     \liminf_{\eta\downarrow 0}\inf_{\bm x \in I_\epsilon}
    \P\bigg(C^I_b \lambda^{ \mathcal{J}^I_b }_L(\eta;\beta)\tau^{\eta|b}_Y(\bm x) > t
     ;\ \bm Y^{\eta|b}_{ \tau^{\eta|b}_Y(\bm x)}(\bm x) \in B\bigg)
     & \geq \frac{ \widecheck{\mathbf{C}}^{ (\mathcal{J}^I_b)|b }(B^\circ) }{ C^I_b }\cdot\exp(-t).
\end{align*}
    Otherwise, we have $C^I_b = 0$, and
        \begin{align*}
            \limsup_{\eta\downarrow 0}\sup_{\bm x \in I_\epsilon}
        \P\bigg(
            \lambda^{ \mathcal J^I_b }_L(\eta;\gamma)\tau^{\eta|b}_Y(\bm x) \leq t
         \bigg) = 0
         \qquad
         \forall \epsilon > 0,\ t \geq 0.
        \end{align*}

    \item
        Suppose that $\widecheck{\mathbf C}(\partial I) = 0$.
        Then ${C^I_\infty} \delequal  \widecheck{ \mathbf{C} }(I^\complement) < \infty$.
        Furthermore, if $C^I_\infty > 0$,
        then
        for any $t \geq 0$ and measurable set $B \subseteq I^c$,
    \begin{align*}
        \limsup_{\eta\downarrow 0}\sup_{\bm x \in I_\epsilon}
        \P\bigg(
            C^I_\infty \lambda_L(\eta;\beta)\tau^\eta_Y(\bm x) > t;\ \bm Y^\eta_{ \tau^\eta_Y(\bm x)}(\bm x) \in B
        \bigg)
        & \leq
        \frac{ \widecheck{\mathbf{C}}(B^-) }{ C^I_\infty }\cdot\exp(-t),
        \\ 
        \liminf_{\eta\downarrow 0}\inf_{\bm x \in I_\epsilon}
        \P\bigg(
            C^I_\infty \lambda_L(\eta;\beta)\tau^\eta_Y(\bm x) > t;\ \bm Y^\eta_{ \tau^\eta_Y(\bm x)}(\bm x) \in B
        \bigg)
        & \geq
        \frac{ \widecheck{\mathbf{C}}(B^\circ) }{ C^I_\infty }\cdot\exp(-t).
    \end{align*}
     Otherwise, we have $C^I_\infty = 0$, and 
    \begin{align*}
        \limsup_{\eta\downarrow 0}\sup_{\bm x \in I_\epsilon}
        \P\bigg(
            \lambda(_L\eta;\gamma)\tau^\eta_Y(\bm x) \leq t
        \bigg) = 0
        \qquad
        \forall \epsilon > 0,\ t \geq 0.
    \end{align*}

    \end{enumerate}
\end{theorem}

\section{Properties of Mappings $\bar h^{(k)}_{[0,T]}$ and $\bar h^{(k)|b}_{[0,T]}$} 
\label{sec: appendix, mapping h}
In this section, we collect a few useful results about the mappings $\bar h^{(k)}_{[0,T]}$ and $\bar h^{(k)|b}_{[0,T]}$ defined in \eqref{def: perturb ode mapping h k b, 1}--\eqref{def: perturb ode mapping h k b, 3}, and provide the proof of Lemmas \ref{lemma: LDP, bar epsilon and delta}, \ref{lemma: LDP, bar epsilon and delta, clipped version}, and \ref{lemma: convergence from C k b measure to C k measure}.

For any $\xi \in \mathbb{D}$, let $\norm{\xi} \delequal \sup_{t \in [0,1]}\norm{\xi(t)}$.
Also, recall the definition of  $\mathbb{D}_{A}^{(k)|b}(r)$ in \eqref{def: l * tilde jump number for function g, clipped SGD}.
Lemma~\ref{lemma: boundedness of k jump set under truncation, LDP clipped} shows that $\norm \xi$ is uniformly bounded for all $\xi \in \mathbb{D}_{A}^{(k)|b}(r)$.

\begin{lemma} \label{lemma: boundedness of k jump set under truncation, LDP clipped}
\linksinthm{lemma: boundedness of k jump set under truncation, LDP clipped}
Let Assumption~\ref{assumption: lipschitz continuity of drift and diffusion coefficients} 
hold.
Given $k \in \mathbb N$, $b,r > 0$,  and a compact set $A \subseteq \R^m$,
% and some $b > 0$,
there exists $M = M(k,b,r, A) < \infty$ such that
% \begin{align*}
%     \xi \in h^{(k)|b}_{ \bm \sigma }\big(A \times \R^{d \times k} \times (0,1]^{k\uparrow}\big)
%     \quad\Longrightarrow\qquad
%     \norm{\xi} \leq M.
% \end{align*}
$\norm{\xi}\leq M\ \forall \xi \in \mathbb{D}_{A}^{(k)|b}(r).$
\end{lemma}

\begin{proof}
\linksinpf{lemma: boundedness of k jump set under truncation, LDP clipped}%
% Consider $-\infty < u \leq v < \infty$ such that $A \subseteq [u,v]$.
% For notational simplicity, in this proof we write $h^{(k)|b} = h^{(k)|b}_{\textbf I_m}$.
Fix some $\bm x_0 \in A$, and 
let $\xi^*(t) = \bm{y}_t(\bm x_0)$; see \eqref{def ODE path y t}.
% for $t \in [0,1]$
% where $\bm{y}_t(u)$ be the ODE defined in \eqref{def ODE path y t}.
Let $N = r + \sup_{ \bm x,\bm y \in A  }\norm{\bm x - \bm y}\vee b$ and $\rho = \exp(D) \geq 1$
where $D\in [1,\infty)$ is the Lipschitz coefficient in Assumption~\ref{assumption: lipschitz continuity of drift and diffusion coefficients}.

By arbitrarily picking an element from $\D_{A}^{(k)|b}(r)$, we get some $\xi = \bar h^{(k)|b}(\bm x,\textbf W,\textbf V, \bm{t})$ with
$\bm x \in A,\ \textbf W = (\bm w_1,\cdots,\bm w_k) \in \R^{d \times k},\ \textbf V = (\bm v_1,\cdots,\bm v_k) \in \R^{m\times k},\ \bm{t} = (t_1,\cdots,t_k) \in (0,1]^{k \uparrow}$.
By Assumption~\ref{assumption: lipschitz continuity of drift and diffusion coefficients}
and Gronwall's inequality, we get
$
\sup_{t \in [0,t_1)}\norm{\xi^*(t) - \xi(t)} \leq  \norm{ \bm x - \bm x_0} \cdot \exp(Dt_1) \leq \rho N.
$
Since $\xi^*(t)$ is continuous, and $\norm{\xi(t_1)- \xi(t_1-)}\leq b + r$ (see the definition of $\varphi_b$ in \eqref{def: perturb ode mapping h k b, 3}), we get 
$\sup_{t \in [0,t_1]}\norm{\xi^*(t) - \xi(t)} \leq \rho N + b + r \leq 2\rho N.$
% due to our choice of $N \geq b$ and $\rho \geq 1$.

Next, we proceed by induction.
Adopt the convention that $t_{k+1} = 1$, and suppose that for some $j = 1,2,\cdots,k$,
\begin{align*}
    \sup_{t \in [0,t_j]}\norm{\xi^*(t) - \xi(t)} \leq \underbrace{(2\rho)^j N}_{\delequal M_j}.
\end{align*}
Then from Gronwall's inequality again, we get $\norm{\xi^*(t) - \xi(t)} \leq \rho A_j$ for any $t \in [t_j, t_{j+1})$.
Due to the continuity of $\xi^*$ and the truncation threshold $b$ and the upper bound $\norm{\bm v_j} \leq r$ at step \eqref{def: perturb ode mapping h k b, 3},
%due to the existence of truncation operator $\varphi_b$ in \eqref{def: perturb ode mapping h k b, 3},
we have 
\begin{align*}
    \norm{\xi(t_{j+1}) - \xi^*(t_{j+1})} & \leq \rho M_j + b + r \leq 2 \rho M_j \leq M_{j + 1}.
\end{align*}
Therefore, $\sup_{t \in [0,t_{j+1}]}\norm{\xi^*(t) - \xi(t)} \leq  M_{j+1}.$
By induction, we can conclude the proof with $M = M_{k+1}  + \|\xi^*\| = (2\rho)^{k+1}N + \|\xi^*\|$.
\end{proof}

Recall the definitions of $\bm a_M,\bm \sigma_M$ in \eqref{def: a sigma truncated at M, LDP}, the mapping $\bar h^{(k)|b}_{M\downarrow}$ in \eqref{def: perturb ode mapping h k b, truncated at M, 1}--\eqref{def: perturb ode mapping h k b, truncated at M, 3},
and sets ${\mathbb{D}_{A;M\downarrow}^{(k)|b}(r)}$ in \eqref{def D A k t truncation set}.
Next, we present a corollary of the boundedness of $\mathbb{D}^{(k)|b}_{A}(r)$ established in Lemma \ref{lemma: boundedness of k jump set under truncation, LDP clipped}.

 \begin{corollary}
 \label{corollary: existence of M 0 bar delta bar epsilon, clipped case, LDP}
 \linksinthm{corollary: existence of M 0 bar delta bar epsilon, clipped case, LDP}
%  \linkdest{location, proof of corollary: existence of M 0 bar delta bar epsilon, clipped case, LDP}
Let Assumption~\ref{assumption: lipschitz continuity of drift and diffusion coefficients} hold.
Let $b,r > 0$, $k \in \mathbb N$.
Let $A \subseteq \R^m$ be compact.
There exists $M_0 \in (0,\infty)$ such that
for any $M \geq M_0$,
\begin{itemize}
    \item 
        $\sup_{t \leq 1}\norm{\xi_t} \leq M_0
        \ \forall \xi \in \mathbb{D}_{A}^{(k)|b}(r)  \cup  \mathbb{D}_{A;M\downarrow}^{(k)|b}(r)$;
    
    \item 
        For any
        $\bm{t} = (t_1,\cdots,t_{k}) \in (0,1]^{k\uparrow}$, 
        $\textbf W = (\bm w_1,\cdots,\bm w_{k}) \in \mathbb{R}^{d \times k}$,
        $\textbf V = (\bm v_1,\cdots,\bm v_k) \in \R^{m \times k}$ with $\max_{j \in [d]}\norm{\bm v_j} \leq r$,
        and $\bm x \in A$,
    \begin{align*}
          \bar h^{(k)|b}(\bm x,\textbf W,\textbf V,\bm{t}) =  \bar h^{(k)|b}_{M \downarrow}(\bm x,\textbf W,\textbf V,\bm{t}).
     \end{align*}
\end{itemize}
 \end{corollary}

\begin{proof} 
\linksinpf{corollary: existence of M 0 bar delta bar epsilon, clipped case, LDP}
The claims follow immediately from Lemma~\ref{lemma: boundedness of k jump set under truncation, LDP clipped},
as well as the fact that 
$\bar h^{(k)|b}(\bm x,\textbf W, \textbf V, \bm t)\in\mathbb D^{(k)|b}_{A}(r)$
and
$\xi = \bar h^{(k)|b}_{M\downarrow}(\bm x,\textbf W,\textbf V, \bm t)\in\mathbb D^{(k)|b}_{A;M\downarrow}(r)$.
% Let $-\infty < u < v < \infty$ be such that $A \subseteq [u,v]$.
% Given $x_0\in A$, $\bm w\in \mathbb{R}^{k}$, and $\bm t \in (0,1]^{k\uparrow}$, 
% let $\xi \delequal h^{(k)|b}(x_0, \bm w, \bm t)\in \mathbb{D}_{A}^{(k)|b}  \subseteq \mathbb{D}_{[u,v]}^{(k)|b}$. 
% Let $M_0< \infty$ be the uniform upper bound associated with $\mathbb{D}_{[u,v]}^{(k)|b}$ in Lemma \ref{lemma: boundedness of k jump set under truncation, LDP clipped}: i.e.,
% $\sup_{t \in [0,1]}|\xi(t)| \leq M_0\ \forall \xi \in \mathbb{D}_{[u,v]}^{(k)|b}$.
% If $M \geq M_0$, 
% then we must have $\xi = h^{(k)|b} (x_0,\bm{w},\bm{t}) = h^{(k)|b}_{M \downarrow}(x_0,\bm{w},\bm{t})$
% due to $\norm{\xi} \leq M_0 \leq M$, and hence $\mathbb{D}_{A;M\downarrow}^{(k)|b} = \mathbb{D}_{A}^{(k)|b}$
\elaborate{
then $a_M(\xi_t) = a(\xi_t)$ and $\sigma_M(\xi(s-)) = \sigma(\xi(s-))$ for all $s\in[0,1]$, and hence, 
$$\xi = h^{(k)|b} (x_0,\bm{w},\bm{t}) = h^{(k)|b}_{M \downarrow}(x_0,\bm{w},\bm{t}),$$
which, in turn, implies that $\mathbb{D}_{A;M\downarrow}^{(k)|b} = \mathbb{D}_{A}^{(k)|b} $ since the choice of $x_0$, $\bm w$, and $\bm t$ was arbitrary.
}
% This concludes the proof.
\end{proof}

% we return to $\mathbb{D}^{A}_{k}[0,t]$ and study its geometric properties.
% First,
Next, we study the continuity of mappings $\bar h^{(k)}_{[0,T]}$ and $\bar h^{(k)|b}_{[0,T]}$.

%\linkdest{location, proof of lemma: continuity of h k b mapping clipped}
\begin{lemma}
\label
{lemma: continuity of h k b mapping clipped}
\linksinthm
{lemma: continuity of h k b mapping clipped}%
Let Assumption~\ref{assumption: lipschitz continuity of drift and diffusion coefficients} hold.
Given any $b,T > 0$ and any $k \in \mathbb N$, the mapping
$\bar h^{(k)|b}_{[0,T]}$ is continuous on $\R^m \times \R^{d\times k} \times \R^{m \times k} \times (0,T)^{k\uparrow}$.
\end{lemma}

\begin{proof}
\linksinpf
{lemma: continuity of h k b mapping clipped}%
To ease notations we focus on the case where $T = 1$, but the proof is identical for any $T > 0$.
Arbitrarily pick some $b > 0$ and $k \in \mathbb N$,
some
$\bm x^* \in \R^m$, $\textbf W^* = (\bm w^*_1,\cdots,\bm w_k^*) \in \R^{d\times k}$,
$\textbf V^* = (\bm v^*_1,\cdots,\bm v^*_k) \in \R^{m \times k}$,
and $\bm{t}^* = (t^*_1,\cdots,t^*_k) \in (0,1)^{k\uparrow}$.
Let $\xi^* = \bar h^{(k)|b}(\bm x^*,\textbf W^*,\textbf V^*,\bm{t}^*)$.
Also, fix some $\epsilon \in (0,1)$.
It suffices to show the existence of some $\delta \in (0,1)$ such that 
$
\bm{d}_{J_1}(\xi^*,\xi^\prime) < \epsilon
$
for all $\xi^\prime = \bar h^{(k)|b}(\bm x^\prime,\textbf W^\prime,\textbf V^\prime,\bm{t}^\prime)$
with $\bm x^\prime \in \R^m,\ \textbf W^\prime = (\bm w^\prime_1,\cdots,\bm w^\prime_k) \in \R^{d \times k},\
\textbf V^\prime = (\bm v^\prime_1,\cdots,\bm v^\prime_k) \in \R^{m \times k},\ 
\bm{t}^\prime = (t^\prime_1,\cdots,t^\prime_k) \in (0,1)^{k \uparrow}$
satisfying
\begin{align}
    \norm{ \bm x^* - \bm x^\prime } < \delta, \qquad 
    \norm{ \bm w^\prime_j - \bm w^*_j} \vee \norm{\bm v^*_j - \bm v^\prime_j} \vee |t^\prime_j - t^*_j| < \delta\ \forall j \in [k].
    \label{condition, arguments for xi prime, lemma: continuity of h k b mapping clipped}
\end{align}
\elaborate{
\begin{itemize}
    \item $|x^*- x^\prime| < \delta$;
    \item $|w^\prime_j - w^*_j| < \delta\ \ \forall j \in [k];$
    \item $|t^\prime_j - t^*_j| < \delta\ \ \forall j \in [k].$
\end{itemize}
}
We start by setting some constants and notations.
First,
by Corollary \ref{corollary: existence of M 0 bar delta bar epsilon, clipped case, LDP},
it follows for any $M \in (0,\infty)$ large enough that
\begin{align}
    \norm{\xi^*} + 1 < M
    \qquad\text{ and }\qquad
    \norm{\xi^\prime} + 1 < M\quad \forall \xi^\prime = \bar h^{(k)|b}(\bm x^\prime,\textbf W^\prime,\textbf 
 V^\prime,\bm{t}^\prime)\text{ satisfying \eqref{condition, arguments for xi prime, lemma: continuity of h k b mapping clipped}.}
    \label{proof, property of M, bound on xi and xi prime, lemma: continuity of h k b mapping clipped}
\end{align}
% $\norm{\xi^*} + 1 < M$ and $\norm{\xi^\prime} + 1 < M$ holds for all  $\xi^\prime = h^{(k)|b}(x^\prime,\bm{w}^\prime,\bm{t}^\prime)$ 
By picking an even larger $M$ if necessary,
we can ensure that $M \geq 1 + \max_{j \in [k]}\norm{\bm w^*_j}$.
In this proof, we write $\bm a^*=\bm a_M$, $\bm \sigma^* = \bm \sigma_M$ (see \eqref{def: a sigma truncated at M, LDP} for definitions).
% In particular, let $M =1 + \sup_{t \in [0,1]}|\xi^*(t)| \vee \max\{ |w^*_j|:\ j \in [k] \}$ and 
% $a^*=a_M$, $\sigma^* = \sigma_M$ (see \eqref{def: a sigma truncated at M, LDP}).
\elaborate{
\begin{align*}
    a^*(x) \delequal
    \begin{cases}
     a(M) & \text{ if }x > M, \\
     a(-M) & \text{ if }x < -M, \\
     a(x) & \text{ otherwise.}
    \end{cases}
    \ \ \ \ \ \ \ \ \ \ \ 
    \sigma^*(x) \delequal
    \begin{cases}
     \sigma(M) & \text{ if }x > M, \\
     \sigma(-M) & \text{ if }x < -M, \\
     \sigma(x) & \text{ otherwise.}
    \end{cases}
\end{align*}
}
% Let $B_{r}(\bm x) \delequal \{ \bm y:\ \norm{\bm x - \bm y} \leq r \}$
% be the closed ball centered at $\bm x$ with radius $r$,
Fix the constant
$$C^* \delequal \sup_{\bm y: \norm{\bm y}\leq M}\norm{\bm a(y)}\vee\norm{\bm \sigma(y)} \vee 1 < \infty.$$
% and $c^* = \inf_{x \in [-M,M]}\sigma(x) \wedge 1$.
% From Assumptions \ref{assumption: lipschitz continuity of drift and diffusion coefficients} and \ref{assumption: nondegeneracy of diffusion coefficients},
% we know that $0 < c^* \leq 1 \leq  C^* < \infty$.
% As a result, Assumptions \ref{assumption: boundedness of drift and diffusion coefficients} and \ref{assumption: uniform nondegeneracy of diffusion coefficients}
% hold for $a^*,\sigma^*$
% with $c = c^*,\ C = C^*$.
We also write
% $h^*: \mathbb{R} \times \mathbb{R}^{k} \times (0,1]^{k\uparrow} \to \mathbb{D}$ be defined as
$
h^* = \bar h^{(k)|b}_{M\downarrow}
$
in this proof;
see \eqref{def: perturb ode mapping h k b, truncated at M, 1}--\eqref{def: perturb ode mapping h k b, truncated at M, 3} for definitions.
The choice of $M$ ensures that $\xi^* = h^*(\bm x^*,\textbf W^*,\textbf V^*,\bm{t}^*)$
and, under condition~\eqref{condition, arguments for xi prime, lemma: continuity of h k b mapping clipped}, $\xi^\prime = h^*(\bm x^\prime, \textbf W^\prime, \textbf V^\prime, \bm t^\prime)$.
\elaborate{
Given any 
$x_0 \in \mathbb{R}$,
$\bm{w} = (w_1,\cdots,w_k) \in \mathbb{R}^{k}$, and any $\bm{t} = (t_1,\cdots,t_k)\in (0,1)^{k\uparrow}$, let $\xi = h^*(x_0,\bm{w},\bm{t})$ be the solution to
\begin{align*}
    \xi(0) & = x_0; 
    \\
    \frac{d\xi(s)}{d s} & = a^*\big( \xi(s) \big),\ \ \ \forall s \in [0,t],\ s \neq t_1,\cdots,t_k;
    \\
    \xi(s) & = \xi(s-) + \varphi_b\Big(\sigma^*\big( \xi(s-) \big)\cdot w_j\Big)\ \ \ \text{ if }s = t_j\text{ for some }j\in[k].
\end{align*}
}
% Also, the choice of $\epsilon \in (0,1)$ and $M \geq \sup_{t \in [0,1]}|\xi^*(t)| + 1$ shows that
% $\xi^\prime = h^*(x^\prime,\bm{w}^\prime,\bm{t}^\prime)$.

Let $\rho \delequal \exp(D) \geq 1$ where $D\in [1,\infty)$ is the Lipschitz coefficient in Assumption \ref{assumption: lipschitz continuity of drift and diffusion coefficients}.
Let $R_0 = 1$,
\begin{align}
    R_{j} \delequal (7C^* + \rho R_{j-1} + 1)(DM + 1) + C^*\qquad \forall j \geq 1.
    \label{proof, def constant R j, lemma: continuity of h k b mapping clipped}
\end{align}
We pick some $\tilde{\delta} > 0$ small enough such that
\begin{align}
    2\tilde{\delta} < 1 \wedge \epsilon;\qquad
    R_{k+1}\tilde{\delta}< \epsilon.
    \label{choice of delta, proof, lemma: continuity of h k b mapping}
\end{align}
Also, by picking $\delta > 0$ small enough,
it is guaranteed that (under convention $t^*_0 = t^\prime_0 =  0,\ t^*_{k+1} = t^\prime_{k+1} = 1$)
\begin{align}
    \delta < \widetilde{\delta} \vee 1;
    \qquad
    \max_{j \in [k]}\Bigg|\frac{t^*_{j+1} - t^*_j}{t^\prime_{j+1} - t^\prime_j} - 1\Bigg| < \widetilde{\delta}
    \ \forall \bm t^\prime = (t^\prime_1,\cdots,t^\prime_k) \in (0,1)^{k\uparrow},\ 
    \max_{j \in [k]}|t^\prime_j - t^*_j| < \delta.
    \label{choice of delta, 2, proof, lemma: continuity of h k b mapping}
\end{align}
% for all $\bm t^\prime = (t^\prime_1,\cdots,t^\prime_k) \in (0,1)^{k\uparrow}$
% satisfying $|t^\prime_j - t^*_j| < \delta\ \forall j \in [k]$.
\elaborate{
for the fixed $\bm{t}^* = (t^*_1,\cdots,t^*_k) \in (0,1)^{k \uparrow}$,
let $t_\Delta = \min\{ t^*_{j + 1} - t^*_j:\ j = 1,2,\cdots,k \} > 0$
with the convention that 
$t^*_0 = t^\prime_0 =  0,\ t^*_{k+1} = t^\prime_{k+1} = 1$.
For
and $\delta \in (0,t_\Delta)$
and
any $(t^\prime_1,\cdots,t^\prime_k) \in (0,1)^{k \uparrow}$
satisfying 
$|t^\prime_j - t^*_j| < \delta\ \forall j \in[k]$,
we have 
$
\frac{t_\Delta}{ t_\Delta + \delta }
\leq
\frac{t^*_{j+1} - t^*_j}{t^\prime_{j+1} - t^\prime_j} \leq \frac{t_\Delta}{ t_\Delta - \delta }
$
for any $j \in [k]$.
Therefore, we can pick $\delta > 0$ small enough to ensure that
\begin{align}
    \delta < \widetilde{\delta};
    \qquad
    \Big|\frac{t^*_{j+1} - t^*_j}{t^\prime_{j+1} - t^\prime_j} - 1\Big| < \widetilde{\delta}
    \ \forall j \in [k].
   \nonumber
\end{align}
}
Now it only remains to show that, under the current the choice of $\delta$, the bound $\bm{d}_{J_1}(\xi,\xi^\prime) < \epsilon$ follows from condition \eqref{condition, arguments for xi prime, lemma: continuity of h k b mapping clipped}.
To do so, we fix some $\xi^\prime$ satisfying condition \eqref{condition, arguments for xi prime, lemma: continuity of h k b mapping clipped}.
Define
$\lambda: [0,1] \to [0,1]$ as
\begin{align*}
    \lambda(u) = 
    \begin{cases}
     0 & \text{ if }u = 0\\ 
     t^*_j +  \frac{t^*_{j+1} - t^*_j}{t^\prime_{j+1} - t^\prime_j}\cdot(u - t^\prime_j) & \text{ if }u \in (t^\prime_{j},t^\prime_{j+1}]\text{ for some }j = 0,1,\cdots,k.
    \end{cases}
\end{align*}
For any $u \in (0,1)$, let $j \in \{0,1,\cdots,k\}$ be such that $u \in (t^\prime_{j},t^\prime_{j+1}]$. Observe that
\begin{align}
    |\lambda(u) - u| & = 
    \bigg| t^*_j +  \frac{t^*_{j+1} - t^*_j}{t^\prime_{j+1} - t^\prime_j}\cdot(u - t^\prime_j) - u  \bigg|
    =
     \bigg| t^*_j +  \frac{t^*_{j+1} - t^*_j}{t^\prime_{j+1} - t^\prime_j}\cdot v - (v + t^\prime_j)  \bigg|\ \ \ \text{with $v \delequal u - t^\prime_j$}
     \nonumber
     \\
     & \leq 
     | t^*_j - t^\prime_j | + 
     \bigg|
      \frac{t^*_{j+1} - t^*_j}{t^\prime_{j+1} - t^\prime_j} - 1
     \bigg|\cdot v
     \nonumber
     \\
     & \leq \widetilde{\delta} + \widetilde{\delta}\cdot 1 < \epsilon.
     \label{ineq 1, proof, lemma: continuity of h k b mapping}
\end{align}
In summary, we have shown that $\sup_{ u \in [0,1] }|\lambda(u) - u| < \epsilon$.
Moving on, we prove that $$\sup_{u \in [0,1]}\norm{\xi^*\big(\lambda(u)\big) - \xi^\prime(u)} < \epsilon$$
using an inductive argument.
First,
Assumption \ref{assumption: lipschitz continuity of drift and diffusion coefficients}
allows us to apply
Gronwall's inequality and get
$
\sup_{ u \in (0,t^*_1 \wedge t^\prime_1) }\norm{ \xi^*(u) - \xi^\prime(u)} \leq \exp\big(D\cdot(t^*_1 \wedge t^\prime_1) \big)\norm{ \bm x^* - \bm x^\prime}\leq \rho \delta.
$
As a result, for any $u \in (0,t^*_1 \wedge t^\prime_1)$,
\begin{align*}
   \norm{\xi^*\big( \lambda(u) \big ) - \xi^\prime(u)}
   & = 
   \norm{
   \xi^*\bigg( \frac{t^*_1}{t^\prime_1} \cdot u \bigg) - \xi^\prime(u)
   }
   \leq 
   \norm{
   \xi^*\bigg( \frac{t^*_1}{t^\prime_1} \cdot u \bigg) - \xi^*(u)
   }
   +
   \norm{\xi^\prime(u) - \xi^*(u)}
   \\
   & \leq 
   \norm{
   \xi^*\bigg( \frac{t^*_1}{t^\prime_1} \cdot u \bigg) - \xi^*(u)
   }
   + \rho\delta
   \\
   & \leq 
   \sup_{\bm y \in \R^m}\norm{\bm a^*(\bm y)} \cdot \bigg| \frac{t^*_1}{t^\prime_1} - 1 \bigg|\cdot u + \rho \delta
   \qquad\text{by \eqref{proof, property of M, bound on xi and xi prime, lemma: continuity of h k b mapping clipped}}
   \\
   & \leq 
   C^* \widetilde{\delta} + \rho \widetilde{\delta} = (C^* + \rho)\widetilde{\delta}
   \qquad \text{due to \eqref{choice of delta, 2, proof, lemma: continuity of h k b mapping}}.
\end{align*}
In case that $t^\prime_1 \leq t^*_1$, we get 
$\sup_{ u \in (0,t^\prime_1) } \norm{\xi^*\big( \lambda(u) \big ) - \xi^\prime(u)} < (C^* + \rho)\widetilde{\delta}$
directly.
In case that $t^*_1 < t^\prime_1$,
due to $\xi^\prime = h^*(\bm x^\prime,\textbf W^\prime,\textbf V^\prime,\bm{t}^\prime)$ 
as well as the bounds in \eqref{choice of delta, 2, proof, lemma: continuity of h k b mapping}\eqref{ineq 1, proof, lemma: continuity of h k b mapping},
for any $u \in [t^*_1,t^\prime_1)$ we have
\begin{align*}
    \norm{\xi^\prime(u) - \xi^\prime(t^*_1) } & \leq \sup_{\bm y \in \R^m}\norm{\bm a^*(\bm y) } \cdot |u - t^*_1| < C^*\widetilde{\delta};
    % \qquad
    % \text{because of }|t^*_1 - t^\prime_1| < \delta < \widetilde{\delta};
    \\
    \norm{ \xi^*\big(\lambda(u)\big) - \xi^*\big(\lambda(t^*_1)\big) }
    & \leq 
     \sup_{\bm y \in \R^m}\norm{ \bm a^*(\bm y)} \cdot \big| \lambda(u) - \lambda(t^*_1)\big|
      < 5C^*\widetilde{\delta}.
\end{align*}
As a result,
$\sup_{ u \in (0,t^\prime_1) } \norm{ \xi^*\big( \lambda(u) \big ) - \xi^\prime(u) } < (7C^* + \rho)\widetilde{\delta}.$
In addition,
due to $\norm{\varphi_b(\bm x) - \varphi_b(\bm y)} \leq \norm{\bm x-\bm y}$,
\begin{align*}
    & \norm{\xi^*\big( \lambda(t^\prime_1) \big ) - \xi^\prime(t^\prime_1)}
    \\
    & =
    \norm{ 
        \xi^*\big( \lambda(t^\prime_1 -) \big ) + \bm v^*_1 +\varphi_b\bigg(\bm \sigma^*\Big(\xi^*\big( \lambda(t^\prime_1 -) \big ) + \bm v^*_1\Big)\bm w^*_1\bigg) 
        - \xi^\prime(t^\prime_1-) - \bm v^\prime_1 -  \varphi_b\bigg(\bm \sigma^*\Big(\xi^\prime(t^\prime_1-) + \bm v^\prime_1\Big)\bm w^\prime_1\bigg)
    }
    \\
    & \leq 
    \norm{ \xi^*\big( \lambda(t^\prime_1 -) \big ) - \xi^\prime(t^\prime_1-)}
    +
    \norm{\bm v^*_1 - \bm v^\prime_1}
    +
    \norm{
        \bm \sigma^*\Big(\xi^*\big( \lambda(t^\prime_1 -) \big ) + \bm v^*_1\Big)\bm w^*_1 -\bm \sigma^*\Big(\xi^\prime(t^\prime_1-) + \bm v^\prime_1\Big)\bm w^\prime_1
    }
    \\
    & \leq 
    \norm{ \xi^*\big( \lambda(t^\prime_1 -) \big ) - \xi^\prime(t^\prime_1-) }
    +
    \norm{\bm v^*_1 - \bm v^\prime_1}
    +
    \norm{
        \bm \sigma^*\Big(\xi^*\big( \lambda(t^\prime_1 -)\big) + \bm v^*_1\Big) - \bm \sigma^*\Big(\xi^\prime(t^\prime_1-) + \bm v^\prime_1\Big)
    }\cdot \norm{ \bm w^*_1 }
    \\ 
    &\qquad\qquad\qquad\qquad\qquad\qquad\qquad\qquad\qquad\qquad
    +
    \norm{\bm \sigma^*\Big(\xi^\prime(t^\prime_1-) + \bm v^\prime_1\Big)}\cdot \norm{ \bm w^\prime_1 - \bm w^*_1}
    \\
    & \leq 
    \norm{ \xi^*\big( \lambda(t^\prime_1 -) \big ) - \xi^\prime(t^\prime_1-) }
    + \delta 
    +
    \norm{ \bm \sigma^*\Big(\xi^*\big( \lambda(t^\prime_1 -)\big) + \bm v^*_1\Big) - \bm \sigma^*\Big(\xi^\prime(t^\prime_1-) + \bm v^\prime_1\Big)}\cdot M
    + C^*\delta
    \\
    & \leq 
    (7C^* + \rho)\widetilde{\delta} + \delta 
    +
    M \cdot D \cdot 
    \Big(
        \norm{\xi^*\big( \lambda(t^\prime_1 -)\big) -\xi^\prime(t^\prime_1-) }
        +
        \norm{\bm v^*_1 - \bm v^\prime_1}
    \Big)
    % +
    % (4C^* + \rho)\widetilde{\delta} \cdot D \cdot M 
    + C^*\delta
    \qquad \text{due to Assumption \ref{assumption: lipschitz continuity of drift and diffusion coefficients}}
    \\
    & = 
    (7C^* + \rho)\widetilde{\delta} + \delta 
    + 
    DM\big((7C^* + \rho)\widetilde{\delta} + \delta \big) + C^*\delta
    \\
    & \leq \big[ (7C^* + \rho + 1)(DM + 1) + C^* \big]\widetilde{\delta}
    \qquad \text{by our choice of }\delta < \widetilde{\delta}\text{ in \eqref{choice of delta, proof, lemma: continuity of h k b mapping}\eqref{choice of delta, 2, proof, lemma: continuity of h k b mapping}.}
\end{align*}
In summary, we yield
$\sup_{ u \in [0,t^\prime_1] } \norm{ \xi^*\big( \lambda(u) \big ) - \xi^\prime(u) } \leq \big[ (7C^* + \rho + 1)(DM + 1) + C^* \big]\widetilde{\delta} = R_1 \widetilde{\delta};
$
see definitions in \eqref{proof, def constant R j, lemma: continuity of h k b mapping clipped}.
Now, suppose that for some $j = 1,2,\cdots,k$,
we have 
$
\sup_{ u \in [0,t^\prime_j] } \norm{ \xi^*\big( \lambda(u) \big ) - \xi^\prime(u) } \leq
    R_j\widetilde{\delta}.
$
By repeating the calculations above, one can obtain that
$
 \sup_{ u \in [0,t^\prime_{j+1}] } \norm{ \xi^*\big( \lambda(u) \big ) - \xi^\prime(u) } \leq R_{j+1}\widetilde{\delta}.
$
To conclude, note that $R_{k+1}\widetilde{\delta} < \epsilon$ by our choice of parameters in \eqref{choice of delta, proof, lemma: continuity of h k b mapping}.
% Then by applying Gronwall's inequality again,
% we have 
% $
% \sup_{ v \in \big[0,(t^\prime_{j+1} \wedge t^*_{j+1})  - t^\prime_j\big) }
%  \big| \xi^*\big( \lambda(t^\prime_j) + v \big ) - \xi^\prime(t^\prime_j + v) \big|
%  \leq \exp\big( D \cdot (t^\prime_{j+1} \wedge t^*_{j+1})  \big)R_j \widetilde{\delta}\leq \rho R_j \widetilde{\delta}.
% $
% As a result, f
\elaborate{
For any $v \in \big[0,(t^\prime_{j+1} \wedge t^*_{j+1})  - t^\prime_j\big)$,
\begin{align*}
   \big| \xi^*\big( \lambda(t^\prime_j + v ) \big ) - \xi^\prime(t^\prime_j + v ) \big|
%   & = 
%   \bigg| 
%   \xi\bigg( t_j +  \frac{t_{j+1} - t_j}{t^\prime_{j+1} - t^\prime_j}\cdot v\bigg) - \xi^\prime(t^\prime_j + v)
%   \bigg|
%   \\
   & \leq 
   \bigg| 
   \xi^*\big( \lambda(t^\prime_j + v ) \big ) - \xi^*(t^\prime_j + v)
   \bigg|
   +
   \big|\xi^*(t^\prime_j + v) - \xi^\prime(t^\prime_j + v)\big|
   \\
& \leq 
\bigg| 
   \xi^*\big( \lambda(t^\prime_j + v ) \big ) - \xi^*(t^\prime_j + v)
   \bigg|
   +
 \rho R_j \widetilde{\delta}
 \qquad \text{Using Gronwall's inequality}
   \\
   & \leq 
   \sup_{x \in \R}|a^*(x)| \cdot |\lambda(t^\prime_j + v ) - (t^\prime_j + v)|
   +
   \rho R_j \widetilde{\delta}
   \\
   & \leq 
   2C^*\widetilde{\delta} + \rho R_j\widetilde{\delta}\qquad\ \text{due to \eqref{ineq 1, proof, lemma: continuity of h k b mapping}.}
\end{align*}
Again,
in case that $t^\prime_{j+1} \leq t^*_{j+1}$, we already get 
$\sup_{ u \in (0,t^\prime_{j+1}) } \big| \xi^*\big( \lambda(u) \big ) - \xi^\prime(u) \big| < \big(5C + \rho R_j\big)\widetilde{\delta}.$
In case that $t^*_{j+1} < t^\prime_{j+1}$,
note that for any $u \in [t^*_{j+1},t^\prime_{j+1})$,
one can apply properties \eqref{choice of delta, 2, proof, lemma: continuity of h k b mapping}\eqref{ineq 1, proof, lemma: continuity of h k b mapping} to yield
\begin{align*}
    \big| \xi^\prime(u) - \xi^\prime(t^*_{j+1}) \big| & \leq \sup_{x \in \R}|a^*(x)| \cdot |u - t^*_{j+1}| < C^*\widetilde{\delta};
    \\
    \big| \xi^*\big(\lambda(u)\big) - \xi^*\big(\lambda(t^*_{j+1})\big) \big|
    & \leq 
     \sup_{x \in \R}|a^*(x)| \cdot \big| \lambda(u) - \lambda(t^*_{j+1})\big|
     % \leq C^* \cdot  \frac{t^*_{j+1} - t^*_j}{t^\prime_{j+1} - t^\prime_j} \cdot (u - t^*_{j+1}) \leq C^* \cdot (1 + \widetilde{\delta})\cdot \widetilde{\delta} 
     < 2C^*\widetilde{\delta}.
\end{align*}
In summary, we get 
$\sup_{ u \in (0,t^\prime_{j+1}) } \big| \xi^*\big( \lambda(u) \big ) - \xi^\prime(u) \big| < \big(5C^* + \rho R_j\big)\widetilde{\delta}.$
Lastly, in case that $j = k+1$ (so $t^\prime_j = t^\prime_{k+1} = t_j =  t_{k+1} = 1$),
we have 
$
\Big| \xi^*( 1) - \xi^\prime(1) \Big|
\leq
\limsup_{ t \uparrow 1}\Big| \xi^*\big( \lambda(t) \big ) - \xi^\prime(t) \Big| \leq 
\big(5C^* + \rho R_j\big)\widetilde{\delta} \leq R_{j+1}\widetilde{\delta}.
$
In case that $j \leq k$,
using $|\varphi_b(x) - \varphi_b(y)| \leq |x-y|$,
\begin{align*} 
    & \Big| \xi^*\big( \lambda(t^\prime_{j+1}) \big ) - \xi^\prime(t^\prime_{j+1}) \Big|
    \\
    & =
    \bigg| \xi^*\big( \lambda(t^\prime_{j+1} -) \big ) + \varphi_b\bigg(\sigma^*\Big(\xi^*\big( \lambda(t^\prime_{j+1} -) \big )\Big)w^*_{j+1}\bigg)  - \xi^\prime(t^\prime_{j+1}-) - \varphi_b\bigg(\sigma^*\Big(\xi^\prime(t^\prime_{j+1}-)\Big)w^\prime_{j+1}\bigg) \bigg|
    \\
    & \leq 
    \Big|\xi^*\big( \lambda(t^\prime_{j+1} -) \big ) - \xi^\prime(t^\prime_{j+1}-)\Big|
    +
    \Big|
    \sigma^*\Big(\xi^*\big( \lambda(t^\prime_{j+1} -) \big )\Big)w^*_{j+1}
    -
    \sigma^*\Big(\xi^\prime(t^\prime_{j+1}-)\Big)w^\prime_{j+1}
    \Big|
    \\
    & \leq 
    \Big|\xi^*\big( \lambda(t^\prime_{j+1} -) \big ) - \xi^\prime(t^\prime_{j+1}-)\Big|
    +
    \Big|\sigma^*\Big(\xi^*\big( \lambda(t^\prime_{j+1} -)\big)\Big) - \sigma^*\Big(\xi^\prime(t^\prime_{j+1}-)\Big)\Big|\cdot |w^*_{j+1}|
    \\ 
    &\qquad
    +
    \Big|\sigma^*\Big(\xi^\prime(t^\prime_{j+1}-)\Big)\Big|\cdot |w^\prime_{j+1} - w^*_{j+1}|
    \\
    & < 
    \Big|\xi^*\big( \lambda(t^\prime_{j+1} -) \big ) - \xi^\prime(t^\prime_{j+1}-)\Big|
    +
    \Big|\sigma^*\Big(\xi\big( \lambda(t^\prime_{j+1} -)\big)\Big) - \sigma^*\Big(\xi^\prime(t^\prime_{j+1}-)\Big)\Big|\cdot M
    + C^*\delta
    \\
    & \leq 
    \big(5C^* + \rho R_j\big)\widetilde{\delta}
    +
   \big(5C^* + \rho R_j\big)\widetilde{\delta} \cdot D \cdot M + C^*\delta
    \qquad\text{because of Assumption \ref{assumption: lipschitz continuity of drift and diffusion coefficients}}
    \\
    & = \Big[ \big(5C^* + \rho R_j\big)(DM + 1) + C^* \Big]\widetilde{\delta}
    \leq 
    \big(6C^* + \rho R_j\big)(DM + 1)\widetilde{\delta}
    \\
    & = 6C^*(DM + 1)\widetilde{\delta} + \rho (DM+1)R_j \widetilde{\delta}
    \leq
    \rho (DM+1)R_j \widetilde{\delta} + \rho (DM+1)R_j \widetilde{\delta}
    \\
    & = 2 \rho (DM+1)R_j \widetilde{\delta} = 
    2^{j}\rho^j(DM+1)^{j+1}(6C^* + \rho)\widetilde{\delta}= R_{j+1}\widetilde{\delta},
\end{align*}
and hence
$
 \sup_{ u \in [0,t^\prime_{j+1}] } \big| \xi^*\big( \lambda(u) \big ) - \xi^\prime(u) \big| \leq R_{j+1}\widetilde{\delta}.
$
By arguing inductively, we yield
$
 \sup_{ u \in [0,1] } \big| \xi^*\big( \lambda(u) \big ) - \xi^\prime(u) \big| \leq R_{k+1}\widetilde{\delta}< \epsilon
$
due to our choice of $\widetilde{\delta}$ in \eqref{choice of delta, proof, lemma: continuity of h k b mapping}.
Combining this bound with \eqref{ineq 1, proof, lemma: continuity of h k b mapping},
we get $
\bm{d}_{J_1}(\xi^*,\xi^\prime) < \epsilon
$
and conclude the proof.
}
\end{proof}

%\linkdest{location, proof of lemma: continuity of h k b mapping}
\begin{lemma}
\label
{lemma: continuity of h k b mapping}
\linksinthm
{lemma: continuity of h k b mapping}%
Let Assumption \ref{assumption: lipschitz continuity of drift and diffusion coefficients} and \ref{assumption: boundedness of drift and diffusion coefficients} hold.
Given any $k \in \mathbb N$ and $T > 0$, the mapping
$\bar h^{(k)}_{[0,T]}$ is continuous on $\R^m \times \R^{d \times k} \times \R^{m \times k} \times (0,T)^{k\uparrow}$.
\end{lemma}

\begin{proof}
\linksinpf
{lemma: continuity of h k b mapping}%
To ease notations we focus on the case where $T = 1$, but the proof is identical for any $T > 0$.
Arbitrarily pick some $k \in \mathbb N$,
$\bm x^* \in \R^m$, $\textbf W^* = (\bm w^*_1,\cdots,\bm w_k^*) \in \R^{d \times k}$,
$\textbf V^* = (\bm v^*_1,\cdots,\bm v^*_k) \in \R^{m \times k}$, and $\bm{t}^* = (t^*_1,\cdots,t^*_k) \in (0,1)^{k\uparrow}$.
% Let $\xi^* = h^{(k)}(x^*,\bm{w}^*,\bm{t}^*)$.
We claim the existence of some $b = b(\bm x^*,\textbf W^*,\textbf V^*,\bm t^*) > 0$ such that for any $\delta \in (0,1)$, $\bm x^\prime \in \R^m$, $\textbf W^\prime = (\bm w^\prime_1,\cdots,\bm w^\prime_k) \in \R^{d \times k}$,
$\textbf V^\prime = (\bm v^\prime_1,\cdots,\bm v^\prime_k) \in \R^{m \times k}$, and $\bm t^\prime \in (0,1)^{k\uparrow}$ satisfying 
\begin{align}
    \norm{ \bm x^* - \bm x^\prime } < \delta, \qquad 
    \norm{ \bm w^\prime_j - \bm w^*_j} \vee \norm{\bm v^*_j - \bm v^\prime_j} \vee |t^\prime_j - t^*_j| < \delta\ \forall j \in [k].
    \label{condition, x prime and x, lemma: continuity of h k b mapping}
\end{align}
we have $\bar h^{(k)}(\bm x^\prime,\textbf W^\prime,\textbf V^\prime,\bm t^\prime) = \bar h^{(k)|b}(\bm x^\prime,\textbf W^\prime,\textbf V^\prime,\bm t^\prime)$.
Then the continuity of $\bar h^{(k)}$ follows immediately from the continuity of $\bar h^{(k)|b}$ established in Lemma~\ref{lemma: continuity of h k b mapping clipped}.

Now, it only remains to find such $b > 0$.
In particular, we can simply set 
$b = C \cdot \big(\max\{ \norm{\bm w^*_j}:\ j \in [k] \} + 1\big)$
where $C \geq 1$ is the constant in Assumption \ref{assumption: boundedness of drift and diffusion coefficients} satisfying
$\sup_{\bm y \in \R^m}\norm{\bm \sigma(\bm y)} \leq C$.
Indeed, given any $\delta \in (0,1)$ and $\bm x^\prime \in \R^m$, $\textbf W^\prime \in \R^{d\times k}$,
$\textbf V^\prime \in \R^{m \times k}$,
and $\bm t^\prime \in (0,1)^{k\uparrow}$ satisfying \eqref{condition, x prime and x, lemma: continuity of h k b mapping},
for 
$
\xi^\prime = \bar h^{(k)|b}(\bm x^\prime,\textbf W^\prime,\textbf V^\prime,\bm{t}^\prime)
$
we have 
$$\norm{\bm \sigma\big(\xi^\prime(t^\prime_j-) + \bm v_j\big)\bm w^\prime_j} \leq C \cdot \big(\max\{ \norm{\bm w^*_i}:\ i \in [k] \} + \delta\big) < b
\qquad \forall j \in [d].
$$
% for all $j \in [k]$,
As a result, the truncation operator $\varphi_b$ at step \eqref{def: perturb ode mapping h k b, 3} is not in effect,
and hence $\xi^\prime = \bar h^{(k)}(\bm x^\prime,\textbf W^\prime,\textbf V^\prime,\bm{t}^\prime)$.
This concludes the proof.
\elaborate{
Given $\epsilon > 0$,
It suffices to show the existence of some $\delta > 0$ such that 
$
\bm{d}_{J_1}(\xi^*,\xi^\prime) < \epsilon
$
holds for any $\xi^\prime = h^{(k)}(x^\prime,\bm{w}^\prime,\bm{t}^\prime)$
with $x^\prime \in \R,\ \bm{w}^\prime = (w^\prime_1,\cdots,w^\prime_k) \in \R^k,\ \bm{t}^\prime = (t^\prime_1,\cdots,t^\prime_k) \in (0,1)^{k \uparrow}$
satisfying
\begin{align}
    |x^*- x^\prime| < \delta;\ \ \ \ |w^\prime_j - w^*_j|\vee |t^\prime_j - t^*_j| < \delta\ \forall j \in [k].
    \nonumber
    %\label{condition, x prime and x, lemma: continuity of h k b mapping}
\end{align}
In particular, let $b = C \cdot \big(\max\{ |w^*_j|:\ j \in [k] \} + 1\big)$
where $C \geq 1$ is the constant in Assumption \ref{assumption: boundedness of drift and diffusion coefficients} satisfying
$\sup_{x \in \R}|\sigma(x)| \leq C$.
As a result,
we have $|\xi^*(t^*_j-)w^*_j| < b$ for any $j \in [k]$,
thus implying $\xi^* = h^{(k)}(x^*,\bm{w}^*,\bm{t}^*) = h^{(k)|b}(x^*,\bm{w}^*,\bm{t}^*)$.
Besides, for any $j \in [k]$, due to $|w^\prime_j - w^*_j| < \delta < 1$ we get $|\xi^\prime(t^\prime_j-)w^\prime_j| \leq C \cdot \big(\max\{ |w^*_j|:\ j \in [k] \} + \delta\big) < b$.
Therefore,
for any
 $x^\prime \in \R,\ \bm{w}^\prime = (w^\prime_1,\cdots,w^\prime_k) \in \R^k,\ \bm{t}^\prime = (t^\prime_1,\cdots,t^\prime_k) \in (0,1)^{k \uparrow}$
 satisfying \eqref{condition, x prime and x, lemma: continuity of h k b mapping},
 we have 
 $
 \xi^\prime = h^{(k)}(x^\prime,\bm{w}^\prime,\bm{t}^\prime) = h^{(k)|b}(x^\prime,\bm{w}^\prime,\bm{t}^\prime).
 $
By appealing to the continuity of $h^{(k)|b}$ (see Lemma \ref{lemma: continuity of h k b mapping clipped}), we conclude the proof.
}
\end{proof}

Next, we move onto the proofs of Lemmas \ref{lemma: LDP, bar epsilon and delta}, \ref{lemma: LDP, bar epsilon and delta, clipped version}, and \ref{lemma: convergence from C k b measure to C k measure}.

\begin{proof}[Proof of Lemma~\ref{lemma: LDP, bar epsilon and delta}]
\linksinpf{lemma: LDP, bar epsilon and delta}%
The claims are trivial if $A$ or $B$ is an empty set.
Also, the claims are trivially true if $k = 0$
(note that in $(b)$ we would have $\mathbb{D}^{(-1)}_A(r) = \emptyset$).
Therefore, in this proof we focus on the case where $A\neq \emptyset$, $B \neq \emptyset$, and $k \geq 1$.

Since $B$ is bounded away from $\mathbb{D}_{A}^{(k-1)}(r)$ under $\dj{}$,
there exists $\bar{\epsilon} > 0$ such that
$\bm{d}_{J_1}\big(B^{\bar{\epsilon}},\mathbb{D}_{A}^{(k - 1)}(r)\big) > 0$
so that part $(b)$ is satisfied. 
Next, we show that there exists $\bar\delta > 0$, which together with $\bar\epsilon$ satisfies $(a)$.
Let $D \in[1,\infty)$ be the Lipschitz coefficient in Assumption \ref{assumption: lipschitz continuity of drift and diffusion coefficients}.
Besides, recall the constant $C\in [1,\infty)$ in Assumption \ref{assumption: boundedness of drift and diffusion coefficients}
that satisfies $\sup_{\bm x \in \mathbb{R}^m}\norm{\bm \sigma(\bm x)} \leq C$.
Let $\rho \delequal \exp(D)$ and 
\begin{align}
    \bar{\delta} \delequal \frac{\bar{\epsilon}}{\rho C + 1}.
    \label{proof: choose bar delta, lemma LDP, bar epsilon and delta}
\end{align}
Note that $\bar\delta < \bar\epsilon$.
To show that the claim $(a)$ holds for such $\bar\epsilon$ and $\bar\delta$, we proceed with proof by contradiction.
Suppose that there is some 
$\bm{t} = (t_1,\cdots,t_k) \in (0,1]^{k\uparrow}$, $\textbf W = (\bm w_1,\cdots,\bm w_k) \in \R^{d\times k}$, and $\bm x_0 \in A$
such that
$\xi \delequal h^{(k)}(\bm x_0,\textbf W, \bm{t}) \in {B^{\bar{\epsilon}}}$
yet $\norm{\bm w_J}\leq \bar{\delta}$ for some $J = 1,2,\cdots,k$.
% Without loss of generality, assume that $0 \leq t_1 \leq t_2 \cdots \leq t_{k} \leq t$.
We construct $\xi' \in \mathbb{D}_{A}^{(k - 1)}(r)$ such that $\bm d_{J_1}(\xi', \xi) < \bar\epsilon$. 
% Let $J \delequal \min\{j \in [k]:\ |w_j| < \bar{\delta}\}$.
% We focus on the case where $J < k$. The arguments for the cases $J=1$ and $J=k$ are simpler and slight adaptations of this case.
Specifically, we focus on the case where $J<k$, since the proof when $J=k$ is almost identical but only slightly simpler.
% First, due to $x_0 \in A^{\bar{\delta}}$,
% there exists some $x^\prime_0 \in A$ such that $|x_0 - x^\prime_0| < \bar{\delta}$. 
Define $\xi'$ by
\begin{equation}%\label{definition of xi prime1}
\xi'(s) \delequal 
\begin{cases}
%h^{(1)}_{[0,T]}^{(0)}(x_0')(s) & s \in [0,t_1)\\
\xi(s) & s \in [0,t_J)\\
h^{(0)}(\xi'(t_J-))(s-t_J) & s \in [t_J, t_{J+1})\\
\xi(s) & s \in [t_{J+1},t].
\end{cases}
\nonumber
\end{equation}
% if $J < k$, and
% \begin{equation}\label{definition of xi prime2}
% \xi'(s) \delequal 
% \begin{cases}
% h^{(1)}_{[0,T]}^{(0)}(x_0')(s) & s \in [0,t_1)\\
% \xi(s) & s \in [t_1,t_J)\\
% h^{(1)}_{[0,T]}^{(0)}(\xi'(t_J-))(s-t_J) & s \in [t_J, t]\\
% \end{cases}
% \end{equation}
% if $J = k$.
That is, $\xi'$ is driven by the same ODE as $\xi$ on $[t_J, t_{J+1})$, except that at the beginning of the intervals, $\xi'$ starts from $\xi(t_J-)$ instead of $\xi(t_J)$.
On the other hand, $\xi'$ coincides with $\xi$ outside of $[t_J,t_{J+1})$.
% Note that, indeed, $\xi'\in \mathbb{D}^A_{k-1}[0,t]$.
To bound the distance between $\xi$ and $\xi'$, note that 
% $\xi(s) = h^{(1)}_{[0,T]}^{(0)}(x_0)(s)$ for $s \in [0,t_1)$, and hence,
% from Gronwall's inequality, 
% \begin{equation}\label{bound of difference between xi and xi prime up to t1}
% |\xi(s) - \xi^\prime(s)| \leq \exp\big( t_{1}L \big)\big| x_0 - x^\prime_0 \big|
% \leq \rho\bar{\delta},\qquad \forall s \in [0,t_1).
% \end{equation}
% % for $s \in [0,t_1)$. 
% This implies that $|\xi'(t_J-) - \xi(t_J-)| \leq \rho \bar\delta$. 
from Assumption~\ref{assumption: boundedness of drift and diffusion coefficients}, we have
$\norm{ \xi(t_J) - \xi(t_J-)} = \norm{\bm \sigma(\xi(t_J-)) \bm w_J}\leq C \bar\delta$. 
Then using Gronwall's inequality, we get
\begin{align}
\norm{\xi(s) - \xi^\prime(s)} 
&
\leq \exp\big( (t_{J+1} - t_J)D \big)\norm{ \xi(t_J) - \xi'(t_J-) }
\nonumber\\
&
\leq \rho\norm{ \xi(t_J) - \xi'(t_J-) }
\nonumber\\
&
\leq \rho C \bar \delta 
% \nonumber\\
% &
< \bar{\epsilon}
\label{bound of difference between xi and xi prime between t_J and t_J+1}
\end{align}
for all $s \in [t_J, t_{J+1})$.
This shows that $\bm d_{J_1} (\xi, \xi') < \bar \epsilon$. 
However, this cannot be the case since $\xi  \in B^{\bar\epsilon}$, $\xi' \in \mathbb D^{(k-1)}_A(r)$, and we chose $\bar\epsilon$ such that $\bm{d}_{J_1}(B^{\bar{\epsilon}},\mathbb{D}_{A}^{(k - 1)}) > 0$.
This concludes the proof for the case with $J < k$.
The proof for the case where $J = k$ is almost identical.
The only difference is that $\xi^\prime$ is set to be $\xi^\prime(s) = \xi(s)$ for all $s < t_k$, and $\xi^\prime(s) = h^{(0)}\big( \xi^\prime(t_k-)\big)(s - t_k)$ for all $s \in [t_k,1]$,
\end{proof}

% Before establishing Lemma~\ref{lemma: LDP, bar epsilon and delta, clipped version},
% we make one observation related to Assumption \ref{assumption: uniform nondegeneracy of diffusion coefficients} and the truncation operator $\varphi_b$ defined in \eqref{defTruncationClippingOperator}.
% For any $b,c > 0$, any $w \in \R$ and any $z \geq c$, note that for $\widetilde w \delequal \varphi_{b/c}(w)$,
% we have 
% $
% \varphi_b( z \cdot w) = \varphi_b(z \cdot \widetilde w).
% $
% Indeed, the claim is obviously true when $|w| \leq b/c$ (so $\widetilde w = w$);
% in case that $|w| > b/c$, we simply get $\varphi_b(z \cdot w ) = \varphi_b(z \cdot \widetilde w)$ with the value equal to $b$ or $-b$.
% Combining this fact with $|\varphi_b(x) - \varphi_b(y)| \leq |x - y|\ \forall x,y\in \R$,
% we yield (for any $b,c > 0$, any $w_1,w_2 \in \R$, and any $z_1,z_2 \geq c$)
% \begin{align}
% |\varphi_b(z_1 \cdot w_1) - \varphi_b(z_2 \cdot w_2)|
% \leq |z_1\widetilde w_1 - z_2\widetilde w_2|
%     \qquad
%     \text{ where }
%     \widetilde w_1 = \varphi_{b/c}(w_1),\
%     \widetilde w_2 = \varphi_{b/c}(w_2).
%     \label{observation, uniform nondegeneracy and truncation operator}
% \end{align}

\begin{proof}[Proof of Lemma~\ref{lemma: LDP, bar epsilon and delta, clipped version}]
\linksinpf{lemma: LDP, bar epsilon and delta, clipped version}
Similar to Lemma~\ref{lemma: LDP, bar epsilon and delta}, all claims hold trivially if $A$ or $B$ is empty, or if $k = 0$.
In this proof, we focus on the case where $A\neq \emptyset$, $B \neq \emptyset$, and $k \geq 1$.

We start by fixing some constant.
Since $B$ is bounded away from $\D^{(k-1)|b}_A(r)$,
we can fix some $\bar\epsilon > 0$ such that 
$\bm{d}_{J_1}\big(B^{\bar\epsilon},\mathbb{D}_{A}^{(k - 1)|b}(r)\big) > 0$ to conclude the proof of part $(b)$.
Next,
let $D \in[1,\infty)$ be the Lipschitz coefficient in Assumption \ref{assumption: lipschitz continuity of drift and diffusion coefficients}.
Besides, recall the constant $C\in [1,\infty)$ in Assumption \ref{assumption: boundedness of drift and diffusion coefficients}
that satisfies $\sup_{\bm x \in \mathbb{R}^m}\norm{\bm \sigma(\bm x)} \leq C$.
Let $\rho \delequal \exp(D)$.
By picking an even smaller $\bar\epsilon > 0$ if necessary, we can w.l.o.g.\ assume that
\begin{align}
    2\rho\bar\epsilon < r\qquad\text{and}\qquad
    \dj{}\big(B^{\bar\epsilon},\D^{(k-1)|b}_A(r)\big) > 2\rho\bar\epsilon.
    \label{proof: choose bar epsilon, lemma: LDP, bar epsilon and delta, clipped version}
\end{align}
Let
\begin{align}
    \bar{\delta} \delequal \bar\epsilon/C.
    \label{proof: choose bar delta, lemma: LDP, bar epsilon and delta, clipped version}
\end{align}
To prove that part $(a)$ holds for such $\bar\delta$, we proceed with a proof by contradiction.
Arbitrarily pick some $\bm x \in A$, $\textbf W = (\bm w_1,\cdots,\bm w_k) \in \R^{d\times k}$, $\textbf V = (\bm v_1,\cdots,\bm v_k) \in \R^{m \times k}$ with $\max_{j \in [k]}\norm{\bm v_j} \leq \bar\epsilon$, $\bm t = (t_1,\cdots,t_k) \in (0,1)^{k\uparrow}$, and $b > 0$.
For $\xi_b = \bar h^{(k)|b}(\bm x,\textbf W, \textbf V, \bm t)$,
suppose that $\xi_b \in B^{\bar\epsilon}$ yet there is some $J \in [k]$ such that $\norm{\bm w_J} \leq \bar \delta$.
% It suffices to show that $\xi_b \notin S$.
Next, construct $\xi \in \mathbb D$ as follows: (recall that $\bm y_\cdot(x)$ is the ODE defined in \eqref{def ODE path y t})
\begin{equation}%\label{definition of xi prime1}
\xi(s) \delequal
\begin{cases}
\xi_b(s) & s \in [0,t_J)\\
\bm y_{s - t_J}(\xi(t_J-)) & s \in [t_J, t_{J+1})\\
\xi_b(s) & s \in [t_{J+1},1].
\end{cases}
\nonumber
\end{equation}
That is, $\xi$ is a modified version of $\xi_b$ where the jump at time $t_J$ is removed, but the two paths coincide on $[0,t_J)\cup[t_{J+1},1]$.
Note that by Assumption~\ref{assumption: boundedness of drift and diffusion coefficients},
$$
\norm{\xi(t_J) - \xi_b(t_J)} = \norm{\Delta \xi_b(t_J)} \leq  \norm{ \bm v_J } +  \norm{ \varphi_b\Big(\bm \sigma\big( \xi_b(t_J-) + \bm v_J \big) \bm w_J \Big)}
\leq 
\bar\epsilon + C\bar\delta.
$$
Applying Gronwall's inequality, we then yield that for all $s \in [t_J,t_{J-1})$,
\begin{align*}
    \norm{\xi_b(s) - \xi(s)}
    & \leq \exp\big( D (s - t_J)\big) \cdot \norm{\xi(t_J) - \xi_b(t_J)}
    \\ 
    & \leq \rho \cdot \norm{\xi(t_J) - \xi_b(t_J)}
    \qquad \text{ where }\rho = \exp(D)
    % \\ 
    % & \leq 
    % \rho C |w_J|\qquad \text{due to }\sup_{x \in \R}|\sigma(x)| \leq C\text{ , see Assumption \ref{assumption: boundedness of drift and diffusion coefficients}}
    \\ 
    & \leq \rho (\bar\epsilon + C\bar\delta) = 2\rho\bar\epsilon
    \qquad
    \text{due to \eqref{proof: choose bar delta, lemma: LDP, bar epsilon and delta, clipped version}}.
\end{align*}
This implies that $\bm d_{J_1}(\xi,\xi_b) \leq 2\rho \bar\epsilon$
and
$
\xi \in \D^{(k-1)|b}_A(2\rho\bar\epsilon) \subseteq \D^{(k-1)|b}_A(r);
$
see \eqref{proof: choose bar epsilon, lemma: LDP, bar epsilon and delta, clipped version}.
However, 
in light of $\dj{}\big(B^{\bar\epsilon},\D^{(k-1)|b}_A(r)\big) > 2\rho\bar\epsilon$ in \eqref{proof: choose bar epsilon, lemma: LDP, bar epsilon and delta, clipped version},
we arrive at the contraction that $\xi_b \notin B^{\bar\epsilon}$.
This concludes the proof of part $(a)$.
\end{proof}

\begin{proof}[Proof of Lemma~\ref{lemma: convergence from C k b measure to C k measure}]
\linksinpf{lemma: convergence from C k b measure to C k measure}
The proof relies on the following claim: for any $S \in \mathscr S_{\mathbb{D}}$ that is bounded away from $\mathbb{D}^{(k-1)}_A(r)$,
\begin{align}
    \lim_{b \rightarrow \infty}\mathbf{C}^{(k)|b}   (S;\bm x) = \mathbf{C}^{(k)}(S;\bm x).
    \label{subgoal, goal 4, proposition: standard M convergence, LDP unclipped}
\end{align}
Then for any $g \in \mathcal{C}\big(\mathbb{D}\setminus\mathbb{D}^{(k-1)}_A(r)\big)$,
we know that $B = \text{supp}(g)$ is bounded away from $\mathbb{D}^{(k-1)}_A(r)$.
Also, given any $\Delta > 0$,
an approximation to $g$ using simple functions implies the existence of some $N \in \mathbb{N}$,
some sequence of real numbers $\big( c^{(i)}_g \big)_{i = 1}^N$, some sequence $\big( B_g^{(i)} \big)_{i = 1}^N$ of Borel measurable sets on $\mathbb{D}$ that are bounded away from $\mathbb{D}^{(k-1)}_A(r)$ such that the following claims hold for $g^\Delta(\cdot) = \sum_{i = 1}^N c^{(i)}_g\mathbbm{I}\big(\ \cdot\ \in B^{(i)}_g\big)$:
\begin{align*}
    B_g^{(i)} & \subseteq B\ \ \forall i \in [N];\ \ \ \ \big|g^\Delta(\xi) - g(\xi)\big| < \Delta \ \ \forall \xi \in \mathbb{D}.
\end{align*}
Then
\begin{align*}
    \limsup_{b \rightarrow \infty}\Big| \mathbf{C}^{(k)|b}   (g;\bm x) - \mathbf{C}^{(k)}(g;\bm x) \Big|
    & \leq 
     \limsup_{b \rightarrow \infty}\Big| \mathbf{C}^{(k)|b}   (g;\bm x) - \mathbf{C}^{(k)|b}   (g^\Delta;\bm x) \Big|
     \\
     &
     +
      \limsup_{b \rightarrow \infty}\Big| \mathbf{C}^{(k)|b}   (g^\Delta;\bm x) - \mathbf{C}^{(k)}(g^\Delta;\bm x) \Big|
      \\
      &
      +
      \limsup_{b \rightarrow \infty}\Big| \mathbf{C}^{(k)}(g^\Delta;\bm x) - \mathbf{C}^{(k)}(g;\bm x) \Big|
\end{align*}
First, note that $\mathbf{C}^{(k)|b}   (g^\Delta;\bm x) = \sum_{i = 1}^N c^{(i)}_g\mathbf{C}^{(k)|b}   (B^{(i)}_g;\bm x)$ and 
$\mathbf{C}^{(k)}(g^\Delta;\bm x) = \sum_{i = 1}^N c^{(i)}_g\mathbf{C}^{(k)}(B^{(i)}_g;\bm x)$.
Therefore, applying \eqref{subgoal, goal 4, proposition: standard M convergence, LDP unclipped}, we get 
$\limsup_{b \rightarrow \infty}\Big| \mathbf{C}^{(k)|b}   (g^\Delta;\bm x) - \mathbf{C}^{(k)}(g^\Delta;\bm x) \Big| = 0$.
Next, note that $\Big| \mathbf{C}^{(k)|b}   (g^\Delta;\bm x) - \mathbf{C}^{(k)|b}   (g;\bm x) \Big| \leq \Delta \cdot \mathbf{C}^{(k)|b}   (B;\bm x)$
and
$\Big| \mathbf{C}^{(k)}(g^\Delta;\bm x) - \mathbf{C}^{(k)}(g;\bm x) \Big| \leq \Delta \cdot \mathbf{C}^{(k)}(B;\bm x)$.
Thanks to \eqref{subgoal, goal 4, proposition: standard M convergence, LDP unclipped} again, we get
$
 \limsup_{b \rightarrow \infty}\Big| \mathbf{C}^{(k)|b}   (g;\bm x) - \mathbf{C}^{(k)}(g;\bm x) \Big| \leq 2\Delta \cdot \mathbf{C}^{(k)}(B;\bm x).
$
The arbitrariness of $\Delta > 0$ allows us to conclude the proof.

Now, we prove \eqref{subgoal, goal 4, proposition: standard M convergence, LDP unclipped} using Dominated Convergence theorem.
By the definition in \eqref{def: measure mu k b t},
\begin{align*}
    \mathbf{C}^{(k)|b}   (S;\bm x) \delequal &
   \int \mathbbm{I}\Big\{ h^{(k)|b} \big( \bm x,\textbf W,\bm t  \big) \in S \Big\} 
    \big((\nu_\alpha \times \mathbf S)\circ \Phi\big)^k(d \textbf W) \times\mathcal{L}_1^{k\uparrow}(d\bm t).
\end{align*}
where $S \in \mathscr S_\mathbb{D}$ is bounded away from $\mathbb{D}^{(k-1)}_A(r)$.
First, we fix some $\textbf W\in \R^{d \times k}$ and $\bm t\in (0,1)^{k\uparrow}$ and $x_0 \in \R$,
and
let $M \delequal \max_{j \in [k]}\norm{\bm w_j}.$
For any $b > MC$ where $C \geq 1$ is the constant satisfying
such that $\sup_{\bm x \in \R^m}\norm{\bm a(\bm x)} \vee \norm{\bm \sigma(\bm x)} \leq C$
(see Assumption \ref{assumption: boundedness of drift and diffusion coefficients}),
by the definitions of $h^{(k)}$ and $h^{(k)|b}$ it is easy to see that 
$h^{(k)|b}(\bm x,\textbf W,\bm t)=h^{(k)}(\bm x,\textbf W,\bm t)$.
This implies
$$
\lim_{b \rightarrow \infty}
    \mathbbm{I}\big\{ h^{(k)|b} \big( \bm x,\textbf W,\bm t  \big) \in S \big\} 
    =
    \mathbbm{I}\big\{ h^{(k)}\big( \bm x,\textbf W,\bm t  \big) \in S \big\}
    \qquad 
    \forall \textbf W \in \R^{d\times k},\ \bm t \in (0,1]^{k\uparrow}.
$$
% for all $\bm w \in \R^k$ and $\bm t\in (0,1)^{k\uparrow}$.
\elaborate{
Consider some $b > MC$ where $C \geq 1$ is the constant in Assumption \ref{assumption: boundedness of drift and diffusion coefficients}
such that $|a(x)| \vee \sigma(x) \leq C$ for any $x \in \R$.
For such $b$,
note that $\sup_{x \in \R}|\sigma(x)\cdot w_j| \leq C \cdot M < b$.
As a result, for $\xi = h^{(k)|b}(x,w_1,\cdots,w_k,t_1,\cdots,t_k)$,
we have 
$
\varphi_b\Big(\sigma\big(\xi(t_j-)\big)w_j\Big) = \sigma\big(\xi(t_j-)\big)w_j
$
for any $j \in [k]$.
In summary, we have shown that 
given any $(w_1,\cdots,w_k) \in \R^k, (t_1,\cdots,t_k) \in (0,1)^{k\uparrow}$,
the claim
$h^{(k)|b}(x,w_1,\cdots,w_k,t_1,\cdots,t_k) =  h^{(k)}(x,w_1,\cdots,w_k,t_1,\cdots,t_k)$
holds for any $b$ large enough}
In order to apply Dominated Convergence theorem and conclude the proof of \eqref{subgoal, goal 4, proposition: standard M convergence, LDP unclipped},
it suffices to find an integrable function that dominates $\mathbbm{I}\big\{ h^{(k)|b} \big( \bm x,\textbf W,\bm t  \big) \in S \big\}$.
Specifically, 
since $S$ is bounded away from $\mathbb{D}^{(k-1)}_A(r)$,
we can find some $\bar \epsilon > 0$ such that $\bm d_{J_1}\big(S,\mathbb D^{(k-1)}_A(r)\big) > \bar \epsilon$.
Also, let $\rho  = \exp(D)$ where $D \in[1,\infty)$ is the Lipschitz coefficient in Assumption \ref{assumption: lipschitz continuity of drift and diffusion coefficients}.
Fix some $\bar \delta < \frac{\bar\epsilon}{\rho C}$.
By part $(a)$ of Lemma~\ref{lemma: LDP, bar epsilon and delta, clipped version}, we get
\begin{align}
    \mathbbm{I}\Big\{ h^{(k)|b} \big( \bm x,\textbf W,\bm{t}  \big) \in S \Big\}  \leq \mathbbm{I}\Big\{ \norm{\bm w_j} > \bar\delta\ \forall j \in [k] \Big\}
    \qquad 
    \forall b > 0,\ \textbf W \in \R^{d\times k},\ \bm t \in (0,1)^{k\uparrow}.
    \nonumber
    %\label{subgoal 2, goal 4, proposition: standard M convergence, LDP unclipped}
\end{align}
From $
\int \mathbbm{I}\big\{\norm{\bm w_j} > \bar \delta\ \forall j \in [k]\big\} \big((\nu_\alpha \times \mathbf S)\circ \Phi\big)^k(d \textbf W) \times \mathcal{L}^{k\uparrow}_1(d\bm{t}) \leq 1/\bar\delta^{k\alpha} < \infty,
$ we conclude the proof.
\end{proof}

The following result will be applied in the proof of Lemma~\ref{lemma: SGD close to approximation x circ, LDP}.
Let $ \notationdef{notation-discrete-gradient-descent}{\bm{x}^\eta_j(x)}$ be the solution to
\begin{align}
\bm x^\eta_0(\bm x) = \bm x,\qquad 
    {\bm{x}^\eta_j(\bm x)} = \bm{x}^\eta_{j-1}(\bm x) + \eta \bm a\big( \bm{x}^\eta_{j-1}(\bm x) \big)\ \ \ \forall j \geq 1.
    \label{def: gradient descent process y}
\end{align}
After proper scaling of the time parameter, $\bm x^\eta_j$ approximates $\bm y_t$ with small $\eta$.
The next lemma is a direct result from Gronwall's inequality and bounds
the distance between $\bm{x}^\eta_{\floor{t/\eta}}(x)$ and $\bm{y}_t(y)$.
For the sake of completeness we provide the proof.

\begin{lemma} \label{lemma Ode Gd Gap}
\linksinthm{lemma Ode Gd Gap}%
Let Assumptions \ref{assumption: lipschitz continuity of drift and diffusion coefficients} and \ref{assumption: boundedness of drift and diffusion coefficients} hold.
For any $\eta > 0, t > 0$ and $x,y \in \R^m$,
\begin{align*}
    \sup_{s \in [0,t]}\norm{\bm{y}_s(y) - \bm{x}^\eta_{\floor{s/\eta}}(x)} \leq (\eta C + \norm{x - y})\exp(D t)
\end{align*}
where $D,C \in [1,\infty)$ are the constants in Assumptions \ref{assumption: lipschitz continuity of drift and diffusion coefficients} and \ref{assumption: boundedness of drift and diffusion coefficients} respectively.
\end{lemma}

\begin{proof}
\linksinpf{lemma Ode Gd Gap}%
For any $s \geq 0$ that is not an integer,
let $\bm{x}^\eta_s(x) \delequal{} \bm{x}^\eta_{\floor{s}}(x)$ and $\bm{y}_s^\eta(y) \delequal \bm{y}_{s\eta}(y)$.
% By definition of $ \bm{y}_t(y)$, we have $\frac{d\bm{y}^\eta_t(y)}{dt} = \eta a\big(\bm{y}^\eta_t(y)\big)$  for any $t > 0$.
Now observe that (for any $s \geq 0$)
\begin{align*}
    \bm{y}^\eta_{s}(y) & = \bm{y}^\eta_{\floor{s}}(y) + \eta\int_{\floor{s}}^{s} \bm a(\bm{y}^\eta_u(y) )du \\
    \bm{y}^\eta_{\floor{s}}(y) & = y + \eta\int_0^{\floor{s} }\bm a(\bm{y}^\eta_u(y) )du \\
    \bm{x}^\eta_{\floor{s}}(y)  & = x + \eta \int_0^{ \floor{s} }\bm a( \bm{x}^\eta_u(y) )du.
\end{align*}
Let $\bm b(u) \delequal \bm{y}^\eta_u(y) - \bm{x}^\eta_u(x).$
It suffices to show that $\sup_{u \in [0,t/\eta]}\norm{\bm b(u)} \leq (\eta C + \norm{x-y})\exp(Dt)$.
To this end, we observe that (for any $s > 0$)
\begin{align*}
    \norm{\bm b(s)} & \leq \norm{\bm b(\floor{s})} + \norm{\eta \int_{ \floor{s} }^s \bm a\big( \bm{y}^\eta_u(y) \big)du}
    \leq \norm{\bm b(\floor{s})} + \eta C
    \\
    & \leq \eta \int_0^{ \floor{s} }\norm{ \bm a\big( \bm{y}^\eta_u(y) \big)-\bm a\big( \bm{x}^\eta_u(x) \big) } du + \norm{x - y} + \eta C
    \\
    & \leq 
    \eta D \int_0^s \norm{\bm b(u)}du +\norm{x - y} + \eta C\qquad \text{due to Assumption \ref{assumption: boundedness of drift and diffusion coefficients}}.
\end{align*}
Apply Gronwall's inequality (see Theorem V.68 of \cite{protter2005stochastic}) to $\norm{\bm b(u)}$ on interval $[0,t/\eta]$ and we conclude the proof.
\end{proof}

% By applying Lemma \ref{lemma: SGD close to approximation x circ, LDP} inductively,
% we can now establish Lemma~\ref{lemma: SGD close to approximation x breve, LDP clipped}.

\section{Technical Results for Metastability Analysis}
\label{subsec: lemma for measure check C}

We first give the proof for Corollary~\ref{corollary: first exit time, untruncated case}.
To do so,
we provide some straightforward bounds for the law of geometric random variables.
\begin{lemma} \label{lemmaGeomFront}
\linksinthm{lemmaGeomFront}
Let $a:(0,\infty) \to (0,\infty)$, $b:(0,\infty) \to (0,\infty)$ 
be two functions
such that 
$\lim_{\epsilon \downarrow 0} a(\epsilon) = 0, \lim_{\epsilon \downarrow 0} b(\epsilon) = 0$.
Let $\{U(\epsilon): \epsilon > 0\}$ be a family of geometric RVs with success rate $a(\epsilon)$,
i.e.
$\P(U(\epsilon) > k) = (1 - a(\epsilon))^{k}$ for $k \in \mathbb{N}$.
For any $c > 1$, there exists $\epsilon_0 > 0$ such that
$$\exp\Big(-\frac{c\cdot a(\epsilon)}{b(\epsilon)}\Big) \leq \P\Big( U(\epsilon) > \frac{1}{b(\epsilon)} \Big) \leq \exp\Big(-\frac{a(\epsilon)}{c\cdot b(\epsilon)}\Big)
\ \ \ \forall \epsilon \in (0,\epsilon_0)
.$$

\end{lemma}

\begin{proof}
\linksinpf{lemmaGeomFront}%
Note that
$\P( U(\epsilon) > \frac{1}{b(\epsilon)} ) = \big(1 - a(\epsilon)\big)^{ \floor{1/b(\epsilon)} }.$
By taking logarithm on both sides, we have
\begin{align*}
    \ln \P\Big( U(\epsilon) > \frac{1}{b(\epsilon)} \Big) & = \floor{1/b(\epsilon)}\ln\Big(1 - a(\epsilon)\Big) = \frac{\floor{1/b(\epsilon)} }{ 1/b(\epsilon) }\frac{\ln\Big(1 - a(\epsilon)\Big) }{-a(\epsilon) }\frac{-a(\epsilon) }{ b(\epsilon) }.
\end{align*}
Since $\lim_{x \rightarrow 0}\frac{\ln(1 + x)}{x} = 1$, we know that for $\epsilon$ sufficiently small, we will have
$-c \frac{a(\epsilon)}{b(\epsilon)}   \leq \ln \P\Big( U(\epsilon) > \frac{1}{b(\epsilon)} \Big) \leq -\frac{a(\epsilon)}{c\cdot b(\epsilon)}.$
% \begin{align}
%  -c \frac{a(\epsilon)}{b(\epsilon)}   \leq \ln \P\Big( U(\epsilon) > \frac{1}{b(\epsilon)} \Big) \leq -\frac{a(\epsilon)}{c\cdot b(\epsilon)}.
%  \label{proofGeomBound}
% \end{align}
By taking exponential on both sides, we conclude the proof.
% (ii)
% To begin with, from the lower bound of part (i), we have
% $$\P\Big( U(\epsilon) \leq \frac{1}{b(\epsilon)} \Big) = 1 -  \P\Big( U(\epsilon) > \frac{1}{b(\epsilon)} \Big)
% \leq 1 -\exp\Big(-c\cdot \frac{a(\epsilon)}{b(\epsilon)}\Big)
% \leq 
%  \frac{c\cdot a(\epsilon)}{b(\epsilon)}
% $$
% for sufficiently small $\epsilon > 0$.
% % by applying the fact that $1-\exp(-x) \leq x\ \forall x\in \mathbb{R}$ at $x=c\cdot a(\epsilon)/b(\epsilon)$.
% For the lower bound, recall that
% $1 - \exp(-x) \geq \frac{x}{\sqrt{c}}$ holds for $x>0$ sufficiently close to $0$. 
% Since we assume $\lim_{\epsilon \downarrow 0}a(\epsilon)/b(\epsilon) = 0$, applying this bound with $x = \frac{a(\epsilon)}{\sqrt c\cdot b(\epsilon)}$
% along with the upper bound of part (i), we get
% $$\P\Big( U(\epsilon)  \leq \frac{1}{b(\epsilon)} \Big) 
% \geq 1 - \exp\Big(-\frac{1}{\sqrt{c}}\cdot\frac{a(\epsilon)}{b(\epsilon)}\Big) 
% \geq \frac{a(\epsilon)}{c\cdot b(\epsilon)}$$
% for sufficiently small $\epsilon$.
% % Therefore, it holds for any $\epsilon$ small enough that
% % $ \P\Big( U(\epsilon)  \leq \frac{1}{b(\epsilon)} \Big) \geq \frac{1}{c}\cdot\frac{a(\epsilon)}{b(\epsilon)}. $
\end{proof}

\begin{proof}[Proof of Corollary~\ref{corollary: first exit time, untruncated case}]
\linksinpf{corollary: first exit time, untruncated case}
Note that the value of $\bm \sigma(\cdot)$ and $\bm a(\cdot)$ outside of the domain $I$ has no impact on the first exit analysis.
Therefore, by modifying the value of $\bm \sigma(\cdot)$ and $\bm a(\cdot)$ outside of $I$,
we can assume w.l.o.g.\ that
\begin{align}
    \norm{\bm a(\bm x)}\vee \norm{\bm \sigma(\bm x)} \leq C
    \qquad
    \forall \bm x \in \R^m.
    \label{boundedness assumption, corollary: first exit time, untruncated case}
\end{align}
for some $C \in (0,\infty)$.
That is, we can impose the boundedness condition in Assumption~\ref{assumption: boundedness of drift and diffusion coefficients} w.l.o.g.
% there is some
% $C > 0$
% such that $0 \leq \sigma(x) \leq C$ 
% and $|a(x)| \leq C$ for all $x \in \R$.
% Also, from the continuity of $a(\cdot)$ (see Assumption \ref{assumption: lipschitz continuity of drift and diffusion coefficients}),
% we are able to fix some $\widetilde{C} \in [ C,\infty)$ such that 
% $
% |a(x)| \leq \widetilde{C}
% $
% for all $x \in [s_\text{left},s_\text{right}]$.

We start with a few observations.
First, under any $\eta \in (0,\frac{b}{2C})$, 
on the event $\{\eta\norm{\bm Z_j} \leq \frac{b}{2C}\ \forall j \leq t\}$ the norm of the step-size (before truncation) 
$\eta \bm a\big(\bm X^{\eta|b}_{j - 1}(\bm x)\big) + \eta \bm \sigma\big(\bm X^{\eta|b}_{j - 1}(\bm x)\big)\bm Z_j$ of $\bm X_j^{\eta|b}(\bm x)$ is less than $b$ for each $j \leq t$. 
Therefore, $\bm X^{\eta|b}_j(\bm x)$ and $\bm X^\eta_j(\bm x)$ coincide for such $j$'s.
\elaborate{First, note that
given any $\eta \in (0, \frac{b}{2 C })$ and $|w| \leq \frac{b}{2C}$, we have
\begin{align*}
    \Big| \eta a(y) + \sigma(y) w   \Big|
    & \leq 
    \eta |a(y)| + | \sigma(y) | \cdot |w|
    % \\
    % & 
    \leq \eta \cdot C + C \cdot \frac{b}{ 2C }
    < \frac{b}{2} + \frac{b}{2} = b
    \qquad\forall y \in I^-.
\end{align*}
As a result, given any $t > 0$, $x \in \R$,  and $\eta \in (0,1 \wedge \frac{b}{2 C })$,
it holds on event $\{\eta|Z_j| \leq \frac{b}{2C}\ \forall j \leq t\}$ that
\begin{align*}
    X^{\eta|b}_j(x) & = X^{\eta|b}_{j - 1}(x) + \varphi_b\Big( \eta a\big(X^{\eta|b}_{j - 1}(x)\big) + \eta \sigma\big(X^{\eta|b}_{j - 1}(x)\big)Z_j  \Big)
    \\
    & = 
    X^{\eta|b}_{j - 1}(x) + \eta a\big(X^{\eta|b}_{j - 1}(x)\big) + \eta \sigma\big(X^{\eta|b}_{j - 1}(x)\big)Z_j
    \ \ \ \ \forall j \leq \tau^{\eta|b}(x) \wedge t.
\end{align*}
}%
In other words, for any $\eta \in (0,\frac{b}{2C})$, on event $ \big\{ \eta\norm{\bm Z_j} \leq \frac{b}{2C}\ \forall j \leq t\big\}$ 
we have
\begin{equation}
    \bm X^{\eta|b}_j(\bm x) = \bm X^\eta_j(\bm x)\qquad \forall j \leq t.
    \label{proof, observation 1, theorem: first exit time, unclipped}
\end{equation}
Next, recall that $I$ is a bounded under Assumption~\ref{assumption: shape of f, first exit analysis}.
Therefore, under $b > \sup_{\bm x \in I}\norm{\bm x}$,
it holds for $\bm w \in \R^d$ that
\begin{align*}
    \varphi_b\big(\bm\sigma(\bm 0)\bm w\big) \notin I\qquad \Longleftrightarrow \qquad \bm \sigma(\bm 0)\bm w \notin I.
\end{align*}
\elaborate{To see why, first suppose that $|\sigma(0)\cdot w| \geq b$. 
Then $\sigma(0)\cdot w \notin I$. 
In the meantime, we also have $\big|\varphi_b\big(\sigma(0)\cdot w\big)\big| = b$, 
and hence, $\varphi_b\big(\sigma(0)\cdot w\big) \notin I$.
On the other hand, the equivalence is trivial if $|\sigma(0)\cdot w| < b$ since $\varphi_b(\sigma(0)\cdot w)  = \sigma(0)\cdot w$ in such a case.
This establishes claim \eqref{proof, observation 2, theorem: first exit time, unclipped}.
}%
As a result, for all $b$ large enough, we have
\begin{align}
   C^I_b = \widecheck{ \mathbf{C} }^{(1)|b}(I^\complement) 
   & = \int \mathbbm{I}\Big\{ \varphi_b\big(\bm \sigma(\bm 0) \bm w\big) \notin  I \Big\} \big((\nu_\alpha \times \mathbf S)\circ \Phi\big)(d \bm w)
   \nonumber
   \\
   & = \int \mathbbm{I}\Big\{ \bm \sigma(\bm 0) \bm w \notin I \Big\} \big((\nu_\alpha \times \mathbf S)\circ \Phi\big)(d \bm w)
   = \widecheck{ \mathbf{C} }(I^\complement) \delequal C^I_\infty.
   \label{proof, observation 2, theorem: first exit time, unclipped}
\end{align}
Similarly, one can show that for all $b$ large enough,
\begin{align}
     \widecheck{ \mathbf{C} }^{(1)|b}(\partial I)
     =
      \widecheck{ \mathbf{C} }^{(1)}(\partial I).
      \label{proof, observation, mass on the boundary set of I, theorem: first exit time, unclipped}
\end{align}
Moreover, given any measurable $A \subseteq \R$ such that $r_A = \inf\{\norm{\bm x}:\ \bm x \in A\} > 0$,
we claim that
\begin{align}
    \lim_{b \to \infty}\widecheck{\mathbf C}^{(1)|b}(A) = \widecheck{ \mathbf C }(A).
    \label{proof, observation 2 general form, theorem: first exit time, unclipped}
\end{align}
This claim follows from a simple application of the dominated convergence theorem.
Indeed, by definition, we have
$
\widecheck{\mathbf C}^{(1)|b}(A) = \int \mathbbm{I}\big\{ \varphi_b\big(\bm \sigma(\bm 0) \bm w\big) \in A \big\}
 \big((\nu_\alpha \times \mathbf S)\circ \Phi\big)(d \bm w).
$
For $f_b(\bm w) \delequal \mathbbm{I}\big\{ \varphi_b\big(\bm \sigma(\bm 0) \bm w\big) \in A \big\}$,
we first note that given $\bm w \in \R^m$, we have $f_b(\bm w) = f(\bm w)\delequal \mathbbm{I}\big\{ \bm \sigma(\bm 0) \bm w \in A \big\}$ for all $b > \norm{\bm w} \norm{\bm  \sigma(\bm 0)}$.
Therefore, the point-wise convergence $\lim_{b \to \infty}f_b(\bm w) = f(\bm w)$ holds for all $\bm w \in \R^m$.
Next, 
observe that
\begin{align*}
    \big\{ \varphi_b\big(\bm \sigma(\bm 0)\bm w\big) \in A \big\}
    &
    \subseteq 
    \big\{
        \norm{ \bm \sigma(\bm 0)\bm w } \geq r_A
    \big\}
    \subseteq 
    \big\{
        \norm{ \bm \sigma(\bm 0)}\cdot \norm{\bm w } \geq r_A
    \big\}
    =
    \big\{
        \norm{\bm w } \geq r_A/\norm{ \bm \sigma(\bm 0)}
    \big\}.
\end{align*}
This implies
$
f_b(\bm w) \leq \mathbbm{I}\big\{ \norm{\bm w } \geq r_A/\norm{ \bm \sigma(\bm 0)} \big\}
$
for all $b > 0$ and $\bm w \in \R^d$.
Also, by definition of the measure $\nu_\alpha$ in \eqref{def: measure nu alpha},
\begin{align}
    \int \mathbbm{I}\big\{ \norm{\bm w} \geq r_A/ \norm{\bm \sigma(\bm 0)} \big\}  \big((\nu_\alpha \times \mathbf S)\circ \Phi\big)(d \bm w) = ( \norm{\bm \sigma(\bm 0)}/r_A)^\alpha < \infty.
    \label{proof: upper bound for measure hat C 1 b, theorem: first exit time, unclipped}
\end{align}
The last inequality follows from $r_A > 0$.
This allows us to apply dominated convergence theorem and establish \eqref{proof, observation 2 general form, theorem: first exit time, unclipped}.

Moving on, we verify a few regularity conditions.
By repeating the calculations in \eqref{proof: upper bound for measure hat C 1 b, theorem: first exit time, unclipped} with  $A = I^\complement$,
we are able to verify the condition ${C^I_\infty} =\widecheck{ \mathbf{C} }(I^\complement) < \infty$
in Corollary~\ref{corollary: first exit time, untruncated case}.
Next, by the convention in \eqref{def: 0 jump coverage set, first exit times},
we have that $\mathcal{G}^{(0)|b}(\epsilon)$ is bounded away from $I^\complement$ for all $\epsilon > 0$ small enough and all $b > 0$.
In the meantime, recall the definition of 
$
\mathcal{G}^{(1)|b} = \big\{ \varphi_b\big(\bm \sigma(\bm 0)\bm w\big):\ \bm w \in \R^d  \big\}.
$
Due to $\norm{\bm \sigma(\bm 0)} > 0$, there exists $\bm w^*$ such that 
$
\norm{\bm \sigma(\bm 0)\bm w^*} > \sup_{\bm x \in I}\norm{\bm x},
$
and hence $\bm \sigma(\bm 0)\bm w^* \notin I$.
As a result, for all $b > \norm{\bm \sigma(\bm 0)\bm w^*}$ we have
$
\mathcal{G}^{(1)|b} \cap I^\complement \neq \emptyset.
$
That is, we have shown that 
\begin{align*}
    \mathcal J^I_b = 1\qquad\text{for all $b > 0$ large enough};
\end{align*}
see \eqref{def: first exit time, J *} for the definition.
Together with \eqref{proof, observation, mass on the boundary set of I, theorem: first exit time, unclipped} and the running assumption $\widecheck{\mathbf C}(\partial I) = 0$ in Corollary~\ref{corollary: first exit time, untruncated case},
we have 
$
 \widecheck{ \mathbf{C} }^{(1)|b}(\partial I) = 0
$
for all $b$ large enough.
These conditions will allow us to apply Theorem~\ref{theorem: first exit time, unclipped}, with $b > 0$ large enough, in the remainder of this proof.

Now, we fix $t \geq 0$ and $B \subseteq I^c$, and recall that
our goal is to study the probability of the event
$$A(\eta,\bm x) \delequal \big\{ C^I_\infty H(\eta^{-1}) \tau^\eta(\bm x) > t,\ \bm X^\eta_{\tau^\eta(\bm x)}(\bm x) \in B \big\}.$$
Here, note that $\lambda(\eta) = \eta^{-1}  H(\eta^{-1})$ and hence $\eta \cdot \lambda(\eta) = H(\eta^{-1})$.
Also, henceforth in the proof we only consider $b$ large enough
such that
$
C^I_\infty = C^I_b;
$
see \eqref{proof, observation 2, theorem: first exit time, unclipped}.
We focus on the case where $C^I_\infty > 0$, but we stress that the proof for the case with $C^I_\infty = 0$ is almost identical.
First,
we arbitrarily pick some $T > t$ and observe that
\begin{align}
    & A(\eta,\bm x) 
    \nonumber \\ 
    & = 
    \underbrace{\Big\{ C^I_\infty H(\eta^{-1}) \tau^\eta(\bm x) \in (t,T],\ \bm X^\eta_{\tau^\eta(\bm x)}(\bm x) \in B \Big\}}_{ \delequal A_1(\eta,\bm x,T) }
    \cup 
    \underbrace{\Big\{ C^I_\infty H(\eta^{-1}) \tau^\eta(\bm x) > T,\ \bm X^\eta_{\tau^\eta(\bm x)}(\bm x) \in B \Big\}}_{ \delequal A_2(\eta,\bm x,T) }.
    \label{proof, decompose event A, theorem: first exit time, unclipped}
\end{align}
Let $E_b(\eta,T) \delequal \big\{ \eta\norm{\bm Z_j} \leq \frac{b}{2C}\ \forall j \leq \frac{T}{ C^I_\infty H(\eta^{-1}) }   \big\}$ and note that
$$
A_1(\eta,\bm x,T) = \Big(A_1(\eta,\bm x,T) \cap E_b(\eta,T) \Big) \cup \Big(A_1(\eta,\bm x,T) \setminus E_b(\eta,T) \Big).
$$
Moreover, for all $\eta \in (0,\frac{b}{2C})$,
\begin{align*}
    & \P\Big(A_1(\eta,\bm x,T) \cap E_b(\eta,T) \Big) 
    \\ 
    & = 
    \P\bigg( \Big\{  C^I_b \eta \cdot \lambda(\eta) \tau^{\eta|b}(\bm x) \in (t,T],\ \bm X^{\eta|b}_{ \tau^{\eta|b}(\bm x) }(\bm x) \in B  \Big\} \cap E_b(\eta,T) \bigg)
    \qquad 
    \text{due to \eqref{proof, observation 1, theorem: first exit time, unclipped} and \eqref{proof, observation 2, theorem: first exit time, unclipped}}
    \\ 
    & \leq 
    \P\bigg( C^I_b \eta \cdot \lambda(\eta) \tau^{\eta|b}(\bm x) \in (t,T],\ \bm X^{\eta|b}_{ \tau^{\eta|b}(\bm x) }(\bm x) \in B  \bigg)
    \\ & = 
    \P\bigg( C^I_b \eta \cdot \lambda(\eta) \tau^{\eta|b}(\bm x) > t,\ \bm X^{\eta|b}_{ \tau^{\eta|b}(\bm x) }(\bm x) \in B  \bigg)
    % \\
    % &\qquad \qquad 
    -
    \P\bigg( C^I_b \eta \cdot \lambda(\eta) \tau^{\eta|b}(\bm x) > T,\ \bm X^{\eta|b}_{ \tau^{\eta|b}(\bm x) }(\bm x) \in B  \bigg).
\end{align*}
By Theorem~\ref{theorem: first exit time, unclipped} and claim~\eqref{proof, observation 2, theorem: first exit time, unclipped}, we get
\begin{align}
\limsup_{\eta \downarrow 0}\sup_{\bm x \in I_\epsilon }\P\Big(A_1(\eta,\bm x,T) \cap E_b(\eta,T) \Big) 
\leq 
\frac{ \widecheck{\mathbf{C}}^{ (1)|b }(B^-) }{ C^I_\infty }\cdot\exp(-t) - \frac{ \widecheck{\mathbf{C}}^{ (1)|b }(B^\circ) }{ C^I_\infty }\cdot\exp(-T).
\label{proof, ineq 0, theorem: first exit time, unclipped}
\end{align}
Meanwhile,
$$
\sup_{\bm x \in I_\epsilon}\P\big(A_1(\eta,\bm x,T) \setminus E_b(\eta,T)\big) \leq \P\big( ( E_b(\eta,T))^\complement\big)
=
\P\bigg( \eta\norm{\bm Z_j} > \frac{b}{2C}\text{ for some }j \leq \frac{T}{C^I_\infty H(\eta^{-1})} \bigg).
$$
Applying Lemma \ref{lemmaGeomFront}, we get (recall that $H(\cdot) = \P(\norm{\bm Z_j} > \cdot)$ and $H(x)\in \RV_{-\alpha}(x)$ as $x \to \infty$)
\begin{align}
    % &  
    \limsup_{\eta \downarrow 0}\P\bigg( \eta\norm{\bm Z_j} >\frac{b}{2C}\text{ for some }j \leq \frac{T}{C^I_\infty H(\eta^{-1})} \bigg)
    \nonumber
    % \\
    & = 
    1 - 
    \liminf_{\eta \downarrow 0}\P\bigg( \text{Geom}\Big(H\big( \frac{b}{\eta \cdot 2 C} \big)\Big) > \frac{T}{C^I_\infty H(\eta^{-1})} \bigg)
    \nonumber
    \\
    &
    \leq 
    1 - \lim_{\eta \downarrow 0}\exp\bigg( -\frac{T \cdot  H(\eta^{-1} \cdot \frac{b}{2C} )}{ C^I_\infty H(\eta^{-1})  }\bigg)
    \nonumber
    \\ 
    & = 
    1 - \exp\bigg( - \frac{T}{C^I_\infty} \cdot \Big( \frac{2C}{b} \Big)^\alpha  \bigg).
    % \qquad 
    % \text{due to }H(x) \in \RV_{-\alpha}(x)\text{ as }x \to \infty
    \label{proof, ineq 1, theorem: first exit time, unclipped}
\end{align}
Similarly,
\begin{align*}
    A_2(\eta,\bm x,T) & \subseteq \Big\{ C^I_\infty H(\eta^{-1})\tau^\eta(\bm x) > T  \Big\}
    \\ 
    & = \bigg( \Big\{ C^I_\infty H(\eta^{-1})\tau^\eta(\bm x) > T  \Big\} \cap E_b(\eta,T) \bigg) \cup  \bigg( \Big\{ C^I_\infty H(\eta^{-1})\tau^\eta(\bm x) > T  \Big\} \setminus E_b(\eta,T) \bigg).
\end{align*}
On $\{ C^I_\infty  H(\eta^{-1})\tau^\eta(\bm x) > T \} \cap E_b(\eta,T)$, we have $\tau^\eta(\bm x) = \tau^{\eta|b}(\bm x)$
again due to \eqref{proof, observation 1, theorem: first exit time, unclipped}.
By Theorem \ref{theorem: first exit time, unclipped} and \eqref{proof, observation 2, theorem: first exit time, unclipped},
we get
\begin{align}
    & \limsup_{\eta \downarrow 0}\sup_{\bm x \in I_\epsilon}\P\bigg( \Big\{ C^I_\infty H(\eta^{-1})\tau^\eta(\bm x) > T  \Big\} \cap E_b(\eta,T) \bigg)
    \nonumber
    \\ 
    &
    \leq 
    \limsup_{\eta \downarrow 0}\sup_{\bm x \in I_\epsilon}
        \P\Big( C^I_b \eta \cdot \lambda(\eta)\tau^{\eta|b}(\bm x) > T\Big) \leq \exp(-T).
    \label{proof, ineq 2, theorem: first exit time, unclipped}
\end{align}
Meanwhile, the limit of $\sup_{\bm x \in I_\epsilon}\P\big( C^I_\infty H(\eta^{-1})\tau^\eta(\bm x) > T \} \setminus E_b(\eta,T) \big)$ as $\eta \downarrow 0$ is again bounded by \eqref{proof, ineq 1, theorem: first exit time, unclipped}.
Collecting \eqref{proof, ineq 0, theorem: first exit time, unclipped}, \eqref{proof, ineq 1, theorem: first exit time, unclipped}, and \eqref{proof, ineq 2, theorem: first exit time, unclipped},
we yield that for all $b > 0$ large enough and all $T > t$,
\begin{align*}
    & \limsup_{\eta \downarrow 0}\sup_{\bm x \in I_\epsilon}\P\big(A(\eta,\bm x)\big)
    \\
    & \leq 
        \frac{ \widecheck{\mathbf{C}}^{ (1)|b }(B^-) }{ C^I_\infty }\cdot\exp(-t) - \frac{ \widecheck{\mathbf{C}}^{ (1)|b }(B^\circ) }{ C^I_\infty }\cdot\exp(-T) + \exp(-T)
    % \\ 
    % &\qquad
    + 
    2 \cdot \bigg[  1 - \exp\bigg( - \frac{T}{C^I_\infty} \cdot \Big( \frac{2C}{b} \Big)^\alpha  \bigg)\bigg].
\end{align*}
In light of claim \eqref{proof, observation 2 general form, theorem: first exit time, unclipped},
we send $b \to \infty$ and $T \to \infty$
% $
%  \limsup_{\eta \downarrow 0}\sup_{x \in I_\epsilon}\P\big(A(\eta,x)\big)
%  \leq 
%  \frac{ \widecheck{\mathbf{C}}(B^-) }{ C^* }\cdot\exp(-t) - \frac{ \widecheck{\mathbf{C}}(B^\circ) }{ C^* }\cdot\exp(-T) + \exp(-T).
% $
% Letting $T$ tend to $\infty$, we 
to conclude the proof of the upper bound.

The lower bound can be established analogously. 
By the decomposition of events in \eqref{proof, decompose event A, theorem: first exit time, unclipped},
\begin{align*}
    & \inf_{\bm x \in I_\epsilon}\P\big(A(\eta,\bm x)\big)
    \\ 
    & \geq 
    \inf_{\bm x \in I_\epsilon}\P\big(A_1(\eta,\bm x,T)\big)
    \geq 
    \inf_{\bm x \in I_\epsilon}\P\big(A_1(\eta,\bm x,T) \cap E_b(\eta,T)\big)
    \\
    & = 
    \inf_{\bm x \in I_\epsilon}
    \P\bigg( \Big\{  C^I_b \eta \cdot \lambda(\eta) \tau^{\eta|b}(\bm x) \in (t,T],\ \bm X^{\eta|b}_{ \tau^{\eta|b}(\bm x) }(\bm x) \in B  \Big\} \cap E_b(\eta,T) \bigg)
    \qquad 
    \text{due to \eqref{proof, observation 1, theorem: first exit time, unclipped} and \eqref{proof, observation 2, theorem: first exit time, unclipped}}
    \\ 
    & \geq 
    \inf_{\bm x \in I_\epsilon}
    \P\bigg( C^I_b \eta \cdot \lambda(\eta) \tau^{\eta|b}(\bm x) \in (t,T],\ \bm X^{\eta|b}_{ \tau^{\eta|b}(\bm x) }(\bm x) \in B  \bigg)
    -
    \P\Big( \big(E_b(\eta,T)\big)^\complement\Big)
    \\ 
    & \geq 
    \inf_{\bm x \in I_\epsilon}
    \P\bigg( C^I_b \eta \cdot \lambda(\eta) \tau^{\eta|b}(\bm x) > t,\ \bm X^{\eta|b}_{ \tau^{\eta|b}(\bm x) }(\bm x) \in B  \bigg)
    -
    \sup_{\bm x \in I_\epsilon}
    \P\bigg( C^I_b \eta \cdot \lambda(\eta) \tau^{\eta|b}(\bm x) > T,\ \bm X^{\eta|b}_{ \tau^{\eta|b}(\bm x) }(\bm x) \in B  \bigg)
    \\ 
    &\qquad \qquad 
    -
    \P\Big( \big(E_b(\eta,T)\big)^\complement\Big).
\end{align*}
By Theorem \ref{theorem: first exit time, unclipped} and the limit in \eqref{proof, ineq 1, theorem: first exit time, unclipped},
we yield (for all $b > 0$ large enough and all $T  > t$)
\begin{align*}
    \liminf_{\eta \downarrow 0}\inf_{\bm x \in I_\epsilon}\P\Big(A(\eta,\bm x)\Big)
    & \leq 
        \frac{ \widecheck{\mathbf{C}}^{ (1)|b }(B^\circ) }{ C^I_\infty }\cdot\exp(-t) - \frac{ \widecheck{\mathbf{C}}^{ (1)|b }(B^-) }{ C^I_\infty }\cdot\exp(-T)
    % \\ 
    % &\qquad 
    -
    \bigg[  1 - \exp\bigg( - \frac{T}{C^I_\infty} \cdot \Big( \frac{2C}{b} \Big)^\alpha  \bigg)\bigg].
\end{align*}
By claim \eqref{proof, observation 2 general form, theorem: first exit time, unclipped},
we send $b \to \infty$ and $T \to \infty$ to conclude the proof of the lower bound.
\end{proof}

The remainder of this section collects important properties of the measure $\widecheck{ \mathbf C }^{(k)|b}(\cdot)$ defined in \eqref{def: measure check C k b}.
In particular, the proof of Lemma~\ref{lemma: limiting measure, with exit location B, first exit analysis} will be provided at the end of this section.
Throughout the rest of this section,
we impose Assumption~\ref{assumption: lipschitz continuity of drift and diffusion coefficients} and \ref{assumption: shape of f, first exit analysis},
and
fix some $b > 0$ such that the conditions in  Theorem~\ref{theorem: first exit time, unclipped} hold.
We fix some $\bar\epsilon > 0$ small enough such that the conditions in \eqref{constant bar epsilon, new, 1, first exit time analysis}--\eqref{constant bar epsilon, new, 3, first exit time analysis} hold.
% Note that this choice implies property \eqref{property: contraction of the ODE around the origin}.

Recall that
$
{I_\epsilon} = \{ \bm y:\ \norm{\bm x - \bm y} < \epsilon\ \Longrightarrow\ \bm x \in I \}
$
is  the $\epsilon$-shrinkage of the domain $I$,
and that $I^-_\epsilon$ is the closure of $I_\epsilon$.
We first study the mapping ${\widecheck{g}^{(k)|b}}$ in \eqref{def: mapping check g k b, endpoint of path after the last jump, first exit analysis},
which is defined based on 
$\bar h^{(k)|b}_{[0,T]}$ and $h^{(k)|b}_{[0,T]}$ defined in
\eqref{def: perturb ode mapping h k b, 1}--\eqref{def: perturb ode mapping h k b, 4}.

% we impose Assumptions \ref{assumption: lipschitz continuity of drift and diffusion coefficients}, \ref{assumption: nondegeneracy of diffusion coefficients}, and \ref{assumption: shape of f, first exit analysis} on some $I = (s_\text{left},s_\text{right})$ where $s_\text{left} < 0 < s_\text{right}$,
% and fix some $b > 0$ such that $s_\text{left}/b \notin \mathbb Z$ and $s_\text{right}/b \notin \mathbb Z$.
% Besides, we adopt the choices of $\bar \epsilon > 0$ and $\bm t(\epsilon)$ in \eqref{constant bar epsilon, first exit time analysis} and \eqref{def: t epsilon function, first exit analysis}
% throughout this section.

% Recall that $I^- = [s_\text{left},s_\text{right}]$.
% Also, recall that $l = |s_\text{left}| \wedge s_\text{right}$ and ${\mathcal J^I_b} = \ceil{l/b}$.

% By studying the mapping
% in the current context,
% we establish 
% Let $b$ and $\bar\epsilon$ be chosen as described in \eqref{constant bar epsilon, first exit time analysis}.

\begin{lemma}
\label{lemma: choose key parameters, first exit time analysis}
\linksinthm{lemma: choose key parameters, first exit time analysis}
    % Let $k \in \mathbb{Z}_+, b > 0$.
    % Let $\gamma,\bar\epsilon > 0$ be such that $\gamma > (k-1)b + 3\bar\epsilon$.
    % Suppose that $a(0) = 0$ and $a(x)x < 0\ \forall x \in (-\gamma,0)\cup(0,\gamma)$.
    % Under Assumption \ref{assumption: f and sigma, stationary distribution of SGD},
    Let Assumptions~\ref{assumption: lipschitz continuity of drift and diffusion coefficients}  and \ref{assumption: shape of f, first exit analysis}
    % and \ref{assumption: shape of f, first exit analysis} 
    hold.
    Let $\bar\epsilon > 0$ be the constant in \eqref{constant bar epsilon, new, 1, first exit time analysis}--\eqref{constant bar epsilon, new, 3, first exit time analysis}. 
    Let $C \in [1,\infty)$ be such that $\sup_{\bm x \in I^-}\norm{\bm a(\bm x)} \vee \norm{\bm \sigma(\bm x)} \leq C$.
    (Below, we adopt the convention that $t_0 = 0$.)
    \begin{enumerate}[(a)]

        % \item  
        % \textcolor{red}{??? do we need part (b) ???}
        % Suppose that $\mathcal J^I_b \geq 2$.
        % Given any $T > 0$ and 

        % then
        % it holds for all $T > 0$, $\bm x \in \bar B_{ b + \bar\epsilon }$, $\bm{w} = (w_1,\cdots,w_{\mathcal J^I_b-2})\in \R^{\mathcal J^I_b-2}$, and $\bm{t} = (t_1,\cdots,t_{\mathcal J^I_b-2})\in (0,T]^{ \mathcal J^I_b-2 \uparrow}$ that
        % \begin{align*}
        % \sup_{t \in [0,T]}|\xi(t)| \leq (\mathcal J^I_b - 1)b +\bar\epsilon < l - 2\Bar{\epsilon}\qquad\text{ where }
        % \xi = h^{(\mathcal J^I_b-2)|b}_{[0,T]}(x_0,\bm{w},\bm{t}).
        % \end{align*}

        \item Given any $T > 0$, the claim $\xi(t) \in I^-_{2\bar\epsilon}\ \forall t \in[0,T]$
        holds for all 
        $
        \xi \in \D^{( \mathcal J^I_b - 1)|b}_{ \bar B_{\bar\epsilon} }[0,T](\bar\epsilon).
        $

        \item 
            Let $\bar c \in (0,1)$ be the constant fixed in \eqref{constant bar c for bar epsilon, first exit time}.
        There exist $\Bar{\delta}>0$ and $\Bar{t} > 0$ such that the following claim holds:
        Given any $T > 0$ and $\bm x_0 \in \R^m$ with $\norm{\bm x_0} \leq \bar\epsilon$,
        if
        \begin{align}
            \xi(t) \notin I_{\bar c\bar\epsilon}\qquad\text{ for some }
            \xi = h^{(\mathcal J^I_b-1)|b}_{[0,T]}\Big( \bm x_0 + \varphi_b\big( \bm \sigma(\bm x_0)\bm w_0\big),\textbf W,(t_1,\cdots,t_{\mathcal J^I_b - 1})\Big),\ t \in [0,T],
            \label{claim, def of xi, lemma: choose key parameters, first exit time analysis}
        \end{align}
        where
        % $\bm w_0 \in \R^d$, 
        $\textbf W = (\bm w_1,\cdots,\bm w_{\mathcal J^I_b-1}) \in \R^{d \times \mathcal J^I_b-1}$, and 
        $(t_1,\cdots,t_{\mathcal J^I_b-1})\in (0,T]^{ \mathcal J^I_b-1 \uparrow}$,
        then
        \begin{enumerate}[(i)]
        \item 
            $\xi(t) \in I_{2\bar\epsilon}^-$ for all $t \in [0,t_{\mathcal J^I_b - 1})$;
            % $\sup_{t \in [0,t_{\mathcal J^I_b-1})}|\xi(t)| \leq (\mathcal J^I_b - 1)b +\bar\epsilon < l - 2\Bar{\epsilon}$;
            
        \item 
            $\xi(t_{\mathcal J^I_b-1})\notin I_{\bar\epsilon}$;

        \item 
            $\norm{\xi(t)} \geq \bar\epsilon$ for all $t \leq t_{\mathcal J^I_b - 1}$;

         \item 
            $t_{\mathcal J^I_b-1} < \Bar{t}$;

        \item 
            $\norm{\bm w_j} > \Bar{\delta}$ for all $j = 0,1,\cdots,\mathcal J^I_b - 1$.

        \end{enumerate}

        \item 
            Let $T > 0$, 
            $\bm x \in \R^m,\ \textbf W = (\bm w_1,\cdots, \bm w_{\mathcal J^I_b}) \in \R^{d \times  \mathcal J^I_b},\ (t_1,\cdots,t_{\mathcal J^I_b})\in (0,T]^{\mathcal J^I_b\uparrow}$, and $\epsilon \in (0,\bar\epsilon)$.
            Let
            \begin{align*}
                \xi & =  h^{ (\mathcal J^I_b)|b }_{[0,T]}\big(\bm x, \textbf W, (t_1,\cdots,t_{\mathcal J^I_b})\big),
                \\ 
                \widecheck{\xi}
                & =
                 h^{ (\mathcal J^I_b - 1)|b }_{[0,T]}
                \Big(
                    \varphi_b\big( \bm \sigma(\bm 0)\bm w_1\big), (\bm w_2,\cdots, \bm w_{ \mathcal J^I_b }), 
                    (t_2 - t_1,t_3 - t_1,\cdots,t_{\mathcal J^I_b} - t_1)
                \Big).
            \end{align*}
            If $\norm{\xi(t_1-)} < \epsilon$ 
            and
            $
            \norm{\bm w_j} \leq \epsilon^{-\frac{1}{2\mathcal J^I_b}}\ \forall j \in [\mathcal J^I_b],
            $
            then
        \begin{align*}
           \sup_{t \in [t_1,t_{\mathcal J^I_b}]} 
           \norm{\xi(t) - \widecheck\xi(t - t_1)} \leq \Big( 2\exp\big(D( t_{\mathcal J^I_b}  - t_1)\big) \cdot D\Big)^{\mathcal J^I_b + 1} \cdot  \epsilon,
        \end{align*}
        where
        $D \geq 1$ is the constant in Assumption~\ref{assumption: lipschitz continuity of drift and diffusion coefficients}.
        % ,
        % and $c \in (0,1]$ is the constant in \eqref{uniform nondegeneracy condition, first exit time analysis}.

        \item 
            Let $\bar c \in (0,1)$ be the constant fixed in \eqref{constant bar c for bar epsilon, first exit time}.
        Given $\Delta > 0$, there exists $\epsilon_0 = \epsilon_0(\Delta) \in (0,\bar\epsilon)$ such that
        the following claim holds:
        given 
        $T > 0$, 
        $\bm x \in \R^m$,
        $\textbf W = (\bm w_1,\cdots, \bm w_{\mathcal J^I_b}) \in \R^{d \times  \mathcal J^I_b},\ (t_1,\cdots,t_{\mathcal J^I_b})\in (0,T]^{\mathcal J^I_b\uparrow}$,
        if $\norm{\bm x} \leq \epsilon_0$
        and
        $
            \max_{j \in [\mathcal J^I_b] }\norm{\bm w_j} \leq \epsilon_0^{-\frac{1}{2\mathcal J^I_b}},
        $
        then        
        % for any $T > 0$, $x \in [-\epsilon_0$, $\epsilon_0]$, $\bm{w} = (w_1,\cdots,w_{\mathcal J^I_b}) \in \R^{\mathcal J^I_b}$, and $\bm{t} = (t_1,\cdots,t_{\mathcal J^I_b}) \in (0,T]^{{\mathcal J^I_b}\uparrow}$,
        \begin{align*}
        \xi(t) \notin I_{\bar c\bar \epsilon}\text{ or }\widecheck{\xi}(t)\notin I_{\bar c\bar \epsilon}
        \text{ for some }t \in [t_1,T - t_1]
        \qquad
        \Longrightarrow
        \qquad
        \sup_{t \in [t_1,t_{\mathcal J^I_b}]}
        \norm{\widecheck{\xi}(t - t_1) - \xi(t) } < \Delta,
        \end{align*}
        where $\xi$ and $\widecheck{\xi}$ are defined as in part $(c)$.
    \end{enumerate}
\end{lemma}

\begin{proof}
\linksinpf{lemma: choose key parameters, first exit time analysis}
Before the proof of the claims,
we highlight two facts.
First, Assumption~\ref{assumption: lipschitz continuity of drift and diffusion coefficients} and $I$ being a bounded set (so $I^-$ is compact) imply the existence of $C \in (0,\infty)$ such that $\sup_{\bm x \in I^-}\norm{\bm a(\bm x)} \vee \norm{\bm \sigma(\bm x)} \leq C$.
Without loss of generality, in the statement of Lemma~\ref{lemma: choose key parameters, first exit time analysis} we pick some $C \geq 1$.
Next,
one can see that the validity of all claims do not depend on the values of $\bm \sigma(\cdot)$ and $\bm a(\cdot)$ outside of $I^-$. 
% Take part $(a)$ as an example.
% Suppose that we can prove part $(a)$ under the stronger assumption that $\sup_{x \in \R}|a(x)| \wedge \sigma(x) \leq C$ for some $C \in [1,\infty)$
% and $\inf_{x \in \R}\sigma(x) \geq c$ for some $c \in (0,1]$.
% Then due to $\sup_{t \in [0,T]}|\xi(t)| < l = |s_\text{left}| \wedge s_\text{right}$ for $\xi = h^{(\mathcal J^I_b-2)|b}_{[0,T]}(x_0,\bm{w},\bm{t})$,
% we have $\xi(t) \in I^-$ for all $t \in [0,T]$.
% This implies that part $(a)$ is still valid even if we only have 
% $\sup_{x \in I^-}|a(x)| \wedge \sigma(x) \leq C$
% and $\inf_{x \in I^-}\sigma(x) \geq c$.
% The same applies to all the other claims.
Therefore, throughout this proof below we w.l.o.g.\ assume that
\begin{align}
    \norm{\bm a(\bm x)}\vee \norm{\bm \sigma(\bm x)} \leq C
    \qquad
    \forall \bm x \in \R^m.
    \label{constant C, boundedness of a and sigma, lemma: choose key parameters, first exit time analysis}
\end{align}
for some $C \in [1,\infty)$.
That is, we impose the boundedness condition in Assumption~\ref{assumption: boundedness of drift and diffusion coefficients}.

% \medskip
% $(a)$
% The proof hinges on the following observation. 
% For any $j \geq 0, T > 0, x_0 \in \R, \bm{w} = (w_1,\cdots,w_j) \in \R^j$ and $\bm t = (t_1,\cdots,t_j) \in (0,T]^{j\uparrow}$,
% let $\xi = h^{(j)|b}_{[0,T]}(x_0,\bm w,\bm t)$.
% The condition $a(x)x \leq 0$ implies that 
% \begin{align}
%     \frac{d |\xi(t)|}{dt} = -\big|a\big(\xi(t)\big)\big|\ \ \ \forall t \in [0,T]\setminus \{t_1,\cdots,t_j\}
%     \label{proof, observation on xi, lemma: choose key parameters, first exit time analysis}
% \end{align}
% Specifically, suppose that ${\mathcal J^I_b} \geq 2$.
% For all $T > 0, x_0 \in [-b - \Bar{\epsilon},b + \bar\epsilon], \bm{w} = (w_1,\cdots,w_{{\mathcal J^I_b}-2})\in \R^{{\mathcal J^I_b}-2}$ and $\bm{t} = (t_1,\cdots,t_{{\mathcal J^I_b}-2})\in (0,T]^{ {\mathcal J^I_b}-2 \uparrow}$,
% it holds for $\xi = h^{({\mathcal J^I_b}-2)|b}_{[0,T]}(x_0,\bm{w},\bm{t})$ that
% $
% d|\xi(t)|/dt \leq 0
% $
% for any $t \in [0,T]\setminus \{t_1,\cdots,t_{\mathcal{J}^I_b-2}\}$,
% thus leading to
% \begin{align*}
%     \sup_{t \in [0,T]}|\xi(t)| 
%     & \leq |\xi(0)| + \sum_{t \leq T}|\Delta \xi(t)| 
%     \\
%     &
%     \leq |\xi(0)| + ({\mathcal J^I_b}-2)b
%     \qquad \text{due to truncation operators $\varphi_b$ in $h^{({\mathcal J^I_b}-2)|b}_{[0,T]}$}
%     \\
%     & 
%     \leq b + \bar\epsilon + ({\mathcal J^I_b}-2)b 
%     \\
%     & = ({\mathcal J^I_b}-1)b + \bar\epsilon < l - 2\bar\epsilon
%     \qquad \text{due to \eqref{constant bar epsilon, first exit time analysis}}.
% \end{align*}
% This concludes the proof of part $(a)$.

\medskip
\noindent
$(a)$
Arbitrarily pick some $T > 0$ and $\xi \in \D^{( \mathcal J^I_b - 1)|b}_{ \bar B_{\bar\epsilon} }[0,T](\bar\epsilon)$.
To lighten notations, in the proof of part $(a)$ we write $k = J^I_b$.
 By the definition of $\D^{(k-1)|b}_A(\epsilon)$ in \eqref{def: l * tilde jump number for function g, clipped SGD},
there are some $\bm x$ with $\norm{\bm x} \leq \bar\epsilon$,
some $(\bm w_1,\cdots,\bm w_{k-1}) \in \R^{d \times k - 1}$,
some $(\bm v_1,\cdots,\bm v_{k-1}) \in \R^{m \times k-1}$ with $\max_{j \in [k-1]}\norm{\bm v_j} \leq \bar\epsilon$,
and $0 < t_1 < t_2 < \cdots < t_{k-1} < \infty$ such that
\begin{align*}
    \xi = \bar h^{(k-1)|b}_{[0,T]}\big(\bm x, (\bm w_1,\cdots,\bm w_{k-1}), (\bm v_1,\cdots,\bm v_{k-1}), (t_1,\cdots,t_{k-1})\big).
\end{align*}
Given any $t \in [0,T]$,
Let $j^* = j^*(t) = \max\{ j = 0,1,\cdots,k-1:\ t_j \leq t  \}$.
By definition of the mapping $\bar h^{(k-1)|b}_{[0,T]}$ in \eqref{def: perturb ode mapping h k b, 1}--\eqref{def: perturb ode mapping h k b, 3},
we have $\xi(t) = \bm y_{t - t_{j^*}}\big(\xi(t_{j^*})\big)$
where $\bm y_\cdot(\bm x)$ is the ODE under the vector field $\bm a(\cdot)$; see \eqref{def ODE path y t}.
By the definition of $\mathcal G^{(k)|b}(\epsilon)$ and $\bar{\mathcal G}^{(k)|b}$ in \eqref{def: set G k b epsilon}, \eqref{def: bar G k b epsilon, extended k jump coverage set},
we then yield $\xi(t) \in \bar{\mathcal G}^{(k-1)|b}(2\bar\epsilon)$.
However, 
by property~\eqref{constant bar epsilon, new, 3, first exit time analysis},
we must have
\begin{align}
    \bar{\mathcal G}^{(k-1)|b}(2\bar\epsilon) \subseteq I^-_{2\bar\epsilon} \subseteq I_{\bar\epsilon}.
    \label{proof, property, inclusion of bar mathcal G k b and I bar epsilon, lemma: choose key parameters, first exit time analysis}
\end{align}
and hence
$
\xi(t) \in \bar{\mathcal G}^{(k-1)|b}(2\bar\epsilon) \subseteq I_{2\bar\epsilon}.
$
This concludes the proof.

\medskip
\noindent
$(b)$
For simplicity, in the proof of part $(b)$ we write $k = \mathcal J^I_b$.
For claim $(i)$, note that due to $\norm{\bm x_0} \leq \bar\epsilon$,
we have 
$
\bm x_0 + \varphi_b(\bm \sigma(\bm x_0)\bm w_0) \in \mathcal G^{(1)|b}(2\bar\epsilon).
$
Moreover, for all $n = 0,1,\cdots,k - 2$ (recall our convention of $t_0 = 0$),
for the cadlag path $\xi$ defined in \eqref{claim, def of xi, lemma: choose key parameters, first exit time analysis}
we have
$
\xi(t_n) \in \mathcal{G}^{(n + 1)|b}(2\bar\epsilon) \subseteq \mathcal{G}^{(k - 1)|b}(2\bar\epsilon).
$
As a result, 
for all $t \in [0, t_{k-1})$ we have
$
\xi(t) \in \bar{\mathcal G}^{(k-1)|b}(2\bar\epsilon) \subseteq I_{2\bar\epsilon}
$
due to \eqref{proof, property, inclusion of bar mathcal G k b and I bar epsilon, lemma: choose key parameters, first exit time analysis}.
This verifies claim $(i)$.

For claim $(ii)$, we proceed with a proof by contradiction and suppose that $\xi(t_{k-1}) \in I_{\bar\epsilon}$.
By \eqref{constant bar c for bar epsilon, first exit time}, we then get 
$
\xi(t) = \bm y_{ t - t_{k-1} }\big(\xi(t_{k-1})\big) \in I_{\bar c\bar\epsilon}
$
for all $t \in [t_{k-1},T]$.
Together with claim $(i)$, we arrive at the contradiction that $\xi(t) \in I_{\bar c\bar\epsilon}$ for all $t \in [0,T]$.

For claim $(iii)$, the fact $\norm{\xi(t_{k-1})} \geq \bar\epsilon$ follows directly from claim~$(ii)$ and \eqref{constant bar epsilon, new, 1, first exit time analysis}.
For any $j = 1,\cdots,k-1$ and any $t \in [t_{j-1}, t_{j})$,
we proceed with a proof by contradiction and suppose that $\norm{\xi(t)} \leq \bar\epsilon$.
Then we have $\norm{\xi(t_j-)} \leq \bar\epsilon$ due to \eqref{constant bar epsilon, new, 2, first exit time analysis},
and hence
$
\xi(t_j) \in \mathcal G^{(1)|b}(2\bar\epsilon).
$
As a result, we arrive at the contradiction that 
$
\xi(t_{k-1}) \in \mathcal{G}^{(k-1)|b}(2\bar\epsilon) \subseteq I_{\bar\epsilon},
$
due to
$\mathcal{G}^{(k-1)|b}(2\bar\epsilon) \subseteq \mathcal{\bar G}^{(k-1)|b}(2\bar\epsilon)$
and \eqref{proof, property, inclusion of bar mathcal G k b and I bar epsilon, lemma: choose key parameters, first exit time analysis}.
This concludes the proof of claim~$(iii)$.

We prove claim~$(iv)$ for $\bar t \delequal k \cdot \bm t(\bar\epsilon/2)$ where $\bm t(\epsilon)$ is defined in \eqref{def: t epsilon function, first exit analysis}.
Consider the following proof by contradiction.
If $t_{k-1} \geq \bar t = (k - 1) \cdot \bm t(\bar\epsilon/2)$,
then there must be some $j = 1,2,\cdots,k-1$ such that $t_j - t_{j-1} \geq \bar t(\epsilon/2)$.
By claim~$(i)$,
we have $\xi(t_{j-1}) \in I^-_{2\bar\epsilon} \subseteq I_{\bar\epsilon/2}$.
Using the property~\eqref{property: t epsilon function, first exit analysis},
we yield $\xi(t_j-) = \lim_{t \uparrow t_j}\xi(t) \in \bar B_{ \bar\epsilon/2 }(\bm 0)$,
which implies $\norm{\xi(t)} < \epsilon$ for all $t$ less than but close enough to $t_{j}$ and contradicts claim~$(iii)$.
This concludes the proof of claim~$(iv)$.

Lastly, we prove claim~$(v)$ for $\bar\delta > 0$ small enough such that
\begin{align}
    \exp(D\bar t)\cdot C\bar\delta < \bar\epsilon,
    \qquad
    C\bar\delta < b,
    \nonumber
\end{align}
where $D\geq 1$ is the Lipschitz coefficient in Assumption~\ref{assumption: lipschitz continuity of drift and diffusion coefficients} and $C \geq 1$ is the constant in \eqref{constant C, boundedness of a and sigma, lemma: choose key parameters, first exit time analysis}.
Again, we consider a proof by contradiction.
Suppose that for the cadlag path $\xi$ in \eqref{claim, def of xi, lemma: choose key parameters, first exit time analysis}
there is some $j = 0,1,\cdots, k - 1$ such that $\norm{\bm w_j} < \bar\delta$.
First, we consider the case where $j \leq k-2$.
Then note that (for the proof of claim~$(v)$, we interpret $\xi(0-)$ as $\bm x_0$ while, by definition, $\xi(0) = \bm x_0 + \varphi_b\big( \bm \sigma(\bm x_0)\bm w_0\big)$),
we have 
\begin{align*}
    \xi(t_j) - \xi(t_j-) 
    =
    \varphi_b\big(\bm \sigma\big(\xi(t_j-)\bm w_j\big),
\end{align*}
and hence
$
\norm{\xi(t_j) - \xi(t_j-) } \leq C\bar\delta.
$
By Gronwall's inequality, we then get
\begin{align*}
    \norm{\bm y_{t - t_j}\big(\xi(t_j-)\big) - \xi(t) } \leq \exp\big(D(t - t_j)\big)\cdot C\bar\delta
    \qquad
    \forall t \in [t_j, t_{j+1}).
\end{align*}
Recall that we currently focus on the case where $j \leq k-2$.
By claim $(iv)$ and our choice of $\bar\delta$, we get $\exp\big(D(t - t_j)\big)\cdot C\bar\delta \leq \exp\big(D\bar t\big) \cdot C\bar\delta <\bar\epsilon$ in the display above.
This implies the existence of some $\xi^\prime \in \D^{(k-1)|b}_{ \bar B_{\bar\epsilon}(\bm 0) }(\bar\epsilon)$
such that 
$
\sup_{t \in [0,T]}\norm{\xi(t) - \xi^\prime(t)} < \bar\epsilon.
$
However, by results in part $(a)$,
we must have $\xi^\prime(t) \in I^-_{2\bar\epsilon}\ \forall t \in [0,T]$,
which leads to $\xi(t) \in I_{\epsilon}^-\ \forall t \in [0,T]$.
This contradicts the running assumption of part $(b)$
that $\xi(t) \notin I_{\bar c\bar\epsilon}$ for some $t \in [0,T]$,
and allows us to conclude the proof of claim~$(v)$ for the cases where $j \leq k-2$.
In case that $j = k-1$,
by claim~$(i)$ we have $\xi(t_{k-1} - ) = \lim_{t \uparrow t_{k-1}}\xi(t)\in I^-_{2\bar\epsilon}$.
Meanwhile, by definition of the mapping $\bar h^{(k-1)|b}_{[0,T]}$,
we have
$
\xi(t_{k-1})
 = 
\xi(t_{k-1} - ) + \varphi\Big(\bm \sigma\big(\xi(t_{k-1} - )\big)\bm w_{k-1} \Big).
$
By $\norm{\bm w_{k-1}} < \bar\delta$ and our choice of $\bar\delta$ above,
we have 
$
\norm{ \varphi\Big(\bm \sigma\big(\xi(t_{k-1} - )\big)\bm w_{k-1} \Big)} < \bar\epsilon
$
and hence
$
\xi(t_{k-1}) \in I_{\bar\epsilon}.
$
Due to the contradiction with claim~$(ii)$, we conclude the proof.

\medskip
\noindent
$(c)$
The proof is almost identical to that of Lemma~\ref{lemma: SGD close to approximation x breve, LDP clipped} based on an inductive argument.
We omit the details to avoid repetition.

\medskip
\noindent
$(d)$
% Note that the statement is not affected by the values of $\xi(t)$ beyond $ t\in [0,t_{\mathcal J^I_b}]$ or the values of $\widecheck  \xi(t)$ outside of the domain $ t \in [0,t_{\mathcal J^I_b} - t_1]$.
% Therefore, without loss of generality we can set $T = t_{\mathcal J_b^*} + 1$.
Let $\bar t$ be the constant specified in part $(b)$.
We claim that: if $\xi(t) \notin I_{\bar c\bar \epsilon}\text{ or }\widecheck{\xi}(t)\notin I_{\bar c\bar \epsilon}$ for some $t \in [0,T]$, then
\begin{align}
        \sup_{t \in [t_1,t_{\mathcal J^I_b}]}
        \norm{\widecheck{\xi}(t - t_1) - \xi(t) }
        < 
        \underbrace{\Big( 
            2\exp\big(D\bar t\big)\cdot D
        \Big)^{\mathcal J^I_b + 1}}_{\delequal \rho^*} \cdot  \epsilon_0
            \qquad \forall \epsilon_0 \in (0,\bar\epsilon].
    \label{goal, part e, lemma: choose key parameters, first exit time analysis}
\end{align}
As a result, claims of part $(d)$ hold for any $\epsilon_0 \in (0,\bar\epsilon)$ small enough such that $\rho^*\epsilon_0 < \Delta$.
Now, it only remains to prove claim~\eqref{goal, part e, lemma: choose key parameters, first exit time analysis}.
Due to $\norm{\bm x} = \norm{\xi(0)} < \epsilon_0$ and \eqref{constant bar epsilon, new, 2, first exit time analysis},
we have 
$
\norm{\xi(t_1-)} \leq \epsilon_0.
$
This allows us to apply results in part $(c)$
and get (recall our choice of $T = t_{\mathcal J_b^*} + 1$)
\begin{align*}
        \sup_{t \in [t_1,t_{\mathcal J^I_b}]} 
           \norm{\xi(t) - \widecheck\xi(t - t_1)} \leq \Big( 2\exp\big(D( t_{\mathcal J^I_b}  - t_1)\big) \cdot D\Big)^{\mathcal J^I_b + 1} \cdot  \epsilon_0,
\end{align*}
Lastly, 
% let $\hat t_j = t_{j+1} - t_1$.
% For 
% \begin{align*}
% \hat{\xi} & = h^{({\mathcal J^I_b}-1)|b}_{[0, T - t_1] }\big( \varphi_b( \sigma(0)\cdot w_1\big),(w_2,\cdots,w_{\mathcal J^I_b}),(t_2 - t_1, t_3 - t_1,\cdots,t_{\mathcal J^I_b} - t_1)\big)
% \\
% & = h^{({\mathcal J^I_b}-1)|b}_{[0, T - t_1] }\big( \varphi_b( \sigma(0)\cdot w_1\big),(w_2,\cdots,w_{\mathcal J^I_b}),(\hat t_1, \hat t_2,\cdots,\hat t_{\mathcal J^I_b-1})\big),
% \end{align*}
if $\xi(t) \notin I_{\bar c\bar \epsilon}\text{ for some }{t \in [t_1,T]}$, 
then $t_{\mathcal{J}^I_b} - t_1 < \bar t$ by claim~$(iv)$ of part $(b)$.
Likewise, if $\widecheck{\xi}(t) \notin I_{\bar c\bar \epsilon}\text{ for some }{t \in [0,T]}$, 
then we get $t_{\mathcal{J}^I_b} < \bar t$.
In both cases, we get $t_{\mathcal J_b^*}-t_1 \leq \bar t$.
This concludes the proof.
\end{proof}

The next lemma studies the mass the measure $\widecheck{\mathbf C}^{(k)|b}$ charges on the boundary of the domain $I$.

\begin{lemma}
\label{lemma: exit rate strictly positive, first exit analysis}
\linksinthm{lemma: exit rate strictly positive, first exit analysis}
Under Assumptions \ref{assumption: lipschitz continuity of drift and diffusion coefficients} and \ref{assumption: shape of f, first exit analysis},
    $
    \widecheck{\mathbf C}^{(\mathcal{J}^I_b)|b}( I^\complement ) < \infty.
    $
\end{lemma}

\begin{proof}
\linksinpf{lemma: exit rate strictly positive, first exit analysis}
Let $\bar\epsilon > 0$ be such that the conditions in \eqref{constant bar epsilon, new, 1, first exit time analysis}--\eqref{constant bar epsilon, new, 3, first exit time analysis} hold.
Let $\bar t$ and $\bar \delta$ be the constants characterized in Lemma \ref{lemma: choose key parameters, first exit time analysis}.
Observe that
(we write $\textbf W = (\bm w_1,\cdots,\bm w_{\mathcal J^I_b})$)
\begin{align*}
    & 
    \widecheck{\mathbf C}^{(\mathcal{J}^I_b)|b}\big( I^\complement \big)
    \\
    % & \leq 
    % \widecheck{\mathbf C}^{(\mathcal{J}^I_b)|b}\big( \R \setminus  [-(l - \bar\epsilon),l-\bar\epsilon] \big)
    % \\
    & = 
    \int \mathbbm{I}\bigg\{
    \widecheck{g}^{ (\mathcal{J}^I_b - 1)|b }
    \big( \varphi_b(\bm \sigma(\bm 0) \bm w_1), (\bm w_2,\cdots,\bm w_{\mathcal{J}^I_b}), (t_1,\cdots,t_{\mathcal{J}^I_b-1})\big) \notin I
    \bigg\}
    \\ 
    &\qquad\qquad\qquad\qquad\qquad\qquad\qquad\qquad
    \big((\nu_\alpha \times \mathbf S)\circ \Phi\big)^{ \mathcal J^I_b }(d \textbf W) \times \mathcal{L}^{\mathcal{J}^I_b - 1 \uparrow}_\infty(dt_1,\cdots, dt_{\mathcal J^I_b - 1})
    \\
    & = 
    \int \mathbbm{I}\bigg\{
    h^{ (\mathcal{J}^I_b - 1)|b }_{[0,1 + t_{\mathcal{J}^I_b-1}]}
    \big( \varphi_b(\bm \sigma(\bm 0) \bm w_1), (\bm w_2,\cdots,\bm w_{\mathcal{J}^I_b}), (t_1,\cdots,t_{\mathcal{J}^I_b-1})\big)(t_{\mathcal{J}^I_b-1}) \notin I
    \bigg\}
    \\ 
    &\qquad\qquad\qquad\qquad\qquad\qquad\qquad\qquad
    \big((\nu_\alpha \times \mathbf S)\circ \Phi\big)^{ \mathcal J^I_b }(d \textbf W) \times \mathcal{L}^{\mathcal{J}^I_b - 1 \uparrow}_\infty(dt_1,\cdots, dt_{\mathcal J^I_b - 1})
    % \\
    % & \leq 
    % \int \mathbbm{I}\Big\{
    % \Big|{h}^{ (\mathcal{J}^I_b) }_{[2 + t_{\mathcal{J}^I_b-1}]}\big( 0, (w_1,\cdots,w_{\mathcal{J}^I_b}), (1, 1+ t_1,\cdots,1 + t_{\mathcal{J}^I_b-1})\big)(t)\Big| > \gamma - \bar\epsilon\text{ for some }t \in [0,2 + t_{\mathcal{J}^I_b-1}]
    % \Big\}
    % \\
    % &\qquad \qquad \qquad \qquad \qquad \qquad
    % \times \nu^{ \mathcal{J}^I_b }_\alpha(d\bm w) \times \mathcal{L}^{\mathcal{J}^I_b - 1 \uparrow}_\infty(d\bm t)
    % \\
    \\
    & \leq 
    \int \mathbbm{I}\Big\{ \norm{\bm w_j} > \bar\delta\ \forall j \in [\mathcal{J}^I_b];\ t_{\mathcal{J}^I_b-1} < \bar t\ \Big\}
    \big((\nu_\alpha \times \mathbf S)\circ \Phi\big)^{ \mathcal J^I_b }(d \textbf W) \times \mathcal{L}^{\mathcal{J}^I_b - 1 \uparrow}_\infty(dt_1,\cdots, dt_{\mathcal J^I_b - 1})
    \\ 
    &\qquad\qquad\qquad
    \text{by part $(b)$ of Lemma \ref{lemma: choose key parameters, first exit time analysis}}
    \\
    & \leq \bar t^{ \mathcal{J}^I_b - 1 } \big/ \bar\delta^{\alpha\mathcal{J}^I_b} < \infty.
\end{align*}
This concludes the proof.
\end{proof}

To conclude, we provide the proof of Lemma~\ref{lemma: limiting measure, with exit location B, first exit analysis}.

\begin{proof}[Proof of Lemma~\ref{lemma: limiting measure, with exit location B, first exit analysis}]\linksinpf{lemma: limiting measure, with exit location B, first exit analysis}
% Let $\bar t$ be characterized as in Lemma~\ref{lemma: choose key parameters, first exit time analysis}.
Let $\bar c \in (0,1)$ be the constant fixed in \eqref{constant bar c for bar epsilon, first exit time}.
By part $(e)$ of Lemma~\ref{lemma: choose key parameters, first exit time analysis},
for the fixed $\Delta \in (0,\bar\epsilon)$, we are able to fix some $\epsilon_0 \in (0,\frac{\Delta}{2} \wedge \bar c\bar\epsilon)$ such that
the following claim holds:
given 
        $T > 0$, 
        $\bm x \in \R^m$,
        $\textbf W = (\bm w_1,\cdots, \bm w_{\mathcal J^I_b}) \in \R^{d \times  \mathcal J^I_b},\ (t_1,\cdots,t_{\mathcal J^I_b})\in (0,T]^{\mathcal J^I_b\uparrow}$,
        if $\norm{\bm x} \leq \epsilon_0$
        and
        $
            \max_{j \in [\mathcal J^I_b] }\norm{\bm w_j} \leq \epsilon_0^{-\frac{1}{2\mathcal J^I_b}},
        $
        then    
        \begin{align}
        \xi(t) \notin I_{\bar c\bar \epsilon}\text{ or }\widecheck{\xi}(t)\notin I_{\bar c\bar \epsilon}
        \text{ for some }t \in [t_1,T - t_1]
        \quad
        \Longrightarrow
        \quad
        \sup_{t \in [t_1,t_{\mathcal J^I_b}]}
        \norm{\widecheck{\xi}(t - t_1) - \xi(t) } < \Delta,
        \label{choice of epsilon 0, lemma: limiting measure, with exit location B, first exit analysis}
        \end{align}
        where
        \begin{align*}
                \xi & =  h^{ (\mathcal J^I_b)|b }_{[0,T]}\big(\bm x, \textbf W, (t_1,\cdots,t_{\mathcal J^I_b})\big),
                \\ 
                \widecheck{\xi}
                & =
                 h^{ (\mathcal J^I_b - 1)|b }_{[0,T]}
                \Big(
                    \varphi_b\big( \bm \sigma(\bm 0)\bm w_1\big), (\bm w_2,\cdots, \bm w_{ \mathcal J^I_b }), 
                    (t_2 - t_1,t_3 - t_1,\cdots,t_{\mathcal J^I_b} - t_1)
                \Big).
            \end{align*}
        % $\widecheck{g}^{ (\mathcal{J}^I_b - 1)|b }\big( \varphi_b(\sigma(0)\cdot w_1),w_2,\cdots,w_{\mathcal{J}^I_b},t_2 - t_1, t_3 - t_1,\cdots,t_{\mathcal{J}^I_b} - t_1 \big)$.

Henceforth in the proof, we fix some $\epsilon \in (0,\epsilon_0]$
and
$B \subseteq (I_{\epsilon})^c$.
Due to our choice of $\epsilon \leq \epsilon_0 < \bar c\bar\epsilon$,
we have
$B \subseteq (I_{\bar c \bar \epsilon})^c$.
To prove the lower bound,
let 
$$ 
\widetilde E =  
\Big\{ \xi \in \D[0,T]:\ \exists t \in [0,T]\text{ s.t. }\xi(t) \in B_{\Delta/2},\ \xi(s) \in I_{2\epsilon}\ \forall s \in [0,t)  \Big\}.
$$
For any $\xi \in \widetilde E$ and any $\xi^\prime$ with $\dj{[0,T]}(\xi,\xi^\prime) < \epsilon$,
due to $\epsilon \leq \epsilon_0 < \Delta/2$,
there must be some $t^\prime \in [0,T]$ such that 
$
\xi^\prime(t^\prime) \in B
$
and
$
\xi^\prime(s) \in I_\epsilon\ \forall s \in [0,t^\prime).
$
This implies that
$\xi^\prime \in \widecheck E(\epsilon,B,T)$,
and hence
\begin{align*}
\widetilde E 
% =
    % \big\{ \xi \in \D[0,T]:\ \exists t \in [0,T]\text{ s.t. }\xi(t) \in B_{\Delta/2},\ \xi(s) \in I_{2\epsilon}\ \forall s \in [0,t)\big\}
    \subseteq
    \Big(\widecheck E(\epsilon,B,T)\Big)_{\epsilon}
    \subseteq
    \Big(\widecheck E(\epsilon,B,T)\Big)^\circ.
\end{align*}
Therefore, for any $\bm x \in \R^m$ with $\norm{\bm x} \leq \epsilon \leq \epsilon_0$,
\begin{align}
            \mathbf{C}^{ (\mathcal{J}^I_b)|b}_{[0,T]}
    \bigg( 
    \Big(\widecheck E(\epsilon,B,T)\Big)^\circ;\ \bm x
    \bigg)
    % \\ 
    & \geq 
    \int \mathbbm{I}\Big\{ 
    % \exists t \in [0,T]\text{ s.t. }
    h^{ (\mathcal{J}^I_b)|b }_{[0,T]}(\bm x,\textbf W,\bm t)\in \widetilde E
    % \text{ and }
    %  h^{ (\mathcal{J}^I_b)|b }_{[0,T]}(x,\bm w,\bm t)(s) \in I_{2\epsilon}\ \forall s \in [0,t)
    \Big\}
    \big((\nu_\alpha \times \mathbf S)\circ \Phi\big)^{ \mathcal J^I_b }(d \textbf W)
    \times \mathcal{L}^{ \mathcal{J}^I_b \uparrow }_T(d\bm t)
    \nonumber
    \\
    &
    = 
    \int \widetilde \phi_B(t_1,\bm x) \mathcal L_T(dt_1),
    \label{proof: lower bound, intermediate, lemma: limiting measure, with exit location B, first exit analysis}
\end{align}
where $\mathcal L_T$ is the Lebesgue measure on $(0,T)$,
$\mathcal L_T^{k\uparrow}$ is the $k$-fold ofq Lebesgue measure restricted on 
$
\{ (t_1,\cdots,t_k) \in (0,T)^k:\ t_1 < t_2 < \cdots <t_k \},
$
and
\begin{align*}
\widetilde \phi_B(t_1,\bm x)
& = 
\int 
    \mathbbm{I}\bigg\{
        \exists t \in [0,T]\text{ s.t. }
        h^{ (\mathcal{J}^I_b)|b }_{[0,T]}
        \Big(\bm x,\textbf W,
        (t_1,t_1 + u_2, t_1 + u_3,\cdots,t_1 + u_{\mathcal{J}^I_b})\Big)(t) \in B_{\Delta/2}
    \\ 
&\qquad\qquad 
\text{ and }
h^{ (\mathcal{J}^I_b)|b }_{[0,T]}\Big(\bm x,\textbf W,
        (t_1,t_1 + u_2, t_1 + u_3,\cdots,t_1 + u_{\mathcal{J}^I_b})\Big)(s) \in I_{2\epsilon}\ \forall s \in [0,t)
\bigg\} 
\\ 
&\qquad\qquad\qquad\qquad\qquad\qquad
     \big((\nu_\alpha \times \mathbf S)\circ \Phi\big)^{ \mathcal J^I_b }(d \textbf W)
    \times \mathcal{L}^{ \mathcal{J}^I_b - 1 \uparrow }_{T-t_1}(du_2,\cdots,du_{\mathcal{J}^I_b}).
\end{align*}
Set
$
\bm x_0 = \lim_{t \uparrow t_1} \bm y_{t}(\bm x),
$
and note that
\begin{align*}
&  h^{ (\mathcal{J}^I_b)|b }_{[0,T]}\Big(\bm x,(\bm w_1,\cdots,\bm w_{\mathcal J^I_b}),
        (t_1,t_1 + u_2, t_1 + u_3,\cdots,t_1 + u_{\mathcal{J}^I_b})\Big)(t_1 + s)  
\\ 
& = 
h^{ (\mathcal{J}^I_b - 1)|b }_{[0,T - t_1]}\Big(\bm x_0 + \varphi_b(\bm \sigma(\bm x_0) \bm w_1),(\bm w_2,\cdots,\bm w_{\mathcal J^I_b}),
        (u_2, u_3,\cdots,u_{\mathcal{J}^I_b})\Big)(s)  \qquad \forall s \in [0,T - t_1].
\end{align*}
Therefore, for any $t_1 \in [0,T - \bar t]$ and $\bm x$ with $\norm{\bm x} \leq \epsilon$, 
by property \eqref{constant bar epsilon, new, 2, first exit time analysis}
we have $\norm{\bm x_0} \leq \epsilon \leq \epsilon_0 \leq \Delta/2$, and
\begin{align*}
& \widetilde \phi_B(t_1,x)
\\
& \geq \inf_{\bm x_0:\ \norm{\bm x_0} \leq \frac{\Delta}{2}}
\int 
    \mathbbm{I}\bigg\{
        \exists t \in [0,T - t_1]\text{ s.t. }
        h^{ (\mathcal{J}^I_b - 1)|b }_{[0,T - t_1]}
        \Big(\bm x_0 + \varphi_b(\bm \sigma(\bm x_0) \bm w_1),(\bm w_2,\cdots,\bm w_{\mathcal J^I_b}),
        (u_2, \cdots,u_{\mathcal{J}^I_b})\Big)(t)
        \in B_{\Delta/2}
    \\ 
&\qquad\qquad 
\text{ and }
h^{ (\mathcal{J}^I_b - 1)|b }_{[0,T - t_1]}
\Big(\bm x_0 + \varphi_b(\bm\sigma(\bm x_0) \bm w_1),(\bm w_2,\cdots,\bm w_{\mathcal J^I_b}),
        (u_2, \cdots,u_{\mathcal{J}^I_b})\Big)(s) \in I_{2\epsilon}\ \forall s \in [0,t)
\bigg\} 
\\ 
&\qquad\qquad\qquad\qquad\qquad\qquad\qquad\qquad\qquad\qquad
    \big((\nu_\alpha \times \mathbf S)\circ \Phi\big)^{ \mathcal J^I_b }(d \textbf W)
    \times \mathcal{L}^{ \mathcal{J}^I_b - 1 \uparrow }_{T-t_1}(du_2,\cdots,du_{\mathcal{J}^I_b})
\\ 
& = \inf_{\bm x_0:\ \norm{\bm x_0} \leq \frac{\Delta}{2}}
\int
\mathbbm{I}\bigg\{
    h^{ (\mathcal{J}^I_b - 1)|b }_{[0,T - t_1]}
    \Big(\bm x_0 + \varphi_b(\bm \sigma(\bm x_0) \bm w_1),(\bm w_2,\cdots,\bm w_{\mathcal J^I_b}),
        (u_2, \cdots,u_{\mathcal{J}^I_b})\Big)(u_{ \mathcal J^I_b }) \in B_{\Delta/2};\ 
        \min_{j \in [\mathcal J^I_b]}\norm{\bm w_j} > \bar\delta
\bigg\}
\\ 
&\qquad\qquad\qquad\qquad\qquad\qquad\qquad\qquad\qquad\qquad
    \big((\nu_\alpha \times \mathbf S)\circ \Phi\big)^{ \mathcal J^I_b }(d \textbf W)
    \times \mathcal{L}^{ \mathcal{J}^I_b - 1 \uparrow }_{T-t_1}(du_2,\cdots,du_{\mathcal{J}^I_b})
\\ 
&\qquad \qquad \text{by claims $(i)$, $(ii)$, and $(v)$ in part $(b)$ of Lemma~\ref{lemma: choose key parameters, first exit time analysis}}
\\ 
& \geq 
\inf_{\bm x_0:\ \norm{\bm x_0} \leq \frac{\Delta}{2}}
\int
\mathbbm{I}\bigg\{
    h^{ (\mathcal{J}^I_b - 1)|b }_{[0,T - t_1]}
    \Big(\bm x_0 + \varphi_b(\bm \sigma(\bm x_0) \bm w_1),(\bm w_2,\cdots,\bm w_{\mathcal J^I_b}),
        (u_2, \cdots,u_{\mathcal{J}^I_b})\Big)(u_{ \mathcal J^I_b }) \in B_{\Delta/2};
        \\ 
    &\qquad\qquad\qquad\qquad\qquad \qquad\qquad\qquad\qquad\qquad\qquad\qquad
        \min_{j \in [\mathcal J^I_b]}\norm{\bm w_j} > \bar\delta,\ 
         \max_{j \in [\mathcal J^I_b] }\norm{\bm w_j} \leq \epsilon_0^{-\frac{1}{2\mathcal J^I_b}}
\bigg\}
\\ 
&\qquad\qquad\qquad\qquad\qquad\qquad\qquad\qquad\qquad\qquad
    \big((\nu_\alpha \times \mathbf S)\circ \Phi\big)^{ \mathcal J^I_b }(d \textbf W)
    \times \mathcal{L}^{ \mathcal{J}^I_b - 1 \uparrow }_{T-t_1}(du_2,\cdots,du_{\mathcal{J}^I_b})
\\
& \geq 
\int
\mathbbm{I}\bigg\{
    h^{ (\mathcal{J}^I_b - 1)|b }_{[0,T - t_1]}
    \Big(\varphi_b(\bm \sigma(\bm 0) \bm w_1),(\bm w_2,\cdots,\bm w_{\mathcal J^I_b}),
        (u_2, \cdots,u_{\mathcal{J}^I_b})\Big)(u_{ \mathcal J^I_b }) \in B_{\Delta};
        \\ 
    &\qquad\qquad\qquad\qquad\qquad \qquad\qquad\qquad\qquad\qquad\qquad\qquad
        \min_{j \in [\mathcal J^I_b]}\norm{\bm w_j} > \bar\delta,\ 
         \max_{j \in [\mathcal J^I_b] }\norm{\bm w_j} \leq \epsilon_0^{-\frac{1}{2\mathcal J^I_b}}
\bigg\}
\\ 
&\qquad\qquad\qquad\qquad\qquad\qquad\qquad\qquad\qquad\qquad
    \big((\nu_\alpha \times \mathbf S)\circ \Phi\big)^{ \mathcal J^I_b }(d \textbf W)
    \times \mathcal{L}^{ \mathcal{J}^I_b - 1 \uparrow }_{T-t_1}(du_2,\cdots,du_{\mathcal{J}^I_b})
\\
&\qquad \qquad \text{by property \eqref{choice of epsilon 0, lemma: limiting measure, with exit location B, first exit analysis}}
\\
& = 
\int
\mathbbm{I}\bigg\{
    \widecheck{g}^{ (\mathcal{J}^I_b - 1)|b }_{[0,T - t_1]}
    \Big(\varphi_b(\bm \sigma(\bm 0) \bm w_1),(\bm w_2,\cdots,\bm w_{\mathcal J^I_b}),
        (u_2, \cdots,u_{\mathcal{J}^I_b})\Big) \in B_{\Delta};
        \\ 
    &\qquad\qquad\qquad\qquad\qquad \qquad\qquad\qquad\qquad\qquad\qquad\qquad
        \min_{j \in [\mathcal J^I_b]}\norm{\bm w_j} > \bar\delta,\ 
         \max_{j \in [\mathcal J^I_b] }\norm{\bm w_j} \leq \epsilon_0^{-\frac{1}{2\mathcal J^I_b}}
\bigg\}
\\ 
&\qquad\qquad\qquad\qquad\qquad\qquad\qquad\qquad\qquad\qquad
    \big((\nu_\alpha \times \mathbf S)\circ \Phi\big)^{ \mathcal J^I_b }(d \textbf W)
    \times \mathcal{L}^{ \mathcal{J}^I_b - 1 \uparrow }_{T-t_1}(du_2,\cdots,du_{\mathcal{J}^I_b})
\\
&\qquad\qquad \text{by the definition of $\widecheck g^{(k)|b}$ in \eqref{def: mapping check g k b, endpoint of path after the last jump, first exit analysis}}
\\ 
& = 
\int
\mathbbm{I}\bigg\{
    \widecheck{g}^{ (\mathcal{J}^I_b - 1)|b }_{[0,T - t_1]}
    \Big(\varphi_b(\bm \sigma(\bm 0) \bm w_1),(\bm w_2,\cdots,\bm w_{\mathcal J^I_b}),
        (u_2, \cdots,u_{\mathcal{J}^I_b})\Big) \in B_{\Delta};
        \\ 
    &\qquad\qquad\qquad\qquad\qquad \qquad\qquad\qquad\qquad\qquad\qquad\qquad
        \min_{j \in [\mathcal J^I_b]}\norm{\bm w_j} > \bar\delta,\ 
         \max_{j \in [\mathcal J^I_b] }\norm{\bm w_j} \leq \epsilon_0^{-\frac{1}{2\mathcal J^I_b}}
\bigg\}
\\ 
&\qquad\qquad\qquad\qquad\qquad\qquad\qquad\qquad\qquad\qquad
    \big((\nu_\alpha \times \mathbf S)\circ \Phi\big)^{ \mathcal J^I_b }(d \textbf W)
    \times \mathcal{L}^{ \mathcal{J}^I_b - 1 \uparrow }_{\bar t}(du_2,\cdots,du_{\mathcal{J}^I_b})
\\
&\qquad\qquad
\text{by claim $(v)$ in part $(b)$ of Lemma~\ref{lemma: choose key parameters, first exit time analysis}}
\\
& \geq 
\int
\mathbbm{I}\bigg\{
    \widecheck{g}^{ (\mathcal{J}^I_b - 1)|b }_{[0,T - t_1]}
    \Big(\varphi_b(\bm \sigma(\bm 0) \bm w_1),(\bm w_2,\cdots,\bm w_{\mathcal J^I_b}),
        (u_2, \cdots,u_{\mathcal{J}^I_b})\Big) \in B_{\Delta};\ \min_{j \in [\mathcal J^I_b]}\norm{\bm w_j} > \bar\delta\bigg\}
\\ 
&\qquad\qquad\qquad\qquad\qquad\qquad\qquad\qquad\qquad\qquad
    \big((\nu_\alpha \times \mathbf S)\circ \Phi\big)^{ \mathcal J^I_b }(d \textbf W)
    \times \mathcal{L}^{ \mathcal{J}^I_b - 1 \uparrow }_{\bar t}(du_2,\cdots,du_{\mathcal{J}^I_b})
\\ 
&\qquad - 
\int
\mathbbm{I}\bigg\{
    \min_{j \in [\mathcal J^I_b]}\norm{\bm w_j} > \bar\delta,\ \max_{j \in [\mathcal J^I_b]}\norm{\bm w_j} > \epsilon_0^{-\frac{1}{2\mathcal J^I_b}}
    \bigg\}
% \\ 
% &\qquad\qquad\qquad\qquad\qquad\qquad\qquad\qquad\qquad\qquad
    \big((\nu_\alpha \times \mathbf S)\circ \Phi\big)^{ \mathcal J^I_b }(d \textbf W)
    \times \mathcal{L}^{ \mathcal{J}^I_b - 1 \uparrow }_{\bar t}(du_2,\cdots,du_{\mathcal{J}^I_b}).
\end{align*}
We focus on the two integrals one the RHS of the last inequality in the display above.
It is easy to see that the latter is upper bounded by 
\begin{align*}
    \widecheck{\bm c}(\epsilon_0) =
    \mathcal J^I_b \cdot (\bar t)^{ \mathcal J^I_b - 1 } \cdot (\bar\delta)^{ -\alpha \cdot (\mathcal J^I_b - 1) }
    \cdot 
    \epsilon_0^{ \frac{\alpha}{2\mathcal J^I_b}  }.
\end{align*}
As for the former, using part $(b)$ of Lemma~\ref{lemma: choose key parameters, first exit time analysis} and the fact that $B_\Delta \subseteq B \subseteq (I_{\bar c \bar\epsilon})^\complement$ again,
we yield
\begin{align*}
    & \int
\mathbbm{I}\bigg\{
    \widecheck{g}^{ (\mathcal{J}^I_b - 1)|b }_{[0,T - t_1]}
    \Big(\varphi_b(\bm \sigma(\bm 0) \bm w_1),(\bm w_2,\cdots,\bm w_{\mathcal J^I_b}),
        (u_2, \cdots,u_{\mathcal{J}^I_b})\Big) \in B_{\Delta};\ \min_{j \in [\mathcal J^I_b]}\norm{\bm w_j} > \bar\delta\bigg\}
\\ 
&\qquad\qquad\qquad\qquad\qquad\qquad\qquad\qquad\qquad\qquad
    \big((\nu_\alpha \times \mathbf S)\circ \Phi\big)^{ \mathcal J^I_b }(d \textbf W)
    \times \mathcal{L}^{ \mathcal{J}^I_b - 1 \uparrow }_{\bar t}(du_2,\cdots,du_{\mathcal{J}^I_b})
\\ 
& = 
\int
\mathbbm{I}\bigg\{
    \widecheck{g}^{ (\mathcal{J}^I_b - 1)|b }_{[0,T - t_1]}
    \Big(\varphi_b(\bm \sigma(\bm 0) \bm w_1),(\bm w_2,\cdots,\bm w_{\mathcal J^I_b}),
        (u_2, \cdots,u_{\mathcal{J}^I_b})\Big) \in B_{\Delta}\}
\\
&\qquad\qquad\qquad\qquad\qquad\qquad\qquad\qquad\qquad\qquad
    \big((\nu_\alpha \times \mathbf S)\circ \Phi\big)^{ \mathcal J^I_b }(d \textbf W)
    \times \mathcal{L}^{ \mathcal{J}^I_b - 1 \uparrow }_{\infty}(du_2,\cdots,du_{\mathcal{J}^I_b})
\\ 
& = \widecheck{\mathbf C}^{ (\mathcal J^I_b)|b}(B_\Delta).
\end{align*}
In summary, for any $\bm x \in \R^m$ with $\norm{\bm x} \leq \epsilon$
and
$t_1 \in [0,T - \bar t]$ ,
we have shown that
$$
\widetilde \phi_B(t_1, \bm x) \geq \widecheck{\mathbf C}^{ (\mathcal J^I_b)|b}(B_\Delta) - 
\widecheck{\bm c}(\epsilon_0).
$$
Together with the trivial bound that 
$
\widetilde \phi_B(t_1,\bm x) \geq 0
$
for all $t_1 > T - \bar t$,
we have in \eqref{proof: lower bound, intermediate, lemma: limiting measure, with exit location B, first exit analysis}
 that
$$
\mathbf{C}^{ (\mathcal{J}^I_b)|b}_{[0,T]}\bigg( \Big(\widecheck E(\epsilon,B,T)\Big)^\circ;\ \bm x\bigg) 
\geq 
(T-\bar t) \cdot \Big( \widecheck{\mathbf C}^{ (\mathcal J^I_b)|b}(B_\Delta) - \widecheck{\bm c}(\epsilon_0)  \Big)
$$
for all $\bm x \in \R^m$ with $\norm{\bm x} \leq \epsilon$.
This concludes the proof of the lower bound.
The proof to the upper bound is almost identical, so we omit the details here.
\end{proof}

Next, we prove the claims in Remark~\ref{remark: regularity conditions for first exit times} regarding the regularity conditions of Theorem~\ref{theorem: first exit time, unclipped} in the non-degenerate one-dimensional cases.
That is, we consider the following iterates in $\R^1$
\begin{align}
    X^\eta_0(x) = x;\qquad
    X^\eta_j(x) = X^\eta_{j - 1}(x) +  \eta a\big(X^\eta_{j - 1}(x)\big) + \eta\sigma\big(X^\eta_{j - 1}(x)\big)Z_j,\ \ \forall j \geq 1
     \nonumber
\end{align}
and impose the following assumptions throughout the rest of Section~\ref{subsec: lemma for measure check C}.

\begin{assumption}[Lipschitz Continuity ($\R^1$)]
\label{assumption: lipschitz continuity of drift and diffusion coefficients, R1}
There exists some $\notationdef{notation-Lipschitz-constant-L-LDP}{D} \in [1,\infty)$ such that
$$|\sigma(x) - \sigma(y)| \vee |a(x)-a(y)| \leq D|x-y|\ \ \ \forall x,y \in \mathbb{R}.$$
\end{assumption}

\begin{assumption}[Attraction Field ($\R^1$)]
\label{assumption: shape of f, first exit analysis, R1}
 $a(0)= 0$.
 Besides, $I \subset \R$ is a bounded set (i.e., $\sup_{x \in I}|x| < \infty)$
 such that $0 \in I$ and 
it holds for all $x \in I\setminus \{0\}$ that $a(x)x < 0$. 
% Besides,
% \begin{enumerate}[(i)]
%     \item $a(0)= 0$; $a(\cdot)$ is differentiable around $0$ with $a^\prime(0) < 0$;
%     \item The following claims hold for $s \in \{s_\text{left},s_\text{right}\}$: 
%     If $a(s) = 0$, then
%     $a(\cdot)$ is differentiable around $s$ and $a^\prime(s) > 0$. 
% \end{enumerate}
% $f(x)$ is monotonically decreasing on $(-\infty,0]$ and monotonically increasing on $[0,\infty)$. Besides, $\{x:\ f^\prime(x) = 0\} = \{0\}$.
\end{assumption}

\begin{assumption}[Nondegeneracy ($\R^1$)]
\label{assumption: nondegeneracy of diffusion coefficients, R1}
% $\inf_{x \in \mathbb{R}}\sigma(x) > 0$.
$\sigma(x) > 0\ \ \forall x \in \R.$
\end{assumption}

Specifically, we write
$I = (s_\text{left},s_\text{right})$ where $s_\text{left} < 0 < s_\text{right}$,
and show that the following lemmas hold for all $b > 0$ such that
\begin{align}
s_\text{left}/b \notin \mathbb Z,
\qquad
s_\text{right}/b \notin \mathbb Z.
    \label{condition for boundary of I / b is not an integer, appendix, R1 setting}
\end{align}
In other words, we show that the lemmas below holds for (Lebesgue) almost all $b > 0$.
Besides, we adopt the following choices of $\bar \epsilon > 0$ and $\bm t(\epsilon)$.
We set
\begin{align}
    {l} & = \inf_{x \in I^\complement}|x| = |s_\text{left}| \wedge s_\text{right},\qquad 
    {\mathcal J^*_b} = \ceil{l/b},
    \label{def: first exit time, J *, R1}
\end{align}
and note that
we have $l > (\mathcal J^*_b-1)b$. 
This allows us to fix, throughout this section, some $\bar\epsilon > 0$ small enough such that
\begin{align}
    \bar\epsilon \in (0,1),\qquad 
    l > (\mathcal J^*_b - 1)b + 3\bar\epsilon.
    \label{constant bar epsilon, first exit time analysis, R1}
\end{align}
Next, for any $\epsilon \in (0,\bar\epsilon)$,
let
\begin{align}
    { \bm{t}(\epsilon) } \delequal \min\big\{ t \geq 0:\ \bm{y}_t(s_\text{left} + \epsilon) \in [-\epsilon,\epsilon]\text{ and }\bm{y}_t(s_\text{right} - \epsilon) \in [-\epsilon,\epsilon] \big\}
    \label{def: t epsilon function, first exit analysis, R1}
\end{align}
for the ODE
\begin{align}
\bm y_0(x) = x,\qquad
   \frac{d\bm{y}_t(x)}{dt} = a\big(\bm{y}_t(x)\big) \ \ \forall t \geq 0.
   \label{def ODE path y t, R1}
\end{align}
Also, recall that ${I_\epsilon} \delequal (s_\text{left} + \epsilon,s_\text{right} - \epsilon)$
% \begin{align}
%   \notationdef{notation-exit-domain-I-epsilon}{I_\epsilon} \delequal (s_\text{left} + \epsilon,s_\text{right} - \epsilon)
%     \label{def, shrunk domain I epsilon, first exit time}
% \end{align}
is the $\epsilon$-shrinkage of set $I$.
We use $I^-_\epsilon = [s_\text{left}+\epsilon,s_\text{right}-\epsilon]$
to denote the closure of $I_\epsilon$.
Then, the definition of $\bm{t}(\cdot)$ immediately implies
\begin{align}
    \bm{y}_{ t }(y) \in [-\epsilon,\epsilon]\qquad \forall y \in I^-_\epsilon,\ t \geq \bm{t}(\epsilon).
    \label{property: t epsilon function, first exit analysis, R1}
\end{align}
The next two lemmas verify the claims in Remark~\ref{remark: regularity conditions for first exit times}
that the regularity conditions in Theorem~\ref{theorem: first exit time, unclipped} holds in the non-degenerate $\R^1$ setting for all $b > 0$ satisfying \eqref{condition for boundary of I / b is not an integer, appendix, R1 setting}.

\begin{lemma}
\label{lemma: measure check C J * b, continuity, first exit analysis, R1}
\linksinthm{lemma: measure check C J * b, continuity, first exit analysis, R1}
Let Assumptions \ref{assumption: lipschitz continuity of drift and diffusion coefficients, R1},
\ref{assumption: shape of f, first exit analysis, R1}, and
\ref{assumption: nondegeneracy of diffusion coefficients, R1} hold.
Let $\bar\epsilon \in (0,b)$ be defined as in \eqref{constant bar epsilon, first exit time analysis}.
For any $|\gamma| > (\mathcal{J}^*_b - 1)b + \bar\epsilon$
such that $\gamma/b \notin \mathbb{Z}$,
$$
\widecheck{\mathbf C}^{(\mathcal{J}^*_b)|b}(\{\gamma\}) = 0.
$$
\end{lemma}

\begin{lemma}
\label{lemma: exit rate strictly positive, first exit analysis, R1}
\linksinthm{lemma: exit rate strictly positive, first exit analysis, R1}
Under Assumptions \ref{assumption: lipschitz continuity of drift and diffusion coefficients, R1},
\ref{assumption: shape of f, first exit analysis, R1}, and
\ref{assumption: nondegeneracy of diffusion coefficients, R1},
    $
    \widecheck{\mathbf C}^{(\mathcal{J}^*_b)|b}( I^\complement ) \in (0,\infty).
    $
\end{lemma}

To provide the proof, we first make one observation related to the truncation operator
\begin{align*}
    {\varphi_c}(w) 
    \delequal{} (w\wedge c)\vee(-c)\ \ \ \forall w \in \mathbb{R}, c>0,
\end{align*}
under a uniform version of Assumption~\ref{assumption: nondegeneracy of diffusion coefficients, R1}, that is,
\begin{align}
\inf_{x \in \R} \sigma(x) \geq c > 0.
    \label{assumption: uniform nondegeneracy of diffusion coefficients}
\end{align}
For any $b,c > 0$, any $w \in \R$ and any $z \geq c$, note that for $\widetilde w \delequal \varphi_{b/c}(w)$,
we have 
$
\varphi_b( z \cdot w) = \varphi_b(z \cdot \widetilde w).
$
Indeed, the claim is obviously true when $|w| \leq b/c$ (so $\widetilde w = w$);
in case that $|w| > b/c$, we simply get $\varphi_b(z \cdot w ) = \varphi_b(z \cdot \widetilde w)$ with the value equal to $b$ or $-b$.
Combining this fact with $|\varphi_b(x) - \varphi_b(y)| \leq |x - y|\ \forall x,y\in \R$,
we yield that under condition~\eqref{assumption: uniform nondegeneracy of diffusion coefficients} (and for any $b,c > 0$, any $w_1,w_2 \in \R$, and any $z_1,z_2 \geq c$),
\begin{align}
|\varphi_b(z_1 \cdot w_1) - \varphi_b(z_2 \cdot w_2)|
\leq |z_1\widetilde w_1 - z_2\widetilde w_2|
    \qquad
    \text{ where }
    \widetilde w_1 = \varphi_{b/c}(w_1),\
    \widetilde w_2 = \varphi_{b/c}(w_2).
    \label{observation, uniform nondegeneracy and truncation operator}
\end{align}

Recall that $I^- = [s_\text{left},s_\text{right}]$.
Also, recall that $l = |s_\text{left}| \wedge s_\text{right}$ and ${\mathcal J^*_b} = \ceil{l/b}$.
We first develop Lemma~\ref{lemma: choose key parameters, first exit time analysis, R1},
which is essentially the $\R^1$ version of Lemma~\ref{lemma: choose key parameters, first exit time analysis},
with more properties established under the non-degeneracy assumption.
We provide the proof for the sake of completeness.
% in \eqref{def: mapping check g k b, endpoint of path after the last jump, first exit analysis},
% which is defined based on 
% $h^{(k)|b}_{[0,T]}$ introduced in
% \eqref{def: perturb ode mapping h k b, 1}--\eqref{def: perturb ode mapping h k b, 3}.
% % By studying the mapping
% in the current context,
% we establish 
% Let $b$ and $\bar\epsilon$ be chosen as described in \eqref{constant bar epsilon, first exit time analysis}.

\begin{lemma}
\label{lemma: choose key parameters, first exit time analysis, R1}
\linksinthm{lemma: choose key parameters, first exit time analysis, R1}
    % Let $k \in \mathbb{Z}_+, b > 0$.
    % Let $\gamma,\bar\epsilon > 0$ be such that $\gamma > (k-1)b + 3\bar\epsilon$.
    % Suppose that $a(0) = 0$ and $a(x)x < 0\ \forall x \in (-\gamma,0)\cup(0,\gamma)$.
    % Under Assumption \ref{assumption: f and sigma, stationary distribution of SGD},
    Let Assumptions~\ref{assumption: lipschitz continuity of drift and diffusion coefficients, R1}  and \ref{assumption: shape of f, first exit analysis, R1}
    % and \ref{assumption: shape of f, first exit analysis} 
    hold.
    Let $\bar\epsilon > 0$ be the constant characterized in \eqref{constant bar epsilon, first exit time analysis, R1}. 
    Furthermore, suppose that $\sup_{x \in I^-}|a(x)| \vee |\sigma(x)| \leq C$ for some $C \geq 1$
    and
    $\inf_{x \in I^-}\sigma(x) \geq {c}$ for some $c \in (0,1]$.
    (Below, we adopt the convention that $t_0 = 0$.)
    \begin{enumerate}[(a)]
        \item  If $\mathcal J^*_b \geq 2$,
        then
        it holds for all $T > 0$, $x_0 \in [-b - \Bar{\epsilon},b + \bar\epsilon]$, $\bm{w} = (w_1,\cdots,w_{\mathcal J^*_b-2})\in \R^{\mathcal J^*_b-2}$, and $\bm{t} = (t_1,\cdots,t_{\mathcal J^*_b-2})\in (0,T]^{ \mathcal J^*_b-2 \uparrow}$ that
        \begin{align*}
        \sup_{t \in [0,T]}|\xi(t)| \leq (\mathcal J^*_b - 1)b +\bar\epsilon < l - 2\Bar{\epsilon}\qquad\text{ where }
        \xi = h^{(\mathcal J^*_b-2)|b}_{[0,T]}(x_0,\bm{w},\bm{t}).
        \end{align*}

        \item It holds for all $T > 0$, $x_0 \in [-\Bar{\epsilon},\bar\epsilon]$, $\bm{w} = (w_1,\cdots,w_{\mathcal J^*_b-1})\in \R^{\mathcal J^*_b-1}$, and $\bm{t} = (t_1,\cdots,t_{\mathcal J^*_b-1})\in (0,T]^{ \mathcal J^*_b-1 \uparrow}$ that
        \begin{align*}
        \sup_{t \in [0,T]}|\xi(t)|\leq (\mathcal J^*_b - 1)b +\bar\epsilon  < l - 2\Bar{\epsilon}\qquad\text{ where }
        \xi = h^{(\mathcal J^*_b-1)|b}_{[0,T]}(x_0,\bm{w},\bm{t}).
        \end{align*}

        \item There exist $\Bar{\delta}>0$ and $\Bar{t} > 0$ such that the following claim holds:
        % Let 
        If
        \begin{align*}
            \sup_{t \in [0,T]}|\xi(t)| \geq l - \bar\epsilon\qquad\text{ where }
            \xi = h^{(\mathcal J^*_b-1)|b}_{[0,T]}\Big( x_0 + \varphi_b\big( \sigma(x_0)\cdot w_0\big),\bm{w},\bm{t}\Big)
        \end{align*}
        for some 
        $T > 0$, $x_0 \in [-\bar\epsilon,\bar\epsilon]$, $w_0 \in \R$, $\bm{w} = (w_1,\cdots,w_{\mathcal J^*_b-1}) \in \R^{\mathcal J^*_b-1}$, and 
        $\bm{t} = (t_1,\cdots,t_{\mathcal J^*_b-1})\in (0,T]^{ \mathcal J^*_b-1 \uparrow}$,
        then
        \begin{enumerate}[(i)]
        \item $\sup_{t \in [0,t_{\mathcal J^*_b-1})}|\xi(t)| \leq (\mathcal J^*_b - 1)b +\bar\epsilon < l - 2\Bar{\epsilon}$;
        \item 
        $|\xi(t_{\mathcal J^*_b-1})| \geq l - \bar\epsilon$;
        \item $\inf_{t \in [0,t_{\mathcal J^*_b-1}]}|\xi(t)| \geq \Bar{\epsilon}$; 
            \item $|w_{j}| > \Bar{\delta}$ for all $j = 0,1,\cdots,\mathcal J^*_b - 1$;
            \item $t_{\mathcal J^*_b-1} < \Bar{t}$.
        \end{enumerate}

        \item Let $T > 0$, $x \in \R, \bm{w} = (w_1,\cdots, w_{\mathcal J^*_b}) \in \R^{\mathcal J^*_b}, \bm{t} = (t_1,\cdots,t_{\mathcal J^*_b})\in (0,T]^{\mathcal J^*_b\uparrow}$ and $\epsilon \in (0,\bar\epsilon)$.
        If $|\xi(t_1-)| < \epsilon$ for $\xi = h^{(\mathcal J^*_b)|b}_{[0,T]}(x,\bm{w},\bm{t})$, then
        \begin{align*}
           \sup_{t \in [t_1,t_{\mathcal J^*_b}]} |\xi(t) - \hat\xi(t - t_1)| \leq \Big[ \exp\big(D(T - t_1)\big)\cdot\Big(1 + \frac{bD}{c}\Big)\Big]^{\mathcal J^*_b} \cdot  \epsilon
        \end{align*}
        where
        $\hat{\xi} = h^{(\mathcal J^*_b-1)|b}_{[0, T - t_1] }\big( \varphi_b( \sigma(0)\cdot w_1),(w_2,\cdots,w_{\mathcal J^*_b}),(t_2 - t_1, t_3 - t_1,\cdots,t_{ \mathcal J^*_b } - t_1)\big)$
        and
        $D \geq 1$ is the constant in Assumption~\ref{assumption: lipschitz continuity of drift and diffusion coefficients, R1}.
        % ,
        % and $c \in (0,1]$ is the constant in \eqref{uniform nondegeneracy condition, first exit time analysis}.

        \item Given $\Delta > 0$, there exists $\epsilon_0 = \epsilon_0(\Delta) \in (0,\bar\epsilon)$ such that
        for any $T > 0$, $x \in [-\epsilon_0$, $\epsilon_0]$, $\bm{w} = (w_1,\cdots,w_{\mathcal J^*_b}) \in \R^{\mathcal J^*_b}$, and $\bm{t} = (t_1,\cdots,t_{\mathcal J^*_b}) \in (0,T]^{{\mathcal J^*_b}\uparrow}$,
        \begin{align*}
        \sup_{t \in [t_1,T - t_1]}
        |\xi(t)| \vee |\hat{\xi}(t - t_1)| \geq l - \bar\epsilon \qquad \Longrightarrow\qquad
        \sup_{t \in [t_1,t_{\mathcal J^*_b}]}
        |\hat{\xi}(t - t_1) - \xi(t) | < \Delta
        \end{align*}
        where $\xi = h^{({\mathcal J^*_b})|b}_{[0,T]}(x,\bm{w},\bm{t})$ and 
        $\hat{\xi} = h^{({\mathcal J^*_b}-1)|b}_{[0, T - t_1] }\big( \varphi_b( \sigma(0)\cdot w_1),(w_2,\cdots,w_{\mathcal J^*_b}),(t_2 - t_1, t_3 - t_1,\cdots,t_{\mathcal J^*_b} - t_1)\big).$

    \end{enumerate}
\end{lemma}

\begin{proof}
\linksinpf{lemma: choose key parameters, first exit time analysis, R1}
Before the proof of the claims,
we highlight two facts.
First, Assumption~\ref{assumption: lipschitz continuity of drift and diffusion coefficients, R1} and $I^-$ being compact immediately imply the existence of $C \in (0,\infty)$ such that $\sup_{x \in I^-}|a(x)| \vee |\sigma(x)| \leq C$.
Without loss of generality, in the statement of Lemma~\ref{lemma: choose key parameters, first exit time analysis, R1} we pick some $C \geq 1$.
Next,
we stress that the validity of all claims do not depend on the values of $\sigma(\cdot)$ and $a(\cdot)$ outside of $I^-$. 
Take part $(a)$ as an example.
Suppose that we can prove part $(a)$ under the stronger assumption that $\sup_{x \in \R}|a(x)| \wedge \sigma(x) \leq C$ for some $C \in [1,\infty)$
and $\inf_{x \in \R}\sigma(x) \geq c$ for some $c \in (0,1]$.
Then due to $\sup_{t \in [0,T]}|\xi(t)| < l = |s_\text{left}| \wedge s_\text{right}$ for $\xi = h^{(\mathcal J^*_b-2)|b}_{[0,T]}(x_0,\bm{w},\bm{t})$,
we have $\xi(t) \in I^-$ for all $t \in [0,T]$.
This implies that part $(a)$ is still valid even if we only have 
$\sup_{x \in I^-}|a(x)| \wedge \sigma(x) \leq C$
and $\inf_{x \in I^-}\sigma(x) \geq c$.
The same applies to all the other claims.
Therefore, in the proof below we assume w.l.o.g.\ that the strong assumptions
$\sup_{x \in \R}|a(x)| \wedge \sigma(x) \leq C$ for some $C \in [1,\infty)$
and $\inf_{x \in \R}\sigma(x) \geq c$ for some $c \in (0,1]$ hold.
Specifically, in this proof we assume w.l.o.g.\ that $a(x) = a(s_\text{left})$ for all $x < s_\text{left}$,
and
$a(x) = a(s_\text{right})$ for all $x > s_\text{right}$.
Then in light of Assumption~\ref{assumption: shape of f, first exit analysis, R1},
we now have $a(x)x \leq 0\ \forall x \in \R$.

\medskip
$(a)$
The proof hinges on the following observation. 
For any $j \geq 0, T > 0, x_0 \in \R, \bm{w} = (w_1,\cdots,w_j) \in \R^j$ and $\bm t = (t_1,\cdots,t_j) \in (0,T]^{j\uparrow}$,
let $\xi = h^{(j)|b}_{[0,T]}(x_0,\bm w,\bm t)$.
The condition $a(x)x \leq 0$ implies that 
\begin{align}
    \frac{d |\xi(t)|}{dt} = -\big|a\big(\xi(t)\big)\big|\ \ \ \forall t \in [0,T]\setminus \{t_1,\cdots,t_j\}
    \label{proof, observation on xi, lemma: choose key parameters, first exit time analysis}
\end{align}
Specifically, suppose that ${\mathcal J^*_b} \geq 2$.
For all $T > 0, x_0 \in [-b - \Bar{\epsilon},b + \bar\epsilon], \bm{w} = (w_1,\cdots,w_{{\mathcal J^*_b}-2})\in \R^{{\mathcal J^*_b}-2}$ and $\bm{t} = (t_1,\cdots,t_{{\mathcal J^*_b}-2})\in (0,T]^{ {\mathcal J^*_b}-2 \uparrow}$,
it holds for $\xi = h^{({\mathcal J^*_b}-2)|b}_{[0,T]}(x_0,\bm{w},\bm{t})$ that
$
d|\xi(t)|/dt \leq 0
$
for any $t \in [0,T]\setminus \{t_1,\cdots,t_{\mathcal{J}^*_b-2}\}$,
thus leading to
\begin{align*}
    \sup_{t \in [0,T]}|\xi(t)| 
    & \leq |\xi(0)| + \sum_{t \leq T}|\Delta \xi(t)| 
    \\
    &
    \leq |\xi(0)| + ({\mathcal J^*_b}-2)b
    \qquad \text{due to truncation operators $\varphi_b$ in $h^{({\mathcal J^*_b}-2)|b}_{[0,T]}$}
    \\
    & 
    \leq b + \bar\epsilon + ({\mathcal J^*_b}-2)b 
    \\
    & = ({\mathcal J^*_b}-1)b + \bar\epsilon < l - 2\bar\epsilon
    \qquad \text{due to \eqref{constant bar epsilon, first exit time analysis, R1}}.
\end{align*}
This concludes the proof of part $(a)$.

\medskip
$(b)$ The proof is almost identical to that of part $(a)$.
In particular, it follows from \eqref{proof, observation on xi, lemma: choose key parameters, first exit time analysis} that 
$
d|\xi(t)|/dt \leq 0
$
for any $t \in [0,T]\setminus \{t_1,\cdots,t_{ \mathcal J^*_b -1}\}$.
Therefore,
we have again that
$
 \sup_{t \in [0,T]}|\xi(t)| 
 \leq 
 |\xi(0)| + ({\mathcal J^*_b}-1)b \leq \bar \epsilon + (\mathcal J^*_b - 1)b < l - 2\bar\epsilon.
$
% \begin{align*}
%     \sup_{t \in [0,T]}|\xi(t)| 
%     & \leq |\xi(0)| + \sum_{t \leq T}|\Delta \xi(t)| 
%     \\
%     &
%     \leq |\xi(0)| + ({\mathcal J^*_b}-1)b
%     \ \ \ \text{due to truncation operators $\varphi_b$ in $h^{({\mathcal J^*_b}-1)|b}_{[0,T]}$}
%     \\
%     & 
%     \leq \bar\epsilon + ({\mathcal J^*_b}-1)b  < l - 2\bar\epsilon
%     \qquad \text{due to \eqref{constant bar epsilon, first exit time analysis}}.
% \end{align*}

\medskip
$(c)$
We start from the claim that 
$\sup_{t \in [0,t_{ {\mathcal J^*_b} -1})}|\xi(t)| < l - 2\Bar{\epsilon}$.
The case with ${\mathcal J^*_b} = 1$ is trivial since $[0,t_{{\mathcal J^*_b}-1}) = [0,0) = \emptyset$.
Now consider the case where ${\mathcal J^*_b} \geq 2$.
For $\hat{x}_0 \delequal x_0 + \varphi_b\big( \sigma(x_0)\cdot w_0\big)$, we have $|\hat{x}_0| \leq \bar\epsilon + b$.
By setting $\hat{\bm w} = (w_1,\cdots,w_{{\mathcal J^*_b}-2}),\hat{\bm t} = (t_1,\cdots,t_{{\mathcal J^*_b}-2})$ and $\hat{\xi} = h^{({\mathcal J^*_b}-2)|b}_{[0,T]}(\hat{x}_0,\hat{\bm w},\hat{\bm t})$,
we get $\xi(t) = \hat{\xi}(t)$ for all $t \in [0,t_{{\mathcal J^*_b}-1})$.
It then follows directly from results in part $(a)$ that 
$
\sup_{t \in [0,t_{{\mathcal J^*_b}-1})}|\xi(t)|
=
\sup_{t \in [0,t_{{\mathcal J^*_b}-1})}|\hat\xi(t)| \leq (\mathcal J^*_b - 1)b +\bar\epsilon < l - 2\bar\epsilon.
$

Next, we prove the claim  $|\xi(t_{{\mathcal J^*_b}-1})| \geq l - \bar\epsilon$.
In particular, note that $\sup_{t \in [0,T]}|\xi(t)| \geq l  - \bar\epsilon$
and we just proved that
$\sup_{t \in [0,t_{{\mathcal J^*_b}-1})}|\xi(t)| < l - 2\Bar{\epsilon}$.
Now consider the following proof by contradiction.
Suppose that 
$|\xi(t_{{\mathcal J^*_b}-1})| < l - \bar\epsilon$.
Then by definition of the mapping $h^{({\mathcal J^*_b}-1)|b}_{[0,T]}$, we know that $\xi(t)$ is continuous over $t \in [t_{{\mathcal J^*_b}-1},T]$.
Given observation \eqref{proof, observation on xi, lemma: choose key parameters, first exit time analysis},
we yield the contradiction that $\sup_{t \in [t_{{\mathcal J^*_b}-1},T]}|\xi(t)| \leq |\xi(t_{{\mathcal J^*_b}-1})| \wedge \big( \sup_{t \in [0,t_{{\mathcal J^*_b}-1})}|\xi(t)| \big) < l - \bar\epsilon$.
This concludes the proof.

Similarly, to show the claim $\inf_{t \in [0,t_{{\mathcal J^*_b}-1}]}|\xi(t)| \geq \Bar{\epsilon}$ we proceed with a proof by contradiction.
Suppose there is some $t \in [0,t_{{\mathcal J^*_b}-1}]$ such that $|\xi(t)| < \bar\epsilon$.
Then observation \eqref{proof, observation on xi, lemma: choose key parameters, first exit time analysis} implies that
\begin{align*}
    |\xi(t_{{\mathcal J^*_b}-1})| & \leq |\xi(t)| + \sum_{ s\in (t,t_{{\mathcal J^*_b}-1}] }|\Delta \xi(s)|
    \\
    & \leq \bar\epsilon + ({\mathcal J^*_b}-1)b
    \qquad \text{due to truncation operators $\varphi_b$ in $h^{({\mathcal J^*_b}-1)|b}_{[0,T]}$}
    \\
    & < l - 2\bar\epsilon
    \qquad \text{due to \eqref{constant bar epsilon, first exit time analysis, R1}.}
\end{align*}
However, we have just shown that $|\xi(t_{{\mathcal J^*_b}-1})| \geq l - \bar\epsilon$ must hold.
With this contradiction established we conclude the proof.

Recall our running assumption that $\sup_{x \in \R}|\sigma(x)| \leq C$ for some $C \geq 1$.
By \eqref{constant bar epsilon, first exit time analysis, R1}, we can fix some $\Bar{\delta} > 0$ small enough such that
\begin{align*}
    ({\mathcal J^*_b}-1)b + 3\bar\epsilon + C\bar\delta < l.
\end{align*}
Now we prove that
$|w_j| > \Bar{\delta}$ for all $j = 0,1,\ldots,{\mathcal J^*_b}-1$.
Again, suppose that the claim does not hold. Then there is some $j^* = 0,1,\ldots,{\mathcal J^*_b}-1$ with $|w_{j^*}| \leq \bar\delta$.
From observation \eqref{proof, observation on xi, lemma: choose key parameters, first exit time analysis}, we get
\begin{align*}
    |\xi(t_{{\mathcal J^*_b}-1})| & \leq |\xi(0)| + \sum_{t \in [0,t_{{\mathcal J^*_b}-1}]}|\Delta \xi(t)|
    \\
    & \leq |x_0| + 
    \varphi_b\Big(\Big| \sigma(x_0) \cdot w_0 \Big|\Big) + \sum_{j = 1}^{{\mathcal J^*_b}-1}\varphi_b\Big(\Big| \sigma\big(\xi(t_j-)\big) \cdot w_j \Big|\Big)
    \\
    & \leq \bar\epsilon + 
    ({\mathcal J^*_b}-1)b + C\bar\delta 
    \qquad
    \text{due to $|x_0| \leq \bar\epsilon,\ |w_{j^*}| \leq \bar\delta$, and $|\sigma(y)| \leq C$ for all $y \in \R$}
    \\
    & < l - 2\bar\epsilon\qquad
    \text{due to our choice of $\bar\delta$}.
\end{align*}
This contradiction with the fact $|\xi(t_{{\mathcal J^*_b}-1})| \geq l - \bar\epsilon$ allows us to conclude the proof.

Lastly, we move onto the claim $t_{{\mathcal J^*_b}-1} < \Bar{t}$.
If ${\mathcal J^*_b} = 1$, then due to $t_0 = 0$ the claim is trivially true for any $\Bar{t} > 0$.
Hereafter, we focus on the case where ${\mathcal J^*_b} \geq 2$ and start by specifying the constant $\Bar{t}$.
From the continuity of $a(\cdot)$ (see Assumption~\ref{assumption: lipschitz continuity of drift and diffusion coefficients, R1}) and the fact that $a(y) \neq 0\ \forall y \in (-l,0)\cup(0,l)$ (see Assumption~\ref{assumption: shape of f, first exit analysis, R1}),
we can find some $c_{\bar\epsilon} > 0$ such that $|a(y)| \geq c_{\bar\epsilon}$ for all $y \in [-l + \bar\epsilon,-\bar\epsilon]\cup[\bar\epsilon,l - \bar\epsilon]$.
Now we pick some
\begin{align*}
t_{\bar\epsilon} = l/c_{\bar\epsilon},\qquad
    \Bar{t} = ({\mathcal J^*_b}-1) \cdot t_{\bar\epsilon}.
\end{align*}
We proceed with a proof by contradiction.
Suppose that $t_{{\mathcal J^*_b}-1} \geq \Bar{t} = ({\mathcal J^*_b}-1)\cdot t_{\bar\epsilon}$, then there must exist some $j^* = 1,2,\ldots,{\mathcal J^*_b}-1$ such that $t_{j^*} - t_{j^*-1} \geq t_{\bar\epsilon}$.
First, we have shown that $|\xi(t_{j^*-1})| < l - \bar\epsilon$.
Next, 
we must have $|\xi(t)| < \bar\epsilon$ for some $t \in [t_{j^*-1},t_{j^*})$.
Indeed, suppose that 
$|\xi(t)| \geq \bar\epsilon$ for all $t \in [t_{j^*-1},t_{j^*})$.
Then
from observation \eqref{proof, observation on xi, lemma: choose key parameters, first exit time analysis} and
$|a(y)| \geq c_{\bar\epsilon}$ for all $y \in [-l + \bar\epsilon,-\bar\epsilon]\cup[\bar\epsilon,l - \bar\epsilon]$,
we yield
\begin{align*}
    |\xi(t_{j^*}-)|
    & \leq |\xi(t_{j^*-1})| - c_{\bar\epsilon} \cdot t_{\bar\epsilon}
    \leq l - c_{\bar\epsilon} \cdot \frac{l}{c_{\bar\epsilon}}= 0.
\end{align*}
The continuity of $\xi(t)$ on $t \in [t_{j^*-1},t_{j^*})$ then implies that for any $t \in [t_{j^*-1},t_{j^*})$ close enough to $t_{j^*}$, we have $|\xi(t)| < \bar\epsilon$.
However, we have also shown that 
$\inf_{t \in [0,t_{{\mathcal J^*_b}-1}]}|\xi(t)| \geq \Bar{\epsilon}$.
With this contradiction established, we conclude the proof.

\medskip
$(d)$
% Due to $|\xi(t_1)| < \epsilon < \bar\epsilon$, we can apply claims in part (c) and yield
% $
% |\xi(t_1 + u)| < r- 2\bar\epsilon
% $
% and
% $
% |\hat\xi(u)| < r - 2\bar\epsilon
% $
% for all $u \in [0,t_{\mathcal J^*_b} - t_1)$.
% Therefore, when defining mapping $h^{(j)|b}_{[0,T]}$, even if we use $a_{ \gamma - 2\bar\epsilon },\sigma_{ \gamma - 2\bar\epsilon }$
% (see \eqref{def: a sigma truncated at M, LDP} for definition of $a_M,\sigma_M$) instead of $a,\sigma$ as the drift and diffusion coefficients,
% the value for $\xi(t_1 + u)$ and $\hat{\xi}(u)$ would remain unchanged for all $u \in [0,t_k]$.
% This allows us to apply Lemma \ref{lemma: upper bound on perturbation at initial value, h k b mapping} and conclude the proof for part (d).
Let
$R_j \delequal \sup_{t \in [t_1,t_{j}]} |\xi(t) - \hat{\xi}(t - t_1) |$ for $j \in [ \mathcal{J}^*_b ]$.
Specifically,
$R_1 = |\xi(t_1) - \hat\xi(0 )|$.
We start by analyzing $R_1$.
First, note that
$
\xi(t_1) = \xi(t_1-) + \varphi_b\big( \sigma(\xi(t_1-))\cdot w_1 \big)
$
and 
$
\hat \xi(0) = \varphi_b(\sigma(0) \cdot w_1).
$
Using \eqref{proof, observation on xi, lemma: choose key parameters, first exit time analysis},
By the assumption $|\xi(t_1-)| < \epsilon$,
\begin{align*}
    R_1 & \leq \epsilon + \big|\varphi_b\big( \sigma\big(\xi(t_1-)\big)\cdot w_1 \big) - \varphi_b\big(\sigma(0) \cdot w_1\big)\big|
    \\ 
    & \leq 
    \epsilon + 
    \big|\sigma\big(\xi(t_1-)\big) - \sigma(0) \big|\cdot \big|\varphi_{b/c}(w_1)\big|
    \qquad 
    \text{due to \eqref{observation, uniform nondegeneracy and truncation operator} and $\inf_{x \in \R}\sigma(x) \geq c > 0$}
    \\ 
    & \leq 
    \epsilon + D \epsilon \cdot \frac{b}{c}
    =
    \bigg(1 + \frac{bD}{c}\bigg) \cdot \epsilon
    \qquad 
    \text{ by Assumption~\ref{assumption: lipschitz continuity of drift and diffusion coefficients, R1}.}
\end{align*}
\elaborate{
Besides, due to $\inf_{x \in \R}\sigma(x) \geq c$ with $c \in (0,1]$,
we have
$
\varphi_b\big( \sigma(x) \cdot w_1 \big) = \varphi_b\big( \sigma(x) \cdot \widetilde w_1 \big) =\ \forall x \in \R
$
with $\widetilde{w}_1 = \varphi_{b/c}(w_1)$.
Therefore,
\begin{align*}
    R_0 & \leq \epsilon +
    \Big|\varphi_b\Big( \sigma\big(\xi(t_1-)\big)\cdot \widetilde w_1 \Big) - \varphi_b\big(\sigma(0) \cdot \widetilde w_1\big)\Big|
    \\ 
    & \leq 
    \epsilon + \Big| \sigma\big(\xi(t_1-)\big)\cdot \widetilde w_1 - \sigma(0) \cdot \widetilde w_1\Big|
    \leq 
    \epsilon + \big|\sigma\big(\xi(t_1-)\big) - \sigma(0) \big|\cdot |\widetilde w_1|
    \\ 
    & \leq 
    \epsilon + D \epsilon \cdot \frac{b}{c}
    =
    (1 + \frac{bD}{c}) \cdot \epsilon
    \qquad 
    \text{ because of Assumption \ref{assumption gradient noise heavy-tailed}, $|\xi(t_1-)|\leq \epsilon$, and $|\widetilde w_1| \leq b/c$}.
\end{align*}
}
We proceed with an induction argument.
Suppose that for some $j = 1,\ldots,\mathcal J^*_b -1$, we have $R_j \leq \rho^{j}\cdot\epsilon$ with 
$
\rho \delequal {\exp(D(T-t_1))\cdot(1 + \frac{bD}{c})}.
$
% \begin{align*}
%    \rho \delequal {\exp(DT)\cdot\Big(1 + \frac{bD}{c}\Big)}.
% \end{align*}
By applying Gronwall's inequality for $u \in [t_j,t_{j+1})$,
% we get
\begin{align}
   \sup_{ u \in [t_j,t_{j+1}) }| \xi(u) - \hat{\xi}(u - t_1) | \leq R_j \cdot \exp\big( D(t_{j+1} - t_j) \big) \leq \exp\big(D(T - t_1)\big)R_j
   \leq \rho^{j+1}\epsilon.
   \label{proof: part d, ineq, lemma: choose key parameters, first exit time analysis}
\end{align}
% If $j = {\mathcal J^*_b} -1$ (which means $t_{j+1} = t_{\mathcal J^*_b} = T$), then due to $\Delta \xi(T) = \Delta \hat{\xi}(T) = 0$, we can conclude the proof with
% $
%  |\hat{\xi}(T) - \xi(T)| = \lim_{u \rightarrow T}| \xi(u) - \hat{\xi}(u) | \leq \exp(DT) R_{k-1} < r \cdot R_{k-1} \leq r^{k+1}|x|.
% $
% If $j \leq k - 2$ (which means $t_{j+1} \leq t_{k-1} < T$)
% then 
Meanwhile, at $t = t_{j+1}$ we have (set $ \hat t_{j+1} \delequal t_{j+1} - t_1$)
\begin{align*}
    & |\hat{\xi}(\hat t_{j+1}) - \xi(t_{j+1})|
    \\
    & = 
    \bigg|
    \hat{\xi}(\hat t_{j+1}-) + \varphi_b\Big( \sigma\big(\hat{\xi}(\hat t_{j+1}-)\big) \cdot w_{j+1} \Big)
    % \\
    % &\qquad\qquad\qquad\qquad\qquad
    -
    \Big[
    \xi(t_{j+1}-) + \varphi_b\Big(\sigma\big(\xi(t_{j+1}-) \big) \cdot {w}_{j+1}\Big)
    \Big]
    \bigg|
    \\
    & \leq \Big|\hat{\xi}(\hat t_{j+1}-) - \xi(t_{j+1}-)\Big|
    +
    \Big|
    \varphi_b\Big( \sigma\big(\hat{\xi}(\hat t_{j+1}-)\big) \cdot w_{j+1} \Big)
    -
    \varphi_b\Big(\sigma\big(\xi(t_{j+1}-) \big) \cdot {w}_{j+1}\Big)
    \Big|
    \\ 
    & \leq 
    \exp\big(D(T - t_1)\big)R_j
    +
    \Big|
    \varphi_b\Big( \sigma\big(\hat{\xi}(\hat t_{j+1}-)\big) \cdot w_{j+1} \Big)
    -
    \varphi_b\Big(\sigma\big(\xi(t_{j+1}-) \big) \cdot {w}_{j+1}\Big)
    \Big|
    \qquad \text{by \eqref{proof: part d, ineq, lemma: choose key parameters, first exit time analysis}}
    \\ 
    & \leq 
    \exp\big(D(T - t_1)\big)R_j
    +
    \Big|
   \sigma\big(\hat{\xi}(\hat t_{j+1}-)\big)
    -
    \sigma\big(\xi(t_{j+1}-) \big)
    \Big|
    \cdot \big|\varphi_{b/c}(w_{j+1})\big|
    \qquad 
    \text{by \eqref{observation, uniform nondegeneracy and truncation operator}}
    \\ 
    & \leq 
     \exp\big(D(T - t_1)\big)R_j
     +
    D\big|\hat{\xi}(t_{j+1}-) - \xi(t_{j+1}-)\big| \cdot b/c
    \qquad 
    \text{by Assumption \ref{assumption: lipschitz continuity of drift and diffusion coefficients, R1}}
    \\
    & \leq 
    \exp\big(D(T - t_1)\big)R_j
    +
    \frac{bD}{c}\cdot \exp\big(D(T - t_1)\big)R_j
    = \Big(1 + \frac{bD}{c}\Big)\exp\big(D(T - t_1)\big)R_j \leq \rho^{j+1}\cdot \epsilon.
\end{align*}
\elaborate{
First, since $\xi$ and $\hat\xi$ are left-continuous,
$
\big|\hat{\xi}(\hat t_{j+1}-) - \xi(t_{j+1}-)\big| = \lim_{u \uparrow t_{j+1}}| \hat \xi(u - t_1) - {\xi}(u) | \leq \exp(DT)R_j.
$
On the other hand, by setting $\widetilde{w}_{j+1}\delequal \varphi_{b/c}(w_{j+1})$, we also get
\begin{align*}
    & \Big|
    \varphi_b\Big( \sigma\big(\hat{\xi}(\hat t_{j+1}-)\big) \cdot w_{j+1} \Big)
    -
    \varphi_b\Big(\sigma\big(\xi(t_{j+1}-) \big) \cdot {w}_{j+1}\Big)
    \Big|
    \\
    & = 
    \Big|
     \varphi_b\Big( \sigma\big(\hat{\xi}(\hat t_{j+1}-)\big) \cdot \widetilde w_{j+1} \Big)
    -
    \varphi_b\Big(\sigma\big(\xi(t_{j+1}-) \big) \cdot \widetilde{w}_{j+1}\Big)
    \Big|
    \\
    & \leq \Big|
   \sigma\big(\hat{\xi}(\hat t_{j+1}-)\big) \cdot \widetilde{w}_{j+1}
    -
    \sigma\big(\xi(t_{j+1}-) \big) \cdot \widetilde{w}_{j+1}
    \Big|
    \\ 
    &
    \leq 
    D\big|\hat{\xi}(t_{j+1}-) - \xi(t_{j+1}-)\big| \cdot |\widetilde{w}_{j+1}|
    \qquad 
    \text{due to the Lipschitz continuity of $\sigma$; see Assumption \ref{assumption: lipschitz continuity of drift and diffusion coefficients}}
    \\
    & \leq 
    \frac{bD}{c}\cdot \exp(DT)R_j.
\end{align*}
In summary,
\begin{align*}
    R_{j+1} \leq \Big(1 + \frac{bD}{c}\Big)\cdot \exp(DT) \cdot R_j 
    \leq \Big(1 + \frac{bD}{c}\Big)\cdot \exp(DT) \cdot \rho^j \epsilon = \rho^{j+1}\epsilon.
\end{align*}    
}
By arguing inductively we conclude the proof.

\medskip
$(e)$
Note that the statement is not affected by the values of $\xi(t)$ beyond $ t\in [0,t_{\mathcal J^*_b}]$ or the values of $\hat \xi(t)$ outside of the domain $ t \in [0,t_{\mathcal J^*_b} - t_1]$.
Therefore, without loss of generality we can set $T = t_{\mathcal J_b^*} + 1$.
Let $\bar t$ be the constant specified in part $(c)$.
Suppose that $\sup_{t \in [t_1,T - t_1]}|\xi(t)| \vee |\hat{\xi}(t - t_1)| \geq l - \bar\epsilon$ implies
\begin{align}
        \sup_{t \in [t_1,t_{\mathcal J^*_b}]}
            |\xi(t) - \hat\xi(t - t_1)| < \underbrace{\Big[ \exp\big(D(\bar t+1)\big)\cdot\Big(1 + \frac{bD}{c}\Big)\Big]^{\mathcal J^*_b}}_{\delequal \rho^*} \cdot  \epsilon_0
            \qquad \forall \epsilon_0 \in (0,\bar\epsilon].
    \label{goal, part e, lemma: choose key parameters, first exit time analysis, R1}
\end{align}
Then claims in part $(e)$ hold for any $\epsilon_0 \in (0,\bar\epsilon)$ small enough such that $\rho^*\epsilon_0 < \Delta$.

Now, it only remains to prove claim \eqref{goal, part e, lemma: choose key parameters, first exit time analysis, R1}.
From observation \eqref{proof, observation on xi, lemma: choose key parameters, first exit time analysis},
we get $|\xi(t_1-)| \leq |\xi(0)| \leq \epsilon_0$.
This allows us to apply results in part $(d)$ and get (recall our choice of $T = t_{\mathcal J_b^*} + 1$)
\begin{align*}
        \sup_{t \in [t_1,t_{\mathcal J^*_b}]}
            |\xi(t) - \hat\xi(t - t_1)| \leq \Big[ \exp\big(D( t_{\mathcal J_b^*} - t_1 + 1)\big)\cdot\Big(1 + \frac{bD}{c}\Big)\Big]^{\mathcal J^*_b+1} \cdot  \epsilon_0.
\end{align*}
Lastly, 
% let $\hat t_j = t_{j+1} - t_1$.
% For 
% \begin{align*}
% \hat{\xi} & = h^{({\mathcal J^*_b}-1)|b}_{[0, T - t_1] }\big( \varphi_b( \sigma(0)\cdot w_1\big),(w_2,\cdots,w_{\mathcal J^*_b}),(t_2 - t_1, t_3 - t_1,\cdots,t_{\mathcal J^*_b} - t_1)\big)
% \\
% & = h^{({\mathcal J^*_b}-1)|b}_{[0, T - t_1] }\big( \varphi_b( \sigma(0)\cdot w_1\big),(w_2,\cdots,w_{\mathcal J^*_b}),(\hat t_1, \hat t_2,\cdots,\hat t_{\mathcal J^*_b-1})\big),
% \end{align*}
if $\sup_{t \in [t_1,T]}|\hat\xi(t-t_1)| \geq l -\bar \epsilon$, then $t_{\mathcal{J}^*_b} - t_1 < \bar t$ follows from part $(c)$.
Likewise, if $\sup_{t \in [0,T]}|\xi(t)| \geq l -\bar \epsilon$, then we get $t_{\mathcal{J}^*_b} < \bar t$.
In both cases, we get $t_{\mathcal J_b^*}-t_1 + 1 \leq \bar t + 1$.
This concludes the proof.
\end{proof}

Now, we are ready to prove Lemmas~\ref{lemma: measure check C J * b, continuity, first exit analysis, R1} and \ref{lemma: exit rate strictly positive, first exit analysis, R1}.

\begin{proof}[Proof of Lemma~\ref{lemma: measure check C J * b, continuity, first exit analysis, R1}]
\linksinpf{lemma: measure check C J * b, continuity, first exit analysis, R1}
First, in case that $\mathcal{J}^*_b = 1$, we have
$
\widecheck{\mathbf C}^{(1)|b}(\{\gamma\})
=
\nu_\alpha(\{ w \in \R:\ \varphi_b(\sigma(0)\cdot w) = \gamma \}).
$
Since $\gamma \neq b$, we know that 
$
\big\{ w:\ \varphi_b\big(\sigma(0)\cdot w\big) = \gamma \big\} \subseteq \{\frac{\gamma}{\sigma(0)}\}.
$
The absolute continuity of $\nu_\alpha$ (w.r.t the Lebesgue measure) then implies that $\widecheck{\mathbf C}^{(1)|b}(\{\gamma\}) = 0$.
Hereafter, we focus on the case where $\mathcal{J}^*_b \geq 2$.
Observe that (recall that $\mathcal L$ is the Lebesgue measure on $(0,\infty)$)
\begin{align*}
    & \widecheck{\mathbf C}^{(\mathcal{J}^*_b)|b}(\{\gamma\})
    \\
    & = 
    \int \mathbbm{I}\bigg(\mathbbm{I}\Big\{ \widecheck g^{(\mathcal{J}^*_b -1)|b}\Big( \varphi_b\big(\sigma(0)\cdot w_1\big),
    (w_2,\cdots,w_{\mathcal{J}^*_b-1},w^*),
    (t_1,\cdots,t_{\mathcal{J}^*_b-2},t_{\mathcal{J}^*_b-2} + t^*) \Big)=\gamma \Big\}
    \\
    &\qquad \qquad \qquad \qquad \qquad \qquad 
    \nu_\alpha(dw^*)\times\mathcal{L}(dt^*)\bigg)
    \nu^{ \mathcal{J}^*_b-1 }_\alpha(d w_1,\cdots,dw_{\mathcal{J}^*_b-1}) \times \mathcal{L}^{\mathcal{J}^*_b-2\uparrow}_\infty(dt_1,\cdots,dt_{\mathcal{J}^*_b-2})
    \\
    & = 
    \int\bigg(\int_{ (w^*,t^*) \in E(\bm w, \bm t )  }\nu_\alpha(dw^*)\times\mathcal{L}(dt^*)\bigg)
    \nu^{ \mathcal{J}^*_b-1 }_\alpha(d\bm w) \times \mathcal{L}^{\mathcal{J}^*_b-2\uparrow}_\infty(d\bm t)
\end{align*}
where
\begin{align*}
 E( \bm w,\bm t )
    & = 
    \Big\{
    (w,t) \in \R \times (0,\infty):\ \varphi_b\Big(
        \bm{y}_{ t }\big( \widetilde{\bm x}(\bm w,\bm t)\big) + \sigma\big(\bm{y}_{ t }( \widetilde{\bm x}(\bm w,\bm t))\big)\cdot w
        \Big) = \gamma
    \Big\},
    \\
    \widetilde{\bm x}(\bm w,\bm t) & = \widecheck{g}^{ ( \mathcal{J}^*_b - 2 ) |b}
    \Big( \varphi_b\big(\sigma(0)\cdot w_1\big), 
    (w_2,\cdots,w_{\mathcal{J}^*_b-1}),
    (t_1,\cdots,t_{\mathcal{J}^*_b-2})\Big).
\end{align*}
Here, $\bm{y}_t(x)$ is the ODE defined in \eqref{def ODE path y t, R1}.
Furthermore, we claim that for any $\bm w,\bm t$, 
there exist some continuous function $w^*:(0,\infty) \to \R$ and some $t^* \in (0,\infty)$ such that
\begin{align}
    E( \bm w,\bm t ) \subseteq \big\{ (w,t) \in \R \times (0,\infty): w = w^*(t)\text{ or }t = t^* \big\}.
    \label{goal, lemma: measure check C J * b, continuity, first exit analysis}
\end{align}
Then set $E( \bm w,\bm t )$ charges zero mass under Lebesgues measure on $\R \times (0,\infty)$.
From the absolute continuity of $\nu_\alpha \times \mathcal{L}$ (w.r.t.\ Lebesgues measure on $\R \times (0,\infty)$)
we get 
$
\widecheck{\mathbf C}^{(\mathcal{J}^*_b)|b}(\{\gamma\}) = 0.
$

Now, it only remains to prove claim \eqref{goal, lemma: measure check C J * b, continuity, first exit analysis}.
Henceforth in this proof we fix some $\bm w \in \R^{\mathcal J^*_b - 1}$ and $\bm t \in (0,\infty)^{\mathcal J^*_b - 2\uparrow}$.
First, due to $|\gamma| > (\mathcal{J}^*_b - 1)b + \bar\epsilon$, it follows from part $(a)$ of Lemma \ref{lemma: choose key parameters, first exit time analysis, R1} that
$
|\widetilde{\bm x}(\bm w,\bm t)| \leq (\mathcal{J}^*_b - 1)b + \bar\epsilon < \gamma.
$
Next, we show that there exists at most one $t^* \in (0,\infty)$ such that 
\begin{align}
    |\bm{y}_{ t^* }\big( \widetilde{\bm x}(\bm w,\bm t)\big) - \gamma| = b.
    \label{proof: goal 2, lemma: measure check C J * b, continuity, first exit analysis}
\end{align}
To see why, we consider two different cases.
If $\widetilde{\bm x}(\bm w,\bm t) = 0$, then $a(0) = 0$ implies that 
$\bm{y}_{ t }\big( \widetilde{\bm x}(\bm w,\bm t)\big) = 0$ for all $t \geq 0$.
By assumption, we have $\gamma \neq b$, 
and hence
$
|\bm{y}_{ t }\big( \widetilde{\bm x}(\bm w,\bm t)\big) - \gamma| = \gamma \neq b
$
for all $t \geq 0$.
If  $\widetilde{\bm x}(\bm w,\bm t) \neq 0$,
then Assumption~\ref{assumption: shape of f, first exit analysis} implies that 
$
|\bm{y}_{ t }(\tilde x)|
$
is strictly monotone decreasing w.r.t.\ $t$ for all $\tilde x \in (-\gamma,\gamma)$.
Due to $|\widetilde{\bm x}(\bm w,\bm t)| < \gamma$,
the only possible scenario for \eqref{proof: goal 2, lemma: measure check C J * b, continuity, first exit analysis} is that
$|\bm{y}_{ t^* }\big( \widetilde{\bm x}(\bm w,\bm t)\big)| = \gamma - b$,
which can only hold for at most one $t^*$ due to the strict monotonicity of $|\bm{y}_{ t }\big( \widetilde{\bm x}(\bm w,\bm t)\big)|$ in $t$.
% Therefore, we obtain
% $
% |\bm{y}_{ t }\big( \widetilde{\bm x}(\bm w,\bm t)\big)| < \gamma
% $
% for all $t \geq 0$.
% As a result, for 
% $
% |\bm{y}_{ t }\big( \widetilde{\bm x}(\bm w,\bm t)\big) - \gamma| = b
% $
% to hold, we need $\bm{y}_{ t }\big( \widetilde{\bm x}(\bm w,\bm t)\big)  = y$
% for some $|y| < \gamma,\ |y-\gamma| = b$.
% There exists at most one $y$ that satisfies this condition:
% that is, $y = \gamma - b$ if $\gamma > b$, and no solution if $\gamma < b$.
% Due to the strict monotonicity of $\bm{y}_{ t }\big( \widetilde{\bm x}(\bm w,\bm t)\big)$ w.r.t.\ $t$,
% there exists at most one $t^* = t^*(\bm w,\bm t)$
% such that 
% $
% |\bm{y}_{ t }\big( \widetilde{\bm x}(\bm w,\bm t)\big) - \gamma| = b.
% $

Now for any $t > 0$ with $t \neq t^*$,
we know that 
$
|\bm{y}_{ t }\big( \widetilde{\bm x}(\bm w,\bm t)\big) - \gamma| \neq b.
$
As a result, the only feasible choice for $w \in \R$ in 
$
\varphi_b\big(\bm{y}_{ t }( \widetilde{\bm x}(\bm w,\bm t)) + \sigma\big(\bm{y}_{ t }( \widetilde{\bm x}(\bm w,\bm t)))\cdot w\big) = \gamma
$
% then 
% % from the fact that
% % $
% % |\bm{y}_{ t }\big( \widetilde{\bm x}(\bm w,\bm t)\big) - \gamma| \neq b,
% % $
% the only feasible choice for $w$ is
is
$
w = \frac{\gamma - \bm{y}_{ t }\big( \widetilde{\bm x}(\bm w,\bm t)\big)}{ \sigma\big(\bm{y}_{ t }\big( \widetilde{\bm x}(\bm w,\bm t)\big)\big) }.
$
(Note that 
% this quantity is well-defined due to 
$\sigma(x) > 0\ \forall x \in \R$; see Assumption \ref{assumption: nondegeneracy of diffusion coefficients, R1}.)
By setting $w^*(t) \delequal \frac{\gamma - \bm{y}_{ t }\big( \widetilde{\bm x}(\bm w,\bm t)\big)}{ \sigma\big(\bm{y}_{ t }\big( \widetilde{\bm x}(\bm w,\bm t)\big)\big) }$ we conclude the proof.
\end{proof}

\begin{proof}[Proof of Lemma~\ref{lemma: exit rate strictly positive, first exit analysis, R1}]
\linksinpf{lemma: exit rate strictly positive, first exit analysis, R1}
Let $\bar t$ and $\bar \delta$ be the constants characterized in Lemma \ref{lemma: choose key parameters, first exit time analysis, R1}.
Let $\bar \epsilon$ be the constant in \eqref{constant bar epsilon, first exit time analysis}.
We start with the proof of $\widecheck{\mathbf C}^{(\mathcal{J}^*_b)|b}( I^\complement ) < \infty$.
Recall that $l = |s_\text{left}| \wedge s_\text{right}$, and observe
\begin{align*}
    & 
    \widecheck{\mathbf C}^{(\mathcal{J}^*_b)|b}\big( (-\infty,s_\text{left}]\cup[s_\text{right},\infty) \big)
    \\
    & \leq 
    \widecheck{\mathbf C}^{(\mathcal{J}^*_b)|b}\big( \R \setminus  [-(l - \bar\epsilon),l-\bar\epsilon] \big)
    \\
    & = 
    \int \mathbbm{I}\bigg\{
    \bigg|\widecheck{g}^{ (\mathcal{J}^*_b - 1)|b }\big( \varphi_b(\sigma(0)\cdot w_{\mathcal{J}^*_b}), (w_1,\cdots,w_{\mathcal{J}^*_b-1}), (t_1,\cdots,t_{\mathcal{J}^*_b-1})\big)\bigg| > l - \bar\epsilon 
    \bigg\}
    \\ 
    &\qquad\qquad\qquad\qquad\qquad\qquad\qquad\qquad \times
    \nu^{ \mathcal{J}^*_b }_\alpha(dw_1,\cdots, d w_{\mathcal J^*_b}) \times \mathcal{L}^{\mathcal{J}^*_b - 1 \uparrow}_\infty(dt_1,\cdots, dt_{\mathcal J^*_b - 1})
    \\
    & = 
    \int \mathbbm{I}\bigg\{
    \bigg|h^{ (\mathcal{J}^*_b - 1)|b }_{[0,1 + t_{\mathcal{J}^*_b-1}]}\big( \varphi_b(\sigma(0)\cdot w_{\mathcal{J}^*_b}), (w_1,\cdots,w_{\mathcal{J}^*_b-1}), (t_1,\cdots,t_{\mathcal{J}^*_b-1})\big)(t_{\mathcal{J}^*_b-1})\bigg| > l - \bar\epsilon 
    \bigg\}
    \\ 
    &\qquad\qquad\qquad\qquad\qquad\qquad\qquad\qquad \times
    \nu^{ \mathcal{J}^*_b }_\alpha(dw_1,\cdots, d w_{\mathcal J^*_b}) \times \mathcal{L}^{\mathcal{J}^*_b - 1 \uparrow}_\infty(dt_1,\cdots, dt_{\mathcal J^*_b - 1})
    % \\
    % & \leq 
    % \int \mathbbm{I}\Big\{
    % \Big|{h}^{ (\mathcal{J}^*_b) }_{[2 + t_{\mathcal{J}^*_b-1}]}\big( 0, (w_1,\cdots,w_{\mathcal{J}^*_b}), (1, 1+ t_1,\cdots,1 + t_{\mathcal{J}^*_b-1})\big)(t)\Big| > \gamma - \bar\epsilon\text{ for some }t \in [0,2 + t_{\mathcal{J}^*_b-1}]
    % \Big\}
    % \\
    % &\qquad \qquad \qquad \qquad \qquad \qquad
    % \times \nu^{ \mathcal{J}^*_b }_\alpha(d\bm w) \times \mathcal{L}^{\mathcal{J}^*_b - 1 \uparrow}_\infty(d\bm t)
    % \\
    \\
    & \leq 
    \int \mathbbm{I}\big\{ |w_j| > \bar\delta\ \forall j \in [\mathcal{J}^*_b];\ t_{\mathcal{J}^*_b-1} < \bar t\ \big\}\nu^{ \mathcal{J}^*_b }_\alpha(d\bm w) \times \mathcal{L}^{\mathcal{J}^*_b - 1 \uparrow}_\infty(d\bm t)
    \ \ \ \ 
    \text{using part $(c)$ of Lemma \ref{lemma: choose key parameters, first exit time analysis, R1}}
    \\
    & \leq \bar t^{ \mathcal{J}^*_b - 1 } \big/ \bar\delta^{\alpha\mathcal{J}^*_b} < \infty.
\end{align*}

Next, we move onto the proof of $\widecheck{\mathbf C}^{(\mathcal{J}^*_b)|b}( I^\complement ) > 0$.
Without loss of generality, assume that $s_\text{right} \leq |s_\text{left}|$.
Then due to $l/b \notin \mathbb{Z}$,
we have $(\mathcal{J}^*_b - 1)b <  s_\text{right} < \mathcal{J}^*_b b$.
First, consider the case where $\mathcal{J}^*_b = 1$.
For any $w \geq \frac{b}{\sigma(0)}$, we have $\varphi_b\big(\sigma(0)\cdot w\big) = b > s_\text{right}$.
Therefore,
\begin{align*}
    \widecheck{\mathbf C}^{(1)|b}\big( [s_\text{right},\infty) \big)
    & = \int \mathbbm{I}\big\{ \varphi_b\big( \sigma(0) \cdot w\big) \geq s_\text{right} \big\}\nu_\alpha(dw)
    \geq \int_{ w \in [\frac{b}{\sigma(0)},\infty) }\nu_\alpha(dw) 
    = \bigg(\frac{\sigma(0)}{b}\bigg)^\alpha
    > 0.
\end{align*}
Next, we consider the case where $\mathcal{J}^*_b \geq 2$.
In particular,
we claim the existence of some $(w_1,\cdots,w_{ \mathcal{J}^*_b }) \in \R^{\mathcal{J}^*_b}$ and $\bm{t} = (t_1,\cdots,t_{ \mathcal{J}^*_b - 1 }) \in (0,\infty)^{ \mathcal{J}^*_b - 1 \uparrow}$ such that
\begin{equation}\label{proof, goal, lemma: exit rate strictly positive, first exit analysis}
    \begin{split}
         & \widecheck{g}^{ (\mathcal{J}^*_b)|b }\Big( \varphi_b(\sigma(0) \cdot w_{ \mathcal{J}^*_b } ), (w_1,\cdots,w_{ \mathcal{J}^*_b - 1 }), \bm{t} \Big)
   \\
   & = 
   h^{ (\mathcal{J}^*_b - 1)|b }_{ [0,t_{ \mathcal{J}^*_b - 1 } +1 ] }\Big( \varphi_b(\sigma(0) \cdot w_{ \mathcal{J}^*_b } ), 
   (w_1,\cdots,w_{ \mathcal{J}^*_b - 1 }), \bm{t} \Big)(t_{ \mathcal{J}^*_b - 1 })
   > s_\text{right}.
    \end{split}
\end{equation}
Then from the continuity of mapping $h^{ (\mathcal{J}^*_b - 1)|b }_{ [0,t_{ \mathcal{J}^*_b - 1 } +1 ] }$ (see Lemma \ref{lemma: continuity of h k b mapping clipped}),
we can fix some $\Delta > 0$ such that the following claim holds:
for all $w^\prime_j$'s with $|w^\prime_j - w_j| < \Delta$ and $t^\prime_j$'s with $|t^\prime_j - t_j| < \Delta$,
$$
\widecheck{g}^{ (\mathcal{J}^*_b - 1)|b }\Big( \varphi_b(\sigma(0) \cdot w^\prime_{ \mathcal{J}^*_b } ), (w^\prime_1,\cdots,w^\prime_{ \mathcal{J}^*_b - 1 }), 
(t^\prime_1,\cdots,t^\prime_{ \mathcal{J}^*_b - 1 }) \Big)  > s_\text{right}.
$$
Now, we can conclude the proof with
\begin{align*}
    & 
    \widecheck{\mathbf C}^{(\mathcal{J}^*_b)|b}\big( [s_\text{right},\infty) \big)
    % % \\
    % & 
    \\
    & \geq  
    \int \mathbbm{I}\big\{ |w^\prime_j - w_j| < \Delta\ \forall j \in [\mathcal{J}^*_b];\ |t^\prime_j - t_j|< \Delta\ \forall j \in [\mathcal{J}^*_b - 1] \big\}
    % \\ 
    % &\qquad \qquad\qquad \qquad\qquad \qquad\qquad \qquad \times
    \nu^{ \mathcal{J}^*_b }_\alpha(d \bm w^\prime)
    \times \mathcal{L}^{ \mathcal{J}^*_b - 1 }_\infty(d \bm t^\prime)
    % \\
    % & 
    > 0.
\end{align*}

It only remains to show \eqref{proof, goal, lemma: exit rate strictly positive, first exit analysis}.
By Assumptions \ref{assumption: lipschitz continuity of drift and diffusion coefficients, R1} and \ref{assumption: nondegeneracy of diffusion coefficients, R1},
% From \eqref{boundedness condition, first exit time analysis} and \eqref{uniform nondegeneracy condition, first exit time analysis},
we can fix some $C_0 >0$ such that $|a(x)|\leq C_0$ for all $x \in [s_\text{left},s_\text{right}]$,
as well as some $c > 0$ such that $\inf_{x \in [s_\text{left},s_\text{right}]}\sigma(x) \geq c$.
Now, we set $w_1 = \cdots = w_{ \mathcal{J}^*_b } = b/c$,
Also, pick some $\Delta > 0$ and set $t_k = k\Delta$ (with convention $t_0 = 0$).
For $\xi =  h^{ (\mathcal{J}^*_b - 1)|b }_{ [0,t_{ \mathcal{J}^*_b - 1 } +1 ] }\big( 
\varphi_b(\sigma(0) \cdot w_{ \mathcal{J}^*_b } ), 
(w_1,\cdots,w_{ \mathcal{J}^*_b - 1 }), 
(t_1,\cdots,t_{ \mathcal{J}^*_b - 1})
\big)$,
part $(c)$ of Lemma \ref{lemma: choose key parameters, first exit time analysis, R1} implies 
$
\sup_{t \in [0,t_{\mathcal J^*_b-1})}|\xi(t)| < l - \bar\epsilon,
$
so 
we must have
$
\xi(t) \in [s_\text{left},s_\text{right}]
$
for all $t < t_{ \mathcal{J}^*_b - 1 }$.
This implies
$
\big|a\big(\xi(t)\big)\big| \leq C_0
$
for all $t < t_{ \mathcal{J}^*_b - 1 }$.
Now we make a few observations.
First,  $\xi(0) = \varphi_b\big(\sigma(0) \cdot w_{\mathcal{J}^*_b}\big) = b$
due to $\sigma(0) \cdot w_\mathcal{J^*} \geq c \cdot \frac{b}{c} = b$.
Also, note that for any $j = 1,2,\ldots,\mathcal{J}^*_b - 1$,
\begin{align*}
    \xi(t_{j}) & = \xi(t_{j-1}) + \int_{ s \in [t_{j-1},t_{j}) }a\big(\xi(s)\big)ds + \varphi_b\big( \sigma( \xi( t_{j}- ) \cdot w_j \big)
    \\
    & = \xi(t_{j-1}) + \int_{ s \in [t_{j-1},t_{j}) }a\big(\xi(s)\big)ds + b
    \ \ \ \text{ due to $\sigma\big(\xi( t_{j}- )\big) \cdot w_j \geq c \cdot \frac{b}{c} = b$}
    \\
    & \geq \xi(t_{j-1}) - C_0 \cdot (t_j - t_{j-1}) + b\ \ \ \text{because of $a(x)x \leq 0$ (see Assumption \ref{assumption: shape of f, first exit analysis, R1})
    and $\big|a\big(\xi(t)\big)\big| \leq C_0$
    }
    \\
    & = \xi(t_{j-1}) - C_0\Delta + b.
\end{align*}
By arguing inductively, we get
$
\widecheck{g}^{ (\mathcal{J}^*_b - 1)|b }\big( 
    \varphi_b(\sigma(0) \cdot w_{ \mathcal{J}^*_b } ), (w_1,\cdots,w_{ \mathcal{J}^*_b - 1 }), \bm{t} 
    \big)
    =
    \xi(t_{ \mathcal{J}^*_b - 1 }) \geq \mathcal{J}^*_b \cdot b - (\mathcal{J}^*_b - 1)C_0\Delta.
$
% \begin{align*}
%     \widecheck{g}^{ (\mathcal{J}^*_b - 1)|b }\big( 
%     \varphi_b(\sigma(0) \cdot w_{ \mathcal{J}^*_b } ), (w_1,\cdots,w_{ \mathcal{J}^*_b - 1 }), \bm{t} 
%     \big)
%     =
%     \xi(t_{ \mathcal{J}^*_b - 1 }) \geq \mathcal{J}^*_b \cdot b - (\mathcal{J}^*_b - 1)C_0\Delta.
% \end{align*}
By definition of $\mathcal J^*_b$ and our running assumption that $s_\text{right} \leq |s_\text{left}|$,
we have $\mathcal J^*_b \cdot b > s_\text{right}$.
It holds for all $\Delta > 0$ small enough that $\mathcal{J}^*_b \cdot b - (\mathcal{J}^*_b - 1)C_0\Delta > s_\text{right}$.
This concludes the proof of claim~\eqref{proof, goal, lemma: exit rate strictly positive, first exit analysis}.
The same arguments apply to the case where $s_\text{right} > |s_\text{left}|$ and we omit the details.
\end{proof}

\ifshowreminders
\newpage
\footnotesize
\newgeometry{left=1cm,right=1cm,top=0.5cm,bottom=1.5cm}

\section*{\linkdest{location of reminders}Some Reminders}
\begin{itemize}[leftmargin=*]
    \item Assumption \ref{assumption gradient noise heavy-tailed}
    \begin{itemize}
        \item $\E Z_j = 0;\ H(\cdot) = \P(|Z_1| > \cdot) \in \RV_{-\alpha}$
        \item
        $\lim_{x \rightarrow \infty}\frac{ H^{(+)}(x) }{H(x)} = p^{(+)},\ \lim_{x \rightarrow \infty}\frac{ H^{(-)}(x) }{H(x)} = p^{(-)} = 1 - p^{(+)}$
        with $p^{(+)},p^{(-)} \in [0,1]$ 
        and $p^{(+)} + p^{(-)} = 1$.
    \end{itemize}
    \item Assumption \ref{assumption: lipschitz continuity of drift and diffusion coefficients} (Lipschitz Continuity)
    \begin{enumerate}
        \item[] There exists some $D \in [1, \infty)$ such that $|\sigma(x) - \sigma(y)| \vee |a(x)-a(y)| \leq D|x-y|\ \ \ \forall x,y \in \mathbb{R}.$
    \end{enumerate}
    \item Assumption \ref{assumption: boundedness of drift and diffusion coefficients} (Boundedness)
    \begin{enumerate}
        \item[] There exist some $0<c \leq 1 \leq C < \infty$ such that $|a(x)| \leq C,\ c \leq \sigma(x) \leq C\ \ \forall x \in \mathbb{R}.$
    \end{enumerate}
    \item Assumption \ref{assumption: nondegeneracy of diffusion coefficients} (Nondegeneracy)
    \begin{enumerate}
        \item[] $\sigma(x) > 0$ $\forall x\in \R$.
    \end{enumerate}
    
    % \item Assumption \ref{assumption: f and sigma, stationary distribution of SGD}
    % \begin{enumerate}
    %     \item[] 
    %     There exist $D,c,C \in (0,\infty)$ such that
    %     \begin{align*}
    %         |a(x) - a(y)| \vee |\sigma(x) - \sigma(y)| \leq D|x - y|\ \ \ &\forall x,y \in \R,
    %         \\
    %         c \leq \sigma(x) \leq C\ \ \ &\forall x \in \R,
    %         \\
    %         |a(x)| \geq c\ \ \ &\forall |x| \geq 1.
    %     \end{align*}
    % \end{enumerate}

    \item Assumption \ref{assumption: shape of f, first exit analysis}
    \begin{enumerate}
    \item[] 
        It holds for all $x \in (s_\text{left},s_\text{right})\setminus \{0\}$ that $a(x)x < 0$. Besides,
        \begin{itemize}
            \item $a(0)= 0$; $a(\cdot)$ is differentiable around $0$ with $a^\prime(0) \neq 0$;
            \item The following claims hold for $s \in \{s_\text{left},s_\text{right}\}$: 
            If $a(s) \neq 0$, then
            $a(\cdot)$ is differentiable around $s$ and $a^\prime(s) \neq 0$. 
        \end{itemize}
    \end{enumerate}
    
\end{itemize}
\fi

\ifshownotationindex
\newpage
\footnotesize
% \newgeometry{left=1cm,right=1cm,top=0.5cm,bottom=1.5cm}
\section{Notation Index}
\label{subsec: Notation Index}
% \section*{\linkdest{location of notation index}Notation Index}
\begin{itemize}[leftmargin=*]

\item 
    \notationidx{asymptotic-equivalence}{Asymptotic Equivalence}:
    $X_n$ is asymptotically equivalent to $Y^\delta_n$ when bounded away from $\mathbb{C}$
    w.r.t.\ $\epsilon_n$ as $\delta \downarrow 0$ if the following holds:
    for each $\Delta > 0$
    and each $B \in \mathscr{S}_\mathbb{S}$ that is bounded away from $\mathbb{C}$,
    \begin{align*}
    \lim_{\delta \downarrow 0}\lim_{n \rightarrow \infty} \frac{\P\big( \bm{d}(X_n, Y^\delta_n)\mathbbm{I}( X_n\in B\text{ or }Y^\delta_n \in B) > \Delta \big)}{ \epsilon_n } = 0.
\end{align*}

\item 
    \notationidx{order-k-time-on-[0,t]}{$A^{k\uparrow}$}:
    Given $A \subseteq \R$,
    ${A^{k \uparrow}} \delequal 
\{
(t_1,\cdots,t_k) \in A^k:\ t_1 < t_2 < \cdots < t_k
\}
    $
    
% \item 
%     \notationidx{order-k-time-on-(0,t]}{$(0,t]^{k\uparrow}$}:
%     $(0,t]^{k\uparrow}
%     \delequal
%     \big\{
%     (t_1,\cdots,t_k) \in \mathbb{R}^k:\ 0 \leq t_1 < t_2 < \cdots < t_k < t
%     \big\}.
%     $

\item 
    \notationidx{set-for-integers-below-n}{$[n]$}: $[n] = \{1,2,\cdots,n\}$ for any positive integer $n$. For $n = 0$ we set $[n] = \emptyset$.

\item 
    \notationidx{floor-operator}{$\floor{x}$}:
    $\floor{x} = \max\{n \in \mathbb{Z}:\ n \leq x\}$
    
\item
    \notationidx{ceil-operator}{$\ceil{x}$}:
    ${\ceil{x}} = \min\{n \in \Z:\ n \geq x\}$
    % $
    % {\ceil{x}} \delequal \min\{n \in \Z:\ n \geq x\}$.
    % $

\item 
    \notationidx{notation-closure-of-set-E}{$E^-$}: closure of set $E$ 

\item 
    \notationidx{notation-interior-of-set-E}{$E^\circ$}: interior of set $E$

\item 
    \notationidx{notation-epsilon-enlargement-of-set-E}{$E^\epsilon$}: 
    $E^\epsilon \delequal 
\{ y \in \mathbb{S}:\ \bm{d}(E,y)\leq\epsilon \}$
    ($\epsilon$-enlargement)

\item 
    \notationidx{notation-epsilon-shrinkage-of-set-E}{$E_{\epsilon}$}:
    $E_{\epsilon} \delequal
    ((E^\complement)^\epsilon)^\complement
    $
    ($\epsilon$-shrinkage)

\linkdest{location, notation index A}

\item 
    \notationidx{a}{$\bm a$}: drift coefficient $\bm a: \mathbb{R}^m \to \mathbb{R}^m$

\item
    \notationidx{a-M}{$\bm a_M$}: drift coefficient with the argument $\bm x$ projected onto the ball $\{ \bm x \in \R^m:\ \norm{\bm x} \leq M \}$

\item 
    \notationidx{alpha-noise-tail-index-LDP}{$\alpha$}:
    $\alpha > 1$; the heavy tail index for $(\bm Z_j)_{j \geq 1}$ in Assumption \ref{assumption gradient noise heavy-tailed}

\item
    \notationidx{notation-event-A-i-concentration-of-small-jumps}{$A_i(\eta,b,\epsilon,\delta,\bm x)$}:
    $
    A_i(\eta,b,\epsilon,\delta,\bm x)
    \delequal 
    % \left\{\rule{0cm}{0.9cm}
    \Big\{
    \underset{j \in I_i(\eta,\delta) }{\max}\  
        \eta\norm{\sum_{n = \tau^{>\delta}_{i-1}(\eta) + 1}^j \bm \sigma\big( \bm X^{\eta|b}_{n-1}(\bm x) \big)\bm Z_n} \leq \epsilon 
        \Big\}
    % \right\}
    $

\linkdest{location, notation index B}

\item   
    $\notationidx{notation-ball-r-x}{\bar B_r(\bm x)}:\ {\bar B_r(\bm x)} \delequal \{ \bm{y}\in\R^m:\ \norm{\bm y - \bm x} \leq r \}$

\item 
    $
    \notationidx{notation-set-B-0}{B_0}:\ {B_0}\delequal 
    \Big\{ \bm{X}^{\eta|b}(\bm x)\in B\text{ or }\hat{\bm X}^{\eta|b;(k)}(\bm x) \in B;\   
    \bm{d}_{J_1}\big(\bm{X}^{\eta|b}(\bm x),\hat{\bm X}^{\eta|b;(k)}(\bm x)\big) > \Delta
    \Big\}
    $

\item 
    $
    \notationidx{notation-B1}
    {B_1}:\ {B_1} \delequal 
    \{ \tau^{>\delta}_{k+1}(\eta) > \floor{1/\eta}\}
    $

\item 
    $
    \notationidx{notation-B2}
    {B_2}:\ {B_2} \delequal 
    \{ \tau^{>\delta}_{k}(\eta) \leq \floor{1/\eta} \}
    $

\item 
    $
    \notationidx{notation-B3}
    {B_3}:\ {B_3} \delequal 
    \Big\{\eta \norm{ \bm W^{>\delta}_{i}(\eta) } > \bar{\delta}\ \text{for all }i \in [k] \Big\}
    $

\item
    $
    \notationidx{notation-B4}
    {B_4}:\ {B_4} \delequal 
    \Big\{\eta \norm{ \bm W^{>\delta}_{i}(\eta) } \leq 1/\epsilon^{ \frac{1}{2k}  } \ \text{for all }i \in [k] \Big\}
    $

\linkdest{location, notation index C}

\item 
    \notationidx{notation-constant-C-boundedness-assumption}{$C$}:
    $C \in [1,\infty)$ is the constant in Assumption \ref{assumption: boundedness of drift and diffusion coefficients} with $\norm{\bm a(\bm x)} \vee \norm{\bm \sigma(\bm x)} \leq  C\ \ \forall \bm x \in \mathbb{R}^m$.

\item 
    \notationidx{notation-C-*-first-exit-time}{$C^I_\infty$}: 
    $
    C^I_\infty \delequal  \widecheck{ \mathbf{C} }\big( I^\complement\big)
    $

\item 
    \notationidx{notation-C-b-*}{$C_b^I$}: 
    $
    {C^I_b} \delequal  \widecheck{ \mathbf{C} }^{ (\mathcal{J}^I_b)|b }(I^\complement)
    $

\item 
    \notationidx{notation-measure-C-k-t-mu-LDP}{$\mathbf{C}^{(k)}_{[0,T]}(\ \cdot\ ;\bm x)$}:
    $
    {\mathbf{C}^{(k)}_{ {[0,T]} }(\ \cdot\ ;\bm x)} 
    \delequal
    {\mathbf{C}^{(k)|\infty}_{ {[0,T]} }(\ \cdot\ ;\bm x)}
    =
   \int \mathbbm{I}\Big\{ h^{(k)}_{ {[0,T]}}\big( \bm x, \bm W, \bm t  \big) \in\ \cdot\  \Big\}
    \big((\nu_\alpha \times \mathbf S)\circ \Phi\big)^k(d \bm W) \times\mathcal{L}^{k\uparrow}_{{T}}(d\bm t).
    $

\item 
    \notationidx{notation-measure-C-k-t-mu-LDP-T-1}{$\mathbf C^{(k)}$}:
    $\mathbf C^{(k)} = \mathbf C^{(k)}_{[0,1]}$
    % =
    % \int \mathbbm{I}\Big\{ h^{(k)}\big( x,(w_1,\cdots,w_k),(t_1,\cdots,t_k)   \big) \in\ \cdot\  \Big\} \nu^k_\alpha(d w_1,\cdots,dw_k) \times\mathcal{L}^{k\uparrow}_1(dt_1, dt_2,\cdots,dt_k)

\item 
    \notationidx{notation-measure-C-k-t-truncation-b-LDP}{${\mathbf{C}}^{(k)|b}_{[0,T]}(\ \cdot\ ;\bm x)$}:
    $
    {\mathbf{C}}^{(k)|b}_{[0,T]}(\ \cdot\ ;\bm x) \delequal 
    \int \mathbbm{I}\Big\{ h^{(k)|b}_{{[0,T]}}\big( \bm x,\textbf W,\bm t  \big) \in\ \cdot\  \Big\} 
   \big((\nu_\alpha \times \mathbf S)\circ \Phi\big)^k(d \textbf W) \times\mathcal{L}^{k\uparrow}_{ {T} }(d\bm t)
   $

\item 
    \notationidx{notation-measure-C-k-t-truncation-b-LDP-T=1}{${\mathbf{C}}^{(k)|b}$}:
    $
    {\mathbf{C}}^{(k)|b} = {\mathbf{C}}^{(k)|b}_{[0,1]}
    $

\item  
    \notationidx{notation-check-C}{$\widecheck{\mathbf C}(\ \cdot\ )$}:
    $
    {\widecheck{\mathbf C}(\ \cdot\ )} \delequal \int \mathbbm{I}\Big\{ \bm \sigma(\bm 0) \bm w \in\ \cdot\ \Big\}
    \big((\nu_\alpha \times \mathbf S)\circ \Phi\big)(d \bm w)
    $
    
% \item
%     \notationidx{notation-check-C-x=0}{$\widecheck{\mathbf C}(\cdot)$}:
%     $\widecheck{\mathbf C}(\cdot)=\widecheck{\mathbf C}(\ \cdot\ ;0)$

\item 
    \notationidx{notation-check-C-k-b}{$\widecheck{ \mathbf C }^{(k)|b}(\ \cdot\ )$}:
    $
    {\widecheck{ \mathbf C }^{(k)|b}(\ \cdot\ )}
    \delequal 
    \int \mathbbm{I}\bigg\{ \widecheck{g}^{(k-1)|b}\Big( \varphi_b\big(\bm\sigma(\bm x)\bm w_1\big),(\bm w_2,\cdots,\bm w_k),\bm t \Big) \in \ \cdot \  \bigg\}
     \big((\nu_\alpha \times \mathbf S)\circ \Phi\big)^k(d \textbf W) \times \mathcal{L}^{k-1\uparrow}_\infty(d\bm t)
    $

% \item 
%     \notationidx{notation-check-C-k-b-x=0}{$\widecheck{ \mathbf C }^{(k)|b}(\cdot)$}:
%     $
%     \widecheck{ \mathbf C }^{(k)|b}(\cdot) =
%     \widecheck{ \mathbf C }^{(k)|b}(\ \cdot\ ;0 )
%     $

\item 
    \notationidx{notation-mathcal-C-S-exclude-C}{$\mathcal{C}({ \mathbb{S}\setminus \mathbb{C} })$}:
    % $
% \mathcal{C}({ \mathbb{S}\setminus \mathbb{C} })
% $
% is 
the set of all real-valued, non-negative, bounded and continuous functions with support bounded away from $\mathbb{C}$

\item   
    $
    \notationidx{notation-error-function-check-c-epsilon}{\widecheck{\bm c}(\epsilon)}:\ {\widecheck{\bm c}(\epsilon)} 
    \delequal 
    \mathcal J^I_b \cdot (\bar t)^{ \mathcal J^I_b - 1 } \cdot (\bar\delta)^{ -\alpha \cdot (\mathcal J^I_b - 1) }
    \cdot 
    \epsilon^{ \frac{\alpha}{2\mathcal J^I_b}  }.
    $

\linkdest{location, notation index D}
\item
    \notationidx{notation-Lipschitz-constant-L-LDP}{$D$}:
    The Lipschitz $D \in[1,\infty)$ in Assumption \ref{assumption: lipschitz continuity of drift and diffusion coefficients}:
    $\norm{\bm \sigma(\bm x) - \bm \sigma(\bm y)} \vee \norm{\bm a(\bm x)-\bm a(\bm y)} \leq D\norm{\bm x - \bm y}\ \ \ \forall \bm x,\ \bm y \in \mathbb{R}^m$

\item 
    \notationidx{notation-D-0T-cadlag-space}{$\mathbb{D}{[0,T]}$}:
     $\mathbb{D}[0,{T}] = \D\big([0,T],\R^m\big)$ is
    the space of all càdlàg functions with domain $[0,{T}]$ and codomain $\R^m$
    % we suppress the subscript $m$ in the notation $\D_m[0,T]$ since the ambient space $\R^m$ is fixed.

\item 
    \notationidx{notation-D-0T-cadlag-space-T=1}{$\mathbb{D}$}:
    $
    \mathbb D \delequal \mathbb D[0,1]
    $

\item 
    \notationidx{notation-D-A-k-t-LDP}{$\mathbb{D}^{(k)}_A[0,T](\epsilon)$}:
    $\mathbb{D}^{(k)}_A[0,T](\epsilon) \delequal h^{(k)}_{ {[0,T]} }\Big( A \times \mathbb{R}^{m \times k} \times \big( \bar B_\epsilon(\bm 0)\big)^k \times (0,{T}]^{k\uparrow} \Big)$ with convention that $\mathbb{D}_A^{(-1)}[0,T](\epsilon) = \emptyset$

\item
    \notationidx{notation-D-A-k-t-LDP-T=1}{$\mathbb{D}^{(k)}_A(\epsilon)$}:
    $
    \mathbb{D}^{(k)}_A(\epsilon) \delequal \mathbb{D}^{(k)}_A[0,1](\epsilon) = \bar h^{(k)}\Big( A \times \mathbb{R}^{m \times k} \times \big(\bar B_\epsilon(\bm 0)\big)^k \times (0,1]^{k\uparrow} \Big)
    $

\item 
    \notationidx{notation-D-A-k-t-truncation-b-LDP}{$\mathbb{D}_{A}^{(k)|b} [0,T](\epsilon)$}:
    ${ \mathbb{D}}_{A}^{(k)|b}[0,T](\epsilon) \delequal h^{(k)|b}_{[0,T]} \Big( A \times \mathbb{R}^{m\times k} \times \big(\bar B_\epsilon(\bm 0)\big)^k\times(0,T]^{k\uparrow} \Big)$ with convention that $\mathbb{D}_{A}^{(-1)|b}[0,T](\epsilon)  = \emptyset$

\item 
    \notationidx{notation-D-A-k-t-truncation-b-LDP-T=1}{${\mathbb{D}}_{A}^{(k)|b}(\epsilon)$}:
    ${\mathbb{D}}_{A}^{(k)|b}(\epsilon) \delequal {\mathbb{D}}_{A}^{(k)|b}[0,1](\epsilon)
    =
    \bar h^{(k)|b} \Big( A \times \mathbb{R}^{m \times k} \times \big(\bar B_\epsilon(\bm 0)\big)^k\times(0,1]^{k\uparrow} \Big)$

\item 
    \notationidx{notation-D-A-k-t-truncation-b-M-LDP}{$\mathbb{D}_{A;M\downarrow}^{(k)|b}(\epsilon)$}: 
    $\mathbb{D}_{A;M\downarrow}^{(k)|b}(\epsilon) \delequal 
\bar h^{(k)|b}_{M\downarrow}\Big( A \times \mathbb{R}^{m \times k} \times \big(\bar B_\epsilon(\bm 0)\big)^k \times (0,1]^{k\uparrow} \Big)
$

% \item
%     \notationidx{notation-D-0-A-T}{$\mathbb{D}_A^{(0)}[0,T]$}:
%     $\mathbb{D}_A^{(0)}[0,T] \delequal \Big\{ \{\bm{x}(t;x):\ t \in [0,T]\} :\ x \in A \Big\}$
    
% \item
%     \notationidx{notation-D-1-A-T}{$\mathbb{D}_A^{(1)}[0,T]$}:
%     $\mathbb{D}_A^{(1)}[0,T] \delequal h^{(1)}_{[0,T]}\big(A \times \R \times [0,T]\big).$

% \item 
%     \notationidx{notation-set-D-k-A-b-T}{$\mathbb{D}^{(k)|b}_{A}[0,T]$}:
%     ${\mathbb{D}^{(k)|b}_{A}[0,T]} \delequal h^{(k)|b}_{[0,T]}\big( A \times \R^k \times (0,T)^{k\uparrow} \big)$

\item 
    \notationidx{notation-D-J1}{$\dj{[0,T]}$}:
    Skorokhod $J_1$ metric on $\mathbb{D}[0,T]$

\item 
    \notationidx{notation-D-J1-T=1}{$\bm{d}_{J_1}$}:
    $\bm{d}_{J_1} = \dj{[0,1]}$ is the Skorodhod metric on $\mathbb{D} = \mathbb{D}[0,1]$

\linkdest{location, notation index E}

\item 
    \notationidx{notation-set-E-delta-LDP}{$E^\delta_{c,k}(\eta)$}:
    $E^\delta_{c,k}(\eta) \delequal \{ \tau^{>\delta}_{k}(\eta) \leq \floor{1/\eta} < \tau^{>\delta}_{k+1}(\eta);\ \eta\norm{\bm W^{>\delta}_j(\eta)} > c\ \ \forall j \in [k] \}$
    \quad
    $(c > \delta)$
    \quad
    (event that there are exactly $k$ ``big'' jumps by $\floor{1/\eta}$)

\item 
    $
    \notationidx{notation-set-check-E-epsilon-B-T}{\widecheck{E}(\epsilon,B,T)}:\ {\widecheck{E}(\epsilon,B,T)} 
    \delequal 
    \Big\{ \xi \in \mathbb{D}[0,T]:\ \exists t \leq T\ s.t.\ \xi_t \in B\text{ and }\xi_s \in I(\epsilon)\ \forall s \in [0,t) \Big\}
    $

\item 
    \notationidx{notation-eta}{$\eta$}: step length

\linkdest{location, notation index F}

\item
    $\notationidx{notation-sigma-algebra-F}{\mathcal{F}}$:
    the $\sigma-$algebra generated by iid copies $(\bm Z_j)_{j \geq 1}$
    
\item 
    \notationidx{notation-F}{$\mathbb F$}: 
    the filtration
    $\mathbb{F} = (\mathcal{F}_j)_{j \geq 0}$ where  $\mathcal{F}_0 \delequal \{\Omega, \emptyset\}$
    and $\mathcal{F}_j$ is the $\sigma$-algebra generated by $\bm Z_1,\cdots,\bm Z_j$

\linkdest{location, notation index G}

\item 
    $
    \notationidx{notation-mapping-bar-g-k-b}{\bar g^{(k)|b}}:\ 
    \bar g^{(k)|b}\big( \bm x, \textbf W, \textbf V, (t_1,\cdots,t_k)\big)
    \delequal 
    \bar h^{(k)|b}_{ [0,t_k + 1] }
    \Big(
        \bm x,
        \textbf W,
        \textbf V,
        (t_1,\cdots,t_k)
    \Big)(t_k)
    $

\item
    \notationidx{notation-check-g-k-b}{$\widecheck{g}^{(k)|b}$}:
    $
    {\widecheck{g}^{(k)|b}(\bm x,\textbf W,\bm t)}
    \delequal 
    \bar g^{(k)|b}\big(\bm x, \textbf W, (\bm 0,\cdots,\bm 0), \bm t\big)
    =
    h^{(k)|b}_{[0,t_k+1]}(\bm x,\textbf W,\bm t)(t_{k})
    $

\item 
    $
    \notationidx{notation-set-G-k-b-epsilon}{\mathcal G^{(k)|b}(\epsilon)}:\ 
    {\mathcal G^{(k)|b}(\epsilon)}
    \delequal 
    \bigg\{
    \bar g^{(k - 1)|b}
    \Big( \bm v_1 + \varphi_b\big(\bm \sigma(\bm v_1)\bm w_1\big),
    (\bm w_2,\cdots, \bm w_k), (\bm v_2,\cdots,\bm v_k), \bm t 
    \Big):
    % \\ 
    % & \qquad\qquad
    \textbf W = (\bm w_1,\cdots, \bm w_k) \in \R^{d\times k},
    \textbf V = (\bm v_1,\cdots, \bm v_k) \in \Big(\bar B_\epsilon(\bm 0)\Big)^k,
    \bm t \in (0,\infty)^{k \uparrow}
    \bigg\}.
    $
    $
    \mathcal{G}^{(0)|b}(\epsilon) \delequal \bar B_{\epsilon}(\bm 0).
    $

\item 
    $
    \notationidx{notation-set-G-k-b}{\mathcal G^{(k)|b}}:\ 
    {\mathcal G^{(k)|b}}
    \delequal 
    \mathcal G^{(k)|b}(0)
    = 
    \bigg\{
    \widecheck{g}^{(k - 1)|b}
    \Big( \varphi_b\big(\bm \sigma(\bm 0)\bm w_1\big),
    (\bm w_2,\cdots, \bm w_k), \bm t 
    \Big):\ 
    \textbf W = (\bm w_1,\cdots,\bm w_k) \in \R^{d \times k},
    \bm t \in (0,\infty)^{k \uparrow}
    \bigg\}.
    $

\item 
    $
    \notationidx{notation-extended-coverage-set-bar-G-k-b-epsilon}{\bar{\mathcal G}^{(k)|b}(\epsilon)}:\ 
    {\bar{\mathcal G}^{(k)|b}(\epsilon)}
    \delequal
    \Big\{
        \bm y_t(\bm x):\ \bm x \in \mathcal G^{(k)|b}(\epsilon),\ t \geq 0
    \Big\},
    $

% \item 
%     \notationidx{notation-F-mu}{$\mathbb F_\mu$}: 
%     $\mathbb{F}_\mu = (\mathcal{F}_{j,\mu})_{j \geq 0}$

% \item 
%     \notationidx{notation-fixed-function-g-LDP}{$g$}:
%     a bounded, continuous function $g: \mathbb{D}[0,T]\to \mathbb{R}$ that is fixed at the beginning of Section \ref{subsec: LDP unclipped, proof of main results} and \ref{subsec: LDP clipped, proof of main results}

% \item
%     $\notationidx{notation-G-b-typical-transition-graph}{\mathcal{G}_b}$: Directed graph ${\mathcal{G}_b} = (V,E_b)$,
%     where $V = \{m_1,\cdots,m_{ n_\text{min} }\}$,
%     and
%     an edge $(m_i\rightarrow m_j)$ is in $E_b$ iff $\mathcal J^I_b(i,j) = \mathcal J^I_b(i)$.

\item 
    \notationidx{notation-Gamma-M-adapted-process-bounded-by-M-LDP}{$\bm{\Gamma}_M$}:
    $\bm{\Gamma}_M \delequal \big\{ (\bm V_j)_{j \geq 0}\text{ is adapted to }\mathbb{F}:\ \norm{\bm V_j} \leq M\ \forall j \geq 0\text{ almost surely} \big\};$
    see \eqref{def: Gamma M, set of bounded adapted process}
    
% \item
%     \notationidx{notation-Gamma-M-cont-adapted-process-bounded-by-M-LDP}{$\bm{\Gamma}_M^{\text{cont}}$}:
%     $\bm{\Gamma}^{\text{cont}}_M \delequal \Big\{ V\text{ takes value in }\mathbb{D}\text{ and is adapted to }\mathbb{F}:\ \sup_{t \in [0,1]}|V(t)| \leq M\text{ almost surely} \Big\}.$

\linkdest{location, notation index H}
% \item 
%     \notationidx{notation-H-plus}{$H^{(+)}$}:
%     $H^{(+)}(x)  \delequal \P(Z > x)$ 

% \item 
%     \notationidx{notation-H-minus}{$H^{(-)}$:} $H^{(-)}(x) \delequal \P(Z < -x) $

\item 
    \notationidx{notation-H}{$H$:} $H(x)  \delequal \P(\norm{\bm Z} > x) \in \RV_{-\alpha}(x)$

\item
    \notationidx{notation-H-L}{$H_L$}:
    $H_L(x)\delequal  \nu\big( \{ \bm y \in \R^m:\ \norm{\bm y} > x\} \big) \in \RV_{-\alpha}(x)$

% \item 
%     \notationidx{notation-h-0-T-stationary-dist-LDP}{$h^{(1)}_{[0,T]}$}:
%     $h^{(1)}_{[0,T]}: \R \times \R \times (0,T) \to \mathbb{D}[0,T]$ is such that $\xi = h^{(1)}_{[0,T]}(x_0,w,t)$ solves \eqref{def: mapping h, 1, LD of stationary distribution of heavy tailed SGD}-\eqref{def: mapping h, 3, LD of stationary distribution of heavy tailed SGD}.

% \item 
%     \notationidx{notation-h-k-t-mapping-LDP}{$h^{(k)}_{[0,T]}$}: 
%     ${h^{(k)}_{[0,T]}}(\bm x,\bm{W},\bm{t}) \delequal
%     h^{(k)}_{[0,T];\bm 1}(\bm x,\bm{W},\bm{t})$;
%     That is, with all $\sigma_{i,j}(\cdot) \equiv 1$.

\item
    $\notationidx{notation-h-k-t-bar-mapping-LDP}{\bar h^{(k)}_{[0,T]}}$: 
    ${\bar h^{(k)}_{[0,T]}} = {\bar h^{(k)|\infty}_{[0,T]}}$.
    An operator for perturbed gradient flow under $\bm a(\cdot)$ with initial value $\bm x$, jump sizes $\bm w_j$'s (modulated by $\bm \sigma(\cdot)$) with perturbations $\bm v_j$'s, and jump times $t_j$'s

\item 
    $\notationidx{notation-h-k-t-mapping-LDP}{h^{(k)}_{[0,T]}}:\ 
    h^{(k)}_{[0,T]}(\bm x, \bm W, \bm t)
    \delequal 
    h^{(k)|\infty}_{[0,T]}(\bm x, \bm W, \bm t)
    =
    \bar h^{(k)}_{[0,T]}\big(\bm x,\bm W, (\bm 0,\cdots,\bm 0), \bm t\big)
    $

% \item 
%     \notationidx{notation-h-k-t-mapping-LDP-T=1}{$h^{(k)}$}:
%     $h^{(k)}\delequal h^{(k)}_{[0,1]}$

\item 
    $\notationidx{notation-h-k-t-sigma-mapping-T=1}{h^{(k)}}:\ 
    {h^{(k)}} \delequal {h^{(k)}_{[0,1]}}
    $

\item 
    $\notationidx{notation-h-k-t-bar-mapping-truncation-level-b-LDP}{\bar h^{(k)|b}_{[0,T]}}$:
    an operator for perturbed gradient flow under $\bm a(\cdot)$ with initial value $\bm x$, jump sizes $\bm w_j$'s (modulated by $\bm \sigma(\cdot)$ and truncated under $b > 0$) with perturbations $\bm v_j$'s, and jump times $t_j$'s; see \eqref{def: perturb ode mapping h k b, 1}--\eqref{def: perturb ode mapping h k b, 3}

\item 
    $\notationidx{notation-h-k-b-t-mapping-LDP}{h^{(k)|b}_{[0,T]}}:\ h^{(k)|b}_{[0,T]}(\bm x, \bm W, \bm t)
    \delequal 
    \bar h^{(k)|b}_{[0,T]}\big(\bm x,\bm W, (\bm 0,\cdots,\bm 0), \bm t\big)$

% \item 
%     \notationidx{notation-h-k-b-t-mapping-LDP}{$h^{(k)|b}_{[0,T]}$}:
%     $
%     {h^{(k)|b}_{[0,T]}} \delequal
%     h^{(k)|b}_{[0,T];\bm 1}
%     $

% \item 
%     \notationidx{notation-h-k-t-mapping-truncation-level-b-LDP-T=1}{$h^{(k)|b}$}:
%     $h^{(k)|b} \delequal h^{(k)|b}_{[0,1]}$

\item 
    $\notationidx{notation-h-k-t-sigma-mapping-truncation-level-b-LDP-T=1}{h^{(k)|b}}:\ {h^{(k)|b}}  \delequal h^{(k)|b}_{[0,1]}$

\item
    \notationidx{notation-mapping-bar-h-k-t-b-M-LDP}{$\bar h^{(k)|b}_{M\downarrow}$}: a modified version of {$\bar h^{(k)|b}$}
    where the truncated drift and diffusion coefficients $\bm a_{M}$, $\bm \sigma_M$ are applied instead of $\bm a$, $\bm\sigma$;
    see \eqref{def: perturb ode mapping h k b, truncated at M, 1}--\eqref{def: perturb ode mapping h k b, truncated at M, 3}

\item
    $\notationidx{mapping-h-k-b-M-down}{h^{(k)|b}_{M\downarrow}}:\ 
    {h^{(k)|b}_{M\downarrow}}\big(\bm x, \textbf W,\bm t\big)
    \delequal 
    \bar h^{(k)|b}_{M\downarrow}\big(\bm x, \textbf W,(\bm 0,\cdots,\bm 0),\bm t\big).$

\linkdest{location, notation index I}

\item 
    $\notationidx{notation-exit-domain-I}{I}$:
    the open, bounded domain $\bm 0 \in {I} \subset \R^m$ that belongs to the attraction field for $\bm 0$ satisfying Assumption~\ref{assumption: shape of f, first exit analysis}.

\item
    $
    \notationidx{notation-I-epsilon-shrinkage}{I_\epsilon}:\ {I_\epsilon} = \{ \bm y:\ \norm{\bm x - \bm y} < \epsilon\ \Longrightarrow\ \bm x \in I \}
    $
    % $\notationidx{notation-exit-domain-I-epsilon}{I_\epsilon}$:
    % ${I_\epsilon} = \{ \bm y:\ \bm x \in I\ \forall \bm x \in \R^m\ s.t.\ \norm{\bm x - \bm y} < \epsilon \}$

\item 
    $\notationidx{notation-domain-check-I-epsilon}{\widecheck I(\epsilon)}:\ 
    {\widecheck I(\epsilon)}\delequal 
    \Big\{
        \bm x \in I:\  \norm{ \bm y_{1/\epsilon}(\bm x) } < \widecheck \epsilon
    \Big\}$
    with $\widecheck{\epsilon} > 0$ defined in \eqref{def: covering sets I epsilon, first exit time}

\item

    \notationidx{notation-A-i-concentration-of-small-jumps-2}{$I_i(\eta,\delta)$}:
    $I_i(\eta,\delta) 
    \delequal 
    \big\{j \in \mathbb{N}:\  \tau^{>\delta}_{i-1}(\eta) + 1 \leq j \leq \big(\tau^{>\delta}_{i}(\eta) - 1 \big) \wedge \floor{1/\eta}\big\}.$

% \item 
%     $\notationidx{notation-identity-matrix}{\textbf I_m}$: identity matrix in $\R^{m \times m}$

\linkdest{location, notation index J}

\item
    \notationidx{J-Z-c-n}{$\mathcal{J}_Z(c,n):$}
    $\mathcal{J}_{\bm Z}(c,n) \delequal \#\{i \in [n]:\ \norm{\bm Z_i} \geq c \}$

% \item 
%     \notationidx{notation-J-L-c-T}{$\mathcal{J}_L(c,T)$}:
%     $
%      \mathcal{J}_L(c,T) \delequal \#\big\{ i \in \mathbb{N}:\ \widetilde{\tau}^L_i \leq T,\ |Z^L_i| > c \big\}
%     $

% \item 
%     \notationidx{notation-J-eta-leq-k-L-i}{$J^{\eta;(k)}_L(i)$}:
%     $
%     J^{\eta;(k)}_L(i) \delequal \min\big\{ j > J^{\eta;(k)}_L(i-1):\ |Z^{L}_j| \geq \bm{Z}^{(k)}_L(\eta)  \big\}
%     $

% \item 
%     \notationidx{notation-J-B-x-jump-number}{$\mathcal{J}(B;x)$}:
%     $
% \mathcal{J}(B;x)\delequal \min\{ k \geq 0: B\cap \mathbb{D}^{(k)}_{\{x\}}\neq \emptyset \}.
%     $

% \item 
%     \notationidx{notation-jump-number-J-b-B-x}{$\mathcal{J}_b(B;x)$}:
%     $\mathcal{J}_b(B;x)\delequal \min\{ k \geq 0: B\cap \mathbb{D}^{(k)|b}_{\{x\}}\neq \emptyset \}.$

\item  
    \notationidx{notation-J-*-first-exit-analysis}{$\mathcal{J}^I_b$}:
    $
    \mathcal{J}^I_b \delequal \min\big\{ k \geq 1:\ \mathcal G^{(k)|b} \cap I^\complement \neq \emptyset \big\}.
    $
    The ``discretized width'' metric for $I$ w.r.t.\ truncation threshold $b$.

\linkdest{location, notation index K}

% \item 
%     $\notationidx{notation-compacta-of-S-minus-C}{\mathcal K(\mathbb S\setminus \mathbb C)}$: 
%     the collection of all compact sets of $\mathbb S\setminus \mathbb C$

\linkdest{location, notation index L}

% \item 
%     $\notationidx{notation-r-radius-of-exit-domain}{l}$:
%     ${l} \delequal \inf_{x \in I^\complement}|x| = |s_\text{left}| \wedge s_\text{right}$

% \item 
%     $\notationidx{notation-r-i-radius-of-I-i}{l_i}: {l_i} \delequal{} \inf_{x \in I_i^\complement}|x - m_i|
%     = 
%     |m_i - s_{i-1}| \wedge |s_i - m_i|$

% \item 
%     $\notationidx{notation-l-i-j}{l_{i,j}}: {l_{i,j}} \delequal \inf_{x \in I_j}|x - m_i| =
%     \begin{cases}
%         s_{j-1} - m_i & \text{if }\ j > i \\
%         m_i - s_j & \text{if }\ j < i
%     \end{cases}$

\item
    \notationidx{notation-levy-process}{$\bm L$}:
    $\bm{L} = \{\bm L_t: t \geq 0\}$ is the L\'evy process taking values in $\R^m$ with the generating triplet $(c_{\bm L},\bm \Sigma_{\bm L},\nu)$ where $c_{\bm L} \in \mathbb{R}^m$ is the drift parameter, 
    $\bm \Sigma_{\bm L}$ is the positive semi-definite matrix that dictates the magnitude of the Brownian motion term in $\bm L_t$, and $\nu$ is the L\'evy measure.

% \item
%     \notationidx{notation-scaled-levy-process}{$\bar{\bm L}^\eta$}:
%     $\bar{\bm L}^\eta \delequal \big\{ \bar{L}^\eta_t = \eta L_{t/\eta}:\ t \in [0,1]\big\}$

% \item
%     \notationidx{notation-L-larger-than-delta-eta}{$L^{>\frac{\delta}{\eta}}_t$}:
%      $L^{>\frac{\delta}{\eta}}_t \delequal{} \sum_{0 \leq s \leq t}\Delta L_s\mathbbm{I}\{ |\Delta L_s| > \delta/\eta\}$
     
%  \item
%     \notationidx{notation-L-larger-than-delta-eta-scaled}{$\bar{L}^{\eta,>\frac{\delta}{\eta}}_t$}:
%      $\bar{L}^{\eta,>\frac{\delta}{\eta}}_t= \eta \cdot L^{>\frac{\delta}{\eta}}_{t/\eta} = \eta\sum_{0 \leq s \leq \frac{t}{\eta}}\Delta L_s\mathbbm{I}\{ |\Delta L_s| > \delta/\eta  \}$

% \item
%     \notationidx{notation-L-smaller-than-delta-eta-N}{$L^{(N,\frac{\delta}{\eta}]}_t$}:
%     $L^{(N,\frac{\delta}{\eta}]}_t \delequal{} -\mu_L(N)t + \sum_{0 \leq s \leq t}\Delta L_s\mathbbm{I}\{ |\Delta L_s| \in (N,\delta/\eta]  \}$ where $\mu_L(z) \delequal{} \int_{|x| > z}x\nu(dx)$
    
% \item 
%     \notationidx{notation-L-smaller-than-delta-eta-N-scaled}{$\bar{L}^{\eta,(N,\frac{\delta}{\eta}] }_t$}:
%     $\bar{L}^{\eta,(N,\frac{\delta}{\eta}] }_t = \eta \cdot L^{(N,\frac{\delta}{\eta}]}_{t/\eta}$

\item 
    \notationidx{notation-lebesgue-measure-restricted}{$\mathcal{L}_t$}: 
    Lebesgue measure restricted on $(0,t)$

\item 
    \notationidx{notation-lebesgue-measure-on-ordered-[0,t]}{$\mathcal{L}^{k\uparrow}_t$}:
    Lebesgue measure restricted on $(0,t)^{k \uparrow}$

\item
    \notationidx{notation-measure-L-k-up-infty}{$\mathcal{L}^{k\uparrow}_\infty$}:
    Lebesgue measure restricted on $\{ (t_1,\cdots,t_k) \in (0,\infty)^k:\ 0 < t_1 < t_2 < \cdots < t_k \}$

\item 
    \notationidx{notation-law-of-X}{$\mathscr{L}(X)$}:
    law of the random element $X$

\item 
    \notationidx{notation-law-of-X-cond-on-A}{$\mathscr{L}(X|A)$}:
    conditional law of $X$ on event $A$

% \item
%     \notationidx{notation-L-P-X-LDP}{$\mathscr L_\P(X)$}:
%     The distribution of random variable $X$ under $\P$
    
% \item
%     \notationidx{notation-L-P-X-A-LDP}{$\mathscr L_\P(X|A)$}:
%     The distribution of random variable $X$ conditioning on event $A$ under $\P$

% \item 
%     \notationidx{notation-scale-function-lambda}{$\lambda^k(\eta)$}:
%     $\lambda^k(\eta)\delequal \Big(\frac{H(1/\eta)}{\eta}\Big)^{k} \in \RV_{k(\alpha - 1)}(\eta)$
\item 
    $\notationidx{notation-lambda-scale-function}{\lambda(\eta)}$:
    $
    {\lambda(\eta)} \delequal \eta^{-1}H(\eta^{-1}) \in \RV_{\alpha -1}(\eta)
    $
    as $\eta \downarrow 0$. 
    
\item 
    \notationidx{notation-scale-function-lambda-L}{$\lambda_L(\eta;\beta)$}:
    $
    {\lambda_L(\eta;\beta)} \delequal \eta^{-\beta}H_L(\eta^{-1}) \in \RV_{\alpha-1}(\eta)
    $ as $\eta \downarrow 0$

% \item 
%     $\notationidx{notation-scale-function-lambda-*-b}{\lambda^*_b(\eta)}: 
%     {\lambda^*_b(\eta)} \delequal \eta \cdot \lambda^{ \mathcal J^I_b(V) }(\eta) \in \RV_{ \mathcal J^I_b(V)\cdot (\alpha-1)  + 1 }(\eta).$

% \item
%     $\notationidx{notation-scale-function-metastability-SDE}{\lambda^*_{b;L}(\eta)}: {\lambda^*_{b;L}(\eta)}\delequal \big(\lambda_L(\eta)\big)^{ \mathcal J^I_b(V) } \in \RV_{ \mathcal J^I_b(V)\cdot (\alpha - 1) }(\eta).$
    
\linkdest{location, notation index M}
    
\item
    \notationidx{notation-M-S-exclude-C}{$\mathbb{M}(\mathbb{S}\setminus \mathbb{C})$}:
    $\mathbb{M}(\mathbb{S}\setminus \mathbb{C})
    \delequal 
    \{
    \nu(\cdot)\text{ is a Borel measure on }\mathbb{S}\setminus \mathbb{C} :\ \nu(\mathbb{S}\setminus \mathbb{C}^r) < \infty\ \forall r > 0
    \}.$

% \item 
%     $\notationidx{notation-M-+-S-minus-C}{\mathbb M_+(\mathbb S \setminus \mathbb C)}:\ 
%     {\mathbb M_+(\mathbb S \setminus \mathbb C)}
%     \delequal 
%     \{
%          \nu(\cdot)\text{ is a Borel measure on }\mathbb{S}\setminus \mathbb{C}:\ 
%          \nu(K) < \infty\ \forall K \in \mathcal K(\mathbb S\setminus \mathbb C)
%     \}$
    
% \item
%     \notationidx{notation-M-convergence}{$\mathbb{M}(\mathbb{S}\setminus \mathbb{C})$-convergence}:
%     $\mu_n(f) \rightarrow \mu(f)$ for any $f \in \mathcal{C}({ \mathbb{S}\setminus \mathbb{C} })$

% \item
%     \notationidx{notation-uniform-M-convergence}{$\mathbb{M}(\mathbb{S}\setminus \mathbb{C})$-convergence uniformly over $\Theta$}:
%     $ \lim_{\eta \downarrow 0}\sup_{\theta \in \Theta}|\mu^\eta_\theta(f) - \mu_\theta(f)| = 0$ for any $f \in \mathcal{C}({ \mathbb{S}\setminus \mathbb{C} })$

\linkdest{location, notation index N}

\item
    \notationidx{notation-non-negative-numbers}{$\mathbb N$}:
    $\mathbb{N} = \{0,1,2,\cdots\}$

\item 
    $\notationidx{notation-R-d-unit-sphere}{\mathfrak N_d}:\ {\mathfrak N_d} \delequal \{\bm x \in \R^d:\ \norm{\bm x} = 1\}$,
    unit sphere in $\R^d$

\item 
    \notationidx{notation-measure-nu-alpha}{$\nu_\alpha$}: $\nu_\alpha[x,\infty) = x^{-\alpha}$

% \item
%     $\notationidx{notation-nu-k-alpha}{\nu_\alpha^k(\cdot)}$:
%     $k$-fold product measure of $\nu_\alpha$.

\linkdest{location, notation index O}
\linkdest{location, notation index P}

% \item 
%     \notationidx{notation-p-plus-and-minus}{$p^{(+)},\ p^{(-)}$}: 
%     $\lim_{x \rightarrow \infty}H^{(+)}(x)\big/H(x) = p^{(+)},\ \lim_{x \rightarrow \infty}H^{(-)}(x)\big/H(x) = p^{(-)}$.

\item 
    \notationidx{notation-truncation-operator-level-b}{$\varphi_c$}:
    ${\varphi_c}(\bm w) 
    \delequal{} 
    \Big(\frac{c}{\norm{\bm w}} \wedge 1\Big)\cdot \bm w$; truncation operator at level $c > 0$

\item 
    $\notationidx{notation-Phi-polar-transform}{\Phi(\bm x)}:\ {\Phi(\bm x)} \delequal \big(\norm{\bm x},\frac{\bm x}{\norm{\bm x}}\big)$ for all $\bm x \neq 0$; polar transform

\linkdest{location, notation index Q}

% \item 
%     $\notationidx{notation-q-b-i}{q_b(i)}: {q_b(i)} \delequal \widecheck{\mathbf C}^{ ( \mathcal J^I_b(i) )|b }( I_i^c;m_i)$

% \item 
%     $\notationidx{notation-q-b-i,j}{q_{b}(i,j)}: {q_{b}(i,j)} \delequal \widecheck{\mathbf C}^{ ( \mathcal J^I_b(i) )|b }( I_j;m_i)$

% % \item 
% %     $\notationidx{notation-q-i}{q(i)}: q(i) \delequal \widecheck{\mathbf C}( I_i^c;m_i)$

% \item 
%     $\notationidx{notation-q-i-j}{q(i,j)}: {q(i,j)} \delequal \widecheck{\mathbf C}( I_j;m_i)$

\linkdest{location, notation index R}

\item 
    \notationidx{notation-rho-LDP}{$\rho$}:
    $\rho \delequal \exp(D)$;  $D$ is the constant in Assumption \ref{assumption: lipschitz continuity of drift and diffusion coefficients}

% \item 
%     \notationidx{notation-rho-t-LDP}{$\rho(t)$}:
%     $\rho(t) \delequal \exp(Dt)$; $D$ is the constant in Assumption \ref{assumption: lipschitz continuity of drift and diffusion coefficients}
    
% \item 
%     \notationidx{notation-rho-j}{$\rho^{(j)}$}:
%     $\rho^{(j)} = (6\rho D)^{j+1}$

% \item 
%     \notationidx{notation-rho-*}{$\rho^*$}:
%     $\rho^* \delequal 4\cdot\big[ 6 \rho(T) \cdot (L \vee 1) \big]^{k}$
    
% \item 
%     \notationidx{notation-rho-*-b}{$\rho^*_b$}:
%     $\rho^*_b \delequal 4\cdot\big[ 6 \rho(T) \cdot (L \vee 1)(b \vee 1) \big]^{k}$

\item 
    \notationidx{notation-RV-LDP}{$\RV_\beta$}:
    $\phi \in \RV_\beta$ (as $x \rightarrow \infty$) if $\lim_{x \rightarrow \infty}\phi(tx)/\phi(x) = t^\beta$ for any $t>0$;
    $\phi \in \RV_\beta(\eta)$ (as $\eta \downarrow 0$) if $\lim_{\eta \downarrow 0}\phi(t\eta)/\phi(\eta) = t^\beta$ for any $t>0$

\item  
    \notationidx{notation-R-eta-b-epsilon-return-time}{$R^{\eta|b}_\epsilon(\bm x)$}:
    $
    {R^{\eta|b}_\epsilon(\bm x)}  \delequal \min\big\{ j \geq 0:\ \norm{\bm X^{\eta|b}_j(\bm x)} < \epsilon \big\}
    $ 

% \item 
%     $\notationidx{notation-return-time-to-I-i}{R_{i;\epsilon}^{\eta|b}(x)}: 
%     {R_{i;\epsilon}^{\eta|b}(x)} \delequal \min\{ j \geq 0:\ X^{\eta|b}_j(x) \in (m_i - \epsilon,m_i + \epsilon) \}$

\linkdest{location, notation index S}

% \item 
%     $\notationidx{notation-S-delta-boundary-set}{S(\delta)}: {S(\delta)} \delequal \bigcup_{i \in [n_\text{min} - 1]}[s_i - \delta,s_i + \delta]$

\item 
    \notationidx{sigma}{$\bm \sigma$}: diffusion coefficient $\bm \sigma: \mathbb{R}^m \to \mathbb{R}^{m\times d}$
    
\item
    \notationidx{sigma-M}{$\bm \sigma_M$}: diffusion coefficient with the argument $\bm x$ projected onto the ball $\{ \bm x \in \R^m:\ \norm{\bm x} \leq M \}$

% \item 
%      $\notationidx{notation-sigma-eta-b-i-epsilon}{\sigma^{\eta|b}_{i;\epsilon}(x)}:
%      {\sigma^{\eta|b}_{i;\epsilon}(x)} \delequal \min\{j \geq 0:\ X^{\eta|b}_j(x) \in \bigcup_{ l \neq i }(m_l - \epsilon,m_l + \epsilon)\}$
    
% \item 
%     \notationidx{notation-bar-sigma}{$\sigma(0)$}:
%     $
%     \sigma(0) \delequal \sigma(0)
%     $

\item 
    \notationidx{notation-support-of-function-g}{$\text{supp} (g)$}:
$\text{supp} (g) \delequal \big(\{ x \in \mathbb S:\ g(x) \neq 0 \}\big)^-$;
support of $g: \mathbb S \to \mathbb{R}$

\item 
    \notationidx{notation-support-of-mu}{$\text{supp}(\mu)$}: the smallest closed set $C$ such that $\mu(\S \setminus C)= 0$

\item 
    \notationidx{notation-borel-sigma-algebra}{$\mathscr{S}_\mathbb{S}$}:
    Borel $\sigma$-algebra of the metric space $(\mathbb{S},\bm{d})$

\linkdest{location, notation index T}
% \item 
%     \notationidx{notation-constant-T-LDP}{$T$}:
%     A constant $T \in (0,\infty)$ that is fixed at the beginning of Section \ref{subsec: LDP unclipped, proof of main results} and \ref{subsec: LDP clipped, proof of main results}

\item
    $
    \notationidx{notation-hitting-time-t-x-epsilon}{\bm t_{\bm x}(\epsilon)}:\ {\bm t_{\bm x}(\epsilon)} \delequal \inf\Big\{ t \geq 0:\ \bm y_t(\bm x) \in \bar B_{\epsilon}(\bm 0) \Big\}
    $

\item 
    $\notationidx{notation-t-epsilon-ode-return-time}{ \bm{t}(\epsilon) }:
        { \bm{t}(\epsilon) } \delequal \sup\Big\{ \bm t_{\bm x}(\epsilon):\ \bm x \in I_\epsilon^-   \Big\}$

\item 
    \notationidx{notation-large-jump-time}{$\tau^{>\delta}_i(\eta)$}:
    $\tau^{>\delta}_i(\eta) \delequal{} \min\{ n > \tau^{>\delta}_{i-1}(\eta):\ \eta\norm{\bm Z_j} > \delta  \},\ \tau^{>\delta}_0(\eta) = 0$; arrival time of $j$\textsuperscript{th} large jump

\item   
    \notationidx{notation-tau-eta-x-first-exit-time}{$\tau^\eta(\bm x)$}:
    $
    \tau^\eta(\bm x) \delequal \min\big\{j \geq 0:\ \bm X^\eta_j(\bm x) \notin I\big\}
    $

\item  
    \notationidx{notation-tau-eta-b-x-first-exit-time}{$\tau^{\eta|b}(\bm x)$}:
    $
    \tau^{\eta|b}(\bm x) \delequal \min\big\{j \geq 0:\ \bm X^{\eta|b}_j(\bm x) \notin I \big\}
    $

\item  
    \notationidx{notation-tau-eta-b-epsilon-exit-time}{$\tau^{\eta|b}_\epsilon(\bm x)$}:
    $
    {\tau^{\eta|b}_\epsilon(\bm x)} \delequal \min\big\{ j \geq 0:\ \bm X^{\eta|b}_j(\bm x) \notin I_\epsilon \big\}
    $

% \item 
%     $\notationidx{notation-transition-time-metastability}{\hat \tau^{\eta,\epsilon|b}_k(x)}: {\hat \tau^{\eta,\epsilon|b}_k(x)} \delequal \min\Big\{ j \geq \hat \tau^{\eta,\epsilon|b}_{k-1}(x):\ X^{\eta|b}_j(x) \in \bigcup_{i \neq \hat{\mathcal I}^{\eta,\epsilon|b}_{k-1}(x)}(m_i - \epsilon,m_i+\epsilon) \Big\}$,
%     the $k$-th transition time to a different $\epsilon$-neighborhood of local minima $m_i$

% \item
%     \notationidx{notation-tilde-tau-L-i}{$\widetilde{\tau}^L_i$}:
%     $
%     \widetilde{\tau}^L_i \delequal \min\{ t > \widetilde{\tau}^L_{i-1}:\ |\Delta L_t| > 1 \}
%     $

% \item 
%     \notationidx{notation-tau-eta-leq-k-L-i}{$ \tau^{\eta;(k)}_L(i)$}:
%     $
%     \tau^{\eta;(k)}_L(i) \delequal \widetilde{\tau}^{L}_{J^{\eta;(k)}_L(i)}
%     $

\linkdest{location, notation index U}

\item 
    \notationidx{notation-U-j-t}{$U_j$}:
    iid copies of Unif$(0,1)$
    
\item  
    \notationidx{notation-U-j-k-LDP}{$U_{(j;k)}$}:
    $0 \leq U_{(1;k)} \leq U_{(2;k)} \leq \cdots \leq U_{(k;k)}$;
    the order statistics of iid $\big(U_j\big)_{j = 1}^k$

\linkdest{location, notation index V}

% \item 
%     $\notationidx{notation-V-of-G-b}{V}: V = \{m_1,\cdots,m_{ n_\text{min} }\}$

% \item
%     $\notationidx{notation-V-*-b}{V^*_b}: V^*_b \delequal \{m_i:\ \mathcal J^I_b(i) = \mathcal{J}^I_b(V)\}$

\linkdest{location, notation index W}

% \item 
%     \notationidx{notation-w-plus-limit-for-LDP-of-stationary-dist}{$w^{(+)}(t;\gamma)$},\notationidx{notation-w-minus-limit-for-LDP-of-stationary-dist}{$\ w^{(-)}(t;\gamma)$}:
%     $w^{(+)}(t;\gamma) = \inf\big\{ y > 0:\ \bm{y}_t(y) \geq \gamma \big\};\ \ \ 
%     w^{(-)}(t;\gamma) = \inf\big\{ y > 0:\ \bm{y}_t(-y) \leq -\gamma \big\}.$
   
\item 
    \notationidx{notation-large-jump-size}{$\bm W^{>\delta}_i(\eta)$}: 
    $\bm W^{>\delta}_i(\eta) \delequal{} \bm Z_{\tau^{>\delta}_i(\eta)}$; size of $j$\textsuperscript{th} large jump, i.e., with size above threshold $\delta/\eta$
   
\item 
    \notationidx{notation-W-*_j}{$\bm W^*_j(\cdot)$}:
    iid copies of $\bm W^*(c)$ defined in \eqref{def: prob measure Q, LDP}
    
% \item
%     \notationidx{W-leq-j-i-n}{$W^{(j)}_i(\eta)$}:
%     $W^{(j)}_i(\eta) \delequal Z_{ \tau^{(j)}_i(\eta) }$

\linkdest{location, notation index X}

\item 
    \notationidx{notation-discrete-gradient-descent}{$\bm{x}^\eta_{j}(x)$}: (deterministic) difference equation
    $\bm{x}^\eta_{j}(x) = \bm{x}^\eta_{j-1}(x) + \eta \bm a\big(\bm{x}^\eta_{j-1}(x) \big)$ for any $j \geq 1$ with initial condition $\bm{x}^\eta_{0}(x) = x$.

\item 
    \notationidx{notation-breve-X-eta-delta-t}{$\breve{\bm X}^{\eta,\delta}_t(\bm x)$}: ODE that coincides with $\bm X^{\eta}_{\floor{t/\eta} }(\bm x)$ at times $t = \eta \tau^{>\delta}_i(\eta)$, $i=1,2,\ldots$.
    
\item  
    \notationidx{notation-breve-X-eta-b-delta-t}{$\breve{\bm X}^{\eta|b;\delta}_t(\bm x)$}:
    ODE that coincides with $\bm X^{\eta|b}_{\floor{t/\eta} }(\bm x)$ at times $t = \eta \tau^{>\delta}_i(\eta)$, $i=1,2,\ldots$.

\item
    \notationidx{notation-hat-X-clip-b-top-j-jumps}{$\hat{\bm{X}}^{\eta|b; >\delta }(\bm x)$}:
    ODE perturbed by $\bm W^{>\delta}_i(\eta)$'s, with sizes modulated by $\bm \sigma(\cdot)$ and truncated under $b$.
    
% \item 
%     $\notationidx{notation-marker-process-metastability-hat-X}{\hat X^{\eta,\epsilon|b}_t(x)}$:
%      ${\hat X^{\eta,\epsilon|b}_t(x)}$ is the $\Big( \Big( \big({\hat \tau^{\eta,\epsilon|b}_k(x) - \hat \tau^{\eta,\epsilon|b}_{k-1}(x)}\big) \cdot { \lambda^*_b(\eta) } \Big)_{k \geq 1},\big( m_{ \hat{\mathcal I}^{\eta,\epsilon|b}_k(x) } \big)_{k \geq 1} \Big)$ jump process.
    
\item 
    \notationidx{notation-X-j-eta-x}{$\bm X^\eta_j(x)$}: $\bm X^\eta_0(\bm x) = \bm x;\ 
    \bm X^\eta_j(\bm x) = \bm X^\eta_{j - 1}(\bm x) +  \eta \bm a\big(\bm X^\eta_{j - 1}(\bm x)\big) + \eta\bm \sigma\big(\bm X^\eta_{j - 1}(\bm x)\big)\bm Z_j,\ \ \forall j \geq 1$

% \item 
%     \notationidx{notation-X-eta-x-cont-time-version}{$X^\eta(s;x)$}:
%     $X^\eta(s;x) = X^\eta_{\floor{s}}(x)$

\item 
    \notationidx{notation-scaled-X-0T-eta-LDP}{$\bm{X}^\eta_{[0,T]}(x)$}:
    $\bm{X}_{[0,T]}^\eta(x) \delequal \big\{ \bm X^\eta_{ \floor{ t/\eta } }(x):\ t \in [0,T] \big\}$

\item 
    \notationidx{notation-scaled-X-eta-LDP}{$\bm{X}^\eta(x)$}: $\bm{X}^\eta(x) = \bm{X}_{[0,1]}^\eta(x) \delequal \big\{ \bm X^\eta_{ \floor{ t/\eta } }(x):\ t \in [0,1] \big\}$

% \item 
%     \notationidx{notation-scaled-X-eta-mu-LDP}{$\bm{X}^\eta(x)$}:
%     $\bm{X}^\eta(x) = \bm{X}^\eta(\bm{\delta}_x)$ for any $x \in \mathbb{R}$   

\item 
    \notationidx{notation-X-eta-j-truncation-b-LDP}{$\bm X^{\eta|b}_j(\bm x)$}:
    $\bm X^{\eta|b}_j(\bm x)= \bm X^{\eta|b}_{j - 1}(\bm x)+  \varphi_b\Big(\eta \big[\bm a\big(\bm X^{\eta|b}_{j - 1}(\bm x)\big) + \bm \sigma\big(\bm X^{\eta|b}_{j - 1}(\bm x)\big)\bm Z_j\big]\Big)\ \ \forall j \geq 1$

% \item 
%     \notationidx{notation-X-j-eta-b-x}{$X^{\eta|b}_j(x)$}:
%     $X^{\eta|b}_0(x) = x;\ X^{\eta|b}_j(x) = X^{\eta|b}_{j-1}(x) + \varphi_b\Big(\eta a\big(X^{\eta|b}_{j-1}(x)\big) + \eta \sigma\big(X^{\eta|b}_{j-1}(x)\big)Z_j\Big)$

% \item 
%     \notationidx{notation-X-eta-j-mu-truncation-b-LDP}{$X^{\eta|b}_j(\mu)$}:
%     $X^{\eta|b}_0(\mu) \stackrel{d}{\sim}\mu;\ \ X^{\eta|b}_j(\mu) = X^{\eta|b}_{j - 1}(\mu) +  \varphi_b\Big(\eta \big[a\big(X^{\eta|b}_{j - 1}(\mu)\big) + \sigma\big(X^{\eta|b}_{j - 1}(\mu)\big)Z_j\big]\Big)\ \ \forall j \geq 1.$
    
% \item 
%     \notationidx{notation-X-eta-b-M-LDP}{$X^{\eta|b}_{M\downarrow}(j)$}:
%     $ X^{\eta|b}_{M\downarrow}(j) = X^{\eta|b}_{M\downarrow}(j - 1) + \varphi_b\Big( \eta a_M\big( X^{\eta|b}_{M\downarrow}(j- 1) \big) + \eta \sigma_M\big( X^{\eta|b}_{M\downarrow}(j- 1) \big)Z_j \Big)\ \ \forall j \geq 1$;
%     The main difference between $X^{\eta|b}_{M\downarrow}$ and $X^{\eta|b}$ is that $X^{\eta|b}_{M\downarrow}$ is constructed under the truncated $a_M(\cdot),\sigma_M(\cdot)$

\item 
    \notationidx{notation-scaled-X-eta-mu-truncation-b-LDP}{${\bm{X}}^{\eta|b}_{[0,T]}(\bm x)$}:
    $ {\bm{X}}^{\eta|b}_{[0,T]}(\bm x) \delequal \big\{ \bm X^{\eta|b}_{ \floor{ t/\eta } }(\bm x):\ t \in [0,T] \big\}$

\item 
    \notationidx{notation-scaled-X-eta-mu-truncation-b-LDP-T=1}{${\bm{X}}^{\eta|b}(\bm x)$}:
    ${\bm{X}}^{\eta|b}(\bm x) = {\bm{X}}^{\eta|b}_{[0,1]}(\bm x) \delequal \big\{ \bm X^{\eta|b}_{ \floor{ t/\eta } }(\bm x):\ t \in [0,1] \big\}$

% \item 
%     \notationidx{notation-scaled-X-eta-mu-truncation-b-LDP}{${\bm{X}}^{\eta|b}(x)$}:
%     ${\bm{X}}^{\eta|b}(x) \delequal \big\{ X^{\eta|b}_{ \floor{ s/\eta } }(x):\ s \geq 0 \big\}$ for any $x \in \mathbb{R}$

% \item
%     \notationidx{notation-scaled-X-eta-b-M}{$\bm{X}^{\eta|b}_{M\downarrow}(x)$}:
% $\bm{X}^{\eta|b}_{M\downarrow}(x) \delequal \big\{ \bm{X}^{\eta|b}_{M\downarrow}(s;x) = X^{\eta|b}_{M\downarrow}\big(\floor{s/\eta};x\big):\ s \in [0,t] \big\}$

\linkdest{location, notation index Y}

\item 
    \notationidx{notation-continuous-gradient-descent}{$\bm{y}_t(\bm x)$}:  ODE path
    $\frac{d\bm{y}_t(\bm x)}{dt} = \bm a\big(\bm{y}_t(\bm x)\big)$ for any $t > 0$
    with initial condition $\bm{y}_0(\bm x) = \bm x$.

\item
    \notationidx{notation-Y-eta-SDE}{$\bm Y^\eta_t(\bm x)$}:
    $d\bm Y^\eta_t(\bm x) 
    =
    \bm a\big(\bm Y^\eta_{t-}(\bm x)\big)dt
        +
    \bm \sigma\big(\bm Y^\eta_{t-}(\bm x)\big) d\bar{\bm L}^\eta_t$
    
\item 
    \notationidx{notation-bm-Y-0T-eta-SDE}{$\bm Y^\eta_{[0,T]}(\bm x)$}:
    $
    {\bm Y^\eta_{[0,T]}(\bm x)} = \{\bm Y^\eta_t(\bm x):\ t\in[0,T]\}
    $

\item 
    $\notationidx{notation-bm-Y-eta-SDE}{\bm Y^\eta(\bm x)} = \{\bm Y^\eta_t(\bm x):\ t\in[0,1]\}$

\item
    \notationidx{notation-Y-eta-b-SDE}{$\bm Y^{\eta|b}_t(\bm x)$}:
    A modified version of the SDE $\bm Y^\eta_t(\bm x)$ with each discontinuity truncated under $b$
    
\item
    \notationidx{notation-bm-Y-0T-eta-b-SDE}{$\bm{Y}^{\eta|b}_{[0,T]}(\bm x)$}:
    $
    {\bm{Y}^{\eta|b}_{[0,T]}(\bm x)} \delequal \big\{\bm Y^{\eta|b}_t(\bm x):\ t \in [0,T]\big\}
    $

\item 
    $\notationidx{notation-bm-Y-eta-b-SDE}{\bm Y^{\eta|b}(\bm x)}= \{\bm Y^{\eta|b}_t(\bm x):\ t\in[0,1]\}$

\linkdest{location, notation index Z}

\item 
    \notationidx{notation-Z-iid-noise-LDP}{$\bm Z_j$}:
    $(\bm Z_j)_{j \geq 1}$ is a sequence of iid copies of a random vector $\bm Z$ such that
    $\E \bm Z = \bm 0$ and 
    the multivariate regular variation assumption (i.e., Assumption~\ref{assumption gradient noise heavy-tailed}) holds for the law of $\bm Z$.

%   \item \hyperlink{{notation-Y-GD-perturbed-by-large-jump-LDP}}{$Y^{\eta,\delta}_j(x)$}:
%   $ Y^{\eta,\delta}_j(x) = Y^{\eta,\delta}_{j-1}(x) +  \eta a\big(Y^{\eta,\delta}_{j-1}(x)\big) + \eta\sigma\big(Y^{\eta,\delta}_{j-1}(x)\big)\sum_{k \geq 1}W^{>\delta}_{k}(\eta)\mathbbm{I}\{j = \tau^{>\delta}_k(\eta)\}$

\end{itemize}
\fi

\ifshowtheoremtree
\newpage
\footnotesize
\newgeometry{left=1cm,right=1cm,top=0.5cm,bottom=1.5cm}

\section*{\linkdest{location of theorem tree}Theorem Tree}
\begin{thmdependence}[leftmargin=*]

\thmtreenode{-}
    {Theorem}{theorem: portmanteau, uniform M convergence}
    {0.8}{Portmanteau Theorem for uniform $\M(\S\setminus\C)$-convergence}
\bigskip

\thmtreenodewopf{}
    {Lemma}{lemma: asymptotic equivalence when bounded away, equivalence of M convergence}
    {0.8}{
        asympt.\ equiv.\ of $X_n$ and $Y_n$ w.r.t.\\ $\epsilon_n^{-1}$ in $\M(\mathbb S\setminus\mathbb{C})$ implies the same $\M$-convergence of $\epsilon_n^{-1}\P(X_n\in \cdot)$ and $\epsilon_n^{-1}\P(Y_n\in \cdot)$.%
    }
    % \begin{thmdependence}
    % \thmtreenodewopf{}
    %     {Theorem}{thm: portmanteau, M convergence}
    %     {0.8}{Portmanteau Theorem for $\M(\S\setminus\C)$-convergence}
    % \end{thmdependence}
\bigskip

% \thmtreenode{\complete}
%     {Theorem}{theorem: sample path LDP, unclipped}
%     {0.8}{
%         [A\ref{assumption gradient noise heavy-tailed},\ref{assumption: lipschitz continuity of drift and diffusion coefficients},\ref{assumption: boundedness of drift and diffusion coefficients}]
%         Sample Path Large Deviations for SGD $\bm X^\eta$
%     }
%     \begin{thmdependence}
%     \thmtreeref
%         {Theorem}{thm: portmanteau, M convergence}
        
%     \thmtreeref
%         {Theorem}{theorem: LDP 1, unclipped}
%     \end{thmdependence}
% \bigskip

\thmtreenode{\complete}
    {Theorem}{theorem: LDP 1, unclipped}
    {0.8}{
    [A\ref{assumption gradient noise heavy-tailed},\ref{assumption: lipschitz continuity of drift and diffusion coefficients},\ref{assumption: nondegeneracy of diffusion coefficients},\ref{assumption: boundedness of drift and diffusion coefficients}]
    Sample Path Large Deviations for SGD $\bm X^\eta$.
    $\P ({\bm X}^{\eta}(x)\in \cdot)/\lambda^k(\eta) \to \mathbf{C}^{(k)}(\cdot;x)$ in $\mathbb{M}(\D \setminus \D_A^{(k-1)})$ uniformly in $x$ on any compact set $A$ 
    }
    \begin{thmdependence}
    \thmtreenode{-}
            {Lemma}
            {lemma: continuity of h k b mapping}
            {0.8}{
            [A\ref{assumption: lipschitz continuity of drift and diffusion coefficients},\ref{assumption: boundedness of drift and diffusion coefficients}]
            $h^{(k)}_{[0,T]}$ is continuous on $\R\times \R^k \times (0,T)^{k\uparrow}$.
            }
        \begin{thmdependence}
        \thmtreenode{\issue}
            {Lemma}
            {lemma: continuity of h k b mapping clipped}
            {0.8}{
                [A\ref{assumption: lipschitz continuity of drift and diffusion coefficients},\ref{assumption: nondegeneracy of diffusion coefficients}] 
                $h^{(k)|b}$ is continuous.
            }
            \begin{thmdependence}
                 \thmtreenode{-}
            {Corollary}{corollary: existence of M 0 bar delta bar epsilon, clipped case, LDP}
            {0.8}{
                [A\ref{assumption: lipschitz continuity of drift and diffusion coefficients},\ref{assumption: nondegeneracy of diffusion coefficients}] 
                If $d(B, \D_{A|b}^{(k-1)}) >0$, the shocks of the paths in $B \cap \D_{A}^{(k)|b}$ are bounded away from 0. 
            }
        
            \begin{thmdependence}
            \thmtreenode{-}
                {Lemma}{lemma: boundedness of k jump set under truncation, LDP clipped}
                {0.8}{
                    $\sup_{\xi\in \D_{A}^{(k)|b}} \|\xi\| < \infty$ for any $b\in(0,\infty)$, $k\in \mathbb N$, and compact set $A$.
                }
            \end{thmdependence}
            \end{thmdependence}
        \end{thmdependence}

    \thmtreenode{-}
            {Lemma}{lemma: LDP, bar epsilon and delta}
            {0.8}{
                [A\ref{assumption: lipschitz continuity of drift and diffusion coefficients},\ref{assumption: boundedness of drift and diffusion coefficients}] 
                If $d(B, \D_{A}^{(k-1)}) >0$, the shocks of the paths in $B \cap \D_{A}^{(k)}$ are bounded away from 0. 
            }

    \thmtreenode{-}
        {Lemma}{lemma: sequential compactness for limiting measures, LD of SGD}{0.8}
        {
        Verify $\lim_{n \to \infty}\mathbf C^{(k)}(f;x_{n})
            =
            \mathbf C^{(k)}(f;x^*)$
            and
            $\lim_{n \to \infty}\mathbf C^{(k)|b}(f;x_{n})
            =
            \mathbf C^{(k)|b}(f;x^*)$
        }
        \begin{thmdependence}
            \thmtreeref
                {Lemma}{lemma: continuity of h k b mapping}
            \thmtreeref
                {Lemma}{lemma: continuity of h k b mapping clipped}
            \thmtreeref
                {Lemma}{lemma: LDP, bar epsilon and delta}
            \thmtreenode{-}
                {Lemma}
                {lemma: LDP, bar epsilon and delta, clipped version}
                {0.8}{
                [A\ref{assumption: lipschitz continuity of drift and diffusion coefficients}, A\ref{assumption: boundedness of drift and diffusion coefficients}]
                If $d(B, \D_{A|b}^{(k-1)}) >0$, the shocks of the paths in $B \cap \D_{A}^{(k)|b}$ are bounded away from 0.
                }
        \end{thmdependence}

    \thmtreeref
        {Theorem}{theorem: portmanteau, uniform M convergence}

    \thmtreeref
        {Proposition}{proposition: standard M convergence, LDP unclipped}

    \end{thmdependence}

\bigskip

\thmtreenode{\complete}
    {Theorem}{corollary: LDP 2}
    {0.8}{
    [A\ref{assumption gradient noise heavy-tailed},\ref{assumption: lipschitz continuity of drift and diffusion coefficients},\ref{assumption: nondegeneracy of diffusion coefficients}]
    Sample path large deviations for $\bm X^{\eta|b}$.
    $\P ({\bm X}^{\eta|b}(x)\in \cdot)/\lambda^k(\eta) \to \mathbf{C}_b^{(k)}(\cdot;x)$ in $\mathbb{M}(\D \setminus \D_{A|b}^{(k-1)})$ uniformly in $x$ on any compact set $A$ 
    }
    
    \begin{thmdependence}
        \thmtreeref
            {Proposition}{proposition: standard M convergence, LDP clipped}
        \thmtreeref
            {Lemma}{lemma: sequential compactness for limiting measures, LD of SGD}
        \thmtreeref
            {Lemma}{lemma: LDP, bar epsilon and delta, clipped version}
        \thmtreeref
            {Theorem}{theorem: portmanteau, uniform M convergence}
    \end{thmdependence}

\bigskip

\thmtreenode{-}
    {Proposition}{proposition: standard M convergence, LDP unclipped}
    {0.8}{
    [A\ref{assumption gradient noise heavy-tailed},\ref{assumption: lipschitz continuity of drift and diffusion coefficients},\ref{assumption: boundedness of drift and diffusion coefficients}]
        $\P({\bm X}^{\eta_n}(x_n)\in \cdot)/\lambda^k(\eta_n) \to \mathbf{C}^{(k)}(\cdot; x^*)$ in $\M(\D\setminus \D_{A}^{(k-1)})$ if $\eta_n\to 0$, $x_n \to x^*$, and $x_n, x^* \in A$: cpt
    }
    \begin{thmdependence}
    
    \thmtreenode{-}
    {Proposition}{proposition: standard M convergence, LDP clipped}
    {0.8}{ [A\ref{assumption gradient noise heavy-tailed},\ref{assumption: lipschitz continuity of drift and diffusion coefficients},\ref{assumption: nondegeneracy of diffusion coefficients}]
        $\P({\bm X}^{\eta_n}(x_n)\in \cdot)/\lambda^k(\eta_n) \to \mathbf{C}^{(k)}(\cdot; x^*)$ in $\M(\D\setminus \D_{A}^{(k-1)})$ if $\eta_n\to 0$, $x_n \to x^*$, and $x_n, x^* \in A$: cpt
    }
    
    \begin{thmdependence}
    \thmtreeref{Proposition}{proposition: standard M convergence, LDP clipped, stronger boundedness assumption}
    
    \thmtreeref
            {Corollary}{corollary: existence of M 0 bar delta bar epsilon, clipped case, LDP}
            % {0.8}{
            %     [A\ref{assumption: lipschitz continuity of drift and diffusion coefficients},\ref{assumption: nondegeneracy of diffusion coefficients}] 
            %     If $d(B, \D_{A|b}^{(k-1)}) >0$, the shocks of the paths in $B \cap \D_{A}^{(k)|b}$ are bounded away from 0. 
            % }
        
            % \begin{thmdependence}
            % \thmtreenode{-}
            %     {Lemma}{lemma: boundedness of k jump set under truncation, LDP clipped}
            %     {0.8}{
            %         $\sup_{\xi\in \D_{A}^{(k)|b}} \|\xi\| < \infty$ for any $b\in(0,\infty)$, $k\in \mathbb N$, and compact set $A$.
            %     }
            % \end{thmdependence}
    \end{thmdependence}

    % \thmtreeref
    %     {Theorem}{thm: portmanteau, M convergence}

    \thmtreeref{Lemma}{lemma LDP, small jump perturbation}
    
    \thmtreeref
            {Lemma}{lemma: LDP, bar epsilon and delta}
            % {0.8}{
            %     [A\ref{assumption: lipschitz continuity of drift and diffusion coefficients},\ref{assumption: boundedness of drift and diffusion coefficients}] 
            %     If $d(B, \D_{A}^{(k-1)}) >0$, the shocks of the paths in $B \cap \D_{A}^{(k)}$ are bounded away from 0. 
            % }

    \end{thmdependence}

\bigskip

\thmtreenode{-}
    {Proposition}{proposition: standard M convergence, LDP clipped, stronger boundedness assumption}
    {0.8}{ [A\ref{assumption gradient noise heavy-tailed},\ref{assumption: lipschitz continuity of drift and diffusion coefficients},\ref{assumption: boundedness of drift and diffusion coefficients}]
        $\P({\bm X}^{\eta_n|b}(x_n)\in \cdot)/\lambda^k(\eta_n) \to \mathbf{C}^{(j)}_b(\cdot; x^*)$ in $\M(\D\setminus \D_{A|b}^{(k-1)})$ 
        if $\eta_n\to 0$, $x_n \to x^*$, and $x_n, x^* \in A$: cpt
    }
    \begin{thmdependence}
        \thmtreeref
            {Lemma}{lemma: asymptotic equivalence when bounded away, equivalence of M convergence}
        
        \thmtreenode{-}
            {Proposition}{proposition: asymptotic equivalence, clipped}
            {0.8}{
                $\bm{X}^{\eta_n|b}(x_n)$ and $\hat{\bm X}^{\eta_n|b;(k)}(x_n)$ are asympt.\ equiv.\ w.r.t.\\ $\lambda^k(\eta)$ in $\M(\D\setminus \D_{A|b}^{(k-1)})$
            }
    
    \begin{thmdependence}
    
        \thmtreeref
                {Lemma}
                {lemma: LDP, bar epsilon and delta, clipped version}

        \thmtreenode{-}
            {Lemma}{lemma: SGD close to approximation x circ, LDP}
            {0.8}
            {
                (time-scaled) SGD $X^{\eta|b}_{\lfloor t/\eta \rfloor}$ is close to (slow) ODE until first large jump
            }
            \begin{thmdependence}
            \thmtreenode{-} 
                {Lemma}{lemmaBasicGronwall}
                {0.8}{
                SGD and GD are close to each other if the noises are small
                }

            \thmtreenode{-}
                {Lemma}{lemma Ode Gd Gap} 
                {0.8}{
                GD ($\bm y^\eta_{\floor{s}}(y)$) and GF ($\bm x^\eta(\cdot;x)$) are close to each other
                }

            \end{thmdependence}
        
        \thmtreenode{-}
            {Lemma}{lemma LDP, small jump perturbation} 
            {0.8}{
                \begin{minipage}[t]{\linewidth}
                (a) 
                $
                \sup_{ (W_i)_{i \geq 0} \in  \bm{\Gamma}_M}\P\Big( \max_{ j \leq \floor{t/\eta} \wedge \big(\tau^{>\delta}_{1}(\eta) - 1\big) }\ \eta\big|\sum_{i = 1}^j W_{i-1}Z_i \big| > \epsilon \Big)   
                = \bm o({\eta^N}) 
                $\\
                
                (b) 
                $
                \sup_{x \in \R} \P\Big( \big(\bigcap_{i = 1}^k A_i(\eta,\epsilon,\delta,t,x)\big)^c \Big) 
                = \bm o(\eta^N)
                $
                \end{minipage}
            }
        
        \thmtreenode{-}
            {Lemma}{lemma: SGD close to approximation x breve, LDP clipped}
            {0.8}{
                $\hat{X}^{\eta|b;>\delta}_{t}(x)$ and $X_{\lfloor t\rfloor}^{\eta|b}(x)$ are close to each other on $\cap_{i=1}^{k+1} A_i(\eta,\epsilon,\delta, x)$
            }
    \end{thmdependence}
            
    \thmtreenode{-}
        {Proposition}{proposition: uniform weak convergence, clipped}
        {0.8}{ [A\ref{assumption: boundedness of drift and diffusion coefficients}]
        % $\mathbb{M}$-convergence from $\hat{\bm X}^{\eta|b;(j)}$ to $\mathbf{C}^{(j)}_b$
        $\P(\hat{\bm X}^{\eta|b;(k)}\in \cdot)/\lambda^k(\eta) \to \mathbf{C}^{(k)|b}   $ in $\M(\D\setminus \D_{A|b}^{(k-1)})$
        }
    
        \begin{thmdependence}
        \thmtreeref{Lemma}{lemma: LDP, bar epsilon and delta, clipped version}
        \thmtreeref
            {Lemma}
            {lemma: continuity of h k b mapping clipped}
        \thmtreenode{-}
            {Lemma}
            {lemma: weak convergence, expectation wrt approximation, LDP, preparation}
            {0.8}{
            On \hyperlink{index, notation-set-E-delta-LDP}{$E^\delta_{c,k}(\eta)$}, continuous and bounded functionals of scaled jump times and jump sizes converge to that of uniform and Pareto distributions.
            }
            \begin{thmdependence}
            \thmtreenode{-}
                {Lemma}{lemma: weak convergence of cond law of large jump, LDP}
                {0.8}{
                Conditional on \hyperlink{index, notation-set-E-delta-LDP}{$E^\delta_{c,k}(\eta)$}, the scaled jump times and jump sizes converge to uniform and Pareto distributions. 
                }
                % \vspace{-5pt}
            \end{thmdependence}
        
        \end{thmdependence}
    \end{thmdependence}

\bigskip

\bigskip

\thmtreenode{-}
    {Theorem}{theorem: first exit time, unclipped}{0.8}
    {
    (a)
    $
        \lim_{\eta \downarrow 0}\P\Big(C^I_b \eta\cdot \lambda^{ \mathcal{J}^I_b }(\eta)\tau^{\eta|b}(x) > t
        % ;\ X^{\eta|b}_{ \tau^{\eta|b}(x)}(x) \in B
        \Big)
        =
        % \big[ 
        \exp(-t)
        % \big]\cdot\frac{ \widecheck{\mathbf{C}}^{ (\mathcal{J}^I_b)|b }(B) }{ C^I_b }.
        $\\
        % where 
        % $\mathcal{J}^I_b \delequal \ceil{|s_\text{left}|\wedge s_\text{right}/b}$,
        % $C^I_b \delequal  \widecheck{ \mathbf{C} }^{ (\mathcal{J}^I_b)|b }\big( (-\infty,s_\text{left}]\cup[s_\text{right},\infty)\big)$,
        % and $\lambda(\eta)\delequal \eta^{-1}H(\eta^{-1})$.
    
    (b)
    $
    \lim_{\eta \downarrow 0}\P\Big(C^* \eta \lambda(\eta)\tau^\eta(x) > t
    % ;\ X^\eta_{ \tau^\eta(x)}(x) \in B
    \Big)
    =
    % \big[
    \exp(-t)
    % \big
    % ]\cdot\frac{ \widecheck{\mathbf{C}}(B) }{ C^* }.
    $
    % where 
    % $C^* \delequal  \widecheck{ \mathbf{C} }\big( (-\infty,s_\text{left}]\cup[s_\text{right},\infty)\big)$
    }
    \begin{thmdependence}
        \thmtreenode{-}
            {Lemma}{lemmaGeomFront}{0.8}{} 
        \thmtreenode{-}
            {Theorem}{thm: exit time analysis framework} {0.8}
            {
            First Exit Time Analysis Framework: $\P\big(
        \gamma(\eta)\tau_{I^\complement}^{\eta}(x)>t,\,V_{\tau}^\eta(x)\in B  
    \big) \approx C(B)e^{-t}$
            }
        \begin{thmdependence}
            \thmtreenode{-}
                {Proposition}{prop: exit time analysis main proposition}{0.8}{
                First Exit Time Analysis Framework ($\epsilon$-relaxed version):
                $
                \P\big(\gamma(\eta) \tau_{I(\epsilon)^\complement}^\eta(x) > t;\; V^\eta_{\tau_\epsilon}(x) \in B\big)
                \approx C(B)e^{-t} + \delta_{t,B}(\epsilon)
                $
                }
        \end{thmdependence}
        \thmtreenodewopf{}
            {Lemma}{lemma: exit prob one cycle, with exit location B, first exit analysis}{0.8}{Apply general framework: Verifying conditions \eqref{eq: exit time condition lower bound} and \eqref{eq: exit time condition upper bound}}
        \thmtreenodewopf{}
            {Lemma}{lemma: fixed cycle exit or return taking too long, first exit analysis}{0.8}{Apply general framework: Verifying condition \eqref{eq:E3}}
        \thmtreenodewopf{}
            {Lemma}{lemma: cycle, efficient return}{0.8}{Apply general framework: Verifying condition \eqref{eq:E4}}
    \end{thmdependence}

\bigskip

\thmtreenode{-}
    {Lemma}{lemma: exit prob one cycle, with exit location B, first exit analysis}{0.8}{Apply general framework: Verifying conditions \eqref{eq: exit time condition lower bound} and \eqref{eq: exit time condition upper bound}}
    
    \begin{thmdependence}
        \thmtreeref
            {Theorem}{corollary: LDP 2}
        \thmtreenode{-}
            {Lemma}{lemma: choose key parameters, first exit time analysis}{0.8}{}
        \thmtreenode{-}
            {Lemma}{lemma: measure check C J * b, continuity, first exit analysis}{0.8}{
            Verifying  $C(\partial I) = 0$ for the measure $C$ defined in \eqref{def: measure C and scale gamma when applying the exit time framework}
            }
            \begin{thmdependence}
                \thmtreeref{Lemma}{lemma: choose key parameters, first exit time analysis}
            \end{thmdependence}

        \thmtreenode{-}
            {Lemma}{lemma: exit rate strictly positive, first exit analysis}{0.8}{
            Verifying $
                    C^I_b = \widecheck{\mathbf C}^{(\mathcal{J}^I_b)|b}\big( I^\complement\big)\in(0,\infty)
                        $
            for the measure $C$ defined in \eqref{def: measure C and scale gamma when applying the exit time framework}
            }
            \begin{thmdependence}
                \thmtreeref{Lemma}{lemma: choose key parameters, first exit time analysis}
                \thmtreeref{Lemma}{lemma: continuity of h k b mapping clipped}
            \end{thmdependence}
        
        \thmtreenode{-}
            {Lemma}{lemma: limiting measure, with exit location B, first exit analysis}{0.8}{}
            \begin{thmdependence}
                \thmtreeref{Lemma}{lemma: choose key parameters, first exit time analysis}
            \end{thmdependence}
        
    \end{thmdependence}

\thmtreenode{-}
    {Lemma}{lemma: fixed cycle exit or return taking too long, first exit analysis}{0.8}{Apply general framework: Verifying condition \eqref{eq:E3}}
    \begin{thmdependence}
        \thmtreeref
            {Theorem}{corollary: LDP 2}
    \end{thmdependence}

\thmtreenode{-}
    {Lemma}{lemma: cycle, efficient return}{0.8}{Apply general framework: Verifying condition \eqref{eq:E4}}
    \begin{thmdependence}
        \thmtreeref
            {Theorem}{corollary: LDP 2}
    \end{thmdependence}

\bigskip

\bigskip

\end{thmdependence}
\fi

\ifshowtheoremlist
\newpage
\footnotesize
\newgeometry{left=1cm,right=1cm,top=0.5cm,bottom=1.5cm}
\linkdest{location of theorem list}
%\section*{List of Theorems}
\listoftheorems
\fi

\ifshowequationlist
\newpage
\linkdest{location of equation number list}
\section*{Numbered Equations}
%\label{def: set of ODE with k jumps}

\eqref{def: X eta b j x, unclipped SGD}
\eqref{def: H, law of Z_j}
% \eqref{assumption gradient noise heavy-tailed}
% \eqref{assumption: lipschitz continuity of drift and diffusion coefficients}
% \eqref{assumption: boundedness of drift and diffusion coefficients}
% \eqref{def: perturb ode mapping h k, 1}
% \eqref{def: perturb ode mapping h k, 2}
% \eqref{def: perturb ode mapping h k, 3}
\eqref{def: measure nu alpha}
% \eqref{def: measure C k t}
\eqref{def: scaled SGD, LDP}
% \eqref{def: set of ODE with k jumps}
% \eqref{theorem: LDP 1, unclipped}
% \eqref{theorem: sample path LDP, unclipped}
% \eqref{corollary, LDP 1 till time T, unclipped}
\eqref{def: X eta b j x, clipped SGD}
% \eqref{defTruncationClippingOperator}
\eqref{def: perturb ode mapping h k b, 1}
\eqref{def: perturb ode mapping h k b, 2}
\eqref{def: perturb ode mapping h k b, 3}
\eqref{def: l * tilde jump number for function g, clipped SGD}
\eqref{def: measure mu k b t}
% \eqref{assumption: nondegeneracy of diffusion coefficients}
% \eqref{theorem: LDP 1}
% \eqref{corollary: LDP 2}
% \eqref{theorem: sample path LDP, clipped}
% \eqref{corollary, LDP 1 till time T}
% \eqref{proof, def recursion tilde X, corollary, LDP 1 till time T}
% \eqref{proof, goal 1, corollary, LDP 1 till time T}
% \eqref{proof, observation 1, corollary, LDP 1 till time T}
% \eqref{lemmaGeomFront}
% \eqref{lemmaBasicGronwall}
\eqref{def: gradient descent process y}
% \eqref{lemma Ode Gd Gap}
\eqref{defArrivalTime large jump}
\eqref{defSize large jump}
\eqref{property: large jump time probability}
% \eqref{property: large jump time probability, exact jump time at the endpoint}
\eqref{def: Gamma M, set of bounded adapted process}
% \eqref{lemma LDP, small jump perturbation}
\eqref{def: event A i concentration of small jumps, 1}
\eqref{def: event A i concentration of small jumps, 2}
\eqref{proof: LDP, small jump perturbation, asymptotics part 1}
\eqref{proof: LDP, small jump perturbation, asymptotics part 2}
\eqref{proof: LDP, small jump perturbation, asymptotics part 3}
\eqref{term E Z 1, lemma LDP, small jump perturbation}
% \eqref{proof: LDP, small jump perturbation, ineq 0}
\eqref{proof: LDP, small jump perturbation, choose p}
\eqref{proof: LDP, small jump perturbation, ineq 1}
\eqref{proof: applying berstein ineq, lemma LDP, small jump perturbation}
\eqref{term second order moment hat Z 1, lemma LDP, small jump perturbation}
\eqref{def: E eta delta set, LDP}
\eqref{def: prob measure Q, LDP}
% \eqref{lemma: weak convergence of cond law of large jump, LDP}
\eqref{proof: lemma weak convergence of cond law of large jump, 1, LDP}
% \eqref{lemma: LDP, bar epsilon and delta}
\eqref{proof: choose bar delta, lemma LDP, bar epsilon and delta}
% \eqref{definition of xi prime1}
\eqref{bound of difference between xi and xi prime between t_J and t_J+1}
% \eqref{}
\eqref{goal, lemma: LDP, bar epsilon and delta, clipped version}
% \eqref{}
\eqref{choice of delta, proof, lemma: continuity of h k b mapping}
\eqref{choice of delta, 2, proof, lemma: continuity of h k b mapping}
\eqref{ineq 1, proof, lemma: continuity of h k b mapping}
% \eqref{}
\eqref{condition, x prime and x, lemma: continuity of h k b mapping}
% \eqref{lemma: boundedness of k jump set under truncation, LDP clipped}
\eqref{def: a sigma truncated at M, LDP}
\eqref{def: perturb ode mapping h k b, truncated at M, 1}
\eqref{def: perturb ode mapping h k b, truncated at M, 2}
\eqref{def: perturb ode mapping h k b, truncated at M, 3}
% \eqref{corollary: existence of M 0 bar delta bar epsilon, clipped case, LDP}
% \eqref{}
\eqref{ineq, no jump time, a, lemma: SGD close to approximation x circ, LDP}
% \eqref{ineq, with jump time, a, lemma: SGD close to approximation x circ, LDP}
\eqref{ineq, no jump time, b, lemma: SGD close to approximation x circ, LDP}
\eqref{ineq, with jump time, b, lemma: SGD close to approximation x circ, LDP}
\eqref{proof, ineq gap between X and y, SGD close to approximation x circ, LDP}
\eqref{proof, ineq gap between y and xi, SGD close to approximation x circ, LDP}
\eqref{proof, up to t strictly less than eta tau_1^eta, SGD close to approximation x circ, LDP}
% \eqref{def: x breve approximation, clipped, LDP, 1}
% \eqref{def: x breve approximation, clipped, LDP, 2}
% \eqref{def: x breve approximation, clipped, LDP, 3}
% \eqref{lemma: SGD close to approximation x breve, LDP clipped}
% \eqref{subsec: LDP clipped, proof of main results}
% \eqref{proposition: standard M convergence, LDP clipped}
% \eqref{proposition: standard M convergence, LDP unclipped}
\eqref{goal 1, proposition: standard M convergence, LDP unclipped}
% \eqref{goal 2, proposition: standard M convergence, LDP unclipped}
\eqref{goal 3, proposition: standard M convergence, LDP unclipped}
\eqref{goal 4, proposition: standard M convergence, LDP unclipped}
\eqref{goal 5, proposition: standard M convergence, LDP unclipped}
\eqref{subgoal for goal 2, proposition: standard M convergence, LDP unclipped}
% \eqref{proof, ineq for mathring x, proposition: asymptotic equivalence, unclipped}
\eqref{subgoal, goal 4, proposition: standard M convergence, LDP unclipped}
% \eqref{pick bar delta, goal 4, proposition: standard M convergence, LDP unclipped}
% \eqref{proposition: standard M convergence, LDP clipped, stronger boundedness assumption}
\eqref{property: choice of M 0, new, proposition: standard M convergence, LDP clipped}
\eqref{property: Y and tilde Y, 1, proposition: standard M convergence, LDP clipped}
\eqref{property: Y and tilde Y, 2, proposition: standard M convergence, LDP clipped}
\eqref{goal new 1, proposition: standard M convergence, LDP clipped}
\eqref{goal new 2, proposition: standard M convergence, LDP clipped}
\eqref{def: objects for definition of hat X leq k}
\eqref{def: time and size of top j jumps before n steps}
\eqref{def: hat X truncated b, j top jumps, 1}
\eqref{def: hat X truncated b, j top jumps, 2}
\eqref{property: equivalence between breve X and hat X, clipped at b}
% \eqref{proposition: asymptotic equivalence, clipped}
% \eqref{proposition: uniform weak convergence, clipped}
\eqref{goal: asymptotic equivalence claim, proposition: asymptotic equivalence, clipped}
\eqref{choice of bar delta, proof, proposition: asymptotic equivalence, clipped}
\eqref{choice of bar epsilon, proof, proposition: asymptotic equivalence, clipped}
\eqref{goal, event B 1, clipped, proposition: asymptotic equivalence, clipped}
\eqref{goal, event B 2, clipped, proposition: asymptotic equivalence, clipped}
\eqref{goal, event B 3, clipped, proposition: asymptotic equivalence, clipped}
\eqref{goal, event B 4, clipped, proposition: asymptotic equivalence, clipped}
% \eqref{goal, event B 5, clipped, proposition: asymptotic equivalence, clipped}
% \eqref{def: x eta b M circ approximation, LDP, 1}
\eqref{def: x eta b M circ approximation, LDP, 2}
\eqref{def: x eta b M circ approximation, LDP, 3}
\eqref{proof, ineq for mathring x, proposition: asymptotic equivalence, clipped}
\eqref{proof: bounded jump size for breve X, goal, event B 2, clipped, proposition: asymptotic equivalence, clipped}
\eqref{proof: goal 1, lemma: atypical 3, large jumps being too small, LDP clipped}
\eqref{proof: ineq 1, lemma: atypical 3, large jumps being too small, LDP, clipped}
% \eqref{}
\eqref{choice of bar delta, proposition: uniform weak convergence, clipped}
% \eqref{def: Phi mapping, LDP, unclipped}
\smallskip

\noindent
% \eqref{def: first exit time for heavy tailed SGD}
\eqref{def: mapping check g k b, endpoint of path after the last jump, first exit analysis}
% \eqref{theorem: first exit time, unclipped}
% \eqref{theorem: first exit time, clipped}
\eqref{def: first exit time, J *}
% \eqref{lemma: choose key parameters, first exit time analysis}
\eqref{proof, observation on xi, lemma: choose key parameters, first exit time analysis}
% \eqref{property: fixed constant bar epsilon delta t, first exit analysis, 4}
% \eqref{property: fixed constant bar epsilon delta t, first exit analysis, 1}
% \eqref{property: fixed constant bar epsilon delta t, first exit analysis, 2}
% \eqref{property: fixed constant bar epsilon delta t, first exit analysis, 3}
% \eqref{property: between g k b and h k b, 1}
% \eqref{property: between g k b and h k b, 2}
\eqref{def: t epsilon function, first exit analysis}
\eqref{property: t epsilon function, first exit analysis}
% \eqref{property: bound return time log scale, first exit analysis}
% \eqref{lemma: measure check C J * b, continuity, first exit analysis}
% \eqref{lemma: exit rate strictly positive, first exit analysis}
\eqref{proof, goal, lemma: exit rate strictly positive, first exit analysis}
% \eqref{lemma: limiting measure, first exit analysis}
% \eqref{lemma: key lemma constant cycle bound}
\eqref{def: epsilon relaxed first exit time}
\eqref{def: return time R in first cycle, first exit analysis}
% % \eqref{lemma: cycle, efficient return}
% \eqref{lemma: cycle, exit when close to boundary}
% \eqref{lemma: fixed cycle exit or return taking too long, first exit analysis}
% \eqref{lemma: exit prob one cycle, first exit analysis}
% \eqref{proposition: first exit time}
% \eqref{claim: proposition: first exit time, upper bound}
% \eqref{claim: proposition: first exit time, lower bound}
% \eqref{proof: goal 1, proposition: first exit time}
% \eqref{proof: goal 2, proposition: first exit time}
% \eqref{proof, lower bound 1, proposition: first exit time}
% \eqref{proof, lower bound 2, proposition: first exit time}
% \eqref{proof: decomp of main event, theorem: first exit time, unclipped}
\eqref{proof, ineq 1, theorem: first exit time, unclipped}
\smallskip

\noindent
% \eqref{bounded away condition, theorem: lifiting from sample path LDP to LD of stationary distribuion of SGD}
% \eqref{sec: SDE}
% \eqref{assumption: heavy-tailed levy process}
\eqref{defSDE, initial condition x}
% \eqref{theorem: LDP, SDE, uniform M convergence}
% \eqref{theorem: LDP, SDE}
\eqref{def: Y eta b 0 f g, SDE clipped}
\eqref{def, tau and W, discont in Y, eta b, 1}
\eqref{def, tau and W, discont in Y, eta b, 2}
\eqref{def: objects for defining Y eta b, clipped SDE, 1}
\eqref{def: objects for defining Y eta b, clipped SDE, 2}
\eqref{def: objects for defining Y eta b, clipped SDE, 3}
\eqref{def: objects for defining Y eta b, clipped SDE, 4}
\eqref{defSDE, initial condition x, clipped}
\fi

\ifshownavigationpage
\newpage
\normalsize
\tableofcontents

\section*{Navigation Links}

\ifshowtheoremlist
    \noindent
    \hyperlink{location of theorem list}{List of Theorems}
    \bigskip
\fi

\ifshowtheoremtree
    \noindent
    \hyperlink{location of theorem tree}{Theorem Tree}
    \begin{itemize}
    \thmtreeref
        {Proposition}{proposition: standard M convergence, LDP clipped}
    
    \thmtreeref
        {Proposition}{proposition: standard M convergence, LDP unclipped}
        
    \thmtreeref
        {Theorem}{theorem: LDP 1, unclipped}
        
    \thmtreeref
        {Theorem}{theorem: sample path LDP, unclipped}
    
    \thmtreeref
        {Proposition}{proposition: standard M convergence, LDP clipped}
    
    \thmtreeref
        {Theorem}{theorem: LDP 1}
    
    \thmtreeref
        {Theorem}{theorem: first exit time, unclipped}
    
    % \thmtreeref
    %     {Theorem}{theorem: lifting from sample path LDP to LD of stationary distribution of SGD}
        
    \end{itemize}
\fi

\ifshowreminders
    \noindent
    \hyperlink{location of reminders}{Assumptions, etc}
    \bigskip
\fi

\ifshowtheoremlist
    \noindent
    \hyperlink{location of equation number list}{Numbered Equations}
    \bigskip
\fi

\ifshownotationindex
    \noindent
    \hyperlink{location of notation index}{Notation Index}
    \begin{itemize}
    \item[] 
        \hyperlink{location, notation index A}{A},
        \hyperlink{location, notation index B}{B},
        \hyperlink{location, notation index C}{C},
        \hyperlink{location, notation index D}{D},
        \hyperlink{location, notation index E}{E},
        \hyperlink{location, notation index F}{F},
        \hyperlink{location, notation index G}{G},
        \hyperlink{location, notation index H}{H},
        \hyperlink{location, notation index I}{I},
        \hyperlink{location, notation index J}{J},
        \hyperlink{location, notation index K}{K},
        \hyperlink{location, notation index L}{L},
        \hyperlink{location, notation index M}{M},
        \hyperlink{location, notation index N}{N},
        \hyperlink{location, notation index O}{O},
        \hyperlink{location, notation index P}{P},
        \hyperlink{location, notation index Q}{Q},
        \hyperlink{location, notation index R}{R},
        \hyperlink{location, notation index S}{S},
        \hyperlink{location, notation index T}{T},
        \hyperlink{location, notation index U}{U},
        \hyperlink{location, notation index V}{V},
        \hyperlink{location, notation index W}{W},
        \hyperlink{location, notation index X}{X},
        \hyperlink{location, notation index Y}{Y},
        \hyperlink{location, notation index Z}{Z}
    \end{itemize}
\fi

\fi

%\end{APPENDIX}
%%%%%%%%%%%%%%%%%
\end{document}
%%%%%%%%%%%%%%%%%